\newcommand{\RNum}[1]{\uppercase\expandafter{\romannumeral #1\relax}}
\newlist{steps}{enumerate}{1}
\setlist[steps, 1]{label = Step \arabic*:}
\theoremstyle{plain}
\newtheorem{theorem}{Theorem}[section]
\newtheorem{definition}[theorem]{Definition}
\newtheorem{proposition}[theorem]{Proposition}
\newtheorem{lemma}[theorem]{Lemma}
\newtheorem{remark}[theorem]{Remark}
\newtheorem{corollary}[theorem]{Corollary}
\theoremstyle{definition}
\newtheorem{exmp}[theorem]{Example}
\newtheorem{rmk}[theorem]{Remark}
\newtheorem{conjecture}[theorem]{Conjecture}
\numberwithin{equation}{section}
\newtheorem*{conjecture*}{Conjecture}
\newtheorem{thm}{theorem}[section]
\makeatletter\@addtoreset{chapter}{part}
\title{THE VAN EST MAP ON GEOMETRIC STACKS}
\author{\vspace{15mm}\\by\vspace{28mm}\\Joshua Lackman\vspace{50mm}\\A thesis submitted in conformity with the requirements\\ for the degree of Doctor of Philosophy\\
Graduate Department of Mathematics\\University of Toronto\vspace{22mm}\\\copyright $\,$ Copyright 2022 by Joshua Lackman}
\date{}
\begin{document}
\maketitle
\thispagestyle{empty}
\setcounter{page}{2}
\chapter*{}
\begin{doublespace}
\begin{centering}
\textbf{\LARGE{Abstract}}
\vspace{1mm}\\The van Est Map on Geometric Stacks
\vspace{3mm}\\Joshua Lackman\footnote{\href{mailto:jlackman@math.toronto.edu}{jlackman@math.toronto.edu}}
\\Doctor of Philosophy
\\Graduate Department of Mathematics
\\University of Toronto
\\2022
\\
\end{centering}
\vspace{3mm}We generalize the van Est map and isomorphism theorem in three ways, and we propose a category of Lie algebroids and LA-groupoids (with equivalences). First, we generalize the van Est map from a comparison map between Lie groupoid cohomology and Lie algebroid cohomology to a (more conceptual) comparison map between the cohomology of a stack $\mathcal{G}$ and the foliated cohomology of a stack $\mathcal{H}\to\mathcal{G}$ mapping into it. At the level of Lie groupoids, this amounts to describing the van Est map as a map from Lie groupoid cohomology to the cohomology of a particular LA-groupoid. We do this by associating to any (nice enough) homomorphism of Lie groupoids $f:H\to G$ a natural foliation of the stack $[H^0/H]\,.$ In the case of a wide subgroupoid $H\xhookrightarrow{}G\,,$ this foliation can be thought of as equipping the normal bundle of $H$ with the structure of an LA-groupoid. In particular, this generalization allows us to derive classical results, including van Est's isomorphism theorem about the maximal compact subgroup, which we generalize to proper subgroupoids, as well as the Poincar\'{e} lemma; it also gives a new method of computing Lie groupoid cohomology.
\vspace{3mm}\\Secondly, we generalize the functions that we can take cohomology of in the context of the van Est map; instead of using functions valued in representations, we can use functions valued in modules — eg. $S^1$-valued functions and $\mathbb{Z}$-valued functions. This generalization allows us to obtain classical results about linearizing group actions, as well as results about lifting group actions to gerbes. Thirdly, everything we do works in both the smooth and holomorphic categories.
\vspace{3mm}\\At the end of this thesis we give a conjectural definition of Morita equivalences of Lie algebroids and generalized morphisms, and we prove a no-go theorem. We explore higher structures and conjecture a link between Lie \RNum{2} and the van Est theorem. This involves describing higher cohomology classes (eg. gerbes) as generalized morphisms in a higher category, similar to how principal bundles define generalized morphisms in a (2,1)-category. We conjecture the existence of a smooth version of Grothendieck's homotopy hypothesis, and we describe a category of LA-groupoids with equivalences.
\end{doublespace}
\chapter*{Acknowledgements}
I would like to begin by thanking my advisor, Marco Gualtieri. Marco spent many hours teaching me over the last several years, and he introduced me to the topic of this thesis (as well as a variety of other topics). In addition to mathematics, Marco gave me a lot of helpful advice with regards to the exposition of my paper and this thesis. Furthermore, with his support, I've had the opportunity to travel to places I may have never gone, with the most memorable being Tokyo (IPMU) and Banff (BIRS). There were times when I was absent for extended periods, without notice (particularly during 2020-2021), and I appreciate his understanding. Finally, I should mention that I'm writing this on a laptop Marco lent me so long ago that I can't even recall when.
\vspace{3mm}\\I would like to thank my committee members Lisa Jeffrey and Eckhard Meinrenken. Lisa, in particular, made many valuable comments with regards to the editing and exposition of this thesis, including the discovery of many errors that I would have been remiss to have left uncorrected. Eckhard's explanation of the van Est map is what initially allowed me to understand it. 
\vspace{3mm}\\I'd like to give a special thanks to Francis Bischoff for the many hours we've spent discussing math, including topics directly related to this thesis. I'd like to give a shout-out to Ahmed, Kasun and David, who were almost the only people I had contact with during the time surrounding the lockdowns. We had some fun times. I will also take the time to acknowledge the people who were there at the beginning: Krishan, who helped motivate me to take running more seriously all those years ago, Lennart, Sina, Adriano, Chris, Matt, Afiny, Zhara and the members of GLab. Last but not least, I'd like to acknowledge the wonderful time I had exploring Tokyo with Omar, Shinji and Marco (garbage bags and all). In particular, the time I spent at The Kavli Institute for the Physics and Mathematics of the Universe (IPMU) was terrific. I surely missed some individuals, and for that I apologize. 
\vspace{3mm}\\I am thankful to the administrative staff in the department of mathematics for all that they've done, including Jemima Merisca who helped set up the meetings and my defense, and for ensuring that I submit this thesis on time.
\vspace{3mm}\\I would like to acknowledge the funding I received from The Faculty of Arts and Science in the form of The Faculty of Arts and Science Top (FAST) Doctoral Fellowship.
\vspace{3mm}\\Finally, I'd like to thank my family for all of their support.
\tableofcontents
\chapter{Introduction and Applications}
\pagenumbering{arabic}
\section{Introduction to Parts 1 and 2}
\subsection{A Bit of History and Motivation}
In 1986, van Est (1921-2002) published a novel proof of Lie's third theorem, which he ascribed to Cartan (1869-1951) — the person who is credited with originally proving the theorem. Recall that Lie's third theorem states that every Lie algebra has an integration, and is considered to be the most difficult of Lie's theorems. The proof used the van Est map and the van Est isomorphism  theorem; more precisely, given that that every matrix Lie algebra integrates to a Lie group (which is much easier to prove) and that every Lie group has vanishing second homotopy group, the van Est isomorphism theorem completes the proof. 
\vspace{3mm}\\Let us give a brief synopsis of the van Est map and isomorphism theorem: let $G$ be a Lie group and let $E$ be a representation of $G\,.$ We can differentiate this structure to obtain the corresponding Lie algebra $\mathfrak{g}$ and the corresponding representation of $\mathfrak{g}$ on $E\,.$ From this data we get two cohomologies: the Lie group cohomology and Lie algebra cohomology with coefficients in $E\,,$ denoted $H^*(G,E)$ and $H^*(\mathfrak{g},E)\,,$ respectively. Originally, the van Est map $VE$ was a map
\begin{equation}
    VE:H^*(G,E)\to H^*(\mathfrak{g},E)\,.
\end{equation}
More generally, given a compact subgroup $K\xhookrightarrow{}G\,,$ with Lie algebra $\mathfrak{k}\,,$ the van Est map factors through the relative Lie algebra cohomology $H^*(\mathfrak{g},\mathfrak{k},E)\,.$ That is, there is a map 
\begin{equation}
VE_{G,K}:H^*(G,E)\to H^*(\mathfrak{g},\mathfrak{k},E)\,.
\end{equation}
Forms in the relative Lie algebra complex are forms in the Lie algebra complex of $\mathfrak{g}$ which evaluate to $0$ when contracted with any vector in $\mathfrak{k}\,,$ and which are invariant under the conjugation action of $H\,.$ Classically, these maps are what has been meant by van Est maps, and essentially what van Est proved amounts to the following theorems: 
\begin{theorem}\label{van est original}
Suppose $G\rightrightarrows *$ has vanishing homotopy groups up to degree $n\,.$ Then $VE$ is an isomorphism up to and including degree $n\,,$ and is injective in degree $n+1\,.$ 
\end{theorem}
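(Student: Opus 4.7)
The plan is to situate the van Est map inside a Bott--Shulman--Stasheff-type double complex whose total cohomology is computed in two ways: one spectral sequence converges to $H^*(G,E)$ (controlled by the $n$-connectivity hypothesis), and the other to $H^*(\mathfrak{g},E)$ (independent of the hypothesis). The desired isomorphism and injectivity then fall out of the comparison in low total degrees.

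First I would recall the cochain-level formula for $VE$: for a smooth group cochain $c \in C^k(G,E)$,
$$VE(c)(X_1,\ldots,X_k) = \sum_{\sigma \in S_k} (-1)^\sigma R^{(1)}_{X_{\sigma(1)}} \cdots R^{(k)}_{X_{\sigma(k)}} c\Big|_{(e,\ldots,e)},$$
where $R^{(i)}_X$ denotes the right-invariant vector field $X$ applied in the $i$-th slot, and check directly that $VE$ commutes with differentials. Next, consider the double complex $K^{p,q} = \Omega^q(G^p,E)$, with horizontal differential the simplicial/bar differential of $BG$ (twisted by the representation on $E$) and vertical differential the ordinary de Rham differential on $G^p$. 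The bar complex for $G$ sits in the $q=0$ row, and $\Lambda^\bullet \mathfrak{g}^* \otimes E$ sits in the $p=0$ column as left-invariant forms at the identity; the chain-level $VE$ realizes itself as a zig-zag between these two edges.

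Now analyze the two spectral sequences. Filtering by the $q$-degree first, the vertical cohomology $H^q_{dR}(G^p, E)$ vanishes for $1 \leq q \leq n$, since $G^p$ is $n$-connected whenever $G$ is (Hurewicz plus K\"unneth), and $E$ is a trivial coefficient bundle. The surviving $q=0$ row is exactly the bar complex computing $H^*(G,E)$, so the total cohomology of $K$ equals $H^*(G,E)$ in degrees $\leq n$ and injects into it in degree $n+1$. Filtering by the $p$-degree first, one shows that the horizontal (bar) cohomology of $\Omega^q(G^\bullet,E)$ is concentrated in $p=0$ and equal to $\Lambda^q\mathfrak{g}^* \otimes E$, with the induced vertical differential equal to the Chevalley--Eilenberg differential; the total cohomology is therefore $H^*(\mathfrak{g},E)$. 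Tracing through the edge maps, the resulting comparison map is precisely $VE$, and the theorem follows.

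The main obstacle is the second spectral sequence---the collapse of the row cohomologies to the Chevalley--Eilenberg column. This is an acyclicity statement which requires building an explicit contracting homotopy, typically by translating forms to the identity using the Maurer--Cartan form of $G$, and averaging appropriately. Extra bookkeeping is needed when $E$ is a nontrivial representation, so that the twist in the bar differential is compatible with the homotopy operator; this is ultimately where the identification of the edge with left-invariant $E$-valued forms, and of the induced differential with the Chevalley--Eilenberg one, is verified.
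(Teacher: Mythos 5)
Your overall strategy---a double complex with one spectral sequence collapsing to group cohomology via the $n$-connectivity hypothesis and the other collapsing to Chevalley--Eilenberg cohomology via an acyclicity argument---has the right shape, and is essentially the classical route (the paper instead obtains this statement as the special case $G^0=\ast$, $M=E$ of its main theorem, using the simplicial principal bundle $\kappa:\mathbf{E}^\bullet G\to\mathbf{B}^\bullet G$, the fiberwise de Rham resolution of $\kappa^{-1}\mathcal{O}(E)$, and the Leray spectral sequence for $n$-acyclic fibers). However, the specific double complex you chose, $K^{p,q}=\Omega^q(G^p,E)$ with the full de Rham differential on the $p$-fold product $G^p$, is the Bott--Shulman--Stasheff complex of $\mathbf{B}^\bullet G$; its total cohomology is the cohomology of the classifying space, and neither of your two edge identifications holds for it. In the first spectral sequence, the $E_1$ term in the row $q=0$ is $H^0_{dR}(G^p,E)$, i.e.\ the \emph{locally constant} functions on the connected manifold $G^p$, not the smooth ones; the surviving row is therefore the bar complex with constant coefficients, computing $H^*(BG,E)$ rather than differentiable group cohomology. (For $G=\mathbb{R}$ with trivial coefficients this gives $\mathbb{R}$ concentrated in degree $0$, whereas $H^1(\mathbb{R},\mathbb{R})\cong\mathrm{Hom}(\mathbb{R},\mathbb{R})\cong H^1(\mathfrak{g},\mathbb{R})\cong\mathbb{R}$, so the discrepancy is already fatal in degree $1$.) In the second spectral sequence, the claim that the bar cohomology of the row $\Omega^q(G^\bullet,E)$ is concentrated in $p=0$ and equals $\Lambda^q\mathfrak{g}^*\otimes E$ cannot be correct, since the $p=0$ entry is $\Omega^q(\ast,E)=0$ for $q\ge 1$; these rows are in fact not acyclic at all.

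The repair is to pass from $\mathbf{B}^\bullet G$ to the contractible model $\mathbf{E}^\bullet G$ (with $p$-simplices $G^{p+1}$ carrying the free diagonal $G$-action) and to take forms only along the fibers of $\kappa:\mathbf{E}^\bullet G\to\mathbf{B}^\bullet G$; equivalently, use $K^{p,q}=C^p_{\mathrm{diff}}(G,\Omega^q(G)\otimes E)$, differentiable cochains valued in forms on a \emph{single} copy of $G$. Then the column cohomology at fixed $p$ is $C^p_{\mathrm{diff}}(G,H^q_{dR}(G)\otimes E)$, which vanishes for $1\le q\le n$ by the connectivity hypothesis and for $q=0$ returns the honest smooth bar complex $C^\bullet_{\mathrm{diff}}(G,E)$; and the row cohomology is concentrated in $p=0$ because $\Omega^q(G)\otimes E\cong C^\infty(G)\otimes\Lambda^q\mathfrak{g}^*\otimes E$ is a coinduced, hence acyclic, $G$-module whose invariants are the translation-invariant forms $\Lambda^q\mathfrak{g}^*\otimes E$ carrying the Chevalley--Eilenberg differential. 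This corrected complex is precisely a de Rham model of the paper's resolution $\kappa^{-1}\mathcal{O}(E)\hookrightarrow\mathcal{O}(\kappa^*E)\to\Omega^1_{\kappa}\to\cdots$, with your explicit contracting homotopy replaced there by Morita invariance of the stack $[G/(G\ltimes G)]$ together with the $n$-acyclicity criterion feeding the Leray spectral sequence.
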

\begin{theorem}\label{van est compact}
Let $K$ be the maximal compact subgroup of $G\rightrightarrows *\,.$ The map $VE_{G,K}$ is an isomorphism in all degrees.
\end{theorem}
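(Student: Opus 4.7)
The plan is to prove this by a Weil--Cartan double-complex argument whose key geometric input is the classical theorem of Cartan and Iwasawa: for any connected Lie group $G$ with maximal compact subgroup $K$, the homogeneous space $G/K$ is diffeomorphic to a Euclidean space, and in particular is contractible. Once this is available, the proof proceeds by comparing two computations of the cohomology of a single bicomplex whose edge maps realize $VE_{G,K}$.

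Concretely, I would set
\[
C^{p,q} = C^\infty_{\mathrm{sm}}(G^p; \Omega^q(G/K; E)),
\]
with horizontal differential $\delta$ the smooth bar cochain differential of $G$ acting on $\Omega^\ast(G/K;E)$, and vertical differential $d$ the de Rham differential twisted by the flat $E$-local system. Filtering by rows first, the $E_1$ page involves $H^q_{dR}(G/K;E)$, which by contractibility of $G/K$ collapses to $E$ in degree $0$; the surviving row reduces to the smooth bar complex computing $H^\ast(G;E)$, so the total cohomology of the bicomplex is $H^\ast(G;E)$. Filtering by columns first, one shows the coefficient modules $\Omega^q(G/K;E)$ are acyclic for smooth group cohomology, so only the $G$-invariants survive in $p=0$. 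Evaluation at the basepoint $eK$ identifies $\Omega^q(G/K;E)^G$ with $\bigl(\textstyle\bigwedge^q(\mathfrak{g}/\mathfrak{k})^\ast \otimes E\bigr)^K = C^q(\mathfrak{g},\mathfrak{k};E)$, and $d$ descends to the Chevalley--Eilenberg differential, producing $H^\ast(\mathfrak{g},\mathfrak{k};E)$. Equating the two computations of the total cohomology and chasing the edge maps (differentiation of group cochains along $\mathfrak{g}$ followed by antisymmetrization and restriction to $K$-basic forms) one verifies the comparison isomorphism is exactly $VE_{G,K}$.

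The main obstacle is the acyclicity claim for the column filtration: the modules $\Omega^q(G/K;E)$ are not injective, yet one needs smooth group cohomology with these coefficients to vanish above degree zero. The standard way to handle this is to use local smooth cross-sections of $G\to G/K$ together with a partition of unity to build a contracting homotopy on the smooth bar resolution; alternatively, one realizes $\Omega^q(G/K;E)$ as a smoothly induced module from the compact subgroup $K$ and combines Shapiro's lemma with the averaging argument for compact $K$, which kills all smooth group cohomology of $K$ in positive degree with values in a Fr\'echet representation. In the framework developed later in the thesis, the theorem alternatively follows directly from the general van Est isomorphism theorem applied to the homomorphism $K\hookrightarrow G$, since the $(2,1)$-categorical fiber of this map is $G/K$, which has vanishing homotopy in all degrees by Cartan's theorem.
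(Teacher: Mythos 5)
Your proof is correct, and it takes a genuinely different (in fact, the classical) route. You build the Hochschild--Mostow/van Est bicomplex $C^{p,q}=C^\infty(G^p;\Omega^q(G/K;E))$ directly on the homogeneous space and play the two filtrations against each other: contractibility of $G/K$ (Cartan--Iwasawa) collapses the de Rham direction to $H^*(G;E)$, while acyclicity of the smoothly induced modules $\Omega^q(G/K;E)$ (Shapiro plus averaging over the compact $K$, or an explicit contracting homotopy from a global section of $G\to G/K$) collapses the bar direction to the $K$-basic forms, i.e.\ the relative complex $C^*(\mathfrak{g},\mathfrak{k};E)$. The thesis instead derives the statement as a corollary of its general isomorphism theorem (\Cref{van est morphism}) applied to $K\hookrightarrow G$: the $(2,1)$-fiber is Morita equivalent to the contractible space $G/K$, so the van Est map into the cohomology of the LA-groupoid \ref{LA group} is an isomorphism in all degrees, and properness of $K$ (vanishing of proper-groupoid cohomology with representation coefficients --- the same averaging input as your Shapiro step) reduces that LA-groupoid complex to the relative Chevalley--Eilenberg complex. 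The two arguments use the same two geometric inputs but organize them around different double complexes: yours has the nerve of $G$ against forms on $G/K$, the paper's has the nerve of $K\ltimes_{\mathrm{Ad}}\mathfrak{k}$ against the Chevalley--Eilenberg complex of $\mathfrak{g}$. Your version is self-contained and elementary, and makes the identification of the edge map with $VE_{G,K}$ concrete; the paper's version buys the immediate generalization to proper wide subgroupoids of Lie groupoids, which is the point of the thesis. Your closing remark correctly identifies the paper's route as the alternative.
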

Later on, the van Est map was extended to Lie groupoids by Weinstein, Xu and others: given a Lie groupoid $G\rightrightarrows G^0$ and a representation $E$ of $G\,,$ we obtain through differentiation a corresponding Lie algebroid $\mathfrak{g}\to G^0$ and a corresponding representation of $\mathfrak{g}$ on $E\,.$ There is a van Est map, still denoted $VE\,,$ from the Lie groupoid cohomology to the Lie algebroid cohomology:
\begin{equation}
    VE:H^*(G,E)\to H^*(\mathfrak{g},E)\,.
\end{equation}
Crainic proved the following result:
\begin{theorem}\label{crainic}
If the target fibers have vanishing homotopy groups up to and including degree $n\,,$ then $VE$ is an isomorphism up to and including degree n, and is injective in degree $n+1$.
\end{theorem}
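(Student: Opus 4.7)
The plan is to construct a double complex that simultaneously computes both Lie groupoid cohomology and Lie algebroid cohomology, then to run a standard spectral-sequence comparison whose only nontrivial input is a fiberwise Poincaré lemma driven by the $n$-connectivity hypothesis. Concretely, let $G^{(p)}$ denote the $p$-th nerve of $G\rightrightarrows G^0$ and let $T^t G^{(p)}$ denote the subbundle of vectors tangent to the fibers of the "leftmost" target map $G^{(p)}\to G^{(p-1)}$. Define
\begin{equation}
C^{p,q} \;=\; \Gamma\!\bigl(G^{(p)},\, \wedge^q (T^t G^{(p)})^*\otimes s^*E\bigr),
\end{equation}
with horizontal differential $\delta$ the usual alternating sum of pullbacks along the face maps of the nerve (twisted by the representation $E$), and vertical differential $d_t$ the leafwise de Rham differential along the target fibers. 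A direct check on face maps shows $\delta d_t + d_t\delta = 0$, so $(C^{\bullet,\bullet},\delta,d_t)$ is a double complex.

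Next I would identify the edges of this double complex with the two cohomology theories. The bottom row $C^{\bullet,0}$ is precisely the normalized complex of smooth cochains on $G$ with values in $E$, so its $\delta$-cohomology is $H^*(G,E)$. The leftmost column $C^{0,\bullet}$ consists of leafwise forms along the $t$-fibers of $G$ itself; evaluation at the units $G^0\hookrightarrow G$ identifies the restriction to the unit leaves with sections of $\wedge^\bullet \mathfrak{g}^*\otimes E$, and one verifies that under this restriction $d_t$ pulls back to the Chevalley–Eilenberg differential for $\mathfrak{g}$ acting on $E$. The van Est map itself is the edge homomorphism obtained by lifting a groupoid cocycle horizontally and differentiating along unit leaves, i.e.\ it factors as a zig-zag through the total complex.

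The analytic heart of the argument is showing that, under the connectivity hypothesis, each column of $C^{\bullet,\bullet}$ (for fixed $p$) has vanishing $d_t$-cohomology in positive degrees up to $n$ after restricting to unit sections, while still having sections representing any given cohomology class. The point is that for each fixed $p$, $C^{p,\bullet}$ computes leafwise de Rham cohomology of the target-fiber foliation of $G^{(p)}\to G^{(p-1)}$; since each target fiber is $n$-connected, a parametrized Poincaré lemma (plus smoothing via a chosen connection/collar) provides smooth leafwise primitives in degrees $\le n$. With this acyclicity in place, a standard argument on the spectral sequence of the filtration by columns collapses the total cohomology onto the bottom row up to degree $n$, while the filtration by rows identifies the total cohomology up to degree $n$ with Lie algebroid cohomology. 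Comparing the two identifications yields that $VE$ is an isomorphism through degree $n$ and injective in degree $n+1$.

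The main obstacle is the fiberwise Poincaré lemma: one must produce smooth primitives depending smoothly on the base parameter $G^{(p-1)}$, which requires picking an Ehresmann-type connection on the target fibration, checking that the standard chain-homotopy operators constructed by integration along paths depend smoothly on parameters, and handling the representation $E$ equivariantly. The remaining homological-algebra steps are routine once this analytic input is in place.
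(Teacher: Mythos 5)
Your route is the classical Crainic double-complex argument rather than the one this thesis uses: here the theorem is obtained as the special case $M=E$ of \Cref{van Est image}, whose proof runs through the simplicial principal bundle $\kappa:\mathbf{E}^\bullet G\to\mathbf{B}^\bullet G$, the resolution of the inverse-image sheaf $\kappa^{-1}\mathcal{O}(M)$ by $\kappa$-foliated forms, and the Leray spectral sequence for locally fibered maps with $n$-connected fibers (\Cref{spectral theorem}, via the Bernstein--Lunts acyclicity criterion). Your explicit bicomplex of global leafwise forms on the nerve, with the van Est map as an edge homomorphism, is a legitimate alternative and is more elementary when the coefficient sheaf is acyclic (as it is for a representation $E$); the sheaf-theoretic packaging used here is what allows the coefficients to be upgraded to general $G$-modules, which your complex of global sections cannot see.

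There is, however, a genuine gap at what you call the analytic heart. A parametrized Poincar\'e lemma built from an Ehresmann connection and integration along paths produces leafwise primitives only when the fibers admit a (fiberwise smooth) contraction; for fibers that are merely $n$-connected with $n<\infty$ (e.g.\ a compact simply connected Lie group, which is $2$-connected but has nontrivial $H^3_{\mathrm{dR}}$) no such contraction exists and no homotopy operator of this type is available. What you actually need for the column exactness is the statement that for a surjective submersion $\pi:Y\to X$ with $n$-connected fibers the leafwise de Rham cohomology with coefficients pulled back from the base agrees with $H^k(X,\mathcal{O}(E))$ for $k\le n$; this is a descent statement, not a fiberwise one. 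Fiberwise exactness does not imply leafwise exactness in general --- the form $\frac{x\,dy}{x^2+y^2}$ on $\mathbb{R}^2\setminus\{0\}\to\mathbb{R}$, discussed in Part 2, restricts trivially to every fiber yet has no leafwise primitive near $x=0$ --- and the passage from fiberwise vanishing to global exactness is exactly where the Leray spectral sequence and the $n$-acyclicity of the fibers for locally constant coefficients enter (\Cref{important}). Replacing your ``smoothing via a connection'' step by this sheaf-cohomological comparison (or by Crainic's equivalent \v{C}ech-type argument) closes the gap; the surrounding homological algebra of your bicomplex is otherwise sound.
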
 Crainic also described the image of the map in degree $n+1$ (and in fact proved a more general result involving a proper action). There are, in particular, applications of this result to the integration of Poisson manifolds, and more generally to the integration of Lie algebroids. 
\vspace{3mm}\\The van Est map is one of the main tools we have to compute Lie groupoid cohomology. Many others have worked on van Est maps and isomorphism theorems, some authors are: Arias Abad, Cabrera, Li-Bland, Meinrenken, Salazar, et al. — van Est maps have been proven to be very useful. However, in the author's opinion the van Est map as currently defined has three drawbacks, which will be addressed in this thesis (in no particular order):
\begin{enumerate}
    \item The van Est map is only defined for coefficients in a representation. However, we would like to consider more general coefficients so that we can use van Est theorems to prove a wider range of results. In particular, $S^1$-valued functions and our theorem are relevant to: computing characters and $S^1$-extensions of Lie groups; computing representations of Lie groupoids and the geometric quantization of Poisson manifolds and Courant Algebroids; the basic gerbe over a compact simple Lie group. In particular, the theorem we prove in this paper can be used to derive the following classical results (either immediately, or with a small amount of work):
    \begin{itemize}
        \item Let $P\to N$ be a principal torus-bundle with an action of a compact, simply connected Lie group $G$ on $N\,.$ Then the action of $G$ lifts to $P\,,$ and the lift is unique up to isomorphism.
        \item If $G\rightrightarrows G^0$ has $n$-connected target fibers, then $H^k(G,\mathbb{Z})\cong H^k(G^0,\mathbb{Z})$ for $0\le k\le n$ (this is a special case of the theorem)
        \item The Poincar\'{e} lemma (this is a special case of the theorem)
        \item van Est's original result, \Cref{van est compact} 
    \end{itemize}
The first two results are related to issues about coefficients, and the third and fourth result are related to point 3, which we will discuss in a moment (Weinstein and Xu in~\cite{weinstein}, Crainic and Zhu in~\cite{zhu} did consider a version of groupoid cohomology with coefficients in $S^1\,,$ and proved some isomorphism theorems in degrees one and two; van Est also wrote about more general coefficients in~\cite{van est 2}, as did Brylinski in~\cite{brylinski}).

   \item The second drawback is related to the first one: the van Est map is only defined in the smooth category, but it is desirable to have one in the holomorphic category as well (this is essentially changing coefficients from smooth to holomorphic functions).
    \item In the absence of a proper action, the van Est map for Lie groupoids doesn't give any information about the higher degree cohomology of the groupoid. We would like a more general theorem that contains \Cref{van est compact} as a special case, and allows us to compute higher degrees of cohomology at the infinitesimal level. In addition, van Est's result doesn't hold when you change coefficients (similarly, it doesn't hold in the holomorphic category), therefore we need a more general theorem to compute even the cohomology of Lie groups.
\end{enumerate}
Consider the following setting: let $\pi:Y\to X$ be a surjective submersion of smooth (complex) manifolds. There is a morphism $H^*(X,\mathcal{O})\to H_{\pi}^*(Y)\,,$ where $H_{\pi}^*(Y)$ denotes the foliated de Rham cohomology of $Y\,.$ Explicitly, the map is given by first applying the map  $H^*(X,\mathcal{O})\to H^*(Y,\pi^{-1}\mathcal{O})\,,$ and then taking a fiberwise de Rham resolution. Now, we have the following theorem (see criterion 1.9.4 in \cite{Bernstein}, \cite{buch}):
\begin{theorem}
Let $\pi:Y\to X$ be a surjective submersion of smooth (complex) manifolds, such that the fibers of $\pi$ are $n$-connected. Then the  morphism $H^*(X,\mathcal{O})\to H_{\pi}^*(Y)$ is an isomorphism up to degree $n$ and injective in degree $n+1\,.$
\end{theorem}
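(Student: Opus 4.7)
The plan is to factor the map $H^k(X,\mathcal{O})\to H^k_\pi(Y)$ as
\[
H^k(X,\mathcal{O})\xrightarrow{\pi^*}H^k(Y,\pi^{-1}\mathcal{O})\xrightarrow{\alpha}H^k_\pi(Y),
\]
where $\alpha$ is induced by the natural inclusion of $\pi^{-1}\mathcal{O}$ into the foliated de Rham complex $\Omega^\bullet_\pi$ as its degree-zero cohomology sheaf, and then to show that each of the two factors is an isomorphism in degrees $\le n$ and injective in degree $n+1$. The tools are two spectral sequences: the Leray spectral sequence for $\pi$ with coefficients in $\pi^{-1}\mathcal{O}$ handles the first factor, and the hypercohomology spectral sequence of $\Omega^\bullet_\pi$ handles the second. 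Both collapses rest on the same geometric input, the $n$-connectivity of the fibers.

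For the first factor, the Leray spectral sequence reads $E_2^{p,q}=H^p(X,R^q\pi_*\pi^{-1}\mathcal{O})\Rightarrow H^{p+q}(Y,\pi^{-1}\mathcal{O})$. The key computation is that the stalk $(R^q\pi_*\pi^{-1}\mathcal{O})_x$ is canonically the cohomology $H^q(\pi^{-1}(x),\underline{\mathcal{O}_x})$ of the fiber with coefficients in the constant sheaf $\underline{\mathcal{O}_x}$. Since the fibers are $n$-connected, Hurewicz plus the universal coefficient theorem kill this for $1\le q\le n$, while $q=0$ gives $\mathcal{O}_x$. Hence $R^0\pi_*\pi^{-1}\mathcal{O}=\mathcal{O}$ and $R^q\pi_*\pi^{-1}\mathcal{O}=0$ for $1\le q\le n$, so the edge morphism is an isomorphism for $k\le n$ and injective for $k=n+1$ (the only possibly surviving extra term, $E_\infty^{0,n+1}$, sits above $E_\infty^{n+1,0}=H^{n+1}(X,\mathcal{O})$ in the filtration).

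For the second factor, the fiberwise Poincaré lemma along $n$-connected fibers shows that $\mathcal{H}^0(\Omega^\bullet_\pi)=\pi^{-1}\mathcal{O}$ and $\mathcal{H}^q(\Omega^\bullet_\pi)=0$ for $1\le q\le n$. In the smooth category this is the classical Poincaré lemma on contractible open sets; in the holomorphic category, it reduces via the holomorphic de Rham resolution of the locally constant sheaf to the same topological input. The hypercohomology spectral sequence $E_2^{p,q}=H^p(Y,\mathcal{H}^q(\Omega^\bullet_\pi))\Rightarrow H^{p+q}_\pi(Y)$ then has its $E_2$-page concentrated on the row $q=0$ in total degrees $\le n$, giving that $\alpha$ is an isomorphism for $k\le n$ and injective for $k=n+1$. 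Composing the two halves yields the theorem.

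The main obstacle I anticipate is the stalk identification for $R^q\pi_*\pi^{-1}\mathcal{O}$: since $\pi$ is only assumed to be a submersion, proper base change does not apply, and holomorphic submersions are typically not even locally trivial. I would handle this by taking a soft (respectively Dolbeault-type) resolution of $\mathcal{O}$ on $X$, pulling it back to $Y$, and verifying that its fiberwise sections compute fiber cohomology — in the smooth case Ehresmann's theorem makes this essentially automatic, while in the holomorphic case one uses that $\pi^{-1}\mathcal{O}$ becomes quasi-isomorphic to a $\pi_*$-acyclic complex before passing to stalks. A more streamlined alternative, which I would attempt if the above becomes cumbersome, is to merge both spectral sequences into a single Čech–de Rham bicomplex on a cover of $X$ over which $\pi$ is topologically trivial; both edge computations then fall out from the two standard filtrations of the total complex, and the theorem follows by a direct diagram chase.
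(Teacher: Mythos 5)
Your overall architecture is the same as the paper's: factor the map through $H^*(Y,\pi^{-1}\mathcal{O})$, treat the second arrow via the fiberwise de Rham resolution, and treat the first via vanishing of the higher direct images $R^q\pi_*\pi^{-1}\mathcal{O}$ plus the Leray spectral sequence. The second half is actually easier than you make it: the relative Poincar\'e lemma is a purely local statement on $Y$ (locally $\pi$ is a projection of a product of balls/polydiscs), so $\Omega^\bullet_\pi$ is a resolution of $\pi^{-1}\mathcal{O}$ and $\alpha$ is an isomorphism in \emph{all} degrees, with no connectivity hypothesis; attributing the vanishing of $\mathcal{H}^q(\Omega^\bullet_\pi)$ to $n$-connectivity of the global fibers conflates a local fact with a global one.

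The genuine gap is in the first half. The stalk identification $(R^q\pi_*\pi^{-1}\mathcal{O})_x\cong H^q(\pi^{-1}(x),\underline{\mathcal{O}_x})$ is \emph{false} for a general (non-proper) submersion, and both of your proposed repairs presuppose exactly what fails: Ehresmann's theorem requires $\pi$ to be proper, and a cover of $X$ over which $\pi$ is topologically trivial need not exist, because a submersion is locally a product on $Y$ but not over open sets of $X$. The paper's own example makes this concrete: for $\pi:\mathbb{R}^2\setminus\{0\}\to\mathbb{R}$ the first projection, the foliated form $x\,dy/(x^2+y^2)$ restricts trivially to every fiber, yet is non-exact on every neighborhood of the fiber $\{x=0\}$, so the stalk of $R^1\pi_*$ at $0$ is nonzero while the fiber cohomology vanishes. (That example violates $0$-connectedness of the fibers, which is why it is not a counterexample to the theorem, but it kills the stalk-by-stalk argument and the ``trivializing cover of $X$'' strategy outright.) What is actually needed is the statement that a locally fibered map with $n$-acyclic fibers is an $n$-acyclic map — criterion 1.9.4 of Bernstein--Lunts, which the paper cites rather than proves — and its proof runs over a cover of $Y$ by product charts together with a Mayer--Vietoris/\v{C}ech descent argument and base-change stability built into the definition of $n$-acyclicity, not over a cover of $X$. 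Until you either prove that criterion or cite it, your argument for the vanishing of $R^q\pi_*\pi^{-1}\mathcal{O}$ for $1\le q\le n$ does not go through.
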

Of course, in the smooth setting $H^*(X,\mathcal{O})$ is zero in positive degrees, however this isn't true in the holomorphic category, and the point is that you can consider any sheaf of functions on $X$ valued in some abelian Lie group and an analogous result holds. The statement of this result is similar to the statement of the van Est theorem, with $Y$ playing the role of $G^0$ and $X$ playing the role of $G;$ a slight generalization of this result is used to prove the van Est theorem. That this result should, in addition, be a special case of a van Est-type isomorphism theorem was one of the author's main motivations for this direction of study.
\subsection{Generalizing the van Est Map}
In this thesis we are going to interpret the van Est map as a result about differentiable stacks. More precisely, a sufficiently nice map of stacks $[H^0/H]\to [G^0/G]$ determines a foliation of $[H^0/H]\,,$ and we can compute the foliated cohomology. The van Est map will then be, roughly, a map from the cohomology of $[G^0/G]$ to the foliated cohomology of $[H^0/H]\,.$ The case of the van Est map for Lie groupoids corresponds to the case that the map of stacks is the one represented by the inclusion $G^0\xhookrightarrow{}G\,.$ van Est's original result, \Cref{van est compact}, is obtained by taking the map of stacks to be the one represented by the inclusion of the maximal compact subgroup $K\xhookrightarrow{} G\,.$ The Poincar\'{e} lemma will be obtained by letting the map of stacks be the one represented by $X\to *\,,$ where $X$ is a contractible space and $*$ is a point.
\subsection{Rough Explanation of the van Est map}
A (nice enough) map $f:H\to G$ of Lie groupoids determines a ``foliation" of $H\,,$ which determines a Lie algebroid-groupoid over $H\,.$ There is a canonical map from the groupoid cohomology of $G$ to the foliated cohomology of $H\,,$ obtained by first applying the inverse image functor to cohomology classes, and then taking a resolution by foliated differential forms. The aformentioned notion of foliation is not always the usual one associated to Lie groupoids, but is one that is appropriate when working in the (2,1)-category of Lie groupoids. For example, consider a Lie group $G\,;$ there is a canonical map $*\to G\,,$ which is just the inclusion of the identity element. The foliation  of $*$ deteremined by this map would naively be the $0$ vector space, but with the notion of foliation we are using it is actually equivalent to the Lie algebra $\mathfrak{g}\,.$ 
\subsection{Addressing the Drawbacks}
To address drawbacks one and two, we first need a more general definition of Lie algebroid cohomology that allows us to use coefficients that are not in a represention\footnote{Weinstein and Xu allude to this possiblity in their paper on the quantization of symplectic groupoids}. For example, suppose we want to use $S^1$-coefficients. Then, if we let $\mathfrak{g}=TX$ for some manifold $X\,,$ changing coefficients from $\mathcal{O}$ (with the trivial action of $TX\,)$ to $\mathcal{O}^*$ would involve passing from de Rham cohomology \begin{equation}
\mathcal{O}_X\xrightarrow[]{\text{d}} \Omega^1_X\to \Omega^2_X\to\cdots
\end{equation}
to Deligne cohomology 
\begin{equation}
\mathcal{O}^*_X\xrightarrow[]{\text{dlog}} \Omega^1_X\to \Omega^2_X\to\cdots\,.
\end{equation}
More generally, given any Lie algebroid $\mathfrak{g}\to X$ and any abelian Lie group $A\,,$ the Lie algebroid forms we get are: 
\begin{enumerate}
    \item In degree $0\,,$ functions on $X$ taking values in $A\,,$
    \item In degree $n>0\,,$ Lie algebroid n-forms taking values in the Lie algebra $\mathfrak{a}$ of $A\,.$
\end{enumerate} 
In general, given a Lie groupoid $G\rightrightarrows G^0\,,$ the coefficients we consider are $G$-modules: these are essentially representations $\pi: M\to G^0$ of $G\,,$ except that, unlike a representation, the fibers of the map $\pi$ don't need to be vector spaces - they can be any abelian Lie group. Once this is done, drawbacks one and two are addressed by using a generalization of Crainic's proof of the van Est isomorphism theorem. 
\vspace{3mm}\\The third drawback is more subtle to resolve. In order to do this, it is best to think of the category of Lie groupoids as a (2,1)-category, where the 2-morphisms between maps of Lie groupoids are natural isomorphisms. In this (2,1)-category, there is a distinct notion of fibers of maps between Lie groupoids, as well as fibrations. Using this notion, and thinking of $G^0$ as a Lie groupoid with only identity morphisms, the fibers of the natural map $G^0\xhookrightarrow{} G$ are simply the target fibers of $G\rightrightarrows G^0\,.$ Therefore, thinking of Lie groupoids as objects in a (2,1)-category, we can restate the van Est isomorphism theorem for Lie groupoids as so:
\begin{theorem}
If the fibers of the natural map $G^0\xhookrightarrow{} G$ are n-connected (ie. have vanishing homotopy groups up to and including degree $n\,),$ then $VE$ is an isomorphism up to and including degree n, and is injective in degree $n+1\,.$
\end{theorem}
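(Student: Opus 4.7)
The plan is to reduce this theorem to \Cref{crainic} by matching the $(2,1)$-categorical notion of fiber with the classical target fiber, and by matching the reformulated van Est map with the classical one.

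First I would compute the homotopy fiber of $i: G^0 \xhookrightarrow{} G$ over a point $x \in G^0$ inside the $(2,1)$-category of Lie groupoids. By definition this is the homotopy pullback of the cospan $* \xrightarrow{x} G \xleftarrow{i} G^0$, which unpacks as follows: an object is a pair $(y, g)$ with $y \in G^0$ and $g \in G$ satisfying $s(g) = y$ and $t(g) = x$, while the only morphisms are identities, since the discrete groupoid $G^0$ has only identity arrows. Hence the homotopy fiber is the discrete Lie groupoid whose underlying manifold is precisely the target fiber $t^{-1}(x) \subset G$. In particular its homotopy groups agree with those of $t^{-1}(x)$ as a topological space, so $n$-connectedness of the fiber in the new sense is equivalent to $n$-connectedness of the target fiber in the classical sense.

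Second, I would verify that the Lie algebroid-groupoid over $G^0$ assigned by the general construction of the paper to $f = i$ is the ordinary Lie algebroid $\mathfrak{g} \to G^0$, so that the foliated cohomology of the induced stack reduces to the classical Lie algebroid cohomology $H^*(\mathfrak{g}, E)$ and the reformulated van Est map reduces to the classical one $H^*(G,E) \to H^*(\mathfrak{g},E)$. Given these two matchings, the theorem becomes a verbatim restatement of \Cref{crainic}, and nothing further is required.

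The main obstacle, and where essentially all the bookkeeping lies, is the second step: one must carefully verify that the ``naive'' foliation associated to $i$ — which would be trivial — is replaced in the $(2,1)$-categorical framework by the one recovering $\mathfrak{g}$, in direct analogy with the paragraph in the introduction explaining that for $* \to G$ (with $G$ a Lie group) the foliation of $*$ is equivalent to $\mathfrak{g}$ rather than $0$. Once this compatibility is established in the general framework of the thesis, the theorem is immediate from Crainic's result.
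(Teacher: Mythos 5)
Your proposal is correct and matches the paper's own treatment: the theorem is presented there precisely as a restatement of \Cref{crainic}, obtained by identifying the $(2,1)$-categorical fiber of $G^0\xhookrightarrow{}G$ over $g^0$ with the target fiber $t^{-1}(g^0)$ (only identity morphisms survive in the fiber product since $G^0$ has only identity arrows) and by noting that the LA-groupoid associated to this inclusion recovers the usual Lie algebroid $\mathfrak{g}\to G^0$ and hence the usual van Est map. Both identifications you flag are carried out elsewhere in the thesis (the fiber computation in the examples on fiber products, and the recovery of $\mathfrak{g}$ as $\iota^{!}\mathfrak{g}$ in the discussion of the canonical cofibration), so nothing further is needed.
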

Now, we will show that to every nice enough homomorphism of $f:H\to G$ of Lie groupoids, one can associate a Lie algebroid-groupoid over $H\,,$ which we will denote $D\to H\,.$ It is then natural to ask: how do the cohomologies of $G$ and this Lie algebroid-groupoid $D\to H$ compare? First, we will define a van Est map 
\begin{equation*}
    VE: H^*(G,M)\to H^*(D\to H, f^*M)\,.
\end{equation*}We will then prove the following theorem:
\begin{theorem}\label{van est morphism}
Let $f:H\to G$ be a homomorphism of Lie groupoids, which is a surjective submersion at the level of objects. Suppose further that the fibers of $f$ are all n-connected. Then the van Est map is an isomorphism up to and including degree $n\,,$ and is injective in degree $n+1\,.$ 
\end{theorem}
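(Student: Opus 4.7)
The plan is to adapt Crainic's spectral-sequence proof of \Cref{crainic} to the relative setting via a double complex whose two filtrations compare the groupoid cohomology of $G$ with the Lie algebroid-groupoid cohomology of $D\to H$. The van Est map itself should be constructed by first using the pullback functor $f^*$ (along the nerve) to transport a $k$-cocycle on $G$ valued in $M$ to a $k$-cocycle on $H$ valued in $f^*M$, and then extracting its leading foliated-form component using the Lie algebroid direction of $D\to H$, exactly analogously to how the classical $VE$ differentiates a groupoid cochain along the target fibers.

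I would build a first-quadrant double complex $C^{p,q}$ whose $(p,q)$ entry consists of $D^{(p)}$-foliated $q$-forms on $H^{(p)}$ valued in the pullback of $f^*M$, where $D^{(p)}\to H^{(p)}$ denotes the natural Lie algebroid-groupoid associated to the map $f^{(p)}\colon H^{(p)}\to G^{(p)}$ on the level-$p$ piece of the nerve. The horizontal differential is the simplicial coboundary, and the vertical differential is the foliated/Lie algebroid differential. The column $q=0$, together with its horizontal differential, recovers $C^{\bullet}(G,M)$ after pulling back through $f$, while the row $p=0$ recovers the complex computing $H^{*}(D\to H,f^*M)$; the van Est map will then be identified as an edge homomorphism of the spectral sequence obtained by filtering by columns.

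Running the other spectral sequence, I would compute the vertical cohomology of each column $C^{p,\bullet}$ first. By hypothesis the fibers of $f$ in the $(2,1)$-sense are $n$-connected, and this should propagate to the nerve: the induced maps $f^{(p)}\colon H^{(p)}\to G^{(p)}$, viewed as maps of stacks, again have $n$-connected fibers (because a level-$p$ nerve piece of a homotopy fiber is a product of level-$p$ pieces, and homotopy commutes with such products). Applying the relative Poincar\'{e} lemma quoted just before the theorem (criterion 1.9.4 of~\cite{Bernstein}, for submersions with $n$-connected fibers, generalized to coefficients in a module), the vertical cohomology collapses onto the bottom row up to degree $n$ and injects in degree $n+1$. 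The resulting $E_2$ page therefore coincides with the groupoid cohomology $H^{*}(G,M)$ in that range. Since the spectral sequence converges to the total cohomology of the double complex, which by construction is also the target of the van Est map, a comparison of the two filtrations yields the claimed isomorphism up through degree $n$ and injectivity in degree $n+1$.

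The main obstacle, in my view, is the compatibility between the $(2,1)$-categorical notion of fiber of $f$ and the ordinary fibers of the maps $f^{(p)}$ on nerves: one must check that $n$-connectedness of the homotopy fiber of $f$ genuinely forces fiberwise $n$-connectedness at every simplicial level in the sense needed by the relative Poincar\'{e} lemma, despite the fact that $H^{(p)}$ need not literally be the iterated strict fiber product. A secondary obstacle is showing that the Lie algebroid-groupoid $D\to H$ and its "nerve-level" analogues $D^{(p)}\to H^{(p)}$ assemble into a simplicial LA-groupoid in a sufficiently coherent way that the bicomplex differentials anticommute and the resolution argument at the module level behaves as in Crainic's proof. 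Once these compatibilities are verified, the spectral-sequence machinery runs automatically, and specializing to $H=G^{0}$ recovers \Cref{crainic} while specializing to $G=*$ with $H=\mathbb{R}^{n}$ recovers the Poincar\'{e} lemma.
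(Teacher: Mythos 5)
Your overall architecture (a double complex with a simplicial direction and a foliated de Rham direction, collapsed by the relative Poincar\'{e} lemma using $n$-connectivity of fibers, with the van Est map appearing as an edge map) is the same in spirit as the paper's argument. But the obstacle you flag at the end is not a technical compatibility to be checked --- it is a genuine gap, and as stated your column-collapse step fails. Criterion 1.9.4 of \cite{Bernstein} applies to a locally fibered map of spaces whose \emph{strict} fibers are $n$-connected, whereas the hypothesis of the theorem concerns the $(2,1)$-categorical fibers of $f$. These do not agree at the level of the nerve: for $H=*\xhookrightarrow{}G$ a Lie group, the $(2,1)$-fiber is $G\rtimes G$, Morita equivalent to a point, but the strict map $f^{(p)}:\ast\to G^{p}$ is not even surjective, so the vertical cohomology of your columns does not collapse onto anything resembling $C^{\bullet}(G,M)$. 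Your justification ("a level-$p$ nerve piece of a homotopy fiber is a product of level-$p$ pieces") conflates the homotopy fiber with the strict fiber; the Leray-type argument only sees the latter.

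The paper's proof is built precisely to repair this. One first replaces $f$ by its canonical fibration replacement $H\ltimes P\rtimes G\to G$ (with $P=H^{0}\times_{G^{0}}G^{(1)}$) and uses the canonical splitting to view it as a map of bisimplicial manifolds; for this split fibration the strict fibers at \emph{every} level of the bisimplicial manifold are the nerves of the groupoid fibers $H\ltimes P_{g^{0}}$, i.e.\ exactly the $(2,1)$-fibers, so the bisimplicial Leray lemma applies verbatim. One then needs two further ingredients you do not supply: the bar/diagonal lemma identifying the cohomology of the double groupoid with that of $G\times_{G}H$ (hence, via the Morita map, with $H^{*}(G,M)$), and Morita invariance of LA-groupoid cohomology to transport the foliated cohomology of the fibration replacement back to $H^{*}(D\to H,f^{*}M)$. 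If you insert the fibration replacement before forming your double complex, your spectral-sequence argument becomes essentially the paper's; without it, the key collapse has no justification.
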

We will also describe its image in degree $n+1\,.$ Letting $H=G^0\,,$ we recover the usual Lie algebroid of $G$ and the usual van Est map. 
\subsection{The Lie Algebroid-Groupoid Associated to the Normal Bundle of a Subgroup}
Before continuing the discussion of the van Est map, let's motivate one instance of associating a Lie algebroid-groupoid to a (nice enough) map $H\to G$ — it resolves the following conundrum: the normal bundle of the identity bisection inherits the structure of a Lie algebroid, so what structure does the normal bundle to a subgroupoid inherit? It should model a small neighborhood of the subgroupoid, in the same way that the Lie algebroid models a small neighborhood of the identity bisection.
\begin{comment}the The reason one should expect it to inherit some structure is by analogy: neighborhoods of the identity bisection form a local Lie groupoid, which essentially means that if you choose a neighborhood $U$ of the identity bisection and two composable arrows in $U$ which are close enough to the identity bisection, the composition will remain in $U;$ in addition, the composition of arrows in $U$ is associative whenever it makes sense to ask. Similarly, if $H\xhookrightarrow{}G$ is a Lie subgroupoid, then a neighborhood $U$ of $H^{(1)}\xhookrightarrow{}G^{(1)}$ will have the analogous property, ie. if you choose any two composable arrows in $U$ which are close enough to $H^{(1)}\,,$ the composition will remain in $U\,,$ and the composition will be associative whenever it makes sense to ask. In the former case, by the tubular neighborhood theorem, the normal bundle is diffeomorphic to a neighborhood of $G^0\,,$ and the normal bundle inherits the structure of a Lie algebroid. In the latter case, we can consider the normal bundle of $H^{(1)}\xhookrightarrow{} G^{(1)}$ - it is also diffeomorphic to a neighborhood of $H^{(1)}\,,$ so one might expect it to inherit some additional structure.  
\end{comment}
\vspace{3mm}\\To illustrate how the normal bundle of $H^{(1)}\xhookrightarrow{}G^{(1)}$ inherits the structure of an LA-groupoid (short for Lie algebroid-groupoid), let's specialize to the case of Lie groups. Let $G$ be a Lie group and $H\xhookrightarrow{} G$ a subgroup. We claim to have the following Lie algebroid-groupoid:
\begin{equation}\label{LA group}
\begin{tikzcd}
H\ltimes_{\text{Ad}}\mathfrak{h}\ltimes\mathfrak{g} \arrow[r, shift left] \arrow[r, shift right] \arrow[d] & \mathfrak{g} \arrow[d] \\
H \arrow[r, shift left] \arrow[r, shift right]                                         & *                 
\end{tikzcd}
\end{equation}
Here, the right column is just the Lie algebra of $G\,,$ and the bottom row is just the Lie group $H\,.$ The Lie algebroid of the left column comes from the identification of $TH\cong H\times \mathfrak{h}$ (where $\mathfrak{h}$ is the Lie algebra of $H\rightrightarrows *\,).$ Then, this Lie algebroid is just the product of the Lie algebroids $TH\to H$ and $\mathfrak{g}\to *$ (it's really the trivial bundle of Lie algebras $\mathfrak{h}\times \mathfrak{g}$ over $H\,).$ Now for the top row: here $H\ltimes_{\text{Ad}} h$ is the semidirect product of $H$ and $\mathfrak{h}$ associated to the adjoint representation of $H$ on $\mathfrak{h}\,.$ There is a natural action of this group on $\mathfrak{g}:$ letting $(h,X_\mathfrak{h})\in H\ltimes \mathfrak{h}\,,\tilde{X}_\mathfrak{g}\in \mathfrak{g}\,,$ we have an action given by $(h,X_\mathfrak{h})\cdot \tilde{X}_\mathfrak{g}=Ad_h\tilde{X}_\mathfrak{g}+X_\mathfrak{h}\,.$ 
\vspace{3mm}\\Now to explain how the LA-groupoid in \ref{LA group} relates to the normal bundle of $H\xhookrightarrow{}G:$ applying the forgetful functor from LA-groupoids to VB-groupoids (ie. a vector bundle over a Lie groupoid), we obtain the following VB groupoid: 
\begin{equation}\label{VB group}
\begin{tikzcd}
H\ltimes_{\text{Ad}}\mathfrak{h}\ltimes\mathfrak{g} \arrow[r, shift left] \arrow[r, shift right] \arrow[d] & \mathfrak{g} \arrow[d] \\
H \arrow[r, shift left] \arrow[r, shift right]                                         & *                 
\end{tikzcd}
\end{equation}
ie. the diagram looks the same, we have just forgotten the Lie brackets. Now, the adjoint action of $H$ on $\mathfrak{g}$ descends to an action of $H$ on $\mathfrak{g}/\mathfrak{h}\,,$ and the groupoid $H\ltimes_{\text{Ad}}\mathfrak{h}\ltimes\mathfrak{g}\rightrightarrows\mathfrak{g}$ is Morita equivalent to $H\ltimes_{\text{Ad}}\mathfrak{g}/\mathfrak{h}\rightrightarrows \mathfrak{g}/\mathfrak{h}\,.$ As a result of this, the VB-groupoid \ref{VB group} is Mortia equivalent to the following VB-groupoid\footnote{If $\mathfrak{h}\subset Z(\mathfrak{g})$ (the center of $\mathfrak{g}$) and if $\mathfrak{g}\cong\mathfrak{h}\oplus\mathfrak{g}/\mathfrak{h}\,,$ then this is a Morita equivalence of LA-groupoids.}
\begin{equation}\label{VB group morita}
\begin{tikzcd}
H\ltimes_{\text{Ad}}\mathfrak{g}/\mathfrak{h} \arrow[r, shift left] \arrow[r, shift right] \arrow[d] & \mathfrak{g}/\mathfrak{h} \arrow[d] \\
H \arrow[r, shift left] \arrow[r, shift right]                                         & *                 
\end{tikzcd}
\end{equation}
Now applying the forgetful functor from VB-groupoids to vector bundles over manifolds, we get 
\begin{equation}\label{VB manifold group}
\begin{tikzcd}
H^{(1)}\times\mathfrak{g}/\mathfrak{h}\arrow[d]  \\
H^{(1)}      
\end{tikzcd}
\end{equation}
The vector bundle \ref{VB manifold group} is naturally identified with the normal bundle of $H^{(1)}\xhookrightarrow{} G^{(1)}\,.$ Hence, in this sense, the normal bundle inherits the structure of an LA-groupoid.
\subsection{A Sketch of the Proof of van Est's Original Result}
Now let's specialize the LA-groupoid \ref{LA group} to the case where $H=K$ is the maximal compact subgroup and $E$ is a representation of $G\,.$ The goal here will be to sketch the proof of \Cref{van est compact}. Two facts will be relevant:
\begin{enumerate}
    \item $K\xhookrightarrow{} G$ is a homotopy equivalence,
    \item The cohomology of a proper Lie groupoid, with coefficients in a representation, is trivial in all degrees higher than $0\,.$
\end{enumerate}
Fact 1 implies that the fiber, which is Morita equivalent to $G/K\,,$ is contractible. Then, using \Cref{van est morphism}, we get that the cohomology of \ref{LA group} with coefficients in $E$ is isomorphic to $H^*(G,E)\,.$ Now, both the top and bottom groupoids in \ref{LA group} are proper, therefore fact 2 implies that the cohomology of \ref{LA group} reduces to the invariant cohomology of the right column. To expound on this, the cohomology of the right column is just the Lie algebra cohomology of $\mathfrak{g}\,,$ ie. the cohomology of the complex
\begin{equation}\label{che complex}
E\xrightarrow[]{d} Hom(\mathfrak{g},E)\xrightarrow[]{d} Hom (\Lambda^2\mathfrak{g},E)\xrightarrow[]{d}\cdots\,.
\end{equation}
Now, the complex we get from \ref{LA group} is the subcomplex of \ref{che complex} consisting of those forms invariant under the action of $K\ltimes_{\text{Ad}}\mathfrak{k}\,,$ ie. forms which evaluate to $0$ upon contraction with any vector in $\mathfrak{k}\,,$ and which are invariant under the conjugation action of $K$ - this is exactly the aformentioned relative Lie algebra complex, therefore we have obtained van Est's result.
\subsection{LA-Groupoids of Homomorphisms \texorpdfstring{$H\to G$}{H to G}}
Now let's discuss an interpretation of the LA-groupoid associated to a map $H\to G\,.$ Recall that to every Lie groupoid $H$ we can associate an LA-groupoid $TH\to H$ by forming degreewise tangent bundles. A foliation of a Lie groupoid $H$ is a wide sub-LA-groupoid of the tangent LA-groupoid $TH\to H\,.$ The LA-groupoid cohomology of a foliation of $H$ can be thought of as the tangential de Rham cohomology, ie. the cohomology of differential forms which take an inputs only vectors in the foliation. We will explain how the LA-groupoid determined by a (nice enough) map between groupoids $H\to G$ can be thought of as a foliation of $H$ associated to the map $H\to G$ (in the (2,1)-category sense, ie. after replacing $H$ with a Morita equivalent groupoid). In particular, a Lie algebra $\mathfrak{g}\to *$ is a foliation of $*\,.$
\subsection{LA-Groupoid of \texorpdfstring{$Y\to X$}{Y to X}} We will first describe how this works in the extreme (but more intuitive) case that the morphism $H\to G$ is just a surjective submersion\footnote{Much of the author's intuition about Lie groupoids comes from surjective submersions between smooth manifolds} between smooth manifolds $\pi:Y\to X\,,$ and where the coefficients are in $\mathcal{O}\,.$ In this case, we can form the submersion groupoid $Y\times_X Y\rightrightarrows Y$ (whose formation can be thought of as replacing the map $Y\to X$ with the cofibration $Y\to Y\times_X Y\rightrightarrows Y\,),$ and we can take its Lie algebroid. This is the same Lie algebroid as the foliation determined by the fibers of $\pi\,,$
and the cohomology of this Lie algebroid is just the de Rham cohomology of differential forms which only take as inputs tangent vectors in the foliation. Thus, we have two methods of obtaining the same Lie algebroid (which can be thought of as an LA-groupoid by considering manifolds to be groupoids with only identity morphisms).
\subsection{LA-Groupoid of \texorpdfstring{$*\to G$}{* to G}} For a more involved example, we consider the extreme case on the opposite side of the spectrum: the case of a Lie group $G\rightrightarrows *$ and the mapping $*\xhookrightarrow{}G\,.$ We can also form a ``submersion groupoid", and the result is just the group $G\rightrightarrows *\,,$ and the Lie algebroid we get is $\mathfrak{g}\mapsto *\,.$ Now, the claim is that this Lie algebra can be thought of as a foliation of $*\,.$ Naively, the tangent bundle of $*$ is the zero vector space, therefore the foliation determined by this map seems like it should just be trivial. However, this isn't what we mean as this isn't the Morita invariant notion of foliation. What will will do is replace the map $*\xhookrightarrow{} G$ with a fibration, in the context of the (2,1)-category of Lie groupoids, which in this case will be the commutative diagram
\begin{equation}
\begin{tikzcd}
 G\rtimes G  \arrow[rd, "p_2"]                       &   \\
* \arrow[u, "{(*,*)}"] \arrow[r,] & G
\end{tikzcd}
\end{equation}
Here, $G\rtimes G$ is the action groupoid associated to the right action of $G$ on itself, and $p_2$ is the projection onto the second factor. Now, we can consider the LA-groupoid given by the tangent LA-groupoid $T(G\rtimes G)\to G\rtimes G\,,$ which has a natural map $p_{2*}$ to the tangent LA-groupoid $TG\to G\,,$ ie. we have a map
\begin{equation}
\begin{tikzcd}
T(G\rtimes G) \arrow[r, shift left] \arrow[r, shift right] \arrow[d] & TG \arrow[d] \arrow[rr, "p_{2*}", shift right=8] &  & TG \arrow[d] \arrow[r, shift right] \arrow[r, shift left] & * \arrow[d] \\
G\rtimes G \arrow[r, shift left] \arrow[r, shift right]              & G                                             &  & G \arrow[r, shift left] \arrow[r, shift right]            & *          
\end{tikzcd}
\end{equation}
Taking kernels of $p_{2*}$ as maps of vector bundles, we get a foliation of $G\rtimes G$ and a natural map to $\mathfrak{g}\,.$ Explicitly, the foliation and map are given by the following:
\begin{equation}
\begin{tikzcd}
TG\rtimes G \arrow[r, shift left] \arrow[r, shift right] \arrow[d] & TG \arrow[d] \arrow[rr, shift right=7] &  & \mathfrak{g} \arrow[d] \arrow[r, shift left] \arrow[r, shift right] & \mathfrak{g} \arrow[d] \\
G\rtimes G \arrow[r, shift left] \arrow[r, shift right]            & G                                      &  & * \arrow[r, shift left] \arrow[r, shift right]                  & *                 
\end{tikzcd}
\end{equation}
Here, the maps from the left and right column to $\mathfrak{g}$ are obtained by right translating vectors in $TG$ to the origin. Now, the map of groupoids on the top row is a Morita equivalence, which implies that this map of LA-groupoids is an equivalence. Therefore, the foliation is indeed Morita equivalent to $\mathfrak{g}\,,$ and closely corresponds to how $\mathfrak{g}$ is usually thought of as: the right invariant vector fields on $G\,.$ In addition, we have again obtained equivalent LA-groupoids using two different methods.
\subsection{Applications}\label{app}
Here we will state some applications of \Cref{van est morphism}. Before stating the first theorem, we will make some remarks. 
\vspace{4mm}\\Suppose we have a Lie group $G$ which acts on a manifold $N\,.$ Then given a subgroup $H$ of $G\,,$ there is an action of $H\ltimes_{\text{Ad}}\mathfrak{h}$ on $\mathfrak{g}\ltimes N\,,$ given by $(h, X_\mathfrak{h})\cdot (X_\mathfrak{g},n)=(\text{Ad}_h X_\mathfrak{g}+X_\mathfrak{h},h\cdot n)\,,$ where $X_\mathfrak{h}\in \mathfrak{h}\,, X_\mathfrak{g}\in\mathfrak{g}\,.$ From this, we get an action of $H\ltimes_{\text{Ad}}\mathfrak{h}$ on Lie algebroid forms on $\mathfrak{g}\ltimes N\,.$ Now we will state the first theorem, which generalizes \Cref{van est compact}, and is an application of \Cref{van est morphism} to the mapping $H\ltimes N\to G\ltimes N:$
\begin{theorem}
Let $G$ be a Lie group and $K$ its maximal compact subgroup. Let $N$ be a smooth manifold on which $G$ acts, and let $E\to N$ be a representation of $G\ltimes N\rightrightarrows N\,.$ Then we have that
\begin{equation}
    H^*(G\ltimes N, E)\cong H^*(\mathfrak{g}\ltimes N,\mathfrak{k}\ltimes N, E)\,,
\end{equation}
where the cohomology group on the right is the cohomology of the subcomplex of Lie algebroid forms on $\mathfrak{g}\ltimes N$ which are invariant under the action of $K\ltimes_{\text{Ad}}\mathfrak{k}\,.$
\end{theorem}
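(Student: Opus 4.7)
The plan is to apply \Cref{van est morphism} to the homomorphism of Lie groupoids $f:K\ltimes N\hookrightarrow G\ltimes N$, and then imitate the sketch of the proof of \Cref{van est compact} given in the introduction, with the action groupoids $K\ltimes N$ and $G\ltimes N$ playing the roles formerly played by $K$ and $G$. On the level of objects $f$ is the identity of $N$, so it is trivially a surjective submersion; what remains is (a) to check the connectivity hypothesis of \Cref{van est morphism}, (b) to identify the associated LA-groupoid, and (c) to reduce its cohomology to the stated relative Lie algebroid complex.

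For (a), I would regard $f$ as a map of stacks $[N/K]\to[N/G]$, whose (2,1)-categorical fiber over any point of $N$ is Morita equivalent to $G/K$. Since $K$ is the maximal compact subgroup of $G$, the homogeneous space $G/K$ is diffeomorphic to a Euclidean space by the Cartan decomposition, hence contractible. Therefore \Cref{van est morphism} applies in every degree and yields an isomorphism
\begin{equation*}
VE:H^*(G\ltimes N,E)\xrightarrow{\ \sim\ }H^*(D\to K\ltimes N,f^*E),
\end{equation*}
where $D\to K\ltimes N$ is the LA-groupoid associated to $f$.

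For (b), I would verify that the general construction of this LA-groupoid specializes, in the case where $f$ is the inclusion of an action groupoid into an action groupoid, to the diagram
\begin{equation*}
\begin{tikzcd}
(K\ltimes_{\text{Ad}}\mathfrak{k})\ltimes(\mathfrak{g}\ltimes N) \arrow[r, shift left] \arrow[r, shift right] \arrow[d] & \mathfrak{g}\ltimes N \arrow[d] \\
K\ltimes N \arrow[r, shift left] \arrow[r, shift right] & N
\end{tikzcd}
\end{equation*}
with the action of $K\ltimes_{\text{Ad}}\mathfrak{k}$ on $\mathfrak{g}\ltimes N$ as described immediately before the statement. This is the evident $N$-parametrized generalization of the LA-groupoid \ref{LA group}, and I expect it to drop out of the general construction after unwinding the definitions.

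For (c), I would use that $K$ compact forces both Lie groupoids in the rows above to be proper, so their cohomology with coefficients in a representation vanishes in positive degrees. Hence the double complex computing the LA-groupoid cohomology of $D\to K\ltimes N$ collapses onto the $(K\ltimes_{\text{Ad}}\mathfrak{k})$-invariant part of the Lie algebroid complex of $\mathfrak{g}\ltimes N$ with coefficients in $E$. By the explicit description of that action, the invariant part is precisely the subcomplex of forms which vanish upon contraction with any element of $\mathfrak{k}\ltimes N$ and are $K$-invariant, i.e.\ $C^*(\mathfrak{g}\ltimes N,\mathfrak{k}\ltimes N,E)$, yielding the claimed isomorphism.

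\emph{Main obstacle.} The principal difficulty I anticipate is step (b): verifying that the LA-groupoid produced by the general machinery on $f:K\ltimes N\hookrightarrow G\ltimes N$ really matches the explicit action-LA-groupoid written above, including the correct action formula. Once this identification is in place, steps (a) and (c) are straightforward translations of the group-theoretic sketch already given for \Cref{van est compact}.
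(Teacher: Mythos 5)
Your proposal is correct and follows essentially the same route as the paper: the paper states this theorem precisely as an application of \Cref{van est morphism} to the map $K\ltimes N\to G\ltimes N$, with the contractibility of the fibers (Morita equivalent to $G/K$) and the collapse onto the $(K\ltimes_{\text{Ad}}\mathfrak{k})$-invariant subcomplex via properness argued exactly as in your steps (a) and (c), mirroring the sketch given for \Cref{van est compact}. Your step (b) fills in the identification of the LA-groupoid that the paper leaves implicit (it only records the action of $K\ltimes_{\text{Ad}}\mathfrak{k}$ on $\mathfrak{g}\ltimes N$ just before the statement), but this is the intended reading rather than a different argument.
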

\vspace{4mm}The next result concerns lifting projective representations to representations, and is an application of \Cref{van est morphism} to the mapping $*\to G\,,$ with coefficients on $\mathcal{O}^*:$
\begin{theorem}
Let $G$ be a simply connected Lie group and let $V$ be a finite dimensional complex vector space. Let $\rho:G\to \text{PGL } (V)$ be a homomorphism. Then $G$ lifts to a homomorphism $\tilde{\rho}:G\to\text{GL } (V)\,.$ If $G$ is semisimple, this lift is unique.
\end{theorem} 
\vspace{4mm}The next theorem concerns lifting group actions to principal bundles, and is an application of \Cref{van est morphism} to the mapping of $N\to G\ltimes N\,,$ with coefficients in $T^n$ (the $n$-dimensional torus):
\begin{theorem}
Let $G$ be a compact, simply connected Lie group acting on a manifold $N\,,$ and let $P\to N$ be a principal bundle for the $n$-torus $T^n\,.$ Then, up to isomorphism, there is a unique lift of the action of $G$ to $P\,.$
\end{theorem}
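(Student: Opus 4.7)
The plan is to reinterpret the theorem cohomologically and then apply \Cref{van est morphism} to the inclusion of units $f : N \hookrightarrow G \ltimes N$, with coefficients in the $G$-module $T^n$. First I would identify isomorphism classes of principal $T^n$-bundles on $N$ with $H^1(N, T^n)$, and isomorphism classes of $G$-equivariant principal $T^n$-bundles on $N$---equivalently, principal $T^n$-bundles on the quotient stack $[N/G]$---with $H^1(G \ltimes N, T^n)$. Under these identifications, the operation of forgetting the equivariant structure becomes the pullback map $f^\ast : H^1(G \ltimes N, T^n) \to H^1(N, T^n)$, and the set of isomorphism classes of lifts of a fixed bundle $[P]$ is precisely the fibre of $f^\ast$ over $[P]$. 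The theorem is thus equivalent to $f^\ast$ being a bijection.

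Next I would verify the hypotheses of \Cref{van est morphism}. The map $f$ is the identity on objects, hence a surjective submersion there. Its $(2,1)$-categorical fibre over a point $n \in N$ is canonically a copy of $G$, namely the space of arrows in $G \ltimes N$ with target $n$. Since $G$ is compact and simply connected, and since $\pi_2$ of every Lie group vanishes by a classical theorem of Cartan, $G$ is $2$-connected. Therefore \Cref{van est morphism} yields a van Est map
\begin{equation*}
 VE : H^k(G \ltimes N, T^n) \longrightarrow H^k(D \to N, f^\ast T^n)
\end{equation*}
which is an isomorphism for $k \leq 2$ and injective in degree $3$.

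Finally I would identify $H^\ast(D \to N, T^n)$ with the ordinary sheaf cohomology $H^\ast(N, T^n)$ and check that $VE$ in degree $1$ becomes the forgetful map. Since $N$ is the object space of $G \ltimes N$, the LA-groupoid $D \to N$ reduces to the action Lie algebroid $\mathfrak{g} \ltimes N \to N$, and the associated foliated complex with coefficients in $T^n$ is a sheaf-theoretic resolution of $f^{-1}\underline{T^n} = \underline{T^n}$ on $N$, so its cohomology is $H^\ast(N, T^n)$; by construction $VE$ is obtained from inverse image followed by this resolution, hence in degree $1$ it agrees with the pullback $f^\ast$. The main obstacle is precisely this final identification of $VE$ with the forgetful map, which however is essentially built into the sheaf-theoretic construction of the van Est map in the paper and should reduce to a naturality check. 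Once it is in place, the isomorphism $H^1(G \ltimes N, T^n) \cong H^1(N, T^n)$ provided by \Cref{van est morphism} gives both the existence and the uniqueness up to isomorphism of the $G$-equivariant lift.
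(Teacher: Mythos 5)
Your reduction of the theorem to the statement that restriction to the unit space is a bijection on $H^1$, and your computation of the $(2,1)$-fibres of $N\hookrightarrow G\ltimes N$ as copies of the $2$-connected group $G$, are both fine. The argument breaks at the last step: the codomain of the van Est map for this inclusion is the Lie algebroid cohomology $H^*(\mathfrak{g}\ltimes N,T^n)$ of the action algebroid, and this is \emph{not} $H^*(N,T^n)$. The complex $\mathcal{O}(T^n_N)\xrightarrow{d_{\mathrm{CE}}\log}\mathcal{C}^1(\mathfrak{g}\ltimes N,\mathbb{R}^n)\to\cdots$ is not a resolution of the sheaf $\mathcal{O}(T^n_N)$ on $N$; there is no Poincar\'e lemma for an action algebroid (the leafwise de Rham complex resolves $\kappa^{-1}\mathcal{O}(M)$ on the fibration replacement $\mathbf{E}^\bullet(G\ltimes N)$, not anything living on $N$ itself). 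Concretely, take $G=\mathbb{R}$ acting on $N=*$: every hypothesis you actually invoke holds, since $\mathbb{R}$ is $2$-connected, yet $H^1(\mathbb{R}\ltimes *,T^1)=\mathrm{Hom}(\mathbb{R},T^1)\cong\mathbb{R}$ while $H^1(*,T^1)=0$, and correspondingly the uniqueness assertion of the theorem genuinely fails for $G=\mathbb{R}$ (distinct characters $\mathbb{R}\to T^1$ give non-isomorphic lifts of the trivial action to the trivial bundle). This pinpoints the real gap: your argument never uses compactness of $G$, which is essential.

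In the paper compactness enters through properness of $G\ltimes N$. A lift of the action to $P$ is recast as a splitting of the pullback along $(s,t):G\ltimes N\to\mathrm{Pair}(N)$ of the gauge-groupoid extension $1\to T^n_N\to\mathrm{At}(P)\to\mathrm{Pair}(N)\to 1$, so existence reduces to the vanishing of the truncated group $H^1_0(G\ltimes N,T^n_N)$ (and uniqueness to a separate torsor argument). That vanishing combines two inputs: properness gives $H^k(G\ltimes N,\mathbb{R}^n_N)=0$ for $k\ge 1$ by \Cref{vanishing cohomology prop}, while $2$-connectedness of $G$ together with \Cref{van Est image} gives $H^2_0(G\ltimes N,\mathbb{Z}^n_N)=0$ because the truncated algebroid complex with $\mathbb{Z}^n$ coefficients is identically zero; the exponential sequence $0\to\mathbb{Z}^n\to\mathbb{R}^n\to T^n\to 0$ then forces $H^1_0(G\ltimes N,T^n_N)=0$. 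If you wanted to push your route through instead, you would still owe a proof of the infinitesimal statement that every principal $T^n$-bundle on $N$ admits a flat $\mathfrak{g}\ltimes N$-connection, unique up to isomorphism---and that is exactly where properness would reappear.
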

\vspace{2mm}The next theorem generalizes a result proven by Crainic using different methods, and in particular gives a criterion for there to exist an integration of certain Lie algebroids. It is an application of \Cref{van est morphism} to the mapping $G^0\to G\rightrightarrows G^0\,,$ with coefficients in a $G$-module $M:$
\begin{theorem}\label{central extension1}
Consider the exponential sequence $0\to Z \to\mathfrak{m}\overset{\exp}{\to} M\,.$ Let
\begin{align}\label{LA extension1}
    0\to\mathfrak{m}\to \mathfrak{a} \to \mathfrak{g}\to 0
\end{align}
be the central extension of $\mathfrak{g}$ associated to $\omega\in H^2(\mathfrak{g},\mathfrak{m})\,.$ Suppose that $\mathfrak{g}$ has a simply connected integration $G\rightrightarrows X$ and that 
\begin{align}
    \int_{S^2_x}\omega\in Z
\end{align}
for all $x\in X$ and $S^2_x\,,$ where $S^2_x$ in a $2$-sphere contained in the source fiber over $x\,.$  Then $\mathfrak{a}$ integrates to a unique extension
\begin{align}
1\to M\to A \to G\to 1\,.
\end{align}
\end{theorem}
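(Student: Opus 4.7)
The plan is to apply \Cref{van est morphism} to the morphism $X = G^0 \hookrightarrow G$ (i.e.\ take $H = G^0$). Since $G$ is the simply connected integration of $\mathfrak{g}$, its source fibers are simply connected, hence $1$-connected, so for every $G$-module $N$ the van Est map $VE : H^k(G, N) \to H^k(\mathfrak{g}, N)$ is an isomorphism for $k \le 1$, injective for $k = 2$, and its image in degree $2$ is characterized by a period condition on source-fiber $2$-spheres (the description promised in the theorem statement). I will apply this with $N = M$.

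Regard the exponential sequence $0 \to Z \to \mathfrak{m} \xrightarrow{\exp} M \to 0$ as a short exact sequence of abelian $G$-modules, inducing compatible long exact sequences in groupoid and Lie algebroid cohomology linked by $VE$. In the Lie algebroid complex with coefficients in $M$, a $k$-cochain for $k \ge 1$ is a section of $\wedge^k \mathfrak{g}^* \otimes \mathfrak{m}$, so the underlying cochain groups agree with the $\mathfrak{m}$-valued complex in degrees $\ge 1$; in particular $H^2(\mathfrak{g}, M) \cong H^2(\mathfrak{g}, \mathfrak{m})$, and $\omega$ determines a class $[\omega]_M \in H^2(\mathfrak{g}, M)$. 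The hypothesis $\int_{S^2_x} \omega \in Z$ is exactly the statement that, modulo $Z$, the $M$-valued periods of $\omega$ over source-fiber $2$-spheres vanish, which is precisely the obstruction for $[\omega]_M$ to lie in the image of $VE$ in degree $2$. Hence there exists $\bar{\omega} \in H^2(G, M)$ with $VE(\bar{\omega}) = [\omega]_M$, and it is uniquely determined by injectivity of $VE$.

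The class $\bar{\omega}$ classifies a central extension $1 \to M \to A \to G \to 1$ of Lie groupoids. By naturality of $VE$ on extension classes, the Lie algebroid of $A$ is the central extension of $\mathfrak{g}$ by $\mathfrak{m}$ classified by $[\omega]_M \in H^2(\mathfrak{g}, \mathfrak{m})$, which is $\omega$; hence $\mathrm{Lie}(A) \cong \mathfrak{a}$. Uniqueness of the integration is immediate: if $A'$ is another such extension, its classifying class in $H^2(G, M)$ has the same van Est image $[\omega]_M$, and injectivity of $VE$ in degree $2$ forces it to equal $\bar{\omega}$.

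The main obstacle is the identification in Step 2, namely that the hypothesis $\int_{S^2_x}\omega \in Z$ is equivalent to $[\omega]_M$ lying in the image of $VE : H^2(G, M) \to H^2(\mathfrak{g}, M)$. This requires unpacking the image characterization in \Cref{van est morphism} for coefficients in a module (rather than a representation) and recognizing that, modulo $Z$, integrating $\omega$ over a source-fiber $2$-sphere records exactly the Crainic--Fernandes period obstruction to integrating the $\mathfrak{m}$-valued cocycle to an $M$-valued groupoid cocycle. Checking the compatibility of $VE$ with the extension-class construction in Step 3 is routine once this matching is in place.
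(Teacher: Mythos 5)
Your overall strategy (lift the class via the van Est isomorphism theorem using the period condition, then identify the resulting extension) is the same as the paper's, but there is a genuine gap in the degree and truncation bookkeeping. In this framework central extensions $1\to M\to A\to G\to 1$ are classified by the \emph{truncated} cohomology $H^1_0(G,M)$, not by $H^2(G,M)$: the full degree-two cohomology contains contributions from the base $G^0$ (e.g.\ for the unit groupoid $X\rightrightarrows X$ with $M=\mathbb{C}^*_X$ it is the group of gerbes on $X$, while there are no nontrivial extensions), so producing a class $\bar\omega\in H^2(G,M)$ with $VE(\bar\omega)=[\omega]_M$ does not by itself produce an extension of $G$. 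Relatedly, your claim that $H^2(\mathfrak{g},M)\cong H^2(\mathfrak{g},\mathfrak{m})$ because the cochain groups agree in degrees $\ge 1$ is false for the sheaf hypercohomology the paper uses: the degree-zero sheaf $\mathcal{O}(M)$ versus $\mathcal{O}(\mathfrak{m})$ does enter $H^2$ of the complex (compare Deligne cohomology $H^2(TX,\mathbb{C}^*_X)$ with de Rham $H^2(TX,\mathbb{C}_X)$). What is true is that the \emph{truncated} complexes coincide in degrees $\ge 1$, so $H^1_0(\mathfrak{g},M)\cong H^1_0(\mathfrak{g},\mathfrak{m})$, and the period criterion in Theorem~\ref{van Est image} is stated precisely for $VE_0$ on $H^1_0$.

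The paper's proof runs entirely in the truncated theory: since $G$ is source simply connected ($n=1$), Theorem~\ref{van Est image} identifies $H^1_0(G,M)$ with the subgroup of $H^1_0(\mathfrak{g},M)$ whose source-fiber periods lie in $Z$, which contains $[\omega]$ by hypothesis; the resulting class in $H^1_0(G,M)$ \emph{is} an extension by the classification result, and Theorem~\ref{groupoid extension} (which you dismiss as routine naturality, but which the paper proves separately) guarantees that its Lie algebroid is the extension classified by $\omega$, i.e.\ $\mathfrak{a}$. Uniqueness is injectivity of $VE_0$ in degree $1$. Your argument becomes correct if you systematically replace $H^2(G,M)$ and $H^2(\mathfrak{g},M)$ by $H^1_0(G,M)$ and $H^1_0(\mathfrak{g},M)$ and invoke Theorem~\ref{groupoid extension} for the compatibility of $VE$ with extension classes.
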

\vspace{4mm}\Cref{van est morphism} also ``knows" about some very classical results, including the Poincaré lemma, which concerns the mapping $\mathbb{R}^n\to *$ and coefficients in $\mathcal{O}$ (we do not claim it is a proof as it is surely circular, but the point is it does contain the result as a ``subtheorem"):
\begin{theorem}
Every closed differential form on $\mathbb{R}^n\,,$ in degree higher than $0\,,$ is exact.
\end{theorem}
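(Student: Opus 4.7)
The plan is to derive the Poincaré lemma as a direct corollary of Theorem \ref{van est morphism} applied to the map $f:\mathbb{R}^n\to *$, regarded as a morphism of Lie groupoids (with $\mathbb{R}^n$ viewed as a groupoid with only identity arrows, and $*$ the trivial groupoid), with coefficients in the sheaf $\mathcal{O}$ of smooth $\mathbb{R}$-valued functions.

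First I would verify the hypotheses of Theorem \ref{van est morphism}. The map $f$ is tautologically a surjective submersion at the level of objects (since the target has a single object), and its homotopy fiber, as discussed in the excerpt, is simply $\mathbb{R}^n$ itself, which is contractible, hence $n$-connected for every $n$. Thus the van Est map is an isomorphism in all degrees.

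Next I would identify both sides. On the groupoid side, $H^k(*,\mathcal{O})=\mathbb{R}$ if $k=0$ and $0$ otherwise. On the LA-groupoid side, we are in the setting of the subsection \emph{LA-Groupoid of $Y\to X$}: the LA-groupoid $D\to H$ associated to the surjective submersion $\mathbb{R}^n\to *$ is the Lie algebroid of the submersion groupoid $\mathbb{R}^n\times_*\mathbb{R}^n\rightrightarrows\mathbb{R}^n$, namely the full tangent bundle $T\mathbb{R}^n\to \mathbb{R}^n$ regarded as a Lie algebroid (the foliation whose leaves are the fibers of $\pi$, here a single leaf). Its cohomology with coefficients in the pullback $f^*\mathcal{O}$ is, by construction, the foliated de Rham cohomology of this foliation, i.e.\ the ordinary de Rham cohomology $H^*_{dR}(\mathbb{R}^n)$.

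Plugging these identifications into the isomorphism $H^*(*,\mathcal{O})\xrightarrow{\sim} H^*(D\to \mathbb{R}^n, f^*\mathcal{O})$ gives $H^k_{dR}(\mathbb{R}^n)=0$ for all $k>0$, which is exactly the statement that every closed form of positive degree on $\mathbb{R}^n$ is exact. The only slightly subtle step is the identification of the LA-groupoid cohomology with ordinary de Rham cohomology, but this is immediate once one unpacks the definitions: the foliated de Rham complex of the top-dimensional foliation on $\mathbb{R}^n$ valued in $\mathcal{O}$ is literally $(\Omega^\bullet(\mathbb{R}^n), d_{dR})$. No genuine obstacle arises beyond bookkeeping; the whole content of the Poincaré lemma is packaged into the contractibility of $\mathbb{R}^n$, which feeds into the van Est isomorphism theorem as the connectivity hypothesis.
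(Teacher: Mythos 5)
Your proposal is correct and follows essentially the same route as the paper: apply Theorem \ref{van est morphism} to $\mathbb{R}^n\to *$, identify the associated LA-groupoid with $T\mathbb{R}^n$ (whose cohomology is de Rham cohomology), and conclude from contractibility of the fiber that this equals the cohomology of the point. The paper's version is just a terser statement of the same argument, together with the caveat (which you echo implicitly) that the derivation is circular since the Poincar\'{e} lemma feeds into the proof of the isomorphism theorem itself.
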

\begin{proof}[Proof]
The LA-groupoid associated to the map $\mathbb{R}^n\to *$ is $T\mathbb{R}^n\,,$ whose cohomology is the de Rham cohomology of $\mathbb{R}^n$. By \Cref{van est morphism}, since $\mathbb{R}^n$ is contractible, this cohomology is the cohomology of the point $*\,,$ which is trivial in degrees higher than $0\,.$
\end{proof}
The next result, proved by Buchdahl in~\cite{buch}, is used when applying the Penrose transform in twistor theory (see section 4 of~\cite{bailey}, as well as~\cite{witten}). Again, this result is a special case of our theorem, however this result is essentially used in the proof.
\begin{theorem}
Let $Y\to X$ be a surjective submersion of complex manifolds with $n$-connected fibers. Then the natural map $H^*(X,\mathcal{O})\to H^*(Y,f^{-1}\mathcal{O})$ is an isomorphism up to degree $n$ and is injective in degree $n+1\,.$
\end{theorem}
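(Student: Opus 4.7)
The plan is to view this as the special case of Theorem van est morphism (Theorem 0.0.6 in the notation of the excerpt) applied to the groupoid homomorphism $f : Y \to X$, where $Y$ and $X$ are regarded as Lie groupoids with only identity morphisms. Concretely, I would first identify the LA-groupoid $D \to Y$ associated to this map. Following the discussion of ``LA-Groupoid of $Y\to X$'' in the excerpt, $D$ is the Lie algebroid of the submersion groupoid $Y\times_X Y \rightrightarrows Y$; equivalently, $D = \ker(df) \subset TY$, the involutive distribution tangent to the fibers of $f$. Since $X$ is a bare manifold, the $X$-module of coefficients is just the structure sheaf $\mathcal{O}_X$, and its pullback is $f^*\mathcal{O}_X = f^{-1}\mathcal{O}_X$.

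Next I would identify the LA-groupoid cohomology $H^*(D\to Y,\, f^*\mathcal{O}_X)$ with $H^*(Y, f^{-1}\mathcal{O}_X)$. By the construction of the LA-groupoid complex, this cohomology is the hypercohomology of the relative holomorphic de Rham complex
\begin{equation*}
\mathcal{O}_Y \longrightarrow \Omega^1_{Y/X} \longrightarrow \Omega^2_{Y/X} \longrightarrow \cdots
\end{equation*}
Since $f$ is a holomorphic submersion, the relative holomorphic Poincar\'e lemma (apply the classical Poincar\'e lemma fiberwise in local product coordinates) shows this is a resolution of $f^{-1}\mathcal{O}_X$. Therefore the hypercohomology equals $H^*(Y, f^{-1}\mathcal{O}_X)$, and under this identification the van Est map reduces precisely to the composition
\begin{equation*}
H^*(X,\mathcal{O}_X) \longrightarrow H^*(Y, f^{-1}\mathcal{O}_X) \longrightarrow H^*(Y,\Omega^\bullet_{Y/X}),
\end{equation*}
i.e., pullback of sheaf cohomology followed by the quasi-isomorphism given by the resolution. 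This matches the sketch of the van Est construction given earlier in the excerpt.

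Finally I would verify the hypotheses of Theorem van est morphism. The map $f$ is a surjective submersion by assumption, hence in particular surjective on objects. The ``fibers of $f$ in the $(2,1)$-category sense'' coincide in this bare-manifold case with the ordinary set-theoretic fibers, which are $n$-connected by hypothesis. Theorem van est morphism then immediately yields that the van Est map, and hence the natural map $H^*(X,\mathcal{O}) \to H^*(Y, f^{-1}\mathcal{O})$, is an isomorphism in degrees $\le n$ and injective in degree $n+1$.

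The only real step that requires care is the identification of the van Est map with the natural sheaf-theoretic pullback map. This is essentially tautological from the definition chosen in the thesis (pullback followed by resolution by foliated forms), but one must confirm that the LA-groupoid differential really restricts on degree zero to the map that sends a \v{C}ech cocycle on $X$ with values in $\mathcal{O}_X$ to its image under $f^{-1}$ in the relative de Rham complex. Apart from this bookkeeping, every analytic ingredient is already packaged into Theorem van est morphism and the holomorphic relative Poincar\'e lemma, so no further estimates or constructions are needed.
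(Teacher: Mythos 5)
Your derivation is formally a valid specialization of the main van Est theorem, and the individual identifications you make (the LA-groupoid of $Y\to X$ is the foliation $\ker df\subset TY$, the relative holomorphic de Rham complex resolves $f^{-1}\mathcal{O}_X$) are correct. But as a proof of this particular statement it is circular, and the paper flags exactly this: immediately after stating the Buchdahl theorem it remarks that ``this result is essentially used in the proof'' of the main theorem. Concretely, the isomorphism theorem you invoke is established by reducing to a lemma about bisimplicial spaces whose proof is ``a generalization of criterion 1.9.4 in \cite{Bernstein} \ldots and the Leray spectral sequence,'' and that criterion, in the degenerate case where the simplicial space is a single manifold mapping to a manifold, \emph{is} the statement you are being asked to prove. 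The paper's actual, non-circular argument is the direct one in the appendix: a holomorphic surjective submersion is locally fibered; its fibers are $n$-connected, hence $n$-acyclic for the locally constant sheaf obtained by restricting $f^{-1}\mathcal{O}$; by the Bernstein--Lunts criterion the adjunction $\mathcal{O}\to R^0f_*f^{-1}\mathcal{O}$ is an isomorphism and $R^qf_*f^{-1}\mathcal{O}=0$ for $1\le q\le n$; the Leray spectral sequence then gives the isomorphism in degrees $\le n$ and injectivity in degree $n+1$ (with the image in degree $n+1$ described by vanishing of restrictions to fibers). If you want a self-contained proof, that is the route to take.

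A secondary point: the theorem as stated concerns only the sheaf-cohomology pullback $H^*(X,\mathcal{O})\to H^*(Y,f^{-1}\mathcal{O})$, so your detour through the foliated de Rham resolution and the fiberwise Poincar\'{e} lemma is beside the point here. That resolution is only needed to identify $H^*(Y,f^{-1}\mathcal{O})$ with the foliated cohomology, i.e., to describe the codomain of the van Est map; it contributes nothing to the isomorphism/injectivity assertion, which lives entirely in the first leg of your composition.
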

More generally, in the previous theorem we can take local sections of any holomorphic vector bundle rather than $\mathcal{O}$ (in fact, our theorem shows that the we can use sheaves more general than local sections of vector bundles, and we also determine its image in degree $n+1)$.
\vspace{3mm}\\Let us remark that the van Est theorem we prove can be derived from a van Est theorem for double Lie groupoids, using the association of a double Lie groupoid to a (nice enough) map of Lie groupoids. In addition, a van Est theorem for relative cohomology (which will be defined in Part 2) should be derivable in this way.
\section{Introduction to Part 3}
Part 3 of this thesis is mostly disjoint from Parts 1 and 2, with the exception of the use of LA-groupoids towards the end, and some mention of the van Est map (but a deep understanding of this map is not needed). This part is largely conjectural — our aim is to understand what Morita equivalences and generalized morphisms of Lie algebroids are (and for completion, details need to be filled in). 
\vspace{3mm}\\Towards the end of Part 3 we define a category of LA-groupoids (with $n$-equivalences), which contains both Lie groupoids and Lie algebroids as objects and provides a ``wormhole" between them. In particular, this category unifies the category of differentiable stacks, the category of Lie algebroids and the homotopy category. This category has the following remarkable properties:
\begin{enumerate}
    \item Manifolds $X\,,Y$ are Morita equivalent in this category if and only if they are diffeomorphic.
    \item More generally, Lie groupoids $G\,,H$ are Morita equivalent in this category if and only if they are Morita equivalent in the category of Lie groupoids,
    \item Two tangent bundles $TX\,,TY$ are Morita equivalent in this category if and only if $X\,,Y$ are homotopy equivalent.
    \item If $G\rightrightarrows G^0$ is source $n$-connected, there is a canonical $n$-equivalence $\mathcal{I}:\mathfrak{g}\to G\,.$ This morphism can be interpreted as the integration functor. If $n=\infty$ this is a Morita equivalence (here $\mathfrak{g}$ is the Lie algebroid of $G$).
    \item With regards to the previous point, the van Est map is given by a pullback: $\mathcal{I}^*:H^*(G,M)\to H^*(\mathfrak{g},M)\,.$
    \item A Lie algebroid $\mathfrak{g}$ is integrable if and only if it is 1-equivalent to some Lie groupoid $G\,.$
    \item This category induces a natural notion of smooth homotopy equivalence of Lie groupoids.
    \item  A finite dimensional classifying space of a Lie groupoid $G$ is just a manifold $BG$ which is homotopy equivalent to $G$ (this may remind the reader of Grothendieck's homotopy hypothesis).
    \item If $EG\to BG$ is finite dimensional, then the Atiyah algebroid $\text{at}(EG)$ is Morita equivalent to $G\,.$ In particular, if $G$ is discrete then $\text{at}(EG)=T(BG)\,,$ therefore $G$ is Morita equivalent to $T(BG)\,.$
     \item Due to points 2, 3 and 4, we get the following result: suppose that $\mathcal{P}$ assigns to each LA-groupoid some property (eg.\ its cohomology) that is invariant under $n$-equivalence. Then if $X$ is homotopy equivalent to $Y\,;$ if $H\rightrightarrows H^0$ is Morita equivalent to $K\rightrightarrows K^0\,;$ if $G\rightrightarrows G^0$ is source $n$-connected, we get that $\mathcal{P}(TX)\cong \mathcal{P}(TY)\,;\mathcal{P}(H)\cong \mathcal{P}(K)\,;\mathcal{P}(G)\cong \mathcal{P}(\mathfrak{g})\,,$ respectively.
    \end{enumerate}
\vspace{3mm}To understand this we should start at the beginning. We make the following observation. The Lie algebroid cohomology of tangent bundles is invariant under homotopy equivalence of the underlying manifolds (for coefficients in $\mathcal{O}\,,$ this cohomology is the de Rham cohomology, for coefficients in $\mathbb{Z}$ it is the singular cohomology). Many interesting properties of tangent bundles are preserved under homotopy equivalences, and in addition we have the following result, due to Arias Abad, Quintero V\'{e}lez and  V\'{e}lez V\'{a}squez:
\begin{theorem}(see~\cite{Quintero}, Corollary 5.1)
If $f:X\to Y$ is a smooth homotopy equivalence, then the pullback functor 
\begin{equation}
    f^*:\textbf{Loc}_{\infty}(N)\to \textbf{Loc}_{\infty}(M)
\end{equation}
is a quasi-equivalence (ie. it is a quasi-equivalence between the dg categories of $\infty$-local systems).
\end{theorem}
We give a definition of Morita equivalence of Lie algebroids which says that two tangent bundles are Morita equivalent if and only if their underlying manifolds are homotopy equivalent. Before doing this, we motivate our definition. Once again, the definition we give is still conjectural. 
\vspace{3mm}\\One advantage of our formulation is that it offers an explanation for why certain cohomologies of manifolds are invariant under homotopy equivalences and others are not: a cohomology should be invariant under homotopy equivalences if it can be interpreted as Lie algebroid cohomology of the tangent bundle. In fact, we do a little more: similarly to how topological spaces have $n$-equivalences (which in nice topological categories give homotopy equivalences when $n=\infty$), we find that Lie algebroids also come with a natural notion of $n$-equivalences.
\vspace{3mm}\\Next we discuss how gerbes can be interpreted as generalized morphisms into a higher category, namely, $n$-gerbes are morphisms into the category of Lie $n$-groupoids (or $n$-fold groupoids; this may be related to Example 6.16 in~\cite{Nuiten}). For $n=1\,,$ this gives the usual interpretation of principal $G$-bundles as generalized morphisms into $G\,.$ This suggests an interpretation of the van Est map (for coefficients in a representation) as a map taking higher (generalized) morphisms of Lie groupoids into higher (generalized) morphisms of Lie algebroids.
\vspace{3mm}\\The fact that tangent bundles are Morita equivalent (under our definition) if and only if their underlying manifolds are homotopy equivalent, together with the fact that the van Est map can be interpreted as a map taking higher generalized morphisms to higher generalized morphisms, suggests the existence of a smooth version of the homotopy hypothesis (see~\cite{groth}), roughly (and up to equivalence)
\begin{equation}
    \text{Higher Lie groupoids}\cong\text{Higher Lie algebroids.}
\end{equation} 
After this discussion, we move on to discuss differentiablity of generalized morphisms. Already, the fact that Morita maps of Lie groupoids don't differentiate to (anything like) equivalences of Lie algebroids suggests that this isn't always possible: $\text{Pair}(X)$ is Morita equivalent to $*\,,$ however the number of points in the leaf space of its Lie algebroid $TX$ can be arbitrarily high, whereas the leaf space of $T*$ consists of a single point. Furthermore, letting $X=S^1\,,$ $TS^1$ has nontrivial representations, whereas $T*$ does not. We suggest the situation is even more dire. We prove the following no-go result:
\begin{theorem}\label{strong}
There is no notion of generalized moprhisms between Lie algebroids with the following properties:
\begin{enumerate}
    \item Associated to any generalized morphism $P:G\to H$ is a generalized morphism $dP:\mathfrak{g}\to\mathfrak{h}\,.$
    \item Generalized morphisms $\mathfrak{g}\to\mathfrak{h}$ induce pullback maps $H^{1}(\mathfrak{h})\to H^{1}(\mathfrak{g})\,,$ in such a way that the pullback of a trivial class is trivial.\footnote{One may assume that we are taking cohomology of cocycles valued in $\mathbb{R}\,.$}
    \item Pullback of cohomology commutes with the van Est map. That is, the following diagram is commutative:
    \begin{equation}
        \begin{tikzcd}
{H^{1}(H)} \arrow[r, "P^*"] \arrow[d, "VE"'] & {H^{1}(G)} \arrow[d, "VE"] \\
{H^{1}(\mathfrak{h})} \arrow[r, "dP^*"]      & {H^{1}(\mathfrak{g})}     
\end{tikzcd}
    \end{equation}
\end{enumerate}
\end{theorem}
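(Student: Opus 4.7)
The plan is to argue by contradiction, by producing a single generalized morphism of Lie groupoids $P:G\to H$ and a class $\omega\in H^{1}(H,\mathbb{R})$ whose two images around the proposed square cannot agree, no matter how $dP^{*}$ is defined. The asymmetry I will exploit is to take $H$ discrete, so that $\mathfrak{h}=0$ and $H^{1}(\mathfrak{h},\mathbb{R})=0$, while $G$ is Morita equivalent to $H$ yet carries a genuine Lie algebroid whose first cohomology is nonzero and is hit by an isomorphism under van Est.

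Concretely, I would take $H=\mathbb{Z}\rightrightarrows *$ and $G=\Pi_{1}(S^{1})\rightrightarrows S^{1}$, linked by the Morita bibundle $P:\Pi_{1}(S^{1})\to\mathbb{Z}$ represented by the universal covering $\mathbb{R}\to S^{1}$ (with $\mathbb{Z}$ acting by deck transformations and $\Pi_{1}(S^{1})$ by path composition). Morita invariance of Lie groupoid cohomology gives an isomorphism
\begin{equation*}
P^{*}:H^{1}(\mathbb{Z},\mathbb{R})\xrightarrow{\;\sim\;}H^{1}(\Pi_{1}(S^{1}),\mathbb{R}),
\end{equation*}
and both groups are $\mathbb{R}$. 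Infinitesimally, $\mathfrak{g}=TS^{1}$ gives $H^{1}(\mathfrak{g},\mathbb{R})=H^{1}_{\mathrm{dR}}(S^{1})=\mathbb{R}$, and since the source-fibers of $\Pi_{1}(S^{1})$ are universal covers of $S^{1}$ (hence contractible), \Cref{crainic} implies that $VE:H^{1}(\Pi_{1}(S^{1}),\mathbb{R})\to H^{1}(TS^{1},\mathbb{R})$ is an isomorphism.

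With this in place the contradiction is immediate. Fix a generator $\omega\in H^{1}(\mathbb{Z},\mathbb{R})$. Then $VE(P^{*}\omega)\in H^{1}(TS^{1},\mathbb{R})$ is nonzero, since $P^{*}\omega$ generates and van Est is an isomorphism. On the other hand $VE(\omega)\in H^{1}(0,\mathbb{R})=0$, so $dP^{*}(VE(\omega))$ is forced to be $0$ (the ``trivial goes to trivial'' hypothesis is automatic, as there is nothing else in the domain). This violates the commutativity demanded by item 3, ruling out the existence of any such infinitesimal formalism.

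The one delicate input is the claim that the Morita bibundle $P$ induces an isomorphism on $H^{1}(-,\mathbb{R})$. I anticipate this being the main obstacle only in the bookkeeping sense: it is a standard feature of the cohomology of differentiable stacks, and in this very simple case can be verified directly from the double complex of the bibundle. The remaining ingredients reduce to the elementary facts $H^{1}(\mathbb{Z},\mathbb{R})=\mathrm{Hom}(\mathbb{Z},\mathbb{R})=\mathbb{R}$, $H^{1}_{\mathrm{dR}}(S^{1})=\mathbb{R}$, and the observation that the discrete group $\mathbb{Z}$ has trivial Lie algebra.
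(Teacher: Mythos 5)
Your proposal is correct and is essentially the paper's own proof: the same example $P:\Pi_1(S^1)\to\mathbb{Z}$ given by the universal cover, the same generator of $H^1(\mathbb{Z},\mathbb{R})$, and the same contradiction from $\mathfrak{h}=0$ versus $VE(P^*\omega)=[d\theta]\neq 0$. The only cosmetic difference is that the paper exhibits the pullback class explicitly as the homomorphism $\mathbb{R}\ltimes S^1\to\mathbb{R}$ rather than invoking Morita invariance to see it is nonzero.
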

Our resolution is that property 1 should not be required to hold, and we discuss, from the perspective of the smooth version of the homotopy hypothesis, why some generalized morphisms cannot be differentiated. We then move on to discuss in which sense certain generalized morphisms can be differentiated. What we find is that, a generalized morphism $G\to H$ can be differentiated ``up to degree $n$" if $H$ has source $n$-connected fibers (this is sufficient but not necessary, of course strict homomorphisms can always be differentiated).
\vspace{3mm}\\The smooth homotopy hypothesis suggests that there should be morphisms between Lie algebroids and Lie groupoids. Towards the end of chapter 2 (Part 3) we will make this precise: we define \textit{exotic} morphisms between Lie algebroids and Lie groupoids by using the embeddings of both Lie algebroids and Lie groupoids into LA-groupoids, and this will give us another interpretation of the van Est map. We prove the following result:
\begin{theorem}
Let $G\rightrightarrows G^0$ be a source $n$-connected Lie groupoid. Then $\mathfrak{g}\to G^0$ is $n$-equivalent to $G\rightrightarrows G^0\,,$ via a canonical $n$-generalized morphism (if $n=\infty$ they are Morita equivalent).
\end{theorem}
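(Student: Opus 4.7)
The plan is to construct $\mathcal{I}:\mathfrak{g}\to G$ as a specific generalized morphism in the LA-groupoid category, identify the induced pullback on cohomology with the van Est map, and then invoke \Cref{van est morphism} to conclude that $\mathcal{I}$ has the defining property of an $n$-equivalence.

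First, I would realize both $\mathfrak{g}$ and $G$ as objects of the LA-groupoid category via the two natural embeddings: a Lie groupoid $G\rightrightarrows G^0$ is an LA-groupoid with zero vertical Lie algebroid, while a Lie algebroid $\mathfrak{g}\to G^0$ is an LA-groupoid over the unit groupoid $G^0\rightrightarrows G^0$. The canonical homomorphism $G^0\xhookrightarrow{}G$ (the unit inclusion) is the simplest nice enough map between Lie groupoids, and by the construction in Part 1 it has associated LA-groupoid exactly $\mathfrak{g}\to G^0$. Exactly as in the discussion of the LA-groupoid of $Y\to X$ and of $*\to G$, one then reads off a canonical zigzag in the LA-groupoid category with legs $\mathfrak{g}\to (\text{LA-groupoid of }G^0\to G)\to G$; this zigzag is the desired generalized morphism $\mathcal{I}$. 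Functoriality of the LA-groupoid construction ensures that $\mathcal{I}$ is canonical (up to the relevant equivalences), independent of auxiliary choices.

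Next, I would verify that the pullback $\mathcal{I}^\ast$ on cohomology is precisely the classical van Est map, which is the content of property (5) listed in the introduction to Part 3. Tracing through the two legs of the zigzag, the pullback along the first leg identifies the cohomology of $G$ with the cohomology of the LA-groupoid of $G^0\to G$ (via inverse image and foliated resolution, exactly as in Part 1), while the second leg identifies this with $H^\ast(\mathfrak{g},M)$. The composite is the van Est map by construction. The crucial geometric input is that, in the $(2,1)$-categorical sense, the fiber of $G^0\to G$ is Morita equivalent to the source fiber of $G$ over a basepoint; hence $G$ being source $n$-connected translates to the $n$-connectedness hypothesis of \Cref{van est morphism}.

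Applying \Cref{van est morphism} then gives that $\mathcal{I}^\ast:H^\ast(G,M)\to H^\ast(\mathfrak{g},M)$ is an isomorphism in degrees $\le n$ and injective in degree $n+1$, for every coefficient $G$-module $M$. Since $n$-equivalence in the LA-groupoid category is designed to be detected by such pullbacks (in direct analogy with the topological notion, as suggested by points (2)--(5) in the introduction to Part 3), this is exactly the statement that $\mathcal{I}$ is an $n$-equivalence. For $n=\infty$ the van Est map is an isomorphism in all degrees and on all coefficients, so $\mathcal{I}$ becomes a full Morita equivalence.

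The main obstacle is not the application of the van Est theorem but the foundational bookkeeping: one must pin down the precise definition of $n$-equivalence between LA-groupoids (presumably given earlier in Part 3) and verify that the van Est criterion, coupled with the coefficient flexibility afforded by $G$-modules as developed in Part 2, really does certify $n$-equivalence in that sense. One also needs to check that the zigzag representing $\mathcal{I}$ is well-defined up to the equivalence relation built into the category of LA-groupoids with generalized morphisms, so that the statement \emph{``canonical''} is justified. These are conceptual rather than computational difficulties, and once the framework is in place the theorem is essentially a repackaging of \Cref{van est morphism}.
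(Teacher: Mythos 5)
Your construction of the connecting object is essentially the paper's: the "LA-groupoid of $G^0\to G$" is (after right translation) $\mathfrak{g}\ltimes G\rtimes G$ over $G\rtimes G$, the leg towards $\mathfrak{g}$ is a Morita map because $G\rtimes G$ is Morita equivalent to $G^0$, and the other leg lands on $G$ embedded as the LA-groupoid $0_G\to G$. Up to that point you and the paper agree.

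The gap is in your final step. You certify that $\mathcal{I}$ is an $n$-equivalence by showing that $\mathcal{I}^*$ is an isomorphism on cohomology up to degree $n$ (via \Cref{van est morphism}) and then asserting that $n$-equivalence in the LA-groupoid category "is designed to be detected by such pullbacks." That is not the paper's definition, and it reverses the intended logical order. In Part 3 an $n$-equivalence of LA-groupoids is defined structurally: a morphism whose restriction to the top Lie algebroids is an $n$-equivalence of Lie algebroids in the sense of \Cref{neq} (the associated path-space map $P_f\to Y$ is a surjective submersion with $n$-connected fibers, or equivalently the algebroid is a pullback along such a submersion), or whose restriction to the top Lie groupoids is a Morita map. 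Cohomological invariance is only an \emph{expected consequence} of $n$-equivalence — the paper explicitly leaves its proof to future work and states that it would \emph{imply} the van Est theorem — so it cannot serve as a sufficient criterion; for instance, zero algebroids $0_X$ and $0_Y$ are declared Morita equivalent only when $X$ and $Y$ are diffeomorphic, which no amount of cohomology with arbitrary coefficients detects. The paper instead verifies the structural condition directly: the leg $\mathfrak{g}\ltimes G\rtimes G\to 0_G$ restricts on the top algebroids to $\mathfrak{g}\ltimes G\to 0_{G^0}$, which is the foliation algebroid of the source map $s:G\to G^0$; since the source fibers are $n$-connected by hypothesis, this is an $n$-equivalence of Lie algebroids by definition. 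The van Est map then appears downstream, as the pullback $\mathcal{I}^*$, rather than as an ingredient. As written, your argument proves the theorem only under a cohomological redefinition of $n$-equivalence that the paper does not adopt; to repair it, replace the appeal to \Cref{van est morphism} with the direct verification that the algebroid leg is a pullback along the source map, whose fibers are $n$-connected.
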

In fact, what we show is that invariance of LA-groupoid cohomology under $n$-equivalence (which we expect to be true up to degree $n$) implies the van Est isomorphism theorem and the homotopy invariance of de Rham cohomology and singular cohomology, and of course the Morita invariance of Lie groupoid cohomology.
\section{Structure of Thesis}
Part 1 of the thesis is due to a paper published by the author. It describes a generalization of the van Est map and isomorphism theorem to cohomology with more general coefficients than functions valued in a representation. This part does contain some important definitions and concepts which are important for part 2; the crucial material for understanding part 2 is contained in chapters 1 and 2 of part 1. In addition, this part describes a canonical module associated to a complex manifold with divisor, and a section called ``Integration by Prequantization" which describes an alternative way of integrating Lie algebroid cohomology classes that doesn't use the van Est map. Furthermore, section 4 contains a list of applications of the van Est map.
\vspace{3mm}\\Part 2 of this thesis contains the full generalization of the van Est map to stacks. This part is more conceptual in nature and contains a few more concepts from category theory than part 1, mostly in the context of our definitions of fibrations, cofibrations and foliations. If the intention is to understand the van Est map, sections 2.2, 3.5, 4.2 and 5 can be skipped, though it is not necessarily recommended; these sections, in particular, describe a higher category of double groupoids, which is necessary for our discussion of cofibrations, which leads us to showing that a nice enough map between Lie groupoids can be replaced with a cofibration; section 5, in particular, contains a description of the equivalence of two LA-groupoids associated to a groupoid homomorphism, as well as an explanation of how the normal bundle of a subgroupoid can be thought of as an LA-groupoid. Section 5 also describes how every wide subgroupoid of a Lie groupoid comes with a canonical representation. 
\vspace{3mm}\\Part 3 of this thesis is largely speculative and in this part we give conjectural definitions of Morita equivalences of Lie algebroids and generalized morphisms. It also describes a link between Lie's second theorem and the van Est isomorphism theorem, which involves viewing gerbes as generalized morphisms in a higher category. We discuss connections with the homotopy hypothesis and we prove a no-go theorem regarding generalized morphisms of Lie algebroids. We define generalized morphisms between Lie algebroids and Lie groupoids, and a category of LA-groupoids (with $n$-equivalences). We conclude this part with conjectures and future directions.
\vspace{3mm}\\We recommend reading the introduction first (at least the parts relevant to the reader's interest). Some of the material in this thesis will be known to experts (perhaps unknowingly to the author), and with regards to this material no originality is claimed. 
\part{Van Est Theory With Coefficients in a Module}
\chapter{Basics of Simplicial Manifolds, Stacks, Sheaves}
\subsection*{Outline of Part 1}
Part 1 of this thesis is based on a paper written by the author, see~\cite{Lackman}. This part is organized as follows: section $1$ is a brief review of simplicial manifolds, Lie groupoids and stacks, but it is important for setting up notation, results and constructions which will be used in the next sections. This section contains all of the results about stacks which are needed for this part. Section $2$ contains a review and a generalization of the Chevalley-Eilenberg complex. Section $3$ is where we define the van Est map and prove the main theorem of part 1. The next sections of part 1 concern applications of the main theorem, various new constructions of geometric structures involving Lie groupoids, and examples.
\\\\$\mathbf{Notation:}$ For the rest of the paper, we use the following notation: given a smooth (or holomorphic) surjective submersion $\pi:M\to X\,,$ we let $\mathcal{O}(M)$ denote the sheaf of smooth (or holomorphic) sections of $\pi\,.$
\begin{comment}\subsection*{Acknowledgements}
The author would like to thank his advisor Marco Gualtieri for introducing him to the problem and for many helpful discussions about the topic, as well as for helping him edit a preliminary draft of this paper. In addition, the author would also like to thank Francis Bischoff for many fruitful discussions and for suggesting edits to the preliminary draft. The author was supported by the University of Toronto's Faculty of Arts and Science Top Doctoral Award.
\end{comment}
\begin{comment}
I would like to thank my advisor, Marco Gualtieri, for suggesting this problem to me and for many useful discussions, as well as for reading a preliminary version of this paper and providing me with feedback. I would also like to thank Francis Bischoff for many helpful discussions and for reading a preliminary version of this paper. I was supported by the University of Toronto and the faculty of arts and science top graduate award.
\end{comment}
\section{Simplicial Manifolds}
In this section we briefly review simplicial manifolds, sheaves on simplicial manifolds and their cohomology. 
\begin{comment}\begin{definition}[see \cite{Deligne}, \cite{Peters}]Let $Z^\bullet$ be a (semi) simplicial manifold, ie. a contravariant functor from the (semi) simplex category to the category of smooth manifolds. A sheaf on $Z^\bullet$ is a functor from the (semi) simplex category to the category of ordered pairs, whose objects are ordered pairs $(X,\mathcal{S})\,,$ where $X$ is a smooth manifold and $\mathcal{S}$ is a sheaf on $X\,,$ and whose morphisms are ordered pairs $(f,F_{\#}):(X,\mathcal{S})\to (Y,\mathcal{G})$ such that $f:X\to Y$ is a smooth map and $F_{\#}:\mathcal{G}\to f_*\mathcal{S}$ is a morphism of sheaves. A morphism between sheaves sheaves on a (semi) simplicial manifold is a natural transformation between the respective functors. 
$\blacksquare$\end{definition}
\end{comment}
%Concretely,
\begin{definition}Let $Z^\bullet$ be a (semi) simplicial manifold, ie. a contravariant functor from the (semi) simplex category to the category of manifolds. A sheaf $\mathcal{S}_\bullet$ on $Z^\bullet$ is a sheaf $\mathcal{S}_n$ on $Z^n\,,$ for all $n\ge 0\,,$ such that for each morphism $f:[n]\to[m]$ we have a morphism $\mathcal{S}(f):Z(f)^{-1}\mathcal{S}_n\to  \mathcal{S}_m\,,$ and such that $\mathcal{S}(f\circ g)=\mathcal{S}(f)\circ \mathcal{S}(g)\,.$ A morphism between sheaves $\mathcal{S}_\bullet$ and $\mathcal{G}_\bullet$ on $Z^\bullet$ is a morphism of sheaves $u^n:\mathcal{S}_n\to \mathcal{G}_n$ for each $n\ge 0$ such that for $f:[n]\to [m]$ we have that
$u^m\circ \mathcal{S}(f)=\mathcal{G}(f)\circ u^n\,.$  We let $\textrm{Sh}(Z^\bullet)$ denote the category of sheaves on $Z^\bullet\,.$ 
$\blacksquare$\end{definition}
\begin{definition}
Given a sheaf $\mathcal{S}_\bullet$ on a (semi) simplicial manifold $Z^\bullet\,,$ we define $Z^n:=\text{Ker}\big[\Gamma(\mathcal{S}_n)\xrightarrow{\delta^*} \Gamma(\mathcal{S}_{n+1})\big]\,,\,B^n:=\text{Im}\big[\Gamma(\mathcal{S}_{n-1})\xrightarrow{\delta^*}\Gamma(\mathcal{S}_n)\big]\,,$
where $\delta^*$ is the alternating sum of the face maps, ie.
\begin{align*}
    \delta^*=\sum_{i=0}^n(-1)^id_{n,i}^{-1}\,,
    \end{align*}
 where $d_{n,i}:Z^n\to Z^{n-1}$ is the $i^{\textrm{th}}$ face map. We then define the naive cohomology (see \cite{nlab}) \begin{equation*}H^n_{\text{naive}}(Z^\bullet\,,\mathcal{S}_\bullet):=Z^n/B^n\,.
\end{equation*}
$\blacksquare$\end{definition}
\begin{definition}[see~\cite{Deligne}]\label{cohomology simplicial} Given a (semi) simplicial manifold $Z^\bullet\,,$ $\textrm{Sh}(Z^\bullet)$ has enough injectives, and we define \begin{equation*}
H^n({Z^\bullet\,,\mathcal{S}_\bullet}):=R^n\Gamma_\text{inv}(\mathcal{S}_\bullet)\,,
\end{equation*}
where $\Gamma_\text{inv}:\textrm{Sh}(Z^\bullet)\to \mathbf{Ab}$ is given by $\mathcal{S}_\bullet \mapsto\text{Ker}\big[\Gamma(\mathcal{S}_0)\xrightarrow{\delta^*} \Gamma(\mathcal{S}_1)\big]\,.$
$\blacksquare$\end{definition}
\theoremstyle{definition}\begin{rmk}\label{acyclic resolution}
As usual, in addition to injective resolutions one can use acylic resolutions to compute cohomology.
\end{rmk}
\theoremstyle{definition}\begin{rmk}[see \cite{Deligne}]\label{double complex}
A convenient way to compute $H^*({Z^\bullet\,,\mathcal{S}_\bullet})$ is to choose a resolution 
\begin{align*}
    0\xrightarrow{}\mathcal{S}_\bullet\xrightarrow{}\mathcal{A}^0_\bullet\xrightarrow{\partial_\bullet^0}\mathcal{A}^1_\bullet\xrightarrow{\partial_\bullet^1}\cdots
\end{align*}
such that 
\begin{align*}
    0\xrightarrow{}\mathcal{S}_n\xrightarrow{}\mathcal{A}^0_n\xrightarrow{\partial_n^0}\mathcal{A}^1_n\xrightarrow{\partial_n^1}\cdots
\end{align*}
is an acyclic resolution of $\mathcal{S}_n\,,$ for all $n\ge 0\,,$ and then take the cohomology of the total complex of the double complex $C^p_q=\Gamma(A^p_q)\,,$  with differentials $\delta^*$ and $\partial^p_q\,.$
\end{rmk}
$\mathbf{}$
\\The following theorem is a well-known consequence of the Grothendieck spectral sequence: 
\begin{theorem}[Leray Spectral Sequence]\label{spectral}
Let $f:X^\bullet\to Y^\bullet$ be a morphism of simplicial topological spaces, and let $\mathcal{S}_\bullet$ be a sheaf on $X^\bullet\,.$ Then there is a spectral sequence $E^{pq}_*\,,$ called the Leray spectral sequence, such that $E^{pq}_2=H^p(Y^\bullet,R^qf_*(\mathcal{S}_\bullet))$ and such that 
\begin{align*}
 E^{pq}_2\Rightarrow H^{p+q}(X^\bullet,\mathcal{S}_\bullet)\,.
\end{align*}
\end{theorem}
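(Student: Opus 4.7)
The plan is to realize the statement as an instance of the Grothendieck composition-of-functors spectral sequence. Let me first identify the two functors whose composition recovers $H^*(X^\bullet,\mathcal{S}_\bullet)$. I would define the pushforward $f_\ast\colon \mathrm{Sh}(X^\bullet)\to \mathrm{Sh}(Y^\bullet)$ degreewise by $(f_\ast \mathcal{S})_n=f_{n\ast}\mathcal{S}_n$, with transition morphisms induced from those of $\mathcal{S}_\bullet$ using base change along the commutative squares witnessing that $f\colon X^\bullet\to Y^\bullet$ is simplicial. Since $f_\ast$ is left exact on each $Y^n$ and the compatibility data is functorial, $f_\ast$ is a left exact functor of abelian categories; similarly, $\Gamma_{\text{inv}}^{Y^\bullet}$ is left exact by its definition as a kernel. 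The key compatibility I would verify is
\begin{equation*}
\Gamma_{\text{inv}}^{X^\bullet} \;=\; \Gamma_{\text{inv}}^{Y^\bullet} \circ f_\ast,
\end{equation*}
which follows because $\Gamma(f_{n\ast}\mathcal{S}_n)=\Gamma(\mathcal{S}_n)$ and the simplicial face maps on $f_\ast\mathcal{S}_\bullet$ are literally the pushforwards of the face maps on $\mathcal{S}_\bullet$, so the two kernels coincide.

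Next I would check the Grothendieck hypothesis: $f_\ast$ sends injectives to $\Gamma_{\text{inv}}^{Y^\bullet}$-acyclics. The standard route is to exhibit an exact left adjoint $f^{-1}$. Define $f^{-1}\colon \mathrm{Sh}(Y^\bullet)\to \mathrm{Sh}(X^\bullet)$ degreewise by $(f^{-1}\mathcal{G})_n = f_n^{-1}\mathcal{G}_n$, with structure maps coming from the natural transformations $f_n^{-1}Y(\varphi)^{-1}\mathcal{G}_m \cong X(\varphi)^{-1}f_m^{-1}\mathcal{G}_m \to X(\varphi)^{-1}\mathcal{G}(\varphi)$ determined by $\mathcal{G}(\varphi)$ and the simpliciality of $f$. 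Adjointness degreewise, together with the fact that sheaves on simplicial manifolds and their morphisms are defined levelwise plus compatibility, promotes this to an adjunction $f^{-1}\dashv f_\ast$ on the simplicial categories. Since each $f_n^{-1}$ is exact, $f^{-1}$ is exact, hence $f_\ast$ preserves injectives. This is the step I expect to be the main obstacle: one has to be careful that the structure maps on $f^{-1}\mathcal{G}$ assemble correctly and that the degreewise adjunctions glue, rather than just working levelwise.

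With these verifications in place, I would invoke the Grothendieck spectral sequence for the composition $\Gamma_{\text{inv}}^{Y^\bullet}\circ f_\ast$. This directly yields a convergent first-quadrant spectral sequence
\begin{equation*}
E_2^{pq}=R^p\Gamma_{\text{inv}}^{Y^\bullet}(R^q f_\ast \mathcal{S}_\bullet) = H^p(Y^\bullet, R^qf_\ast \mathcal{S}_\bullet) \;\Longrightarrow\; R^{p+q}\Gamma_{\text{inv}}^{X^\bullet}(\mathcal{S}_\bullet)=H^{p+q}(X^\bullet,\mathcal{S}_\bullet),
\end{equation*}
as desired. As a sanity check, I would note that this spectral sequence can alternatively be obtained from the double complex of Remark~\ref{double complex}: pick an injective (or simply $f_\ast$-acyclic) resolution $\mathcal{S}_\bullet \to \mathcal{A}^\bullet_\bullet$, push it forward to $f_\ast\mathcal{A}^\bullet_\bullet$, and take the two standard filtrations of the totalized double complex $\Gamma_{\text{inv}}^{Y^\bullet}(f_\ast\mathcal{A}^\bullet_\bullet)$; one filtration computes $H^*(X^\bullet,\mathcal{S}_\bullet)$, the other has $E_2$-page given by $H^p(Y^\bullet, R^qf_\ast\mathcal{S}_\bullet)$. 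This concrete model is useful for later applications and matches the abstract spectral sequence.
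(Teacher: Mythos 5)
Your proposal is correct and is exactly the route the paper intends: the paper offers no proof, merely noting that the theorem is "a well-known consequence of the Grothendieck spectral sequence," and your argument is the standard fleshing-out of that citation — identifying $\Gamma_{\text{inv}}^{X^\bullet}=\Gamma_{\text{inv}}^{Y^\bullet}\circ f_*$, using the exact left adjoint $f^{-1}$ to see that $f_*$ preserves injectives, and invoking the composition-of-functors spectral sequence. The levelwise constructions and compatibilities you flag as the main obstacle are indeed routine for sheaves on simplicial spaces in the sense of Deligne, so nothing further is needed.
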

\subsection{Stacks}
Here we briefly review the theory of differentiable stacks. A differentiable stack is in particular a category, and first we will define the objects of the category, and then the morphisms. All manifolds and maps can be taken to be in the smooth or holomorphic categories. The following definitions can be found in~\cite{Kai}.
\begin{definition}
Let $G\rightrightarrows G^0$ be a Lie groupoid. A $G$-principal bundle (or principal $G$-bundle) is a manifold $P$ together with a surjective submersion $P\overset{\pi}{\to}M$ and a map $P\overset{\rho}{\to} G^0\,,$ called the moment map, such that there is a right $G$-action on $P\,,$ ie.
a map $P\sideset{_{\rho}}{_{t}}{\mathop{\times}} G\to P\,,$ denoted $(p,g)\mapsto p\cdot g\,,$ such that
\begin{itemize}
    \item $\pi(p\cdot g)=\pi(p)$
    \item $\rho(p\cdot g)=s(g)$
    \item $(p\cdot g_1)\cdot g_2=p\cdot(g_1g_2)$
\end{itemize}
and such that 
\begin{align*}
    P\sideset{_{\rho}}{_{t}}{\mathop{\times}} G\to P\sideset{_{\pi}}{_{\pi}}{\mathop{\times}} P\,,\;(p,g)\mapsto (p,p\cdot g)
\end{align*}
is a diffeomorphism.
$\blacksquare$\end{definition}
\begin{definition}
A morphism between $G$-principal bundles $P\to M$ and $Q\to N$ is given by a commutative diagram of smooth maps
\[
\begin{tikzcd}
P\arrow{r}{\phi}\arrow{d} & Q\arrow{d}
\\M\arrow{r} & N
\end{tikzcd}
\]
such that $\phi(p\cdot g)=\phi(p)\cdot g\,.$ In particular this implies that $\rho\circ\phi(p)=\rho(p)\,.$
$\blacksquare$\end{definition}
\begin{definition}
Let $G\rightrightarrows G^0$ be a Lie groupoid. Then we define $[G^0/G]$ to be the category of $G$-principal bundles, together with its natural functor to the category of manifolds (which takes a $G$-principal bundle to its base manifold). We call $[G^0/G]$
a (differentiable or holomorphic) stack.
$\blacksquare$\end{definition}
%\begin{lemma}\label{stack groupoids eq}
%Groupoids $G\rightrightarrows G^0$ and %$K\rightrightarrows K^0$ are Morita equivalent if and %only if $[G^0/G]$ is categorically equivalent to %$[K^0/K]\,.$ 
%\end{lemma}
\theoremstyle{definition}\begin{rmk}
Given a Lie groupoid $G\rightrightarrows G^0\,,$ there is a canonical Grothendieck topology on $[G^0/G]\,,$ hence we can talk about sheaves on stacks and their cohomology. What is most important to know for the next sections is that a sheaf on $[G^0/G]$ is in particular a contravariant functor 
\[F:[G^0/G]\to \mathbf{Ab}\,.
\]
See Section~\ref{appendix:Cohomology of Sheaves on Stacks} for details.
\end{rmk}
\subsection{Groupoid Modules}
We now define Lie groupoid modules. Their importance is due to the fact that these are the structures which differentiate to representations; they will be one of the main objects we study in this paper.
\begin{definition}
Let $X$ be a manifold. A family of groups over $X$ is a Lie groupoid $M\rightrightarrows X$ such that the source and target maps are equal. A family of groups will be called a family of abelian groups if the multiplication on $M$ induces the structure of an abelian group on its source fibers, ie. if
$s(a)=s(b)$ then $m(a,b)=m(b,a)\,.$
$\blacksquare$\end{definition}
\theoremstyle{definition}\begin{exmp}\label{trivial}
Let $A$ be an abelian Lie group and let $X$ be a manifold. Then $X{\mathop{\times}} A$ is naturally a family of abelian groups, with the source
and target maps being the projection onto the first factor $p_1:X{\mathop{\times}} A\to X\,.$ This will be called a trivial family of abelian groups, and will be denoted $A_X\,.$
\end{exmp}
\theoremstyle{definition}\begin{exmp}\label{foag}
One way of constructing families of abelian groups is as follows: Let $A$ be an abelian group, and let $\text{Aut}(A)$ be its 
automorphism group. Then to any principal $\text{Aut}(A)$-bundle $P$ we have a canonical family of abelian groups - it is given by the fiber bundle
whose fibers are $A$ and which is associated to $P\,.$ Families of abelian groups constructed in this way are locally trivial in the sense
that locally they are isomorphic to the trivial family of abelian groups given by $A_{\mathbb{R}^n}\,,$ for some $n$ (compare this with vector bundles).
\end{exmp}
\begin{definition}(see \cite{Tu}$\,$): Let $G\rightrightarrows G^0$ be a Lie groupoid. A $G$-module $M$ is a family of abelian groups together with an action of $G$ on $M$ such that for $a\,,b\in G^0\,,$ $G(a,b):M_a\to M_b$ acts by homomorphisms $($here $G(a,b)$ is the set
of morphisms with source $a$ and target $b)\,.$ If $M$ is a vector bundle\footnote{Here we are implicitly using the fact that the forgeful functor from the category of finite dimensional vector spaces to the category of simply connected abelian Lie groups
is an equivalence of categories.}, $M$ will be called a representation of $G\,.$
$\blacksquare$\end{definition}
\theoremstyle{definition}\begin{exmp}
Let $G\rightrightarrows G^0$ be a groupoid and let $A$ be an abelian group. Then $A_{G^0}$ is a family of abelian groups (see Example~\ref{trivial}), and it is a $G$-module with the trivial action, that is $g\in G(x,y)$ acts by $g\cdot (x,a)=(y,a)\,.$ We will call this a trivial
$G$-module.
\end{exmp}
\begin{comment}\theoremstyle{definition}\begin{exmp}
The automorphism group of $S^1$ is $\mathbb{Z}_2\,,$ hence $H^1(X,\mathbb{Z}_2)$ classifies locally trivial families of groups over $X$ for which the fiber is isomorphic to $S^1\,,$ and all of these families can be equipped with a module structure for $\Pi_1(X)$ since the transition functions are locally constant. 
\end{exmp}
\end{comment}
\begin{comment}\theoremstyle{definition}\begin{exmp}
The automorphism group of $\mathbb{C}^*{\mathop{\times}}\mathbb{Z}$ is $\mathbb{C}^*{\mathop{\times}}\mathbb{Z}_2{\mathop{\times}}\mathbb{Z}_2\,,$ hence $H^1(X,\mathcal{O}^*{\mathop{\times}}\mathbb{Z}_2{\mathop{\times}}\mathbb{Z}_2)=H^1(X,\mathcal{O}^*){\mathop{\times}} H^1(X,\mathbb{Z}_2){\mathop{\times}} H^1(X,\mathbb{Z}_2)$ classifies locally trivial families of groups isomorphic to $\mathbb{C}^*{\mathop{\times}}\mathbb{Z}\,,$ over $X\,.$  In particular, any class in $H^1(X,\mathbb{C}^*){\mathop{\times}} H^1(X,\mathbb{Z}_2){\mathop{\times}} H^1(X,\mathbb{Z}_2)$ defines a $\Pi_1(X)$-module where the family of abelian groups is a locally trivial family of groups isomorphic to $\mathbb{C}^*{\mathop{\times}}\mathbb{Z}\,.$
\end{exmp}
\end{comment}
\theoremstyle{definition}\begin{exmp}\label{SL2}
Let $G=SL(2,\mathbb{Z})\,,$ which is the mapping class group of the torus. Every $T^2$-bundle over $S^1$ is isomorphic to one with transition functions in $SL(2,\mathbb{Z})\,,$ with the standard open cover of $S^1$ using two open sets. All of these are naturally $\Pi_1(S^1)$-modules since $SL(2,\mathbb{Z})$ is discrete. In particular, the Heisenberg manifold is a $\Pi_1(S^1)$-module. Explicitly,
consider the matrix 
\begin{align*}
    \begin{pmatrix}
    1 & 1 \\
    0 & 1 
    \end{pmatrix}\in SL(2,\mathbb{Z})\,.
\end{align*}
This matrix defines a map from $T^2\to T^2\,,$ and it corresponds to a Dehn twist. The total space of the corresponding $T^2$-bundle is diffeomorphic to the Heisenberg manifold $H_M$, which is the quotient of the Heisenberg group by the right action of the integral Heisenberg subgroup on itself, ie. we make the identification
\begin{align*}
    \begin{pmatrix}
    1 & a & c\\
    0 & 1 & b \\
    0 & 0 & 1
    \end{pmatrix}\sim \begin{pmatrix}
    1 & a+n & c+k+am\\
    0 & 1 & b+m \\
    0 & 0 & 1
    \end{pmatrix}\,,
\end{align*}
where $a\,,b\,,c\in\mathbb{R}$ and $n\,,m\,,k\in\mathbb{Z}\,.$ The projection onto $S^1$ is given by mapping to $b\,.$
\\\\The fiberwise product associated to the bundle $H_M\to S^1$ is given by
\begin{align*}
      \begin{pmatrix}
    1 & a & c\\
    0 & 1 & b \\
    0 & 0 & 1
    \end{pmatrix}\cdot  \begin{pmatrix}
    1 & a' & c'\\
    0 & 1 & b \\
    0 & 0 & 1
    \end{pmatrix}=  \begin{pmatrix}
    1 & a+a' & c+c'\\
    0 & 1 & b \\
    0 & 0 & 1
    \end{pmatrix}\,.
\end{align*}
See Example~\ref{module example} for more.
\end{exmp}
\begin{definition}
Let $M\,,N$ be $G$-modules. A morphism $f:M\to N$ is a morphism of the underlying groupoids such that if $s(g)=s(m)\,,$ then $f(g\cdot m)=g\cdot f(m)\,.$
$\blacksquare$\end{definition}
\begin{proposition}
Let $M\to X$ be a family of abelian groups. Then $H^1(X,\mathcal{O}(M))$ classifies principal $M$-bundles over $X$ for which $\rho=\pi\,.$
\end{proposition}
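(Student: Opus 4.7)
The plan is to use the standard Čech-cohomological classification of principal bundles, adapted to the setting where the structure ``group'' is a family of abelian groups. Since $M\to X$ is a family of abelian groups, $\mathcal{O}(M)$ is genuinely a sheaf of abelian groups on $X$, so $H^1(X,\mathcal{O}(M))$ can be computed via Čech cohomology with respect to a sufficiently fine open cover $\{U_i\}_{i\in\mathcal{I}}$ of $X$ (one on which every principal $M$-bundle in question admits local sections).

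First I would construct the map from isomorphism classes of such principal bundles to $H^1(X,\mathcal{O}(M))$. Given $P\overset{\pi}{\to}X$ with $\rho=\pi$, choose local sections $\sigma_i:U_i\to P$ of $\pi$; these exist after refinement because $\pi$ is a surjective submersion. The defining diffeomorphism $P\sideset{_{\rho}}{_{t}}{\mathop{\times}} M\to P\sideset{_{\pi}}{_{\pi}}{\mathop{\times}} P$ specializes, on each overlap $U_{ij}$, to a unique section $g_{ij}\in\mathcal{O}(M)(U_{ij})$ satisfying $\sigma_j(x)=\sigma_i(x)\cdot g_{ij}(x)$. Associativity of the action together with the fact that $M$ is a family of abelian groups (so that fiberwise multiplication is well-defined on sections with a common base point) gives the cocycle condition $g_{ij}\cdot g_{jk}=g_{ik}$ on $U_{ijk}$. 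A different choice of sections $\sigma_i'=\sigma_i\cdot h_i$ with $h_i\in\mathcal{O}(M)(U_i)$ produces $g_{ij}'=h_i^{-1}\cdot g_{ij}\cdot h_j$, which in the abelian setting is cohomologous to $g_{ij}$. Hence we obtain a well-defined class in $\check{H}^1(\{U_i\},\mathcal{O}(M))$, and passing to the direct limit over refinements gives a class in $H^1(X,\mathcal{O}(M))$ depending only on the isomorphism class of $P$.

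Conversely, given a cocycle $\{g_{ij}\}\in\check{Z}^1(\{U_i\},\mathcal{O}(M))$, I would build a bundle by gluing: set
\begin{equation*}
P:=\Big(\bigsqcup_{i\in\mathcal{I}} U_i\sideset{}{_{\pi|_{U_i}}}{\mathop{\times}} M\Big)\Big/\sim\,,\qquad (x,m)_j\sim (x,g_{ij}(x)\cdot m)_i\,,
\end{equation*}
with $\pi([x,m])=x$ and $\rho([x,m])=\pi_M(m)=x$ (so $\rho=\pi$), and right action $[x,m]\cdot m'=[x,m\cdot m']$, which is well-defined because the fibers of $M\to X$ are abelian groups. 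The cocycle condition makes $\sim$ an equivalence relation, the quotient inherits a manifold structure because the $g_{ij}$ are smooth, and by construction the map $P\sideset{_{\rho}}{_{t}}{\mathop{\times}} M\to P\sideset{_{\pi}}{_{\pi}}{\mathop{\times}} P$ is a diffeomorphism on each local piece $U_i\times_X M$ and hence globally. Cohomologous cocycles produce isomorphic bundles by the same fiberwise translation argument used above.

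The two constructions are mutually inverse at the level of isomorphism classes, which gives the desired bijection. The only genuinely delicate point — the part I expect to be the main obstacle — is verifying that the transition data genuinely takes values in the sheaf $\mathcal{O}(M)$ (rather than in some larger sheaf of set-theoretic sections of $M\to X$ not respecting the group structure) and that the cocycle/coboundary manipulations are consistent with the non-trivial fiberwise group structure of $M$; this is where the hypothesis that $M$ is a family of \emph{abelian} groups is essential, since otherwise the coboundary relation $g_{ij}'=h_i^{-1}g_{ij}h_j$ would not descend to an honest equivalence on cocycles and $H^1$ would not even be a group.
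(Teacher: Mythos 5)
Your proof is correct. The paper actually states this proposition without any proof, so there is nothing to compare against; your Čech argument — extracting transition sections $g_{ij}\in\mathcal{O}(M)(U_{ij})$ from local sections of $\pi$ via the torsor diffeomorphism $P\sideset{_{\rho}}{_{t}}{\mathop{\times}}M\cong P\sideset{_{\pi}}{_{\pi}}{\mathop{\times}}P$, checking the cocycle and coboundary relations, and gluing in the reverse direction — is exactly the standard argument that fills the gap. One minor imprecision in your closing remarks: the right action $[x,m]\cdot m'=[x,mm']$ on the glued bundle is well defined even without commutativity (left gluing by $g_{ij}$ commutes with right multiplication by $m'$ in any group), and the coboundary relation $g_{ij}'=h_i^{-1}g_{ij}h_j$ is an equivalence relation on cocycles in the non-abelian case too; what abelianness actually buys is that $\check{H}^1$ is a group and agrees with the derived-functor cohomology $H^1(X,\mathcal{O}(M))$ of the abelian sheaf, which is what the proposition asserts.
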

Before concluding this section we will make a remark on notation:
\theoremstyle{definition}\begin{rmk}Given a family of abelian groups $E\overset{\pi}{\to} Y\,,$ we can form its sheaf of sections, which as previously stated we denote by $\mathcal{O}(E)\,.$ In addition, given a map $f:X\to Y$ we get a family of abelian groups on $X\,,$ given by $f^*E=X{\mathop{\times}}_Y E\,.$
\end{rmk}
\subsection{Sheaves on Lie Groupoids and Stacks}
In this section we discuss the relationship between sheaves on $[G^0/G]\,,$ sheaves on $\mathbf{B}^\bullet G$ and $G$-modules ($\mathbf{B}^\bullet G$ is the nerve of $G\,,$ see appendix~\ref{appendix:Cohomology of Sheaves on Stacks} for more).
\subsubsection{Sheaves: Lie Groupoids to Stacks}\label{sheaves on stacks}
Here we discuss how to obtain a sheaf on the stack $[G^0/G]$ from a $G$-module.
\\\\Let $M$ be a $G$-module for $G\rightrightarrows G^0\,.$ We obtain a sheaf on $[G^0/G]$ as follows: consider the object of $[G^0/G]$ given by 
\[
\begin{tikzcd}
P \arrow{r}{\rho}\arrow{d}{\pi} & G^0 \\
X
\end{tikzcd}
\]
We can form the action groupoid $G\ltimes P$ and consider the $(G\ltimes P)$-module given by $\rho^*M\,.$ To $P$ we assign the abelian group $\Gamma_\text{inv}(\rho^*\mathcal{O}(M))$ (ie. the sections invariant under the $G\ltimes P$ action). To a morphism between objects of $[G^0/G]$ the functor just assigns the set-theoretic pullback. This defines a sheaf on $[G^0/G]\,,$ denoted $\mathcal{O}(M)_{[G^0/G]}\,.$

\subsubsection{Sheaves: Stacks to Lie Groupoids}\label{stacks to lie groupoids}
Here we discuss how to obtain a sheaf on $\mathbf{B}^\bullet G$ from a sheaf on $[G^0/G]\,,$ and we define the cohomology of a groupoid with coefficients taking values in a module.
\\\\Let $G\rightrightarrows G^0$ be a Lie groupoid and let $\mathcal{S}$ be a sheaf on $[G^0/G]\,.$ Consider the object of $[G^0/G]$ given by 
\[
\begin{tikzcd}
P \arrow{r}{\rho}\arrow{d}{\pi} & G^0 \\
X
\end{tikzcd}
\]
We can associate to each open set $U\subset X$ the object of $[G^0/G]$ given by
\[
\begin{tikzcd}
P\vert_U \arrow{r}{\rho}\arrow{d}{\pi} & G^0 \\
U
\end{tikzcd}
\]
We get a sheaf on $X$ by assigning to $U\subset X$ the abelian group $\mathcal{S}(P\vert_U)\,.$ 
\\\\Now for all $n\ge 0\,,$ the spaces $\mathbf{B}^{n} G$ are canonically identified with $G$-principal bundles, by identifying $\mathbf{B}^{n} G$ with the object of $[G^0/G]$ given by

\begin{equation}\label{canonical object}
\begin{tikzcd}[row sep=large, column sep=large]
 \mathbf{B}^{n+1} G \arrow{r}{d_{1,1}\circ p_{n+1}}\arrow{d}{d_{n+1,n+1}} & G^0 \\
\mathbf{B}^{n} G
\end{tikzcd}
\end{equation}
where $p_{n+1}$ is the projection onto the $(n+1)^{\text{th}}$ factor. Hence given a sheaf $\mathcal{S}$ on $[G^0/G]$ we obtain a sheaf on $\mathbf{B}^{n} G\,,$ for all $n\ge 0\,,$ denoted $\mathcal{S}(\mathbf{B}^{n} G)\,,$ and together these form a sheaf on $B^\bullet G\,.$ Furthermore, given a $G$-module $M$ we have that \begin{align*}
    \mathcal{O}(M)_{[G^0/G]}({G^0})\cong \mathcal{O}(M)\,.
\end{align*}
Moreover, we have the following lemma:
\begin{lemma}
Let $M$ be a $G$-module. Then the sheaf on $\mathbf{B}^\bullet G$ given by $\mathcal{O}(M)_{[G^0/G]}(\mathbf{B}^{\bullet}G)\,,$ is isomorphic to the sheaf of sections of the simplicial family of abelian groups given by
\begin{align*}
    \mathbf{B}^\bullet(G\ltimes M)\to \mathbf{B}^\bullet G\,.
\end{align*}
\end{lemma}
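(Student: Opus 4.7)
The plan is to establish the claimed isomorphism levelwise first, and then verify compatibility with the simplicial structure. For each $n\ge 0$, the sheaf $\mathcal{O}(M)_{[G^0/G]}(\mathbf{B}^n G)$ on $\mathbf{B}^n G$ assigns to an open subset $U$ the invariant sections $\Gamma_{\text{inv}}(\rho^{*}\mathcal{O}(M))$ of the restriction of the canonical $G$-principal bundle $\mathbf{B}^{n+1} G \to \mathbf{B}^n G$ from diagram (1.3), with moment map $\rho = d_{1,1}\circ p_{n+1}$, to $U$. First I would record the standard fact that for any $G$-principal bundle $\pi: P \to X$ with moment map $\rho: P \to G^0$ and any $G$-module $M$, there is a canonical bijection
\begin{equation*}
\Gamma_{\text{inv}}(\rho^{*}\mathcal{O}(M)) \cong \Gamma(P \times_{G} M),
\end{equation*}
where $P \times_{G} M \to X$ is the associated family of abelian groups. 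This follows because the $G\ltimes P$-action on $\rho^{*}M$ is free on fibers, so an invariant section descends uniquely to a section of the quotient, and conversely.

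Next I would construct a natural isomorphism of families of abelian groups over $\mathbf{B}^n G$,
\begin{equation*}
\mathbf{B}^{n+1} G \times_{G} M \xrightarrow{\sim} \mathbf{B}^n(G \ltimes M),
\end{equation*}
by observing that the right-hand side is, by definition, the space of composable $n$-chains $((g_1,m_1),\ldots,(g_n,m_n))$ in the action groupoid $G\ltimes M$, and composability forces every $m_i$ to be determined by the $g_j$'s together with a single element of $M$ whose base point is prescribed by $\rho$ applied to the chain. On the other hand, a point in $\mathbf{B}^{n+1}G\times_G M$ is an equivalence class $[(g_1,\ldots,g_{n+1}),m]$, which similarly amounts to the data $(g_1,\ldots,g_n)$ together with a choice of $m\in M$ over the correct base point (the free $G$-action eliminating the ambiguity in $g_{n+1}$). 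Matching these descriptions gives the bijection, and it is an isomorphism of families of abelian groups because the $M$-group law on the associated bundle agrees with the groupwise addition in the source fibers of $G\ltimes M \rightrightarrows M$.

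Composing the two displays, for every open $U\subseteq \mathbf{B}^n G$ we get a natural chain
\begin{equation*}
\mathcal{O}(M)_{[G^0/G]}(\mathbf{B}^n G)(U) \;\cong\; \Gamma(U,\mathbf{B}^{n+1} G \times_{G} M) \;\cong\; \Gamma(U,\mathbf{B}^n(G \ltimes M)),
\end{equation*}
i.e.\ the sheaf of sections of $\mathbf{B}^n(G\ltimes M)\to \mathbf{B}^n G$ over $U$. Finally, I would verify that these levelwise isomorphisms intertwine the structure maps $\mathcal{S}(f):Z(f)^{-1}\mathcal{S}_n\to \mathcal{S}_m$ induced by simplex morphisms $f:[n]\to[m]$; this follows from naturality of the associated-bundle construction under morphisms between the canonical objects (1.3), together with functoriality of the nerve applied to the forgetful morphism $G\ltimes M \to G$. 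The main obstacle, and really the only subtle point, is bookkeeping conventions: the identifications above all depend on specifying consistently which face map plays the role of the source vs.\ the target, both for the $G$-action on $\mathbf{B}^{n+1}G$ and for composability in $G\ltimes M$. Once these conventions are pinned down, the remaining verifications are routine diagram chases.
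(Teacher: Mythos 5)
The paper states this lemma without giving any proof, so there is nothing to compare your argument against; on its own merits the proposal is correct and is the natural argument. Your two key steps --- that $\Gamma_{\text{inv}}(\rho^{*}\mathcal{O}(M))$ descends to sections of the associated bundle $P\times_G M$ because the $G$-action on the canonical principal bundle $\mathbf{B}^{n+1}G\to\mathbf{B}^nG$ is free and transitive on fibers, and that this associated bundle is identified with $\mathbf{B}^n(G\ltimes M)$ since both are canonically the pullback of $M\to G^0$ along a map $\mathbf{B}^nG\to G^0$ picking out the common base point of a composable chain --- are exactly the identifications the paper uses implicitly elsewhere (for instance in Proposition~\ref{key example}, where $\Gamma_{\text{inv}}(\rho^{*}\mathcal{O}(M))\cong\Gamma(\sigma^{*}\mathcal{O}(M))$ for a submersion groupoid). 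Your closing caveat about fixing source/target conventions consistently is the only genuinely delicate point, and acknowledging it rather than eliding it is appropriate.
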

\theoremstyle{definition}\begin{definition}
Let $G\rightrightarrows G^0$ be a Lie groupoid and let $M$ be a $G$-module. We define 
\[
H^*(G,M):=H^*(\mathbf{B}^{\bullet}G,\mathcal{O}(M)_{[G^0/G]}(\mathbf{B}^{\bullet}G))\,.
\]
$\blacksquare$\end{definition}
\theoremstyle{definition}\begin{rmk}[See~\cite{Kai}]\label{morita inv}
Let $G\rightrightarrows G^0$ and $K\rightrightarrows K^0$ be Lie groupoids and let $\phi:G\to K$ be a Morita morphism (see \Cref{moritamapjeff}). Then the pullback $\phi^*$ induces an equivalence of categories \[
\phi^*:[K^0/K]\to[G^0/G]\,.
\]
Furthermore, let $\mathcal{S}$ be a sheaf on $[G^0/G]\,.$ Then the pushforward sheaf $\phi_*\mathcal{S}:=\mathcal{S}\circ\phi^*$ is a sheaf on $[K^0/K]$ and we have a natural isomorphism 
\[
H^*(\mathbf{B}^{\bullet}G,\mathcal{S}(\mathbf{B}^{\bullet}G))\cong H^*(\mathbf{B}^{\bullet}K,\phi_*\mathcal{S}(\mathbf{B}^{\bullet}K))\,.
\]
\end{rmk}
\subsection{Godement Construction for Sheaves on Stacks}
Here we discuss a version of the Godement resolution for sheaves on stacks, and we show how it can be used to compute cohomology.
\begin{definition}
Let $G\rightrightarrows G^0$ be a Lie groupoid and let $\mathcal{S}$ be a sheaf on $[G^0/G]\,.$ We define the Godement resolution of $\mathcal{S}$ as follows:
Consider the object of $[G^0/G]$ given by
\[
\begin{tikzcd}
P \arrow{r}{\rho}\arrow{d}{\pi} & G^0 \\
X
\end{tikzcd}
\]
and consider the corresponding sheaf on $X$ (see~\Cref{stacks to lie groupoids}), denoted by $\mathcal{S}(X)\,.$
We can then consider, for each $n\ge 0\,,$ the $n^\text{th}$ sheaf in the Godement resolution of $\mathcal{S}(X)\,,$ denoted $\mathbb{G}^n(\mathcal{S}(X))\,,$ and to $P$ we assign the abelian group $\Gamma(\mathbb{G}^n(\mathcal{S}(X)))\,.$ These define sheaves on $[G^0/G]$ which we denote by $\mathbb{G}^n(\mathcal{S})\,.$ 
$\blacksquare$\end{definition}
For a sheaf $\mathcal{S}$ on $[G^0/G]$ we obtain a resolution by using $\mathbb{G}^\bullet(\mathcal{S})$ in the following way: 
\begin{align*}
    \mathcal{S}\xhookrightarrow{}\mathbb{G}^1(\mathcal{S})\to \mathbb{G}^2(\mathcal{S})\to\cdots\,.
    \end{align*}
The sheaves $ \mathbb{G}^n(\mathcal{S})$ are not in general acyclic on stacks, however the sheaves $\mathbb{G}^n(\mathcal{S})(\mathbf{B}^m G)$ are acyclic on $\mathbf{B}^m G$ and hence can be used to compute cohomology (see Theorem~\ref{stack groupoid cohomology} and Remark~\ref{double complex}).
\subsection{Examples}\label{important examples}
The constructions in the previous sections will be important in Section~\ref{van Est map} when defining the van Est map; it is crucial that modules define sheaves on stacks in order to use the Morita invariance of cohomology. Here we exhibit examples of the constructions from the previous sections which will be used in Section~\ref{van Est map}. 
\begin{proposition}\label{example surj sub}Let $f:Y\to X$ be a surjective submersion, and consider the submersion groupoid $Y{\mathop{\times}}_f Y\rightrightarrows Y\,.$ This groupoid is Morita equivalent (see \Cref{moritaequivjeff}) to the trivial $X\rightrightarrows X$ groupoid, hence their associated stacks, $[Y/(Y{\mathop{\times}}_f Y)]$ and $[X/X]\,,$ are categorically equivalent. 
\end{proposition}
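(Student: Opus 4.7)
The plan is to exhibit an explicit Morita morphism $\phi: Y \times_f Y \to X$ (where $X$ is viewed as the trivial/unit groupoid $X \rightrightarrows X$) and then invoke \Cref{morita inv} to pass from Morita equivalence of Lie groupoids to categorical equivalence of their associated stacks.

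First I would define the functor $\phi$ on objects by $\phi(y) = f(y)$ and on morphisms by $\phi(y_0, y_1) = \mathrm{id}_{f(y_0)}$, which is well-defined because $f(y_0) = f(y_1)$ for any arrow $(y_0, y_1) \in Y \times_f Y$. It is straightforward to check that $\phi$ respects source, target, identity, and composition, so it is a homomorphism of Lie groupoids.

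Next I would verify that $\phi$ satisfies the two conditions for being a Morita morphism (\Cref{moritamapjeff}): (i) $\phi$ is essentially surjective at the level of objects, i.e.\ the map $Y \sideset{_{f}}{_{s}}{\mathop{\times}} X \to X$, $(y,x) \mapsto t(x) = x$, is a surjective submersion; this reduces to asking that $f: Y \to X$ itself be a surjective submersion, which is our hypothesis. (ii) $\phi$ is fully faithful, i.e.\ the square
\begin{equation*}
\begin{tikzcd}
Y \times_f Y \arrow[r, "\phi"] \arrow[d, "{(s,t)}"'] & X \arrow[d, "{(\mathrm{id},\mathrm{id})}"] \\
Y \times Y \arrow[r, "f \times f"'] & X \times X
\end{tikzcd}
\end{equation*}
is Cartesian. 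But this is literally the defining universal property of the fiber product $Y \times_f Y$: an element of $(Y \times Y) \times_{X \times X} X$ is a pair $(y_0, y_1)$ with $f(y_0) = f(y_1)$ together with a point of $X$ equal to that common value, which is exactly the same data as an element of $Y \times_f Y$.

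Once Morita equivalence of the two Lie groupoids is established, the categorical equivalence $[Y/(Y \times_f Y)] \simeq [X/X]$ follows immediately from \Cref{morita inv}, which asserts that a Morita morphism of Lie groupoids induces an equivalence between the categories of principal bundles. There is no real obstacle here — the content is entirely bookkeeping, with the only mild point being the interpretation of the pullback condition for fully faithfulness, which in this case is tautological because $Y \times_f Y$ was defined precisely as the relevant fiber product.
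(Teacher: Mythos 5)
Your proof is correct. The paper itself gives no formal proof of this proposition; instead, immediately after the statement it constructs the categorical equivalence directly at the level of stacks, describing the functor $f^*:[X/X]\to[Y/(Y\times_f Y)]$ that sends an $X\rightrightarrows X$ principal bundle $(N,\rho)$ to $N\sideset{_\rho}{_f}{\mathop{\times}} Y$. You work one level down: you exhibit the canonical Morita morphism $\phi:(y_0,y_1)\mapsto f(y_0)$ at the groupoid level, verify the two axioms of \Cref{moritamapjeff} (both of which, as you note, are immediate — the essential surjectivity condition collapses to $f$ being a surjective submersion, and the fully faithful square is the definition of the fiber product), and then invoke \Cref{morita inv} to transport the equivalence to stacks. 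The two routes buy slightly different things: yours is the more self-contained verification of the literal claim ``this groupoid is Morita equivalent to $X\rightrightarrows X$,'' while the paper's presentation foregrounds the explicit stack-level functor $f^*$ because that description is what gets reused in \Cref{key example} and the subsequent cohomology computations. Either way the content is the same canonical map, and there is no gap in your argument.
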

We now describe the functor $f^*:[X/X]\to[Y/(Y{\mathop{\times}}_f Y)]$ which gives this equivalence:
\\\\An $X\rightrightarrows X$ principal bundle is given by a manifold $N$ together with a map $\rho:N\to X$ (the $\pi$ map here is the identity map $N\to N)\,.$ To such an object, we let $f^*(N,\rho)=N\sideset{_\rho}{_{f}}{\mathop{\times}} Y\,.$ This is a $Y{\mathop{\times}}_f Y$ principal bundle in the following way:
\[\begin{tikzcd}
N\sideset{_\rho}{_{f}}{\mathop{\times}} Y \arrow{r}{\rho = p_2}\arrow{d}{\pi = p_1} & Y \\
N
\end{tikzcd}
\]
The functor $f^*$ is an equivalence of stacks. 
\\\\Now suppose we have a sheaf $\mathcal{S}$ on the stack $[Y/(Y{\mathop{\times}}_f Y)]\,,$ then we obtain a sheaf on $[X/X]$ by using the pushforward of $f\,,$ ie. to an object $(N,\rho)\in [X/X]$ we associate the abelian group $f_*\mathcal{S}(N,\rho):=\mathcal{S}(f^*(N,\rho))\,.$ We then obtain a sheaf on the simplicial space $\mathbf{B}^\bullet(X\rightrightarrows X)$ as follows: First note that $\mathbf{B}^n(X\rightrightarrows X)=X$ for all $n\ge 0\,,$ so the sheaves are the same on all levels. Now let $U\xhookrightarrow{\iota} X$ be open. Then $(U,\iota)\in [X/X]\,,$ so to this object we assign the abelian group $f_*\mathcal{S}(U,\iota)\,.$
\begin{proposition}\label{key example}Suppose $M$ is a $Y{\mathop{\times}}_f Y$-module and $f$ has a section $\sigma:X\to Y\,.$ We then obtain a sheaf (and its associated Godement sheaves) on $[X/X]\,,$ and in particular we obtain a sheaf (and its associated Godement sheaves) on $X\in [X/X]\,,$ which we describe as follows:
\end{proposition}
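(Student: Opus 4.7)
The plan is to unravel the push-forward of $\mathcal{O}(M)_{[Y/(Y\times_f Y)]}$ along $f^*$ and identify it explicitly in terms of the section $\sigma$, concluding that the sheaf on $X$ is $\sigma^*\mathcal{O}(M)$, with Godement sheaves $\mathbb{G}^n(\sigma^*\mathcal{O}(M))\,.$ First I would identify the image under $f^*$ of the object $(X,\mathrm{id}_X)\in[X/X]\,.$ Since $X\sideset{_{\mathrm{id}}}{_{f}}{\mathop{\times}} Y=Y$ with $\rho=p_2=\mathrm{id}_Y$ and $\pi=p_1=f\,,$ we obtain the principal $Y\times_f Y$-bundle $(Y,\mathrm{id}_Y,f)$ over $X\,.$

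Next, following Section~\ref{stacks to lie groupoids}, I would describe the sheaf on $X$ associated to $(X,\mathrm{id}_X)$ by evaluating $f_*\mathcal{O}(M)_{[Y/(Y\times_f Y)]}$ on each open $U\subset X\,.$ By definition this is
\begin{equation*}
\mathcal{O}(M)_{[Y/(Y\times_f Y)]}\big(f^{-1}(U)\big)=\Gamma_{\mathrm{inv}}\big(\mathcal{O}(M)\vert_{f^{-1}(U)}\big)\,,
\end{equation*}
using the construction of $\mathcal{O}(M)_{[Y/(Y\times_f Y)]}$ from Section~\ref{sheaves on stacks} applied to the trivial action $(\mathrm{id}_Y)^*M=M\,.$

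The main step, and the one which does the real work, is to show that $\sigma$ induces an isomorphism $\Gamma_{\mathrm{inv}}(\mathcal{O}(M)\vert_{f^{-1}(U)})\cong \Gamma(\sigma^*\mathcal{O}(M)\vert_U)\,.$ The idea is that for any $y\in Y$ with $f(y)=x$ the pair $(y,\sigma(x))\in Y\times_f Y$ is a morphism $\sigma(x)\to y$ which acts as a group isomorphism $M_{\sigma(x)}\to M_y\,.$ Therefore any invariant section $s$ is uniquely determined by its pullback $\sigma^*s$ along $\sigma\,,$ and conversely any section $t$ of $\sigma^*M$ over $U$ extends to an invariant section by $s(y):=(y,\sigma(f(y)))\cdot t(f(y))\,.$ The compatibility of this assignment with restriction to open subsets is pointwise and therefore immediate, upgrading the bijection to a sheaf isomorphism; this is the only place where the existence of the section $\sigma$ is essential.

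Finally, for the Godement sheaves, I would apply the definition from Section~\ref{appendix:Cohomology of Sheaves on Stacks} verbatim: the sheaf $\mathbb{G}^n(f_*\mathcal{O}(M)_{[Y/(Y\times_f Y)]})$ evaluated at $(U,\iota)$ is $\Gamma(\mathbb{G}^n(\mathcal{S}(U)))\,,$ where $\mathcal{S}(U)$ is the restriction to $U$ of the sheaf on $X$ obtained in the previous step, namely $\sigma^*\mathcal{O}(M)\vert_U\,.$ By functoriality of the Godement construction, this yields $\mathbb{G}^n(\sigma^*\mathcal{O}(M))$ as the associated Godement sheaves on $X\,,$ completing the description. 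The expected obstacle is not any single computation — all are routine — but ensuring the identifications are natural enough to commute with restriction and with the Godement functor, which is the content of the compatibility argument in the main step.
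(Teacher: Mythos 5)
Your proposal is correct and follows essentially the same route as the paper: apply the equivalence $f^*$ to the objects $(U,\iota)\in[X/X]$ to get the bundles $Y\vert_U\to U\,,$ identify the invariant sections $\Gamma_{\text{inv}}(\mathcal{O}(M)\vert_{f^{-1}(U)})$ with $\Gamma(\sigma\vert_U^*\mathcal{O}(M))$ via the canonical arrows $(\,y,\sigma(f(y))\,)$ of the submersion groupoid, and then observe that the Godement construction passes through this identification. The paper merely asserts the isomorphism $\Gamma_{\text{inv}}(\rho^*\mathcal{O}(M))\cong\Gamma(\sigma\vert_U^*\mathcal{O}(M))\,;$ your main step supplies exactly the intended justification.
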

We use the notation in Proposition~\ref{example surj sub}. We have that
\[f^*(U,\iota)=\begin{tikzcd}
U_{\iota}{\mathop{\times}}_f Y \arrow{r}{\rho= p_2}\arrow{d}{\pi= p_1} & Y \\
U
\end{tikzcd}
=\begin{tikzcd}
Y\vert_U \arrow{r}{}\arrow{d}{f} & Y \\
U
\end{tikzcd}
\]
We then see that $\,\Gamma_\text{inv}(\rho^*\mathcal{O}(M))\cong \Gamma(\sigma\vert_U^*\mathcal{O}(M))\,,$ 
hence the sheaf we get on $X$ is simply $\sigma^*\mathcal{O}(M)\,.$ Furthermore, the sheaves we get on $X$ by applying the Godement construction to $\mathcal{O}(M)_{[Y/(Y{\mathop{\times}}_f Y)]}$ are simply $\mathbb{G}^\bullet(\sigma^*\mathcal{O}(M))\,.$
\begin{lemma}\label{acyclic edit}
Suppose we have a sheaf $\mathcal{S}$ on the stack $[Y/(Y{\mathop{\times}}_f Y)]\,,$ then the associated Godement sheaves $\mathbf{G}^\bullet(\mathcal{S})$ are acyclic.
\end{lemma}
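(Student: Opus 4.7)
The strategy is to use Morita invariance of stack cohomology to reduce acyclicity on $[Y/(Y{\mathop{\times}}_f Y)]$ to acyclicity on the trivial stack $[X/X]\,,$ where the Godement sheaves become ordinary Godement sheaves on the manifold $X$ and acyclicity is classical.

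First, I would invoke the Morita equivalence of Proposition~\ref{example surj sub}: the map $f:Y\to X$ induces a Morita morphism between $Y{\mathop{\times}}_f Y\rightrightarrows Y$ and the trivial groupoid $X\rightrightarrows X\,.$ By Remark~\ref{morita inv} the pushforward functor $f_*$ is an equivalence on sheaves and preserves cohomology, so it suffices to prove that $f_*\mathbb{G}^n(\mathcal{S})$ is acyclic on $[X/X]$ for every $n\ge 0\,.$

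The second (and main) step is to identify $f_*\mathbb{G}^n(\mathcal{S})$ as a Godement sheaf on $X\,.$ Tracing through the definitions exactly as in Proposition~\ref{key example} but for a general stack sheaf $\mathcal{S}$: for $U\xhookrightarrow{\iota} X$ the object $f^*(U,\iota)$ is the principal bundle $Y|_U\to U\,,$ and by construction of $\mathbb{G}^n$ on stacks we have $\mathbb{G}^n(\mathcal{S})(f^*(U,\iota))=\Gamma\big(\mathbb{G}^n(\mathcal{S}(U))\big)\,,$ where $\mathcal{S}(U)$ is the sheaf on $U$ assigned to $f^*(U,\iota)\,.$ Since $\mathcal{S}(U)$ is just the restriction to $U$ of the sheaf $\mathcal{S}(X)$ assigned to $(X,\mathrm{id})\in [X/X]$ (this is where we use that $[X/X]$ has principal bundles of the form $\mathrm{id}:V\to V$ and compatibility of $\mathcal{S}$ with restriction), and the Godement construction is local, we obtain the identification $f_*\mathbb{G}^n(\mathcal{S})\cong \mathbb{G}^n(f_*\mathcal{S})$ as sheaves on $[X/X]\,.$

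Finally, cohomology on the trivial stack $[X/X]$ reduces to ordinary sheaf cohomology on $X\,,$ because $\mathbf{B}^\bullet(X\rightrightarrows X)$ is constant at $X$ with identity face maps and $[X/X]$ is equivalent as a stack to the manifold $X\,.$ Since $\mathbb{G}^n(f_*\mathcal{S})$ is a Godement sheaf on the manifold $X\,,$ it is flasque by construction, hence acyclic. Combining these three steps yields $H^k\big([Y/(Y{\mathop{\times}}_f Y)],\mathbb{G}^n(\mathcal{S})\big)=0$ for $k>0\,,$ which is the desired acyclicity.

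The main obstacle is the second step: checking that the Godement construction on the stack commutes with the Morita pushforward. Since $\mathbb{G}^n$ on a stack is built from the Godement construction on the base manifold of each principal bundle, this compatibility is really an unraveling of definitions, but it does require that one verify the assignment $U\mapsto \mathcal{S}(U)$ obtained through $f^*$ is a sheaf on $X$ whose stalks and restrictions behave exactly as the classical Godement construction demands; this is where having a section $\sigma$ (or at least working locally where one exists) makes the identification transparent.
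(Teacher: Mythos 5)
Your proposal is correct and follows essentially the same route as the paper's own (very terse) proof: reduce to $[X/X]$ via Morita invariance of stack cohomology and then use that Godement sheaves on a manifold are acyclic. The only difference is that you spell out the intermediate identification of the pushed-forward Godement sheaf with the classical Godement construction on $X$, which the paper leaves implicit.
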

\begin{proof}
This follows from the fact that $[Y/(Y{\mathop{\times}}_f Y)]\,,$ is Morita equivalent to $[X/X]\,,$ since cohomology is invariant under Morita equivalence of stacks, and the fact that the Godement sheaves on a manifold are acyclic. 
\end{proof}
\theoremstyle{definition}\begin{rmk}
Let $X$ be a manifold and let $X\rightrightarrows X$ be the trivial Lie groupoid. Let $\mathcal{S}$ be a sheaf on $[X/X]\,.$ Then we recover the usual cohomology: 
\[
H^*(\mathbf{B}^\bullet X,\mathcal{S}(\mathbf{B}^\bullet X))
=H^*(X,\mathcal{S}(X))\,.
\]
This will be important in computing the cohomology of submersion groupoids, since they are Morita equivalent to trivial groupoids.
\end{rmk}
\chapter{Chevalley-Eilenberg Complex for Modules}\label{Chevalley}
In this section we review the Chevalley-Eilenberg complex associated to a representation of a Lie algebroid. Then we generalize Lie
algebroid representations to Lie algebroid modules and define their Chevalley-Eilenberg complex. These will be used in Section~\ref{van Est map}.
\section{Lie Algebroid Representations}
\begin{definition}Let $\mathfrak{g}\overset{\pi}{\to} Y$ be a Lie algebroid, with anchor map $\alpha:\mathfrak{g}\to TY\,,$ and recall that $\mathcal{O}(\mathfrak{g})$ denotes the sheaf of sections of $\mathfrak{g}\overset{\pi}{\to} Y\,.$ A representation of $\mathfrak{g}$ is a vector bundle
 $E\to Y$ together with a map\begin{align*}
     \mathcal{O}(\mathfrak{g})\otimes\mathcal{O}(E)\to\mathcal{O}(E)\,,\,X\otimes s\mapsto L_X(s)
 \end{align*}
 such that for all open sets  $\,U\subset Y$ and for all $f\in \mathcal{O}_Y(U)\,,X\in \mathcal{O}(\mathfrak{g})(U)\,,
 s\in \mathcal{O}(E)(U)\,,$ we have that
 \begin{enumerate}
     \item $L_{fX}(s)=fL_X(s)\,,$
     \item $L_X(fs)=fL_X(s)+(\alpha(X)f)s\,,$
     \item $L_{[X,Y]}(s)=[L_X,L_Y](s)\,.$
 \end{enumerate}
 $\blacksquare$\end{definition}
\begin{definition}
Let $E$ be a representation of $\mathfrak{g}\,.$ Let $\mathcal{C}^n(\mathfrak{g},E)$ denote the sheaf of $E$-valued
 $n$-forms on 
 $\mathfrak{g}\,,$ ie. the sheaf of sections of $\Lambda^n \mathfrak{g}^*\otimes E\,.$ There is a canonical differential\footnote{Meaning in particular that $d_\text{CE}^2=0\,.$}
 \begin{align*}
     d_\text{CE}:\mathcal{C}^n(\mathfrak{g},E)\to \mathcal{C}^{n+1}(\mathfrak{g},E)\,,\,n\ge0
 \end{align*}
 defined as follows: let $\omega\in \mathcal{C}^n(\mathfrak{g},E)(V) $ for some open set $V\,.$ Then for $X_1\,,\ldots \,, X_{n+1}\in \pi^{-1}(m)\,,\,m\in V\,,$ choose local extensions $\mathbf{X}_1\,,\ldots\,,\mathbf{X}_{n+1}$ of these vectors, ie. choose
 \begin{align*}
     p\mapsto\mathbf{X}_1(p)\,,\ldots\,,p\mapsto\mathbf{X}_{n+1}(p)\in \mathcal{O}(\mathfrak{g})(U)\,,
     \end{align*}
     for some open set $U$ such that $m\in U\subset V\,,$ and such that $\mathbf{X}_i(m)=X_i$ for all $1\le i\le n+1)\,.$ Then let
 \begin{align*}
     d_{\text{CE}}\omega(X_1\,,\ldots\,,X_{n+1})&=\sum_{i<j}(-1)^{i+j-1}\omega([\mathbf{X}_i,\mathbf{X}_j],\mathbf{X}_1,\ldots ,\hat{\mathbf{X}}_i,\ldots
     , \hat{\mathbf{X}}_j,\ldots , \mathbf{X}_{n+1})
\vert_{p=m}     \\& +\sum_{i=1}^{n+1}(-1)^i L_{\mathbf{X}_i}(\omega(\mathbf{X}_i,\ldots , \hat{\mathbf{X}}_i ,\ldots , \mathbf{X}_{n+1}))\vert_{p=m}\,.
 \end{align*}
This is well-defined and independent of the chosen extensions.
 $\blacksquare$\end{definition}
\subsection{Lie Algebroid Modules}
We will now define Lie algebroid modules and define their Chevalley-Eilenberg complexes; these will look like the Chevalley-Eilenberg complexes associated to representations, except for possibly in degree zero (though representations will be seen to be special 
cases of Lie algebroid modules).
\begin{definition}
Let $\mathfrak{g}\to Y$ be a Lie algebroid, and let $M$ be a family of  abelian groups, with Lie algebroid $\mathfrak{m}$ and exponential map $\exp:\mathfrak{m}\to M\,.$\footnote{Note that $\mathfrak{m}$ is just a vector bundle and the exponential map is given by the fiberwise exponential map taking a Lie algebra to its corresponding Lie group.} Then a $\mathfrak{g}$-module structure on $M$ is given by the following: a $\mathfrak{g}$-representation structure on $\mathfrak{m}$ $($ie. a morphism  $\mathcal{O}(\mathfrak{g})\otimes\mathcal{O}(\mathfrak{m})\to\mathcal{O}(E)\,,\,X\otimes s\mapsto L_X(s))\,,$ together with a morphism of sheaves \begin{align*}
     \mathcal{O}(\mathfrak{g})\otimes_{\mathbb{Z}}\mathcal{O}(M)\to\mathcal{O}(\mathfrak{m})\,,\,X\otimes_{\mathbb{Z}} s\mapsto \tilde{L}_X(s)
 \end{align*}
  such that for all open sets  $\,U\subset Y$ and for all $f\in \mathcal{O}_Y(U)\,,X\in \mathcal{O}(\mathfrak{g})(U)\,,
 s\in \mathcal{O}(M)(U)\,,\sigma\in \mathcal{O}(\mathfrak{m})(U)\,, $ we have that
 \begin{enumerate}
     \item $\tilde{L}_{fX}(s)=f\tilde{L}_X(s)\,,$
     \item $\tilde{L}_{[X,Y]}(s)=(L_X\tilde{L}_Y-L_Y\tilde{L}_X)(s)\,,$
     \item $\tilde{L}_X(\exp{\sigma})=L_X(\sigma)\,.$
 \end{enumerate}
If $M$ is endowed with such a structure we call it a $\mathfrak{g}$-module.
$\blacksquare$\end{definition}
\begin{comment}Let $G\rightrightarrows X$ be a Lie groupoid and let $M$ be a $G$-module, and $\mathfrak{m}$ the associated representation, ie. a representation such that if $v\in \mathfrak{m}_x$ and $g\in G$ satisfies $s(g)=x\,,$ then 
\begin{align*}
    g\exp{v}=\exp{gv}\,.
\end{align*}
\end{comment}
\begin{definition}\label{forms}Let $\mathfrak{g}\to X$ be a Lie algebroid and let $M$ be a $\mathfrak{g}$-module. We then define sheaves on $X\,,$ called ``sheaves of $M$-valued forms", as follows: let
\begin{align*}
&\mathcal{C}^0(\mathfrak{g},M)=\mathcal{O}(M)\,,
\\&   \mathcal{C}^n(\mathfrak{g},M)=\mathcal{O}(\Lambda^n \mathfrak{g}^*\otimes \mathfrak{m})\,,\;n> 0\,.
\end{align*}
\begin{comment}Furthermore, let $d_{\text{CE}}$ denote the usual Chevalley-Eilenberg differentials for $\mathcal{C}^\bullet(\mathfrak{g},\mathfrak{m})\,.$ Define a map \begin{align*}
    d_{\text{CE}}\log:\mathcal{C}^0(\mathfrak{g},M)\to\mathcal{C}^1(\mathfrak{g},M)
    \end{align*}
as follows: let $s\in \mathcal{C}^0(\mathfrak{g},M)(U)\,,$ and for each $x\in U$ choose an open set $V$ such that $x\in V\subset U\,,$ and such that on $V$ there exists an
$\tilde{s}:V\to\mathfrak{m}$ for which $s\vert_V=\exp{\tilde{s}}\,.$ Define 
\begin{align*}
    d_{\text{CE}}\log\,s\vert_V=d_{\text{CE}}\,\tilde{s}\in \mathcal{C}^1(\mathfrak{g},M)(V)\,.
    \end{align*}
This is well-defined by~\eqref{condition} and determines a local section $d_{\text{CE}}\log s\in \mathcal{C}^1(\mathfrak{g},M)(U)\,.$
\end{comment}
Furthermore, for $s\in\mathcal{O}(M)(U)\,,$ we define $d_\text{CE}\log f$ by $d_\text{CE}\log f(X):=\tilde{L}_X(s)\,.$ We then have a cochain complex of sheaves given by
\begin{equation}\label{CE}
    \mathcal{C}^0(\mathfrak{g},M)\xrightarrow{d_{\text{CE}}\log}\mathcal{C}^1(\mathfrak{g},M)\xrightarrow{d_\text{CE}}\mathcal{C}^2(\mathfrak{g},M)\xrightarrow{d_\text{CE}}\cdots\,.
\end{equation}
 $\blacksquare$\end{definition}
\begin{definition}
The sheaf cohomology of the above complex of sheaves is denoted by $H^*(\mathfrak{g},M)\,.$
$\blacksquare$\end{definition}
\begin{definition}
Let $M\,,N$ be $\mathfrak{g}$-modules. A morphism $f:M\to N$ is a morphism of the underlying families of abelian groups such that the induced map $df:\mathfrak{m}\to\mathfrak{n}$ satisfies $\tilde{L}_X(f\circ s)=df\circ \tilde{L}_X(s)\,,$ for all local sections $X$ of $\mathfrak{g}$ and $s$ of $M\,.$
$\blacksquare$\end{definition}

 \theoremstyle{definition}\begin{exmp}
Here we will show that the notion of $\mathfrak{g}$-modules naturally extends the notion of $\mathfrak{g}$-representations. Let $E$ be a representation of $\mathfrak{g}\,.$ By thinking of the fibers of $E$ as abelian groups it defines a family of  abelian groups. The exponential map $E\overset{\exp}{\to} E$ is the identity, hence its kernel is the zero section and $E$ naturally defines a $\mathfrak{g}$-module where $d_\text{CE}\log =d_\text{CE}\,.$ So the definition of a $\mathfrak{g}$-module and its Chevalley-Eilenberg complex recovers the definition of a $\mathfrak{g}$-representation and its Chevalley-Eilenberg complex given by Crainic in~\cite{Crainic}.
\end{exmp}
 \theoremstyle{definition}\begin{exmp}\label{representation}
The group of isomorphism classes of $\mathfrak{g}$-representations on complex line bundles is isomorphic to $H^1(\mathfrak{g},\mathbb{C}^*_M)\,,$ where $\mathbb{C}^*_M$ is the $\mathfrak{g}$-module for which $\tilde{L}_X s=\mathrm{dlog}\,s(\alpha(X))\,,$ for a local section $s$ of $\mathbb{C}^*_M\,.$ The corresponding statement holds for real line bundles, with $\mathbb{C}^*_M$ replaced by $\mathbb{R}^*_M\,.$
\end{exmp}
\theoremstyle{definition}\begin{exmp}(Deligne Complex)
Let $X$ be a manifold and  $\mathfrak{g}=TX\,.$ Then letting $M=\mathbb{C}^*_X\,,$ we have that $\mathfrak{m}=\mathbb{C}_X$ naturally carries a representation of $TX\,,$ ie. where the differentials are the de Rham differentials. Letting $\exp:\mathfrak{m}\to M$ be the usual exponential map, it follows that $M$ is a $\mathfrak{g}$-module, and in fact the complex~\eqref{CE} in this case is known as the Deligne complex.
\end{exmp}
For a less familiar example we have the following:
\theoremstyle{definition}\begin{exmp}\label{module example}
Consider the space $S^1$ and the group $\mathbb{Z}/2\mathbb{Z}=\{-1,1\}\,.$ This group is contained in the automorphism  groups of $\mathbb{Z}\,,\mathbb{R}$ and $\mathbb{R}/\mathbb{Z}\,,$ hence we get nontrivial families of abelian groups over $S^1$  as follows (compare with Example~\ref{foag}): Let $A$ be any of the groups $\mathbb{Z}\,,\mathbb{R}\,,\mathbb{R}/\mathbb{Z}\,.$ Now cover $S^1$ in the standard way  using two open sets $U_0\,,U_1\,,$ and glue together the bundles $U_0{\mathop{\times}} A\,,U_1{\mathop{\times}} A$ with the transition functions
$-1\,,1$ on the two connected components of $U_0\cap U_1\,.$ Denote these families of abelian groups by $\tilde{\mathbb{Z}}\,,\tilde{\mathbb{R}}\,,\widetilde{\mathbb{R}/\mathbb{Z}}$ respectively. The space $\tilde{\mathbb{R}}$ is toplogically
the M\"{o}bius strip, and $\widetilde{\mathbb{R}/\mathbb{Z}}$ is topologically the Klein bottle.
\\\\Next, there is a canonical flat connection on these bundles of groups which is compatible with the fiberwise group structures, hence these families of abelian groups are modules for $\Pi_1(S^1)\,,$ the fundamental groupoid of $S^1\,.$ 
\\\\Furthermore, the $TS^1$-representation associated to the $TS^1$-module of $\tilde{\mathbb{Z}}$ is the rank $0$ vector bundle over $S^1\,,$ and the $TS^1$-representations associated to the $TS^1$-modules of $\tilde{\mathbb{R}}\,,\,\widetilde{\mathbb{R}/\mathbb{Z}}$ are isomorphic to the Mobius strip, ie. the line bundle obtained by gluing together $U_0{\mathop{\times}}\mathbb{R}\,,\,U_ 1{\mathop{\times}} \mathbb{R}$ using the same transition functions as discussed above. The Chevalley-Eilenberg differential, on each local trivialization $U_0{\mathop{\times}}\mathbb{R}\,,\,U_1{\mathop{\times}}\mathbb{R}\,,$ is just the de Rham differential.
\\\\The cohomology groups are $H^i(TS^1,\tilde{\mathbb{R}})=0$ in all degrees, and 
\[
H^i(TS^1,\widetilde{\mathbb{R}/\mathbb{Z}})=\begin{cases}
      \mathbb{Z}/2\mathbb{Z}, & \text{if}\ i= 0 \\
      0, & \text{if}\ i>0\,.
    \end{cases}
\]
\end{exmp}
\begin{theorem}\label{g-module}
Suppose $G\rightrightarrows G^0$ is a Lie groupoid. There is a natural functor
\begin{align*}
    F:G\text{-modules}\to \mathfrak{g}\text{-modules}\,.
\end{align*}
Furthermore, if $G$ is source simply connected then this functor restricts to an equivalence of categories on the subcategories of $G$-modules and $\mathfrak{g}$-modules for which $\exp:\mathfrak{m}\to M$ is a surjective submersion.
\end{theorem}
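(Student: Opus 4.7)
The plan is to construct $F$ by differentiation and then invert it via Lie II for Lie algebroids (Crainic--Fernandes). Given a $G$-module $M$, the fiberwise maps $G(x,y)\colon M_x\to M_y$ are homomorphisms of abelian Lie groups, so they differentiate at the identity to linear isomorphisms $\mathfrak{m}_x\to\mathfrak{m}_y$, equipping the vector bundle $\mathfrak{m}\to G^0$ with a smooth $G$-representation; differentiating this once more yields a $\mathfrak{g}$-representation $L$ on $\mathfrak{m}$. To produce $\tilde L$, for a local section $s$ of $M$ I would set $\tilde s(g):=g^{-1}\!\cdot s(t(g))-s(s(g))\in\mathfrak{m}_{s(g)}$ (difference in the abelian fiber). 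Since $\tilde s$ vanishes on the identity bisection and $X\in\ker(ds)_{1_x}$, the derivative $d\tilde s_{1_x}(X)$ lives in $T_0M_x=\mathfrak{m}_x$, and I define $\tilde L_X(s)(x):=d\tilde s_{1_x}(X)$. Property (1) is automatic from the pointwise definition, property (2) is the familiar cocycle/associativity identity coming from $g_1g_2\cdot m=g_1\cdot(g_2\cdot m)$, and property (3) is clean: since $\exp$ is fiberwise $G$-equivariant (being the $\exp$ of a homomorphism of abelian Lie groups), $\widetilde{\exp\sigma}=\exp\circ\tilde\sigma$, and differentiating gives $\tilde L_X(\exp\sigma)=L_X(\sigma)$. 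Functoriality is automatic because morphisms of $G$-modules intertwine the fiberwise exponentials.

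For the equivalence, assume $G$ source simply connected and $\exp\colon\mathfrak{m}\to M$ a surjective submersion. Since $\dim\mathfrak{m}=\dim M$ fiberwise, $\exp$ is a local diffeomorphism, so $Z:=\ker\exp$ is a closed étale bundle of discrete abelian groups over $G^0$. Given a $\mathfrak{g}$-module $(M,L,\tilde L)$, I first integrate the $\mathfrak{g}$-representation $L$ on $\mathfrak{m}$ to a $G$-representation on $\mathfrak{m}$ via Lie II for Lie algebroids. To descend this to $M=\mathfrak{m}/Z$, I would set $g\cdot\exp(\sigma):=\exp(g\cdot\sigma)$ and verify well-definedness, which reduces to showing $Z$ is $G$-invariant inside $\mathfrak{m}$.

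The main obstacle is precisely this $G$-invariance of $Z$, and the proof uses condition (3) crucially. Given $\zeta\in Z_x$, étaleness of $Z$ lets me extend $\zeta$ to a local section $\tilde\zeta$ of $Z\subset\mathfrak{m}$; then $\exp\tilde\zeta=0$, so $\mathbb Z$-linearity of $\tilde L_X$ in its second argument gives $\tilde L_X(\exp\tilde\zeta)=0$, and property (3) forces $L_X(\tilde\zeta)=0$ for every section $X$ of $\mathfrak{g}$. Thus $\tilde\zeta$ is a flat section of $\mathfrak{m}$ under $L$, so $G$-parallel transport (which solves the ODE defined by $L$) preserves $\tilde\zeta$ locally. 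Now given any $g\in G$ with $s(g)=x$, choose a path $\gamma\colon[0,1]\to s^{-1}(x)$ from $1_x$ to $g$ (possible since source fibers are connected) and parallel transport $\zeta$; the set $\{t:\eta(t)\in Z\}$ is nonempty, open (by the local flat extension argument) and closed (since $Z\subset\mathfrak{m}$ is closed), hence is all of $[0,1]$, so $g\cdot\zeta\in Z_{t(g)}$. This makes the action on $M$ well-defined; smoothness is inherited from the smoothness of $\exp$ and the $G$-action on $\mathfrak{m}$, and the action is by homomorphisms because it is induced on a quotient from a linear action. Finally, to confirm the functors are inverse, I would observe that differentiating the constructed $G$-action on $M$ recovers $(L,\tilde L)$ by construction (via the equivariance of $\exp$), while starting with a $G$-module and reintegrating recovers the original action by the uniqueness clause of Lie II applied to the $G$-representation on $\mathfrak{m}$, combined with the fact that both actions descend to $M$ via the same formula $g\cdot\exp(\sigma)=\exp(g\cdot\sigma)$.
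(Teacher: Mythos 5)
Your proposal is correct and follows essentially the same route as the paper: define $F$ by differentiating the $G$-action along source-fiber curves, integrate the $\mathfrak{g}$-representation on $\mathfrak{m}$ via Lie II for source simply connected groupoids, and descend to $M$ by showing the (discrete) kernel of $\exp$ is $G$-invariant as a consequence of property~(3). Your open–closed parallel-transport argument for the $G$-invariance of $\ker\exp$ spells out a step the paper only asserts, but it is the same underlying mechanism.
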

\begin{proof}$\mathbf{}$
\\\\$\mathbf{1.}$ For the first part, let $M$ be a $G$-module and for $x\in G^0\,$ let $\gamma:(-1,1)\to G(x,\cdot)$ be a curve in the source fiber such that $\gamma(0)=\text{Id}(x)\,.$ We define 
    \begin{align*}
    \tilde{L}_{\dot{\gamma}(0)}r:=\frac{d}{d\epsilon}\Big\vert_{\epsilon=0}r(x)^{-1}[\gamma(\epsilon)^{-1}\cdot r(t(\gamma(\epsilon)))]
    \end{align*}
for a local section $r$ of $\mathcal{O}(M)\,.$ One can check that this is well-defined and that property 1 is satisfied. Now note that the action of $G$ on $M$ induces a linear action of $G$ on $\mathfrak{m}\,,$ and we get a $\mathfrak{g}$-representation on $\mathfrak{m}$ by defining
\begin{align*}
    L_{\dot{\gamma}(0)}\sigma:=\frac{d}{d\epsilon}\Big\vert_{\epsilon=0}\sigma(x)^{-1}[\gamma(\epsilon)^{-1}\cdot \sigma(t(\gamma(\epsilon)))]
    \end{align*}
for a local section $\sigma$ of $\mathfrak{m}\,.$ With these definitions property 2 is satisfied. 
\\\\Now note that this action of $G$ on $\mathfrak{m}$ preserves the kernel of $\exp:\mathfrak{m}\to M\,.$ Let $\sigma$ be a local section of $\mathfrak{m}$
around $x$ such that
$\exp{\sigma}=e\,.$ Then 
\begin{align*}
    L_{\dot{\gamma}(0)}\sigma=\frac{d}{d\epsilon}\Big\vert_{\epsilon=0}\sigma(x)^{-1}[\gamma(\epsilon)^{-1}\cdot \sigma(t(\gamma(\epsilon)))]\,,
    \end{align*}
and since the $G$-action preserves the kernel of 
$\exp\,,$ which is discrete, we have that 
\begin{align*}
    \gamma(\epsilon)^{-1}\cdot \sigma(t(\gamma(\epsilon)))=\sigma(x)\,,
    \end{align*}
hence $L_{\dot{\gamma}(0)}(\sigma)=0\,,$ therefore $L(\sigma)=\tilde{L}(\exp{\sigma})=0\,,$ from which property 3 follows. Since it can be seen that morphisms of $G$-modules induce morphisms of $\mathfrak{g}$-modules, this completes the proof.
\\\\$\mathbf{2.}$ For the second part, let $M$ be a $\mathfrak{g}$-module for which $\exp:\mathfrak{m}\to M$ is a surjective submersion, and suppose $G$ is source simply connected. Then in particular $\mathfrak{m}$ is a $\mathfrak{g}$-representation, and it is known that for source simply connected groupoids Rep$(G)\cong\text{Rep}(\mathfrak{g})$ (by Lie's second theorem for Lie groupoids), hence $\mathfrak{m}$ 
integrates to a $G$-representation. Property 3 implies that the $G$-action preserves the kernel of $\exp\,,$
hence the action of $G$ on $\mathfrak{m}$ descends to $M\,.$ More explicitly: let $g\in G(x,y)$ and let $m\in M_x\,,$ ie. the source fiber of $M$ over $x\,.$ Let 
$\tilde{m}\in\mathfrak{m}_x$ be such that $\exp{\tilde{m}}=m$ and define
\begin{align*}
    g\cdot m=\exp{(g\cdot{\tilde{m}})}\,.
\end{align*}
This is well-defined since the action of $G$ preserves the kernel of $\exp\,.$ Hence the functor is essentially surjective. Now again using the fact that for source simply connected groupoids
Rep$(G)\cong\text{Rep}(\mathfrak{g})\,,$ it follows that the functor is fully faithful, and since it is also essentially surjective, this completes the proof.
 \end{proof}
\begin{comment}$\mathbf{}$
\theoremstyle{definition}\begin{rmk}One can use the tools developped in this section to define connetions on principal groupoid bundles. See Section~\ref{abelian extensions} for more.
\end{rmk}
\end{comment}
\begin{comment}\\\\Let $G\rightrightarrows X$ be a Lie groupoid and let $M$ be a $G$-module. Let $\mathfrak{m}$ be the corresponding $G$-representation.
Define $d_\text{CE}\log:\Gamma(M)\to \mathcal{C}^1 (\mathfrak{g},M)$ by letting, for $\gamma'_0\in T_sG$ with representative $\gamma_\epsilon\in G_s\,,$ for $\epsilon\in (-1,1)\,,$
\begin{align*}
    d_\text{CE}\log f(\gamma'_0):=f(\gamma_0)^{-1}\left\frac{d}{d\epsilon}\right\vert_{\epsilon=0}[\gamma_\epsilon^{-1}f(t(\gamma_\epsilon))]\,.
\end{align*}
\end{comment}
\begin{comment}Alternatively, given a section $f\in\Gamma(M)\,,$ at each point $x\in X$ we can choose a logarithm of $f$ on a small neigbourhood $U\ni x$ and get a local section $\log{f}$ of $\mathfrak{m}\to X\,.$ We then define on $U$ 
\begin{align*}
     d_\text{CE}\log f(X):=d_\text{CE}(\log{f})(X)\,,
\end{align*}
where $X\in \mathfrak{g}$ and where on the right $d_\text{CE}$ is the differential coming from the corresponding representation of $\mathfrak{m}\,.$ The result is independent of the choice of logarithm and thus we get a well-defined global section of $\mathfrak{m}\to X\,.$
\end{comment}
\begin{comment}
\end{comment}Note that by composing with $\exp$ we obtain a natural map 
\begin{align*} 
V_{\mathfrak{g}}^*\overset{\exp}{\to} V_{G}^*\,.
\end{align*}
Claim: this map is an isomorphism. To see this, let $f\in V^*_{\mathfrak{g}}$ and suppose $\exp{f}=0\,.$ Let $v\in V$ and $t\in \mathbb{R}\,.$ Then $\exp{f(tv)}=0\implies \exp{tf(v)}=0.$ But taking $t\ne 0$ small enough such that $tf(v)\in U$ we get that $tf(v)=0\,,$ hence $f(v)=0\,.$
So $\exp$ is injective. 
\\\\Now let $f\in V^*_G\,.$ Let $v\in V\,.$ We must have $f(0)=0\,,$ so the map $[0,1]\to G$ defined by $t\mapsto f(tv)$ defines a path in $G$ starting at $0\,,$ therefore since $\mathfrak{g}$ is a covering space of $G\,,$ by the unique path lifting property there is a unique path $X_v:[0,1]\to\mathfrak{g}$ starting at $0$ which lifts $G\,.$ So define $X\in V^*_{\mathfrak{g}}$ by $X(v)=X_v(1)\,.$ Then $X(tv)=X_{tv}(1)$ and this defines a path lifting $f(tv)$ and starting at $0\,,$ hence $X(tv)=tX(v)\,.$ $X(a+b)=X_{a+b}(1)$

\\\\so by continuity for small enough $t\ne 0\in\mathbb{R}$ we have that $f(tv)\in \exp(U)\,.$ Hence there exists a unique $X_{tv}\in\mathfral{g}$ such that $\exp{X_{tv}}=f(tv)\,.$ Claim: $\exp{\frac{X_{tv}}{t}}=f(v).$
Now define $X\in V^*_{\mathfrak{g}}$ by $X(v)=\frac{X_{tv}}{t}\,.$

Let $\Lambda^0 V^*:=G\,.$ Let $\omega\in \Lambda^n V^*\,.$
\end{comment}

\chapter{Van Est Map}\label{van Est map}
\section{Definition}
In this section we will discuss a generalization of the van Est map that appears in~\cite{Crainic}. It will be a map
$H^*(G,M)\to H^*(\mathfrak{g},M)\,,$ for a $G$-module $M\,,$ which will be an isomorphism up to a certain degree which depends on the connectivity of the source fibers of $G\,.$ Let us remark that one doesn't need to know the details of the map to understand the main theorem of this paper, \Cref{van Est image}, and if the reader wishes they may skip ahead to~\Cref{main theorem section}.
\\\\Given a groupoid $G\rightrightarrows G^0\,,$ $G$ naturally defines a principal $G$-bundle with the moment map given by $t\,,$ ie. the action is given by the left multiplication of $G$ on itself. Being consistent with the previous notation, we denote the resulting action groupoid by $G\ltimes G$ and note that it is isomorphic to $G_s{\mathop{\times}}_{s} G\rightrightarrows G\,,$ hence it is Morita equivalent to the trivial $G^0\rightrightarrows G^0$ groupoid. 
\begin{definition}
We let $\mathbf{E}^\bullet G:=\mathbf{B}^\bullet(G\ltimes G)\,.$
The simplicial map $\kappa:\mathbf{E}^\bullet G\to\mathbf{B}^\bullet G$ induced by the groupoid morphism $\pi_1:G\ltimes G\to G$ makes $\mathbf{E}^\bullet G$ into a simplicial principal $G$-bundle, and the fiber above $(g^1\,,\ldots\,,g^n)\in \mathbf{B}^n G$ is $t^{-1}(s(g_n))\,.$ 
$\blacksquare$\end{definition}
\theoremstyle{definition}\begin{rmk}Note that $G\ltimes G$ is a groupoid object in $[G^0/G]\,,$ and as a principal $G$-bundle it is the canonical object associated to $G$ via diagram~\ref{canonical object} 
\end{rmk}
\begin{comment} ie. it is  isomorphic to the object given by
\\\begin{center}
    \begin{tikzcd}
G^2\arrow{r}{}\arrow{d}{} & G \\
G^0
\end{tikzcd}
\end{center}
Now given a $G$-module $M\,,$ $\kappa^*M$ is a $(G\ltimes G)$-module, and its global sections are isomorphic to the abelian group associated to $G\ltimes G$ when considered as an object of $[G^0/G]\,.$ 
\end{comment}
\begin{definition}
Let $\Omega_{\kappa\;q}^p(\kappa^*M)$ denote the sheaf of sections of $\Lambda^p T^*_\kappa\, \mathbf{E}^qG\,(\kappa^*M)\,,$ the $\kappa$-foliated covectors taking values in $\kappa^*M\,.$ Succinctly, from $M$ we get a family of abelian groups on $\mathbf{B}^q G\,,$ given by \[
\mathbf{B}^q(G\ltimes M)\to \mathbf{B}^qG\,,
\]
which we denote by $M_{\mathbf{B}^qG}\,;$ we then have that $\kappa^*M_{\mathbf{B}^q G}$ is a module for the submersion groupoid 
\[\mathbf{E}^qG\times_{\mathbf{B}^qG} \mathbf{E}^qG\rightrightarrows \mathbf{E}^qG\,,
\]
and $\Omega_{\kappa\;q}^p(\kappa^*M)$ is the sheaf of $\kappa^*M_{\mathbf{B}^q G}$-valued p-forms associated to the corresponding Lie algebroid module (see~\Cref{forms}). Explicitly, $\Omega_{\kappa\;q}^0(\kappa^*M)$ is the sheaf of sections of~\footnote{Really, we should write $\kappa^*M_{\mathbf{B}^q G}\to\mathbf{E}^q G\,,$ but for notational simplicitly we suppress the subscript.}
\[
\kappa^*M\to \mathbf{E}^q G\,,
\]
and for $p\ge 1\,,$ $\Omega_{\kappa\;q}^p(\kappa^*M)$ is the sheaf of $\kappa$-foliated $p$-forms taking values in $\kappa^*\mathfrak{m}\,.$ There is a differential 
\begin{equation}
\Omega_{\kappa\;q}^0(\kappa^*M)\xrightarrow{\text{dlog}} \Omega_{\kappa\;q}^1(\kappa^*M)
\end{equation}
which is defined as follows: let $U$ be an open set in $\mathbf{E}^q G$ and let $X_g$ be a vector tangent to a $\kappa$-fiber at a point $g\in U\,.$ Let $f\in \Omega_{\kappa\;q}^0(\kappa^*M)(U)\,.$ Define $\text{dlog}\,f\in \Omega_{\kappa\;q}^1(\kappa^*M)(U)$ by 
\[
\text{dlog}\,f\,(X_g)=f(g)^{-1}f_*(X_g)\,,
\]
where in order to identify this with a point in $\kappa^*\mathfrak{m}_g$ we are implicity using the canonical identification of $\kappa^*M_g$ with $\kappa^*M_{g'}$ for any two points $g\,,g'$ in the same $\kappa$-fiber (here $\kappa^*M_g$ is the fiber of $\kappa^*M$ over $g)\,.$ We also use the canonical identification of $\kappa^*\mathfrak{m}_g$ with $\kappa^*\mathfrak{m}_{g'}$ for any two points $g\,,g'$ in the same $\kappa$-fiber to define the differentials for $p>0:$
\begin{equation}
\Omega_{\kappa\;q}^p(\kappa^*M)\overset{\text{d}}{\to}\Omega_{\kappa\;q}^{p+1}(\kappa^*M)\,.
\end{equation}
$\blacksquare$\end{definition}
$\mathbf{}$
\begin{theorem}
There is an isomorphism
\begin{equation*}
Q:H^*(\mathbf{E}^\bullet G\,,\kappa^{-1}\mathcal{O}(M))\to H^*(\mathfrak{g}\,,M)\,.
\end{equation*}
\end{theorem}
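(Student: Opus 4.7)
The plan is to realize both sides of $Q$ as the cohomology of a single double complex, by applying the foliated-form resolution on each simplicial level and then computing the resulting spectral sequence in the other direction.

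First I would verify that, for each fixed $q \geq 0$, the complex of sheaves on $\mathbf{E}^q G$
\begin{equation*}
0 \to \kappa^{-1}\mathcal{O}(M) \to \Omega_{\kappa\,q}^{0}(\kappa^*M) \xrightarrow{\mathrm{dlog}} \Omega_{\kappa\,q}^{1}(\kappa^*M) \xrightarrow{d} \Omega_{\kappa\,q}^{2}(\kappa^*M) \xrightarrow{d} \cdots
\end{equation*}
is exact. In positive degrees this is the Poincar\'{e} lemma applied fiberwise to the submersion $\kappa_q$. In degree zero, a section $f$ of $\kappa^*M$ with $\mathrm{dlog}\,f = 0$ is locally constant along the $\kappa$-fibers; since $\kappa^*M$ is canonically $\kappa$-equivariant (the identifications used to define $d$ are consistent), such an $f$ is locally the $\kappa$-pullback of a section of $\mathcal{O}(M)_q$. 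This is the content of the Deligne resolution of $\kappa^{-1}\mathcal{O}(M)$ along the foliation by $\kappa$-fibers.

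Next I would assemble the double complex $C^{p,q} := \Gamma(\mathbf{E}^q G,\, \Omega_{\kappa\,q}^{p}(\kappa^*M))$ with vertical differential $d$ (resp.\ $\mathrm{dlog}$ in degree zero) and horizontal differential $\delta^*$ from the simplicial structure. Filtering by $p$ and using the resolution of the previous paragraph — combined with a Godement step on each level so that the sheaves involved become acyclic, exactly as in Remark~\ref{double complex} — identifies the total cohomology with $H^*(\mathbf{E}^\bullet G,\kappa^{-1}\mathcal{O}(M))$. This gives one face of the map $Q$: every class on the left is represented by a cocycle in $\mathrm{Tot}(C^{\bullet,\bullet})$.

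Then I would compute the other spectral sequence, filtered by $q$. For each fixed $p$, I have to evaluate the simplicial cohomology of the sheaf $\Omega_{\kappa\,\bullet}^{p}(\kappa^*M)$ on $\mathbf{E}^\bullet G$. The crucial point is that $G \ltimes G \rightrightarrows G$ is Morita equivalent, via $\kappa$, to the trivial groupoid $G^0 \rightrightarrows G^0$, and $\mathbf{E}^\bullet G$ is the associated principal $G$-bundle $(g^1,\ldots,g^n)\mapsto t^{-1}(s(g^n))$ with contractible (source-fiber) structure along $\kappa$. Under this Morita equivalence, a $\kappa$-foliated $p$-form with coefficients in $\kappa^*\mathfrak{m}$ descends, when it is invariant (i.e.\ lies in $\ker \delta^*$), to a section of $\Lambda^p\mathfrak{g}^*\otimes \mathfrak{m}$ on $G^0$, because tangent vectors to $\kappa$-fibers at the identity bisection are precisely elements of $\mathfrak{g}$, and simplicial invariance forces right-invariance. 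By Morita invariance of sheaf cohomology on stacks (Remark~\ref{morita inv}) together with the acyclicity argument of Lemma~\ref{acyclic edit} applied in the direction of the submersion groupoids at each simplicial level, the cohomology collapses to the $q=0$ row and produces $\Gamma(G^0,\mathcal{C}^p(\mathfrak{g},M))$. Checking that the induced vertical differential coincides with $d_{\mathrm{CE}}$ (respectively $d_{\mathrm{CE}}\log$ in degree zero) — which amounts to the classical calculation relating the de Rham differential of a right-invariant form to the Lie algebroid bracket — then identifies the $E_2$-page with $H^*(\mathfrak{g},M)$ and produces the map $Q$ as the edge homomorphism.

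The main obstacle is the second step, i.e.\ showing that the simplicial cohomology of the sheaves $\Omega_{\kappa\,\bullet}^{p}(\kappa^*M)$ collapses to $\Gamma(G^0,\mathcal{C}^p(\mathfrak{g},M))$ with the correct differential. One must track carefully how the Morita equivalence of $G\ltimes G$ with $G^0$ interacts with the Lie algebroid $t^*\mathfrak{g}$ of $G\ltimes G$, and verify compatibility of $\mathrm{dlog}$ (defined using the canonical $\kappa$-equivariance of $\kappa^*M$) with $d_{\mathrm{CE}}\log$ for the $\mathfrak{g}$-module $M$. The remaining pieces — exactness of the resolution and acyclicity in each individual direction — are essentially formal given the machinery set up in Chapters~1 and~2.
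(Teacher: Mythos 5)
Your proposal is correct and follows essentially the same route as the paper: resolve $\kappa^{-1}\mathcal{O}(M)$ by the $\kappa$-foliated Deligne complex, observe that the resolving sheaves (after a Godement step) are acyclic sheaves on the stack because $G\ltimes G\rightrightarrows G$ is Morita equivalent to a submersion groupoid, and identify the surviving invariant sections on level $0$ with the Chevalley--Eilenberg complex via restriction to the identity bisection. The paper merely packages the collapse more directly — as ``acyclic resolution $\Rightarrow$ cohomology $=$ total complex of $\Gamma_{\mathrm{inv}}$'' rather than as the second spectral sequence of your double complex — but the content is identical.
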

\begin{proof}Form the sheaf $\kappa^{-1}\mathcal{O}(M)$ on $\mathbf{E}^\bullet G\,.$ This sheaf is not in general a sheaf on the stack $[G/(G\ltimes G)]\,,$ but it is resolved by sheaves on stacks
in the following way:\footnote{These are sheaves on stacks because \begin{equation*}
\Lambda^n T^*_\kappa G(\kappa^*M)\cong\Lambda^n T^*_tG(t^*M)
\end{equation*}
(where $t$ is the target map),
and the latter are $(G\ltimes G)$-modules.}
\begin{comment}
and furthermore the sheaf of sections of $\Lambda^p T^*_\kappa\, \mathbf{E}^qG\,(\kappa^*M)$ are isomorphic to the sheaf on $\mathbf{E}^qG$ associated to the $(G\ltimes G)$-module $\Lambda^n T^*_\kappa G(\kappa^*M)\,.$
\end{comment}
\begin{equation}\label{deligne1}
    \kappa^{-1}\mathcal{O}(M)_\bullet\xhookrightarrow{} \mathcal{O}(\kappa^*M)_\bullet\overset{\text{dlog}}{\to}\Omega_{\kappa\,\bullet}^1(\kappa^*M)
    \overset{\text{d}}{\to}\Omega_{\kappa\,\bullet}^2(\kappa^*M)\to\cdots\,.
\end{equation}
We let, for all $q\ge 0\,,$
\begin{align}\label{C}C^\bullet_q:=\mathcal{O}(\kappa^*M)_q\to\Omega_{\kappa\,q}^1(\kappa^*M)
    \to\Omega_{\kappa\,q}^2(\kappa^*M)\to\cdots\,.
    \end{align}
We can then take the Godement resolution of $C^\bullet_q$ and get a double complex for each $q\ge 0:$ \begin{align*}
C^\bullet_q\xhookrightarrow{} \mathbb{G}^0(C^\bullet_q)\to \mathbb{G}^1(C^\bullet_q)\to\cdots\,.
\end{align*}
%We then can form the triple complex $\text{Tot}(\Gamma(\mathbb{G}^\bullet(C^\bullet_\bullet)))$ and use %this to compute the cohomology $H^*(\mathbf{E}^\bullet G, \kappa^{-1}\mathcal{O}(M))\,.$
\\All of the sheaves $\mathbb{G}^p(C^r_\bullet)$ are sheaves on stacks, and it follows that these sheaves are acyclic (as sheaves on stacks) since $G\ltimes G\rightrightarrows G$ is Morita equivalent to a submersion groupoid, and \Cref{acyclic edit}. Hence $\mathbb{G}^p(C^r_\bullet)$ can be used to compute cohomology (see Remark~\ref{acyclic resolution}) and we have that
\begin{align*}
    H^*(\mathbf{E}^\bullet G, \kappa^{-1}\mathcal{O}(M))\cong  H^*(\text{Tot}(\Gamma_\text{inv}(\mathbb{G}^\bullet (C^\bullet_0))))\,.
\end{align*}
Now we have that 
\begin{align*}
    \Gamma_\text{inv}(\mathbb{G}^p(C^q_0)))=\Gamma(\mathbb{G}^p(i^*C^q_0))\,, 
\end{align*}
where $i:G^0\to G$ is the identity bisection. Since all of the differentials in~\eqref{C} preserve invariant sections, they descend to differentials on 
$\Gamma(\mathbb{G}^\bullet(i^*C^\bullet_0))\,,$ hence 
\begin{align*}
    H^*(\text{Tot}(\Gamma_\text{inv}(\mathbb{G}^\bullet (C^\bullet_0))))\cong H^*(\mathfrak{g},M)\,.
\end{align*}
\end{proof}
\begin{comment}
where 
\begin{align*}
    H^*(\mathfrak{g}\,,M)=H^*(\,\mathcal{C}^0(\mathfrak{g}\,,M)\overset{d_{\text{CE}}\text{log}}{\to}\mathcal{C}^1(\mathfrak{g}\,,M)\overset{d_{\text{CE}}}{\to}\mathcal{C}^2(\mathfrak{g}\,,M)\overset{d_{\text{CE}}}{\to}\cdots)\,,
\end{align*}
where $\mathcal{C}^0(\mathfrak{g},M)=\mathcal{O}(M)$ and $\mathcal{C}^n(\mathfak{g},M)$ is the sheaf of sections of Lie algebroid $n$-forms taking values in $\mathfrak{m}\,,$ for $n>0\,,$ and where $d_{\text{CE}}$ are the usual Chevalley-Eilenberg differentials.
\end{comment}
\theoremstyle{definition}\begin{definition}\label{van Est}
Let $M$ be a $G$-module. We define a map 
\begin{equation*}
H^*(G\,,M)\to H^*(\mathfrak{g}\,,M)
\end{equation*}
given by the composition
\begin{equation*}
H^*(G\,,M)\overset{\kappa^{-1}}{\to} H^*(\mathbf{E}^\bullet G\,,\kappa^{-1}\mathcal{O}(M)_\bullet)\overset{Q}{\to}H^*(\mathfrak{g}\,,M)\,.
\end{equation*}
This is the van Est map; we denote it by $VE\,.$
\end{definition}
\theoremstyle{definition}\begin{rmk}
Taking $M=\mathbb{C}_{G^0}$ as a smooth abelian groups with the trivial $G$-action, the sheaves in the resolution of $\kappa^{-1}\mathcal{O}(M)$ in~\eqref{deligne1} are already acyclic (as sheaves on stacks). Hence our map coincides with the map \begin{align*}
    H^*(G\,,M)&\to H^*(\mathbf{E}^\bullet G\,,\kappa^{-1}\mathcal{O}(M)_\bullet)
    \\&=H^*(\Gamma_\text{inv}(\mathcal{O}(\kappa^*M)_0)\to\Gamma_\text{inv}(\Omega_{\kappa\,0}^1(M))
  \to\cdots)
  \to H^*(\mathfrak{g}\,,M)\,,
\end{align*}
which is the van Est map as described in \cite{Meinrenken}.
\end{rmk} 
\subsection{Van Est for Truncated Cohomology}
In order to emphasize geometry on the space of morphisms rather than on the space of objects we perform a truncation. That is, we truncate the contribution of $G^0$ to $H^*(G,M)$ by considering instead the cohomology 
\[
H^*(\mathbf{B}^\bullet G,\mathcal{O}(M)^0_\bullet)\,,
\]
where $\mathcal{O}(M)^{0}_n=\mathcal{O}(M)_n$ for all $n\ge 1\,,$ and where $\mathcal{O}(M)^{0}_0$ is the trivial sheaf on $G^0\,,$ ie. the sheaf that assigns to every open set the group containing only the identity.
\\\\We define 
\[
H^*_0(G,M):=H^{*+1}(\mathbf{B}^\bullet G,\mathcal{O}(M)^{0}_\bullet)\,.
\]
There is a canonical map 
\[
H^*_0(G,M)\to H^{*+1}(G,M)\
\]
induced by the morphism of sheaves on $\mathbf{B}^\bullet G$ given by $\mathcal{O}(M)^{0}_\bullet\xhookrightarrow{}\mathcal{O}(M)_\bullet\,.$
Similarly, we can truncate $M$ from $H^*(\mathfrak{g},M)$ by considering instead 
\[
H^*_0(\mathfrak{g},M):=H^{*+1}(0\to\mathcal{C}^1(\mathfrak{g},M)\to\mathcal{C}^2(\mathfrak{g},M)\to\cdots)\,.
\]
Then in like manner there is a canonical map 
\[
H^*_0(\mathfrak{g},M)\mapsto H^{*+1}(\mathfrak{g},M)
\]
induced by the inclusion of the truncated complex into the full one.
\begin{comment}\\\\Now let $\mathcal{O}(M)^{0}_\bullet$ be the sheaf on $\mathbf{B}^\bullet(G^0\rightrightarrows G^0)$ such that $\widetilde{\mathcal{O}(M)}_0=\mathcal{O}(M)$ and $\widetilde{\mathcal{O}(M)}_n$ is the trivial sheaf for $n\ge 1\,.$ Let $i:\mathbf{B}^\bullet(G^0\rightrightarrows G^0)\to \mathbf{E}^\bullet G$ be the canonical embedding, and let $\tilde{\mathcal{O}(M)}_\bullet$ be the sheaf on $\mathbf{E}^\bullet G$ given by $\tilde{\mathcal{O}(M)}_n=\mathcal{O}(\kappa^*M)_n$ for $n\ge 1\,,$ and such that $\tilde{\mathcal{O}(M)}_0$ is the subsheaf of $\mathcal{O}(\kappa^*M)_0$ consisting of local sections which are the identity on $G^0\,.$ Then we have the following short exact sequence of sheaves:
\begin{align}\label{SES}
    0\to \tilde{\mathcal{O}(M)}_\bullet\to \mathcal{O}(\kappa^*M)_\bullet\to i_*\mathcal{O}(M)^0_\bullet\to 0\,.
\end{align}
Since $H^*(\mathbf{E}^\bullet G,\mathcal{O}(\kappa^*M)_\bullet)\cong H^*(\mathbf{E}^\bullet G,i_*\widetilde{\mathcal{O}(M)}_\bullet)\,,$
it follows from the long exact sequence associated to \eqref{SES} that $H^*(\mathbf{E}^\bullet G,\mathcal{O}(M)^+_\bullet)=0\,.$ The same is true for the Godement sheaves, that is for each $n\ge 0$ the following is a short exact sequence of sheaves:
\begin{align*}
    0\to \mathbb{G}^n(\mathcal{O}(M)^+_\bullet)\to \mathbb{G}^n(\mathcal{O}(\kappa^*M)_\bullet)\to \iota_*\mathbb{G}^n(\widetilde{\mathcal{O}(M)}_\bullet)\to 0\,,
\end{align*}
which in like manner implies that 
\end{comment}
\begin{theorem}\label{canonical edit}
There is a canonical map $VE_0$ lifting $VE\,,$ ie. such that the following diagram commutes:
\begin{equation}\label{diagram}
\begin{tikzcd}
H^*_0(G,M) \arrow{d}{}\arrow{r}{VE_0} & H^*_0(\mathfrak{g},M)\arrow{d} \\
H^{*+1}(G,M)\arrow{r}{VE} & H^{*+1}(\mathfrak{g},M) 
\end{tikzcd}
\end{equation}
where $H_0^*$ denotes truncated cohomology, as define above.
\end{theorem}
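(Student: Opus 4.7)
The plan is to realise $VE_0$ as the map induced on truncated cohomology by a morphism of short exact sequences of complexes of sheaves that lifts $VE\,.$ On the groupoid side, there is a short exact sequence of complexes of sheaves on $\mathbf{B}^\bullet G\,,$
\begin{equation*}
0 \longrightarrow \mathcal{O}(M)^0_\bullet \longrightarrow \mathcal{O}(M)_\bullet \longrightarrow \mathcal{Q}_\bullet \longrightarrow 0\,,
\end{equation*}
where $\mathcal{Q}_\bullet$ is the quotient, concentrated in simplicial degree $0$ with value $\mathcal{O}(M)\,.$ Its hypercohomology is $H^n(G^0, \mathcal{O}(M))\,,$ and by definition $H^{n+1}(\mathbf{B}^\bullet G, \mathcal{O}(M)^0_\bullet) = H^n_0(G, M)\,.$ Analogously, on $X = G^0$ there is a short exact sequence of complexes of sheaves
\begin{equation*}
0 \longrightarrow \mathcal{C}^{>0}(\mathfrak{g}, M) \longrightarrow \mathcal{C}^\bullet(\mathfrak{g}, M) \longrightarrow \mathcal{O}(M)[0] \longrightarrow 0\,,
\end{equation*}
with $H^{n+1}(X, \mathcal{C}^{>0}(\mathfrak{g}, M)) = H^n_0(\mathfrak{g}, M)\,.$

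The crux of the proof is to lift the van Est map of \Cref{van Est} to a morphism of the above short exact sequences of complexes, with the component on the quotients being the canonical identity $\mathcal{O}(M) \to \mathcal{O}(M)$ on $G^0 = X\,.$ This requires verifying that, under the identification $\Gamma_\text{inv}(\mathbb{G}^p(C^q_\bullet)) = \Gamma(\mathbb{G}^p(i^*C^q_0))$ used in the proof of the preceding theorem (where $i : G^0 \hookrightarrow G = \mathbf{E}^0 G$ is the identity bisection), the simplicial-degree-$0$ piece of $\mathcal{O}(M)_\bullet$ on $\mathbf{B}^\bullet G$ corresponds under $VE$ to the Chevalley-Eilenberg-degree-$0$ piece $\mathcal{C}^0(\mathfrak{g}, M) = \mathcal{O}(M)\,.$ Equivalently, the van Est construction via the foliated resolution \eqref{deligne1} on $\mathbf{E}^\bullet G$ sends the quotient $\kappa^{-1}\mathcal{Q}_\bullet$ to $\mathcal{O}(M)$ on $G^0$ by pullback along $i\,.$

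Granted this compatibility, the snake lemma applied to the morphism of short exact sequences of complexes of sheaves produces the desired map $VE_0 : H^*_0(G, M) \to H^*_0(\mathfrak{g}, M)\,,$ and the commutativity of \eqref{diagram} follows from the naturality of the induced morphism of long exact sequences. The main obstacle is the compatibility verification of the previous paragraph: making rigorous the correspondence of the simplicial-degree-$0$ piece with the Chevalley-Eilenberg-degree-$0$ piece under the van Est construction requires a careful chase through the van Est double complex, particularly because the truncated subcomplex is not itself a stack sheaf on $\mathbf{E}^\bullet G$ and so the invariance identification must be handled with some care.
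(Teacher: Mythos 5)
Your overall strategy---comparing the truncated and full complexes through the simplicial-degree-zero quotient and transporting this through the van Est construction---has the right shape, and your computation of the two quotient cohomologies is correct. But the step you defer as ``the main obstacle'' is not a compatibility check to be chased through the double complex; it is the entire content of the theorem, and the mechanism you propose does not, taken literally, reach the stated target. Since $VE$ factors through $\mathbf{E}^\bullet G\,,$ running your snake-lemma argument forces you to pull the sequence $0\to\mathcal{O}(M)^0_\bullet\to\mathcal{O}(M)_\bullet\to\mathcal{Q}_\bullet\to 0$ back along $\kappa$ and resolve it there. The subobject you then control is $\kappa^{-1}\mathcal{O}(M)^0_\bullet\,,$ and $H^{*+1}(\mathbf{E}^\bullet G,\kappa^{-1}\mathcal{O}(M)^0_\bullet)$ is \emph{not} isomorphic to $H^*_0(\mathfrak{g},M)$ in general: the proof of \Cref{van Est image} shows that the comparison map out of this group is an isomorphism only in a range of degrees governed by the connectivity of the source fibers. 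Likewise $\kappa^{-1}\mathcal{Q}_\bullet$ is concentrated in simplicial degree $0$ on $\mathbf{E}^0G=G$ and computes $H^*(G,\kappa_0^{-1}\mathcal{O}(M))$ rather than $H^*(G^0,\mathcal{O}(M))\,,$ so the map on quotients cannot be ``the identity''; it must be a restriction to the identity bisection. The snake lemma applied to the naively pulled-back sequence therefore terminates at the wrong group, and even in a derived-category formulation the axiom producing a map on third vertices of triangles gives existence without uniqueness, so the word ``canonical'' in the statement would still need an explicit construction.

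The paper's proof supplies precisely the ingredient you are missing: it interposes the normalized sheaf $\widehat{\kappa^{-1}\mathcal{O}(M)}_\bullet$ (sections equal to the identity on $G^0$), which receives a canonical map from $\kappa^{-1}\mathcal{O}(M)^0_\bullet\,,$ and shows that its normalized foliated resolution $\widehat{\mathcal{O}(\kappa^*M)}_\bullet\to\Omega_{\kappa\,\bullet}^1(M)\to\cdots$ has the same hypercohomology as the truncated complex $0\to\Omega_{\kappa\,\bullet}^1(M)\to\cdots\,,$ whose invariant sections compute $H^*_0(\mathfrak{g},M)\,.$ The map $VE_0$ is the resulting explicit composite, and commutativity of the square is then immediate because each arrow in that composite is the restriction of the corresponding arrow defining $VE$ under the inclusions of the truncated objects into the full ones. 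The compatibility on quotients that your argument would require is, at the level of cohomology, exactly the content of \Cref{commute}; but that lemma is a consequence of the construction, not a substitute for it.
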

\begin{proof}
Consider the ``normalized" sheaf on $\mathbf{E}^\bullet G$ given by $\widehat{\mathcal{O}(\kappa^*M)}_\bullet\,,$ where $\widehat{\mathcal{O}(\kappa^*M)}_n=\mathcal{O}(\kappa^*M)_n$ for $n\ge 1\,,$ and where $\widehat{\mathcal{O}(\kappa^*M)}_0$ is the subsheaf of $\mathcal{O}(\kappa^*M)_0$ consisting of local sections which are the identity on $G^0\,.$ Then \[
H^*(\mathbf{E}^\bullet G,\mathbb{G}^n(\widehat{\mathcal{O}(\kappa^*M)}_\bullet))=0\,,
\] and in particular, $\mathbb{G}^n(\widehat{\mathcal{O}(\kappa^*M)}_\bullet)$ is acyclic.
$\mathbf{}$
\\\\Now consider the sheaf $\widehat{\kappa^{-1}\mathcal{O}(M)}_\bullet$ on $\mathbf{E}^\bullet G$ given by $\widehat{\kappa^{-1}\mathcal{O}(M)}_n=\kappa^{-1}\mathcal{O}(M)_n$ for $n\ge 1\,,$ and such that $\widehat{\kappa^{-1}\mathcal{O}(M)}_0$ is the subsheaf of $\kappa^{-1}\mathcal{O}(M)_0$ consisting of local sections which are the identity on $G^0\,.$
Then there is a canonical embedding $\kappa^{-1}\mathcal{O}(M)^0_\bullet\xhookrightarrow{}\widehat{\kappa^{-1}\mathcal{O}(M)}_\bullet\,,$ hence we get a map
\begin{align*}
    H^*(\mathbf{B}^\bullet G,\mathcal{O}(M)^0_\bullet)\to H^*(\mathbf{E}^\bullet G,\widehat{\kappa^{-1}\mathcal{O}(M)}_\bullet)\,.
\end{align*}
Now we have that the following inclusion is a resolution:
\begin{align*}
    \widehat{\kappa^{-1}\mathcal{O}(M)}_\bullet\xhookrightarrow{} \widehat{\mathcal{O}(\kappa^*M)}_\bullet\to\Omega_{\kappa\,\bullet}^1(M)
    \to\Omega_{\kappa\,\bullet}^2(M)\to\cdots\,.
\end{align*}
Then one can show that 
\begin{align*}
   & H^*(\mathbf{E}^\bullet G,\widehat{\mathcal{O}(\kappa^*M)}_\bullet\to\Omega_{\kappa\,\bullet}^1(M)
    \to\Omega_{\kappa\,\bullet}^2(M)\to\cdots)
    \\&\cong H^{*}(\mathbf{E}^\bullet G,0\to\Omega_{\kappa\,\bullet}^1(M)
    \to\Omega_{\kappa\,\bullet}^2(M)\to\cdots)\,,
\end{align*}
and since $\Omega_{\kappa\,\bullet}^1(M)\to\Omega_{\kappa\,\bullet}^2(M)\to\cdots$ is a complex of sheaves on stacks, by a similar argument made when defining the van Est map in the previous section we get that 
\begin{align*}
    H^{*+1}(\mathbf{E}^\bullet G,0\to\Omega_{\kappa\,\bullet}^1(M)\to\Omega_{\kappa\,\bullet}^2(M)\to\cdots)\cong H^*_0(\mathfrak{g},M)\,.
\end{align*}
Then $VE_0$ is the map 
\begin{align*}
   & H^*_0(G,M)=H^{*+1}(\mathbf{B}^\bullet G,\mathcal{O}(M)^0_\bullet)\xrightarrow{\kappa^{-1}} H^{*+1}(\mathbf{E}^\bullet G,\kappa^{-1}\mathcal{O}(M)^0_\bullet)
   \\&\to H^{*+1}(\mathbf{E}^\bullet G,\widehat{\kappa^{-1}\mathcal{O}(M)}_\bullet)
   \,{\cong}\, H^{*+1}(\mathbf{E}^\bullet G,\widehat{\mathcal{O}(\kappa^*M)}_\bullet\to\Omega_{\kappa\,\bullet}^1(M)
    \to\cdots)
    \\&{\cong}\,H^{*+1}(\mathbf{E}^\bullet G,0\to\Omega_{\kappa\,\bullet}^1(M)
    \to\cdots)
    \xrightarrow{\cong} H^{*}_0(\mathfrak{g},M)\,.
\end{align*}
\end{proof}
\theoremstyle{definition}\begin{rmk}
The van Est map (including the truncated version) factors through a local van Est map defined on the cohomology of the local groupoid [see~\cite{Meinrenken}], ie. to compute the van Est map one can first localize the cohomology classes to a neighborhood of the identity bisection.
\end{rmk}
\subsection{Properties of the van Est Map}
In this section we discuss some properties of the van Est map; the main results pertain to its kernel and image.
\\\\Recall that given a sheaf $\mathcal{S}_\bullet$ on a (semi) simplicial space $X^\bullet\,,$ we calculate its cohomology by taking an injective resolution $0\to\mathcal{S}_\bullet\to \mathcal{I}^0_\bullet\to\mathcal{I}^1_\bullet\to\cdots$
and computing 
\begin{align*}
    H^*(\,\Gamma_\text{inv}(\mathcal{I}^0_0)\to\Gamma_\text{inv}(\mathcal{I}^1_0)\to\cdots)\,.
\end{align*}
By considering the natural injection $\Gamma_\text{inv}(\mathcal{I}_0^n)\hookrightarrow\Gamma(\mathcal{I}^n_0)$ we get a map
\begin{equation}\label{restriction}
    r:H^*(X^\bullet,\mathcal{S}_\bullet)\to H^*(X^0,\mathcal{S}_0)\,.
    \end{equation}
Similarly, for a cochain complex of abelian groups $\mathcal{A}^0\to \mathcal{A}^1\to\cdots$ there is a map 
\begin{align*}
H^*(\mathcal{A}^0\to \mathcal{A}^1\to\cdots)\overset{r}{\to} H^*(\mathcal{A}^0)\,.
\end{align*}
Using this, we have the following result, which gives an enlargement of diagram \eqref{diagram}:
\begin{lemma}\label{commute}
The following diagram is commutative:
\[
\begin{tikzcd}
H^{*}(G^0,\mathcal{O}(M))\arrow{r}{\delta^*}\arrow{d}{\parallel} &H^*_0(G,M)\arrow{r}{}\arrow{d}{VE_0} &H^{*+1}(G,M) \arrow{r}{r} \arrow{d}{VE} & H^{*+1}(G^0,\mathcal{O}(M))\arrow{d}{\parallel} \\
H^{*}(G^0,\mathcal{O}(M))\arrow{r}{d_{CE}\log}& H^*_0(\mathfrak{g},M)\arrow{r}{}& H^{*+1}(\mathfrak{g},M) \arrow{r}{r}& H^{*+1}(G^0,\mathcal{O}(M))
\end{tikzcd}
\]
\end{lemma}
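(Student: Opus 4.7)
The plan is to exhibit the entire diagram as a morphism of long exact sequences coming from a morphism of short exact sequences of complexes of sheaves. On the groupoid side, the short exact sequence
\[
0 \to \mathcal{O}(M)^0_\bullet \to \mathcal{O}(M)_\bullet \to i_*\mathcal{O}(M) \to 0\,,
\]
where $i:G^0\xhookrightarrow{}\mathbf{B}^\bullet G$ is the inclusion at level $0\,,$ induces on $\mathbf{B}^\bullet G$ a long exact sequence whose relevant segment is exactly the top row of the diagram, with $\delta^*$ as the connecting homomorphism and $r$ as restriction to level $0\,.$ On the algebroid side, the analogous short exact sequence of Chevalley--Eilenberg complexes
\[
0\to (0\to\mathcal{C}^1(\mathfrak{g},M)\to\cdots)\to(\mathcal{C}^0(\mathfrak{g},M)\to\mathcal{C}^1(\mathfrak{g},M)\to\cdots)\to\mathcal{C}^0(\mathfrak{g},M)\to 0
\]
yields the bottom row, with $d_{CE}\log$ as its connecting map and $r$ the projection onto the degree-$0$ component.

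Next I would unpack the construction of $VE$ and $VE_0$ (from \Cref{van Est} and \Cref{canonical edit}) as a single morphism of bicomplexes, obtained by first pulling back along $\kappa:\mathbf{E}^\bullet G\to\mathbf{B}^\bullet G$ and then resolving via the Deligne-type complex $\mathcal{O}(\kappa^*M)_\bullet\to\Omega^1_{\kappa\,\bullet}(\kappa^*M)\to\cdots$ together with the Godement construction; passing to $\Gamma_{\text{inv}}$ along the identity bisection $i$ produces the CE complex. The key observation is that this morphism of bicomplexes is compatible with the above short exact sequences: the ``hat" normalization used to build $VE_0$ picks out exactly the $\kappa^{-1}$-pullback of $\mathcal{O}(M)^0_\bullet\,,$ and after applying $i^*\Gamma_{\text{inv}}$ it matches the truncation $0\to\mathcal{C}^1(\mathfrak{g},M)\to\cdots\,;$ meanwhile on the quotient it reduces to the identity on $\mathcal{O}(M)$ since $\mathcal{C}^0(\mathfrak{g},M)=\mathcal{O}(M)$ (the foliated resolution starts with $\mathcal{O}(\kappa^*M)_\bullet$ itself). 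Consequently $VE$ and $VE_0$ assemble into a morphism of short exact sequences of hypercohomology-computing complexes, and the lemma is the induced morphism of long exact sequences: the middle square is \Cref{canonical edit}, the right-hand square records the fact that $VE$ restricts to the identity on $G^0\,,$ and the left-hand square is naturality of the connecting homomorphism.

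The main obstacle is bookkeeping at the cochain level: verifying that the normalization on the $\mathbf{E}^\bullet G$ side (local sections trivial on $G^0$) is literally the subcomplex whose image under $i^*\Gamma_{\text{inv}}$ is the truncated CE complex, and that on the quotient the morphism is the identity of $\mathcal{O}(M)\,.$ This is essentially a matter of tracing through the proof of \Cref{canonical edit}, noting that along the identity bisection the Deligne--Godement resolution collapses so that the only surviving contribution in degree $0$ is $\mathcal{O}(M)$ itself. Once this compatibility is established, commutativity of all three squares is a formal diagram chase using naturality of long exact sequences.
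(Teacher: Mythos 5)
The paper states Lemma~\ref{commute} without proof, so there is nothing to compare your argument against; on its own terms your strategy --- realizing both rows as segments of long exact sequences attached to truncation-type short exact sequences, and exhibiting $VE$ and $VE_0$ as a morphism between them --- is the natural one and does yield the lemma once the bookkeeping is carried out. The identification of the top row with the long exact sequence of $0\to\mathcal{O}(M)^0_\bullet\to\mathcal{O}(M)_\bullet\to i_*\mathcal{O}(M)\to 0$ on $\mathbf{B}^\bullet G\,,$ and of the bottom row with that of the stupid truncation of the Chevalley--Eilenberg complex, is correct; the middle square is indeed \Cref{canonical edit}, and the right-hand square does come down to $t\circ i=\mathrm{id}_{G^0}\,.$

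One assertion in your middle paragraph is wrong as stated and needs repair. The normalized sheaf $\widehat{\kappa^{-1}\mathcal{O}(M)}_\bullet$ is \emph{not} $\kappa^{-1}\mathcal{O}(M)^0_\bullet$: at level $0$ the former consists of sections that are the identity on $G^0$ while the latter is the zero sheaf, and the paper only has a comparison map $\kappa^{-1}\mathcal{O}(M)^0_\bullet\xhookrightarrow{}\widehat{\kappa^{-1}\mathcal{O}(M)}_\bullet$ which is not an isomorphism of sheaves. Correspondingly, the quotient $\kappa^{-1}\mathcal{O}(M)_\bullet/\kappa^{-1}\mathcal{O}(M)^0_\bullet$ is concentrated in level $0$ and equal to $t^{-1}\mathcal{O}(M)$ on all of $\mathbf{E}^0G=G^{(1)}\,,$ so its cohomology is $H^*(G^{(1)},t^{-1}\mathcal{O}(M))$ rather than $H^*(G^0,\mathcal{O}(M))\,,$ and the morphism of short exact sequences cannot be taken verbatim as you describe. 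The fix is to take $\widehat{\kappa^{-1}\mathcal{O}(M)}_\bullet$ as the subobject on the $\mathbf{E}^\bullet G$ side (its quotient is $i_*\mathcal{O}(M)$ concentrated in level $0\,,$ whose cohomology is $H^*(G^0,\mathcal{O}(M))$), map the $\kappa^{-1}$-pullback of the top sequence into this one, and observe that on quotients the induced map $H^*(G^0,\mathcal{O}(M))\to H^*(G^{(1)},t^{-1}\mathcal{O}(M))\to H^*(G^0,\mathcal{O}(M))$ is the identity because $t\circ i=\mathrm{id}\,.$ After that, the only remaining content is the compatibility of the quasi-isomorphism $Q$ (foliated resolution, Godement construction, restriction to invariant sections along the identity bisection) with these two short exact sequences, so that the connecting map of the hatted sequence is carried to $d_{CE}\log\,;$ this is the real substance of the left-hand square, it is only asserted in your write-up, but it does follow by the trace-through you indicate.
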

\begin{lemma}\label{kernel}
Suppose that $X\overset{\pi}{\to} Y$ is a surjective submersion with $(n-1)$-connected fibers, for some $n>0\,,$ and with a section $\sigma\,.$  Consider an exact sequence of of families of abelian groups on $Y$ given by
\begin{align*}
    0\to Z\xrightarrow{}\mathfrak{m}\xrightarrow{\exp{}}M\,.
\end{align*} Let $\omega\in H^0(X,\Omega^n_{\pi}(\pi^*\mathfrak{m}))$ be closed (ie. $\omega$ is a closed, foliated $n$-form on $X$) and suppose that
\begin{equation}\label{in kernel}
\int_{S^n(\pi^{-1}(y))}\omega \in Z\;\;\text{     for all } y \in Y \text{ and all } S^n(\pi^{-1}(y))\,,
\end{equation}
where $S^n(\pi^{-1}(y))$ is an $n$-sphere contained in the source fiber over $y\,.$ 
 %Then letting $\alpha\in H^n(\mathbf{E}^\bullet G\,,
 %\kappa^{-1}\mathfrak{m}_\bullet)$ be such that such that $\alpha\mapsto\omega$ under the map 
 %\begin{align*}
%H^n(\mathbf{E}^\bullet G\,,
 %\kappa^{-1}\mathfrak{m}_\bullet)\to H^n(\mathfrak{g},\mathfrak{m})\,,
% \end{align*}
Let $[\omega]$ denote the class $\omega$ defines in $H^n(X,\pi^{-1}\mathcal{O}(\mathfrak{m}))\,.$ Then $\exp{[\omega]}=0\,.$ 
%under the map
%begin{align*}
  %  r:H^*(\mathbf{E}^\bullet G,\kappa^{-1}\mathcal{O}(M)_\bullet)\to H^*(\mathbf{E}^0G,\kappa^{-1}\mathcal{O}(M)_0)\,.
%\end{align*}
\begin{proof}
%Consider $r(\alpha)\,;$ we want to show that $\exp{r(\alpha)}=0\,.$ 
From Equation~\ref{in kernel} we know that $\exp{\omega}\vert_{\pi^{-1}(y)}=0$ for each $y\in Y\,,$ therefore since the source fibers of $X$ are $(n-1)$-connected, by Theorem~\ref{spectral theorem} we have that 
\begin{align*}
    \exp{[\omega]}=\pi^{-1}\beta
    \end{align*}
for some $\beta\in H^n(Y,\mathcal{O}(M))\,.$ Since $\pi\circ \sigma:Y\to Y$ is the identity this implies that
\begin{align*}
    \exp{\sigma^{-1}[\omega]}=\beta\,,
    \end{align*}
    but $\sigma^{-1}[\omega]=0$ since $\omega$ is a global foliated form. Hence $\beta=0\,,$ hence $\exp{[\omega]}=0\,.$
\end{proof}
\end{lemma}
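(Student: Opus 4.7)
The plan has three steps. First, use the period hypothesis together with Hurewicz's theorem to show $\exp[\omega]$ vanishes on every fiber of $\pi$. Second, combine this with the paper's spectral theorem and the section $\sigma$ to reduce everything to showing $\sigma^{-1}[\omega]=0$. Third, deduce this vanishing using the Leray spectral sequence for $\pi$, exploiting that a global closed foliated form contributes only to the fiberwise part of the Leray filtration.

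For Step~1, each fiber $F_y=\pi^{-1}(y)$ is $(n-1)$-connected, so by Hurewicz $\pi_n(F_y)\xrightarrow{\sim} H_n(F_y,\mathbb{Z})$ and continuous $n$-spheres generate $H_n(F_y,\mathbb{Z})$. The long exact cohomology sequence on $F_y$ associated to $0\to Z_y\to\mathfrak{m}_y\to\exp(\mathfrak{m}_y)\to 0$ shows that $\exp[\omega|_{F_y}]=0$ in $H^n(F_y,M_y)$ is equivalent to $[\omega|_{F_y}]$ lying in the image of $H^n(F_y,Z_y)\to H^n(F_y,\mathfrak{m}_y)$; by the universal coefficient theorem (applicable since $H_{n-1}(F_y,\mathbb{Z})=0$), this is equivalent to all integral periods of $\omega|_{F_y}$ lying in $Z_y$. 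The hypothesis guarantees this for $n$-sphere periods, and Hurewicz extends it to all of $H_n(F_y,\mathbb{Z})$.

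For Step~2, the paper's spectral theorem gives that $\pi^{-1}\colon H^n(Y,\mathcal{O}(M))\to H^n(X,\pi^{-1}\mathcal{O}(M))$ is injective with image exactly the classes whose restriction to each fiber vanishes. Step~1 puts $\exp[\omega]$ in this image, so $\exp[\omega]=\pi^{-1}\beta$ for a unique $\beta\in H^n(Y,\mathcal{O}(M))$. Since $\pi\circ\sigma=\mathrm{id}_Y$ and both $\exp$ and pullback are natural, $\beta=\sigma^{-1}\pi^{-1}\beta=\sigma^{-1}\exp[\omega]=\exp\sigma^{-1}[\omega]$.

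For Step~3 (the hardest), the class $[\omega]$ is represented, via the foliated resolution $\pi^{-1}\mathcal{O}(\mathfrak{m})\hookrightarrow\mathcal{O}(\pi^*\mathfrak{m})\to\Omega^1_\pi(\pi^*\mathfrak{m})\to\cdots$, by a global closed foliated $n$-form; this places it in $E_\infty^{0,n}$ of the Leray spectral sequence for $\pi$. The section $\sigma$ splits the Leray filtration of $H^n(X,\pi^{-1}\mathcal{O}(\mathfrak{m}))$: $\sigma^{-1}$ is the identity on $E_\infty^{n,0}=\mathrm{image}(\pi^{-1})$ but annihilates $E_\infty^{0,n}$, because a fiberwise cohomology class, restricted along a section of the fibration, becomes a class on a single point in each fiber and thus vanishes in positive degree. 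Hence $\sigma^{-1}[\omega]=0$, yielding $\beta=\exp(0)=0$ and $\exp[\omega]=\pi^{-1}\beta=0$. The main obstacle I foresee is executing Step~3 rigorously, specifically verifying that $\sigma$ really does induce the claimed splitting of the Leray filtration and that the class of a global closed foliated form lies entirely in $E_\infty^{0,n}$.
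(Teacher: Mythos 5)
Your proposal follows the paper's proof essentially verbatim: fiberwise vanishing of $\exp\omega$ from the period condition, descent via Theorem~\ref{spectral theorem} to get $\exp[\omega]=\pi^{-1}\beta$, identification $\beta=\exp\sigma^{-1}[\omega]$ using $\pi\circ\sigma=\mathrm{id}$, and finally $\sigma^{-1}[\omega]=0$. The only difference is presentational: you expand Step~1 with Hurewicz/UCT (which the paper leaves implicit) and dress Step~3 in Leray-filtration language, whereas the paper disposes of it in one line — a globally defined foliated form pulls back to zero along a section since $\sigma_*$ meets the fiber directions trivially — which is the cleaner way to close your flagged gap.
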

\begin{corollary}\label{groupoid kernel}
Suppose that $G\rightrightarrows X$ is source $(n-1)$-connected for some $n>0\,.$ Consider an exact sequence of families of abelian groups  on $X$ given by
\begin{align*}
    0\xrightarrow{} Z\xrightarrow{}\mathfrak{m}\xrightarrow{\exp{}} M\,.
\end{align*} Let $\omega\in H^0(\mathcal{C}^0(\mathfrak{g},\mathfrak{m}))$ be closed (ie. it is a closed $n$-form in the Chevalley-Eilenberg complex) and suppose that
\begin{equation}\label{in kernel 2}
\int_{S^n(s^{-1}(x))}\omega \in Z\;\;\text{     for all } x \in X \text{ and all } S^n(s^{-1}(x))\,,
\end{equation}
where in the above we have left translated $\omega$ to a source-foliated $n$-form, and where $S^n(s^{-1}(x))$ is an $n$-sphere contained in the source fiber over $x\,.$
 %Then letting $\alpha\in H^n(\mathbf{E}^\bullet G\,,
 %\kappa^{-1}\mathfrak{m}_\bullet)$ be such that such that $\alpha\mapsto\omega$ under the map 
 %\begin{align*}
%H^n(\mathbf{E}^\bullet G\,,
 %\kappa^{-1}\mathfrak{m}_\bullet)\to H^n(\mathfrak{g},\mathfrak{m})\,,
% \end{align*}
Let $[\omega]$ denote the class $\omega$ defines in $H^n(\mathbf{E}^\bullet G,\kappa^{-1}\mathfrak{m})\,.$ Then $r(\exp{([\omega])})=0\,,$ where $r$ is as in Equation~\ref{restriction}. 
%under the map
%begin{align*}
  %  r:H^*(\mathbf{E}^\bullet G,\kappa^{-1}\mathcal{O}(M)_\bullet)\to H^*(\mathbf{E}^0G,\kappa^{-1}\mathcal{O}(M)_0)\,.
%\end{align*}
\end{corollary}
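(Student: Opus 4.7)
The plan is to reduce to the preceding lemma applied to the target map $t : G \to G^0$, which is a surjective submersion admitting the unit bisection $u : G^0 \to G$ as a canonical section, and whose fibers are $(n-1)$-connected (via inversion, from the source $(n-1)$-connectivity of $G$).

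First I will unwind what $[\omega] \in H^n(\mathbf{E}^\bullet G, \kappa^{-1}\mathcal{O}(\mathfrak{m}))$ means. The isomorphism $Q$ used to define the van Est map factors through the resolution~\eqref{deligne1}, and on level $0$ one has $\mathbf{E}^0 G = G$ with $\kappa_0 = t$, so a closed CE $n$-form $\omega \in \mathcal{C}^n(\mathfrak{g},\mathfrak{m})$ is represented at level $0$ by the left-invariant extension $\widetilde{\omega}$, a closed $t$-foliated $n$-form on $G$ valued in $t^*\mathfrak{m}$, whose pullback along $u$ recovers $\omega$. The restriction map $r$ of~\eqref{restriction} returns precisely the level-$0$ component of a class, so $r(\exp[\omega]) \in H^n(G, t^{-1}\mathcal{O}(M))$ is represented by $\exp(\widetilde{\omega})$ (here I am using that $\exp$ is compatible with the resolution~\eqref{deligne1}, so that applying $\exp$ and restricting to level $0$ commute).

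Next, I transport the integration condition. The hypothesis~\eqref{in kernel 2} gives $\int_{S^n}\omega \in Z$ for every $n$-sphere $S^n$ contained in a source fiber of $G$; after left-translation to $\widetilde{\omega}$ and inversion, this is equivalent to $\int_{S^n}\widetilde{\omega}\in Z$ for every $n$-sphere contained in a target fiber of $G$. Since the target fibers of $t$ are $(n-1)$-connected and $t$ admits the section $u$, the preceding lemma (applied to $\pi = t$, $\sigma = u$, the exact sequence $0 \to Z \to \mathfrak{m} \xrightarrow{\exp} M$, and the closed foliated form $\widetilde{\omega}$) yields $\exp[\widetilde{\omega}] = 0$ in $H^n(G, t^{-1}\mathcal{O}(M))$, which is the desired conclusion $r(\exp[\omega]) = 0$.

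The principal difficulty is the first step: keeping track of how the quasi-isomorphism $Q$ identifies a CE class with a left-invariant foliated form on $G$, and checking that the integration condition is preserved under this identification (i.e.\ that left-translation takes $n$-spheres in source fibers to $n$-spheres in target fibers with the correct orientation, so the periods remain in $Z$). Once this bookkeeping is verified, the corollary follows immediately from the lemma.
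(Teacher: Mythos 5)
Your proposal is correct and is essentially the paper's own proof, which simply states that the corollary "follows directly from Lemma~\ref{kernel}": the intended reduction is exactly the one you spell out, applying the lemma to the bundle map $\mathbf{E}^0 G = G \to G^0$ (the source/target map, which admits the identity bisection as a section and has $(n-1)$-connected fibers) after translating the Chevalley--Eilenberg form to a closed foliated form with periods in $Z$. Your extra bookkeeping about $r$ picking out the level-$0$ component and about translation preserving the period condition is a faithful elaboration of what the paper leaves implicit.
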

\begin{proof}
This follows directly from Lemma~\ref{kernel}.
\end{proof}
\subsection{Main Theorem}\label{main theorem section}
Before proving the main theoem of the paper, we will discuss translation of Lie algebroid objects: similarly to how one can translate Lie algebroid forms to differential forms along the source fibers, one can translate all Lie algebroid cohomology classes (eg. 1-dimensional Lie algebroid representations) to cohomology classes along the source fibers (in the case of a Lie algebroid representation, translation will result in a principal bundle with flat connection along the source fibers). 
We will describe it in degree 1, the translations in higher degrees works analogously. 
\theoremstyle{definition}\begin{definition}\label{translation}

Let $G\rightrightarrows G^0$ be a Lie groupoid and let $M$ be a $G$-module. Let $\{U_i\}_i$ be an open cover of $G^0$ and let $\{(h_{ij},\alpha_i\}_{ij}$ represent a class in $H^1(\mathfrak{g},M)$ (here the $h_{ij}$  are sections of $M$ over $U_i\cap U_j\,,$ and the $\alpha_i$ are Lie algebroid 1-forms taking values in $\mathfrak{m}$). Then on $\mathbf{B}^1G$ we get a class in foliated cohomology (foliated with respect to the source map), ie. a class in
\begin{align}
    H^1(\mathcal{O}(s^*M)\overset{\text{dlog}}{\to}\Omega_{s}^1(s^*M)
    \overset{\text{d}}{\to}\Omega_{s}^2(s^*M)\to\cdots)\,,
\end{align}
\begin{comment}Let $x\in G^0$ and let $M_x\,,\mathfrak{m}_x$ be the Lie group and Lie algebra given by the fibers over $x$ in $M$ and $\mathfrak{m}\,,$ respectively. Then on $s^{-1}(x)$ we get a class in \begin{align*}
H^1(s^{-1}(x),\mathcal{O}(M_x)\xrightarrow{\text{dlog}}\Omega^1\otimes\mathfrak{m}_x\to\Omega^2\otimes\mathfrak{m}_x\to\cdots)
\end{align*}
\end{comment}
defined as follows:
\begin{comment}we have a map $t:s^{-1}(x)\to X$ given by the target map, and we get an open cover of $s^{-1}(x)$
\end{comment}
we have an open cover of $\mathbf{B}^1G$ given by $\{t^{-1}(U_i)\}_i\,.$ We then get a principal $s^*M$-bundle over $\mathbf{B}^1G$ given by transition functions $t^*h_{ij}$ defined as follows: for $g\in t^{-1}(U_{ij})$ let \begin{align}
    t^*h_{ij}(g):=g^{-1}\cdot h_{ij}(t(g))\,.
\end{align}Similarly, we define foliated 1-forms $t^*\alpha_i$ on $t^{-1}(U_i)$ as follows: for $g\in t^{-1}(U_i)$ with $s(g)=x\,,$ and for $V_g\in T_g(s^{-1}(x))\,,$ let
\begin{align*}
t^*\alpha_i(V_g):=g^{-1}\cdot\alpha_{i}(R_{g^{-1}}V_g)\,,
\end{align*}
where $R_{g^{-1}}$ denotes translation by $g^{-1}\,.$ Then the desired class is given by the cocycle $\{(t^*h_{ij},t^*{\alpha_i})\}_i$ on the open cover $\{t^{-1}(U_i)\}\,.$ For each $x\in G^0\,,$ by restricting the cocycle to $s^{-1}(x)$ we also get a class in \begin{align*}
H^1(s^{-1}(x),\mathcal{O}(M_x)\xrightarrow{\text{dlog}}\Omega^1\otimes\mathfrak{m}_x\to\Omega^2\otimes\mathfrak{m}_x\to\cdots)\,.
\end{align*}
\begin{comment}In addition, performing the above construction for all $x\in X$ defines a class in the foliated cohomology of $G$ (foliated with respect to the source map, see~\Cref{van Est map}).
\end{comment}
\\Similarly, we can translate any class $\alpha\in H^{\bullet}(\mathfrak{g},M)$ to a class in
\begin{align}\label{class}
    H^{\bullet}(\mathcal{O}(s^*M)\overset{\text{dlog}}{\to}\Omega_{s}^1(s^*M)
    \overset{\text{d}}{\to}\Omega_{s}^2(s^*M)\to\cdots)\,,
\end{align}
and we denote 
this class by $t^*\alpha\,.$ Furthermore, for each $x\in G^0$ we obtain as class in
\begin{align*}
H^\bullet(s^{-1}(x),\mathcal{O}(M_x)\xrightarrow{\text{dlog}}\Omega^1\otimes\mathfrak{m}_x\to\Omega^2\otimes\mathfrak{m}_x\to\cdots)\,,
\end{align*} and we denote this class by $t_x^*\alpha\,.$ 
\\\\Alternatively, given a class $\alpha\in H_0^{\bullet}(\mathfrak{g},M)\,,$ we can translate this to a class in 
\begin{align}
    H^\bullet(0\to\Omega_{s}^1(s^*M)
    \overset{\text{d}}{\to}\Omega_{s}^2(s^*M)\to\cdots)\,,
\end{align}
and we denote this class by $t_0^*\alpha\,.$ In this case the notation $t^*\alpha$ will be used to mean the class obtained is in~\ref{class} by first viewing $\alpha$ as a class in $H^\bullet(\mathfrak{g},M)\,.$
$\blacksquare$
\end{definition}
\begin{proposition}
With the previous definition, we have the following commutative diagram:
\[
\begin{tikzcd}
H^\bullet_0(\mathfrak{g},M) \arrow{d}{}\arrow{r}{t_0^*} & H^{\bullet+1}(0\to\Omega_{s}^1(s^*M)
    \overset{\text{d}}{\to}\cdots)
\arrow{d} \\
H^{\bullet+1}(\mathfrak{g},M)\arrow{r}{t^*} 
&   H^{\bullet+1}(\mathcal{O}(s^*M)\to\Omega_{s}^1(s^*M)
    \overset{\text{d}}{\to}\cdots)
\end{tikzcd}
\]
$\blacksquare$
\end{proposition}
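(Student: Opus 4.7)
The plan is to verify commutativity at the level of cochains (or \v{C}ech cocycles), from which commutativity at the level of cohomology follows by functoriality. The essential observation is that the translation operation defined in \Cref{translation} is strictly degree-preserving: given any Lie algebroid $k$-form $\alpha \in \mathcal{C}^k(\mathfrak{g}, M)$ with $k \geq 1$, the translated foliated $k$-form on $\mathbf{B}^1G$ is defined by the pointwise formula $t^*\alpha(V_g) = g^{-1}\cdot \alpha(R_{g^{-1}} V_g)$, which does not involve any degree-zero piece of $\alpha$. Hence the translation assembles into a morphism of complexes of sheaves
\begin{equation*}
t^*: \mathcal{C}^\bullet(\mathfrak{g}, M) \to \Omega_s^\bullet(s^*M)
\end{equation*}
(with the understanding that in degree $0$, $t^*$ sends a section $f$ of $M$ to $g \mapsto g^{-1}\cdot f(t(g))$, i.e.\ to a section of $s^*M$).

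The next step is to observe that the two truncations appearing in the diagram are obtained by replacing the degree-$0$ term of the respective complex by zero, leaving all higher-degree terms unchanged. Thus $t^*$ carries the truncated complex $0 \to \mathcal{C}^1(\mathfrak{g},M) \to \mathcal{C}^2(\mathfrak{g},M)\to\cdots$ into the truncated complex $0 \to \Omega_s^1(s^*M) \to \Omega_s^2(s^*M)\to \cdots$, and the restriction of $t^*$ to the truncated source is precisely $t_0^*$. So at the level of complexes of sheaves, we have a commutative square
\begin{equation*}
\begin{tikzcd}
\mathcal{C}^\bullet(\mathfrak{g},M)_{\mathrm{trunc}} \arrow[r, "t_0^*"] \arrow[d, hook] & \Omega_s^\bullet(s^*M)_{\mathrm{trunc}} \arrow[d, hook] \\
\mathcal{C}^\bullet(\mathfrak{g},M) \arrow[r, "t^*"] & \Omega_s^\bullet(s^*M)
\end{tikzcd}
\end{equation*}
in which both vertical maps are the identity in positive degrees and the zero-to-something inclusion in degree zero.

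Finally, to pass from the chain-level square to the cohomology square in the statement, represent a class $[\alpha] \in H^\bullet_0(\mathfrak{g},M)$ by a \v{C}ech cocycle on a suitable open cover of $G^0$ (so as to compute hypercohomology of the truncated complex of sheaves); apply $t^*$ componentwise to this \v{C}ech cocycle to obtain a \v{C}ech cocycle for the truncated foliated complex on $\mathbf{B}^1 G$. The chain-level commutativity above, together with the naturality of \v{C}ech resolutions with respect to morphisms of complexes of sheaves, gives commutativity of the induced square in hypercohomology. Since $H^\bullet_0$ and $H^{\bullet+1}$ of the respective complexes are just this hypercohomology (with the degree shift built into the definition), this is exactly the diagram asserted.

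The only subtlety is making sure that the pointwise definition of $t^*$, which was given fiberwise on sections, is compatible with the sheaf-theoretic refinement to \v{C}ech cochains; but this is routine because left translation by $g$ is a local diffeomorphism from a neighborhood of $s(g)$ in $G^0$ to a neighborhood of $g$ in the source fiber, so a \v{C}ech refinement of a cover $\{U_i\}$ of $G^0$ pulls back under $t$ to a cover of $\mathbf{B}^1G$ on which the translation formula is applied cover-piece by cover-piece, commuting strictly with the truncation.
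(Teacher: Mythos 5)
Your proof is correct and is essentially the argument the paper leaves implicit: the proposition is stated without proof precisely because $t_0^*$ is defined by the same pointwise, degree-preserving translation formula as $t^*$, restricted to the truncated complex, so the square commutes already at the level of complexes of sheaves and hence in hypercohomology. Your elaboration of the chain-level square and its compatibility with \v{C}ech resolutions fills in exactly the routine details the paper's $\blacksquare$ elides.
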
 
The importance of the previous definition is due to the fact that given a class in $\alpha\in H^{\bullet}(\mathfrak{g},M)\,,$ the class $t^*\alpha$ defines a class in $H^{\bullet}(E^\bullet G\,,\kappa^{-1}\mathcal{O}(M))$ (or if $\alpha\in H^\bullet_0(\mathfrak{g},M)\,,$ then $t_0^*\alpha$ defines a class in $H^{\bullet}(E^\bullet G\,,\widehat{\kappa^{-1}\mathcal{O}(M)})\,,$ see~\Cref{van Est map}). 
We are now ready to state and prove the main theorem of the paper:
\begin{theorem}[Main Theorem]\label{van Est image}
Suppose $G\rightrightarrows G^0$ is source $n$-connected and that $M$ is a $G$-module fitting into the exact sequence
\[
0\to Z\to \mathfrak{m}\overset{exp}{\to} M\,,
\] where $\mathfrak{m}$ is the Lie algebroid of $M\,.$ Then the van Est map  $\,VE:H^*(G,M)\to H^*(\mathfrak{g},M)$ is an isomorphism in degrees $\le n$ and injective in degree $(n+1)\,.$ The image of $VE$ in degree $(n+1)$ are the classes $\alpha\in H^{n+1}(\mathfrak{g},M)$ such that for all $x\in G^0\,,$ the translated class $t_x^*\alpha$ (see Definition~\ref{translation}) is trivial in \begin{align*}
H^{n+1}(s^{-1}(x),\mathcal{O}(M_x)\xrightarrow{\text{dlog}}\Omega^1\otimes\mathfrak{m}_x\to\Omega^2\otimes\mathfrak{m}_x\to\cdots)\,.
\end{align*} The same statement holds for $VE_0:H_0^*(G,M)\to H_0^*(\mathfrak{g},M)$ with a degree shift, that is: the truncated van Est map  $VE_0:H_0^*(G,M)\to H_0^*(\mathfrak{g},M)$ is an isomorphism in degrees $\le n-1$ and injective in degree $n\,.$ The image of $VE_0$ in degree $n$ are the classes $\alpha\in H_0^{n}(\mathfrak{g},M)$ such that for all $x\in G^0\,,$ the translated class $t_x^*\alpha$ is trivial in \begin{align*}
H^{n+1}(s^{-1}(x),\mathcal{O}(M_x)\xrightarrow{\text{dlog}}\Omega^1\otimes\mathfrak{m}_x\to\Omega^2\otimes\mathfrak{m}_x\to\cdots)\,.
\end{align*}
In particular, let $\omega$ be a closed Lie algebroid $(n+1)$-form, ie.
\[
\omega \in \ker\big[\Gamma(\mathcal{C}^{n+1}(\mathfrak{g},M))\xrightarrow{d_\text{CE}}\Gamma(\mathcal{C}^{n+2}(\mathfrak{g},M))\big]\,.
\]
Then $[\omega]\in H^{n}_0(\mathfrak{g},M)$ is in the image of $VE_0$ if and only if \begin{equation}\label{van Est condition}
\int_{S^{n+1}_x}\omega \in Z\;\;\text{     for all } x \in G^0 \text{ and all } S^{n+1}_x\,,
\end{equation}
 where $S^{n+1}_x$ in an $(n+1)$-sphere contained in the source fiber over $x\,.$\footnote{For the case of smooth Lie groups, this seems to be shown in~\cite{wockel1}, although our proof is still different.}
 \end{theorem}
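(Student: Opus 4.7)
The plan is to decompose the van Est map as
$$VE:H^*(G,M)\xrightarrow{\kappa^{-1}}H^*(\mathbf{E}^\bullet G,\kappa^{-1}\mathcal{O}(M))\xrightarrow{Q}H^*(\mathfrak{g},M),$$
where $Q$ is already an isomorphism by construction of the resolution by Godement and foliated forms. Everything therefore reduces to analysing $\kappa^{-1}$, a pullback along the simplicial morphism $\kappa:\mathbf{E}^\bullet G\to\mathbf{B}^\bullet G$. Levelwise, $\kappa$ is a surjective submersion whose fiber over $(g_1,\dots,g_q)$ is the source fiber $t^{-1}(s(g_q))$; by the source $n$-connectedness hypothesis every such fiber is $n$-connected.

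Next I would apply the Leray spectral sequence for $\kappa$ with coefficients in $\kappa^{-1}\mathcal{O}(M)$:
$$E_2^{p,q}=H^p(\mathbf{B}^\bullet G,R^q\kappa_*\kappa^{-1}\mathcal{O}(M))\Rightarrow H^{p+q}(\mathbf{E}^\bullet G,\kappa^{-1}\mathcal{O}(M)).$$
Stalkwise, $R^q\kappa_*\kappa^{-1}\mathcal{O}(M)$ is computed by the cohomology of a source fiber $F_x$ with constant coefficients in $M_x$. Resolving by foliated forms and invoking the Buchdahl--Bernstein vanishing criterion cited in the introduction (applied to each $n$-connected fiber), one obtains $R^0\kappa_*\kappa^{-1}\mathcal{O}(M)=\mathcal{O}(M)$ and $R^q=0$ for $1\le q\le n$. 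The spectral sequence then degenerates in the relevant range and gives immediately that $\kappa^{-1}$, and hence $VE$, is an isomorphism for $*\le n$ and injective for $*=n+1$.

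For the image in degree $n+1$ the edge-map of the spectral sequence produces a short exact sequence
$$0\to H^{n+1}(\mathbf{B}^\bullet G,\mathcal{O}(M))\to H^{n+1}(\mathbf{E}^\bullet G,\kappa^{-1}\mathcal{O}(M))\xrightarrow{\rho}H^0(\mathbf{B}^\bullet G,R^{n+1}\kappa_*\kappa^{-1}\mathcal{O}(M)),$$
so $\text{Im}(\kappa^{-1})=\ker\rho$. I would then identify $\rho$ with the fiberwise restriction realised by the translation map $t_x^*$ of the Definition above: unwinding $Q$, a Chevalley--Eilenberg cocycle $\alpha$ corresponds under $Q^{-1}$ to the class whose restriction to $F_x$ is the left-translated cocycle $t_x^*\alpha$ in the Deligne-type complex with $M_x$-coefficients. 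Hence $\alpha\in\text{Im}(VE)$ iff $t_x^*\alpha=0$ for every $x\in G^0$. The truncated statement for $VE_0$ follows by running the same argument with $\mathcal{O}(M)^0$ and the normalized resolution $\widehat{\kappa^{-1}\mathcal{O}(M)}$, using the compatibility diagram already proved.

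Finally, for the explicit integral criterion, suppose $\omega\in\mathcal{C}^{n+1}(\mathfrak{g},\mathfrak{m})$ is closed and represents a class in $H^n_0(\mathfrak{g},M)$. On each $n$-connected source fiber $F_x$ the translated form $t_x^*\omega$ is a closed $\mathfrak{m}_x$-valued $(n+1)$-form whose image under $\exp$ in $H^{n+1}(F_x,M_x)$ vanishes iff all of its periods against embedded $(n+1)$-spheres lie in $Z=\ker(\exp)$; this is precisely the content of the kernel lemma and its corollary proved earlier. Combined with the previous paragraph this gives the stated condition $\int_{S^{n+1}_x}\omega\in Z$. The main obstacle will be the third paragraph: identifying the abstract fiber-restriction $\rho$ with the concrete translation $t_x^*$. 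This requires tracing a Chevalley--Eilenberg cocycle through right-translation to $s$-foliated forms, pullback along $\kappa$, and comparison via the Godement and foliated-form resolutions underlying $Q$, which is technical bookkeeping rather than a conceptual obstruction.
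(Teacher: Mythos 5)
Your proposal is correct and follows essentially the same route as the paper: the paper also factors $VE$ as $Q\circ\kappa^{-1}$ with $Q$ an isomorphism, applies the Leray spectral sequence argument for the locally fibered simplicial map $\kappa$ with $n$-acyclic (source) fibers (packaged as Theorem~\ref{spectral theorem} in the appendix, which rests on the same Bernstein--Lunts criterion you cite), identifies the image in the critical degree with the kernel of fiberwise restriction realized by $t_x^*$, and deduces the period criterion from Lemma~\ref{kernel} and Corollary~\ref{groupoid kernel}. The truncated case via the normalized sheaves $\widehat{\kappa^{-1}\mathcal{O}(M)}_\bullet$ is likewise handled exactly as you describe.
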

\begin{proof}
The statement regarding $VE$ follows from the fact that 
\begin{equation}\label{iso of coho}
 \  H^*(\mathfrak{g},M)= H^*(\mathbf{E}^\bullet G,\kappa^{-1}\mathcal{O}(M)_\bullet)
\end{equation}
and Theorem~\ref{spectral theorem}. For the statement regarding $VE_0$ we use the fact that
\begin{align*}
    H^*_0(\mathfrak{g},M)\cong  H^{*+1}(\mathbf{E}^\bullet G,\widehat{\kappa^{-1}\mathcal{O}(M)}_\bullet)\,, \end{align*}
and the fact that the map
\begin{align*}
    H^{*+1}(\mathbf{E}^\bullet G,\kappa^{-1}\mathcal{O}(M)^0_\bullet)\xrightarrow{}H^{*+1}(\mathbf{E}^\bullet G,\widehat{\kappa^{-1}\mathcal{O}(M)}_\bullet) 
\end{align*}
is an isomorphism in degrees $\le n-1$ and is injective in degree $n\,.$
Furthermore, Theorem~\ref{spectral theorem} implies that \begin{align*}
    H^*(\mathbf{B}^{\bullet}G,\mathcal{O}(M)^0_\bullet)\xrightarrow{\kappa^{-1}}H^*(E\mathbf{G}^{\bullet},\kappa^{-1}\mathcal{O}(M)^0_\bullet)
    \end{align*}
is an isomorphism in degrees $\le n-1$ and is injective in degree $n\,,$ hence we get that the map
$H^*_0(G,M)\to H^*_0(\mathfrak{g},M)$ is an isomorphism in degrees $\le n-1$ and injective in degree $n\,.$ The statement regarding its image in degree $n$ follows from Corollary~\ref{groupoid kernel}.
\end{proof}
%\theoremstyle{definition}\begin{rmk}\label{van Est affine}
%Theorem~\ref{van Est image} holds in the holomorphic category as well.
%\end{rmk}
\theoremstyle{definition}\begin{exmp}This is a continuation of Example~\ref{module example}.
The source fibers of $\Pi_1(S^1)\rightrightarrows S^1$ are contractible, hence Theorem~\ref{van Est image} shows that
the cohomology groups are $H^i(\Pi_1(S^1),\tilde{\mathbb{R}})=0$ in all degrees, and 
\[
H^i(\Pi_1(S^1),\widetilde{\mathbb{R}/\mathbb{Z}})=H^{i+1}(\Pi_1(S^1),\tilde{\mathbb{Z}})=\begin{cases}
      \mathbb{Z}/2\mathbb{Z}, & \text{if}\ i= 0 \\
      0, & \text{if}\ i\ne 0\,,
    \end{cases}
\] and this result agrees with the computation done in Example~\ref{module example}.
We also have that $\Pi_1(S^1)$ is Morita equivalent to the fundamental group $\pi_1(S^1)\cong\mathbb{Z}\,,$ and the associated $\mathbb{Z}$-modules are the abelian groups $\mathbb{Z}\,,\,\mathbb{R}\,,\,\mathbb{R}/\mathbb{Z}\,,$ where even integers act trivially and odd integers act by inversion. One can also use this information to compute $H^{i}(\Pi_1(S^1),\tilde{\mathbb{Z}})$
and indeed find that \[
H^{i}(\Pi_1(S^1),\tilde{\mathbb{Z}})=\begin{cases}
      \mathbb{Z}/2\mathbb{Z}, & \text{if}\ i= 1 \\
      0, & \text{if}\ i\ne 1\,.
    \end{cases}
\]\end{exmp} 

\subsection{Groupoid Extensions and the van Est Map}
To every extension 
\begin{equation}\label{ab ext}
     1\to A\to E\to G\to 1
    \end{equation}
    of a Lie groupoid $G$ by an abelian group $A$ (see~\ref{abelian extensions}) one can associate a class in $H^1_0(\mathfrak{g},A)$ (where $\mathfrak{g}$ is the Lie algebroid of $G$) in two ways: one is given by the extension class of the short exact sequence $0\to \mathfrak{a}\to\mathfrak{e}\to\mathfrak{g}\to 0$ determined by~\ref{ab ext}, and the other is given by applying the van Est map to the class in $H^1_0(G,A)$ determined by~\ref{ab ext}. Here we will show that these two classes are the same.
\begin{theorem}\label{groupoid extension}
Let $M$ be a $G$-module and consider an extension of the form
\begin{align*}
    1\to M\to E\to G\to 1
\end{align*}
and let $\alpha\in H^1_0(G,M)$ be its isomorphism class. Then the isomorphism class of the Lie algebroid associated to $VE(\alpha)\in H^1_0(\mathfrak{g},M)$ is equal to the isomorphism class of the Lie algebroid $\mathfrak{e}$ of $E\,.$
\end{theorem}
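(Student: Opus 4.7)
The plan is to verify the identification at the level of explicit cocycles, exploiting the locality of the van Est map noted in the remark after \Cref{canonical edit}. First, choose an open cover $\{U_i\}$ of $G$ containing a neighborhood of the identity bisection, and local sections $\sigma_i:U_i\to E$ of the projection $E\to G\,.$ The multiplicativity defect
\begin{align*}
    c(g_1,g_2):=\sigma_i(g_1)\,\sigma_j(g_2)\,\sigma_k(g_1 g_2)^{-1}\,,\qquad g_1\in U_i\,,\,g_2\in U_j\,,\,g_1 g_2\in U_k\,,
\end{align*}
together with the Čech differences $h_{ij}:=\sigma_i\,\sigma_j^{-1}$ on $U_i\cap U_j\,,$ assembles into a $2$-cocycle on $\mathbf{B}^\bullet G$ with values in $\mathcal{O}(M)^0_\bullet$ whose class in $H^1_0(G,M)$ is $\alpha$ by the standard correspondence between abelian extensions and $H^1_0\,.$

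On the infinitesimal side, differentiating $\sigma_i$ along the identity bisection yields local vector-bundle splittings $d\sigma_i:\mathfrak{g}|_{U_i\cap G^0}\to\mathfrak{e}|_{U_i\cap G^0}$ of the short exact sequence of Lie algebroids $0\to\mathfrak{m}\to\mathfrak{e}\to\mathfrak{g}\to 0\,.$ The bracket defects
\begin{align*}
    \omega_i(X,Y):=[d\sigma_i(X),d\sigma_i(Y)]-d\sigma_i([X,Y])\in\Gamma(\mathfrak{m}|_{U_i\cap G^0})
\end{align*}
together with the $\mathfrak{m}$-valued $1$-form differences $d\sigma_i-d\sigma_j$ on overlaps furnish a $2$-cocycle in the truncated Chevalley--Eilenberg complex representing the class of $\mathfrak{e}$ in $H^1_0(\mathfrak{g},M)\,,$ under the standard cocycle classification of Lie algebroid extensions.

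The heart of the proof is then to verify $VE_0[(c,h_{ij})]=[(\omega_i,d\sigma_i-d\sigma_j)]\,.$ Because $VE_0$ factors through a local van Est map (see the remark at the end of \Cref{van Est map}), it suffices to work in a neighborhood of the identity bisection. Concretely, I would pull $(c,h_{ij})$ back along $\kappa:\mathbf{E}^\bullet G\to\mathbf{B}^\bullet G\,,$ lift through the foliated dlog-resolution using the $\sigma_i$ themselves (now viewed as sections of $\kappa^{*}E$ over a neighborhood of the identity in $\mathbf{E}^\bullet G$), and then restrict the resulting $\mathfrak{m}$-valued foliated $2$-form to invariants on $G^0\,.$ A direct Taylor expansion of $c$ to second order at $(1_x,1_x)$ along left-invariant vector fields associated with vectors in $\mathfrak{g}$ identifies the output with the antisymmetrized bracket defect $\omega_i\,,$ while $h_{ij}$ differentiates to $d\sigma_i-d\sigma_j\,,$ matching the gluing data of the Lie algebroid cocycle. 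The main obstacle is the book-keeping across the Čech, simplicial and foliated-form directions of the defining double complex; an alternative is to exploit the naturality of both the extension-class construction and $VE_0$ to reduce the verification to the universal case of a central extension of a Lie group, where the required identity becomes a classical Chevalley--Eilenberg formula.
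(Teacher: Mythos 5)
Your proposal is correct and follows essentially the same route as the paper: both represent $\alpha$ by the Čech--simplicial cocycle built from local sections $\sigma_i$ of $E\to G$ near the identity bisection, represent $\mathfrak{e}$ by the curvature forms of the induced splittings together with the overlap gluing data, and use locality of the van Est map to match them. The only difference is that where you propose to carry out the key local identity (van Est of the multiplicativity defect equals the bracket defect of the splitting) by direct Taylor expansion through the dlog-resolution, the paper simply cites the argument of Theorem 5 in Crainic for that step.
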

\begin{proof}
Let $\{U_i\}_i$ be an open cover of $G^0\xhookrightarrow{} G$ on which there are local sections $\sigma_i:U_i\to E$ such that $\sigma$ takes $G^0\xhookrightarrow{}G$ to $G^0\xhookrightarrow{}E\,.$ These define a class $\alpha\in H^1_0(G,M)$ by taking $g_{ij}=\sigma^{-1}_i\cdot \sigma_j$ on $U_i\cap U_j\,,$ and where $h_{ijk}=\sigma_k^{-1}\cdot \sigma_i\cdot \sigma_j$ on $p_1^{-1}(U_i)\cap p_2^{-1}(U_j)\cap m^{-1}(U_k)\subset \mathbf{B}^{2}G\,.$ The sections $\sigma_{ii}$ induce a splitting of 
\begin{align*}
    0\to\mathfrak{m}\vert_{U_i}\to\mathfrak{e}\vert_{U_i}\to\mathfrak{g}\vert_{U_i}\to 0\,,
\end{align*}
which in turn gives a canonical closed 2-form $\omega\in C^2(\mathfrak{g}\vert_{U_i},M)\,,$ and the isomorphism given by $g_{ij}:E\vert_{U_i\cap U_j}\to E\vert_{U_i\cap U_j}$ induces an isomorphism  $\mathfrak{e}\vert_{U_i\cap U_j}\to \mathfrak{e}\vert_{U_i\cap U_j}$ given by $g_{ij*}$ (ie. the pushforward). Now the argument in Theorem 5 in~\cite{Crainic} implies that $VE(h_{iii})=[\omega_i]\,,$ and then one can check that $VE(\alpha)$ is the class given by $\{(\omega_i,g_{ij*})\}_{ij}\,.$
\end{proof}
\begin{comment}
$\mathbf{}$
\\In the holomorphic category we have the following:
\begin{theorem}\label{van Est image}
Suppose $G\rightrightarrows G^0$ is source $(n-1)$-connected, and that $M$ is a $G$-module. Then the van Est map  $\,VE:H^*(G,M)\to H^*(\mathfrak{g},M)$ is an isomorphism in degrees $\le n-1$ and is injective in degree $n\,.$  Furthermore, let $\omega\in H^n(\mathfrak{g},M)$ be such that there is an $\tilde{\omega}\in H^{n}(\mathfrak{g},\mathfrak{m})$ such that
\begin{align*}
    \tilde{\omega}\mapsto \omega 
    \end{align*}
    under the map 
    \begin{align*}
        H^{n}(\mathfrak{g},\mathfrak{m})\to H^n(\mathfrak{g},M)\,.
    \end{align*}
Then $\omega$ is in the image of $VE$ if and only if \begin{equation}\label{van Est condition}
\int_{S^n_x}\tilde{\omega} \in Z\;\;\text{     for all } x \in X \text{ and all } S^n_x\,,
\end{equation}
 where $S^n_x$ in an $n$-sphere contained in the source fiber over $x\,.$ 
\end{theorem}
\end{comment}
\chapter{Applications}
\section{Groupoid Extensions and Multiplicative Gerbes}
Here we describe applications of the main theorem (\Cref{van Est image}) to the integration of Lie algebroid extensions, to representations, and to multiplicative gerbes. 
\\\\If we take $M=E$ to be a representation in Theorem~\ref{van Est image}, then $Z=\{0\}$ and we obtain the following result, due to Crainic (see~\cite{Crainic}, section 2.3).
\begin{theorem}[\cite{Crainic}]
Suppose $G\rightrightarrows X$ is source $(n-1)$-connected and that $E$ is a $G$-representation. Then the van Est map  $\,VE:H^*(G,E)\to H^*(\mathfrak{g},E)$ is an isomorphism in degrees $\le n-1$ and is injective in degree $n\,.$  Furthermore, $\omega\in H^n(\mathfrak{g},E)$ is in the image of $VE$ if and only if \begin{equation}
\int_{S^n_x}\omega=0\;\;\text{     for all } x \in X \text{ and all } S^n_x\,,
\end{equation}
 where $S^n_x$ in an $n$-sphere contained in the source fiber over $x\,.$ 
\end{theorem}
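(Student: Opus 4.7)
The plan is to deduce this result directly from the main theorem (\Cref{van Est image}) by specializing the $G$-module $M$ to the representation $E$. First I would observe that any representation $E \to X$ is a $G$-module whose underlying family of abelian groups consists of vector spaces, so the exponential map $\exp : \mathfrak{m} \to M$ is simply the identity $E \to E$. Consequently $\mathfrak{m} = E$ and the kernel $Z = \ker(\exp) = 0$; moreover the complex $\mathcal{C}^\bullet(\mathfrak{g}, E)$ with its differential $d_{\text{CE}}\log = d_{\text{CE}}$ collapses to the classical Chevalley--Eilenberg complex of a representation, exactly as already noted in the example following the definition of $\mathfrak{g}$-modules.

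With this identification in hand, the isomorphism and injectivity assertions follow immediately: taking the connectivity parameter in \Cref{van Est image} to be $n-1$ and noting $Z = 0$, one obtains that $VE : H^*(G,E) \to H^*(\mathfrak{g},E)$ is an isomorphism in degrees $\le n-1$ and injective in degree $n$. What remains is to repackage the abstract ``translated class is trivial'' criterion from the main theorem as the concrete sphere-integral condition stated in the corollary.

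For the image in degree $n$, \Cref{van Est image} asserts that $\omega \in H^n(\mathfrak{g}, E)$ lies in the image of $VE$ if and only if, for every $x \in X$, the translated class $t_x^*\omega$ (in the sense of \Cref{translation}) vanishes in the $n$-th cohomology of the complex $\mathcal{O}(E_x) \xrightarrow{\text{dlog}} \Omega^1 \otimes E_x \to \Omega^2 \otimes E_x \to \cdots$ on the source fiber $s^{-1}(x)$. Because $\mathfrak{m} = E$ and $E_x$ is a finite-dimensional vector space, this complex is nothing but the ordinary de Rham complex of $s^{-1}(x)$ with coefficients in the constant bundle $E_x$, so the triviality of $t_x^*\omega$ means precisely that the right-translated closed $E_x$-valued $n$-form is exact on the source fiber.

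The last bridge is supplied by Hurewicz's theorem: since $s^{-1}(x)$ is $(n-1)$-connected, the map $\pi_n(s^{-1}(x)) \twoheadrightarrow H_n(s^{-1}(x), \mathbb{Z})$ is surjective, so $H_n$ is generated by classes of continuous maps $S^n \to s^{-1}(x)$. By de Rham duality a closed $E_x$-valued $n$-form is then exact if and only if it integrates to zero over every such sphere, which turns the abstract criterion into the desired condition $\int_{S^n_x}\omega = 0$ for every $x$ and every $n$-sphere $S^n_x$ in the source fiber. I expect no real obstacle beyond this unpacking; the one minor point requiring care is verifying that right-translation commutes with integration over spheres, which is automatic from naturality of the pairing between closed forms and singular cycles under diffeomorphisms.
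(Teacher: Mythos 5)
Your proposal is correct and follows essentially the same route as the paper, which derives this result in one line by specializing \Cref{van Est image} to $M=E$ with $\exp=\mathrm{id}$ and $Z=\{0\}$ (and connectivity parameter $n-1$). Your extra unpacking of the image criterion --- identifying the translated complex on a source fiber with the $E_x$-valued de Rham complex and invoking Hurewicz plus de Rham duality to convert exactness into vanishing of sphere periods --- is exactly the content the paper has already folded into Lemma~\ref{kernel} and the ``in particular'' clause of the main theorem, so nothing further is needed.
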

Now we will prove a result about the integration of Lie algebroid extensions, which generalizes the above result in the $n=2$ case. At least in the case where $M=S^1$ this is due to Crainic and Zhu (see~\cite{zhu}), but their proof is different.
\begin{theorem}\label{central extension}
Consider the exponential sequence $0\to Z \to\mathfrak{m}\overset{\exp}{\to} M\,.$ Let
\begin{align}\label{LA extension}
    0\to\mathfrak{m}\to \mathfrak{a} \to \mathfrak{g}\to 0
\end{align}
be the central extension of $\mathfrak{g}$ associated to $\omega\in H^2(\mathfrak{g},\mathfrak{m})\,.$ Suppose that $\mathfrak{g}$ has a simply connected integration $G\rightrightarrows X$ and that 
\begin{align}\label{condition Z}
    \int_{S^2_x}\omega\in Z
\end{align}
for all $x\in X$ and $S^2_x\,,$ where $S^2_x$ in a $2$-sphere contained in the source fiber over $x\,.$  Then $\mathfrak{a}$ integrates to a unique extension
\begin{align}\label{LG extension}
1\to M\to A \to G\to 1\,.
\end{align}
\end{theorem}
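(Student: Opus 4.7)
The plan is to deduce the statement from the van Est isomorphism theorem, \Cref{van Est image}, combined with the classification of extensions by truncated cohomology. Recall from \Cref{groupoid extension} that isomorphism classes of extensions $1\to M\to A\to G\to 1$ are classified by $H^1_0(G,M)$, and that the truncated van Est map $VE_0:H^1_0(G,M)\to H^1_0(\mathfrak{g},M)$ sends the class of an extension to the class of its Lie algebroid extension. The central Lie algebroid extension $0\to\mathfrak{m}\to\mathfrak{a}\to\mathfrak{g}\to 0$ determined by $\omega\in H^2(\mathfrak{g},\mathfrak{m})$ defines a corresponding class $[\omega]_0\in H^1_0(\mathfrak{g},M)$ by composing a representative closed $2$-form with the exponential $\mathfrak{m}\to M$, i.e.\ by viewing it as an element of $\Gamma(\mathcal{C}^2(\mathfrak{g},M))$. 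The task therefore reduces to producing a unique $\alpha\in H^1_0(G,M)$ with $VE_0(\alpha)=[\omega]_0$.

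Since $G$ is source simply connected (i.e.\ $n=1$ in the main theorem), \Cref{van Est image} gives that $VE_0$ is an isomorphism in degree $0$ and is injective in degree $1$, with image in degree $1$ consisting of those classes $\beta$ for which the translated class $t^*_x\beta$ is trivial on every source fiber with coefficients in the exponential sequence $0\to Z\to\mathfrak{m}\to M$. Injectivity will automatically deliver the uniqueness assertion of the theorem once existence is established, so the substantive content is exhibiting $[\omega]_0$ inside the image.

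For this I would invoke \Cref{groupoid kernel}: left-translation turns the closed Lie algebroid form $\omega$ into a closed source-foliated $2$-form $t^*\omega$ valued in $t^*\mathfrak{m}$, and the hypothesis $\int_{S^2_x}\omega\in Z$ is exactly the integrality condition of that corollary. It follows that the exponentiated class restricts trivially to every source fiber, which is precisely the image criterion in \Cref{van Est image}. The class $\alpha\in H^1_0(G,M)$ so produced corresponds, via \Cref{groupoid extension}, to an extension $1\to M\to A\to G\to 1$ whose Lie algebroid is $\mathfrak{a}$, and the required uniqueness follows from the injectivity of $VE_0$ in degree $1$.

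The main obstacle, to the extent there is one, is bookkeeping rather than genuine analysis: one must verify that the natural map $H^2(\mathfrak{g},\mathfrak{m})\to H^1_0(\mathfrak{g},M)$ used to define $[\omega]_0$ is compatible with the identifications used in \Cref{groupoid extension} and \Cref{groupoid kernel}, so that the integrality hypothesis on $\omega$ translates faithfully into the image criterion of \Cref{van Est image}. Once these compatibilities are spelled out, both existence and uniqueness of the integration are immediate consequences of the main theorem, with no further integration-theoretic input required.
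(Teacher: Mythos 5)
Your proposal is correct and follows essentially the same route as the paper: the paper's proof likewise invokes \Cref{van Est image} to identify $H^1_0(G,M)$ with the subgroup of $H^1_0(\mathfrak{g},M)$ whose classes have periods in $Z$ along source fibers, and then applies \Cref{groupoid extension} to match the integrated class with the extension integrating $\mathfrak{a}$. Your version merely spells out the intermediate steps (the role of \Cref{groupoid kernel} and the injectivity argument for uniqueness) that the paper leaves implicit.
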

In particular, if $G$ and $M$ are Hausdorff then $\mathfrak{a}$ admits a Hausdorff integration.\footnote{This generalizes Theorem 5 in~\cite{Crainic}, with a different proof.} 
\begin{proof}
By Theorem~\ref{van Est image} $H^1_0(G,M)$ is isomorphic to the subgroup of $H^1_0(\mathfrak{g},M)$ which have periods in $Z$ along the source fibers. Hence by Theorem~\ref{groupoid extension} the Lie algebroid extension in~\ref{LA extension} integrates to an extension of the form~\ref{LG extension}. Since in particular $\mathbf{B}^1A$ is a principal $M$-bundle over $G\,,$ it must be Hausdorff if $G$ and $M$ are.
\end{proof}
\begin{comment}\theoremstyle{definition}\begin{rmk}
Let us bring Theorem~\ref{central extension} into contact with the integrability results in~\cite{rui}. Consider an abelian extension of the form
\[
0\to \mathfrak{a}\to\mathfrak{g}\to TX\to 0\,.
\]
Lemma 3.6 and Theorem 4.1 in~\cite{rui}
imply that if we take the curvature class $\omega$ associated to a splitting of this extension, then $\mathfrak{g}$ is integrable if and only if the monodromy groups, given by
\[
N_x(\mathfrak{g})=\Big\{\int_{S^2_x}\omega\,:x\in X\,,S^2_x\in s^{-1}(x)\Big\}\,,
\]
satisfy the following two conditions: for all $x\in X\,,$
\begin{itemize}
    \item $N_x(\mathfrak{g})\subset A_x$ is discrete\,,
    \item $\liminf_{y\to x} d(\{0\},N_y(\mathfrak{g})-\{0\})>0\,,$
\end{itemize}
where $d(\cdot,\cdot)$ is some norm on $\mathfrak{g}\,.$ These two conditions automatically hold if $\omega$ satisfies condition~\eqref{condition Z}, so they are seen to be consistent with one another. However, Theorem~\ref{central extension} says in addition that the integration of $\mathfrak{g}$ will be an abelian extension of $\Pi_1(X)\,,$ in particular implying the integration is Hausdorff.
\end{rmk} 
\end{comment}
\begin{remark}Note that in fact a stronger result than the above theorem can be made. Suppose we have an extension 
\begin{align}\label{nonabelian}
0\to\mathfrak{m}\xrightarrow{\iota}\mathfrak{a}\xrightarrow{\pi} \mathfrak{g}\to 0\,,
\end{align}where now $\mathfrak{m}$ isn't assumed to be abelian, so that $M$ is a nonabelian module. However, suppose there is a splitting of~\eqref{nonabelian} such that the curvature $\omega$ takes values in the center of $\mathfrak{m}\,,$ denoted $Z(\mathfrak{m})$ (which we assume is a vector bundle). Then two things occur: First, let $\sigma:\mathfrak{g}\to\mathfrak{a}$ denote the splitting. Then we get an action of $\mathfrak{g}$ on $\mathfrak{m}$ defined by $
\iota(L_{X}W):=[\sigma(X),\iota(W)]\,,$ for $X\in \mathcal{O}(\mathfrak{g})\,,W\in \mathcal{O}(\mathfrak{m})$ (here we are defining $L_X W\,.$ One can check that this is in the image of $\iota$ and so defines a local section of $\mathcal{O}(\mathfrak{m})\,,$ and that this action is compatible with Lie brackets). Assume that this action integrates to an action of $G$ on $M\,,$ making $M$ into a (nonabelian) G-module. 
\\\\The second thing that occurs is that we get a central extension given by
\begin{align}\label{nonabelian2}
0\to Z(\mathfrak{m})\to (Z(\mathfrak{m})\oplus\mathfrak{g},\omega)\to \mathfrak{g}\to 0\,,
\end{align}
where $\omega$ is the curvature of $\sigma\,,$ and $\mathfrak{g}$ acts on $Z(\mathfrak{m})$ as above. The extension~\eqref{nonabelian2} is a reduction of~\eqref{nonabelian} in the following sense: we can form the Lie algebroid $Z(\mathfrak{m})\oplus \mathfrak{m}$ and this Lie algeboid has a natural action of $Z(\mathfrak{m})\,,$ and the quotient is isomorphic to $\mathfrak{m}\,.$ Similarly, we can form the Lie algebroid $(Z(\mathfrak{m})\oplus\mathfrak{g},\omega)\oplus \mathfrak{m}\,,$ and this Lie algebroid also has a natural action of $Z(\mathfrak{m})\,,$ and the quotient is isomorphic to $\mathfrak{a}\,.$ Therefore, the extension~\eqref{nonabelian} is associated to the extension~\eqref{nonabelian2} in a way that is analogous to the reduction of the structure group of a principal bundle.
\\\\Assume now that the extension~\eqref{nonabelian2} integrates to an extension 
\begin{align}\label{abelian ext}
1\to Z(M)\xrightarrow{\iota} E \xrightarrow{\pi}  G\to 1\,,
\end{align}
where $G$ is the source simpy connected groupoid integrating $\mathfrak{g}\,.$ Then we can form the product Lie groupoid $E _{s}{\times}_{s} M:$ the multiplication is given by \begin{align*}
(e,m)(e',m')=(ee',m(\pi(e)^{-1}\cdot m'))\,,
\end{align*} where $t(e)=s(e')\,.$ Similarly to the Lie algebroid extension case, the family of abelian groups $Z(M)$ acts on the family of groups $Z(M)_{s}{\mathop{\times}}_s M\,,$ as well as on the Lie groupoid $E _{s}{\mathop{\times}}_s M\,,$ and the quotient of the former is isomorphic to $M\,,$ and the quotient of the latter integrates $\mathfrak{a}$ in~\eqref{nonabelian}. This gives us an extension
\begin{align}
    1\to M\to A\to G\to 1
\end{align}
integrating~\eqref{nonabelian}. Therefore, if  we can integrate~\eqref{nonabelian2} we can also integrate~\eqref{nonabelian}.
\\\\One should notice the similarity between the construction we've just described and the construction described in Lemma 3.6 in~\cite{rui}, in the special case of a regular Lie algebroid (ie. where the anchor map has constant rank), and where the extension is given by
\begin{align}
    0\to \text{ker}(\alpha)\to \mathfrak{a}\xrightarrow{\alpha}\text{im}(\alpha)\to 0\,,
\end{align}
where $\alpha$ is the anchor map of $\mathfrak{a}\,.$ The obstruction to integration described there coincides with the obstruction given by~\Cref{van Est image} for the integration of~\eqref{nonabelian2}, and we've shown that the vanishing of this obstruction is sufficient for the integration of~\eqref{nonabelian}, and hence of $\mathfrak{a}\,,$ to exist. 
\begin{comment}So the argument for the integration of Lie algebroids presented in~\cite{rui} is a kind of nonabelian and singular analogue of the argument for the integration of abelian extensions given by~\Cref{van Est image} (singular because if the Lie algebroid isn't regular its kernel and image aren't vector bundles).
\end{comment}
\end{remark}

\begin{comment}\begin{theorem}
Suppose $(X,\Lambda)$ is a regular Poisson manifold and that the kernel of $\Lambda$ has rank $n\,.$. Let $\mathcal{F}$ denote the foliation groupoid associated to the foliation of $X$ by symplectic leaves. Suppose further that there exists an $\mathcal{F}$-module $M$ with Lie algebroid $\mathbb{R}^n_X$ such that, for each symplectic leaf $L\,,$ the subgroup generated by integrating $\Lambda$ over all classes in $\pi_2(L)$ lies in the kernel of $\exp:\mathbb{R}^n_X\to M\,.$  
\end{theorem}
\end{comment}
The above results concerned the degree $1$ case in truncated cohomology. We will now apply the main theorem to the integration of rank one representations, which concerns degree $1$ in nontruncated cohomology. First we make use of the following result:
\begin{proposition}\label{G representations}
The group of isomorphism classes of representations of $G\rightrightarrows G^0$ on complex line bundles is isomorphic to $H^1(G\,,\mathbb{C}^*_{G^0})\,.$ The corresponding statement for real line bundles holds, with $\mathbb{C}^*_{G^0}$ replaced by $\mathbb{R}^*_{G^0}\,.$ See Example~\ref{representation}.
\end{proposition}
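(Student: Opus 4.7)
The plan is to match both sides with the same Čech–simplicial cocycle data, using the double-complex description of $H^1(\mathbf B^\bullet G,\mathbb C^*_{G^0})$ given in~\Cref{double complex} and the fact that $\mathbb C^*_{G^0}$ is just the trivial $G$-module with fibre $\mathbb C^*\,.$

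First I would unpack what a representation $L\to G^0$ of $G$ on a complex line bundle amounts to in local data. Pick an open cover $\{U_i\}_{i\in\mathcal I}$ of $G^0$ over which $L$ is trivialized by local frames $e_i\,.$ On $U_{ij}$ we get transition functions $\lambda_{ij}:U_{ij}\to\mathbb C^*$ with $\lambda_{ij}\lambda_{jk}=\lambda_{ik}\,.$ On the open cover $\{W_{ij}:=t^{-1}(U_i)\cap s^{-1}(U_j)\}$ of $\mathbf B^1 G=G\,,$ the $G$-action on $L$ restricts to a linear map $L_{s(g)}\to L_{t(g)}$ that, expressed in the frames $e_j$ and $e_i\,,$ is multiplication by a smooth function $\mu_{ij}:W_{ij}\to\mathbb C^*\,.$ The axioms for a $G$-representation translate into (a) the multiplicativity condition $\mu_{ik}(g\cdot h)=\mu_{ij}(g)\,\mu_{jk}(h)$ on the appropriate triple overlap inside $\mathbf B^2G\,,$ and (b) the mixed compatibility $\mu_{i'j'}=\lambda_{i'i}\,\mu_{ij}\,\lambda_{jj'}^{-1}$ on $W_{ij}\cap W_{i'j'}$ expressing equivariance of the chosen trivialization.

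Second I would identify this data with a degree-one cocycle in the Čech–simplicial double complex $C^{p,q}=\check C^p(\mathcal U_q,\mathbb C^*_{G^0}(\mathbf B^q G))$ associated to the induced covers $\mathcal U_q$ of $\mathbf B^q G\,,$ which computes $H^*(\mathbf B^\bullet G,\mathbb C^*_{G^0})$ by~\Cref{double complex} since $\mathbb C^*_{G^0}$ is acyclic on each $\mathbf B^q G$ when the $U_i$ are chosen to be Stein/contractible. A total degree-one cochain is exactly a pair $(\lambda_{ij},\mu_{ij})\in C^{1,0}\oplus C^{0,1}\,,$ and a direct check shows that the total differential vanishes iff (i) $\lambda$ is a Čech $1$-cocycle on $G^0\,,$ (ii) the simplicial differential $\delta^*\mu=0$ coincides with the multiplicativity condition (a), and (iii) the mixed term $\check\delta\mu-\delta^*\lambda=0$ recovers the compatibility (b). Hence cocycles correspond bijectively to line-bundle representations of $G\,.$

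Third, I would show that isomorphisms of representations correspond to coboundaries. An isomorphism amounts to a rescaling of the local frames by $h_i:U_i\to\mathbb C^*\,,$ and a computation shows this replaces $\lambda_{ij}$ by $h_i^{-1}\lambda_{ij}h_j$ and $\mu_{ij}$ by $(h_i\circ t)^{-1}\mu_{ij}(h_j\circ s)\,,$ which is precisely the action of the total differential on the $(0,0)$-cochain $(h_i)\in C^{0,0}\,.$ Thus isomorphism classes of rank-one representations biject with $H^1(G,\mathbb C^*_{G^0})\,.$ The real case is verbatim the same with $\mathbb C^*$ replaced by $\mathbb R^*\,,$ since only the abelian group structure was used.

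The main obstacle is purely notational: writing out the two differentials of the double complex in the right sign convention so that they agree, on the nose, with the line-bundle cocycle condition, the action associativity, and the equivariance of local frames. Once a cover is fixed and signs are chosen consistently the correspondence is tautological; there is no analytic content beyond the fact that a complex line bundle is determined by its $\mathbb C^*$-valued transition functions.
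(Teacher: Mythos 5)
The paper does not actually prove this proposition: it is asserted as a standard fact, with only a pointer to its infinitesimal analogue (Example~\ref{representation}), so there is no argument of the paper's to compare yours against. Your Čech--simplicial dictionary is the standard route and is correct in outline: a rank-one representation is exactly a pair $(\lambda_{ij},\mu_{ij})$ in total degree one, the total cocycle condition reproduces the bundle cocycle identity, the multiplicativity of the action, and the equivariance of the frames, and isomorphisms act by total coboundaries of $(h_i)\,.$ The identity arrows acting trivially comes for free since $\mu(\mathrm{id}_x)=\mu(\mathrm{id}_x)^2$ in $\mathbb{C}^*\,.$

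The one step that is wrong as stated is your justification that this double complex computes $H^*(\mathbf{B}^\bullet G,\mathbb{C}^*_{G^0})\,.$ The sheaf of smooth nonvanishing functions is not acyclic on $\mathbf{B}^q G$ (its $H^1$ is $H^2(\,\cdot\,,\mathbb{Z})$), so what you need is that each cover $\mathcal{U}_q$ is a Leray cover for it; and choosing the $U_i$ contractible in $G^0$ does not make the induced sets $t^{-1}(U_{i_0})\cap s^{-1}(U_{i_1})\cap\cdots$ in $\mathbf{B}^q G$ contractible, or even Leray. The paper itself has to argue this point separately in its $\mathbb{C}\ltimes\mathbb{P}^1$ computation (``a standard Mayer--Vietoris argument shows that this is a good cover in degree one''). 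The repair for the present statement is standard and cheap because only total degree one is involved: either pass to the colimit over refinements and use that $\check{H}^1\to H^1$ is always an isomorphism in the relevant range, or work with the Godement resolution from Definition~\ref{cohomology simplicial} and Remark~\ref{double complex} and observe that a degree-one Godement cocycle can be put in your Čech form after choosing local trivializations, and conversely. With that adjustment your argument is complete, and the real case is indeed verbatim.
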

The following statement is already known, we are just giving a cohomological proof.
\begin{theorem}
Let $G\rightrightarrows G^0$ be a source simply connected Lie groupoid. Then $\text{Rep}(G,1)\cong \text{Rep}(\mathfrak{g},1)\,,$ where
$\text{Rep}(G,1)\,,\,\text{Rep}(\mathfrak{g},1)$ are the categories of 1-dimensional representations, ie. representations on line bundles.
\end{theorem}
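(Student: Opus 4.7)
The plan is to reduce the categorical equivalence to the van Est isomorphism applied to the $G$-module $M = \mathbb{C}^*_{G^0}$ (with trivial $G$-action), combined with the classification of $1$-dimensional representations by $H^1$. I treat the complex case; the real case is identical upon replacing $\mathbb{C}^*_{G^0}$ by $\mathbb{R}^*_{G^0}$.

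First, I would recall the two classification results already established: by \Cref{G representations}, isomorphism classes of complex $1$-dimensional representations of $G$ are in bijection with $H^1(G, \mathbb{C}^*_{G^0})$, and by \Cref{representation}, isomorphism classes of complex $1$-dimensional representations of $\mathfrak{g}$ are in bijection with $H^1(\mathfrak{g}, \mathbb{C}^*_{G^0})$.

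Next, I would apply the Main Theorem (\Cref{van Est image}) to $M = \mathbb{C}^*_{G^0}$ via the exponential sequence $0 \to 2\pi i\mathbb{Z} \to \mathbb{C} \overset{\exp}{\to} \mathbb{C}^*_{G^0}$. Since $G$ is source simply connected, its source fibers are $1$-connected, so the theorem applies with $n = 1$, yielding that
\begin{equation*}
VE: H^k(G, \mathbb{C}^*_{G^0}) \to H^k(\mathfrak{g}, \mathbb{C}^*_{G^0})
\end{equation*}
is an isomorphism for $k = 0, 1$ (no period condition is needed in these degrees). The degree-$1$ isomorphism induces a bijection between isomorphism classes of $1$-dimensional $G$-representations and $1$-dimensional $\mathfrak{g}$-representations, providing essential surjectivity of the differentiation functor $d: \text{Rep}(G, 1) \to \text{Rep}(\mathfrak{g}, 1)$.

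For full faithfulness, given $L_1, L_2 \in \text{Rep}(G, 1)$, a morphism $L_1 \to L_2$ in $\text{Rep}(G, 1)$ is an equivariant bundle map, which is the same as an invariant section of the $1$-dimensional representation $L_1^* \otimes L_2$; hence $\text{Hom}_{\text{Rep}(G,1)}(L_1, L_2) \cong H^0(G, L_1^* \otimes L_2)$, and analogously $\text{Hom}_{\text{Rep}(\mathfrak{g},1)}(dL_1, dL_2) \cong H^0(\mathfrak{g}, L_1^* \otimes L_2)$. The degree-$0$ van Est isomorphism identifies these Hom-sets compatibly. The main obstacle will be verifying that the van Est map, under these classifications, actually intertwines differentiation $d$ with the isomorphism $H^1(G, \mathbb{C}^*_{G^0}) \cong H^1(\mathfrak{g}, \mathbb{C}^*_{G^0})$; this requires tracking how a $\mathbb{C}^*$-cocycle representing a $1$-dimensional $G$-representation maps to its infinitesimal counterpart under $VE$, in the same spirit as \Cref{groupoid extension} for abelian groupoid extensions.
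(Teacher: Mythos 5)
Your proposal is correct and follows essentially the same route as the paper, which proves this theorem simply by citing the classification of $1$-dimensional $G$- and $\mathfrak{g}$-representations by $H^1(G,\mathbb{C}^*_{G^0})$ and $H^1(\mathfrak{g},\mathbb{C}^*_{G^0})$ together with the main van Est theorem applied to the module $\mathbb{C}^*_{G^0}$ in degrees $0$ and $1$. Your elaboration of full faithfulness via the degree-$0$ isomorphism on invariant sections of $L_1^*\otimes L_2$, and of the compatibility of $VE$ with differentiation of cocycles, fills in details the paper leaves implicit but does not change the argument.
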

\begin{proof}
This follows directly from Theorem~\ref{van Est image}, Example~\ref{representation} and Proposition~\ref{G representations}.
\end{proof}
\begin{comment}In the same vein as the previous theorem, we have the following:
\begin{theorem}
Let $G\rightrightarrows G^0$ be a source simply connected Lie groupoid. Then every principal $M$-bundle with a flat connection integrates uniquely, up to isomorphism, to a nonlinear representation of $G$ (see the appendix for the definition of nonlinear representation).
\end{theorem}
\begin{proof}
This follows directly from Theorem~\ref{van Est image}.
\end{proof}
$\mathbf{}$
\end{comment}
Now for the degree $2$ case in truncated cohomology: we use the main theorem to give a proof of an integration result concerning the multiplicative gerbe on compact, simple and simply connected Lie groups (see~\cite{konrad}). For the purposes of this thesis it is enough to think of a gerbe as the data given by a degree 2 \^{C}ech cocycle.
\begin{theorem}
Let $G$ be a simply connected Lie group. Then for each $\alpha\in H^2_0(\mathfrak{g},\mathbb{R})$ which is integral on $G\,,$ there is a class in $H^2_0(G,S^1)$ integrating it.
\end{theorem}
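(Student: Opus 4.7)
The plan is to apply the Main Theorem (\Cref{van Est image}) to the Lie groupoid $G \rightrightarrows *$ with coefficients in the $G$-module $M = S^1$, whose exponential sequence is $0 \to \mathbb{Z} \to \mathbb{R} \xrightarrow{\exp} S^1 \to 0$. The crucial input is the classical theorem of Cartan that $\pi_2$ of any Lie group vanishes, so that $G$ simply connected implies $G$ is $2$-connected; since the source fibers of $G \rightrightarrows *$ are $G$ itself, this makes $G \rightrightarrows *$ source $2$-connected, putting us in the $n=2$ range of the main theorem.

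First, I would use the short exact sequence of $G$-modules to produce a map $\exp_*\colon H^2_0(\mathfrak{g},\mathbb{R}) \to H^2_0(\mathfrak{g},S^1)$ and set $\bar\alpha := \exp_*(\alpha)$. Since $H^2_0(\mathfrak{g},\mathbb{R})$ is the degree-$3$ cohomology of the truncated Chevalley--Eilenberg complex and the Lie algebra of $S^1$ is $\mathbb{R}$, any closed representative $\omega \in \mathcal{C}^3(\mathfrak{g},\mathbb{R})$ of $\alpha$ simultaneously represents $\bar\alpha$ in $H^2_0(\mathfrak{g},S^1)$.

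Next I would invoke \Cref{van Est image} with $n=2$. The theorem says that $VE_0\colon H^2_0(G,S^1) \to H^2_0(\mathfrak{g},S^1)$ is injective, and that a class represented by a closed Chevalley--Eilenberg $3$-form $\omega$ lies in its image if and only if
\[
\int_{S^3} \omega \in \mathbb{Z} \qquad \text{for every 3-sphere } S^3 \subset G.
\]
Via the left-translation procedure of \Cref{translation}, $\omega$ corresponds to a closed left-invariant $3$-form on $G$, and integration over an $S^3 \subset G$ computes the pairing of $\alpha \in H^3_{\mathrm{dR}}(G) \cong H^2_0(\mathfrak{g},\mathbb{R})$ with the homology class $[S^3] \in H_3(G,\mathbb{Z})$. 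The hypothesis that $\alpha$ is ``integral on $G$'' is precisely the statement that all such pairings land in $\mathbb{Z}$ (and by Hurewicz, using $\pi_1(G)=\pi_2(G)=0$, the class $[S^3]$ runs over $\pi_3(G)\cong H_3(G,\mathbb{Z})$).

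Therefore $\bar\alpha$ lies in the image of $VE_0$, giving a (unique, by injectivity) class in $H^2_0(G,S^1)$ integrating $\alpha$. The only non-routine point in the argument is verifying the translation--integration identification so that the period condition in \Cref{van Est image} matches the given notion of integrality; once that bookkeeping is done and Cartan's $\pi_2(G)=0$ is invoked, the result is an immediate application of the main theorem.
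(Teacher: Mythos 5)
Your proof is correct and follows exactly the same route as the paper: the paper's own proof is the one-line observation that $\pi_2(G)=0$ makes a simply connected Lie group $2$-connected, so \Cref{van Est image} applies immediately. Your write-up simply unpacks what "applies immediately" means — identifying $H^2_0$ with closed Chevalley–Eilenberg $3$-forms, matching the period condition $\int_{S^3}\omega\in\mathbb{Z}$ from the main theorem with the integrality hypothesis — which is accurate bookkeeping but not a different argument.
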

\begin{proof}
It is well known that simply connected Lie groups are 2-connected (since $\pi_2(G)=0$ for Lie groups), so Theorem~\ref{van Est image} immediately gives the result.
\end{proof}
\subsection{Group Actions and Lifting Problems}
In this section we apply~\Cref{van Est image} to study the problems of lifting projective representations to representations, and to lifting Lie group actions to principal torus bundles.
\subsubsection{Lifting Projective Representations}
\begin{theorem}
Let $G$ be a simply connected Lie group and let $V$ be a finite dimensional complex vector space. Let $\rho:G\to \text{PGL } (V)$ be a homomorphism. Then $G$ lifts to a homomorphism $\tilde{\rho}:G\to\text{GL } (V)\,.$ If $G$ is semisimple, this lift is unique.
\end{theorem} 
\begin{proof}
We have a central extension
\begin{align}\label{GL extension}
    1\to \mathbb{C}^*\to \text{GL } (V)\to \text{PGL } (V)\to 1\,,
\end{align}
and the corresponding Lie algebra extension splits: the Lie algebra of $\text{PGL } (V)$ is isomorphic to $\mathfrak{g}\mathfrak{l}(V)/\mathbb{C}\,,$ where $\lambda\in\mathbb{C}$ acts on $X\in\mathfrak{g}\mathfrak{l}(V)$ by taking $X\mapsto X+\lambda\,\mathbf{I}\,.$ The map \begin{align*}
    \mathfrak{g}\mathfrak{l}(V)/\mathbb{C}\to\mathfrak{g}\mathfrak{l}(V)\,, X\mapsto X-\frac{\text{tr}(X)}{\text{dim}(V)}\mathbf{I}
    \end{align*}
    is a Lie algebra homomorphism. Therefore, since $G$ is simply connected,~\Cref{van Est image} implies that the extension of $G$ that we get by pulling back the extension given by \eqref{GL extension} via $\rho$ is trivial (since the pullback of a trivial Lie algebra extension is trivial). However, a trivialization of the pullback extension is the same thing as a lifting of the homomorphism $\rho$ to a homomorphism $\tilde{\rho}:G\to\text{GL } (V)\,,$ hence such a lifting exists. 
    \\\\Now for uniqueness: it is easy to see that the liftings of $\rho$ are a torsor for $\text{Hom}(G,\mathbb{C}^*)\,,$ but again by~\Cref{van Est image} we have that $\text{Hom}(G,\mathbb{C}^*)\cong \text{Hom}(\mathfrak{g},\mathbb{C})\,,$ and the right side is $0$ if $G$ is semisimple. Hence if $G$ is semisimple there is a unique lift.
    
\end{proof}
\begin{remark}
One can also use the above method to give a proof of Bargmann's theorem (\cite{bargmann}), that is, if $H^2(\mathfrak{g},\mathbb{R})=0\,,$ then every projective representation of a (infinite dimensional) Hilbert space lifts to a representation.
\end{remark}
\subsubsection{Lifting Group Actions to Principal Bundles and Quantizations}
Now we will look at a different lifting  problem, one involving compact, semisimple Lie groups. First let us remark the following well-known result:
\begin{lemma}
A compact Lie group is semisimple if and only if its fundamental group is finite.
\end{lemma}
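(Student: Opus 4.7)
The plan is to reduce to the connected case and then exploit the structure theory of compact Lie algebras together with Weyl's theorem on the compactness of the simply connected integration of a compact semisimple Lie algebra. First I would reduce to the case that $G$ is connected: the identity component $G^0$ is open and closed in $G\,,$ so $G$ is compact iff $G^0$ is compact (with $G/G^0$ then finite), and $\pi_1(G) = \pi_1(G^0)$ and semisimplicity depends only on $\mathfrak{g}\,.$ So assume $G$ connected with Lie algebra $\mathfrak{g}\,.$

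The key structural input I would cite is the decomposition
\begin{equation*}
\mathfrak{g} = Z(\mathfrak{g}) \oplus [\mathfrak{g},\mathfrak{g}]
\end{equation*}
valid for any compact Lie algebra (proved via an $\mathrm{ad}$-invariant inner product coming from averaging against a Haar measure), with $[\mathfrak{g},\mathfrak{g}]$ compact and semisimple. The other main input is Weyl's theorem: if $\mathfrak{h}$ is a compact semisimple Lie algebra, then its simply connected integration is compact. Combined with the decomposition above applied to $\tilde G\,,$ this gives the identification
\begin{equation*}
\tilde G \;\cong\; K \times \mathbb{R}^k\,,
\end{equation*}
where $K$ is the (compact, simply connected) integration of $[\mathfrak{g},\mathfrak{g}]$ and $k = \dim Z(\mathfrak{g})\,.$

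For the forward implication, assume $G$ is compact and semisimple. Then $Z(\mathfrak{g}) = 0\,,$ so $\tilde G \cong K$ is compact by Weyl. The covering map $\tilde G \to G$ has discrete fiber $\pi_1(G)\,,$ which is closed in the compact space $\tilde G$ and hence finite. For the reverse implication, assume $G$ is compact and $\pi_1(G)$ is finite. Then $\tilde G \to G$ is a finite cover of a compact space, so $\tilde G$ is compact. But $\tilde G \cong K \times \mathbb{R}^k\,,$ and $\mathbb{R}^k$ is compact only when $k=0\,;$ thus $Z(\mathfrak{g}) = 0$ and $\mathfrak{g}$ is semisimple.

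The only nontrivial piece is Weyl's compactness theorem, which is standard and may be assumed; everything else is bookkeeping about covering spaces and closed subgroups. I expect no genuine obstacle — the statement is folklore and the argument above is the textbook one.
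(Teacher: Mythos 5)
Your argument is correct: the decomposition $\mathfrak{g}=Z(\mathfrak{g})\oplus[\mathfrak{g},\mathfrak{g}]$ for compact Lie algebras plus Weyl's compactness theorem gives $\tilde G\cong K\times\mathbb{R}^k\,,$ and both implications then follow from elementary covering-space considerations exactly as you describe. The paper states this lemma as a well-known fact and supplies no proof, so there is nothing to compare against; your write-up is the standard textbook argument and fills the gap correctly.
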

Now the aim of the rest of this section is to prove the following result:
\begin{theorem}\label{compact group actions}
Let $G$ be a compact, semisimple Lie group acting on a manifold $X\,.$ Suppose $P\to X$ is a principal bundle for the $n$-torus $T^n\,.$ Then the action of $G$ on $X$ lifts to an action of $G$ on $P^{|\pi_1(G)|}$ (here $P^{|\pi_1(G)|}$ is the principal $T^n$-bundle whose torsor over $x\in X$ is the product of the torsor over $x$ in $P$ with itself $|\pi_1(G)|$ times), and the lift is unique up to isomorphism (ie. any two lifts differ by a principal bundle automorphism). 
\end{theorem}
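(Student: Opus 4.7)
The plan is to first lift the action of the universal cover $\tilde G$ of $G$ to $P$ itself, and then to descend to a $G$-action on $P^{|\pi_1(G)|}$ by killing the residual $\pi_1(G)$-action on the bundle. Since $G$ is compact and semisimple, $\tilde G$ is compact, simply connected, and semisimple with finite central kernel $\pi_1(G)$; as $\pi_2(\tilde G) = 0$ for any Lie group, $\tilde G$ is $2$-connected, and the source fibers of the action groupoid $\tilde G \ltimes X \rightrightarrows X$ are $2$-connected copies of $\tilde G$. Lifts of the $\tilde G$-action on $X$ to a principal $T^n$-bundle over $X$, taken up to isomorphism, correspond to classes in $H^1(\tilde G \ltimes X, T^n_X)$, where $T^n_X$ is the trivial $\tilde G \ltimes X$-module, and the forgetful map to the underlying bundle on $X$ is the restriction $r\colon H^1(\tilde G \ltimes X, T^n_X) \to H^1(X, T^n_X)$. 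I will show $r$ is a bijection, which yields both existence and uniqueness of a $\tilde G$-equivariant lift for every $P$.

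By \Cref{van Est image} together with the commutative diagram of \Cref{commute}, $r$ is identified with the analogous Lie algebroid restriction $r_\mathfrak g\colon H^1(\mathfrak g \ltimes X, T^n_X) \to H^1(X, T^n_X)$. Two computations drive the argument. First, $H^k(\mathfrak g \ltimes X, \mathbb R^n_X) = 0$ for $k = 1, 2$: since $\tilde G$ is compact, the action groupoid $\tilde G \ltimes X$ is proper, and the cohomology of a proper Lie groupoid with representation coefficients vanishes in positive degree (Fact 2 in the proof sketch of \Cref{van est compact}); \Cref{van Est image} then transfers this vanishing to the Lie algebroid side. Second, the CE complex for the module $\mathbb Z^n_X$ over $\mathfrak g \ltimes X$ reduces to the locally constant sheaf $\mathbb Z^n_X$ in degree $0$ (since the Lie algebra of $\mathbb Z^n_X$ is zero), so $H^k(\mathfrak g \ltimes X, \mathbb Z^n_X) = H^k(X, \mathbb Z^n)$ canonically. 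The exponential sequence $0 \to \mathbb Z^n_X \to \mathbb R^n_X \to T^n_X \to 0$ now yields $H^1(\mathfrak g \ltimes X, T^n_X) \cong H^2(\mathfrak g \ltimes X, \mathbb Z^n_X) = H^2(X, \mathbb Z^n) \cong H^1(X, T^n_X)$, and naturality of the connecting homomorphisms identifies this composition with $r_\mathfrak g$, so $r_\mathfrak g$, and hence $r$, is bijective.

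To descend from $\tilde G$ to $G$, restrict the lifted $\tilde G$-action on $P$ to the finite central subgroup $\pi_1(G) \subset \tilde G$. Since $\pi_1(G)$ acts trivially on $X$, this restricted action lies in the gauge group of $P$, giving a homomorphism $\chi\colon \pi_1(G) \to C^\infty(X, T^n)$. Each $\chi(\gamma)$ satisfies $\chi(\gamma)^{|\pi_1(G)|} = 1$, so takes values in the finite torsion subgroup $T^n[|\pi_1(G)|]$ and is locally constant on $X$. Passing to the bundle $P^{|\pi_1(G)|}$, obtained by pushing forward $P$ along $T^n \xrightarrow{t \mapsto t^{|\pi_1(G)|}} T^n$, induces the gauge group map $f \mapsto f^{|\pi_1(G)|}$, under which $\chi$ becomes identically trivial. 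Consequently the induced $\tilde G$-action on $P^{|\pi_1(G)|}$ factors through $G$, producing the desired lift. Uniqueness descends because any $\tilde G$-equivariant isomorphism of two $\tilde G$-lifts on $P^{|\pi_1(G)|}$ automatically commutes with the now trivial $\pi_1(G)$-action and is therefore $G$-equivariant. The main obstacle is establishing the bijectivity of $r_\mathfrak g$, specifically the vanishing of $H^{1}$ and $H^2$ of $\mathfrak g \ltimes X$ with $\mathbb R^n_X$-coefficients, which critically uses the compactness of $G$ through the proper-groupoid vanishing theorem.
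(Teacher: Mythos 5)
Your proof is correct. The first half (the compact simply connected case) uses the same ingredients as the paper's \Cref{compact simply connected} — properness of $\tilde G\ltimes X$, the van Est isomorphism from \Cref{van Est image}, and the exponential sequence $0\to\mathbb Z^n\to\mathbb R^n\to T^n\to 0$ — only repackaged as bijectivity of the restriction map $r$ rather than as vanishing of the truncated cohomology $H^1_0(\tilde G\ltimes X,T^n_X)$ plus a torsor argument; these are equivalent formulations. Where you genuinely diverge is the descent from $\tilde G$ to $G$. The paper stays on the cohomological side: it applies the van Est map to the class $\alpha$ of the $\tilde G$-lift, translates $\text{VE}(\alpha)$ to a flat $T^n$-bundle $P_{(G,x)}$ on each source fiber of $G\ltimes X$, invokes \Cref{power is zero} to see that $P_{(G,x)}^{|\pi_1(G)|}$ is trivial, and then uses the image characterization in \Cref{van Est image} to integrate $|\pi_1(G)|\cdot\text{VE}(\alpha)$ back up to $G\ltimes X$. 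You instead argue directly on the bundle: the central kernel $\pi_1(G)\subset\tilde G$ acts on $P$ covering $\mathrm{id}_X$, hence by gauge transformations $\chi(\gamma)\in C^\infty(X,T^n)$ of order dividing $|\pi_1(G)|$, and the pushforward along $t\mapsto t^{|\pi_1(G)|}$ kills exactly these, so the action factors through $G$ on $P^{|\pi_1(G)|}$. This is more elementary, avoids re-invoking the van Est machinery for the descent step, and makes the role of $|\pi_1(G)|$ completely transparent; it is in fact the alternative the paper alludes to but does not carry out ("one way of doing this is to look at the action of $\pi_1(G)$ on its universal cover"). The trade-off is that the paper's route illustrates the degree-$(n+1)$ image criterion of its main theorem, which is presumably why it is presented that way. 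Your uniqueness reduction — pulling two $G$-lifts back to $\tilde G$-lifts of $P^{|\pi_1(G)|}$ and noting that a $\tilde G$-equivariant isomorphism between actions factoring through $G$ is automatically $G$-equivariant — is also cleaner than the paper's terse "same argument" remark.
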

In particular, if $G$ is compact and simply connected, then actions of $G$ on a manifold $X$ lift to all principal $T^n$-bundles over $X\,.$
\theoremstyle{definition}\begin{exmp}
Consider the standard action of $SO(3)$ on $S^2\,.$ We have that $\pi_1(SO(3))=\mathbb{Z}/2\mathbb{Z}\,,$ hence $|\pi_1(SO(3))|=2\,.$ Therefore,~\Cref{compact group actions} implies that the action of $SO(3)$ on $S^2$ lifts to an action on all even degree principal $S^1$-bundles over $S^2\,,$ in a unique way up to isomorphism. On the other hand, since $SU(2)$ is simply connected, its standard action on $S^2$ lifts to an action on all principal $S^1$-bundles over $S^2\,,$ again in a unique way up to isomorphism.
\end{exmp}
Before proving~\Cref{compact group actions}, we will prove the following result, which is interesting in its own right and is related to the Riemann-Hilbert correspondence\footnote{In particular, this result determines exactly when a flat connection on a gerbe integrates to an action of the fundamental groupoid on the gerbe.}
\begin{lemma}\label{fundamental group}
Let $X$ be a connected manifold with universal cover $\tilde{X}$ and suppose that $\pi_k(X)=0$ for all $2\le k\le m\,.$ Let $T^n\xrightarrow{\text{dlog}} \mathbf{\Omega}^\bullet$ be the Deligne complex. Then $H^k(\pi_1(X),T^n)\cong H^k(X,T^n\xrightarrow{\text{dlog}} \mathbf{\Omega}^\bullet)$ for all $2\le k\le m\,,$ and the following sequence is exact: \begin{align}
    0\to H^{m+1}(\pi_1(X),T^n)\to H^{m+1}(X,T^n\to \mathbf{\Omega}^\bullet)\to H^{m+1}(\tilde{X},T^n\to \mathbf{\Omega}^\bullet)\,.
\end{align}
\end{lemma}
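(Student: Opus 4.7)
The plan is to apply the main theorem, \Cref{van Est image}, to the fundamental groupoid $\Pi_1(X) \rightrightarrows X$ with the $\Pi_1(X)$-module $T^n$ equipped with the trivial action. Observe that the Lie algebroid of $\Pi_1(X)$ is $TX$, and that the Chevalley-Eilenberg complex of $TX$ with coefficients in $T^n$ (whose associated representation on $\mathfrak{m} = \mathbb{R}^n$ is trivial) is precisely the Deligne complex $T^n \xrightarrow{\text{dlog}} \mathbf{\Omega}^\bullet$; consequently $H^k(TX, T^n)$ is identified with $H^k(X, T^n \to \mathbf{\Omega}^\bullet)$. Moreover, the source fiber of $\Pi_1(X)$ over $x \in X$ is (topologically) the universal cover $\tilde X$, and since $\pi_1(\tilde X) = 0$ together with $\pi_k(\tilde X) = \pi_k(X) = 0$ for $2 \le k \le m$, $\tilde X$ is $m$-connected; hence $\Pi_1(X)$ is source $m$-connected.

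First I would invoke Morita invariance of groupoid cohomology (\Cref{morita inv}): $\Pi_1(X)$ is Morita equivalent to the discrete group $\pi_1(X)$ (viewed as a groupoid over a point), via the chain $\Pi_1(X) \simeq \pi_1(X) \ltimes \tilde X \simeq \pi_1(X)$, where the second equivalence uses that $\tilde X \to X$ is a principal $\pi_1(X)$-bundle; this identifies $H^k(\pi_1(X), T^n) \cong H^k(\Pi_1(X), T^n)$. Then \Cref{van Est image} (with its $n$ replaced by $m$, and $M = T^n$) gives that the van Est map
\begin{equation*}
    VE: H^k(\Pi_1(X), T^n) \longrightarrow H^k(X, T^n \xrightarrow{\text{dlog}} \mathbf{\Omega}^\bullet)
\end{equation*}
is an isomorphism for $k \le m$ and is injective in degree $m+1$. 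This immediately yields the claimed isomorphisms in the range $2 \le k \le m$ and the injectivity of the first map in the exact sequence.

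To identify the image in degree $m+1$ with the kernel of pullback along the covering map $\tilde X \to X$, I will use the image description in \Cref{van Est image}: the image consists of those $\alpha$ such that, for every $x \in X$, the translated class $t_x^*\alpha$ is trivial in the Deligne cohomology of the source fiber $s^{-1}(x) \simeq \tilde X$. Unpacking \Cref{translation} in this setting, since the $\Pi_1(X)$-action on $T^n$ is trivial, left-translation acts as the identity, while right-translation identifies $T_g s^{-1}(x)$ with $T_{t(g)} X$ via $t_*$; hence $t_x^*\alpha$ reduces to the pullback of $\alpha$ along the covering map $t|_{s^{-1}(x)}:\tilde X \to X$. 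Triviality of this pullback is exactly the condition that $\alpha$ lies in the kernel of $H^{m+1}(X, T^n \to \mathbf{\Omega}^\bullet) \to H^{m+1}(\tilde X, T^n \to \mathbf{\Omega}^\bullet)$, which yields the exact sequence. The main technical point to check carefully is that the translation procedure of \Cref{translation} really reduces to the covering-map pullback here, but this is a direct unwinding of the definitions given the triviality of the action.
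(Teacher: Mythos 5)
Your proposal is correct and follows essentially the same route as the paper: the paper's proof is exactly the two-step argument of invoking Morita invariance to identify $H^*(\pi_1(X),T^n)$ with $H^*(\Pi_1(X),T^n_X)$ and then applying \Cref{van Est image}, using that the source fibers of $\Pi_1(X)$ are the $m$-connected universal cover $\tilde X$. Your additional unwinding of the translation construction to see that the degree-$(m+1)$ image condition is pullback along $\tilde X\to X$ is a correct elaboration of what the paper leaves implicit.
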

\begin{proof}
We have that $\pi_1(X)$ is Morita equivalent to $\Pi_1(X)$ (the fundamental group and fundamental groupoid of $X\,,$ respectively), so by Morita invariance \begin{align*}
H^{\bullet}(\pi_1(X),T^n)\cong H^{\bullet}(\Pi_1(X),T_X^n)\,.\end{align*}
The result then follows from~\Cref{van Est image}.
\end{proof}
\begin{corollary}\label{power is zero}
Let $G$ be a connected Lie group and as usual let $\mathbf{B}^1G$ be the underlying manifold. Then for every class $\alpha \in H^1(\mathbf{B}^1G,T^n\to \mathbf{\Omega}^\bullet)\,,$ we have that $\alpha^{|\pi_1(G)|}=1\,.$ 
\end{corollary}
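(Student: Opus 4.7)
The plan is to identify $H^1(\mathbf{B}^1G, T^n \to \mathbf{\Omega}^\bullet)$ with $\mathrm{Hom}(\pi_1(G), T^n)$ via Lemma \ref{fundamental group}; once that is done, the corollary becomes a one-line consequence of Lagrange's theorem applied to the finite group $\pi_1(G)$.

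First I would apply Lemma \ref{fundamental group} with $X = G$. The classical fact $\pi_2(G) = 0$ for any Lie group allows us to take $m = 2$. Although the lemma is stated only in the range $2 \le k \le m$, the same argument used in its proof also produces the isomorphism in degree $1$: Morita invariance identifies the one-object groupoid $\pi_1(G)$ with the fundamental groupoid $\Pi_1(G) \rightrightarrows G$, and then the main theorem (Theorem \ref{van Est image}) applied to $\Pi_1(G)$, whose source fibers are copies of the simply connected cover $\tilde G$, yields an isomorphism in every cohomological degree $k \le m$. In particular,
\begin{equation*}
H^1(\mathbf{B}^1G, T^n \to \mathbf{\Omega}^\bullet) \cong H^1(\pi_1(G), T^n) = \mathrm{Hom}(\pi_1(G), T^n).
\end{equation*}
Equivalently, one may observe that the complex $T^n \to \mathbf{\Omega}^\bullet$ is a fine resolution of the constant sheaf $\underline{T^n}$, so its $H^1$ agrees with $H^1(G, \underline{T^n}) = \mathrm{Hom}(\pi_1(G), T^n)$ by the universal coefficients theorem.

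Next, assuming $|\pi_1(G)|$ is finite (the hypothesis needed for the statement to make sense, and which holds in the target application, Theorem \ref{compact group actions}), the corollary follows: under the above identification, $\alpha$ corresponds to a homomorphism $\phi: \pi_1(G) \to T^n$, and Lagrange's theorem gives $\gamma^{|\pi_1(G)|} = e$ for every $\gamma \in \pi_1(G)$, so
\begin{equation*}
|\pi_1(G)| \cdot \phi(\gamma) = \phi\bigl(\gamma^{|\pi_1(G)|}\bigr) = \phi(e) = 0 \in T^n.
\end{equation*}
Thus $|\pi_1(G)| \cdot \phi = 0$, which in the multiplicative notation of the corollary reads $\alpha^{|\pi_1(G)|} = 1$.

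The only point that demands care is the extension of Lemma \ref{fundamental group} to degree $1$; as explained above this is routine via either the van Est argument or the quasi-isomorphism with $\underline{T^n}$, but it is not content the lemma records in its stated form.
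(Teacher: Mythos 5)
Your proof is correct and follows essentially the same route as the paper: identify $H^1(\mathbf{B}^1G, T^n \to \mathbf{\Omega}^\bullet)$ with $\mathrm{Hom}(\pi_1(G),T^n)$ via Lemma \ref{fundamental group} (the paper simply calls this identification ``well-known'') and then apply Lagrange's theorem to the finite group $\pi_1(G)$. Your explicit attention to the fact that the lemma is stated only for $2 \le k \le m$, and to the implicit finiteness of $\pi_1(G)$, is a welcome refinement of the paper's one-line argument but does not change the approach.
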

\begin{proof}
From~\Cref{fundamental group} we have the well-known result that $H^1(\mathbf{B}^1G,T^n\to \mathbf{\Omega}^\bullet)\cong H^1(\pi_1(G),T^n)\,.$ The latter is equal to $\text{Hom}(\pi_1(G),T^n)\,,$ however every $f\in \text{Hom}(\pi_1(G),T^n)$ satisfies $f^{|\pi_1(G)|}=1\,,$ completing the proof.
\end{proof}
We now state a proposition that will be needed for the proof of~\Cref{compact group actions} (for a proof of this proposition, see~\cite{Crainic}).
\begin{proposition}\label{vanishing cohomology prop}
Let $G\rightrightarrows X$ be a proper Lie groupoid (ie. the map $(s,t):G\to X\times X$ is a proper map). Let $E\to X$ be a representation of $G\,.$ Then $H^k(G,E)=0$ for all $k\ge 1\,.$ 
\end{proposition}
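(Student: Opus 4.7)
The plan is to reduce the statement to the acyclicity of the Bott--Shulman--Stasheff complex $C^\bullet(G,E) := \Gamma(\mathbf{B}^\bullet G, t^*E)$ (with the usual alternating-sum differential) and then construct an explicit contracting homotopy from a Haar system adapted to the properness assumption. By the results of Chapter~1, since $E$ is a representation the sheaf $\mathcal{O}(E)_{\bullet}$ on $\mathbf{B}^\bullet G$ is soft on each $\mathbf{B}^n G$, so it is acyclic level-wise; by Remark~\ref{double complex} together with the vanishing of the horizontal cohomology of the corresponding double complex, one obtains $H^*(G,E) \cong H^*(C^\bullet(G,E))$. Thus it suffices to show the cochain complex $C^\bullet(G,E)$ is acyclic in positive degrees.

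Next I would invoke properness to produce the needed analytic data. Since $(s,t): G \to X \times X$ is proper, the target-fibers $t^{-1}(x)$ are compact, and one can fix a smooth right-invariant Haar system $\{\mu^x\}_{x \in X}$ on the $t$-fibers. Properness further guarantees the existence of a \emph{cutoff function} $c: X \to \mathbb{R}_{\geq 0}$ whose support meets each orbit compactly and satisfies the normalization $\int_{g \in t^{-1}(x)} c(s(g))\, d\mu^x(g) = 1$ for every $x \in X$. This cutoff function is the essential ingredient that one cannot have in the non-proper case.

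With $c$ and $\mu$ in hand, I would define a homotopy operator $h: C^n(G,E) \to C^{n-1}(G,E)$ by
\begin{equation*}
(h\omega)(g_1,\ldots,g_{n-1}) \;=\; (-1)^{n}\!\int_{g \in t^{-1}(x_0)} c(s(g))\, g \cdot \omega(g^{-1},g g_1, g_2,\ldots,g_{n-1})\, d\mu^{x_0}(g),
\end{equation*}
where $x_0 = t(g_1)$ and the action of $g$ uses the $G$-representation structure on $E$. A direct but careful computation, splitting the boundary into the terms that involve the inserted element $g$ and using right-invariance of $\mu$ together with the normalization of $c$, should yield
\begin{equation*}
(\delta h + h \delta)\omega \;=\; \omega
\end{equation*}
for every $\omega$ of positive degree $n \geq 1$. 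This is the standard Poincaré-type averaging argument; the only step requiring real care is tracking signs and verifying that the terms in $\delta h \omega$ cancel pairwise with those of $h \delta \omega$ except for the single contribution that reconstructs $\omega$ (coming from the $(g, g^{-1})$ substitution collapsing to an identity).

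The main obstacle is twofold: first, ensuring the homotopy operator is well-defined, i.e. that $h\omega$ is smooth and that the integrand has compact support (here compactness of $t$-fibers from properness is what saves us, combined with the properties of $c$); second, the bookkeeping of the cancellation $\delta h + h\delta = \mathrm{id}$, where one must use $G$-equivariance of the action on $E$ in exactly the right way to move the factor $g$ through the action. Once these are verified, acyclicity of $C^\bullet(G,E)$ in positive degrees follows immediately, and combined with the initial reduction yields $H^k(G,E) = 0$ for $k \geq 1$.
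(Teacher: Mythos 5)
The paper does not actually prove this proposition; it is imported from the literature (``for a proof of this proposition, see~\cite{Crainic}''), and your proposal is in outline exactly the argument given there: reduce to the global-sections (Bott--Shulman) complex using the fact that the sheaves of sections of $t^*E$ are soft, hence acyclic, on each $\mathbf{B}^nG$, and then contract that complex by averaging against a cutoff function with respect to a Haar system. So the strategy is the right one. However, two of your supporting claims are wrong as stated. First, properness of $(s,t)$ does \emph{not} make the $t$-fibers compact: $t^{-1}(x)=(s,t)^{-1}(X\times\{x\})$ is the preimage of a compact set only when $X$ is compact, and the pair groupoid of a noncompact manifold is a proper groupoid with noncompact fibers. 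What makes the integral defining $h\omega$ converge is not compactness of the fiber but the support condition built into the definition of a cutoff function (namely that $t$ restricted to $s^{-1}(\operatorname{supp}c)$ is proper, so that $t^{-1}(x)\cap s^{-1}(\operatorname{supp}c)$ is compact for each $x$); properness of $(s,t)$ is what guarantees such a $c$ exists. Since you invoke compactness of the fibers twice as the reason the operator is well defined, this needs to be corrected, although the conclusion survives.

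Second, the explicit homotopy operator does not work as written. With $g$ ranging over $t^{-1}(t(g_1))$ the product $gg_1$ is composable only on the isotropy, so the integrand is not even defined on the domain of integration; and independently of conventions, front-insertion homotopies of this type fail for the inhomogeneous complex because the module action sits in the first face map. Already for a compact group $G$ and $n=1$ one finds that $h\delta f(g_1)$ contains the term $\int_G g\cdot f(g_1)\,dg$, which is the projection of $f(g_1)$ onto the invariants rather than $f(g_1)$ itself, so $\delta h+h\delta\neq\mathrm{id}$. The correct operator averages over an element appended as the \emph{last} argument,
\begin{equation*}
(h\omega)(g_1,\ldots,g_{n-1})=(-1)^{n}\int \omega(g_1,\ldots,g_{n-1},\sigma)\,c(s(\sigma))\,d\mu(\sigma)\,,
\end{equation*}
which requires neither the inverse nor the module action: the last face map of $\delta$ then reproduces $\omega$ times the normalization $\int c\,d\mu=1$, and all remaining terms cancel in pairs by right-invariance of the Haar system. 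With these two repairs your argument is complete and coincides with the proof in the cited reference.
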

The key to proving~\Cref{compact group actions} is the following lemma:
\begin{lemma}\label{vanishing cohomology}
Let $G$ be a compact, simply connected Lie group acting on a manifold $X\,.$ Then $H^1_0(G\ltimes X,T^n_X)=0\,.$
\end{lemma}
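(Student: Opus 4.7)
The plan is to apply the van Est isomorphism theorem (\Cref{van Est image}) to convert the statement into a vanishing statement for Lie algebroid cohomology, and then reduce that to a vanishing result for a genuine representation via a trick passing through $\mathbb{R}^n$-coefficients, where properness of $G\ltimes X$ kills everything.

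First I would observe that the source fibers of $G\ltimes X\rightrightarrows X$ are all diffeomorphic to $G$, which is $2$-connected: by hypothesis $\pi_0(G)=\pi_1(G)=0$, and $\pi_2$ of any Lie group vanishes. Taking $n=2$ in \Cref{van Est image}, the truncated van Est map
\[ VE_0\colon H^1_0(G\ltimes X, T^n_X)\longrightarrow H^1_0(\mathfrak{g}\ltimes X, T^n_X) \]
is an isomorphism, since degree $1\le n-1$. It therefore suffices to prove the right-hand side vanishes.

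Next I would note that the Lie algebra of the module $T^n_X$ is $\mathbb{R}^n_X$, which is also the Lie algebra of $\mathbb{R}^n_X$ regarded as a (trivial) representation of $\mathfrak{g}\ltimes X$. By \Cref{forms}, for $k\ge 1$ the sheaves $\mathcal{C}^k(\mathfrak{g}\ltimes X,M)$ depend only on $\mathfrak{m}$ and the induced $\mathfrak{g}$-representation on it, and not on $M$ directly. Hence the truncated complex for $T^n_X$ coincides verbatim with the one for $\mathbb{R}^n_X$, giving
\[ H^1_0(\mathfrak{g}\ltimes X, T^n_X) = H^1_0(\mathfrak{g}\ltimes X, \mathbb{R}^n_X). \]

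Now I would compare truncated and untruncated cohomology for the representation $\mathbb{R}^n_X$. The sheaf $\mathcal{O}(\mathbb{R}^n_X)$ of smooth $\mathbb{R}^n$-valued functions on $X$ is fine, so $H^k(X,\mathcal{O}(\mathbb{R}^n_X))=0$ for $k\ge 1$. The short exact sequence of complexes of sheaves
\[ 0\to [\text{truncated complex}]\to [\text{full complex}]\to [\mathcal{O}(\mathbb{R}^n_X)\text{ in degree }0]\to 0 \]
and its associated long exact sequence then yield
\[ H^1_0(\mathfrak{g}\ltimes X,\mathbb{R}^n_X)\cong H^2(\mathfrak{g}\ltimes X,\mathbb{R}^n_X). \]

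Finally I would apply the full van Est isomorphism (\Cref{van Est image}) to the representation $\mathbb{R}^n_X$ of the source-$2$-connected groupoid $G\ltimes X$: in degree $2\le n$ we obtain
\[ H^2(\mathfrak{g}\ltimes X,\mathbb{R}^n_X)\cong H^2(G\ltimes X,\mathbb{R}^n_X). \]
Because $G$ is compact, $G\ltimes X$ is proper, and $\mathbb{R}^n_X$ is a genuine representation, so \Cref{vanishing cohomology prop} gives $H^2(G\ltimes X,\mathbb{R}^n_X)=0$. Chaining the isomorphisms backwards forces $H^1_0(G\ltimes X,T^n_X)=0$. There is no real obstacle here; the only thing to verify carefully is the degree bookkeeping between $VE$ and $VE_0$ and the insensitivity of the truncated complex to the discrete kernel $\mathbb{Z}^n$ in $0\to\mathbb{Z}^n_X\to\mathbb{R}^n_X\to T^n_X\to 0$, both of which are immediate from the definitions.
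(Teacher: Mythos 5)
Your argument is correct, and every step checks out against the machinery of Part 1: the source fibers of $G\ltimes X$ are copies of $G$, hence $2$-connected, so $VE_0$ is an isomorphism in degree $1$; the truncated Chevalley--Eilenberg complexes for $T^n_X$ and $\mathbb{R}^n_X$ coincide because $\mathcal{C}^k$ for $k\ge 1$ only sees $\mathfrak{m}=\mathbb{R}^n_X$; the comparison $H^1_0(\mathfrak{g}\ltimes X,\mathbb{R}^n_X)\cong H^2(\mathfrak{g}\ltimes X,\mathbb{R}^n_X)$ follows from fineness of $\mathcal{O}(\mathbb{R}^n_X)$; and the full van Est isomorphism in degree $2$ together with \Cref{vanishing cohomology prop} finishes it. However, your route differs from the paper's. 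The paper never leaves the groupoid level: it first kills $H^k(G\ltimes X,\mathbb{R}^n_X)$ for $k\ge 1$ by properness, hence $H^k_0(G\ltimes X,\mathbb{R}^n_X)=0$, then uses van Est only once --- for the module $\mathbb{Z}^n_X$, whose truncated algebroid complex is zero, so injectivity of $VE_0$ in degree $2$ gives $H^2_0(G\ltimes X,\mathbb{Z}^n_X)=0$ --- and concludes via the long exact sequence of $0\to\mathbb{Z}^n\to\mathbb{R}^n\to T^n\to 0$ in truncated groupoid cohomology. You instead push the class down to the Lie algebroid with $T^n$-coefficients, observe that the circle-versus-line distinction evaporates in the truncated infinitesimal complex, and climb back up with $\mathbb{R}^n$-coefficients. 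Your version costs an extra application of the van Est theorem (both $VE_0$ and $VE$) plus the truncated-versus-full comparison at the algebroid level, but it buys transparency: it makes explicit that the only obstruction data in $T^n_X$-coefficients beyond the representation $\mathbb{R}^n_X$ lives in degree $0$ of the complex, which the truncation discards, whereas the paper's version hides this in the connecting homomorphism of the exponential sequence. Both are legitimate; just note that your first step implicitly uses that $T^n_X$ satisfies the hypothesis $0\to Z\to\mathfrak{m}\xrightarrow{\exp}M$ of \Cref{van Est image}, which holds here with $Z=\mathbb{Z}^n$.
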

\begin{proof}
Since $G$ is compact the action is proper, hence $G\ltimes X$ is a proper groupoid, hence from~\Cref{vanishing cohomology prop} we see that $H^k(G\ltimes X,\mathbb{R}^n_X)=0$ for all $k\ge 1\,.$ This implies that $H^k_0(G\ltimes X,\mathbb{R}^n_X)=0$ for all $k\ge 2\,.$  Since simply connected Lie groups are $2$-connected,~\Cref{van Est image} implies that $H^2_0(G\ltimes X,\mathbb{Z}^n_X)=0\,.$ Hence, from the short exact sequence
$0\to\mathbb{Z}^n\to\mathbb{R}^n\to T^n\to 0\,,$ we get that $H^1_0(G\ltimes X,T^n_X)=0\,.$ 
\end{proof}
We are now ready to prove~\Cref{compact group actions} for simply connected groups.
\begin{lemma}\label{compact simply connected}
Let $G$ be a compact, simply connected Lie group acting on a manifold $X\,.$ Suppose $P\to X$ is a principal bundle for the $n$-torus $T^n\,.$ Then the action of $G$ on $X$ lifts to an action of $G$ on $P\,,$ and the lift is unique up to isomorphism.
\end{lemma}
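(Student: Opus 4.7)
The plan is to classify isomorphism classes of $G$-equivariant principal $T^n$-bundles on $X$ by $H^1(G\ltimes X, T^n_X)$, so that the natural forgetful map
\[ r : H^1(G\ltimes X, T^n_X) \to H^1(X, T^n_X) \]
records the underlying non-equivariant bundle. Isomorphism classes of lifts of the $G$-action to a fixed bundle $P$ are then in bijection with the fiber $r^{-1}([P])$, so existence of a lift amounts to surjectivity of $r$, and uniqueness up to isomorphism amounts to injectivity of $r$.

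I would extract both from the long exact sequence induced by the short exact sequence of sheaves on $\mathbf{B}^\bullet(G\ltimes X)$ underlying the definition of truncated cohomology,
\[ H^0_0(G\ltimes X, T^n_X) \to H^1(G\ltimes X, T^n_X) \xrightarrow{r} H^1(X, T^n_X) \to H^1_0(G\ltimes X, T^n_X). \]
By \Cref{vanishing cohomology}, the rightmost term $H^1_0(G\ltimes X, T^n_X)$ vanishes, which gives surjectivity of $r$ and hence existence. For injectivity, the image of $H^0_0(G\ltimes X, T^n_X)$ in $H^1$ must be zero; using the exponential sequence $0 \to \mathbb{Z}^n \to \mathbb{R}^n \to T^n \to 0$ together with the properness vanishing $H^k(G\ltimes X, \mathbb{R}^n_X)=0$ for $k \geq 1$ from \Cref{vanishing cohomology prop}, and van Est applied in the appropriate low degree with the discrete coefficient sheaf $\mathbb{Z}^n_X$ (whose Chevalley-Eilenberg complex is trivial in positive degrees, so that the Lie-algebroid counterpart vanishes), one controls the non-invariant part and concludes that the connecting map $H^0_0 \to H^1$ is zero as well.

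The main obstacle is uniqueness. Existence follows cleanly from the vanishing of $H^1_0$, but showing the kernel of $r$ is trivial is subtler: the group $H^0_0(G\ltimes X, T^n_X)$ measures non-invariant global sections, which can a priori produce inequivalent equivariant structures on the same underlying bundle. The way out is that simple connectedness of $G$, channelled through van Est, forces these ostensible differences to become coboundaries — essentially because any putative twist would be classified by a homomorphism into $T^n$, and compact simply connected (hence semisimple) Lie groups admit only the trivial such homomorphism. A more geometric alternative, often easier in practice, is to average an arbitrary principal connection on $P$ under $G$ to obtain a $G$-invariant connection, lift the fundamental vector fields of the $G$-action on $X$ horizontally, and integrate these lifts to a genuine $G$-action on $P$, using the simple connectedness of $G$ to rule out any holonomy ambiguity in the integration.
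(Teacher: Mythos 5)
Your main argument is correct and is essentially the paper's proof: existence comes from $H^1_0(G\ltimes X,T^n_X)=0$ (Lemma~\ref{vanishing cohomology}) — the paper phrases the obstruction as trivializing the pullback of the Atiyah central extension $1\to T^n_X\to \mathrm{At}(P)\to\mathrm{Pair}(X)\to 1$, which is exactly your connecting-map image of $[P]$ in $H^1_0$ — and uniqueness comes from killing the image of $H^0_0(G\ltimes X,T^n_X)$ in $H^1(G\ltimes X,T^n_X)$ via the exponential sequence, $H^1(G\ltimes X,\mathbb{R}^n_X)=0$ (properness) and $H^1_0(G\ltimes X,\mathbb{Z}^n_X)=0$ (van Est), just as in the paper. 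The closing ``geometric alternative'' is not needed and, as sketched, has a gap: horizontal lifts of the fundamental vector fields need not close under Lie bracket (the defect is the curvature evaluated on fundamental vector fields), so one must correct by a moment-map-type term whose existence uses semisimplicity of $\mathfrak{g}$ rather than the simple connectedness you invoke.
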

\begin{proof}
Consider the gauge groupoid of $P$ given by $\text{At}(P):=P\times P/T^n\rightrightarrows X\,,$ where the action of $T^n$ is the diagonal action (here the source and target maps are the projections onto the first and second factors, respectively, and a morphism with source $x$ and target $y$ is a $T^n$-equivariant morphism between the fibers of $P$ lying over $x$ and $y\,,$ respectively). The gauge groupoid fits into a central extension of $\text{Pair}(X)\,,$ ie.
\begin{align}\label{Atiyah sequence}
    1\to T^n_X\to \text{At}(P)\to\text{Pair}(X)\to 1\,.
\end{align}
A lift of the $G$-action to $P\to X$ is equivalent to a lift of the canonical homomorphism $G\ltimes X\xrightarrow{(s,t)}\text{Pair}(X)$ to $\text{At}(P)\,,$ which is equivalent to a trivialization of the central extension of $G\ltimes X$ given by pulling back, via $(s,t)\,,$ the central extension given by \eqref{Atiyah sequence}. From~\Cref{vanishing cohomology} we know that such a trivialization exists, hence the $G$-action lifts to $P\,.$
\\\\Uniqueness up to isomorphism follows from the fact that the isomorphism classes of different lifts are a torsor for the image of $H^0_0(G\ltimes X,T^n)$ in $H^1(G\ltimes X,T^n)\,,$ and that the image is trivial follows from the exponential sequence $1\to \mathbb{Z}^n\to\mathbb{R}^n\to T^n\to 1\,,$ since both $H^1(G\ltimes X,\mathbb{R}^n_X)$ and $H^1_0(G,\mathbb{Z}^n_X)$ are trivial (the former follows from~\Cref{vanishing cohomology prop}, the latter follows from~\Cref{van Est image}).
\end{proof}
Now we can prove~\Cref{compact group actions}. One way of doing this is to look at the action of $\pi_1(G)$ on its universal cover, another way is  the following:
\begin{proof}[Proof of Theorem \ref{compact group actions}]
Let $\tilde{G}$ be the universal cover of $G\,.$ From~\Cref{compact simply connected} we know that the corresponding action of $\tilde{G}$ on $X$ lifts to an action on $P\,,$ giving us a class $\alpha\in H^1(\tilde{G}\ltimes X,T^n)$ whose underlying principal bundle on $X$ is $P\,.$ Hence after applying the van Est map we get a class $\text{VE}(\alpha)\in H^1(\mathfrak{g}\ltimes X,T^n)\,,$ whose underlying principal bundle on $X$ is also $P\,.$ 
\\\\After translating $\text{VE}(\alpha)$, we get a flat $T^n$-bundle on each source fiber of $G\ltimes X\,,$ ie. for each $x\in X$ we get a flat $T^n$-bundle on $G$, which we denote by $P_{(G,x)}\,.$ Then by~\Cref{power is zero} we have that $P_{(G,x)}^{|\pi_1(G)|}$ is trivial. However, $P_{(G,x)}^{|\pi_1(G)|}$ is the right translation of $|\pi_1(G)|\cdot\text{VE}(\alpha)$ (where the $\mathbb{Z}$-action is the natural one on cohomology classes),
hence by~\Cref{van Est image} we get the existence of a lift of the $G$-action to $P^{|\pi_1(G)|}\,.$ 
\\\\Uniqueness follows from the same argument as in~\Cref{compact simply connected}.
 \end{proof}
In the same vein, given a Hamiltonian group action on a symplectic manifold $G\, \rotatebox[origin=c]{-90}{$\circlearrowright$}\,(M,\omega)\,,$ one can show that, if $G$ is simply connected, this action lifts to a quantization. The reason is that if one considers the action Lie algebroid $\mathfrak{g}\ltimes M\,,$ with anchor $\alpha\,,$ the moment map $\mu$ trivializes $\alpha^*\omega$ in the truncated Lie algebroid complex. The rest follows from the van Est isomorphism theorem.
\begin{comment}
\begin{proof}
Because Lie groups have vanishing second homotopy group,~\Cref{fundamental group} implies that $H^2(G^{(1)},\mathbb{Z})\cong H^2(\pi_1(G),\mathbb{Z})\,.$ Since $|\pi_1(G)|$ is finite, every class in $H^2(\pi_1(G),\mathbb{Z})\,,$ and hence every class in $H^2(G^{(1)},\mathbb{Z})\,,$ is torsion. So we have that $H^1(G^{(1)},\mathbb{C})=H^1(\pi_1(G),\mathbb{C})=0\,,$ and  $H^2(G^{(1)},\mathbb{C})=H^2(\pi_1(G),\mathbb{C})=0\,,$ 
Hence from the short exact sequence \begin{align*}
    0\to \mathbb{Z}\to\mathbb{C}\to\mathbb{C}^*\to 0
\end{align*}
we get that 
$H^1(G^{(1)},\mathbb{C}^*)\cong H^1(\pi_1(G),\mathbb{C}^*)\,,$ which implies the result since for every $\gamma\in\pi_1(G)$ we have that $\gamma^{|\pi_1(G)|}=1\,.$ 
 \end{proof}
\end{comment}
\subsection{Quantization of Courant Algebroids}
In this section we will discuss applications of our main theorem to the quantization of Courant algebroids, as discussed in~\cite{gen kahler}. 
\\\\Let $C$ be a smooth Courant algebroid over $X$ (see~\cite{gen kahler} for a definition)  associated to a $3$-form $\omega\,,$ and suppose that it is prequantizable, that is $\omega$ has integral periods. Let $g$ denote an $S^1$-gerbe prequantizing $\omega\,.$ Let $D\subset C$ be a Dirac structure. Then in particular, $D$ is a Lie algebroid, and as explained in~\cite{gen kahler} $g$ can be equipped with a flat $D$-connection, denoted $A\,.$ This determines a class $[(g,A)]\in H^2(D,S^1_X)\,.$ Suppose $D$ integrates to a Lie groupoid. We can then ask about the integrability of $[(g,A)]\,,$ or in other words: does the action of $D$ on $g$ integrate to an action of the corresponding source simpy connected groupoid on $g\,?$
\begin{comment}\\\\First we will give an example for which it doesn’t.
\theoremstyle{definition}\begin{exmp}Let $X=S^2$ and consider the trivial Courant bracket on $TS^2\oplus T^*S^2\,.$ Let $\omega$ be a $2$-form which doesn’t have integral periods, and let $D$ be the graph of $\omega: TS^2\to T^*S^2\,.$ This Courant algebroid prequantizes to the trivial gerbe with the flat connection given by $\omega\,.$ Now $\Pi_1(S^2)=\text{Pair}\,(S^2)\,,$ and since $\omega$ doesn't have integral periods, by Theorem~\ref{van Est image} the gerbe with flat connection doesn’t integrate.
\end{exmp}
\end{comment}
Here we give a class of examples that does integrate, and it relates to the basic gerbe on a compact, simple Lie group. $($see~\cite{erbe},~\cite{Severa}$)\,.$
\theoremstyle{definition}\begin{exmp}
Let $G$ be a compact, simple Lie group with universal cover $\tilde{G}\,.$ 
and let $\langle \cdot,\cdot\rangle$ be the unique bi-invariant $2$-form which at the identity is equal to the Killing form. Associated to $\langle \cdot,\cdot\rangle$ is a bi-invariant and integral $3$-form $\omega$, called the Cartan $3$-form, given at the identity by
\begin{align*}\omega\vert_e=\frac{\langle \,[\,\cdot\,,\,\cdot\,]\,,\cdot\rangle\vert_e}{2}\,.
\end{align*}
\begin{comment}
From this we get a Courant algebroid with Dirac structure given by 
\begin{align*}
C_D\vert_g=\{\big(L_{g*}a-R_{g*}a, \langle L_{g*}a+R_{g*}a,\cdot\rangle/2\big):a\in\mathfrak{g}\}\,,
\end{align*} 
where $L_g\,,R_g$ denote left and right multiplication by $g\,,$ respectively.
\end{comment}
The Dirac structure in this case, called the Cartan-Dirac structure, is the action Lie algebroid $\mathfrak{g}\ltimes G\,,$ where the action is the adjoint action of $\mathfrak{g}$ on $G\,.$
%From this we get a gerbe, called the basic gerbe, with a flat $C_D$ connection. Furthermore 
From this there is a canonical class $\alpha\in H^2(\mathfrak{g}\ltimes G,S^1_G)\,,$ whose underlying gerbe on $G$ is called the basic gerbe. The source simply connected integation of $\mathfrak{g}\ltimes G$ is $\tilde{G}\ltimes G\,,$ where the action of $\tilde{G}$ on $G$ is the one lifting the action of $G$ on itself by conjugation. Since the source fibers of $\tilde{G}\ltimes G$ are diffeomorphic to $\tilde{G}\,,$ which is necessarily $2$-connected, by Theorem~\ref{van Est image} we have that 
\begin{align*}
    H^2(\tilde{G}\ltimes G,S^1_G)\overset{VE}{\cong}H^2(\mathfrak{g}\ltimes G,S^1_G)\,.
\end{align*} Hence $\alpha$ integrates to a class in $H^2(\tilde{G}\ltimes G,S^1_G)\,.$
\end{exmp}To summarize, we have proven the following [see~\cite{erbe},~\cite{krepski}]:
\begin{theorem}[Integration of Cartan-Dirac structures]
Let $G$ be a compact, simple Lie group with universal cover $\tilde{G}\,.$ Then the adjoint action of $\mathfrak{g}$ on the basic gerbe (where the action is given by the Cartan-Dirac structure) integrates to an action of $\tilde{G}$ on the basic gerbe.
\end{theorem}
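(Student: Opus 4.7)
The plan is to recognize this theorem as a direct corollary of the main van Est isomorphism theorem (\Cref{van Est image}) applied to the $S^1_G$-valued cohomology of the action Lie algebroid $\mathfrak{g}\ltimes G$ and its source simply connected integration $\tilde{G}\ltimes G$. The basic gerbe together with its natural flat $\mathfrak{g}\ltimes G$-connection (coming from the Cartan–Dirac structure) represents a class $\alpha\in H^2(\mathfrak{g}\ltimes G,S^1_G)$, and producing the claimed action of $\tilde{G}$ on the basic gerbe is the same as producing a preimage of $\alpha$ under the van Est map
\[
VE: H^2(\tilde{G}\ltimes G, S^1_G)\to H^2(\mathfrak{g}\ltimes G, S^1_G).
\]

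First I would identify the source fibers of $\tilde{G}\ltimes G\rightrightarrows G$ with $\tilde{G}$ itself (via the map sending a morphism to its first component), so the relevant connectivity is that of $\tilde{G}$. Since $G$ is a compact simple Lie group, $\tilde{G}$ is simply connected, and by the classical result $\pi_2(L)=0$ for any Lie group $L$ we get that $\tilde{G}$ is $2$-connected. Hence the source fibers are $2$-connected, which is exactly the hypothesis needed to apply \Cref{van Est image} with $n=2$: the van Est map is an isomorphism in degrees $\le 2$, and in particular a bijection in degree $2$.

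Next I would pull $\alpha$ back through this isomorphism to obtain a unique class $\tilde\alpha\in H^2(\tilde{G}\ltimes G,S^1_G)$. Finally I would unpack what this degree-$2$ class concretely encodes: a class in $H^2(\tilde{G}\ltimes G,S^1_G)$ is (up to isomorphism) the datum of an $S^1$-gerbe on $G$ together with a compatible action of the groupoid $\tilde{G}\ltimes G$ lifting its action on $G$. Because the van Est map sends such a class to its underlying infinitesimal connection datum, and the underlying gerbe on $G$ is unchanged under the van Est map, the integrated class $\tilde\alpha$ realizes the basic gerbe equipped with a genuine $\tilde{G}$-action whose differentiation recovers the original adjoint $\mathfrak{g}$-action coming from the Cartan–Dirac structure.

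The only real subtlety is checking that degree $2$ is the surjective (not merely injective) range of the van Est map here: this is where the full strength of $\pi_2(\tilde{G})=0$, rather than just simple connectivity, is used. Once that is in place, the rest is bookkeeping — matching up the cohomological integration with the geometric statement about actions on gerbes — and uniqueness of the lift (up to isomorphism) follows from the fact that $VE$ is injective in degree $2$ under these hypotheses.
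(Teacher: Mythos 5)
Your proposal is correct and follows essentially the same route as the paper: both identify the Cartan--Dirac structure with the action algebroid $\mathfrak{g}\ltimes G$, represent the basic gerbe with its flat connection as a class in $H^2(\mathfrak{g}\ltimes G,S^1_G)$, note that the source fibers of the source simply connected integration $\tilde{G}\ltimes G$ are diffeomorphic to $\tilde{G}$ and hence $2$-connected (since $\pi_2$ of a Lie group vanishes), and apply the main theorem to conclude that $VE$ is an isomorphism in degree $2$. Your added remark that surjectivity in degree $2$ is exactly where $\pi_2(\tilde{G})=0$ (and not merely simple connectivity) is used is accurate and consistent with the hypotheses of \Cref{van Est image}.
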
 
\subsection{Integration of Lie \texorpdfstring{$\infty$-Algebroids}{infinity algebrois}}
In this section we will discuss the integration and quantization of Lie $\infty$-algebroids. See~\cite{pym2} for more details. We consider Lie $\infty$-algebroids of the following form: 
\\\\Let $\mathfrak{g}$ be a Lie algebroid and let $M\to X$ be a $\mathfrak{g}$-module. Let $\omega\in C^n(\mathfrak{g},M)$ be closed, $n>2\,.$ We can define a two term Lie $(n-1)$-algebroid as follows: Let $\mathcal{L}=\mathfrak{m}\oplus\mathfrak{g}$ where $\mathfrak{m}$ has degree $2-n$ and $\mathfrak{g}$ has degree $0\,.$ Let all differentials be zero except for the degree $0$ and degree $-n$ differentials. Define the degree 
$0$ differential as follows: for $U$ an open set in $X$ and for $m_1\,,m_2\in\mathcal{O}(\mathfrak{m})(U)\,,g_1\,,g_2\in\mathcal{O}(\mathfrak{g})(U)\,,$ let
\begin{align*}
    [m_1+g_1,m_2+g_2]_0=[g_1,g_2]+d_{CE}m_2(g_1)-d_{CE}m_1(g_2)\,,
    \end{align*}
    where $[g_1,g_2]$ is the Lie bracket of $g_1\,,g_2$ in $\mathfrak{g}\,.$
    Define the degree $2-n$ bracket by as follows: for $g_1\,,\ldots\,,g_n\in\mathcal{O}(\mathfrak{g})(U)\,,$ let 
    \begin{align*}
        [g_1\,,\ldots\,,g_n]_n=\omega(g_1\,,\ldots\,,g_n)\,,
    \end{align*}
    otherwise if any of inputs is in $\mathcal{O}(\mathfrak{m})(U)$ let the bracket be zero. This defines a Lie $(n-1)$-algebroid. 
\\\\Since the universal cover of a $k$-dimensional torus $($for $k\ge 1)$ is contractible,~\Cref{van Est image} gives us the following result, at the level of cohomology:
\begin{corollary}
All Lie $(n-1)$-algebroids associated to closed $n$-forms on the $k$-dimensional torus $T^k$ integrate to multiplicative $(n-2)$-gerbes.
\end{corollary}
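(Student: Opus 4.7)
The plan is to apply \Cref{van Est image} directly to the fundamental groupoid $\Pi_1(T^k)\rightrightarrows T^k$, whose Lie algebroid is $TT^k$. First I would observe that the source fiber of $\Pi_1(T^k)$ over any point $x\in T^k$ is canonically identified with the universal cover of $T^k$ based at $x$, namely $\mathbb{R}^k$. Since $\mathbb{R}^k$ is contractible, the source fibers are $n$-connected for all $n$, so the hypothesis of \Cref{van Est image} is satisfied for every degree.

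Next I would invoke \Cref{van Est image} with $G=\Pi_1(T^k)$ and the $G$-module $M=\mathbb{C}^*_{T^k}$ (fitting into the exponential sequence $0\to \mathbb{Z}\to\mathbb{C}\to\mathbb{C}^*\to 0$), to conclude that the van Est map
\[
VE:H^m(\Pi_1(T^k),\mathbb{C}^*_{T^k})\longrightarrow H^m(TT^k,\mathbb{C}^*_{T^k})
\]
is an isomorphism in \emph{every} degree $m$. Indeed, since the source fibers are contractible they are $(m-1)$-connected for every $m$, and the obstruction on the image in degree $m$ in \Cref{van Est image} is automatically satisfied because the translated class $t_x^*\alpha$ lives on a contractible source fiber and hence is trivial.

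Given a closed $n$-form $\omega$ on $T^k$, the construction recalled earlier in the chapter produces a Lie $(n-1)$-algebroid whose isomorphism class is governed by $[\omega]\in H^n(TT^k,\mathbb{C}^*_{T^k})$ (obtained by composing with the exponential sequence). By the isomorphism just established, there is a unique class $\alpha\in H^n(\Pi_1(T^k),\mathbb{C}^*_{T^k})$ with $VE(\alpha)=[\omega]$. This $\alpha$ is precisely the cohomology class of a multiplicative $(n-2)$-gerbe on $\Pi_1(T^k)$ integrating the given Lie $(n-1)$-algebroid, which is what we needed.

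There is no serious obstacle to overcome: the whole content of the corollary is that the hypotheses of \Cref{van Est image} trivialize when $X=T^k$, because the universal cover $\mathbb{R}^k$ is contractible. The only small point to verify is the identification of source fibers of $\Pi_1(T^k)$ with the universal cover, which is standard. Equivalently, one could check the two conditions spelled out in the preceding paragraph of the text, namely $\pi_j(T^k)=0$ for $j\ge 2$ and integrality (in fact vanishing) of the periods of the pullback of $\omega$ to $\mathbb{R}^k$, and both hold trivially since $H^n(\mathbb{R}^k,\mathbb{C})=0$ for $n>0$.
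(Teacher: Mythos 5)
Your proposal is correct and follows exactly the paper's argument: the paper's entire justification is the sentence preceding the corollary, namely that the universal cover (equivalently, the source fibers of $\Pi_1(T^k)$) is contractible, so \Cref{van Est image} applies with coefficients in $\mathbb{C}^*_{T^k}$ in all degrees and the class $[\omega]\in H^n(TT^k,\mathbb{C}^*_{T^k})$ lifts uniquely to $H^n(\Pi_1(T^k),\mathbb{C}^*_{T^k})$. You have merely spelled out the details (identification of source fibers with $\mathbb{R}^k$, triviality of the degree-$(n+1)$ obstruction on contractible fibers) that the paper leaves implicit.
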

%\subsubsection{Lie $2$-Algebras}
We now apply the previous results to Lie $2$-algebras (an L$-\infty$ algebra concentrated in the two lowest degrees). As proved in~\cite{baez2}, all Lie $2$-algebras are equivalent to ones of the form
\begin{align}\label{lie 2}
V\to \mathfrak{g}\,,
\end{align}
where the only nonzero brackets are the degree $0$ and $-1$ brackets, and where the degree $-1$ bracket is given by a closed $3$-form. Furthermore, if $\omega\,,\omega'$ define equivalent Lie $2$-algebras, then $[\omega]=[\omega']$ in $H^3_0(\mathfrak{g},V)\,,$ implying that the map from Lie $2$-algebras to $H^3_0(\mathfrak{g},V)$ is canonical. Since simply connected Lie groups are $2$-connected, Theorem~\ref{van Est image} can help us determine when a Lie $2$-algebra integrates.
\begin{theorem}\label{2 algebra}
Let $\mathcal{L}$ be a Lie $2$-algebra represented by the $3$-form $\omega\,.$ Let $G$ be the simply connected integration of $\mathfrak{g}\,.$ Then if the periods $P(\omega)$ of $\omega$ form a discrete subgroup of $V\,,$ then $\mathcal{L}$ integrates to a class in $H_0^2(G,V/P(\omega))\,.$ 
\end{theorem}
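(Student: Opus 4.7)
The plan is to realize this as a direct application of the main theorem (Theorem~\ref{van Est image}) in degree $n=2$, with the coefficient module chosen so that the period obstruction tautologically vanishes. More precisely, I will take $\mathfrak{m}=V$ (viewed as a trivial bundle of abelian Lie algebras over the point), $Z=P(\omega)\subset V$, and $M=V/P(\omega)$. The first thing to verify is that this data fits into a legitimate exponential sequence $0\to P(\omega)\to V\xrightarrow{\exp}V/P(\omega)$ of $\mathfrak{g}$-modules in the sense of the Chevalley--Eilenberg setup of Chapter~\ref{Chevalley}; this is exactly where the hypothesis that $P(\omega)$ is a \emph{discrete} subgroup is used, as it guarantees that $V/P(\omega)$ is an abelian Lie group whose Lie algebra is $V$, and that the quotient map is a local diffeomorphism so that $\exp$ has the required form. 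The trivial $\mathfrak{g}$-action on $V$ descends to $V/P(\omega)$, and this in turn integrates (uniquely, since $G$ is simply connected, by Theorem~\ref{g-module}) to a $G$-module structure on $V/P(\omega)$.

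Next I would check source-connectivity of $G$. Because $G$ is a Lie group, it is source $n$-connected exactly when $\pi_{k}(G)=0$ for $k\le n$. Simply connectedness gives $\pi_{0}=\pi_{1}=0$, and Lie groups always satisfy $\pi_{2}(G)=0$, so $G$ is source $2$-connected. Theorem~\ref{van Est image} then applies with $n=2$: the truncated van Est map
\begin{equation*}
VE_{0}\colon H^{2}_{0}(G,V/P(\omega))\longrightarrow H^{2}_{0}(\mathfrak{g},V/P(\omega))
\end{equation*}
is injective, and its image consists of those classes whose translation $t_{x}^{*}\alpha$ is trivial on each source fiber. Equivalently, by the second half of the theorem, a class represented by a closed Chevalley--Eilenberg $3$-form lies in the image if and only if all of its $3$-sphere periods along the source fibers lie in $Z=P(\omega)$.

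Finally I would identify the class to be integrated. By~\cite{baez2}, the Lie $2$-algebra $\mathcal{L}$ is captured, up to equivalence, by the closed $3$-form $\omega\in\Gamma(\mathcal{C}^{3}(\mathfrak{g},V))$, and its cohomology class in $H^{2}_{0}(\mathfrak{g},V)$ depends only on the equivalence class of $\mathcal{L}$. Pushing forward by $\exp\colon V\to V/P(\omega)$ yields a class $[\omega]\in H^{2}_{0}(\mathfrak{g},V/P(\omega))$. The van Est period condition for this class is precisely $\int_{S^{3}_{x}}\omega\in P(\omega)$ for every $3$-sphere in the source fiber over $x\in *$; since the source fiber is $G$ itself, this is exactly the defining condition of the period group $P(\omega)$ and therefore holds tautologically. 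Hence $[\omega]$ is in the image of $VE_{0}$, and the preimage yields the desired class in $H^{2}_{0}(G,V/P(\omega))$ integrating $\mathcal{L}$. The only genuinely substantive point is that the period condition is vacuous after reducing coefficients mod $P(\omega)$; everything else is bookkeeping and invocation of Theorem~\ref{van Est image}. (Uniqueness of the integrating class, although not asserted in the theorem statement, would follow from the injectivity clause of the same theorem.)
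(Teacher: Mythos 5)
Your proposal is correct and is essentially the argument the paper intends (the paper states this theorem without a written-out proof, but the surrounding text makes clear it is a direct application of \Cref{van Est image} with $n=2$, using that simply connected Lie groups are $2$-connected and that the coefficient module $V/P(\omega)$ is chosen precisely so that the $3$-sphere period condition $\int_{S^3}\omega\in Z=P(\omega)$ holds tautologically). Your additional checks — that discreteness of $P(\omega)$ is what makes $0\to P(\omega)\to V\to V/P(\omega)$ a legitimate exponential sequence of modules, and that injectivity of $VE_0$ gives uniqueness — are exactly the right bookkeeping.
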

\theoremstyle{definition}\begin{rmk}
Note that in~\cite{andre} it is shown that the obstruction to integrating a Lie $2$-algebra to a Lie $2$-group is that the periods of $\omega$ form a discrete subgroup of $V\,,$ ie. the obstruction is the same as the one in the above theorem. To explain this, we note the following: it is shown in~\cite{schommer} that to every class in $H^2_0(G,S^1)$ there corresponds an equivalence class of Lie $2$-groups. We expect that under this correspondence,~\Cref{2 algebra} shows that the Lie $2$-algebras which satisfy the hypotheses of this theorem integrate to Lie $2$-groups.
\end{rmk}
\subsection{Applications to Gerbes}
Let us recall that isomorphism classes of gerbes with a flat connection on a manifold $X$ are classified by $H^2(TX,X\times\mathbb{C}^*)\,.$ If the underlying gerbe of a gerbe with a flat connection is trivial then there is a lift of this
gerbe to some $\omega\in H^2(TX,X\times\mathbb{C})\,.$ Furthermore, actions of $\Pi_1(X)$ on gerbes are classified by $H^2(\Pi_1(X),\mathbb{C}^*)\,.$ We will call a gerbe with a flat connection integrable if its isomorphism class is in the image of
the van Est map 
\[ 
H^2(\Pi_1(X),X\times\mathbb{C}^*)\to H^2(TX,X\times\mathbb{C}^*)\,.
\]
\Cref{van Est image} then implies the following:
\begin{theorem}\label{gerbe}
If $\pi_2(X)=0$, then all gerbes with a flat connection are integrable, and the correspondence is one-to-one. Otherwise, suppose the underlying gerbe of the gerbe with a flat connection is trivial and let $\pi:\tilde{X}\to X$ be the universal cover of $X\,.$ Then it is integrable if and only if $\pi^*\omega$ has integral periods, where $\omega\in H^2(TX,X\times\mathbb{C})$ is the lift of the class of the gerbe with a flat connection. Moreover if this is the case then the integration is unique.
\end{theorem}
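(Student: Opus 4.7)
The plan is to apply the Main Theorem (\Cref{van Est image}) to the Lie groupoid $\Pi_1(X)\rightrightarrows X$ with coefficients in the $\Pi_1(X)$-module $X\times \mathbb{C}^*$, whose associated Lie algebroid is $TX$ and whose exponential sequence is
\begin{equation*}
0\to X\times \mathbb{Z}\to X\times \mathbb{C}\xrightarrow{\exp} X\times \mathbb{C}^*\to 0\,.
\end{equation*}
Under the identifications recalled just before the theorem, the van Est map in degree $2$ is exactly
\begin{equation*}
VE:H^2(\Pi_1(X),X\times\mathbb{C}^*)\to H^2(TX,X\times\mathbb{C}^*)\,,
\end{equation*}
and integrability is by definition membership in its image. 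The key geometric observation is that the source fibers of $\Pi_1(X)$ are all diffeomorphic to the universal cover $\tilde{X}$.

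For the first case, assume $\pi_2(X)=0$. Then $\pi_k(\tilde{X})=0$ for $k=1,2$, so $\tilde{X}$ is $2$-connected. Applying \Cref{van Est image} with $n=2$, $VE$ is an isomorphism in degree $2$, which gives the desired bijection between gerbes with flat connection and isomorphism classes of $\Pi_1(X)$-actions on gerbes.

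For the second case, $\tilde{X}$ is only $1$-connected. By \Cref{van Est image} with $n=1$, $VE$ is injective in degree $2$ (giving uniqueness when integration exists), and a class $\alpha\in H^2(TX,X\times\mathbb{C}^*)$ lies in its image if and only if its translation $t_x^*\alpha$ is trivial in the smooth Deligne cohomology
\begin{equation*}
H^2\bigl(\tilde{X},\,\mathbb{C}^*_{\tilde{X}}\xrightarrow{\mathrm{dlog}}\Omega^1\to\Omega^2\to\cdots\bigr)
\end{equation*}
for all (equivalently, for some) $x\in X$. The main step is to translate this triviality into the integrality of $\pi^*\omega$. Assuming the underlying gerbe of $\alpha$ is trivial, the Dixmier--Douady obstruction vanishes and $\alpha$ lifts to $\omega\in H^2(TX,X\times\mathbb{C})\cong H^2_{\mathrm{dR}}(X,\mathbb{C})$; then $t_x^*\alpha=\exp(\pi^*\omega)$ in the smooth Deligne cohomology of $\tilde{X}$. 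Using the long exact sequence of the exponential sequence on the simply connected manifold $\tilde{X}$,
\begin{equation*}
H^2(\tilde{X},\mathbb{Z})\to H^2(\tilde{X},\mathbb{C})\xrightarrow{\exp}H^2\bigl(\tilde{X},\mathbb{C}^*_{\tilde{X}}\to\Omega^\bullet\bigr)\to H^3(\tilde{X},\mathbb{Z})\,,
\end{equation*}
the class $\exp(\pi^*\omega)$ vanishes if and only if $\pi^*\omega$ lies in the image of $H^2(\tilde{X},\mathbb{Z})$, i.e., has integer periods. Uniqueness then follows from injectivity of $VE$ in degree $2$, already noted above. The main subtlety will be verifying that the translation prescription of \Cref{translation} indeed sends the Lie algebroid class of a gerbe with flat connection (and its lift $\omega$) to the pullback class $\pi^*\omega$ along source fibers, together with the above exponential sequence computation; both are straightforward but require unwinding the identification of $s^{-1}(x)$ with $\tilde{X}$.
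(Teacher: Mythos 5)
Your proposal is correct and follows essentially the same route as the paper: the paper derives this theorem directly from the Main Theorem (\Cref{van Est image}) applied to $\Pi_1(X)$ with module $X\times\mathbb{C}^*$, using that the source fibers are the universal cover $\tilde{X}$ (hence $2$-connected when $\pi_2(X)=0$, and only $1$-connected otherwise), with the image-in-degree-$(n+1)$ criterion yielding the integrality of $\pi^*\omega$. Your additional unwinding of the exponential-sequence computation on $\tilde{X}$ is a correct filling-in of details the paper leaves implicit.
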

\begin{remark}
In fact since 
\begin{align}\label{TX iso}
H^2(TX,X\times\mathbb{C}^*)\cong H^2(X,\mathbb{C}^*)\,,
\end{align}
one can show that
\[
H^2(\Pi_1(X),X\times\mathbb{C}^*)\overset{\text{VE}}{\cong} \ker[\pi^*:H^2(TX,X\times\mathbb{C}^*)\to H^2(T\tilde{X},\tilde{X}\times\mathbb{C}^*)]\,,
\]
where $\pi^*$ is the pullback induced by~\eqref{TX iso}.
This is consistent with the result from exercise 159 in~\cite{davis} which states that there is an exact sequence
\[
\pi_2(X)\to H_2(X,\mathbb{Z})\to H_2(\pi_1(X),\mathbb{Z})\to 0\,,
\]
from which one can deduce that 
\[
H^2(\pi_1(X),\mathbb{C}^*)\cong \ker[\pi^*:H^2(X,\mathbb{C}^*)\to H^2(\tilde{X},\mathbb{C}^*)]\,.
\]
\end{remark}
$\mathbf{}$
\\Morita invariance of cohomology gives us the following:
\begin{theorem}\label{gerbe fundamental group}
Integrable gerbes with a flat connection on $X$ are in one-to-one correspondence with isomorphism classes of central extensions of the form
\[
0\to\mathbb{C}^*\to E\to \pi_1(X)\to 0\,,
\]
ie. $H^2(\pi_1(X),\mathbb{C}^*)\,.$
\end{theorem}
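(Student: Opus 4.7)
The plan is to combine \Cref{gerbe}, \Cref{van Est image}, and the Morita invariance of sheaf cohomology on stacks (\Cref{morita inv}). The key point is that the fundamental group $\pi_1(X)$ (viewed as a Lie groupoid over a point) is Morita equivalent to the fundamental groupoid $\Pi_1(X) \rightrightarrows X$, so their cohomologies with compatible coefficients agree; meanwhile \Cref{gerbe} already identifies integrable gerbes with a flat connection as the image of the van Est map into $H^2(TX, X\times\mathbb{C}^*)$.

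First I would note that the source fibers of $\Pi_1(X) \rightrightarrows X$ are the universal covers of the connected components of $X$, and in particular are simply connected. Applying \Cref{van Est image} with $n=1$ (with the trivial module $X \times \mathbb{C}^*$, whose exponential sequence is $0 \to X\times\mathbb{Z} \to X\times\mathbb{C} \to X\times\mathbb{C}^* \to 0$), the van Est map
\begin{equation*}
VE: H^2(\Pi_1(X), X\times\mathbb{C}^*) \to H^2(TX, X\times\mathbb{C}^*)
\end{equation*}
is injective in degree $2$. Combined with the definition of \emph{integrable} as ``lying in the image of $VE$'' (given just before \Cref{gerbe}), this shows that the set of isomorphism classes of integrable gerbes with a flat connection is in canonical bijection with $H^2(\Pi_1(X), X\times\mathbb{C}^*)$.

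Next, I would invoke \Cref{morita inv}. For $X$ connected with basepoint $x_0$, the inclusion of $\{x_0\}$ into $X$ induces a Morita morphism $\pi_1(X,x_0) \to \Pi_1(X)$, and this Morita equivalence identifies the $\Pi_1(X)$-module $X\times\mathbb{C}^*$ (with trivial action) with the $\pi_1(X,x_0)$-module $\mathbb{C}^*$ (with trivial action), because the pullback of $X\times\mathbb{C}^*$ to $\{x_0\}$ is $\mathbb{C}^*$. Hence
\begin{equation*}
H^2(\Pi_1(X), X\times\mathbb{C}^*) \cong H^2(\pi_1(X), \mathbb{C}^*),
\end{equation*}
where the right-hand side is ordinary discrete group cohomology. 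Since $\pi_1(X)$ is a discrete group, classical results identify $H^2(\pi_1(X), \mathbb{C}^*)$ with isomorphism classes of central extensions $0 \to \mathbb{C}^* \to E \to \pi_1(X) \to 0$, completing the proof.

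The only nontrivial point, and therefore the main obstacle, is bookkeeping: verifying that under the Morita equivalence the trivial $X\times\mathbb{C}^*$-coefficients on $\Pi_1(X)$ really correspond to the trivial $\mathbb{C}^*$-coefficients on $\pi_1(X)$ (so that the cohomologies match), and that the bijection is induced by the geometric map taking a central extension to its associated flat gerbe via the Morita equivalence rather than by some auxiliary construction. One also has to check, if $X$ is disconnected, that both sides split as products over connected components so the argument reduces to the connected case.
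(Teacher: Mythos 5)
Your proposal is correct and follows essentially the same route as the paper, which justifies this theorem with the single remark ``Morita invariance of cohomology gives us the following,'' relying implicitly on the preceding definition of integrability as membership in the image of $VE$ and on the injectivity of $VE$ in degree $2$ (from \Cref{van Est image}, since the source fibers of $\Pi_1(X)$ are simply connected). Your version simply makes explicit the injectivity step and the identification of coefficients under the Morita equivalence $\pi_1(X)\to\Pi_1(X)$, which the paper leaves unstated.
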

$\mathbf{}$
\\Combining~\Cref{gerbe} and~\Cref{gerbe fundamental group} we get the following:
\begin{corollary}
There is a canonical embedding 
\[
H^2(\pi_1(X),\mathbb{C}^*)\xhookrightarrow{} H^2(TX,X\times\mathbb{C}^*)\,.
\]
If $\pi_2(X)=0$ then this embedding
is an isomorphism.
\end{corollary}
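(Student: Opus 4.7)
The plan is to identify the claimed embedding as the van Est map, and then invoke the two preceding theorems. First I would recall that, by definition, a gerbe with a flat connection on $X$ is integrable precisely when its class in $H^2(TX, X \times \mathbb{C}^*)$ lies in the image of the van Est map
\[
\mathrm{VE}:H^2(\Pi_1(X), X \times \mathbb{C}^*)\to H^2(TX, X \times \mathbb{C}^*)\,.
\]
Because $\Pi_1(X)$ has simply connected (hence $1$-connected) source fibers, Theorem~\ref{van Est image} applied in degree $n+1 = 2$ with $n = 1$ guarantees that $\mathrm{VE}$ is \emph{injective}. On the other hand, by Morita invariance of groupoid cohomology (Remark~\ref{morita inv}), the Morita equivalence $\Pi_1(X) \simeq \pi_1(X)$ gives a canonical isomorphism
\[
H^2(\Pi_1(X), X \times \mathbb{C}^*)\cong H^2(\pi_1(X), \mathbb{C}^*)\,,
\]
which is the content of Theorem~\ref{gerbe fundamental group}. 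Composing these two maps yields the canonical embedding
\[
H^2(\pi_1(X),\mathbb{C}^*)\xhookrightarrow{}H^2(TX, X \times \mathbb{C}^*)\,.
\]

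For the second statement, assume $\pi_2(X) = 0$. Then the source fibers of $\Pi_1(X)$ are the universal covers of the connected components of $X$, which are now $2$-connected. Applying Theorem~\ref{van Est image} again, this time with $n = 2$, upgrades injectivity in degree $2$ to a full isomorphism
\[
\mathrm{VE}:H^2(\Pi_1(X), X \times \mathbb{C}^*)\xrightarrow{\cong} H^2(TX, X \times \mathbb{C}^*)\,,
\]
so every gerbe with a flat connection is integrable (this is also the content of Theorem~\ref{gerbe}). Composing with the Morita isomorphism as above shows the embedding is an isomorphism when $\pi_2(X) = 0$.

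There is essentially no obstacle beyond book-keeping: the real work was done in Theorem~\ref{van Est image} (which supplies injectivity in degree $n+1$ and surjectivity in degrees $\leq n$ for source $n$-connected groupoids) and in the Morita invariance identification. The only minor point worth verifying is that the image of $\mathrm{VE}$ in degree $2$ consists precisely of the classes satisfying the integrality condition stated in Theorem~\ref{gerbe}, but this is exactly what the main theorem gives with $M = X \times \mathbb{C}^*$ and the exponential sequence $0 \to \mathbb{Z} \to \mathbb{C} \to \mathbb{C}^* \to 0$, so no additional computation is required.
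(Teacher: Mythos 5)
Your proposal is correct and follows essentially the same route as the paper, which simply states that the corollary follows by combining Theorem~\ref{gerbe} and Theorem~\ref{gerbe fundamental group}; you have merely unpacked that combination into its two ingredients — the Morita-invariance identification $H^2(\pi_1(X),\mathbb{C}^*)\cong H^2(\Pi_1(X),X\times\mathbb{C}^*)$ and the van Est map applied with $n=1$ (injectivity in degree $2$, since the source fibers of $\Pi_1(X)$ are the simply connected universal cover) and with $n=2$ when $\pi_2(X)=0$ (isomorphism in degree $2$). No gaps.
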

\begin{remark}
One should compare the above results with the well-known theorem which states that line bundles with a flat connection are in one-to-one correpsondence with $\text{Hom}(\pi_1(X),\mathbb{C}^*)\cong H^1(\pi_1(X),\mathbb{C}^*)\,.$ One can also prove this using the Mortia invariance of cohomology and the fact that line bundles with flat connections always integrate uniquely to a class in $H^1(\Pi_1(X),X\times\mathbb{C}^*)\,.$ Note that $H^2(\pi_1(X),\mathbb{C}^*)$ is known as the Schur multiplier of $\pi_1(X)$ (or its dual, depending on conventions; see~\cite{greg}).
\end{remark}
\subsection{Van Est Map: Heisenberg Action Groupoids}\label{heisenberg action}
\begin{comment}In this section we will provide applications of the tools developed in the previous sections to an explicit example. That is,
\end{comment}
In this section we will apply the tools developed in the previous sections to integrate a particular Lie algebroid extension and show that we get a Heisenberg action groupoid.
\\\\Consider the space $\mathbb{C}^2$ with divisor $D=\{xy=0\}\,.$ Then the $2$-form \begin{align*}
    \omega=\frac{dx\wedge dy}{xy}
    \end{align*}
    is a closed form in $C_0^2(T_{\mathbb{C}^2}(-\log{D}),\mathbb{C}_{\mathbb{C}^2})\begin{footnote}{On $X\backslash D$ the $2$-form $\omega/2\pi i$ is the curvature of the Deligne line bundle associated to the holomorphic functions $x$ and $y\,.$}\end{footnote}\,.$ The source simply connected integration of $T_{\mathbb{C}^2}(-\log{D})$ is 
$\mathbb{C}^2\ltimes \mathbb{C}^2\,,$ where the action of $\mathbb{C}^2$ on itself is given by
\begin{align*}
    (a,b)\cdot (x,y)=(e^ax,e^by)\,.
\end{align*}
Since the source fibers are contractible Theorem~\ref{van Est image} tells us that the central extension of $T_{\mathbb{C}^2}(-\log{D})$ defined by $\omega$ integrates to an $\mathbb{C}_{\mathbb{C}^2}$ central extension of $\mathbb{C}^2\ltimes \mathbb{C}^2\,.$ We will describe the central extension here. First we will compute the integration of $\omega:$ we define coordinates on $\mathbf{B}^{\bullet\le 2} (\mathbb{C}{\mathop{\times}}\mathbb{C}\ltimes \mathbb{C}{\mathop{\times}}\mathbb{C})$ as follows:
\begin{align*}
&(x,y)\in \mathbb{C}^2= \mathbf{B}^0 (\mathbb{C}{\mathop{\times}}\mathbb{C}\ltimes \mathbb{C}{\mathop{\times}}\mathbb{C})\,,
\\&(a,b,x,y)\in \mathbb{C}^2{\mathop{\times}}\mathbb{C}^2=\mathbf{B}^1 (\mathbb{C}{\mathop{\times}}\mathbb{C}\ltimes \mathbb{C}{\mathop{\times}}\mathbb{C})\,,
\\& (a',b',a,b,x,y)\in \mathbf{B}^2 (\mathbb{C}{\mathop{\times}}\mathbb{C}\ltimes \mathbb{C}{\mathop{\times}}\mathbb{C})\,.
\end{align*}
On $\mathbf{E}^{\bullet\le 2} (\mathbb{C}{\mathop{\times}}\mathbb{C}\ltimes \mathbb{C}{\mathop{\times}}\mathbb{C})$ we have coordinates
\begin{align*}
&(a,b,x,y)\in \mathbf{E}^0 (\mathbb{C}{\mathop{\times}}\mathbb{C}\ltimes \mathbb{C}{\mathop{\times}}\mathbb{C})\,,
\\&(a',b',a,b,x,y)\in \mathbf{E}^1 (\mathbb{C}{\mathop{\times}}\mathbb{C}\ltimes \mathbb{C}{\mathop{\times}}\mathbb{C})\,,
\\& (a'',b'',a',b',a,b,x,y)\in \mathbf{E}^2 (\mathbb{C}{\mathop{\times}}\mathbb{C}\ltimes \mathbb{C}{\mathop{\times}}\mathbb{C})\,,
\end{align*}
where the map $\kappa:\mathbf{E}^{\bullet\le 2} (\mathbb{C}{\mathop{\times}}\mathbb{C}\ltimes \mathbb{C}{\mathop{\times}}\mathbb{C})\to\mathbf{B}^{\bullet\le 2} (\mathbb{C}{\mathop{\times}}\mathbb{C}\ltimes \mathbb{C}{\mathop{\times}}\mathbb{C})$
is given by
\begin{align*}
    &(a,b,x,y)\mapsto (e^a x,e^b y)\,,
    \\& (a',b',a,b,x,y)\mapsto (a',b',e^a x, e^b y)\,,
    \\& (a'',b'',a',b',a,b,x,y)\mapsto (a'',b'',a',b',e^a x, e^b y)\,.
\end{align*}
When we right translate $\omega$ to $\mathbf{E}^0(\mathbb{C}\ltimes\mathbb{C})$ we get the fiberwise form $da\wedge db\,.$ This is exact, with primitive $a\,db\,.$ When we pullback $a\,db$ to $\mathbf{E}^1(\mathbb{C}\ltimes\mathbb{C})$ we get the fiberwise form $a'\,db\,,$ and this is exact, with primitive $a'b\,.$ When we pullback $a'b$ to $\mathbf{E}^2(\mathbb{C}\ltimes\mathbb{C})$ we get the function $a''b'\,,$ and this is $\kappa^*a'b\,.$ So the cocycle integrating $\omega$ is $f(a',b',a,b,x,y)=a'b\,.$ 
\\\\One can show that the central extension associated to this cocycle is an action groupoid of the complex Heisenberg group acting on $\mathbb{C}{\mathop{\times}}\mathbb{C}\,,$ ie. we have the following proposition:
\begin{proposition}
The logarithmic $2$-form $\frac{dx\wedge dy}{xy}$ on $\mathbb{C}^2$ with divisor $xy=0$ defines a Lie algebroid extension of $T_{\mathbb{C}^2}(-\log{\{xy=0\}})\,.$ This Lie algebroid extension integrates to an extension of $\mathbb{C}^2\ltimes \mathbb{C}^2$ given by a Heisenberg action groupoid. More precisely,
the extension is of the form
\begin{align}\label{heisenberg extension}
    0\to\mathbb{C}_{\mathbb{C}^2}\to H\ltimes\mathbb{C}^2 \to\mathbb{C}^2\ltimes \mathbb{C}^2\to 0\,,
\end{align}
where $H$ is
the subgroup of matrices of the form
\begin{align*}
    \begin{pmatrix}
    1 & a & c\\
    0 & 1 & b \\
    0 & 0 & 1
    \end{pmatrix}
\end{align*}
for $a\,,b\,,c\in\mathbb{C},$ and the action on $\mathbb{C}{\mathop{\times}}\mathbb{C}$ is given by $(a,b,c)\cdot (x,y)=(e^a x,e^b y)\,,$ where $(a,b,c)$ represents the above matrix.
\end{proposition}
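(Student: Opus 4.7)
The plan is to combine the van Est isomorphism theorem with an explicit identification of the resulting cocycle extension with a Heisenberg action groupoid. First I would observe that $\omega=\frac{dx\wedge dy}{xy}$ is a closed $2$-form in the truncated Chevalley--Eilenberg complex $C_0^\bullet(T_{\mathbb{C}^2}(-\log D),\mathbb{C}_{\mathbb{C}^2})$, so it determines a class in $H_0^2(T_{\mathbb{C}^2}(-\log D),\mathbb{C}_{\mathbb{C}^2})$, which classifies central Lie algebroid extensions of $T_{\mathbb{C}^2}(-\log D)$ by $\mathbb{C}_{\mathbb{C}^2}$ (see \Cref{central extension}). Since the source simply connected integration $\mathbb{C}^2\ltimes\mathbb{C}^2$ has contractible source fibers, \Cref{van Est image} implies that $\omega$ lifts uniquely to a class in $H_0^2(\mathbb{C}^2\ltimes\mathbb{C}^2,\mathbb{C}_{\mathbb{C}^2})$, which in turn classifies a unique Lie groupoid central extension.

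Next I would invoke the explicit computation already carried out in the preceding paragraphs of Section~\ref{heisenberg action}: by pulling $\omega$ back through $\mathbf{E}^\bullet(\mathbb{C}^2\ltimes\mathbb{C}^2)\to \mathbf{B}^\bullet(\mathbb{C}^2\ltimes\mathbb{C}^2)$ and successively taking fiberwise primitives (first $a\,db$, then $a'b$), one finds that the integrating $2$-cocycle is
\[
f(a',b',a,b,x,y)=a'b.
\]
The key step is then to write down the extension groupoid explicitly. Its arrows are triples $((a,b,c),(x,y))$ with $(a,b,c)\in\mathbb{C}^3$ and $(x,y)\in\mathbb{C}^2$, source and target inherited from $\mathbb{C}^2\ltimes\mathbb{C}^2$, and composition law
\[
\bigl((a',b',c'),(e^a x,e^b y)\bigr)\circ\bigl((a,b,c),(x,y)\bigr)=\bigl((a+a',b+b',c+c'+a'b),(x,y)\bigr).
\]

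I would then identify the multiplication $(a',b',c')\ast(a,b,c)=(a+a',b+b',c+c'+a'b)$ with that of the $3$-dimensional complex Heisenberg group $H$ via the map
\[
(a,b,c)\longmapsto \begin{pmatrix} 1 & a & c\\ 0 & 1 & b\\ 0 & 0 & 1\end{pmatrix},
\]
which is a direct matrix computation: multiplying two such matrices yields exactly the entry $c+c'+a'b$ in the upper right corner. Under this identification, the projection $H\twoheadrightarrow\mathbb{C}^2$, $(a,b,c)\mapsto(a,b)$, intertwines the action on $\mathbb{C}^2$ with $(e^a\,\cdot,e^b\,\cdot)$, so the extension groupoid is canonically isomorphic to the action groupoid $H\ltimes\mathbb{C}^2$ and the central kernel $\{(0,0,c):c\in\mathbb{C}\}\subset H$ recovers the factor $\mathbb{C}_{\mathbb{C}^2}$.

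The main obstacle is purely bookkeeping: making sure the sign/order conventions for the simplicial face maps, the direction of composition in the action groupoid, and the choice of primitive agree so that the cocycle $a'b$ is produced exactly as above and lines up with the Heisenberg product in the correct order; once these conventions are fixed, uniqueness (from \Cref{van Est image}) and the explicit matrix isomorphism together identify the integrating extension with $H\ltimes\mathbb{C}^2$.
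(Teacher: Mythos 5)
Your proposal is correct and follows essentially the same route as the paper: the paper likewise computes the integrating cocycle $f(a',b',a,b,x,y)=a'b$ by right-translating $\omega$ to $da\wedge db$ and taking successive fiberwise primitives $a\,db$ and $a'b$ on $\mathbf{E}^\bullet(\mathbb{C}^2\ltimes\mathbb{C}^2)$, and then identifies the resulting cocycle extension with the Heisenberg action groupoid via the splitting $(a,b,x,y)\mapsto((a,b,0),x,y)$ and the matrix multiplication producing $c'+c+a'b$. The only caveat is the minor degree bookkeeping between the $2$-form $\omega$ and the class in truncated cohomology $H_0^1$ classifying extensions, which matches the paper's own conventions.
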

\subsection{Van Est Map: \texorpdfstring{$\mathbb{C}\ltimes\mathbb{P}^1$}{C to P1}}
In this section we will classify two different geometric strucutres: rank one holomorphic representations of $\Pi_1(\mathbb{P}^1,\{0,\infty\})\cong \mathbb{C}\ltimes\mathbb{P}^1\,,$ which are classified by 
\[
H^1(\Pi_1(\mathbb{P}^1,\{0,\infty\}),\mathbb{C}^*_{\mathbb{P}^1})\,,
\]
and rank one holomorphic representations of its Lie algebroid, denoted $T_{\mathbb{P}^1}(-\log{\{0,\infty\}})\,,$ which are classified by 
\[H^1(T_{\mathbb{P}^1}(-\log{\{0,\infty\}}),\mathbb{C}^*_{\mathbb{P}^1})\,.
\]
We then compute the van Est map between them and explicitly show that it is an isomorphism.
\vspace{3mm}\\Let's begin: consider the action of $\mathbb{C}$ on $\mathbb{P}^1$ given by $a\cdot [z:w]=[e^{a}z:w]\,,$ and form the action groupoid given by $\mathbb{C}\ltimes\mathbb{P}^1\,.$ Then representations of $\mathbb{C}\ltimes\mathbb{P}^1$ on holomorphic line bundles are classified by $H^1(\mathbb{C}\ltimes\mathbb{P}^1,\mathbb{C}^*_{\mathbb{P}^1})\,,$ and these are the global versions of flat logarithmic connections on holomorphic line bundles, with poles at $[0:1]\,,$ also known as holomorphic representations of the Lie algebroid of $\mathbb{C}\ltimes\mathbb{P}^1\,.$ The sheaf of sections of the Lie algebroid of $\mathbb{C}\ltimes\mathbb{P}^1$ is isomorphic to the sheaf of sections of $T_{\mathbb{C}}$ which vanish at the origin and $\infty\,.$
\begin{steps}
\item Let $U_0\,,U_1$ be the standard open covering of $\mathbb{P}^1\,.$ Then we get an open covering of $\mathbf{B}^1(\mathbb{C}\ltimes\mathbb{P}^1)$ by using the open cover $\{s^{-1}U_i\cap t^{-1}U_j\}_{i,j\in \{0,1\}}\,.$ A standard Mayer-Vietoris argument shows that this is a good cover in degree one, ie. it can be used to compute cohomology in degree one. Let $U_{ij}=s^{-1}U_i\cap t^{-1}U_j\,.$ The inequivalent degree one cocycles are given by the following:
\begin{align}\label{cocycle}
    &\sigma_{00}(a,z)=e^{(k+\lambda)a}\,,\,\sigma_{01}(a,z)=e^{\lambda a}z^{-k}\,,
       \\ &\sigma_{10}(a,z)=e^{(k+\lambda) a}z^k\,,\,\sigma_{11}(a,z)=e^{\lambda a}\,,\nonumber\\& g_{01}(z)=z^k\,,\nonumber
\end{align}
where $\sigma_{ij}$ are functions on $U_{ij}$ and $g_{01}$ is a function on $U_0\cap U_1$ representing the principal bundle, and where $k\in\mathbb{Z}\,,\lambda\in\mathbb{C}\,.$ Hence 
\begin{align*}
    H^1(\mathbb{C}\ltimes\mathbb{P}^1,\mathbb{C}^*_{\mathbb{P}^1})\cong \mathbb{C}{\mathop{\times}}\mathbb{Z}\,.
\end{align*}
Now we compute the van Est map on these classes. First recall that the van Est map factors through the cohomology of the local groupoid, so we only need to be concerned with a neighborhood of the identity bisection.
$\mathbf{}$

\item  Pull back the cocycle via $\kappa^{-1}$ to a neighborhood of $G$ in
$G\ltimes G\,,$ and get the cocycle given by the functions
\begin{align*}
    \kappa^{-1}\sigma_{00}\,,\,\kappa^{-1}\sigma_{11}\,,
         t^{-1}g_{01}
\end{align*}
defined on the open sets $\kappa^{-1}U_{00}\,,\kappa^{-1}U_{11}\,,t^{-1}U_{10}\,,$ respectively. 
\item Now by the condition that~\eqref{cocycle} is a cocycle, it follows that 
\begin{align*}
    &\kappa^{-1}\sigma_{00}=\delta^*\sigma_{00}\;\text{   on  }\;\kappa^{-1}U_{00}\cap s^{-1}U_{00}\cap t^{-1}U_{00}\,,
    \\&\kappa^{-1}\sigma_{11}=\delta^*\sigma_{11}\;\text{   on  }\;\kappa^{-1}U_{11}\cap s^{-1}U_{11}\cap t^{-1}U_{11}\,,
    \end{align*}
and these two open sets cover a neighbourhood of $G^0$ in $G\ltimes G\,.$ Explicitly,
\begin{align*}
    \kappa^{-1}U_{ii}\cap s^{-1}U_{ii}\cap t^{-1}U_{ii}=\{(g_1,g_2)\in G{\mathop{\times}} G: g_1\,,g_2\,,g_1g_2\in U_{ii}\}\,,
\end{align*}
for $i=0\,,1\,.$
So for the second step we get the functions
\begin{align*}
    \sigma_{00}\,,\,\sigma_{11}\,,
         s^{-1}g_{01}\,,
\end{align*}
defined on the open sets $U_{00}\,,U_{11}\,,U_{00}\cap U_{11}\,,$ respectively.
\item Now apply $\mathrm{dlog}+\partial$ to $\sigma_{00}\,,\sigma_{11}\,,$ and we get the elements
\begin{align*}
    \mathrm{dlog}\sigma_{00}\,,\,\mathrm{dlog}\sigma_{11}\,.
         \frac{\sigma_{00}}{\sigma_{11}}s^{-1}g_{01}\,,
\end{align*}
defined on the open sets $U_{00}\,,U_{11}\,,U_{00}\cap U_{11}\,,$ respectively. Explicitly, these are given, respectively, by
\begin{align*}
    (k+\lambda)\frac{da}{2\pi i}\,,\,
    \lambda\frac{da}{2\pi i}\,,\,
    e^{ka}z^k\,.
\end{align*}
\item Now this cocycle is pulled back from the following cocycle in the Chevalley-Eilenberg complex via $t\,:$
\begin{align}\label{lie cocycle}
    \alpha_0(z)=(k+\lambda)\frac{da}{2\pi i}\,,
    \alpha_1(z)=\lambda\frac{da}{2\pi i}\,,
    g_{01}(z)=z^k\,,
\end{align}
where these maps are defined on $U_0\,,U_1\,, U_0\cap U_1\,,$ respectively.
\end{steps}
$\mathbf{}$
\\The anchor map in this case is $\alpha(\partial_a\vert_{(0,z')})=z'\,\partial_z\vert_{z'}\,,$
which is an embedding of sheaves, and hence the sheaf of sections of the Lie algebroid is isomorphic to the sheaf on $\mathbb{P}^1$ generated by $z\,\partial_z$ on $U_0$ and $\tilde{z}\,\partial_{\tilde{z}}$ on $U_1\,.$ Under this isomorphism of sheaves, $da$ gets sent to $dz/z\,.$
We can use the isomorphism to identify the Lie algebroid cocycle in~\eqref{lie cocycle} with the cocycle in the sheaf of logarithmic differential forms given by
\begin{align}\label{flat log connections}
    \alpha_0(z)=\frac{(k+\lambda)}{2\pi i}\,\frac{dz}{z}\,,
    \alpha_1(z)=\frac{\lambda}{2\pi i}\,\frac{dz}{z}\,,
    g_{01}(z)=z^k\,.
\end{align}
$\mathbf{}$
\\To summarize, we have the following:
\begin{proposition}
The cocycles in~\ref{cocycle} give an isomorphism
$H^1(\mathbb{C}\ltimes\mathbb{P}^1,\mathbb{C}^*_{\mathbb{P}^1})\cong\mathbb{C}{\mathop{\times}}\mathbb{Z}\,;
$ the cocycles in~\ref{flat log connections} give an isomorphism
$H^1(T_{\mathbb{P}^1}(-\log{\{0,\infty\}}),\mathbb{C}^*_{\mathbb{P}^1})\cong \mathbb{C}{\mathop{\times}}\mathbb{Z}\,.$
Under these isomorphisms the van Est map
\[
VE:H^1(\mathbb{C}\ltimes\mathbb{P}^1,\mathbb{C}^*_{\mathbb{P}^1})\to H^1(T_{\mathbb{P}^1}(-\log{\{0,\infty\}}),\mathbb{C}^*_{\mathbb{P}^1})
\]
is given by $(\lambda,k)\mapsto(\lambda,k)\,.$
\end{proposition}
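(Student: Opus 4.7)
The plan is to establish both isomorphisms by a direct \v{C}ech computation on the standard cover $\{U_0,U_1\}$ of $\mathbb{P}^1$ (with $U_0=\mathbb{P}^1\setminus\{\infty\}$, $U_1=\mathbb{P}^1\setminus\{0\}$), and then verify the van Est map by the five-step algorithm already sketched in the preceding paragraphs. For the groupoid cohomology, I would use the induced cover $\{U_{ij}=s^{-1}U_i\cap t^{-1}U_j\}$ of $\mathbf{B}^1(\mathbb{C}\ltimes\mathbb{P}^1)=\mathbb{C}\times\mathbb{P}^1$, and (as noted in the excerpt) check via Mayer--Vietoris that this cover computes $H^1$. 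A class in $H^1(\mathbb{C}\ltimes\mathbb{P}^1,\mathbb{C}^*_{\mathbb{P}^1})$ is the same datum as a holomorphic line bundle on $\mathbb{P}^1$ equipped with a lift of the $\mathbb{C}$-action. The underlying line bundle contributes $\operatorname{Pic}(\mathbb{P}^1)=\mathbb{Z}$ (the integer $k$), and the linearization of the $\mathbb{C}$-action is a holomorphic homomorphism $\mathbb{C}\to\mathbb{C}^*$, necessarily of the form $a\mapsto e^{\lambda a}$ (the scalar $\lambda\in\mathbb{C}$). Solving the cocycle equations on the $U_{ij}$ explicitly produces the family (\ref{cocycle}) and confirms that these exhaust the cohomology, giving $\mathbb{C}\times\mathbb{Z}$.

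For the Lie algebroid cohomology, the object to compute is the hypercohomology of the two-term complex $\mathcal{O}^*\xrightarrow{\operatorname{dlog}}\Omega^1_{\mathbb{P}^1}(\log\{0,\infty\})$ via the \v{C}ech double complex on $\{U_0,U_1\}$. On each $U_i$, the global logarithmic $1$-forms modulo exact ones are one-dimensional, spanned by $dz/z$; this contributes the $\mathbb{C}$-factor parametrized by $\lambda$. Combined with $H^1(\mathbb{P}^1,\mathcal{O}^*)=\mathbb{Z}$, every class is represented by a triple $(\alpha_0,\alpha_1,g_{01})$ satisfying $\alpha_0-\alpha_1=\operatorname{dlog}g_{01}$, and a quick comparison pins this down to the family (\ref{flat log connections}), proving the second isomorphism.

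The final step is to follow the displayed five-step recipe for $VE$ on a representative cocycle $(\sigma_{ij},g_{01})\in(\ref{cocycle})$: pull back by $\kappa^{-1}$, use the cocycle identities $\kappa^{-1}\sigma_{ii}=\delta^{*}\sigma_{ii}$ on the open sets covering a neighbourhood of the identity bisection (so that the primitives in the double complex are literally $\sigma_{00},\sigma_{11}$), apply $\operatorname{dlog}+\partial$, and identify the result as the pullback along $t$ of a Chevalley--Eilenberg cocycle. Under the anchor-map identification of sheaves $\mathfrak{g}\cong T_{\mathbb{P}^1}(-\log\{0,\infty\})$ sending the generator $\partial_a$ to $z\,\partial_z$ (equivalently $da\mapsto dz/z$), the output is precisely the triple in (\ref{flat log connections}), establishing $VE(\lambda,k)=(\lambda,k)$. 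I do not anticipate a conceptual obstacle; the only real care required is bookkeeping with normalizations (the $2\pi i$ factors and the sign inherited from the anchor on $U_1$, where $d\tilde{z}/\tilde{z}=-dz/z$), ensuring that the same parameters $(\lambda,k)$ label both sides.
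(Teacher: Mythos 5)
Your proposal is correct and follows essentially the same route as the paper: a \v{C}ech computation on the standard cover $\{U_0,U_1\}$ (and the induced cover $\{U_{ij}\}$ of $\mathbf{B}^1(\mathbb{C}\ltimes\mathbb{P}^1)$) to produce the explicit cocycle families, followed by the displayed five-step recipe for $VE$ and the anchor-map identification $da\mapsto dz/z$. Your gloss of the groupoid classes as line bundles with a linearization of the $\mathbb{C}$-action is a pleasant conceptual shortcut to $\mathbb{C}\times\mathbb{Z}$, but the substance of the argument is the same as the paper's.
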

\section{Heisenberg Manifold as a Higher Structure}
In this section we will show that the Heisenberg manifold has several compatible geometric structures on it; in particular, it is a principal bundle in the category of groupoids, or a groupoid in the category of principal bundles (in fact, one can enhance the construction we make to obtain a $\Pi_1(S^1)$-module in the category of principal bundles with a connection, since the Heisenberg manifold is naturally a principal bundle with connection over $T^2$).
\vspace{3mm}\\The Heisenberg manifold, denoted $H_M\,,$  is the quotient of the Heisenberg group by the right action of the integral Heisenberg subgroup on itself, ie. we make the identification
\begin{align*}
    \begin{pmatrix}
    1 & a & c\\
    0 & 1 & b \\
    0 & 0 & 1
    \end{pmatrix}\sim \begin{pmatrix}
    1 & a+n & c+k+am\\
    0 & 1 & b+m \\
    0 & 0 & 1
    \end{pmatrix}\,,
\end{align*}
where $a\,,b\,,c\in\mathbb{R}$ and $n\,,m\,,k\in\mathbb{Z}\,.$ 
\\\\$H_M$ is a principal $S^1$-bundle over $T^2$ by projecting onto $(a,b)\,.$  Furthermore, we get a $T^2$-bundle over $S^1$ by projecting onto $b\,,$ making $H_M$ into a family of abelian groups over $S^1\,.$
\\\\More explcitly, the product associated to the bundle $H_M\to S^1$ is given by
\begin{align*}
      \begin{pmatrix}
    1 & a & c\\
    0 & 1 & b \\
    0 & 0 & 1
    \end{pmatrix}\cdot  \begin{pmatrix}
    1 & a' & c'\\
    0 & 1 & b \\
    0 & 0 & 1
    \end{pmatrix}=  \begin{pmatrix}
    1 & a+a' & c+c'\\
    0 & 1 & b \\
    0 & 0 & 1
    \end{pmatrix}\,,
\end{align*}
Putting this together, we have the following diagram:
\begin{equation}\label{hei}
    \begin{tikzcd}
H_M \arrow[r, shift right] \arrow[r, shift left] \arrow[d] & S^1 \arrow[d] \\
T^2 \arrow[r, shift right] \arrow[r, shift left]           & S^1          
\end{tikzcd}
\end{equation}
Here, the principal bundle on the right is the trivial principal bundle (the map is the identity), and the groupoid on the bottom is the trivial $S^1$ family of abelian groups over $S^1\,.$
\vspace{3mm}\\As mentioned earlier, we can enhance \ref{hei} with connections to obtain a diagram of the following form:
\begin{equation}
\begin{tikzcd}
H_M \arrow[d, "{(S^1\text{-principal bundle},\nabla_2)}"'] \arrow[rrr, "{(T^2\text{-family of groups},\nabla_1)}", shift left] \arrow[rrr, shift right] &  &  & S^1 \arrow[d, "\text{Trivial principal bundle}"] \\
T^2 \arrow[rrr, shift left] \arrow[rrr, "\text{Trivial } S^1\text{-family of groups}"', shift right]                                                    &  &  & S^1                                             
\end{tikzcd}
\end{equation}
In the bottom row and right column, the connections are the trivial ones on the trivial bundles. The connection on the top row is flat, making $H_M$ into a $\Pi_1(S^1)$-module, and the connection on the left is the one associated with the quantization of $T^2\,.$ One might say that the quantization of $T^2$ is a $\Pi_1(S^1)$-module.

\section{The Canonical Module Associated to a Complex Manifold and Divisor}
Given a complex manifold $X$ and a (simple normal crossings) divisor $D\,,$ we construct a natural module for the Lie groupoid $\text{Pair}(X,D)$ (which is the terminal integration of $T_X(-\log{D})\,,$ the Lie algebroid whose sheaf of sections is the sheaf of sections of $T_X$ which are tangent to $D)\,.$ These are modules for which the underlying surjective submersion does not define a fiber bundle, and in particular the underlying family of abelian groups is not locally trivial. Generically the fiber will be $\mathbb{C}^*\,,$ but over $D$ the fibers will degenerate to $\mathbb{C}^*{\mathop{\times}} \mathbb{Z}^k\,,$ for some $k$ depending on the point $D\,.$
\subsection{The Module \texorpdfstring{$\mathbb{C}^*_{\mathbb{C}}(*\{0\})$}{C}}
Here we will do a warm up example for the general case to come in the next section. More precisely, we will construct
a family of abelian groups whose sheaf of sections is isomorphic to the sheaf of nonvanishing meromorphic functions with a possible pole or zero
only at the origin, and we will show that it is naturally a module for the terminal groupoid integrating $T_\mathbb{C}(-\log{\{0\}})\,,$ the Lie algebroid whose sheaf of sections is
isomorphic to the sheaf of sections of $T\mathbb{C}$ vanishing at the origin. This space was defined in~\cite{luk}.
\\\\Consider the action groupoid $\mathbb{C}^*\ltimes\mathbb{C}\rightrightarrows\mathbb{C}\,,$ where the action of $\mathbb{C}^*$ on $\mathbb{C}$ is given by
\begin{align*}
    a\cdot x=ax\,.
\end{align*}This is the terminal groupoid integrating $T_\mathbb{C}(-\log{\{0\}})\,.$ We will construct a module for this groupoid as follows:
consider the family of abelian groups given by
\begin{align*}
{\mathbb{C}}{\mathop{\times}}\mathbb{C}^*{\mathop{\times}}\mathbb{Z}\overset{p_1}{\to}\mathbb{C}\,.
\end{align*}
This family of abelian groups is a $\mathbb{C}^*\ltimes\mathbb{C}$-module with action given by
\begin{align}
(a,x)\cdot(x,y,i)=(ax,a^{-i}y,i)\,.
\end{align}
There is a submodule given by
\begin{align*}
\mathbb{C}{\mathop{\times}}\mathbb{Z}\backslash \{(0,j):j\ne 0\}\overset{p_1}{\to}\mathbb{C}\,,
\end{align*}
where the embedding into ${\mathbb{C}}{\mathop{\times}}\mathbb{C}^*{\mathop{\times}}\mathbb{Z}$ is given by $(x,j)\mapsto (x,x^{-j},j)\,,$ for $x\ne 0\,,$ and $(0,0)\mapsto (0,1,0)\,.$
We can then form the quotient to get another module, denoted $\mathbb{C}^*_{\mathbb{C}}(*\{0\})\,.$ Formally, we have the following:
\theoremstyle{definition}\begin{definition}
 We define the space $\mathbb{C}^*_{\mathbb{C}}(*\{0\})$ as
\begin{align*}
   \mathbb{C}^*_{\mathbb{C}}(*\{0\}):= {\mathbb{C}}{\mathop{\times}}\mathbb{C}^*{\mathop{\times}}\mathbb{Z}/\sim\,,\,(x,y,i)\sim (x,x^{-j}y,i+j)\,,\;x\ne 0\,.
\end{align*}
$\blacksquare$\end{definition}
\begin{proposition}The space $\mathbb{C}^*_{\mathbb{C}}(*\{0\})$ is a complex manifold and there is a holomorphic surjective submersion $\pi:M\to\mathbb{C}$ given by $\pi(x,y,i)=x\,$ The space $\mathbb{C}^*_{\mathbb{C}}(*\{0\})$ is a family of abelian groups with product defined by a
\begin{align*}
(x,y,i)\cdot(x,y',j)=(x,yy',i+j)\,.
\end{align*}
It is a $\mathbb{C}^*\ltimes\mathbb{C}$-module with action given by
\begin{align*}
    (a,x)\cdot(x,y,i)=(ax,a^{-i}y,i)\,,
\end{align*}
and there is a short exact sequence of modules given by
\begin{align*}
    0\to \mathbb{C}{\mathop{\times}}\mathbb{Z}\backslash \{(0,j):j\ne 0\}\to {\mathbb{C}}{\mathop{\times}}\mathbb{C}^*{\mathop{\times}}\mathbb{Z}\to \mathbb{C}^*_{\mathbb{C}}(*\{0\})\to 0\,. 
\end{align*}
The fiber of $\mathbb{C}^*_{\mathbb{C}}(*\{0\})$ over a point $x\ne 0$ is isomorphic to $\mathbb{C}^*\,,$ and the fiber over $x=0$ is isomorphic to $\mathbb{C}^*{\mathop{\times}}\mathbb{Z}\,.$ 
\end{proposition}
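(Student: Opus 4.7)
The strategy is to prove each of the six assertions by direct verification, working from the definition of the equivalence relation. The first and central step is to exhibit an explicit holomorphic atlas on $\mathbb{C}^*_\mathbb{C}(*\{0\})$. My plan is to define, for each $i \in \mathbb{Z}$, the chart
\[
\varphi_i : \mathbb{C} \times \mathbb{C}^* \to \mathbb{C}^*_\mathbb{C}(*\{0\}), \qquad (x,y) \mapsto [(x,y,i)].
\]
I would first observe that within the single slice $\mathbb{C} \times \mathbb{C}^* \times \{i\}$ no two distinct points are identified (the relation $(x,y,i) \sim (x, x^{-j}y, i+j)$ requires $x \neq 0$ and then changes the third coordinate), so $\varphi_i$ is injective. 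Its image is the open set $V_i$ of classes having a representative with third coordinate $i$. Over the overlap $V_i \cap V_j$ (which corresponds to $x \neq 0$), the transition map $\varphi_j^{-1} \circ \varphi_i$ is given by $(x,y) \mapsto (x, x^{j-i} y)$, which is a biholomorphism of $\mathbb{C}^* \times \mathbb{C}^*$. This makes $\mathbb{C}^*_\mathbb{C}(*\{0\})$ into a complex manifold, and in these charts the map $\pi([(x,y,i)]) = x$ is simply $(x,y) \mapsto x$, which is clearly a holomorphic surjective submersion.

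Next I would verify well-definedness of the fiberwise product and of the $(\mathbb{C}^* \ltimes \mathbb{C})$-action. For the product, replacing $(x,y,i)$ by $(x, x^{-k}y, i+k)$ and $(x,y',j)$ by $(x, x^{-\ell}y', j+\ell)$ gives
\[
(x, x^{-k-\ell} yy', i+j+k+\ell) \sim (x, yy', i+j),
\]
so the product descends to equivalence classes. For the action, one computes
\[
(a,x) \cdot (x, x^{-j}y, i+j) = (ax, a^{-i-j}x^{-j}y, i+j) \sim (ax, a^{-i}y, i),
\]
where the last equivalence uses $(ax)^{-j} \cdot a^{-i}y = a^{-i-j}x^{-j}y$, so the action is well-defined. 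Smoothness of both structures follows because each formula lifts to the holomorphic product/action on the cover $\mathbb{C} \times \mathbb{C}^* \times \mathbb{Z}$. Checking the module compatibility $(a,x)\cdot((x,y,i)\cdot(x,y',j)) = ((a,x)\cdot(x,y,i)) \cdot ((a,x)\cdot(x,y',j))$ is then a one-line computation in the chart.

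For the short exact sequence, I would show that the proposed map $\iota \colon \mathbb{C} \times \mathbb{Z} \setminus \{(0,j):j\ne 0\} \to \mathbb{C} \times \mathbb{C}^* \times \mathbb{Z}$ sending $(x,j) \mapsto (x, x^{-j}, j)$ for $x \neq 0$ and $(0,0) \mapsto (0,1,0)$ is a holomorphic embedding of families of abelian groups (holomorphy at $x=0$ only needs to be checked on the component $j=0$, where the map is $(x,0)\mapsto (x,1,0)$, and module-compatibility reduces to the identity $(ax)^{-j}=a^{-j}x^{-j}$). Then I would identify its image with the kernel of the quotient map to $\mathbb{C}^*_\mathbb{C}(*\{0\})$: a class $[(x,y,i)]$ equals the identity $[(x,1,0)]$ iff $(x,y,i) \sim (x,1,0)$, which for $x \neq 0$ forces $y = x^{-i}$, exactly the image of $\iota$, and for $x=0$ forces $(y,i)=(1,0)$.

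Finally I would read off the fibers from the atlas. Over $x \neq 0$, the equivalence class of $(x,y,i)$ contains a unique representative $(x, x^i y, 0)$ in slice zero, so $\pi^{-1}(x) \cong \mathbb{C}^*$ via $y \mapsto [(x,y,0)]$. Over $x = 0$, no two points $(0,y,i)$ are identified, so $\pi^{-1}(0) = \{[(0,y,i)]: y\in\mathbb{C}^*,\, i\in\mathbb{Z}\} \cong \mathbb{C}^* \times \mathbb{Z}$. The main obstacle is really bookkeeping: the atlas is natural but must be defined slicewise because the identifications collapse only in the direction of $i$ (and only over $x\neq 0$), and the embedding $\iota$ needs a separate prescription at $x=0$. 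Once one resists the temptation to write a single global formula, every verification reduces to a short algebraic identity.
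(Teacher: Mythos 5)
Your verification of the product, the action, the exact sequence, and the fibers is correct and in fact more thorough than what the paper writes down; the atlas you describe is the same one the paper uses. However, there is one genuine gap: you never verify that the quotient is Hausdorff, and this is precisely the point to which the paper devotes essentially its entire proof. Covering a quotient by charts with holomorphic transition maps only produces a possibly non-Hausdorff complex manifold, and here the danger is real rather than formal: the identification $(x,y,i)\sim(x,x^{-j}y,i+j)$ is imposed only for $x\neq 0$, so the slices $\mathbb{C}\times\mathbb{C}^*\times\{i\}$ are glued along the complement of $x=0$ but remain separate over $x=0$. This is exactly the configuration of the line with a doubled origin, so Hausdorffness must be checked and cannot be assumed.

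The check is short but not vacuous. The only delicate case is two points $(0,y,i)$ and $(0,y',j)$ with $j>i$: a point $(x,z,i)$ with $x\neq 0$ and $z$ near $y$ is identified with $(x,x^{i-j}z,j)$, and since $i-j<0$ one has $|x^{i-j}z|\to\infty$ as $x\to 0$, so for $|x|$ small enough these images escape any bounded neighborhood of $y'$; choosing the $x$-radius small enough (the paper phrases this as choosing $x$ with $|x^{i-j}y|>|y'|$) and small disks around $y$ and $y'$ yields disjoint saturated neighborhoods. The remaining cases (two points with the same $i$ over $x=0$, or one point over $x=0$ and one over $x\neq 0$) are handled by separating in the $y$-coordinate or the $x$-coordinate respectively. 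Adding this paragraph would close the gap; the rest of your argument stands as written.
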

\begin{proof}
We prove that it is a complex manifold. First we show that we can cover the space with charts whose transition functions are holomorphic. For each $i\in\mathbb{Z}\,,$ we get a chart given by $\mathbb{C}\times\mathbb{C}^*\,,$ taking $(x,y,i)\mapsto (x,y)\,.$ On the intersection between the $i$ and $j$ coordinate systems, the transition function is given by $(x,y)\mapsto (x,x^{-j}y)\,,$ which is holomorphic. 
\newline\newline To prove the space is Hausdorff, we observe that away from $x=0\,,$ the space is just $\mathbb{C}^*\times\mathbb{C}^*\,.$ Now take two points $(0,y,i)\,,(x,y',j)\,,$ $x\ne 0\,.$ We get disjoint neighborhoods of these points by choosing small enough neighborhoods $U_i\,, U_j\,,$ such that the projections onto the $x$-coordinate are disjoint. Now given two distinct points $(0,y,i)\,,(0,y',j)\,,$ with $j>i$ we obtain two disjoint neighborhoods by choosing $x\in \mathbb{C}$ such that $|x^{i-j}y|>|y'|\,,$ and then choosing small enough disks around $y\,,y'\,.$ Now suppose we take two distinct points $(0,y,i)\,,(0,y',i)\,.$ We get two disjoint neighborhoods by choosing disjoint neighborhoods of $y\,,y'\in\mathbb{C}^*\,,$ and taking all $x\in\mathbb{C}^*\,.$
\end{proof}
\begin{proposition}\label{sheaf identification}The sheaf $\mathcal{O}(\mathbb{C}^*_{\mathbb{C}}(*\{0\}))$ (where sections here are taken to be holomorphic) is isomorphic to the sheaf of meromorphic functions on $\mathbb{C}$ with poles or zeroes only at $x=0\,,$ denoted $\mathcal{O}^*(*\{0\})\,.$
\end{proposition}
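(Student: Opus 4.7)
The plan is to construct an explicit isomorphism of sheaves by exploiting the chart structure on $\mathbb{C}^*_{\mathbb{C}}(*\{0\})$ indexed by $\mathbb{Z}$. In one direction, I would define $\Phi: \mathcal{O}^*(*\{0\})(U) \to \mathcal{O}(\mathbb{C}^*_{\mathbb{C}}(*\{0\}))(U)$ as follows. Given $f \in \mathcal{O}^*(*\{0\})(U)$, if $0 \notin U$ then $f$ is a holomorphic $\mathbb{C}^*$-valued function and I set $\Phi(f)(x) = [x, f(x), 0]$. If $0 \in U$, let $n = \mathrm{ord}_0 f \in \mathbb{Z}$ and write $f(x) = x^n g(x)$ on a neighborhood $V$ of $0$ with $g$ holomorphic and nonvanishing. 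Define $\Phi(f)(x) = [x, g(x), n]$ on $V$ and $\Phi(f)(x) = [x, f(x), 0]$ on $U \setminus \{0\}$.

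Next I would check that $\Phi(f)$ is a well-defined holomorphic section. On the overlap $V \setminus \{0\}$, the equivalence relation with $j = n$ gives $[x, g(x), n] \sim [x, x^n g(x), 0] = [x, f(x), 0]$, so the two local formulas agree. Holomorphicity follows because the map $x \mapsto (x, g(x))$ into the chart $\{i = n\} \cong \mathbb{C} \times \mathbb{C}^*$ is holomorphic (since $g$ is), and similarly for the chart $\{i = 0\}$ on $U \setminus \{0\}$.

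In the other direction, I would construct $\Psi: \mathcal{O}(\mathbb{C}^*_{\mathbb{C}}(*\{0\}))(U) \to \mathcal{O}^*(*\{0\})(U)$. Given a holomorphic section $s$, around any point $x_0 \in U$ the section lands in some chart $\{i = i_{x_0}\}$, so locally $s(x) = [x, y(x), i_{x_0}]$ with $y$ holomorphic and nonvanishing. Set $\Psi(s)(x) = x^{i_{x_0}} y(x)$. Any two such local definitions agree on punctured overlaps by the same equivalence relation, and if $0 \in U$ then $i_{x_0}$ is forced to be locally constant near $0$ (call it $n$), so $\Psi(s)$ extends across $0$ as a meromorphic function with $\mathrm{ord}_0 \Psi(s) = n$, that is, a section of $\mathcal{O}^*(*\{0\})$.

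Finally I would verify that $\Phi$ and $\Psi$ are mutually inverse and compatible with restriction maps; both checks reduce to direct computation using the equivalence relation. The main obstacle, such as it is, is simply bookkeeping: one must carefully track the choice of chart representative and confirm that the equivalence relation $(x,y,i) \sim (x, x^{-j} y, i+j)$ precisely matches the multiplicative structure of meromorphic functions of the form $x^n g(x)$. There is no deeper analytic difficulty, since both sheaves have the same stalks ($\mathcal{O}_x^\times$ for $x \neq 0$ and $\mathcal{O}_0^\times \times \mathbb{Z}$ at $x = 0$) and the constructed maps $\Phi, \Psi$ induce the evident stalkwise isomorphism.
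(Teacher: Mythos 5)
Your proposal is correct and follows essentially the same route as the paper, which simply writes down the map $s(x)=(x,f(x),i)\mapsto x^if(x)$ and asserts it is an isomorphism of sheaves; you spell out the inverse $\Phi$ and the chart-compatibility checks in more detail, which is fine. (One trivial bookkeeping slip: to pass from $[x,g(x),n]$ to $[x,x^ng(x),0]$ via $(x,y,i)\sim(x,x^{-j}y,i+j)$ you should take $j=-n$, not $j=n$.)
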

\begin{proof}
Consider the morphism of sheaves defined as follows: for an open set $U\subset\mathbb{C}$ and a holomorphic section $s(x)=(x,f(x),i)$ of $\mathbb{C}^*_{\mathbb{C}}(*\{0\})$ over $U\,,$ define a meromorphic function on $U\,,$ with a possible pole/zero only at $x=0\,,$ by $x^if(x)\,,\,x\in U\,.$ This map is an isomorphism of sheaves.
\end{proof}
Now to any $G$-module there is an associated $G$-representation, and the representation associated to $\mathbb{C}^*_{\mathbb{C}}(*\{0\})$ is the trivial one, ie. $\mathfrak{m}\cong\mathbb{C}{\mathop{\times}}\mathbb{C}$ with the projection map being the projection onto the first factor, and the action of $\mathbb{C}^*\ltimes\mathbb{C}$ is given by 
\begin{align*}
(a,x)\cdot(x,y)=(ax,y)\,.
\end{align*}
We identify $\mathfrak{m}$ with points $(x,y,0)\in\mathbb{C}{\mathop{\times}}\mathbb{C}{\mathop{\times}}\mathbb{Z}\,,$ where the second $\mathbb{C}$ is identified with the Lie algebra of $\mathbb{C}^*\,.$ The sheaf of sections of $\mathfrak{m}$ is naturally isomorphic to the sheaf of $\mathbb{C}$-valued functions on $\mathbb{C}\,.$
\begin{proposition}The Chevalley-Eilenberg complex associated to $\mathbb{C}^*_{\mathbb{C}}(*\{0\})$ is isomorphic to the complex
\begin{align*}
    \mathcal{O}_\mathbb{C}^*(*\{0\})\overset{\mathrm{dlog}}{\to}\Omega^1_\mathbb{C}(\log D)\,.
\end{align*}
\end{proposition}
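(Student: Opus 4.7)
The plan is to verify the isomorphism term by term, using the identifications already established in the text, and then to check that the single nontrivial differential matches $\mathrm{dlog}$ by an explicit computation in local coordinates. Since $\mathbb{C}$ is one-dimensional and $\mathfrak{g} = T_{\mathbb{C}}(-\log\{0\})$ has rank one, the only nonzero terms in the Chevalley-Eilenberg complex live in degrees $0$ and $1$, so I will only need to identify those two sheaves and one differential.

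First I would handle the degree $0$ identification: the previous proposition already gives $\mathcal{O}(\mathbb{C}^*_{\mathbb{C}}(*\{0\})) \cong \mathcal{O}_{\mathbb{C}}^*(*\{0\})$ via $(x,f(x),i) \leftrightarrow x^i f(x)$. Next, for the degree $1$ identification, I would use the remark preceding the proposition that the associated representation $\mathfrak{m}$ is the trivial line bundle with trivial $\mathfrak{g}$-action (the action $(a,x)\cdot(x,y)=(ax,y)$ is trivial on fibers). Consequently $\Lambda^1 \mathfrak{g}^* \otimes \mathfrak{m} \cong \mathfrak{g}^*$, and since $\mathfrak{g}^*$ is exactly $\Omega^1_{\mathbb{C}}(\log D)$ (the dual of the local generator $x\partial_x$ of $\mathfrak{g}$ is the logarithmic form $dx/x$), this gives the degree $1$ identification. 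Degrees $\geq 2$ vanish on both sides for dimension reasons, so no further work is needed there.

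The one substantive step — and the main thing to check — is that $d_{\text{CE}}\log$ corresponds to $\mathrm{dlog}$ under these identifications. The plan here is to compute $\tilde{L}_{x\partial_x}(s)$ directly using the formula from Theorem 2.9, applied to the curve $\gamma(\epsilon)=(e^\epsilon,x)$ in the source fiber over $x$, whose tangent at $\epsilon=0$ is $x\partial_x$. For $s(x) = (x,f(x),i)$, one finds
\[
\gamma(\epsilon)^{-1}\cdot s(t(\gamma(\epsilon))) = (x,\, e^{\epsilon i}f(e^\epsilon x),\, i),
\]
and after multiplying by $s(x)^{-1}$ and differentiating at $\epsilon=0$, the result is $i + xf'(x)/f(x) \in \mathfrak{m}_x \cong \mathbb{C}$. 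On the other hand, differentiating the meromorphic function $x^i f(x)$ logarithmically and evaluating on $x\partial_x$ gives $(i/x + f'(x)/f(x))\cdot x = i + xf'(x)/f(x)$, matching exactly.

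I expect this computation to be the only real obstacle, since everything else is a bookkeeping exercise once the identification of $\mathfrak{m}$ as trivial is in hand. I would conclude by combining the degree $0$ and degree $1$ sheaf identifications with this verification of the differential to produce the desired isomorphism of two-term complexes $\mathcal{C}^\bullet(\mathfrak{g},\mathbb{C}^*_{\mathbb{C}}(*\{0\})) \cong [\mathcal{O}_{\mathbb{C}}^*(*\{0\}) \xrightarrow{\mathrm{dlog}} \Omega^1_{\mathbb{C}}(\log D)]$.
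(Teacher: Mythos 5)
Your proposal is correct and follows essentially the same route as the paper: the paper's proof is exactly the computation of $\tilde{L}_{(\partial_x,x)}s$ for $s(x)=(x,f(x),n)$, yielding $(x,\,n+xf'(x)f(x)^{-1},\,0)=\mathrm{dlog}(x^nf)(x\partial_x)$ under the sheaf identification of the preceding proposition. Your additional bookkeeping on the degree $0$ and degree $1$ terms is implicit in the paper but harmless to make explicit.
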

\begin{proof}We will compute $\mathrm{d}_\text{CE}\,\mathrm{log}:$ consider the meromorphic function $x^nf(x)\,,$ $x\in U\,,$ where $f$ is holomorphic and nonvanishing. We identify it with the local section of $\mathbb{C}^*_{\mathbb{C}}(*\{0\})$ given by $s(x)=(x,f(x),n)\,.$ Now the anchor map is given by \begin{align*}
    \alpha:\textit{Lie}(\mathbb{C}^*\ltimes\mathbb{C})\to T\mathbb{C}\,,\,\alpha(\partial_x,x)=x\partial_x\,.
    \end{align*}
Then we can compute that
\begin{align*}
&\tilde{L}_{(\partial_x,x)}s(x)=\frac{d}{d\varepsilon}\Big\vert_{\varepsilon=0}\,(x,e^{n\varepsilon}f(e^{\varepsilon}x)f(x)^{-1},0)
=(x,n+xf'(x)f(x)^{-1},0)
\\&=(x,\mathrm{dlog} (x^nf)\,(x\partial_x),0)=(x,\mathrm{dlog} (x^nf)\,\alpha(\partial_x,x),0)\,,
\end{align*}
so $f$ differentiates to $\mathrm{dlog} (x^nf)\,,$ so that $\mathrm{d}_\text{CE}\,\mathrm{log}$ corresponds to $\mathrm{dlog}$ under the identification of sheaves used in Proposition~\ref{sheaf identification}. This completes the proof.
\end{proof}
\subsection{The Module \texorpdfstring{$\mathbb{C}^*_X(*D)$ and Pair$(X,D)$}{the module}}
Here we will generalize the construction in the previous section to arbitrary complex manifolds and smooth divisors.
\begin{proposition}
Let $X$ be a complex manifold of complex dimension $n\,,$ and let $D$ be a smooth divisor. Then there is a canonical family of abelian groups $\mathbb{C}^*_X(*D)\to X$ such that $\mathcal{O}(\mathbb{C}^*_X(*D))$ (where sections here are taken to be holomorphic) is isomorphic to $\mathcal{O}^*(*D)\,,$ the sheaf of nonvanishing meromorphic  functions with poles or zeros only on $D\,.$
\end{proposition}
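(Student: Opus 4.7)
The plan is to globalize the construction of $\mathbb{C}^*_{\mathbb{C}}(*\{0\})$ from the preceding section by using local defining functions for $D$ and gluing them. Since $D$ is smooth, one can cover $X$ by open sets $\{U_\alpha\}_\alpha$ on which there exist holomorphic functions $f_\alpha \in \mathcal{O}(U_\alpha)$ with $D \cap U_\alpha = \{f_\alpha = 0\}$ cut out transversally. On each $U_\alpha$, I would define
\[
M_\alpha := U_\alpha \times \mathbb{C}^* \times \mathbb{Z} \big/ \sim_\alpha\,, \qquad (x,y,i) \sim_\alpha (x,\, f_\alpha(x)^{-j}y,\, i+j) \text{ for } x \notin D\,,
\]
exactly as in the case $X=\mathbb{C}\,,\,D=\{0\}\,.$ The same argument as before shows $M_\alpha$ is a complex manifold, the projection to $U_\alpha$ is a surjective holomorphic submersion, and the fiberwise product $(x,y,i)\cdot(x,y',j) = (x,yy',i+j)$ gives $M_\alpha$ the structure of a family of abelian groups with generic fiber $\mathbb{C}^*$ and fiber $\mathbb{C}^*\times\mathbb{Z}$ over points of $D\,.$

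Next I would glue these. On overlaps $U_{\alpha\beta} := U_\alpha\cap U_\beta\,,$ the two defining functions are related by $f_\beta = u_{\alpha\beta} f_\alpha$ for a unique unit $u_{\alpha\beta}\in \mathcal{O}^*(U_{\alpha\beta})\,,$ and I would define the transition map
\[
\varphi_{\alpha\beta}: M_\alpha\vert_{U_{\alpha\beta}} \to M_\beta\vert_{U_{\alpha\beta}}\,, \qquad (x,y,i)\mapsto (x,\, u_{\alpha\beta}(x)^{-i} y,\, i)\,.
\]
A short computation verifies that $\varphi_{\alpha\beta}$ descends to the quotient (sending $\sim_\alpha$ to $\sim_\beta$), is holomorphic, and is fiberwise a group homomorphism. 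On triple overlaps one has $u_{\alpha\gamma} = u_{\beta\gamma}\,u_{\alpha\beta}\,,$ from which the cocycle relation $\varphi_{\beta\gamma}\circ\varphi_{\alpha\beta} = \varphi_{\alpha\gamma}$ follows by a one-line calculation. This produces the desired global family of abelian groups $\mathbb{C}^*_X(*D)\to X\,.$

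To identify its sheaf of holomorphic sections with $\mathcal{O}^*(*D)\,,$ I would, as in Proposition~\ref{sheaf identification}, send a local section $x\mapsto (x,g(x),i)$ in the $U_\alpha$-trivialization (with $g\in \mathcal{O}^*(U_\alpha)$) to the meromorphic function $f_\alpha^i g\in \mathcal{O}^*(U_\alpha)(*D)\,.$ Applying $\varphi_{\alpha\beta}$ sends $(x,g(x),i)$ to $(x, u_{\alpha\beta}^{-i}g, i)\,,$ which corresponds to $f_\beta^i u_{\alpha\beta}^{-i} g = (u_{\alpha\beta}f_\alpha)^i u_{\alpha\beta}^{-i}g = f_\alpha^i g\,,$ so the assignment is compatible with transition maps and therefore glues to a globally defined isomorphism of sheaves $\mathcal{O}(\mathbb{C}^*_X(*D))\xrightarrow{\sim}\mathcal{O}^*(*D)\,.$ Locally, this map is an isomorphism precisely by the analogous statement in the one-variable case, since smoothness of $D$ implies that in adapted coordinates $(z_1,\ldots,z_n)$ with $f_\alpha=z_1$ every meromorphic function with poles/zeros only on $D$ factors uniquely as $z_1^i g$ for some $i\in\mathbb{Z}$ and $g\in \mathcal{O}^*$.

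The only real obstacle is checking well-definedness of $\varphi_{\alpha\beta}$ on the quotient and the cocycle condition; both are routine but they are where the smoothness hypothesis on $D$ is used, since it ensures that local defining functions are unique up to multiplication by a unit. The phrase ``canonical'' in the statement refers precisely to the independence of the glued object from the chosen cover and trivializing functions, which follows because the cocycle $\{u_{\alpha\beta}\}$ is determined by $D$ up to a coboundary, and refinements yield compatible identifications.
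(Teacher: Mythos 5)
Your proposal is correct and follows essentially the same route as the paper: cover $X$ by charts with local defining functions for $D$ (the paper uses polydiscs with $f_\alpha = x_{i,1}$), build the local models $U_\alpha\times\mathbb{C}^*\times\mathbb{Z}/\sim$, and glue via $(x,y,i)\mapsto(x,u_{\alpha\beta}^{-i}y,i)$ where $u_{\alpha\beta}$ is the unit comparing the two defining functions. Your write-up is in fact somewhat more explicit than the paper's about the cocycle condition and the compatibility of the sheaf identification with the transition maps, but there is no substantive difference in approach.
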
 
\begin{proof}We can construct a family of abelian groups as follows: choose an open cover $\{\mathbb{D}^n_i\}_i$ of $X$ by polydiscs $($ie.  $\mathbb{D}_i=\{z\in\mathbb{C}:|z|<1\})\,,$ with coordinates $(x_{i,1},\mathbf{x}_i)=(x_{i,1}\,,x_{i,2}\,,\ldots\,,x_{i,n})$ on $\mathbb{D}^n_i\,,$ in such a way that 
\begin{align*}D\cap \mathbb{D}^n_i=\{x_{i,1}=0\}\,.
\end{align*}
Then on $\mathbb{D}^n_i$ form the family of abelian groups $\mathbb{D}^n_i{\mathop{\times}}\mathbb{C}^*{\mathop{\times}}\mathbb{Z}/\sim\,,$ where \begin{align*}
(x_{i,1},\mathbf{x}_i,y,k)\sim (x_{i,1},\mathbf{x}_i,x_{i,1}^{-l}y,k+l) \textit{  for } x_{i,1}\ne 0\,,
\end{align*}
where the surjective submersion is given by the projection onto $(x_{i,1},\mathbf{x}_i)\,,$ and where the product is given by 
\begin{align*}
    (x_{i,1},\mathbf{x}_i,y,k)\cdot (x_{i,1},\mathbf{x}_i,y',l)=(\mathbf{x}_i,yy',k+l)\,.
\end{align*}
We can glue these families of abelian groups together in the following way: on $\mathbb{D}^n_i\cap\mathbb{D}^n_j$ we have a nonvanishing holomorphic function $g_{ij}$ satisfying $x_{j,1}=g_{ij}x_{i,1}\,.$ Now let 
\begin{align*}
(x_{i,1},\mathbf{x}_i,y,k)\sim(x_{j,1},\mathbf{x}_j,g_{ij}^{-k}y,k)\,.
\end{align*}
This gluing preserves the fiberwise group structure, hence we obtain a family of abelian groups, denoted 
\begin{align*}
    \mathbb{C}^*_X(*D)\overset{\pi}{\to}X\,.
\end{align*}
As in the previous section, where this was done for $(X,D)=(\mathbb{C},\{0\})\,,$ the sheaf $\mathcal{O}(\mathbb{C}^*_X(*D))$ is isomorphic to $\mathcal{O}^*(*D)\,.$
\end{proof}
\begin{proposition}[see~\cite{pym}] There is a terminal integration of $T_X(-\log{} D)$ (denoted by $\text{Pair}(X,D))\,,$ the Lie algebroid whose sheaf of
sections is isomorphic to the sheaf of sections of $T_X$ which are tangent to $D\,.$ 
\end{proposition}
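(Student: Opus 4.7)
The plan is to construct $\text{Pair}(X,D)$ by gluing local models and then to verify both that it integrates $T_X(-\log D)$ and that it is terminal. Locally, in a polydisc $\mathbb{D}^n_i$ with $D\cap\mathbb{D}^n_i=\{x_{i,1}=0\}$, the Lie algebroid $T_X(-\log D)$ is generated by $x_{i,1}\partial_{x_{i,1}},\partial_{x_{i,2}},\ldots,\partial_{x_{i,n}}$, making it the action algebroid for $\mathbb{C}\times\mathbb{C}^{n-1}$ acting via $(a,\mathbf{v})\cdot(x_{i,1},\mathbf{x}_i)=(e^a x_{i,1},\mathbf{x}_i+\mathbf{v})$. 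Replacing the $\mathbb{C}$-factor by its exponential quotient $\mathbb{C}^*$ gives the local candidate, whose arrows with source $p=(x_{i,1},\mathbf{x}_i)$ and target $q=(x_{i,1}',\mathbf{x}_i')$ are triples $(p,q,a)$ with $a\in\mathbb{C}^*$ constrained by $x_{i,1}'=a\,x_{i,1}$; away from $D$, the parameter $a=x_{i,1}'/x_{i,1}$ is redundant and arrows reduce to pairs, matching $\text{Pair}(X\setminus D)$ on the dense open $(X\setminus D)\times(X\setminus D)$.

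Next I would glue these local pieces along overlaps $\mathbb{D}^n_i\cap\mathbb{D}^n_j$ by the assignment $(p,q,a)_i \mapsto (p,q,g_{ij}(q)g_{ij}(p)^{-1}a)_j$, where $g_{ij}$ is the nonvanishing holomorphic cocycle with $x_{j,1}=g_{ij}x_{i,1}$ used earlier to construct $\mathbb{C}^*_X(*D)$. The cocycle condition for $\{g_{ij}\}$ ensures consistency on triple overlaps, and the groupoid composition $(p,q,a)(q,r,b)=(p,r,ba)$ is preserved by these transitions. Hausdorffness is checked exactly as for $\mathbb{C}^*_X(*D)$: distinct arrows above $D$ with the same source and target are separated by their $\mathbb{C}^*$-coordinate. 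Identification with the right Lie algebroid then follows from the source-fiber tangent computation at the identity bisection, which at $p\in D$ returns $\mathrm{span}(x_{i,1}\partial_{x_{i,1}},\partial_{x_{i,2}},\ldots,\partial_{x_{i,n}})$ and at $p\notin D$ returns all of $T_X|_p$.

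For terminality, I would observe that the source-simply-connected integration is given by the same construction with $\mathbb{C}^*$ replaced by its universal cover $\mathbb{C}$, and that its isotropy monodromy subgroup along $D$ is precisely $2\pi i\mathbb{Z}=\ker(\exp)$. Consequently $\text{Pair}(X,D)$ is realized as the smallest Hausdorff quotient of the source-simply-connected integration, and any other Hausdorff integration admits a unique base-preserving morphism to it. The main obstacle I anticipate is bookkeeping: verifying that the local groupoid multiplication is preserved under the transitions reduces to checking the identity $g_{ij}(r)g_{ij}(p)^{-1}(ba)=[g_{ij}(r)g_{ij}(q)^{-1}b]\cdot[g_{ij}(q)g_{ij}(p)^{-1}a]$, which is immediate but must be done carefully; once that is in hand, every remaining step parallels the construction of $\mathbb{C}^*_X(*D)$ given immediately before the proposition, with the $\mathbb{C}^*$-parameter now interpreted as the logarithmic derivative of an arrow in the normal direction to $D$.
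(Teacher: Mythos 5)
Your construction is essentially the one the paper gives: the same local model (pair groupoid in the directions along $D\,,$ a $\mathbb{C}^*$-scaling parameter in the normal direction, so that arrows are determined by their endpoints away from $D$ and acquire $\mathbb{C}^*$-isotropy along $D$), glued over chart overlaps by exactly the paper's cocycle $a\mapsto g(\text{target})\,g(\text{source})^{-1}a\,,$ with the Hausdorffness check modelled on that for $\mathbb{C}^*_X(*D)\,.$ The paper stops there and defers the identification of this groupoid as the \emph{terminal} integration to the cited reference, so on the construction itself you and the paper agree.

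One caveat on the part where you go beyond the paper: your claim that the source-simply-connected integration is obtained by simply replacing $\mathbb{C}^*$ with its universal cover $\mathbb{C}$ is not correct in general. The source fiber of the $\mathbb{C}$-version over a point $p\notin D$ is a $\mathbb{Z}$-cover of $X\setminus D$ and over $p\in D$ it fibers over $D\,,$ so these fibers are simply connected only when $X\setminus D$ (respectively $D$) has the appropriate connectivity; in general $\Pi_1(X,D)$ also records fundamental-group data of the leaves and is strictly larger. This does not affect your construction of $\text{Pair}(X,D)$ itself, but the terminality argument needs the correct universal object (or a direct argument that every source-connected integration admits a unique map to $\text{Pair}(X,D)$), which is what the reference \cite{pym} supplies.
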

\begin{proof}The terminal integration, $\text{Pair}(X,D)\,,$ can be described locally as follows (here the notation is as in the previous proposition):
the set of morphisms $\mathbb{D}^n_i\to\mathbb{D}^n_j$ is given by all
\begin{align*}
  &(a,\mathbf{x}_j,x_{i,1},\mathbf{x}_i)\in\mathbb{C}^*{\mathop{\times}}\mathbb{D}_j^{n-1}{\mathop{\times}}\mathbb{D}_i{\mathop{\times}}\mathbb{D}_i^{n-1}
  \\&\text{such that } (ax_{i,1},\mathbf{x}_j)\in \mathbb{D}^n_j\,.
\end{align*}
The source, target and multiplication maps are:
\begin{align*}
    &s(a,\mathbf{x}_j,x_{i,1}\,,\mathbf{x}_i)=(x_{i,1}\,,\mathbf{x}_i)\in \mathbb{D}_i^{n}\,,
    \\&t(a,\mathbf{x}_j,x_{i,1}\,,\mathbf{x}_i)=(ax_{i,1},\mathbf{x}_j)\in\mathbb{D}_j^{n}\,,
    \\&(a',\mathbf{x}_k,ax_{i,1},\mathbf{x}_j)\cdot(a,\mathbf{x}_j,x_{i,1}\,,\mathbf{x}_i)
    \\&=(a'a,\mathbf{x}_k,x_{i,1}\,,\mathbf{x}_i)\in\mathbb{C}^*{\mathop{\times}}\mathbb{D}_k^{n-1}{\mathop{\times}}\mathbb{D}_i{\mathop{\times}}\mathbb{D}_i^{n-1}\,.
\end{align*}
The gluing maps on the groupoid are induced by the gluing maps on $X\,,$ that is, 
\begin{align*}
   &(a,\mathbf{x}_j,x_{i,1}\,,\mathbf{x}_i)\sim \Big(a\frac{g_{jl}(ax_{i,1})}{g_{ik}(x_{i,1})},\mathbf{x}_l,x_{k,1}\,,\mathbf{x}_k\Big) 
   \end{align*}
  if
\begin{align*}
&(x_{i,1},\mathbf{x}_i)\in \mathbb{D}_i^{n}\sim (x_{k,1},\mathbf{x}_k)\in \mathbb{D}_k^{n}\,, 
\\&(ax_{i,1},\mathbf{x}_j)\in \mathbb{D}_j^{n}\sim (x_{l,1},\mathbf{x}_l)\in \mathbb{D}_l^{n}\,.
\end{align*}
\end{proof}
\begin{proposition}The morphism 
\begin{align}\label{TX-module}
\mathrm{dlog}:\mathcal{O}^*(*D)\to \Omega_X^1(\log{} D)
\end{align}
endows $\mathbb{C}^*_X(*D)$ with the structure of a $T_X(-\log D)$-module, and this structure integrates to give $\mathbb{C}^*_X(*D)$ the structure of a $\mathrm{Pair}(X,D)$-module.
\end{proposition}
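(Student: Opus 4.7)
First I would identify the Lie algebroid $\mathfrak{m}$ of $\mathbb{C}^*_X(*D)$. From the local construction, the identity component of each fiber is the $\mathbb{C}^*$ subgroup (coming from the slice $k=0$), so $\mathfrak{m}$ is the trivial line bundle $\mathbb{C}_X$ with exponential map defined locally by $(x_{i,1},\mathbf{x}_i,\sigma)\mapsto (x_{i,1},\mathbf{x}_i,e^\sigma,0)$. This glues across charts because the cocycle $y\mapsto g_{ij}^{-k}y$ acts trivially on the $k=0$ slice. I would equip $\mathfrak{m}$ with the tautological $T_X(-\log D)$-representation $L_V(\sigma)=V(\sigma)$, and, using the identification $\mathcal{O}(\mathbb{C}^*_X(*D))\cong \mathcal{O}^*(*D)$ from Proposition~\ref{sheaf identification}, define $\tilde L_V(f):=\mathrm{dlog}(f)(V)$. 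This pairing lands in $\mathcal{O}_X=\mathcal{O}(\mathfrak{m})$ since $\Omega^1_X(\log D)$ and $T_X(-\log D)$ are sheaf-dual.

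Next I would verify the three $\mathfrak{g}$-module axioms. The $\mathcal{O}_X$-linearity $\tilde L_{fV}(s)=f\,\tilde L_V(s)$ is immediate from bilinearity of the pairing. The compatibility $\tilde L_V(\exp\sigma)=L_V(\sigma)$ follows from $\mathrm{dlog}(e^\sigma)=d\sigma$, which gives $\mathrm{dlog}(e^\sigma)(V)=V(\sigma)$. The bracket identity $\tilde L_{[V,W]}(s)=(L_V\tilde L_W-L_W\tilde L_V)(s)$ is precisely Cartan's formula applied to the closed logarithmic form $\mathrm{dlog}(s)$, namely $0=d\,\mathrm{dlog}(s)(V,W)=V\,\mathrm{dlog}(s)(W)-W\,\mathrm{dlog}(s)(V)-\mathrm{dlog}(s)([V,W])$.

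For integration, I would construct the $\mathrm{Pair}(X,D)$-action directly in charts: the morphism $(a,\mathbf{x}_j,x_{i,1},\mathbf{x}_i):(x_{i,1},\mathbf{x}_i)\to(ax_{i,1},\mathbf{x}_j)$ sends $(x_{i,1},\mathbf{x}_i,y,k)$ to $(ax_{i,1},\mathbf{x}_j,a^{-k}y,k)$. I would check this is well-defined modulo the local equivalence $(x_{i,1},\mathbf{x}_i,y,k)\sim(x_{i,1},\mathbf{x}_i,x_{i,1}^{-l}y,k+l)$ and compatible with the chart gluings on both the module ($y\mapsto g_{ij}^{-k}y$) and the groupoid ($a\mapsto a\,g_{jl}(ax_{i,1})/g_{ik}(x_{i,1})$); these match because the resulting scalar on the target fiber is $(a\,g_{jl}(ax_{i,1}))^{-k}y$ via either path. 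The fiberwise group-homomorphism property is clear since only the $y$-coordinate is twisted. To finish, I would differentiate the action along $a=e^{\epsilon\dot a}$ and verify that the infinitesimal action agrees with $\tilde L$ above, which reduces to the identity $\mathrm{dlog}(x_{i,1}^k g)(x_{i,1}\partial_{x_{i,1}})=k+x_{i,1}g'/g$ already observed in the $\mathbb{C}$-valued case.

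The main obstacle is the bookkeeping in this integration step, because Theorem~\ref{g-module} does \emph{not} apply directly: the exponential $\exp:\mathbb{C}_X\to \mathbb{C}^*_X(*D)$ fails to be a surjective submersion over $D$, where the fiber $\mathbb{C}^*\times\mathbb{Z}$ has non-trivial component group. The action must therefore be produced by hand, and the $a^{-k}$ twist in the formula is precisely what absorbs the extra $\mathbb{Z}$-monodromy when crossing between off-$D$ and $D$-points. Equivalently, this explains why such a natural integration exists despite the hypotheses of the general integration theorem failing: the ``component-group obstruction'' is geometrically present, but it is accounted for inside the action formula rather than obstructing it.
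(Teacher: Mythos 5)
Your proof is correct and follows essentially the same route as the paper, whose own proof simply writes down the chart-level action formula $(a,\mathbf{x}_j,x_{i,1},\mathbf{x}_i)\cdot(x_{i,1},\mathbf{x}_i,y,k)=(ax_{i,1},\mathbf{x}_j,a^{-k}y,k)$ and asserts that it is well defined and differentiates to the $\mathrm{dlog}$ structure; your verifications of the gluing compatibilities and of the three module axioms are exactly the omitted bookkeeping. Your further remark that Theorem~\ref{g-module} cannot be invoked here because $\exp:\mathbb{C}_X\to\mathbb{C}^*_X(*D)$ fails to be surjective over $D$ (the fibers $\mathbb{C}^*\times\mathbb{Z}$ having nontrivial component group) is correct and is a useful clarification of why the action must be exhibited by hand.
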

\begin{proof}Define an action of $\mathrm{Pair}(X,D)$ on $\mathbb{C}^*_X(*D)$ as follows (the notation is as in the previous two propositions):
\begin{align*}
    &(a,\mathbf{x}_j,x_{i,1}\,,\mathbf{x}_i)\cdot(x_{i,1},\mathbf{x}_i,y,k)
    \\&=(ax_{i,1},\mathbf{x}_j,a^{-k}y,k)\,.
\end{align*}
This is a well-defined action by fiberwise isomorphisms, and it indeed differentiates to the $T_X(-\log{}D)$-module defined by
\eqref{TX-module}.
\end{proof}
Essentially the same construction can be done in the case that $D$ is a simple normal crossing divisor. In a neighborhood $U$ of a simple crossing divisor which is biholomorphic to a polydisk, we can choose coordinates $\mathbf{x}=(x_1,\ldots,x_n)$ on $\mathbb{D}^n$ such that
the simple normal crossing divisor is given by $x_1\cdots x_k=0\,.$ Then
\begin{align*}
   & \mathbb{C}^*_X(*D)\vert_{\mathbb{D}^n}=\mathbb{D}^n{\mathop{\times}}\mathbb{C}^*{\mathop{\times}}\mathbb{Z}^k/\sim\,,
    \,(\mathbf{x},y,\mathbf{i})\sim (\mathbf{x},x_{j_1}^{-m_{j_1}}\cdots x_{j_l}^{-m_{j_l}}y,\mathbf{i}+\mathbf{m})
   \\&\text{away from } x_{j_1}\cdots x_{j_l}=0\,, \text{ where } j_1,\ldots,j_l\in \{1,\ldots,k\}
   \\&\text{and where } m_{j_1},\ldots, m_{j_l}\text{  are the nonzero components of }\mathbf{m}\in\mathbb{Z}^k\,.
\end{align*}
Alternatively, it can locally be described as \begin{align*}
    \mathbb{C}^*_X(*{\{x_1=0\}})\otimes_{\mathbb{C}^*}\cdots\otimes_{\mathbb{C}^*}\mathbb{C}^*_X(*{\{x_k=0\}})\,,
    \end{align*}
where the $\mathbb{C}^*$-action is the one induced by the action of $\mathbb{C}^*_U$ on $\mathbb{C}^*_X(*D)\,,$ which comes from the embedding $\mathbb{C}^*_U\xhookrightarrow{}\mathbb{C}^*_X(*D)\,.$
\newline\newline To summarize this section, we have proven the following:
\begin{theorem}
Let $X$ be a complex manifold and let $D$ be a simple normal crossing divisor. There is a family of abelian groups 
\begin{align*}
    \mathbb{C}^*_X(*D)\overset{\pi}{\to}X
\end{align*}
whose sheaf of holomorphic sections is isomorphic to $\mathcal{O}_X^*(*D)\,.$
Furthermore, there is a canonical action of $\mathrm{Pair}(X,D)$ on $M$ making it into a $\mathrm{Pair}(X,D)$-module, and this module structure integrates the canonical $T_X(-\log{}D)$-module structure on $M$
induced by the morphism
\begin{align*}
\mathrm{dlog}:\mathcal{O}_X^*(*D)\to \Omega_X^1(\log{} D)\,.
\end{align*}
\end{theorem}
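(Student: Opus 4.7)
The statement is a summary theorem that packages the previous constructions together with the extension to simple normal crossing divisors, so the plan is to assemble the earlier propositions and then handle the only genuinely new ingredient, namely the gluing in the simple normal crossing case. First, in the smooth divisor case, each clause was already established: the construction of $\mathbb{C}^*_X(*D)\overset{\pi}{\to} X$ as a family of abelian groups, the identification $\mathcal{O}(\mathbb{C}^*_X(*D))\cong\mathcal{O}_X^*(*D)$, the action of $\mathrm{Pair}(X,D)$, and the fact that this action differentiates to the $T_X(-\log D)$-module structure coming from $\mathrm{dlog}$. So for smooth $D$ nothing new is required — I would simply cite those propositions.

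For the simple normal crossing case, the plan is to mimic the smooth construction in local coordinates, using the local model
\[
\mathbb{C}^*_X(*D)\vert_{\mathbb{D}^n}=\mathbb{D}^n\times\mathbb{C}^*\times\mathbb{Z}^k/\sim
\]
described in the excerpt, where $D=\{x_1\cdots x_k=0\}$ on the chart $\mathbb{D}^n$. First I would verify that this local model is a complex manifold and a family of abelian groups, essentially by repeating the proof in the smooth case $k=1$ coordinate by coordinate, or equivalently by using the tensor product description
\[
\mathbb{C}^*_X(*\{x_1=0\})\otimes_{\mathbb{C}^*}\cdots\otimes_{\mathbb{C}^*}\mathbb{C}^*_X(*\{x_k=0\})
\]
and noting that $\mathbb{C}^*$-torsors in families of abelian groups admit a well-defined tensor product. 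The identification of the sheaf of holomorphic sections with $\mathcal{O}_X^*(*D)$ is then immediate from the smooth case applied to each branch: a holomorphic section $(x,f(x),\mathbf{i})$ corresponds to the meromorphic function $x_1^{i_1}\cdots x_k^{i_k}f(x)$.

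Next, I would globalize by specifying the transition data. On overlaps of polydisc charts $\mathbb{D}_i^n\cap\mathbb{D}_j^n$, the local defining functions $x_{i,l}$ and $x_{j,l}$ for the same component of $D$ differ by nonvanishing holomorphic functions $g_{ij,l}$, and the gluing of the local models is given by $(\mathbf{x}_i,y,\mathbf{k})\sim(\mathbf{x}_j,\prod_l g_{ij,l}^{-k_l}y,\mathbf{k})$ — exactly analogous to the smooth case but carried out branchwise. This gluing preserves the fiberwise group structure, so one obtains a global family of abelian groups, and the cocycle identity follows componentwise.

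Finally, for the module structure, I would describe the local action of $\mathrm{Pair}(X,D)$ on $\mathbb{C}^*_X(*D)$ in coordinates: a morphism $\mathbb{D}_i^n\to\mathbb{D}_j^n$ in $\mathrm{Pair}(X,D)$ is locally of the form $(a_1,\ldots,a_k,\mathbf{x}_j',x_{i,1},\ldots,x_{i,k},\mathbf{x}_i)$ with target $(a_lx_{i,l},\mathbf{x}_j')$ along each branch, and one acts on $(\mathbf{x}_i,y,\mathbf{k})$ by sending it to $(a_lx_{i,l},\mathbf{x}_j',\prod_l a_l^{-k_l}y,\mathbf{k})$. Compatibility with the groupoid structure is a direct check, and differentiating this action at the identity bisection recovers precisely $\mathrm{dlog}:\mathcal{O}_X^*(*D)\to\Omega_X^1(\log D)$ — the computation is the same as in the smooth divisor case done branch by branch. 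The main (minor) obstacle is verifying the compatibility of the action with the branchwise gluing cocycles, but since the action along each branch is the smooth-divisor action already verified, and the branches do not interact in the action formula, this reduces to observing that the local data patch in a way that respects the tensor product description. Since all the checks are local and reduce to the smooth case, the theorem follows.
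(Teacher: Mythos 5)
Your proposal is correct and follows essentially the same route as the paper: the paper likewise treats the theorem as a summary of the preceding propositions for smooth $D$, and handles the simple normal crossing case by the same local model $\mathbb{D}^n\times\mathbb{C}^*\times\mathbb{Z}^k/\sim$ (equivalently the branchwise tensor product description), leaving the branchwise gluing and action as routine extensions of the smooth case. The only difference is that you spell out the transition data and the $\mathrm{Pair}(X,D)$-action in the normal crossing case more explicitly than the paper does, which is a harmless (indeed welcome) elaboration.
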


\section{Integration of Cohomology Classes by Prequantization}
In this section we describe an alternative approach to integration of classes in Lie algebroid cohomology that may sometimes be used, and which doesn't directly involve the van Est map (more accurately, this method could be combined with the previous method). We call it integration by prequantization because in the case that the Lie algebroid is the tangent bundle and one is trying to integrate a $2$-form $\omega$, this method uses the line bundle whose first Chern class is the cohomology class of $\omega\,.$ 
We will first describe this method and then give some examples. 
\begin{comment}First we define Poincar\'{e} modules.
\begin{definition}
Let $G\rightrightarrows G^0$ be a Lie groupoid. A Poincar\'{e} $G$-module is a $G$-module $M$ such that the associated Chevalley-Eilenberg complex satisfies the Poincar\'{e} lemma, that is the Chevalley-Eilenberg complex is locally exact.
$\blacksquare$\end{definition}
Now let $M$ be a Poincar\'{e} $G$-module, then 
\begin{align*}
H^*(G^0,\ker[\mathcal{O}(M)\overset{\mathrm{d}_{CE}\mathrm{log}}{\to}\mathcal{C}^1(\mathfrak{g},M)]\,)\cong H^*(\mathfrak{g},M)\,.
\end{align*}
\end{comment}
\\\\Suppose we have a $G$-module $N$ and we are interested in integrating a class in the cohomology of the truncated complex, $\alpha\in H^*_0(\mathfrak{g},N)\,.$ 
Now suppose we have a $G$-module $M$ such that $\mathfrak{m}=\mathfrak{n}\,,$ and such that there is a map $N\to M$ of $G$-modules which differentiates to the identity map on $\mathfrak{n}\,.$ 
In this case
the morphism 
\[
\mathcal{O}(M)\xrightarrow{\text{d}_{\text{CE}}\text{log}}\mathcal{C}^1(\mathfrak{g},M)\cong \mathcal{C}^1(\mathfrak{g},N)
\]
induces a morphism
\begin{align*}
H^*(G^0,\mathcal{O}(M))\to H_0^*(\mathfrak{g},N)\,.    
\end{align*}
Then one can try lift $\alpha$ to a class $\tilde{\alpha}\in H^*(G^0,\mathcal{O}(M))\,.$ If a lift can be found, then one can attempt to integrate $\alpha$ to a class in $H^*_0(G,N)$ by showing that $\delta^*\tilde{\alpha}$ is in the image of the map $H^*_0(G,N)\to H^*_0(G,M)\,.$ If this succeeds then this class in $H^*_0(G,N)$ integrates $\alpha\,.$ We can summarize this method with the following proposition:
\begin{proposition} Let $G$ be a Lie groupoid, and let $N,M$ be $G$-modules with the same underlying Lie algebroids $\mathfrak{n}\,.$ Suppose further that there is a map of $G$-modules $f:N\to M$ which differentiates to the identity on $\mathfrak{n}$ (in particular this means that the Lie algebroids of $N$ and $M$ are the same as $G$-representations). The following diagram is commutative:
\[
    \begin{tikzcd}[row sep=3em, column sep = 3em]
 & H^*_0(G,M) \arrow[bend left=60,"VE_0"]{dd}
 \\ H^*_0(G,N)\arrow{ur}{f}\arrow{d}{VE_0}& H^*(G^0,\mathcal{O}(M))\arrow{u}{\delta^*}\arrow{d}{\mathrm{d}_{\mathrm{CE}}\mathrm{log}} 
 \\H^*_0(\mathfrak{g},N)\arrow{r}{\cong}& H^*_0(\mathfrak{g},M)
\end{tikzcd}
\]
\end{proposition}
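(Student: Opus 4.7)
The plan is to verify commutativity of the diagram by splitting it into two pieces: a triangle on the right that recovers the statement of Lemma~3.5 (the \textit{commute} lemma), and a naturality square on the left that expresses functoriality of $VE_0$ with respect to morphisms of $G$-modules. The outer curved arrow $VE_0 : H^*_0(G,M) \to H^*_0(\mathfrak{g},M)$ will automatically factor in once both pieces are established, so the bulk of the work lies in these two sub-diagrams.

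For the triangle, note that we need $VE_0 \circ \delta^* = (\text{iso}) \circ d_{\text{CE}}\log$ as maps $H^*(G^0, \mathcal{O}(M)) \to H^*_0(\mathfrak{g}, N) \cong H^*_0(\mathfrak{g}, M)$. This is precisely the content of the leftmost square of Lemma~3.5 applied to $M$, after identifying $H^*_0(\mathfrak{g},N)$ with $H^*_0(\mathfrak{g},M)$ via the hypothesis that $df = \mathrm{id}_{\mathfrak{n}}$. Since the truncated Chevalley--Eilenberg complex in Definition~2.4 depends (in positive degrees) only on the underlying $\mathfrak{g}$-representation on $\mathfrak{n} = \mathfrak{m}$, and since a $G$-module map differentiates to a $\mathfrak{g}$-module map (Theorem~2.8), the truncated complexes for $N$ and $M$ are literally equal in degrees $\ge 1$, so the bottom isomorphism is the identity.

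For the naturality square, the map $f : N \to M$ induces morphisms of the sheaves $\mathcal{O}(N)_{[G^0/G]} \to \mathcal{O}(M)_{[G^0/G]}$ on the stack, and hence compatible morphisms of the associated simplicial sheaves on $\mathbf{B}^\bullet G$ and (after pulling back along $\kappa$) on $\mathbf{E}^\bullet G$. Each step in the construction of $VE_0$ given in Theorem~3.3 --- the inverse image $\kappa^{-1}$, the resolution by the ``normalized'' foliated complex $\widehat{\mathcal{O}(\kappa^*M)}_\bullet \to \Omega^1_{\kappa,\bullet}(M) \to \Omega^2_{\kappa,\bullet}(M) \to \cdots$, the Godement construction, and the final restriction along $i : G^0 \hookrightarrow \mathbf{E}^\bullet G$ --- is manifestly functorial in the module. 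Applying this to $f$ and using $df = \mathrm{id}$ in the positive-degree portion yields commutativity of the square, with the bottom edge being the identification of Chevalley--Eilenberg complexes above.

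The main thing to check carefully is that the resolution $\widehat{\kappa^{-1}\mathcal{O}(M)}_\bullet \hookrightarrow \widehat{\mathcal{O}(\kappa^*M)}_\bullet \to \Omega^1_{\kappa,\bullet}(M) \to \cdots$ really is natural in $M$, i.e.\ that the foliated differentials commute with the induced maps from $f_*: \Omega^p_{\kappa,\bullet}(N) \to \Omega^p_{\kappa,\bullet}(M)$. In positive degrees this reduces to the fact that $f_*$ agrees with $df_* = \mathrm{id}$ on the Lie algebras of the fibers, so both sides of the square are computed by the same cocycles; in degree zero the square commutes tautologically because $f$ is a module homomorphism. Combining the triangle and the square, and noting that both paths from $H^*_0(G,N)$ to $H^*_0(\mathfrak{g},M)$ pass through $H^*_0(\mathfrak{g},N)$ via the bottom isomorphism, yields commutativity of the full diagram.
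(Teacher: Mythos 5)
Your proposal is correct, and it supplies exactly the justification the paper leaves implicit: the proposition is stated there without proof, as a summary of the preceding discussion. Your decomposition into (i) the right-hand triangle, which is literally the leftmost square of \Cref{commute} applied to $M$ (and does not actually require the identification with $N$ at all, since every node of that triangle involves only $M$), and (ii) the left-hand square, which is naturality of $VE_0$ in the module combined with the observation that $df=\mathrm{id}_{\mathfrak{n}}$ forces the induced map on truncated Chevalley--Eilenberg complexes to be the stated isomorphism, is the intended argument; your check that the fiberwise $\mathrm{dlog}$ commutes with $f_*$ (because $f$ is a fiberwise homomorphism differentiating to the identity) is the only point requiring care, and you handle it correctly.
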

\theoremstyle{definition}\begin{exmp}
Let $X$ be a manifold and let $\omega$ be a closed $2$-form which has integral periods. Then there is a class $g\in H^1(X,\mathcal{O}^*)$ which lifts $\omega\,,$ ie. a principal $\mathbb{C}^*$-bundle. We then have that $\delta^*g\in H^1_0(\textrm{Pair}(X),\mathbb{C}^*_X)$ integrates $\omega\,.$ 
\end{exmp}
\theoremstyle{definition}\begin{exmp}
Consider the trivial $(\mathbb{C}^*\ltimes\mathbb{C}\rightrightarrows \mathbb{C})$-module $\mathbb{C}^*_{\mathbb{C}}\,,$ and let $\mathfrak{g}$ be its Lie algebroid. Consider the class in $H^0_0(\mathfrak{g},\mathbb{C}^*_{\mathbb{C}})$ given by $\frac{dz}{z}\,.$ This class is not in the image of 
\[
\mathcal{O}^*_{\mathbb{C}}\xrightarrow{\mathrm{d}\mathrm{log}}\mathcal{C}^1(\mathfrak{g},\mathbb{C}^*_{\mathbb{C}})\,.
\]
However, $\mathbb{C}^*_{\mathbb{C}}\xhookrightarrow{} \mathbb{C}^*_{\mathbb{C}}(*\{0\})$ $($where $\mathbb{C}^*_{\mathbb{C}}(*\{0\})$ is as in the previous section$)\,,$ and they have the same Lie algebroids, and in addition the class $\frac{dz}{z}$ is in the image of 
\[
\mathcal{O}^*_{\mathbb{C}}(*D)\xrightarrow{\mathrm{d}_{CE}\mathrm{log}}\mathcal{C}^1(\mathfrak{g},\mathbb{C}^*_{\mathbb{C}})\,,
\]
namely $\mathrm{d}\mathrm{log}\,z=\frac{dz}{z}\,.$ We then have that $\delta^*z\,(a,z)=a\,,$ which is $\mathbb{C}^*$-valued. Hence the morphism $(a,z)\mapsto a$ integrates $\frac{dz}{z}\,.$
\end{exmp}
To get examples involving the integration of extensions, we have the following proposition:
\begin{proposition}
Let $X$ be a complex manifold with smooth divisor $D\,,$ and let $\Pi_1(X,D)\rightrightarrows X$ be the source simply connected integration of $T_X(-\log{D})\,.$ Then the subgroup of classes in $H^1_0(T_X(-\log{D}),\mathbb{C}_{X})$ which are integral on $X\backslash D$ embeds into $H^1_0(\Pi_1(X,D),\mathbb{C}^*_{X})\,.$
\end{proposition}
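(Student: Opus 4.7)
The plan is to construct the embedding as the composition $\alpha \mapsto VE_0^{-1}(\exp_*\alpha)$, where $\exp_*:H^1_0(T_X(-\log D),\mathbb{C}_X)\to H^1_0(T_X(-\log D),\mathbb{C}^*_X)$ is induced by the morphism of $T_X(-\log D)$-modules $\exp:\mathbb{C}_X\to\mathbb{C}^*_X$ (both modules have the same underlying Lie algebroid $\mathbb{C}_X$, with the identity differentiating to the identity), and $VE_0$ is the truncated van Est map for $\Pi_1(X,D)$. To make sense of $VE_0^{-1}$ we must land in the image of $VE_0$, and to get an embedding we must verify injectivity of the composition.

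Since $\Pi_1(X,D)$ is source simply connected, it is source $1$-connected, and the exact sequence $0\to\mathbb{Z}_X\to\mathbb{C}_X\xrightarrow{\exp}\mathbb{C}^*_X$ puts us in the hypothesis of Theorem \ref{van Est image} with $n=1$ and $Z=\mathbb{Z}_X$. Thus $VE_0: H^1_0(\Pi_1(X,D),\mathbb{C}^*_X)\to H^1_0(T_X(-\log D),\mathbb{C}^*_X)$ is injective, and its image is characterised by the period condition: a closed Lie algebroid $2$-form $\omega$ representing $\exp_*\alpha$ lies in the image if and only if $\int_{S^2_x}\omega\in\mathbb{Z}$ for every $x\in X$ and every $2$-sphere $S^2_x$ contained in the source fiber of $\Pi_1(X,D)$ over $x$. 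Our task therefore reduces to showing that integrality on $X\setminus D$ forces integrality on every such $S^2_x$.

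The main step is the transfer of integrality. Choose a representative $\omega\in\mathcal{C}^2(T_X(-\log D),\mathbb{C}_X)$ of $\alpha$ and its translation $t^*\omega$ to a foliated $2$-form on $\Pi_1(X,D)$ (see Definition \ref{translation}). For $x\in X\setminus D$, the restriction of $\Pi_1(X,D)$ to $X\setminus D$ agrees with $\Pi_1(X\setminus D)$, so $s^{-1}(x)\cap t^{-1}(X\setminus D)$ is an open subset of the universal cover of $X\setminus D$ based at $x$. The local model of $\Pi_1(X,D)$ near $D$ (described in the construction of $\operatorname{Pair}(X,D)$) shows that the log-preimage $t^{-1}(D)\cap s^{-1}(x)$ is a smooth real-codimension-$2$ submanifold of $s^{-1}(x)$, and that in a regular neighbourhood of any point of $D$ the source fibers of $\Pi_1(X,D)$ are contractible (since locally the groupoid looks like $\mathbb{C}^*\ltimes\mathbb{C}$). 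A standard transversality-plus-local-contractibility argument then lets any smooth $2$-sphere $\sigma:S^2\to s^{-1}(x)$ be homotoped (within $s^{-1}(x)$) to a $2$-sphere disjoint from $t^{-1}(D)$, which therefore lies in the universal cover of $X\setminus D$. Since $\omega$ is closed and $t^*\omega$ is leafwise closed, the integral $\int_{S^2}\sigma^* t^*\omega$ is a homotopy invariant; it equals the integral of $\omega$ (as an ordinary $2$-form on $X\setminus D$) over the projected sphere in $X\setminus D$, which is integral by hypothesis. The case $x\in D$ is handled by the same argument using the explicit local model of the log groupoid near $D$.

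For injectivity, suppose $\phi(\alpha)=0$; then $\exp_*\alpha=0$ in $H^1_0(T_X(-\log D),\mathbb{C}^*_X)$. The long exact sequence associated to $0\to\mathbb{Z}_X\to\mathbb{C}_X\xrightarrow{\exp}\mathbb{C}^*_X\to 0$ of $T_X(-\log D)$-modules forces $\alpha$ to come from $H^1_0(T_X(-\log D),\mathbb{Z}_X)$. But $\mathbb{Z}_X$ is discrete, so its Lie algebroid $\mathfrak{m}$ is zero; the truncated Chevalley--Eilenberg sheaves $\mathcal{C}^n(T_X(-\log D),\mathbb{Z}_X)=\mathcal{O}(\Lambda^n T_X(-\log D)^*\otimes\mathfrak{m})$ vanish for $n\geq 1$, so $H^1_0(T_X(-\log D),\mathbb{Z}_X)=0$, giving $\alpha=0$. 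The principal obstacle is the integrality-transfer step; specifically, establishing cleanly that any smooth $2$-sphere in a source fiber of $\Pi_1(X,D)$ is homotopic, inside that source fiber, to one lying in the preimage of $X\setminus D$. This requires both transversality of $2$-spheres to the codimension-$2$ log locus and a careful use of the contractibility of source fibers of the local log groupoid near $D$.
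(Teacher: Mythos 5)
Your overall strategy (use Theorem~\ref{van Est image} with $n=1$ to reduce everything to the period condition over $2$-spheres in source fibers, plus the observation that $\exp_*$ is essentially an isomorphism on truncated degree-one cohomology so injectivity is easy) is a legitimate route and genuinely different from the paper's. The paper instead invokes the result of~\cite{luk} that integrality on $X\setminus D$ is equivalent to the class being in the image of $H^1(X,\mathcal{O}^*_X(*D))\to H^1_0(T_X(-\log D),\mathbb{C}_X)$, i.e.\ to prequantizability by a meromorphic line bundle; it then applies $\delta^*$ to that line bundle class and uses the short exact sequence of $\Pi_1(X,D)$-modules $0\to\mathbb{C}^*_X\to\mathbb{C}^*_X(*D)\to\mathrm{\acute{e}t}(\iota_*\mathcal{O}(\mathbb{Z}_D))\to 0$ to lift the resulting class uniquely to $H^1_0(\Pi_1(X,D),\mathbb{C}^*_X)$. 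That route never has to analyze $2$-spheres in source fibers at all, which is precisely where your argument breaks.

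The gap is in your integrality-transfer step, and it is caused by an incorrect picture of the source fibers. The orbits of $T_X(-\log D)$ are $X\setminus D$ and (the components of) $D$, and source fibers of any integration lie over a single orbit. Consequently $t^{-1}(D)\cap s^{-1}(x)$ is never a codimension-two submanifold that a sphere can be pushed off: for $x\in X\setminus D$ it is empty (so $s^{-1}(x)$ is just the universal cover of $X\setminus D$ and no transversality is needed — this case is fine and easier than you make it), while for $x\in D$ the entire source fiber maps into $D$ under $t$, so no homotopy inside $s^{-1}(x)$ can move a sphere into $t^{-1}(X\setminus D)$. Your closing sentence that ``the case $x\in D$ is handled by the same argument'' therefore cannot be right. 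Over a point $x\in D$ the source fiber is (roughly) a contractible $\mathbb{C}$-bundle over the universal cover of $D$, so its $2$-spheres see periods of a class living over $D$, and relating those to integrality of $\omega$ on $X\setminus D$ requires a genuine argument (e.g.\ comparing $H^2(D,\mathbb{Z})$ with $H^2(X\setminus D,\mathbb{Z})$ via a tubular neighbourhood, or falling back on the meromorphic-line-bundle lift as the paper does). As written, the proposal does not establish the period condition for source fibers over $D$, so the image characterization of $VE_0$ has not been verified.
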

\begin{proof}
Let $\omega\in H^1_0(T_X(-\log{D}),\mathbb{C}_X)$ be a class which is prequantizable, which means that it is in the image of the map 
\[
H^1(X,\mathcal{O}^*_X(*D))\to H^1_0(T_X(-\log{D}),\mathbb{C}_X)\,.
\]
It is proved in~\cite{luk} that this is equivalent to $\omega$ being integral on $X\backslash D\,.$ 
\\\\There is a short exact sequence of $\Pi_1(X,D)$-modules
\begin{align*}
    0\to \mathbb{C}^*_X\overset{\iota}{\to} \mathbb{C}^*_X(*D)\overset{\pi}{\to} \mathrm{\acute{e}t}(\iota_*\mathcal{O}(\mathbb{Z}_D))\to 0
\end{align*}
$($where $\iota:D\to X$ is the inclusion and \'{e}t means the \'{e}tal\'{e} space, which may be non-Hausdorff, but this is fine $)\,.$ From this we get the long exact sequence 
\begin{align*}
&H^0_0(\Pi_1(X,D),\mathrm{\acute{e}t}(\iota_*\mathcal{O}(\mathbb{Z}_D)))\to H^1_0(\Pi_1(X,D),\mathbb{C}^*_X)\to H^1_0(\Pi_1(X,D),\mathbb{C}^*_X(*D))
\\&\to H^1_0(\Pi_1(X,D),\mathrm{\acute{e}t}(\iota_*\mathcal{O}(\mathbb{Z}_D)))\,.    
\end{align*}
Now $H^0_0(\Pi_1(X,D),\mathrm{\acute{e}t}(\iota_*\mathcal{O}(\mathbb{Z}_D)))=0$ since a morphism of groupoids must be $0$ on the identity bisection, so since the fibers of $\mathrm{\acute{e}t}(\iota_*\mathcal{O}(\mathbb{Z}_D))$ are discrete and the source fibers $\Pi_1(X,D)$ are connected, any such morphism must be identically $0\,.$ So we get the long exact sequence
\begin{align*}
  0  \to H^1_0(\Pi_1(X,D),\mathbb{C}^*_X)\to H^1_0(\Pi_1(X,D),\mathbb{C}^*_X(*D))\to H^1_0(\Pi_1(X,D),\mathrm{\acute{e}t}(\iota_*\mathcal{O}(\mathbb{Z}_D)))\,.
\end{align*}
If we let $\alpha\in H^1(X,\mathbb{C}_X^*(*D))\,,$ then $t^*\alpha-s^*\alpha\in H^1_0(\Pi_1(X,D),\mathbb{C}_X^*(*D))\,,$ and 
\begin{align*}
\pi(t^*\alpha-s^*\alpha)=t^*\pi(\alpha)-s^*\pi(\alpha)=0\,,
\end{align*}
where the latter equality follows from the fact that $\pi(\alpha)$ is a module for the full subgroupoid over $D\,,$ which follows from the following: there is a morphism from the full subgroupoid over $D$ to $\Pi_1(D)\,,$ and $\pi(\alpha)$ is a module for $\Pi_1(D)$ since $\pi(\alpha)$ is a local system. 
\\\\Hence there is a unique lift of $\alpha$ to $H^1_0(\Pi_1(X,D),\mathbb{C}^*_X)\,.$ Hence all of the prequantizable classes in $H^2(T_X(-\log{D}),\mathbb{C}_X)$ integrate to classes in $H^1_0(\Pi_1(X,D),\mathbb{C}^*_X)\,,$
\end{proof}
What this proposition means is that any closed logarithmic $2$-form on a complex manifold $X$ with smooth divisor $D\,,$ which has integral periods on $X\backslash D\,,$ defines a $\mathbb{C}^*$-groupoid extension of $\Pi_1(X,D)\,.$
\theoremstyle{definition}\begin{exmp}We can specialize to the case $X=\mathbb{P}^2$ and where $D$ is a smooth projective curve of degree $\ge 3$ and genus $g$ in $\mathbb{P}^2\,.$ Then as proved in~\cite{luk}, the prequantizable subgroup of $H^1_0(T_{\mathbb{P}^2}(-\log{D}),\mathbb{C}^*_X)$ is isomorphic to $\mathbb{Z}^{2g}\,.$ Hence $\mathbb{Z}^{2g}\xhookrightarrow{}H^1_0(\Pi_1(\mathbb{P}^2,D),\mathbb{C}^*_{\mathbb{P}^2})\,.$
\end{exmp}
\part{Van Est Theory on Geometric Stacks}
\chapter{The (2,1)-Category of Lie Groupoids and Stacks}
Here we will briefly describe the (2,1)-category of Lie groupoids and the (2,1)-category of geoemtric stacks. 
\section{(2,1)-Category of Lie groupoids} Let's start with the (2,1)-category of Lie groupoids:
\begin{itemize}
    \item The objects are Lie groupoids
    \item the morphisms are homomorphisms of Lie groupoids
    \item The 2-morphisms are smooth (holomorphic) natural transformations
    \item The weak equivalences are Morita maps
\end{itemize}
There are two notions of fiber products: one coming from the categroy of Lie groupoids, which we will call the strong fiber product, and the other coming from the (2,1)-category of Lie groupoids, which we will call the fiber product. They are defined as so:
\begin{definition}
Given two homomorphisms of Lie groupoids $f_1:H\to G\,,f_2:K\to G\,,$ we get a third groupoid by taking the fiber product at the level of objects and morphisms: 
\begin{equation}
H^{(1)}\times_{G^{(1)}}K^{(1)}\rightrightarrows H^0\times_{G^0}K^0\,.
\end{equation}
If the resulting groupoid is a Lie groupoid, we call it the strong fiber product (or strong pullback), denoted $H\times_{G!}K$ or $f_2^!H\,.$ This is in particular the case if the maps at the level of objects and arrows are transversal.
\end{definition}
Now given a morphism of Lie groupoids $f:H\to G$ and an object $g^0\in G^0\,,$ we call $f^{-1}(g^0)$ the kernel of $f$ over $g^0$ (assuming this kernel exists). Thinking of $\{g^0\}\rightrightarrows \{g^0\}$ as the trivial Lie groupoid, it comes with a natural map into $G\,,$ and the kernel of $f$ over $g^0$ is equivalently given by the strong fiber product $H\times_{G!}\{g^0\}\,.$ We make the following definition:
\begin{definition}
Given a map $f:H\to G$ of Lie groupoids and an object $g^0\in G^0\,,$ the kernel of $f$ over $g^0\,,$ if it exists, is given by $f^{-1}(g^0)\,,$ or equivalently it is given by $H\times_{G!}\{g^0\}\,.$ 
\end{definition}
Now the second definition of fiber product, which is the main one we will be using and is the one that has the right universal property in the (2,1)-category of Lie groupoids, is given by the following:
\begin{definition}
Given two homomorphisms of Lie groupoids $f_1:H\to G\,,f_2:K\to G\,,$ we get a third groupoid as so (see \cite{Moerdijk}):
\begin{itemize}
    \item The objects are triples $(h^0,g,k^0)\in H^0\times G^{(1)}\times K^0$ where $g$ is an arrow $f_1(h^0)\to f_2(k^0)\,.$
\item An arrow between the objects $(h^0,g,k^0)\to (h'^{ 0},g',k'^{ 0})$ is given by a pair $(h,k)\in H^{(1)}\times K^{(1)}$ such that $h\,,k$ are arrows from $h^0\to h'^0\,,k^0\to k'^0\,,$ respectively, such that $g'\,f_1(h)=f_2(k)\,g\,.$
\end{itemize}
If this groupoid is a Lie groupoid, it will be called the fiber product (or pullback), and denoted $H\times_G K$ or $f_1^*K\,.$ This will be a Lie groupoid as long as the space of objects is a manifold. This is in particular the case if $t\circ p_2:H^0\times_{G^0}G^{(1)}\to G^0$ is submersion, where $p_2:H^0\times_{G^0}G^{(1)}\to G^{(1)}$ is the projection onto the second factor.
\end{definition}
Now given a morphism $f:H\to G$ of Lie groupoids, we can ask what the fibers of the map are. Fibers exist only over objects in $G^0\,,$ and the fiber over an object $g^0\xhookrightarrow{} G\,.$
\begin{definition}
Let $f:H\to G$ be a map of Lie groupoids and let $g^0\xhookrightarrow{} G$ be an object in $G^0\,.$ We can consider the trivial Lie groupoid $\{g^0\}\rightrightarrows \{g^0\}\,,$ and this comes with a morphism into $G\,.$ Assuming the fiber product $H\times_G \{g^0\}$ exists, we call it the fiber of $f$ over $g^0\,.$
\end{definition}
\subsection{Computing Fiber Products}
Here we will collect some basic results about fibers and fiber products:
\begin{exmp}
If $f:H\to G$ is a homomorphism of Lie groups, then there is only one object, hence only one fiber, and it is given by $H\ltimes G\rightrightarrows G\,.$ If $H\xhookrightarrow{}G\,,$ then the fiber is Morita equivalent to $G/H\,.$ In particular, if $H\xhookrightarrow{}G$ is the maximal compact subgroup, then the fiber is contractible. 
\end{exmp}
\begin{exmp}
If $f:Y\to X$ is a map of smooth manifolds, thought of as groupoids, then the fibers are just the fibers as maps between manifolds (if it exists as a smooth manifold).
\end{exmp}
\begin{proposition}
If $f:H\to G$ is a Morita map, then the fibers are all pair groupoids, hence the fibers are all Morita equivalent to a point.
\end{proposition}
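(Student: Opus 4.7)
The plan is to unravel the definition of the fiber $H \times_G \{g^0\}$ using the explicit description of the (2,1)-pullback, and then to show that the two defining properties of a Morita map, essential surjectivity and full faithfulness, translate respectively into non-emptiness of the space of objects and uniqueness of arrows between any two objects, which is exactly the data of a pair groupoid.

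First I would spell out the fiber. Taking $K = \{g^0\} \rightrightarrows \{g^0\}$ with the canonical map into $G$, the objects of $H \times_G \{g^0\}$ are pairs $(h^0, g)$ with $h^0 \in H^0$ and $g \in G^{(1)}$ an arrow $f(h^0) \to g^0$. An arrow $(h^0,g) \to (h'^{\,0}, g')$ is a morphism $h \in H^{(1)}$ from $h^0$ to $h'^{\,0}$ such that $g' f(h) = g$ (the $K$-component being forced to be $\mathrm{id}_{g^0}$). The space of objects is
\begin{equation*}
H^0 \,\sideset{_{f}}{_{t}}{\mathop{\times}}\, G^{(1)}\vert_{s^{-1}(\text{anything}), t^{-1}(g^0)},
\end{equation*}
and this is a smooth manifold because essential surjectivity of the Morita map $f$ makes $t \circ p_2 : H^0 \times_{G^0} G^{(1)} \to G^0$ a surjective submersion, so the preimage of $\{g^0\}$ is a manifold; the same surjectivity ensures this manifold is non-empty.

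Next, I would use full faithfulness of $f$, that is the pullback condition on the square with corners $H^{(1)}, G^{(1)}, H^0 \times H^0, G^0 \times G^0$. Given two objects $(h^0, g)$ and $(h'^{\,0}, g')$ of the fiber, the element $g'^{-1} g \in G^{(1)}$ is an arrow $f(h^0) \to f(h'^{\,0})$; by the pullback property there is a \emph{unique} $h \in H^{(1)}$ with source $h^0$, target $h'^{\,0}$, and $f(h) = g'^{-1}g$. Thus there is exactly one morphism between any two objects of the fiber, which is precisely the defining property of the pair groupoid on the space of objects.

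Finally, pair groupoids on non-empty manifolds $M$ are Morita equivalent to the point, via the obvious map $\mathrm{Pair}(M) \to *$, which is essentially surjective (trivially) and fully faithful (both sides have a unique arrow between any two points over the image). Combining the two steps, every fiber of a Morita map is a pair groupoid, and all pair groupoids are Morita equivalent to $*$. There is essentially no obstacle here; the only subtlety worth highlighting is remembering that the fiber is built in the (2,1)-categorical sense rather than as a strong pullback, since the strong fiber product would typically fail to be a manifold and would yield only the identity bisection of the pair groupoid.
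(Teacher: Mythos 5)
Your proof is correct: the paper states this proposition without any proof, and your argument — identifying the objects of the $(2,1)$-fiber $H\times_G\{g^0\}$ with the (non-empty, by essential surjectivity) manifold $(t\circ p_2)^{-1}(g^0)\subset H^0\times_{G^0}G^{(1)}$, and then using the Cartesian square of full faithfulness to produce a unique arrow $h$ with $f(h)=g'^{-1}g$ between any two objects — is exactly the natural way to fill that gap. Nothing is missing; your closing remark that one must use the weak fiber product rather than the strong one is also the right caveat.
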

\begin{exmp}
If $f:G\to X$ is a map from a groupoid to a manifold, then the fiber (if it exists) over a point $x\in X$ is just the kernel $f^{-1}(x)$ 
 \end{exmp}
\begin{exmp}
If $f:X\to G$ is a map from a manifold to a groupoid, then the fiber over a point $g^0\in G^0$ is the manifold $X\sideset{_{f}}{_{s}}{\mathop{\times}} t^{-1}(g^0)\,.$ In particular, if $X=G^0\,,$ then the fibers are just the target fibers.
\end{exmp}
\begin{proposition}\label{morita of fibers}
Given morphisms of Lie groupoids $f_1:H\to G\,,f_2:K\to G\,,$ there is a canonical morphism of Lie groupoids $H\times_{G!}K\xhookrightarrow{}H\times_G K\,,$ assuming they exist. In particular, given a morphism $f:H\to G$ and an object $g^0\in G^0\,,$ there is a natural inclusion $f^{-1}(g^0)\xhookrightarrow{}H\times_G\{g^0\}\,.$ In \Cref{equiv1}, \Cref{equiv2}, we give conditions on which these inclusions are Morita equivalences.
\end{proposition}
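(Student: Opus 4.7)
The plan is to exhibit the canonical functor explicitly and then verify that it is a smooth embedding of groupoids. At the level of objects, an element of $H\times_{G!}K$ is a pair $(h^0,k^0)\in H^0\times K^0$ satisfying the strict equality $f_1(h^0)=f_2(k^0)\,.$ I would send such a pair to the triple
\begin{equation*}
\Phi(h^0,k^0):=(h^0,\mathrm{id}_{f_1(h^0)},k^0)\in H\times_G K\,,
\end{equation*}
which is a valid object of the weak pullback since $\mathrm{id}_{f_1(h^0)}$ is indeed an arrow $f_1(h^0)\to f_2(k^0)$ by hypothesis. At the level of arrows, a morphism in $H\times_{G!}K$ is a pair $(h,k)\in H^{(1)}\times K^{(1)}$ with $f_1(h)=f_2(k)\,;$ I would send it to the same pair $(h,k)\,,$ interpreted as a morphism $\Phi(h^0,k^0)\to \Phi(h'^0,k'^0)$ in the weak pullback. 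The compatibility condition $g'\,f_1(h)=f_2(k)\,g$ becomes $\mathrm{id}_{f_1(h'^0)}\,f_1(h)=f_2(k)\,\mathrm{id}_{f_1(h^0)}\,,$ which reduces exactly to $f_1(h)=f_2(k)\,,$ so the assignment is well-defined.

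Next I would verify functoriality: compatibility with source and target is immediate from the formulas, and compatibility with composition follows because composition in both groupoids is componentwise in the $H$ and $K$ factors (the middle $g$-component composes trivially as $\mathrm{id}\circ\mathrm{id}=\mathrm{id}$). Smoothness of $\Phi$ on objects and arrows follows from smoothness of the identity bisection $\mathrm{Id}:G^0\to G^{(1)}$ together with smoothness of $f_1\,,f_2\,.$ For the embedding property, injectivity on objects and on arrows is clear from the formulas, and the image is cut out by the closed condition $g=\mathrm{id}_{f_1(h^0)}$ on the middle $G^{(1)}$-factor; since the identity bisection is an embedding of $G^0$ into $G^{(1)}\,,$ $\Phi$ is a (smooth) embedding wherever both sides are assumed to be Lie groupoids.

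For the second assertion, I specialize $K=\{g^0\}\rightrightarrows\{g^0\}$ with $f_2$ the inclusion. Then $H\times_{G!}K=f^{-1}(g^0)$ by the definition of the kernel, and $\Phi$ restricts to the natural inclusion $f^{-1}(g^0)\xhookrightarrow{}H\times_G\{g^0\}$ sending $h^0\mapsto (h^0,\mathrm{id}_{g^0},g^0)$ on objects and $h\mapsto (h,\mathrm{id}_{g^0})$ on arrows.

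I do not anticipate a real obstacle here: the content is entirely formal, the hard work having already been encoded in the two definitions of fiber product. The only subtle point is ensuring that $\Phi$ is well-defined as a morphism in the (2,1)-category even when the strict pullback is badly behaved; but the hypothesis that both $H\times_{G!}K$ and $H\times_G K$ exist as Lie groupoids is exactly what makes this routine. The nontrivial question of \emph{when} $\Phi$ is a Morita equivalence is postponed to the later references \Cref{equiv1}, \Cref{equiv2}, and is not part of the present statement.
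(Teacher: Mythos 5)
Your construction is correct and is precisely the canonical morphism the paper has in mind: the paper states this proposition without proof, leaving implicit exactly the functor $(h^0,k^0)\mapsto(h^0,\mathrm{id}_{f_1(h^0)},k^0)$, $(h,k)\mapsto(h,k)$ that you write out, and your verification of the compatibility condition, functoriality, smoothness via the identity bisection, and the specialization to $K=\{g^0\}$ is all sound. No gaps.
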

\begin{proposition}
Suppose $F:H\to G$ is a Lie groupoid homomorphism and $g$ is an arrow $g^0\to g'^0\,.$ Then if the fiber over $g^0$ exists, then so does the fiber over $g'^0\,,$ and they are isomorphic.
\begin{proof}
At the level of objects, the isomorphism is given by $(h^0,g')\to (h^0,gg')\,.$ The morphisms are already naturally identified.
\end{proof}
\end{proposition}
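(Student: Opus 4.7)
The plan is to write down the equivalence explicitly using left-multiplication by $g$, and show it is both a functor and a diffeomorphism at the level of objects and arrows. Recall that, unpacking the definition of fiber product, an object of $H \times_G \{g^0\}$ is a pair $(h^0, \tilde g)$ with $\tilde g \in G^{(1)}$ satisfying $s(\tilde g) = F(h^0)$ and $t(\tilde g) = g^0$; equivalently, the object space is the (set-theoretic) pullback $H^0 \sideset{_{F}}{_{s}}{\mathop{\times}} t^{-1}(g^0)$. An arrow from $(h^0,\tilde g)$ to $(h'^0,\tilde g')$ is an $h \in H^{(1)}$ from $h^0$ to $h'^0$ with $\tilde g'\,F(h) = \tilde g$ (the $\{g^0\}$-component must be the identity at $g^0$).

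First I would define a functor $L_g\colon H \times_G \{g^0\} \to H \times_G \{g'^0\}$ by
\begin{equation*}
(h^0,\tilde g) \mapsto (h^0, g\,\tilde g), \qquad h \mapsto h,
\end{equation*}
and observe that the cocycle condition is preserved: if $\tilde g' F(h) = \tilde g$ then $(g\tilde g') F(h) = g\tilde g$, so $L_g(h)$ is a valid arrow in the target. The inverse is given by the analogous functor $L_{g^{-1}}$, and $L_{g^{-1}} \circ L_g$ and $L_g \circ L_{g^{-1}}$ are literally identity functors, not merely naturally isomorphic to them. So $L_g$ is an isomorphism of set-theoretic groupoids.

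Next I would upgrade this to the Lie groupoid level. The key observation is that left multiplication by $g$ is a diffeomorphism $L_g\colon t^{-1}(g^0) \to t^{-1}(g'^0)$ (with smooth inverse $L_{g^{-1}}$), because $G$ is a Lie groupoid. Consequently $L_g$ induces a diffeomorphism
\begin{equation*}
H^0 \sideset{_{F}}{_{s}}{\mathop{\times}} t^{-1}(g^0) \;\xrightarrow{\;\cong\;}\; H^0 \sideset{_{F}}{_{s}}{\mathop{\times}} t^{-1}(g'^0)
\end{equation*}
whenever either side exists as a smooth manifold; the same argument applied with an extra $H^{(1)}$ factor handles the arrow space. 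Hence if the fiber over $g^0$ exists (i.e.\ these pullbacks are manifolds so that $H \times_G \{g^0\}$ is a Lie groupoid), then so does the fiber over $g'^0$, and $L_g$ is a smooth isomorphism of Lie groupoids.

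There is essentially no obstacle here — the proposition is really the statement that left multiplication by an arrow of $G$ is a diffeomorphism on target fibers, transported to the fiber product. The only subtlety worth flagging is that the isomorphism $L_g$ depends on $g$, not just on its endpoints; different choices of arrow $g^0 \to g'^0$ produce functors differing by a natural isomorphism (conjugation by the arrow), so the isomorphism is canonical only up to $2$-isomorphism in the $(2,1)$-category.
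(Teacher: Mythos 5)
Your proof is correct and is essentially the paper's own argument: the isomorphism is left composition with $g$ on the $G^{(1)}$-component, $(h^0,\tilde g)\mapsto(h^0,g\tilde g)$, with arrows unchanged. The extra care you take with smoothness and the closing remark about dependence on the choice of $g$ are fine but go beyond what the paper records.
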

\begin{corollary}
Suppose $F:H\to G$ is a Lie groupoid homomorphism such that $G$ is a transitive groupoid. Then if one fiber of $F$ exists, they all exist and are all isomorphic.
\end{corollary}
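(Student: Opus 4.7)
The plan is to deduce this directly from the preceding proposition by invoking transitivity. Recall that transitivity of $G$ means that for any two objects $g^0, g'^0 \in G^0$, there exists at least one arrow $g : g^0 \to g'^0$ in $G^{(1)}$.

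First, suppose the fiber of $F$ over some object $g_0^0 \in G^0$ exists. I would pick an arbitrary object $g'^0 \in G^0$ and use transitivity of $G$ to produce an arrow $g : g_0^0 \to g'^0$. Then the preceding proposition applies verbatim: the existence of the fiber over $g_0^0$ implies the existence of the fiber over $g'^0$, together with a canonical isomorphism between them (the isomorphism at the level of objects being $(h^0, g') \mapsto (h^0, g g')$, with morphisms identified tautologically). Since $g'^0$ was arbitrary, every fiber exists.

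For the isomorphism statement, given any two objects $g_1^0, g_2^0 \in G^0$, transitivity yields an arrow $g : g_1^0 \to g_2^0$, and the preceding proposition furnishes an isomorphism $F^{-1}(g_1^0) \cong F^{-1}(g_2^0)$. This completes the argument; no genuine obstacle arises, as the corollary is essentially a direct repackaging of the previous proposition under the transitivity hypothesis. The only mild subtlety worth mentioning is that different choices of connecting arrow $g$ produce different isomorphisms between the fibers, but the statement only asserts the existence of an isomorphism, so this ambiguity is immaterial here.
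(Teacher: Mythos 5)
Your argument is correct and is exactly the intended one: the paper states the corollary without proof precisely because it is an immediate consequence of the preceding proposition, obtained by using transitivity to supply a connecting arrow between any two objects of $G^0$ and then invoking the isomorphism $(h^0,g')\mapsto (h^0,gg')$. Your closing remark that the isomorphism depends on the choice of arrow, but that this is immaterial for the statement, is accurate.
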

\begin{exmp}
If $f:H\to G$ is a homomorphism, then, $G^0\times_{G} H=H\ltimes P\rightrightarrows P\,,$ where $P$ is the bibundle associated to the morphism $f\,.$ We can also consider the map $f\vert_{H^0}:H^0\to G^0\xhookrightarrow{}G\,,$ where we consider $H^0\rightrightarrows H^0\,, G^0\rightrightarrows G^0$ to be the trivial Lie groupoids, and the fiber product $G^0\times_{G}H^0=P\,.$
\end{exmp}
\begin{exmp}
Suppose we have generalized morphisms $P_1:G\to H\,,\,P_2:H\to K\,.$ We then have two action groupoids given by $P_1\rtimes H\,,\,H\ltimes P_2$ with correponding morphisms into $H\,.$ We can form the fiber product 
\begin{equation}
 H\ltimes P_2\times_H P_1\rtimes H\,,
\end{equation}
and this groupoid is Morita equivalent to $(P_2\times_{H^0} P_1)/H\,,$ hence can be identified with the composition $P_2\circ P_1\,.$
\end{exmp}

\section{(2,1)-Category of Stacks}
Now, to get the (2,1)-category of geometric stacks we localize the (2,1)-category of Lie groupoids at the weak equivalences. We can describe the category as follows:
\begin{itemize}
  \item The objects are Lie groupoids
    \item the morphisms are anafunctors
    \item The 2-morphisms are ananatural transformations
\end{itemize}
In the above definition, an anafunctor $H\to G$ is a generalized morphism given by a roof (see \cite{Li}):
\begin{equation}
    \begin{tikzcd}
  & K \arrow[ld, "\cong"'] \arrow[rd] &   \\
H &                                   & G
\end{tikzcd}
\end{equation}
Here the left leg is a Morita equivalence and the map $K^0\to H^0$ is a surjective submersion (in fact, we can even take the map $K\to H$ to be a fibration).
We can compose anafunctors via the strong fiber product:
\begin{equation}
    \begin{tikzcd}
  &                                   & K\times_{H!} K' \arrow[ld, "\cong"'] \arrow[rd] &                                    &   \\
  & K \arrow[ld, "\cong"'] \arrow[rd] &                                              & K' \arrow[ld, "\cong"'] \arrow[rd] &   \\
I &                                   & H                                            &                                    & G
\end{tikzcd}
\end{equation}
\vspace{3mm}\\ An ananatural  transformation between two anafunctors $H\xleftarrow[]{\cong}K\to G\Rightarrow H\xleftarrow[]{\cong}K'\to G$ is given by a natural transformation between the composite functors $K\times _G K'\to K\to G$ and $K\times_G K'\to K'\,.$ 
One can also compose ananatural transformations, but we won't define it here (see \cite{Li} for more).
 \chapter{Double Lie Groupoids and LA-Groupoids}
 In this chapter we will present the information about double Lie groupoids and LA-groupoids which is relevant to the theorem we wish to prove. Essentially, a double Lie groupoid is a Lie groupoid internal to the category of Lie groupoids, ie. the space of arrows and the space of objects are both Lie groupoids. An LA-groupoid is essentially a Lie algebroid internal to the category of Lie groupoids, ie. the total space and the base space of the Lie algebroid are Lie groupoids. There is a differentiation functor from double Lie groupoids to LA-groupoids.
 \begin{definition}(see page 5 of~\cite{mehtatang} for complete details)
 A double groupoid is a groupoid internal to the category of groupoids. We will denote it as so 
 \begin{equation}\label{double groupoid}
     \begin{tikzcd}
G^{01} \arrow[r, shift left] \arrow[r, shift right] \arrow[d, shift left] \arrow[d, shift right] & G^{11} \arrow[d, shift left] \arrow[d, shift right] \\
G^{00} \arrow[r, shift left] \arrow[r, shift right]                                              & G^{10}                                             
\end{tikzcd}
 \end{equation}
 We will denote the source and target maps from $G^{ij}\to G^{kl}$ by $s_{ij,kl},\,t_{ij,kl}\,,$ respectively.
 \end{definition}
 \begin{definition}
 A double Lie groupoid is a double groupoid such that all rows and columns are Lie groupoids, and such that the double source map 
 \begin{equation}
     (s_{01,00},\,s_{01,11}):G^{01}\to G^{00}\sideset{_{s_{00,10}}}{_{s_{11,10}}}{\mathop{\times}} G^{11}
     \end{equation}
 is a surjective submersion.
 \end{definition}
 \vspace{3mm}Now we can describe the infinitesimal analogue of a double Lie groupoid, in the vertical direction.
 \begin{definition}
 An LA-groupoid (short for Lie algebroid-groupoid), denoted as \ref{LA-groupoid def} is a Lie algebroid internal to the category of Lie groupoids. That is, the top and bottom rows are Lie groupoids, the left and right columns are Lie algebroids, all structure maps are compatible, and such that the map (to be defined below)
 \begin{equation}
     (p_1\,,s_A):A^1\to M^1\sideset{_{s_M}}{_{p_0}}{\mathop{\times}}M^0
 \end{equation}
 is a surjective submersion. Here, $p_1\,,p_2$ are the projection maps  $A^1\to M^1\,,A^2\to M^2\,,$ respectively, and $s_A\,,s_M$ are the source maps $A^1\to A^0\,,M^1\to M^0\,,$ respectively. 
 \begin{equation}\label{LA-groupoid def}
     \begin{tikzcd}
A^1 \arrow[r, shift left] \arrow[r, shift right] \arrow[d] & A^0 \arrow[d] \\
M^1 \arrow[r, shift left] \arrow[r, shift right]           & M^0          
\end{tikzcd}
 \end{equation}
 \end{definition}
 \section{Category of Double Lie groupoids and LA-Groupoids}\label{defLAM}
 Here we define morphisms of double Lie groupoids, LA-groupoids, and Morita equivalences (compare with~\cite{del hoyo}).
 \begin{definition}\label{mor double}
 A morphism $f$ between double Lie groupoids consists of four functions, \begin{equation}
 f^{00}\,,f^{10}\,,f^{01}\,,f^{11}\,,
 \end{equation}
 where $f^{ij}$ maps the $ij$ corner to the $ij$ corner, for which the corresponding maps of Lie groupoids are all morphisms.
 \end{definition}
 \begin{definition}\label{mordo}
 A Morita map of double Lie groupoids is a morphism of double Lie groupoids for which the morphism between the top rows (or left columns) is a Morita equivalence.
 \end{definition}
 \begin{definition}
 A morphism of LA-groupoids consists of four functions, $f^{00}\,,f^{10}\,,f^{01}\,,f^{11}\,,$ where $f^{ij}$ maps the $ij$ corner to the $ij$ corner, for which the corresponding maps of Lie groupoids and Lie algebroids are all morphisms. 
 \end{definition}
 \begin{definition}
  A Morita map of LA-groupoids is a morphism of LA-groupoids for which the morphism between the top rows is a Morita equivalence.
 \end{definition}
 \begin{remark}\label{generate}
The definition of Morita map of double Lie groupoids we've given is not quite the definition we should give. This category comes with two notions of weak equivalence, one in the horizontal direction and one in the vertical direction. Really, we should take the smallest subcategory containing all of these weak equivalences to get a category with weak equivalences, or better, a homotopical category. Alternatively, there may be a nicer definition.
 \end{remark}
 \section{Higher Category of Double Lie Groupoids}
 Now since cofibrations will be mentioned several times in this thesis, we wish to define 2-morphisms between morphisms of double Lie groupoids. We will do this with respect to the top rows, however, by taking the transpose of the diagram we get the definition with respect to the left columns.
\begin{definition}
Consider morphisms $f_1\,,f_2$ between double Lie groupoids
\begin{equation}
   \begin{tikzcd}
H^{01} \arrow[d, shift left] \arrow[d, shift right] \arrow[r, shift left] \arrow[r, shift right] & H^{11} \arrow[rr, "f_2", shift right=5] \arrow[d, shift left] \arrow[d, shift right] &  & G^{01} \arrow[d, shift right] \arrow[r, shift left] \arrow[d, shift left] \arrow[r, shift right] & G^{11} \arrow[d, shift right] \arrow[d, shift left] \\
H^{00} \arrow[r, shift left] \arrow[r, shift right]                                              & H^{10} \arrow[rr, "f_1"', shift left=6]                                              &  & G^{00} \arrow[r, shift right] \arrow[r, shift left]                                              & G^{10}                                             
\end{tikzcd}
\end{equation}
A 2-morphism $f_1\Rightarrow f_2$ is given by a functor 
\begin{equation}
    \begin{tikzcd}
                                                    &                   & G^{01} \arrow[r, shift right] \arrow[r, shift left] & G^{11} \\
H^{00} \arrow[r, shift left] \arrow[r, shift right] & H^{10} \arrow[ru] &                                                     &       
\end{tikzcd}
\end{equation}
for which the induced map $H^{00}\to G^{01}$ defines a natural transformation between $f_1\,,f_2$ when restricted to the groupoids in the left column.
\end{definition}
Now we have a (2,1)-category of double Lie groupoids, and we may now discuss cofibration in the category, which we will do later. In addition, one can invert weak equivalences to obtain a new category, analogous to what is done for groupoids, but we won't be needing that.
\begin{remark}Note that because 2-morphisms are functors, we also have morphisms of 2-morphisms, ie. 3-morphisms. Therefore we really have a (3,1)-category.\end{remark}
\begin{comment}\begin{remark}
Given this (2,1)-category of double Lie groupoids and constructions made in the next chapter, one can probably determine what the correct definition of the fiber product of double groupoids is.
\end{remark}
\end{comment}
\begin{comment} \subsection{Fibrations and Cofibrations}
 Here we will discuss fibrations and cofibrations in the (2,1)-category of Lie groupoids, and how double Lie groupoids are relevant. 
\vspace{3mm}\\
Now associated to \ref{fol} is a foliation of the double groupoid, given by 
\begin{equation}
\begin{tikzcd}
TH\ltimes TP\rtimes G \arrow[d, shift left] \arrow[d, shift right] \arrow[r, shift left] \arrow[r, shift right] & TP\rtimes G \arrow[d, shift left] \arrow[d, shift right] \arrow[rr, shift right=7] &  & H\ltimes P\rtimes G \arrow[r, shift left] \arrow[r, shift right] \arrow[d, shift left] \arrow[d, shift right] & P\rtimes G \arrow[d, shift left] \arrow[d, shift right] \\
TH\ltimes TP \arrow[r, shift left] \arrow[r, shift right]                                                       & TP                                                                                 &  & H\ltimes P \arrow[r, shift left] \arrow[r, shift right]                                                       & P                                                      
\end{tikzcd}
\end{equation}
This LA-double groupoid is Morita equivalent to the canonical LA-groupoid over $H\rightrightarrows H^0\,.$
\end{comment}
\chapter{Replacing a Map With a Fibration/Cofibration}
In the introdution, we mentioned obtaining equivalent LA-groupoids associated to a map using two different methods. The two methods of obtaining LA-groupoids given a (nice enough) map $H\to G$ correspond to the two methods of constructing ``groupoids" out of such a map, which we will call the fibration and cofibration replacements. By analogy one should think back to homotopy theory, where one can replace a map $Y\to X$ with a fibration, namely the mapping path space, or a cofibration, namely the mapping cylinder. One should keep analogies with homotopy theory in mind when reading this chapter (if one thinks of homotopy theory as really being about $\infty$-groupoids, these are more than analogies).
\section{Fibration and Cofibrations}
The context here is that we are thinking about the (2,1)-category of Lie groupoids (the one where no localization has been performed). Given any 2-category there is a notion of fibration (see \cite{str}, \cite{riehl}), and the first thing we will do here is define a fibration of Lie groupoids. There are several notions of ``fibrations" of Lie groupoids in the literature, but as far as the author can tell they are distinct from the one we are about to define, which should be thought of as analogous to Hurewicz fibrations (whereas, for example, Kan fibrations are analogous to Serre fibrations). Some fibrations currently defined in the literature are, in particular, what we call quasifibrations (ie. see \cite{Mackenzie}).
\vspace{3mm}\\Let us expound on fibrations for a moment. A morphism of $\infty$-groupoids (modelled by simplicial spaces) is a Kan fibration if it has the ``homotopy lifting property" with respect to the standard $n$-simplices. For a map of groupoids $f:H\to G$ (ie. the groupoids have no higher morphisms) this is equivalent to $f$ having the ``homotopy lifting property" with respect to the standard $0$-simplex, meaning that if $h^0\in H^0\,,g\in G$ are such that $s(g)=f(h^0)\,,$ then there must exists an $h\in H$ such that $s(h)=h^0$ and $f(h)=g\,.$  Mackenzie (see~\cite{Mackenzie}, definition 2.4.3) requires the following stronger condition to hold: $f_0:H^0\to G^0$ and $H\to f_0^!G$ must be surjective submersions. 
\vspace{3mm}\\The fibrations we discuss will have the homotopy lifting property with respect to all Lie groupoids. In particular, they will be Kan fibrations, and in the important examples we will consder they will be fibrations in the sense of Mackenzie. The two things that could prevent a fibration in the sense of this paper from being a fibration in the sense of Mackenzie are: 
\begin{itemize}
    \item the technical condition of certain maps being submersions,
    \item the definition of a Hurewicz fibration $X\to Y$ from topology doesn't imply that the map is surjective (which could happen if $Y$ isn't path connected), and analogously our definition of fibrations doesn't imply that $f_0$ is surjective.
\end{itemize}
\begin{remark}
A remark on conventions and notation: in this thesis, when we speak of a 2-morphism (or natural transformation) of a morphism $f\,,$ we mean a 2-morphism/natural transformatio $f\Rightarrow f'\,,$ where $f'$ is some morphism. We may not explicitly write $f'\,.$ This is justified by the following: a natural transformation between morphisms of groupoids $f,f':H\to G$ is determined by a map $g_H:H^0\to G^{(1)}$ which satisfies $s(g_H(h^0))=f(h^0)\,,t(g_H(h^0))=f'(h^0)$ and which satisfies the desired commuation relations. Conversely, since arrows in a Lie groupoid are invertible, a morphism $f:H\to G$ and a map $g_H:H^0\to G^{(1)}$ satisfying $s(g_H(h^0))=f(h^0)$ determines a morphism $f':H\to G$ and a 2-morphism $f\Rightarrow f'\,.$ As for notation, we may denote a natural transforamtion of a morphism $H\to G$ by using the notation $g_H:H^0\to G^{(1)}$ (here the $g$ in $g_H$ references the arrows in codomain $G\,,$ and the $H$ references the domain). In addition, objects will be denoted with a superscript $0\,,$ ie. an object of $G$ will be denoted by $g^0$ (arrows will be denoted by $g\,,$ including identity arrows).
\end{remark}
\begin{definition}
A morphism $F:E\to G$ of Lie groupoids is a fibration if it has the lifitng property with respect to 2-morphisms. That is, $F$ is a fibration if the following condition holds: let $f:H\to G$ be a morphism of Lie groupoids and let $g_H:H^0\to G^{(1)}$ define a natural transformation of $f\,.$ Suppose there exists a lift $\tilde{f}:H\to E\,,$ ie. $f=F\circ\tilde{f}\,,$ then there must exist a lift of the natural transformation $g\,,$ ie. a map $e_H:H^0\to E^{(1)}$ satisfying $g_H=F\circ e_H$ which defines a natural transformation of $f\,.$
\end{definition}
\begin{definition}
A morphism $\iota:A\to G$ is a cofibration if it has the the extension property with respect to 2-morphisms, ie. suppose we have a morphism $f:A\to H$ together with a natural transformation of $f\,,$ given by $h_A:A^0\to H^{(1)}\,.$ If there exists a map $\tilde{f}:G\to H$ satisfying $f=\tilde{f}\iota\,,$ there must be a natural transformation of $\tilde{f}\,,$ given by a map $h_G:G^0\to H^{(1)}\,,$ satisfying $h_A=h_G\iota\,.$
\end{definition}
The previous definitions raise the following question: given a morphism of Lie groupoids, when can one replace it with an equivalent fibration/cofibration? The answer to the former is always. On the other hand, the author believes that a morphism which isn't already a cofibration seldom has a cofibration replacement (though we will see later that if we use double groupoids they exist far more often).
\section{The Canonical Fibration Replacement}
Now to explain how to replace a morphism with a fibration (see~\cite{hoyor} section 2.2,~\cite{fernandes} page 6 for discussions about the same canonical factorization). Given a map $H\to G$ there is a canonical bibundle $P=G^{(1)}\sideset{_t}{_f}{\mathop{\times}} H^0$ for $H$ and $G\,,$ where $H$ acts via a left action and $G$ acts via a right action, forming the groupoid $H\ltimes P\rtimes G\rightrightarrows P\,.$ From this, we get a canonical fibration replacement for the map $H\to G\,,$ given by the following commutative diagram:
\begin{equation}\label{fibration rep}
\begin{tikzcd}
H\ltimes P\rtimes G \arrow[rd, "p_3"] &   \\
H \arrow[u, "\iota"] \arrow[r, "A"']  & G
\end{tikzcd}                                        
\end{equation}
The map $p_3$ (projection onto the third factor) is a fibration and $\iota$ is a cofibration. In addition, letting 
\begin{equation*}
p_1:H\ltimes P \rtimes G\to H
\end{equation*}
be the projection onto the first factor, we have that $p_1$ is a retraction of $\iota\,,$ and there is a 2-morphism $c:\iota p_1\Rightarrow \mathbbm{1}_{H\ltimes P\rtimes G}$ such that for $h^0\in H^0\,,$ $c\iota(h^0)=\mathbbm{1}_{\iota(h^0)}\,.$  In particular, $\iota$ is a Morita map. The groupoid $H\ltimes P\rtimes G$ is isomorphic to $G\times_G H$ (and as we will see, $H\ltimes P\rtimes G$ naturally has the structure of a double Lie groupoid).
\begin{definition}
Let $f:H\to G$ be a morphism of Lie groupoids. We will say a fibration $F:E\to G$ is a fibration replacement for $f$ if the following conditions hold: there are maps $\iota:H\to E\,,$ $p:E\to H$ such that there exists 2-morphisms $p\iota\Rightarrow \mathbbm{1}_H\,,\iota p\Rightarrow \mathbbm{1}_E\,,$ and such that $f=F\iota\,.$
\begin{remark}
Rather than requiring that there are Morita maps both ways in the definition of fibration, one might require instead that there is a map just one way, as in the definition of fibration in a model category — but we won't be needing to do this. 
\end{remark}
\end{definition}
The discussion above proves the following:
\begin{proposition}
Given any morphism $f:H\to G$ of Lie groupoids, there is a fibration replacement for $f\,.$\footnote{If we allow the space of arrows to be non-Hausdorff, then we must allow the base to be non-Hausdorff as well, otherwise the fibration replacement may not exist in the category. We will assume everything is Hausdorff.}
\end{proposition}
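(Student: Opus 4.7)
The plan is to take the canonical factorization already sketched in diagram \ref{fibration rep} and verify that it satisfies the defining properties of a fibration replacement. So first I would construct, given $f:H\to G$, the bibundle $P=G^{(1)}\sideset{_t}{_f}{\times}H^0$, form the double action groupoid $E:=H\ltimes P\rtimes G\rightrightarrows P$, and define $\iota:H\to E$ on objects by $h^0\mapsto(\mathbbm{1}_{f(h^0)},h^0)\in P$ and on arrows by $h\mapsto(h,\mathbbm{1}_{f(s(h))},\mathbbm{1}_{f(s(h))})$, together with the projection $F:=p_3:E\to G$ onto the $G$-factor, and a retraction $p_1:E\to H$ onto the $H$-factor. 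The equation $f=F\iota$ is immediate from the definitions, so what remains is (i) that $\iota$ is a Morita equivalence, concretely presented by 2-morphisms $p_1\iota\Rightarrow\mathbbm{1}_H$ and $\iota p_1\Rightarrow\mathbbm{1}_E$, and (ii) that $p_3$ is a fibration in the sense just defined.

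For (i) the 2-morphism $p_1\iota\Rightarrow\mathbbm{1}_H$ is literally the identity (these composite functors agree), and the 2-morphism $c:\iota p_1\Rightarrow\mathbbm{1}_E$ is given on objects $(g,h^0)\in P$ by the arrow $(\mathbbm{1}_{h^0},\mathbbm{1}_{f(h^0)},g)\in H\ltimes P\rtimes G$ whose source is $(\mathbbm{1}_{f(h^0)},h^0)=\iota p_1(g,h^0)$ and whose target is $(g,h^0)$; one checks naturality by direct computation using the action groupoid multiplication, and observes that on $\iota(H^0)$ the natural transformation $c$ restricts to identities, exactly as claimed after \ref{fibration rep}.

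For (ii), the main technical point, suppose we are given a morphism $\varphi:K\to G$ with a lift $\tilde{\varphi}:K\to E$ (so $F\tilde{\varphi}=\varphi$) and a natural transformation $g_K:K^0\to G^{(1)}$ of $\varphi$. I would define the lifted natural transformation $e_K:K^0\to E^{(1)}$ by
\[
e_K(k^0)=\bigl(\mathbbm{1}_{p_1\tilde{\varphi}(k^0)},\;g_K(k^0)\cdot q(k^0),\;g_K(k^0)\bigr),
\]
where $q(k^0)\in P$ is the $P$-component of $\tilde{\varphi}(k^0)$, interpreted in the obvious way so that the $H$-part is the identity, the $G$-part is $g_K(k^0)$, and the middle $P$-component records the required change of source in $P$ under the $G$-action. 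A direct check shows $s$ and $t$ of $e_K(k^0)$ land in the correct objects of $E$, that $F\circ e_K = g_K$, and that the naturality square for $e_K$ follows from the naturality square for $g_K$ together with the action groupoid relations. The only nontrivial issue is smoothness of $e_K$, which follows because $g_K$ is smooth and $\tilde{\varphi}$ is smooth, and all structure maps of $P$ and of the action groupoid are smooth; since the double source map of $E$ is a surjective submersion by construction of the action groupoid, no extra transversality hypothesis is needed.

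The step I expect to be the main obstacle is verifying the 2-morphism lifting property for $p_3$ cleanly: one has to be careful to pick $e_K$ so that its source is exactly $\tilde{\varphi}(k^0)$ (not merely isomorphic to it), and to check this is well-defined and smooth in $k^0$. Once this is pinned down the rest is bookkeeping, and the proposition follows. I would conclude with the parenthetical footnote in the statement: Hausdorffness of $E$ reduces to Hausdorffness of $P$, which holds whenever $G^{(1)}$ and $H^0$ are Hausdorff and $t$ is a submersion, so the assumption is harmless for our purposes.
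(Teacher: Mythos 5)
Your proposal is correct and is essentially the paper's own argument: the paper also takes the canonical factorization through $H\ltimes P\rtimes G\rightrightarrows P$ with $\iota$, $p_1$, the 2-morphism $c$, and the projection $p_3$, and it establishes that $p_3$ is a fibration via the compatible right $G$-action on $E$ with moment map $t\circ F$ (your explicit lift $e_K$ is exactly the lift that this $G$-action produces). The only differences are expository — the paper leaves the 2-morphism-lifting verification to its general lemma on $G$-actions rather than writing out $e_K$ by hand.
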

Now given morphisms $H\to G\,, H'\to G'\,,$ we will call them equivalent if there is a diagram of one of the following forms, where the vertical arrows are Morita maps:
\begin{equation}\label{morita maps}
   \begin{tikzcd}
H' \arrow[r]                                                            & G'           & H' \arrow[r] \arrow[d] \arrow[r, Rightarrow, bend right=74, shift right=2] & G'           \\
H \arrow[r] \arrow[u] \arrow[ru, Rightarrow]                            & G \arrow[u]  & H \arrow[r]                                                                & G \arrow[u]  \\
H' \arrow[r]                                                            & G' \arrow[d] & H' \arrow[d] \arrow[r] \arrow[rd, Rightarrow]                              & G' \arrow[d] \\
H \arrow[r] \arrow[u] \arrow[r, Rightarrow, bend left=74, shift left=2] & G            & H \arrow[r]                                                                & G           
\end{tikzcd}
\end{equation}
\begin{comment}\begin{equation}\label{morita of maps}
   \begin{tikzcd}
H' \arrow[r]                                 & G'          & H' \arrow[r] \arrow[d] \arrow[rd, Rightarrow] & G' \arrow[d] \\
H \arrow[r] \arrow[u] \arrow[ru, Rightarrow] & G \arrow[u] & H \arrow[r] \arrow[r]                         & G           
\end{tikzcd}
\end{equation}
\end{comment}
Now the following proposition shows that fibration replacements are essentially unique:
\begin{proposition}\label{canonical map}
Given any pair of fibration replacements for $H\to G\,,H'\to G'$ (fitting into the diagram in the top left) given by $E\to G\,,E'\to G'\,,$ respectively, there is a commutative diagram of the following form:
\begin{equation}
\begin{tikzcd}
E \arrow[d] \arrow[r, dashed] & E' \arrow[d] \\
G \arrow[r, dashed]           & G'          
\end{tikzcd}
\end{equation}
\end{proposition}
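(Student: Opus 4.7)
The plan is to exploit the Morita-inverse structure built into the definition of fibration replacement. Write $\iota: H \to E$ and $p: E \to H$ for the maps given by the definition, with 2-morphisms $p\iota \Rightarrow \mathbbm{1}_H$ and $\iota p \Rightarrow \mathbbm{1}_E$, and with $F\iota \simeq f$; decorate the primed side similarly with $\iota', p', F'$. Denote by $\phi_H: H \to H'$ and $\phi_G: G \to G'$ the two Morita maps of the top-left square of \ref{morita maps}, and by $\alpha: \phi_G \circ f \Rightarrow f' \circ \phi_H$ the natural isomorphism filling that square.

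First I would take the bottom horizontal arrow of the target diagram to be $\phi_G$ itself, and define the top horizontal arrow $\Phi: E \to E'$ as the composite
\begin{equation*}
\Phi \;:=\; \iota' \circ \phi_H \circ p.
\end{equation*}
This uses the weak inverse $p$ to return to $H$, transports across to $H'$ via $\phi_H$, and re-enters the primed replacement via $\iota'$; it is essentially the only natural choice available.

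Next I would verify commutativity of the resulting square up to 2-morphism by stringing together the following chain of 2-cells:
\begin{align*}
\phi_G \circ F \;&\Rightarrow\; \phi_G \circ F \circ \iota \circ p \;\simeq\; \phi_G \circ f \circ p \\
&\Rightarrow\; f' \circ \phi_H \circ p \;\simeq\; F' \circ \iota' \circ \phi_H \circ p \;=\; F' \circ \Phi,
\end{align*}
where the first arrow uses $\iota p \Rightarrow \mathbbm{1}_E$, the second uses the horizontal pasting of $\alpha$ with $p$, and the two $\simeq$'s use $F\iota \simeq f$ and $F'\iota' \simeq f'$ respectively.

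The main obstacle is not to find $\Phi$ or the filler but to confirm coherence: because "fibration replacement" is defined via the mere existence of $(\iota,p)$ together with 2-isomorphisms satisfying no further axioms, the composite 2-cell above is automatically well-defined, so no triangle or pentagon identity needs to be checked. If one wanted the diagram to commute strictly rather than merely up to 2-morphism, one would additionally invoke the fibration property of $F'$: the lifting property applied to the 2-morphism just constructed along $F'$ produces a natural transformation of $\Phi$ that upgrades the square to a strictly commuting one. That extra step is where the fibration hypothesis on the primed replacement becomes essential, but the statement as written requires only the weaker up-to-2-morphism version, so the construction above suffices.
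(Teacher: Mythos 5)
Your proof is correct and follows essentially the same route as the paper's: both construct the candidate $\iota'\circ\phi_H\circ p$, paste the 2-cells $\iota p\Rightarrow \mathbbm{1}_E$, $F\iota\simeq f$, the filler $\alpha$, and $F'\iota'\simeq f'$ to compare it with $\phi_G\circ F$, and then use the fibration property of $E'\to G'$ to lift the resulting 2-morphism and obtain the commuting square.
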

\begin{proof}
What we want to do is show that the map $E\twoheadrightarrow G'$ given by the composition $E\twoheadrightarrow G\twoheadrightarrow G'$ lifts to a map $E\to E'$ (note that we are using different arrows for ease of exposition, they do not carry any connotation). We will do this by showing that there is another map $E\rightharpoonup G'$ which factors through $E'$ and which is equivalent to the first map via a $2$-morphism. The map in question is the one given by the following commutative diagram:
\begin{equation}
    \begin{tikzcd}
E \arrow[d,harpoon]  &                                 & E' \arrow[d] \\
H \arrow[r,harpoon]                    & H' \arrow[r,harpoon] \arrow[ru, dashed] & G'          
\end{tikzcd}
\end{equation}
Now the proof follows from the following diagram:
\begin{equation}
    \begin{tikzcd}
                                                                                                    &  & G \arrow[r,two heads]                                  & G'           \\
E \arrow[rr, harpoon,shift right=0.5] \arrow[rru, two heads, shift left=1] \arrow[rru, Rightarrow, bend right, shift left] &  & H \arrow[r,harpoon] \arrow[u] \arrow[ru, Rightarrow] & H' \arrow[u,harpoon]
\end{tikzcd}
\end{equation}
\begin{comment}
\begin{equation}
    \begin{tikzcd}
            & G \arrow[r]                                  & G'           \\
E \arrow[r] & H \arrow[r] \arrow[u] \arrow[ru, Rightarrow] & H' \arrow[u]
\end{tikzcd}
\end{equation}
\begin{equation}
   \begin{tikzcd}
                                                                                    & G                           \\
E \arrow[r, shift right] \arrow[ru] \arrow[ru, Rightarrow, bend right, shift right] &    H \arrow[u, shift right]
\end{tikzcd}
\end{equation}
\end{comment}
\end{proof}
Now we have the following result, which already gives us one application of fibrations (see \Cref{morita of fibers} for more information):
\begin{proposition}\label{equiv1}
Suppose $F:E\to G$ is a fibration which is a surjective submersion at the level of objects, and let $f:H\to G$ be a morphism of Lie groupoids. Then the strong fiber product and the fiber product both exist, and the canonical morphism $E\times_{G!} H\xhookrightarrow{} E\times_G H$ is a Morita equivalence.
\end{proposition}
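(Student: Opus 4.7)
The plan is to verify, in order, existence of both fiber products as Lie groupoids and then essential surjectivity plus fully-faithfulness of the canonical inclusion $\iota:K:=E\times_{G!}H\to L:=E\times_G H$. Throughout I write objects as $e^0,h^0,g^0$ and arrows as $e,h,g$.

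Existence is straightforward. The strong fiber product exists at the level of objects since $F^0:E^0\to G^0$ is a surjective submersion; for arrows I invoke the fibration property to show $(s,F):E^{(1)}\to E^0\times_{G^0}G^{(1)}$ admits smooth local sections, hence is a surjective submersion, so $E^{(1)}\times_{G^{(1)}}H^{(1)}$ is a smooth manifold. The weak fiber product $L$ exists because $t\circ p_2:E^0\times_{G^0}G^{(1)}\to G^0$ is a submersion ($t$ is), as noted in the definition of the fiber product.

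For essential surjectivity, I must show that $\tau:=t\circ p_2:K^0\times_{L^0}L^{(1)}\to L^0$ is a surjective submersion. Given $(e^0,g,h^0)\in L^0$, view $g:F(e^0)\to f(h^0)$ as a natural transformation of the constant morphism $\{*\}\to G$ sending $*\mapsto F(e^0)$, lifted to $\{*\}\to E$ via $*\mapsto e^0$. The fibration property produces an arrow $\tilde e:e^0\to e^0_1$ in $E^{(1)}$ with $F(\tilde e)=g$. Then $F(e^0_1)=t(g)=f(h^0)$, so $(e^0_1,h^0)\in K^0$, and the pair $(\tilde e^{-1},\mathbbm{1}_{h^0})\in L^{(1)}$ is an arrow from $\iota(e^0_1,h^0)=(e^0_1,\mathbbm{1}_{f(h^0)},h^0)$ to $(e^0,g,h^0)$: indeed the required relation $g\cdot F(\tilde e^{-1})=f(\mathbbm{1}_{h^0})\cdot\mathbbm{1}_{F(e^0_1)}$ reduces to $\mathbbm{1}_{f(h^0)}=\mathbbm{1}_{f(h^0)}$. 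Applying the fibration property parametrically to the identity morphism $L^0\to L^0$ (viewed as a natural transformation along $g$) produces a smooth section of $\tau$. Combined with right-translation by arrows in the Lie groupoid $L$, which acts transitively along the fibers of $\tau$ and carries the image of the section to any other point, this upgrades the surjective smooth section to a surjective submersion.

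Fully-faithfulness requires the canonical map $K^{(1)}\to K^0\times K^0\times_{L^0\times L^0}L^{(1)}$ to be a diffeomorphism. Given $(e^0_0,h^0_0),(e^0_1,h^0_1)\in K^0$, an arrow in $L^{(1)}$ from $\iota(e^0_0,h^0_0)=(e^0_0,\mathbbm{1},h^0_0)$ to $\iota(e^0_1,h^0_1)=(e^0_1,\mathbbm{1},h^0_1)$ is a pair $(e,h)\in E^{(1)}\times H^{(1)}$ with $s(e)=e^0_0,t(e)=e^0_1,s(h)=h^0_0,t(h)=h^0_1$ satisfying $\mathbbm{1}\cdot F(e)=f(h)\cdot\mathbbm{1}$, i.e.\ $F(e)=f(h)$; this is precisely the defining condition of $K^{(1)}=E^{(1)}\times_{G^{(1)}}H^{(1)}$, and the identification is manifestly a diffeomorphism.

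The main obstacle is the submersion (rather than merely surjectivity) in the essential surjectivity step: the set-theoretic lift via the fibration is easy, but one must carefully use the smooth/parametric formulation of the fibration definition together with the Lie groupoid translation on $L$ to conclude that $\tau$ is a submersion at every point of $K^0\times_{L^0}L^{(1)}$, not just along the image of the section.
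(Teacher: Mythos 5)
The paper gives no proof of this proposition (the intended proof is commented out in the source), so there is nothing to match against; your route — essential surjectivity via the 2-morphism lifting property, fully-faithfulness by direct identification of arrows — is exactly the intended one, and your fully-faithfulness argument is complete and correct: an arrow of $E\times_G H$ between objects with identity middle components is precisely a pair $(e,h)$ with $F(e)=f(h)$, i.e.\ an arrow of $E\times_{G!}H$. Your use of the lifting property applied to $L^0$ (as a trivial groupoid) to produce a \emph{global smooth} section of $\tau=t\circ p_2$ is also a correct and genuinely smooth argument, not merely set-theoretic.

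The gap is the one you flag yourself, and the fix you sketch does not close it. A smooth map admitting a global section is a submersion only along the image of that section; to get the surjective submersion required by \Cref{moritamapjeff} you need local sections through \emph{every} point of $K^0\times_{L^0}L^{(1)}$. The proposed repair by translation fails because left or right composition with a fixed arrow of $L$ is defined only on a source (resp.\ target) fiber of $L^{(1)}$, which is a positive-codimension submanifold, not an open neighbourhood; so translating your section produces a map out of a fiber, not a new local section of $\tau$ near an arbitrary point. The same problem infects your existence argument for the strong fiber product: the lifting property gives one global section of $(s,F):E^{(1)}\to E^0\times_{G^0}G^{(1)}$, hence surjectivity and submersivity along that section, but not that $(s,F)$ (or $F^{(1)}$) is a submersion everywhere, which is what is needed for $E^{(1)}\times_{G^{(1)}}H^{(1)}$ to be a manifold. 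This is precisely the defect the paper itself concedes when it notes that its fibrations need not be fibrations in Mackenzie's sense because ``the technical condition of certain maps being submersions'' may fail. The clean way to close the gap is to assume (or first prove, for the canonical fibration $G\times_G H\to G$, where it holds by inspection) that $(s,F)$ is a surjective submersion in the sense of \Cref{surj sub}; then local sections exist through every point and both the existence claims and the submersion condition for $\tau$ follow by your argument run locally.
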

\begin{comment}\begin{proof}
Suppose $f_1:H\to G$ is a quasifibration. Consider an object $(h^0,g,k^0)\in H\times_G K\,.$ Since $f$ is a quasifibration, the inclusion $f_1^{-1}(f_2(k^0))\xhookrightarrow{}H\times_G \{f_2(k^9)\}$ is a Morita equivalence, hence there is an object $h^0'$ and an arrow $h:h^0'\to h^0$ such that $f_1(h^0')=f_2(k^0)$ and $f_1(h)=g\,.$ Hence the canonical inclusion is essentially surjective. Now the map is an injection, so its necessarily an injection on hom spaces, so we must show that it is also surjective. Sonsider two objects $(h^0,k^0)\,,(h^0',k^0')\in H^0\times_{G!} K^0\,.$ A morphism $(f_1(h^0),f_1(h^0),f_2(k^0))\to (f_1(h^0'),f_1(h^0'),f_2(k^0'))$ is given by arrows $h:h^0\to ^0'\,,k:k^0\to k^0'$ such that $f_1(h)=f_2(k)\,,$ this $(h,k)\in H\times_{G!}K\,.$  
\end{proof}
\end{comment}
Given the previous result, it is in particular true that if $F:E\to G$ is a fibration which is a surjective submersion at the level of objects, then the the canonical inclusion $F^{-1}(g^0)\xhookrightarrow{} E\times_G \{g^0\}$ is a Morita equivalence. In light of this, we make the following definition:
\begin{definition}\label{quasifibration}
We will call a map $f:H\to G$ a quasifibration if for each $g^0\in G^0$ the kernel over $g^0$ exists and if the canonical inclusion $f^{-1}(g^0)\xhookrightarrow{}H\times_G \{g^0\}$ is a Morita equivalence.
\end{definition}
Now we will define what it means for a map of Lie groupoids $f:H\to G$ to be a surjective submersion. This is what Mackenzie calls a fibration in \cite{Mackenzie}. They are the correct maps for defining simple foliations of Lie groupoids.
\begin{definition}\label{surj sub}
We call a map $f:H\to G$ a surjective submersion of Lie groupoids if both the map on the space of objects and the map $H\to G\sideset{_s}{_{f^0}}{\mathop{\times}}H^0\,, h\mapsto (f(h),s(h))$ are surjective submersions.
\end{definition}
\begin{proposition}\label{equiv2}
A surjective submersion of Lie groupoids is a quasifibration.
\end{proposition}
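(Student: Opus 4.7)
The plan is to unpack the two surjective submersion conditions in \Cref{surj sub} and show that, fiber-by-fiber at each $g^0 \in G^0$, they supply exactly the two ingredients needed for the canonical inclusion $f^{-1}(g^0) \hookrightarrow H \times_G \{g^0\}$ to be essentially surjective and fully faithful. The existence of both groupoids comes essentially for free from the submersion hypotheses: the object space $(f^0)^{-1}(g^0)$ is a submanifold since $f^0$ is a surjective submersion, and the arrow space $f^{-1}(1_{g^0})$ is a submanifold because the composite $H \xrightarrow{h \mapsto (f(h),s(h))} G \sideset{_s}{_{f^0}}{\mathop{\times}} H^0 \xrightarrow{\mathrm{pr}_1} G$ is a submersion (the second leg being the pullback of the submersion $f^0$), so $f$ itself is a submersion. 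The fiber product $H \times_G \{g^0\}$ exists because $t\circ \mathrm{pr}_2 \colon H^0 \times_{G^0} G^{(1)} \to G^0$ is always a submersion, making the discussion at the end of \Cref{defLAM} applicable.

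Next I would check full faithfulness. An arrow from $(h^0, 1_{g^0})$ to $(h'^{0}, 1_{g^0})$ in $H \times_G \{g^0\}$ is an $h \colon h^0 \to h'^{0}$ in $H$ satisfying $1_{g^0}\cdot f(h) = 1_{g^0}$, i.e.\ $f(h) = 1_{g^0}$, which is precisely an arrow of $f^{-1}(g^0)$; so the inclusion identifies arrow spaces as submanifolds.

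Third, I would establish essential surjectivity in the Morita sense. Given $(h^0, g) \in (H \times_G \{g^0\})^{0}$ with $g \colon f(h^0) \to g^0$, the pair $(g, h^0)$ lies in $G \sideset{_s}{_{f^0}}{\mathop{\times}} H^0$; the surjectivity assumption on $h \mapsto (f(h), s(h))$ then furnishes an $h \in H^{(1)}$ with $s(h) = h^0$ and $f(h) = g$. Its target $\tilde h^0 := t(h)$ satisfies $f^0(\tilde h^0) = t(g) = g^0$, so $\tilde h^0 \in f^{-1}(g^0)^{0}$, and $h$ itself is an isomorphism $(h^0, g) \to (\tilde h^0, 1_{g^0})$ in $H \times_G \{g^0\}$. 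Upgrading this from a pointwise statement to the Morita condition requires that the comparison map $f^{-1}(g^0)^{0} \sideset{}{_{\iota,t}}{\mathop{\times}} (H \times_G \{g^0\})^{(1)} \to (H \times_G \{g^0\})^{0}$, $(\tilde h^0, h) \mapsto (s(h), f(h))$, be a surjective submersion — which is exactly the pullback of the surjective submersion $h \mapsto (f(h), s(h))$ along the inclusion $t^{-1}(g^0) \hookrightarrow G^{(1)}$.

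The main obstacle is bookkeeping: verifying that all of the pullback diagrams in sight really are transversal in the smooth category, so that we are free to invoke stability of submersions under base change at each step. Once this is done the Morita equivalence $f^{-1}(g^0) \hookrightarrow H \times_G \{g^0\}$ is immediate, and hence $f$ is a quasifibration in the sense of \Cref{quasifibration}.
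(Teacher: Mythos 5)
The paper states this proposition without proof, so there is nothing to compare against; your argument is correct and is surely the intended one, namely a direct unwinding of \Cref{surj sub}, \Cref{quasifibration} and \Cref{moritamapjeff}: the second submersion condition $h\mapsto(f(h),s(h))$ delivers both essential surjectivity (producing the arrow $h$ with $f(h)=g$, $s(h)=h^0$, so that $h$ carries $(h^0,g)$ to $(t(h),1_{g^0})$) and, by restriction to the preimage of the submanifold $t^{-1}(g^0)\times_{G^0}H^0$, the surjective-submersion clause of the Morita condition, while full faithfulness is immediate since an arrow $(h^0,1_{g^0})\to(h'^0,1_{g^0})$ in $H\times_G\{g^0\}$ is exactly an $h$ with $f(h)=1_{g^0}$. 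The remaining transversality bookkeeping you flag is genuinely routine, given that $f$ is a submersion on arrows as the composite you describe.
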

\begin{exmp}
A map of Lie groups $H\to G$ is a quasifibration if and only if it is a surjective submersion at the level of arrows. It is also a surjective submersion if and only if it is a surjective submersion at the level of arrows.
\end{exmp}
\section{Properties of Fibrations}
Now in homotopy theory, a fiber bundle is in particular a fibration, but this is not true for Lie groupoids. One might wonder, if the map $H\to G$ is a fiber bundle in the category of Lie groupoids, would there be an advantage to replacing this with a fibration? The answer is yes. 
 \vspace{3mm}\\
Consider for example the homomorphism $\mathbb{R}\to S^1\,.$ This is a fiber bundle in the category of Lie groupoids. However, the fibration replacement $\mathbb{R}\ltimes S^1\rtimes S^1\to S^1$ has at least one interesting property that the map $\mathbb{R}\to S^1$ doesn't have (aside from the 2-morphism lifting property): the fibers of the map $\mathbb{R}\to S^1$ (as a map of spaces) are all diffeomorphic to the fiber over the identity, but not canonically. However, the fibers of the fibration replacement $\mathbb{R}\ltimes S^1\rtimes S^1\to S^1$ are all canonically identified with the fiber over the identity, $\mathbb{R}\ltimes S^1\,.$ We have the following result (a similar observation is made in~\cite{fernandes}):
 \begin{proposition}\label{fibration property}
 Let $F:E\to G$ be a morphism of Lie groupoids. Suppose there is an action of $G$ on $E$ with respect to the moment map $t\circ F\,,$ which is compatible with $F$ and the action of $G$ on itself with respect to the target map, in the sense that, if $F(e)=g$ and s(g')=t(g), then $F(g'\cdot e)=g'g$ and $s(g'\cdot e)=s(e)$  (in particular, $F$ is a morphism of Lie groupoids as well as $G$-spaces). Then $F$ is a fibration.
 \end{proposition}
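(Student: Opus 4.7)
The goal is to verify the $2$-morphism lifting property: given a morphism $f:H\to G$ with a lift $\tilde{f}:H\to E$ (so $F\tilde{f}=f$) and a natural transformation $g_H:H^0\to G^{(1)}$ of $f$ (so $s(g_H(h^0))=f(h^0)$), we must produce a map $e_H:H^0\to E^{(1)}$ with $s(e_H(h^0))=\tilde{f}(h^0)$ and $F\circ e_H=g_H\,,$ and such that $e_H$ defines a natural transformation of $\tilde{f}\,.$ The plan is to use the $G$-action on $E$ to translate the identity arrow at $\tilde{f}(h^0)$ by $g_H(h^0)\,.$

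Concretely, I would define
\begin{equation}
e_H(h^0) := g_H(h^0)\cdot \mathbbm{1}_{\tilde{f}(h^0)}\,.
\end{equation}
This is well-defined: $F(\mathbbm{1}_{\tilde{f}(h^0)})=\mathbbm{1}_{f(h^0)}\,,$ and the compatibility condition in the hypothesis requires $s(g_H(h^0))=t(\mathbbm{1}_{f(h^0)})=f(h^0)\,,$ which is exactly the condition that $g_H$ is a natural transformation of $f\,.$ Applying the two properties of the action, one sees immediately that $F(e_H(h^0))=g_H(h^0)\cdot\mathbbm{1}_{f(h^0)}=g_H(h^0)\,,$ and $s(e_H(h^0))=s(\mathbbm{1}_{\tilde{f}(h^0)})=\tilde{f}(h^0)\,.$ Smoothness is automatic because the $G$-action and the unit bisection of $E$ are smooth.

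It remains to check that $e_H$ actually defines a natural transformation $\tilde{f}\Rightarrow \tilde{f}'$ for some morphism $\tilde{f}':H\to E\,.$ As noted in the paper's remark on conventions, a smooth map $e_H:H^0\to E^{(1)}$ with $s(e_H(h^0))=\tilde{f}(h^0)$ automatically determines a natural transformation, provided the induced assignment $h\mapsto e_H(h^{0\prime})\circ\tilde{f}(h)\circ e_H(h^0)^{-1}$ (for $h:h^0\to h^{0\prime}$) is a smooth functor. Functoriality is formal from the definition, and smoothness follows from the smoothness of composition and inversion in $E\,.$ Moreover, applying $F$ to this formula gives $g_H(h^{0\prime}) f(h) g_H(h^0)^{-1}\,,$ which is precisely the morphism induced by $g_H\,,$ so $e_H$ indeed lifts $g_H\,.$

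There isn't really a hard step here: the argument is essentially formal once the construction $e_H(h^0)=g_H(h^0)\cdot \mathbbm{1}_{\tilde{f}(h^0)}$ is written down. The one thing to be careful about is matching the source/target conventions of the action with those of the natural transformation — concretely, confirming that acting on identities sends identities at $\tilde{f}(h^0)$ (which have $F$-image $\mathbbm{1}_{f(h^0)}$, and hence target $f(h^0)$) to arrows whose source is $\tilde{f}(h^0)\,.$ This is exactly what the two compatibility bullets in the hypothesis guarantee, so the proof amounts to unwinding the definitions.
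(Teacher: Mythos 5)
Your proof is correct, and it is exactly the argument the paper intends: the paper leaves the proposition unproved, but the remark immediately following it (that the action yields a canonical section of $E\xrightarrow{(F,s)}G\sideset{_s}{_F}{\mathop{\times}}E^0$) is precisely your construction $e_H(h^0)=g_H(h^0)\cdot\mathbbm{1}_{\tilde f(h^0)}\,.$ The source/target bookkeeping and the appeal to the paper's convention that any smooth map $H^0\to E^{(1)}$ with the correct sources determines a 2-morphism are all handled correctly.
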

Such a $G$-action will in particular identify the fibers $F^{-1}(g)$ and $F^{-1}(g')\,$ if $s(g)=s(g')\,.$ Let us remark that, in particular, given such a $G$-action, we get a canonical section of \begin{equation}
E\xrightarrow[]{(F,s)} G\sideset{_s}{_F}{\mathop{\times}}E^0\,.
\end{equation}
\begin{comment}\begin{remark}
There is a partial converse to the previous proposition; given a fibration $F:E\to G\,,$ there is a natural action (up to 2-morphism) of $G$ on $E$ with respect to the aforementioned moment map.
\end{remark}
\end{comment}
In the special case of Lie groups, one can ask what the fibrations are. As the following proposition shows, maps of Lie groups are almost never fibrations (however in the case of discrete groups, the condition is equivalent to the map being surjective).
\begin{proposition}
Let $F:H\to G$ be a map of Lie groups; $F$ is a fibration if and only if, as manifolds, $H$ is a trivial fiber bundle with respect to $F\,,$ ie. $H\cong \text{ker }F\times G\,,$ with the map to $G$ being the projection onto the second factor. 
\end{proposition}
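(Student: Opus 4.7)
The plan is to reduce the proposition to the equivalence: $F$ is a fibration if and only if it admits a smooth global section $\sigma: G \to H$. This suffices because a smooth section produces the diffeomorphism $\Phi: \ker F \times G \to H$, $(k, g) \mapsto k \cdot \sigma(g)$, with smooth inverse $h \mapsto (h \cdot \sigma(F(h))^{-1}, F(h))$, under which $F$ corresponds to the projection onto the second factor; conversely, any such trivialization furnishes the section $g \mapsto (e, g)$.

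For the ``only if'' direction, I would apply the fibration lifting property to a single well-chosen test diagram. Let $K$ be the manifold $G$ viewed as a unit Lie groupoid (only identity arrows), let $f: K \to G$ be the constant morphism to the unique object $\ast \in G^0$, and let $\tilde{f}: K \to H$ be the corresponding trivial lift. The map $g_K := \mathrm{id}_G: K^0 = G \to G^{(1)} = G$ satisfies $s(g_K(x)) = \ast = f(x)$, so it defines a natural transformation of $f$ (the target morphism is again $f$, because $K$ has only identity arrows). The fibration property then produces a smooth map $h_K: G \to H$ with $F \circ h_K = g_K = \mathrm{id}_G$, i.e., a smooth section of $F$.

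For the ``if'' direction, suppose $\sigma: G \to H$ is a smooth section, and suppose we are given a lift $\tilde{f}: K \to H$ of $f: K \to G$ together with a natural transformation $g_K: K^0 \to G$ of $f$. Set $h_K := \sigma \circ g_K: K^0 \to H$. Then $h_K$ is smooth and $F \circ h_K = g_K$. Define the candidate target morphism by $\tilde{f}'(k) := h_K(k'^0) \cdot \tilde{f}(k) \cdot h_K(k^0)^{-1}$ for $k: k^0 \to k'^0$. This is a well-defined smooth map, respects identities (since $\tilde{f}(\mathrm{id}_{k^0}) = e$), and respects composition by a direct telescoping computation using the group axioms. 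Hence $\tilde{f}'$ is a morphism of Lie groupoids and $h_K$ defines a natural transformation $\tilde{f} \Rightarrow \tilde{f}'$ lifting $g_K$, so $F$ is a fibration.

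The argument is essentially formal, so the only point worth flagging is where the Lie group hypothesis is genuinely used. In the ``if'' direction, the conjugation formula for $\tilde{f}'$ makes sense precisely because $H$ has a single object, so every triple $(h_K(k'^0), \tilde{f}(k), h_K(k^0)^{-1})$ is composable. For a general Lie groupoid $H$ one would need matching source/target data that a section alone does not supply, which is why the equivalence collapses to the triviality statement only in the group case.
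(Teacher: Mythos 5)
Your proof is correct and follows essentially the same route as the paper: both directions reduce to the existence of a smooth section of $F$, and the ``only if'' direction uses exactly the paper's test diagram (the unit groupoid on $G$ mapping to $G\rightrightarrows *$ via the constant morphism, with the identity map $G\to G$ as the 2-morphism to be lifted). The only difference is that you spell out the lifting of 2-morphisms in the ``if'' direction, which the paper leaves implicit.
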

\begin{proof}
First, if $H\cong \text{ker }F\times G$ as manifolds, then $F$ admits a section, given by $g\mapsto (e,g)\,.$ This section allows us to lift 2-morphisms. Conversely, suppose $F:H\to G$ is a fibration. Consider the map $f:G\rightrightarrows G\to G\rightrightarrows *\,,$ which sends everything to the identity element. We have a 2-morphism given by the identity map $G\to G\,.$ The map $f$ factors through $F\,,$ therefore the 2-morphism must lift, ie. there must be a section of $F\,.$ Since the kernel of $F$ acts on the fibers of $F\,,$ this section gives us the desired identification $H\cong \text{ker }F\times G\,.$
\end{proof}
 \section{Split Fibrations}
We define a splitting of a Lie groupoid $A\rightrightarrows A^0$ to be an embedding of $A$ as the diagonal of a double Lie groupoid, ie.
\begin{equation}
    \begin{tikzcd}
A \arrow[r, shift right] \arrow[r, shift left] \arrow[d, shift left] \arrow[d, shift right] & C \arrow[d, shift right] \arrow[d, shift left] \\
B \arrow[r, shift left] \arrow[r, shift right]                                              & A^0                                           
\end{tikzcd}
\end{equation}
where the source and target maps of $A\rightrightarrows A^0$ are equal to the double source and double target maps of the above double groupoid (for more on obtaining a simplicial manifold from the diagonal of a bisimplicial manifold, see \cite{mehtatang}).
\vspace{3mm}\\Now given a map $F:E\to G$ equipped with a compatible $G$-action as in \Cref{fibration property}, we get a splitting of $E\rightrightarrows E^0\,.$ The double groupoid is essentially an action groupoid, which we will describe below:
\begin{equation}
\begin{tikzcd}
E \arrow[r, shift right] \arrow[r, shift left] \arrow[d, shift right] \arrow[d, shift left] & E^0\rtimes G \arrow[d, shift right] \arrow[d, shift left] \\
\text{Ker }F \arrow[r, shift right] \arrow[r, shift left]                                   & E^0                                                      
\end{tikzcd}
\end{equation}
\begin{itemize}
    \item The groupoid on the bottom row is just the subgroupoid $\text{Ker }F$ of $E\,.$
\item Now for the groupoid in the left column: we have an identifiation of $E$ with $\text{Ker }F\sideset{_F}{_s}{\mathop{\times}}G\,,$ and associated to this identification is an action groupoid of $G$ on $\text{Ker }{F}\,.$ The action is defined as follows: let $e\in \text{Ker }F\,,$ and let $g\in G$ be such that $s(g)=F(e)\,.$ Then $(e,F(e))\cdot g=(e', g)\,,$ and we define $e\cdot {g}=e'\,.$ 
\item Now for the groupoid in the right column: let $e^0\in E^0\,,$ and $g$ be such that $s(g)=F(e^0)\,.$ We can identify $e^0$ with the identity morphism in $E\,,$ denoted $\iota(e^0)\,,$ and we define $e^0\cdot g:=t(\iota(e^0)\cdot g)$
\item Finally for the groupoid in the top row. There is an action of $\text{Ker }F$ on $E^0\rtimes G\,,$ defined as follows: suppose $s(e)=e^0\,,$ we then define $e\cdot (e^0,g)=(t(e),g)\,.$
\end{itemize}
We can think of this groupoid as an action groupoid of a groupoid on another groupoid, ie. $G$ acts on $\text{ker } F\rightrightarrows E^0\,.$ Due to this discussion, we make the following definition:
\begin{definition}(see~\cite{Mackenzie}, definition 2.5.2)
We call a fibration $E\to G$ with a choice of $G$-action, as in \Cref{fibration property}, a split fibration.
\end{definition}
\begin{remark}
Note that our use of the term ``splitting" refers exclusively to the fact that the groupoid ``splits" into a double groupoid. Mackenzie (in~\cite{Mackenzie}, definition 2.5.2) coincidentally uses the terminology ``split fibration" in a way that seems to agree with how we use the term split fibration.
\end{remark}
\begin{comment}\begin{remark}Let us make the observation that $E\rightrightarrows E^0$ embeds into the double groupoid as the diagonal, with source and target maps being the double source and double target maps (whether you compose the horizontal source with the vertical source or vice versa doesn't matter since they agree, and the same is true for the target maps).
\end{remark}
\end{comment}
\begin{exmp}
Let $A\,,B$ be Lie groups, and consider the fibration $A\times B\to A\,.$ We have an action of $A$ on $A\times B\,,$ given by $a'\cdot (a,b)=(aa'^{-1}\,,b)\,,$ and so in particular the action on the kernel of this map is trivial. We then have the following splitting of $A\times B$:
\begin{equation}
    \begin{tikzcd}
A\times B \arrow[r, shift left] \arrow[r, shift right] \arrow[d, shift left] \arrow[d, shift right] & A \arrow[d, shift left] \arrow[d, shift right] \\
B \arrow[r, shift left] \arrow[r, shift right]                                                      & *                                             
\end{tikzcd}
\end{equation}
\end{exmp}
Previously we discussed what the fibrations of Lie groups are, and similarly one can ask what the split fibrations of Lie groups are. We have the following result: (see~\cite{Mackenzie})
\begin{proposition}
Let $f:H\to G$ be a map of Lie groups. Then $H$ is a split fibration if and only if it is a semidirect product of $\text{ker }f$ and $G\,.$ 
\end{proposition}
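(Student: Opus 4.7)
The plan is to prove both directions by identifying the data of a split fibration $F: H \to G$ with that of a group-theoretic splitting of the short exact sequence $1 \to \ker F \to H \to G \to 1$; once one has a homomorphism section $\sigma: G \to H$ of $F$, the standard decomposition $H \cong \ker F \rtimes_{\alpha} G$ via $(k,g) \mapsto \sigma(g) k$ with $\alpha_g(k) := \sigma(g) k \sigma(g)^{-1}$ closes the argument. The paper's description of a splitting as an embedding of $H$ as the diagonal of a double Lie groupoid specializes, in the Lie group case, to the ``$G$ acts on $\ker F$'' double groupoid, which is exactly the categorical shadow of a semidirect product.

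For the easier direction $(\Leftarrow)$, suppose $H = K \rtimes_\alpha G$ with projection $f:(k,g) \mapsto g$, so $\ker f = K$. I will take $\sigma(g) := (e_K, g)$, a group-homomorphism section of $f$, and define the $G$-action on $H$ by left multiplication through this section, namely $\mu(g', h) := \sigma(g') h$, which expands to $\mu(g', (k,g)) = (\alpha_{g'}(k), g' g)$. Direct verification gives $F(\mu(g', h)) = g' F(h)$ and the action axioms; by the earlier proposition characterizing fibrations of Lie groups, $F$ is a fibration, and the standard ``$G$ acts on $K$'' double Lie groupoid (with bottom row $K \rightrightarrows *$, right column $G \rightrightarrows *$, and with $H$ sitting as the diagonal) provides the required splitting.

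For the main direction $(\Rightarrow)$, suppose $F: H \to G$ is a split fibration with $G$-action $\mu$. Set $\sigma(g) := \mu(g, e_H)$. The compatibility $F(\mu(g, h)) = g F(h)$ gives $F \circ \sigma = \mathrm{id}_G$, so $\sigma$ is a smooth set-theoretic section; the action axiom gives $\sigma(g_1 g_2) = \mu(g_1, \sigma(g_2))$. The essential step is to prove the identity $\mu(g, h) = \sigma(g) \cdot h$ for all $g \in G$ and $h \in H$. Granting this, $\sigma(g_1 g_2) = \mu(g_1, \sigma(g_2)) = \sigma(g_1)\sigma(g_2)$, so $\sigma$ is a group homomorphism, and the splitting of the short exact sequence $1 \to \ker F \to H \to G \to 1$ yields $H \cong \ker F \rtimes_\alpha G$ with $\alpha_g(k) = \sigma(g) k \sigma(g)^{-1}$.

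The main obstacle is establishing $\mu(g, h) = \sigma(g) h$, and the plan is to extract it from the interchange law of the splitting double Lie groupoid: horizontal composition is the group operation of $H$, while vertical composition is the action-groupoid structure $G \ltimes \ker F$ on the left column. I will set up a $2 \times 2$ square with entries $h_1 = \sigma(g_1)$, $h_3 = e_H$, $h_2 = \mu(g_2, k)$, $h_4 = \mu(g_4, k)$ chosen so that the vertical composability conditions $s_v(h_1) = t_v(h_3)$ and $s_v(h_2) = t_v(h_4)$ hold (both reduce to statements about $e_K$ and the action-groupoid source/target). The vertical-first iterated composition collapses, using $\sigma(g_1) \star_v e_H = \sigma(g_1)$ and the action-groupoid formula $\mu(g_2, k) \star_v \mu(g_4, k) = \mu(g_2 g_4, k)$, to $\sigma(g_1) \mu(g_2 g_4, k)$; the horizontal-first composition collapses, using source-map compatibility of the double groupoid, to $\mu(g_1 g_2 g_4, k)$. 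Equating the two sides gives $\mu(g_1 g, k) = \sigma(g_1) \mu(g, k)$ for arbitrary $g := g_2 g_4$ and $k$, and since every $h \in H$ can be written as $\mu(g, k)$ via the manifold-level parameterization $H \cong G \times \ker F$ afforded by the fibration property, this is exactly the desired identity. The only technical subtlety will be the bookkeeping to ensure the source/target maps of the double groupoid make both composites simultaneously defined, which is built into the definition of splitting given earlier in the paper.
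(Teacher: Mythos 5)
The paper gives no proof of this proposition (it simply cites Mackenzie), so there is nothing to compare your argument against; judged on its own, your $(\Leftarrow)$ direction is fine, but the $(\Rightarrow)$ direction has a real gap at exactly the step you flag as essential, namely $\mu(g,h)=\sigma(g)h$. In the splitting double groupoid that the paper builds from the action, neither composition is the ambient multiplication of $H$: the horizontal composition multiplies the $\ker F$-components of two elements lying in the \emph{same} $F$-fibre (so $\sigma(g_1)$ and $\mu(g_2,k)$ are not even horizontally composable unless $g_1=g_2$), and the vertical composition is that of the action groupoid $G\ltimes\ker F$. An interchange-law computation expressed purely in terms of $\star_h$ and $\star_v$ therefore cannot by itself output an identity involving the product $\sigma(g)\cdot h$ taken in $H$; when you write that the vertical-first composite ``collapses to $\sigma(g_1)\mu(g_2g_4,k)$'' you are silently identifying a horizontal composite with an $H$-product, which is essentially the identity you are trying to prove.

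The gap is not merely one of bookkeeping: with only the axioms the paper explicitly attaches to a split fibration (a fibration together with a $G$-action satisfying $F(g'\cdot h)=g'F(h)$), the proposition is false. Take $H$ the Heisenberg group with coordinates $(a,b,c)$, $F(a,b,c)=(a,b)\in\mathbb{R}^2$, and the action $(x,y)\cdot(a,b,c)=(a+x,b+y,c)$; this is a fibration with a compatible $G$-action, yet $H$ is not a semidirect product of its centre $\ker F=\mathbb{R}$ by $\mathbb{R}^2$ (any complement would force $H$ to be abelian). What is missing is the further compatibility $g\cdot(h_1h_2)=(g\cdot h_1)h_2$ --- this is part of Mackenzie's actual definition of an action on a fibration, and it is equivalent to requiring that the original multiplication of $H$ be recovered as the diagonal composition of the double groupoid, which is how the paper defines a splitting. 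Once that axiom is granted, no interchange computation is needed: $\mu(g,h)=\mu(g,e_H h)=\mu(g,e_H)\,h=\sigma(g)h$, hence $\sigma(g_1g_2)=\mu(g_1,\sigma(g_2))=\sigma(g_1)\sigma(g_2)$, and the splitting of $1\to\ker F\to H\to G\to 1$ follows exactly as you describe. So you should either invoke that axiom explicitly or prove that the diagonal-embedding condition implies it; the interchange law of the auxiliary double groupoid alone does not suffice.
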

\begin{exmp}
Consider a semidirect product $N\rtimes H\,.$ There is a natural morphism $H\to N\rtimes H\,,$ and this defines the action of $H$ on $N\rtimes H\,.$ The splitting of this group is then given by the following double groupoid:
\begin{equation}
    \begin{tikzcd}
N\rtimes H \arrow[r, shift left] \arrow[r, shift right] \arrow[d, shift left] \arrow[d, shift right] & H \arrow[d, shift left] \arrow[d, shift right] \\
N \arrow[r, shift left] \arrow[r, shift right]                                                      & *                                             
\end{tikzcd}
\end{equation}
Here the groupoid in the left column is just the action groupoid of $H$ acting on $N$ as a space, and the groupoid in the top row is just the action groupoid of $N$ acting trivially on $H$ as a space, ie. it is a bundle of Lie groups over $H$ (so the source and target maps are just the projection onto $H)\,.$
\end{exmp}
\subsection{The Canonical Split Fibration}
In the case that the fibration is of the form $G\times_G H\to G\,,$ there is a canonical $G$-action as in \Cref{fibration property}, and the associated splitting is given by
 \begin{equation}\label{replacement}
\begin{tikzcd}
H\ltimes P\rtimes G \arrow[r, shift left] \arrow[r, shift right] \arrow[d, shift right] \arrow[d, shift left] & P\rtimes G \arrow[d, shift right] \arrow[d, shift left] \\
H\ltimes P \arrow[r, shift right] \arrow[r, shift left]                                              & P                                           
\end{tikzcd}
\end{equation}
There is an advantage to thinking of $H\ltimes P\rtimes G$ as a double groupoid, since the fibers of the map
\begin{equation}
\begin{tikzcd}\label{fol}
H\ltimes P\rtimes G \arrow[d, shift left] \arrow[d, shift right] \arrow[r, shift left] \arrow[r, shift right] & P\rtimes G \arrow[d, shift left] \arrow[d, shift right] \arrow[r, shift right=6] & G \arrow[r, shift right] \arrow[r, shift left] \arrow[d, shift left] \arrow[d, shift right] & G \arrow[d, shift left] \arrow[d, shift right] \\
H\ltimes P \arrow[r, shift left] \arrow[r, shift right]                                                       & P                                                                                & G^0 \arrow[r, shift left] \arrow[r, shift right]                                            & G^0                                           
\end{tikzcd}    
\end{equation}
at each vertical level of the corresponding bisimplicial space are the fibers of the map $H\to G\,.$ This is not true for the map 
\begin{equation}
    \begin{tikzcd}
H\ltimes P\rtimes G \arrow[d, shift left] \arrow[d, shift right] \arrow[r, shift right=6] & G \arrow[d, shift left] \arrow[d, shift right] \\
P                                                                                         & G^0                                           
\end{tikzcd}
\end{equation}
where the fibers only appear as the kernel over an object in $G^0$ (and this will be true for any split fibration). Even worse, typically for morphisms $H\to G$ the fibers aren't embedded in $H$ in any way. In the context of this thesis, this is the main reasons for replacing a map $H\to G$ with a fibration; this will allow us to use results about simplicial/bisimplicial manifolds to study Lie groupoids.
\section{The Canonical Cofibration}\label{cofibration replacement}
The second construction one can make from a (nice enough) map $f:H\to G$ can be interpreted as replacing the map $f:H\to G$ with a cofibration; it may also be interpreted as presenting the stack $[G^0/G]$ by a double groupoid with base $H\rightrightarrows H^0\,.$  First we will motivate the construction.
\vspace{3mm}\\Suppose $Y\to X$ is a surjective submerison. In the category of manifolds isomorphisms are diffeomorphisms, therefore unless this map is also injective (making it a cofibration) there can be no cofibration replacement. However, in the category of Lie groupoids we have the submersion groupoid $Y\times_X Y\rightrightarrows Y\,,$ which is Morita equivalent to $X\,,$ and $Y\xhookrightarrow{} Y\times_X Y$ is an injection; in addition it is a cofibration. Therefore, in the category of Lie groupoids, we can replace a surjective submersion (or any submersion) between manifolds with a cofibration, and $Y\times_X Y\rightrightarrows Y$ may be called a cofibration replacement of $Y\to X\,.$ Of course, it also gives a presentation of the stack $[X/X]\,.$
\vspace{3mm}\\For maps of Lie groupoids, we will generalize the construction of the submersion groupoid. The object we will be replacing $G$ with is $H\times_G H\,,$ which though a priori is only a Lie groupoid, actually has the structure of a double Lie groupoid; this is analogous to how, given two submersions of manifolds $Y,Z\to X\,,$ the fiber product $Y\times_X Z$ is just a manifold - however, in the special case that $Y=Z\,,$ the fiber product inherits the structure of a groupoid. Explicitly, the double groupoid is given by
\begin{equation}\label{HH}
\begin{tikzcd}
H^{(1)}\sideset{_{f\circ s}}{_{t}}{\mathop{\times}} G^{(1)}\sideset{_s}{_{f\circ s}}{\mathop{\times}} H^{(1)} \arrow[d, shift right] \arrow[d, shift left] \arrow[r, shift right] \arrow[r, shift left] & H^{(0)}\sideset{_f}{_{t}}{\mathop{\times}} G^{(1)}\sideset{_s}{_{f}}{\mathop{\times}} H^{(0)} \arrow[d, shift right] \arrow[d, shift left] \\
H^{(1)} \arrow[r, shift right] \arrow[r, shift left]                                                                                                                                                    & H^0                                                                                                                                       
\end{tikzcd}
\end{equation}
Here, the bottom groupoid is simply $H$ and the top groupoid is $H\times_G H\,.$
Equivalently, we can write \ref{HH} in a condensed form as
\begin{equation}
 \begin{tikzcd}
H^{(1)}\times_G H^{(1)} \arrow[d, shift right] \arrow[d, shift left] \arrow[r, shift right] \arrow[r, shift left] & H^0\times_G H^0 \arrow[d, shift right] \arrow[d, shift left] \\
H^{(1)} \arrow[r, shift right] \arrow[r, shift left]                                                              & H^0                                                         
\end{tikzcd}
\end{equation}
The fiber product in the bottom row is with respect to the map $f^0:H^0\to G\,,$ and in the top row the fiber product is with respect to the map $f\circ s:H^{(1)}\to G\,.$ Using the strong pullback, we can write this double groupoid as
\begin{equation}
\begin{tikzcd}
(f^0\circ s)^!G \arrow[d, shift right] \arrow[d, shift left] \arrow[r, shift right] \arrow[r, shift left] & f^{0!}G \arrow[d, shift right] \arrow[d, shift left] \\
H^{(1)} \arrow[r, shift right] \arrow[r, shift left]                                                    & H^0                                              
\end{tikzcd}
\end{equation}
\begin{comment}\vspace{3mm}\\First for the bottom groupoid. This groupoid is $H^0\times_G H^0\rightrightarrows H^0\,,$ which is sometimes called the pullback and denoted $f^!G$ (even though it's actually a pushout, it can be thought of as a pullback). Explicitly, a morphism between $h^0\,,h^{0'}\in H^0$ is a morphism in $G$ between $f(h^0)\,,f(h^{0'})\,,$ ie. a triple 
\begin{equation}
(h^{0'},g,h^0)\in H^{(0)}\sideset{_f}{_{t}}{\mathop{\times}} G^{(1)}\sideset{_s}{_{f}}{\mathop{\times}} H^{(0)}
\end{equation}
with source and target $h^0\,,h^{0'}\,,$ respectively. The composition is given by $(h^{0''},g',h^{0'})\cdot (h^{0'},g,h^0)=(h^{0''},g'g,h^0)\,.$
\vspace{3mm}\\Now for the top groupoid. This groupoid is $H^{(1)}\times_G H^{(1)}\rightrightarrows H^1\,,$ sometimes denoted $(f\circ s)^!G\,.$ Morphisms consist of triples $(h^{2},h^1,g)$ such that $f\circ s(h^1)=s(g)\,,f\circ s(h^2)=t(g)\,.$ The source and target of $(h^{2},h^1,g)$ are $h^1$ and $h^2\,,$ respectively. The composition is given by $(h^{2'},h^{2},g')\cdot (h^{2},h^1,g)=(h^{2'},h^1,g'g)\,.$
\end{comment}
We will now make the following definitions:
\begin{definition}\label{canonical cofi}
Given a (nice enough) homomorphism $f:H\to G\,,$ we define $H\times_G H\rightrightarrows H$ to be the double Lie groupoid in \ref{HH}. We may also denote it by $f^!G\,.$ We will sometimes call this the canonical cofibration (associated to $f$).
\end{definition}
We will now explain what we mean by ``caononical cofibration". First we make a definition:
\begin{definition}
Let $f:H\to G$ be a morphism (here $H\,,G$ may be Lie groupoids or double Lie groupoids). A cofibration replacement for $f$ is given by a pair of maps $\iota:H\to K\,,F:K\to G$ (where $K$ is a Lie groupoid or double Lie groupoid), such that $\iota$ is a cofibration, $F$ is a fibration which is also a Morita map, and such that $f=F\iota\,.$
\end{definition}
We will now show that if $f:H\to G$ is essentially surjective then $H\times_G \rightrightarrows H$ is Morita equivalent to $G\rightrightarrows G^0\,;$ while there is no strict morphism between them, there is a natural roof. While we do this, we will also discuss the sense in which the map $H\xhookrightarrow{}H\times_G H\rightrightarrows{}H$ is a cofibration replacement for $H\to G\,.$
\vspace{3mm}\\Consider the map $H\ltimes P\rtimes G\to G\,.$ We can form the fiber product with respect to the objects and arrows to get the following double groupoid (with the appropriate fiber products in the top row):
\begin{equation}\label{cofi}
    \begin{tikzcd}
H\times H\ltimes(P\times P)\rtimes G \arrow[d, shift right] \arrow[d, shift left] \arrow[r, shift right] \arrow[r, shift left] & P\times_{G^0} P \arrow[d, shift right] \arrow[d, shift left] \\
H\ltimes P\rtimes G \arrow[r, shift right] \arrow[r, shift left]                                                               & P                                                           
\end{tikzcd}
\end{equation}
Now there is a natural morphism from \ref{cofi} to $G\rightrightarrows G^0\,,$ which we think of as a double groupoid in the following way:
\begin{equation}\label{G}
    \begin{tikzcd}
G \arrow[d, shift right] \arrow[d, shift left] \arrow[r, shift right] \arrow[r, shift left] & G^0 \arrow[d, shift right] \arrow[d, shift left] \\
G \arrow[r, shift right] \arrow[r, shift left]                                              & G^0                                             
\end{tikzcd}
\end{equation}
The map from \ref{cofi} to \ref{G} is given by projection onto $G\,.$ This map is a Morita equivalence (where we view the double groupoid as the groupoid in the left column over the groupoid in the right column).
\vspace{3mm}\\Now there is a natural inclusion from $H\rightrightarrows H^0$ to the double groupoid in \ref{cofi}, and composing this map with the map from \ref{cofi} to $G\rightrightarrows G^0$ gives us our original map $H\to G\,.$ Moreover, this inclusion is a cofibration. Therefore, the map from $H$ to \ref{cofi} is a cofibration replacement for $H\to G\,.$ In general, if the map $H\to G$ isn't essentially surjective but $H^0\to G^0$ is a submersion, we can form the disjoint union Lie groupoid $H\sqcup G^0$ which will map into $G\,,$ and will be essentially surjective and a submersion at the level of objects. We will now summarize this result :
\begin{proposition}
Let $f:H\to G$ be such that the induced map $H^0\to G^0$ is a submersion. Then a cofibration replacement for $f$ exists in the category of double Lie groupoids.
\end{proposition}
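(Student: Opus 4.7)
My plan is to verify, step by step, that the construction outlined immediately before the proposition actually produces a cofibration replacement in the sense defined at the start of Section~\ref{cofibration replacement}. First I would reduce to the essentially surjective case: if $f:H\to G$ is not essentially surjective, replace $H$ with the disjoint union $H\sqcup G^0$ (regarding $G^0\rightrightarrows G^0$ as the trivial groupoid) and extend $f$ by the identity bisection $G^0\hookrightarrow G$. The resulting map is still a groupoid homomorphism, is essentially surjective, and the map on objects, being a disjoint union of a submersion and the identity, is still a submersion. Since the original $H\hookrightarrow H\sqcup G^0$ is a Morita-type inclusion fitting into the equivalence diagrams of \eqref{morita maps}, any cofibration replacement of the modified map transports to one of the original by \Cref{canonical map}.

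Next I would carry out the construction of the double Lie groupoid $K$ of diagram~\eqref{cofi}, whose top row is $H\ltimes(P\times_{G^0} P)\rtimes G$ and bottom row is $H\ltimes P\rtimes G\rightrightarrows P$, where $P=G^{(1)}\sideset{_{t}}{_{f}}{\mathop{\times}}H^0$. The submersion hypothesis on $f^0$ ensures that $P$ and the iterated fibered products $P\times_{G^0}P$ are smooth manifolds, so $K$ exists as a double Lie groupoid; the double source map is a surjective submersion because it factors through the action-groupoid source maps, both of which are submersions. I would then define the morphism of double groupoids $F:K\to G$, with $G$ viewed as the double groupoid~\eqref{G}, by projecting the $P$- and $H$-factors onto $G^0$ and $G^{(1)}$. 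By construction $F$ is a split fibration in the sense of \Cref{fibration property} (the compatible $G$-action on every level comes from the right $G$-action on $H\ltimes P\rtimes G$), and it is a Morita map of double groupoids in the sense of \Cref{mordo} because on the left columns it restricts to the canonical Morita equivalence $H\ltimes P\rtimes G\simeq G$ established in~\eqref{fibration rep}.

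Finally, I would verify that the natural inclusion $\iota:H\hookrightarrow K$, sending an arrow $h$ to its image in the diagonal copy of $H$ inside $H\ltimes(P\times_{G^0}P)\rtimes G$ (using the identity arrows in $G$ and in $P$), is a cofibration in the (2,1)-category of double Lie groupoids and that $f=F\iota$. The factorization $f=F\iota$ is immediate from the definitions, so the substantive point is the 2-morphism extension property: given a morphism $\phi:K\to L$ of double Lie groupoids and a 2-morphism $\phi\iota\Rightarrow\phi'$, produce an extension $\phi\Rightarrow\tilde{\phi}$. Here I would use the split fibration structure of $F$ — the canonical $G$-action on $K$ — to transport the 2-morphism given on $H^0$ along the $P$- and $G$-factors to all of $K^0$, checking horizontal and vertical naturality separately. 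This last verification is the hard part: as pointed out in \Cref{generate}, the category of double Lie groupoids has both horizontal and vertical weak equivalences, and one must confirm that the extension is well-defined with respect to both groupoid structures. I expect this to reduce, after unwinding the action-groupoid definitions, to the elementary fact that the composition $H\xhookrightarrow{\iota} H\ltimes P\rtimes G\xrightarrow{p_1} H$ is the identity together with the canonical 2-morphism $c:\iota p_1\Rightarrow \mathbbm{1}$ recorded just after~\eqref{fibration rep}, applied levelwise in $K$.
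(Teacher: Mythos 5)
Your proposal follows essentially the same route as the paper: you form the double groupoid of diagram~\eqref{cofi} by taking the fiber product of the fibration replacement $H\ltimes P\rtimes G\to G$ with itself, observe that the projection to $G$ (viewed as the double groupoid~\eqref{G}) is a fibration and a Morita map while the inclusion of $H$ is a cofibration with $f=F\iota$, and handle the non-essentially-surjective case by passing to $H\sqcup G^0$ — exactly the paper's argument, with the fibration and cofibration verifications (which the paper asserts without proof) spelled out in more detail. The only quibble is a notational slip in your description of the top row, which should read $H\times H\ltimes(P\times P)\rtimes G$ as in~\eqref{cofi}.
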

\vspace{3mm}Now on the other hand, we also have a Morita equivalence from \ref{cofi} to $f^!G\,.$ Therefore $f^!G\,,$ by definition, is Morita equivalent to $G\,.$ In addition, the natural inclusion $H\xhookrightarrow{}f^!G$ is a cofibration. Now there isn't a morphism $f^!G\to G\,,$ so $H\xhookrightarrow{}f^!G$ isn't exactly a cofibration replacement for $H\xhookrightarrow{} G\,,$ it is almost just as good, so we will call it the canonical cofibration.
\vspace{3mm}\\Now we will state a sufficient condition for a map of Lie groupoids $H\to G$ to be a cofibration:
\begin{proposition}
Suppose $H\to G$ is a map of Lie groupoids such that the map on the space of objects is a diffeomorphism, then $f$ is a cofibration. In particular, all maps of Lie groups are cofibrations.
\end{proposition}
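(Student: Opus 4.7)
The plan is to exploit the hypothesis that $f^{0}\colon H^{0}\to G^{0}$ is a diffeomorphism by pushing the natural transformation forward along $(f^{0})^{-1}$. Unpacking the definition of cofibration: we are given a morphism $\phi\colon H\to K$, a natural transformation of $\phi$ encoded by a map $h_{H}\colon H^{0}\to K^{(1)}$ (with $s\circ h_{H}=\phi$ on objects), and a factorization $\phi=\tilde{\phi}\circ f$ through some $\tilde{\phi}\colon G\to K$. The obvious candidate for the required lift is
\[
h_{G}:=h_{H}\circ (f^{0})^{-1}\colon G^{0}\to K^{(1)},
\]
which is smooth because $f^{0}$ is a diffeomorphism, satisfies $h_{G}\circ f^{0}=h_{H}$ by construction, and has source matching $\tilde{\phi}$ on objects since $\tilde{\phi}^{0}=\phi^{0}\circ (f^{0})^{-1}=s\circ h_{G}$.

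Next I would invoke the convention recorded in the earlier remark: any map $h_{G}\colon G^{0}\to K^{(1)}$ satisfying $s\circ h_{G}=\tilde{\phi}$ on objects automatically determines a second morphism $\tilde{\phi}'\colon G\to K$ by the formula
\[
\tilde{\phi}'(g):=h_{G}(t(g))\cdot\tilde{\phi}(g)\cdot h_{G}(s(g))^{-1},
\]
so that $h_{G}$ becomes a 2-morphism $\tilde{\phi}\Rightarrow\tilde{\phi}'$. The only genuine content of the proposition is therefore (i) that $\tilde{\phi}'$ really is a Lie groupoid morphism, and (ii) that the induced 2-morphism restricts along $f$ to the given $h_{H}$.

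For (i), preservation of composition reduces to checking that $h_{G}(s(g_{1}))^{-1}\cdot h_{G}(t(g_{2}))$ is an identity arrow whenever $g_{1},g_{2}$ are composable in $G$; this is automatic because $s(g_{1})=t(g_{2})$ makes these arrows literally inverse to one another in $K^{(1)}$. Compatibility with source, target, and identities is immediate from the defining formula, and smoothness follows because $h_{G}$ and $\tilde{\phi}$ are smooth and composition/inversion in $K$ is smooth. For (ii), the object-level condition is precisely $h_{G}\circ f^{0}=h_{H}$, while the arrow-level condition $\tilde{\phi}'\circ f=\phi'$ (where $\phi'$ is the target of $h_{H}$) follows by substituting $g=f(h)$ into the formula for $\tilde{\phi}'$ and using the naturality square $h_{H}(t(h))\cdot\phi(h)=\phi'(h)\cdot h_{H}(s(h))$ that encodes $h_{H}$ being a 2-morphism of $\phi$.

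The final sentence of the proposition is a corollary: a map of Lie groups is a map between one-object Lie groupoids, so the diffeomorphism hypothesis on objects is vacuous and the above argument applies verbatim. There is no genuine obstacle; the content is really the observation that a 2-morphism out of a Lie groupoid is determined by its values on objects, together with the fact that the diffeomorphism hypothesis makes the pushforward of such data canonical and smooth.
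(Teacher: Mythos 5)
Your proof is correct. The paper actually states this proposition without proof, so there is nothing to compare against; your argument — transporting the object-level data of the 2-morphism along $(f^{0})^{-1}$ and then invoking the fact (recorded in the paper's earlier remark on conventions) that a map $h_{G}\colon G^{0}\to K^{(1)}$ with $s\circ h_{G}=\tilde{\phi}$ on objects determines a 2-morphism out of $\tilde{\phi}$ — is the evident and intended one, and your verifications of functoriality of $\tilde{\phi}'$ and of compatibility with $h_{H}$ along $f$ are accurate.
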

Homomorphisms which aren't diffeomorphisms at the level of objects are often not cofibrations (in general, a condition for a map $H\to G$ to be a cofibration is probably that the map $H^0\to G^0$ is a closed embedding). Here we will give an example:
\begin{exmp}
Let $G\rightrightarrows *$ be a Lie group, and consider the identity morphism $G\to G\,.$ We can consider the trivial groupoid $G\rightrightarrows G\,,$ and there is a unique homomorphism $f$ mapping into $G\rightrightarrows *\,,$ which sends everything to $*\,.$ We get a natural transformation $f\Rightarrow f$ by sending the space of objects of $G\rightrightarrows G$ to the space of arrows of $G\rightrightarrows *$ using the identity map. Now $f$ factors through the identity morphism, therefore the identity morphism extends this map, however there can be no extension of this natural transformation since $G\rightrightarrows *$ has only one object, so the identity map $G\to G$ can't factor through it.
\end{exmp}
\begin{comment}We will now state a proposition summarizing the existence result about cofibration replacements:
\begin{proposition}
Let $f:H\to G$ be a map of Lie groupoids such that the map 
\begin{equation}
G^{(1)}\sideset{_s}{_f}{\mathop{\times}}H^0\to G^0\,, (g,h^0)\mapsto t(g)
\end{equation}
is a surjective submersion. Then a cofibration replacement for $f$ exists in the category of double Lie groupoids.
\end{proposition}
\end{comment}
\begin{remark}
Due to these results about fibrations and cofibrations, one can probably put a model structure on the category of $\infty$-fold Lie groupoids (that is, the category consisting of Lie groupoids, double groupoids, triple groupoids, etc.), with respect to a certain nice class of maps (ie. submersions at the level of objects).
\end{remark}
\subsection{``Relative" Lie Groupoid Cohomology}
There is a global version of relative Lie algebra cohomology, and there is also a Lie groupoid analogue of this which we will now discuss. We put relative in quotations as there doesn't appear to be a long exact sequence associated with this cohomology which involves a groupoid and a subgroupoid. However, in \cref{analogies} we will exhibit a cohomology which does fit into such a long exact sequence. 
\vspace{3mm}\\Once again, one can think about the canonical cofibration associated to a map $H\to G$ as presenting the stack $[G^0/G]$ as a double groupoid over $H\rightrightarrows H^0\,.$ This is a useful construction to make when comparing the cohomology of two groupoids as it assembles both groupoids into a single object.
\begin{exmp}
Let's specialize \ref{HH} to the case where $H\xhookrightarrow{}G$ is a wide subgroupoid. In this case, the double groupoid is 
\begin{equation}\label{properr}
    \begin{tikzcd}
H^{(1)}\sideset{_s}{_t}{\mathop{\times}} G^{(1)}\sideset{_s}{_s}{\mathop{\times}} H^{(1)} \arrow[d, shift right] \arrow[d, shift left] \arrow[r, shift right] \arrow[r, shift left] & G^{(1)} \arrow[d, shift right] \arrow[d, shift left] \\
H^{(1)} \arrow[r, shift right] \arrow[r, shift left]                                                                          & G^0                                                 
\end{tikzcd}
\end{equation}
Notice that, if $H$ is proper, then the groupoids in all of the rows of the corresponding bisimplicial manifold are proper. Since the cohomology of a proper groupoid with values in a represenation vanishes in positive degree, the cohomology of \ref{properr} reduces to the cohomology of the right column, and thus one can work with cocycles for which the  pullback by $\delta_h^*$ is trivial. Therefore, $H^*(G,E)$ is isomorphic to the cohomology of the subcomplex of $Z(G,E)$ consisting of functions $f:G^{(n)}\to E$ such that $\delta_h^*f=0\,.$ These are functions such that, 
\begin{equation}\label{naive}
f(g_1,g_2,\ldots,g_n)=f(h_1g_1h_2^{-1},h_2g_2h_3^{-1},\ldots,h_ng_nh_{n+1}^{-1})\,,
\end{equation}
whenever the expression on the right makes sense. In degree $0$ we get functions invariant under the action of $H\,,$ ie. $f(s(h))=f(t(h))\,.$ 
\vspace{3mm}\\Notice that this double groupoid relates the cohomology of $H,G$ and the ``$H$-invariant" (or ``relative") cohomology of $G\,,$ given by the kernel of $\delta_h^*\,.$
It also relates the cohomology of $H,G$ and the cohomology of the mapping cone (to be defined in \cref{analogies}), which in this case may be interpreted as the relative cohomology (of $G$ relative to $H\,$).
\end{exmp}
Let's rephrase what was previously said about reducing the cohomology of the double groupoid to that of the right column. With any double complex there is an associated spectral sequence; actually, there are two, but we will focus on the one where we compute the first page using the horizontal differentials, and the second page is then computed using  the vertical differentials. This spectral sequence converges to the cohomology of the total complex, which in this case is the cohomology of $G$ (assuming Morita invariance of cohomology of double groupoids). In the case that $H$ is proper, this spectral sequence collapses on the second page to the first column. Summarizing this:
\begin{proposition}
Let $G$ be a Lie groupoid with a representation $E\,,$ and let $K$ be a wide and proper Lie subgroupoid. Then $H^*(G,E)$ is isomorphic to the cohomology of the subcomplex of functions $f:G^{(n)}\to E$ consisting of those functions which satisfy 
\begin{equation}\label{k invariant}
f(g_1,g_2,\ldots,g_n)=f(k_1g_1k_2^{-1},k_2g_2k_3^{-1},\ldots,k_ng_nk_{n+1}^{-1})\,,
\end{equation}
whenever the expression on the right makes sense. In degree $0$ we get functions invariant under the action of $K\,,$ ie. $f(s(k))=f(t(k))$ (here $g_i\in G^{(1)}\,, k_i\in K^{(1)})\,.$
\end{proposition}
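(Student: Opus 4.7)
The plan is to apply the canonical cofibration construction of Section~\ref{cofibration replacement} to the wide inclusion $K\xhookrightarrow{}G$. First, I would form the double Lie groupoid $K\times_G K\rightrightarrows K$ as displayed in~\eqref{properr}. Because $K$ is wide, the inclusion is essentially surjective at the level of objects, so the zig-zag of Morita maps constructed in Section~\ref{cofibration replacement} (through the intermediate double groupoid $K\ltimes P\rtimes G$, with $P=G^{(1)}$ in this case) connects this double groupoid to $G\rightrightarrows G^0$ viewed as a trivial double groupoid. Invoking Morita invariance of cohomology in the appropriate sense for double Lie groupoids, with coefficients pulled back from the representation $E$, gives an identification of $H^*(G,E)$ with the cohomology of this double groupoid.

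Next, I would compute the latter via the standard double complex associated to the bisimplicial nerve, running the spectral sequence that first takes cohomology in the horizontal direction (coming from $K$) and then in the vertical direction (coming from $G$). At each vertical level $q$, the horizontal groupoid is built from $K$ acting diagonally on $G^{(q)}$: a cochain in bidegree $(p,q)$ is a function $K^{(p)}\sideset{}{_s}{\mathop{\times}} G^{(q)}\to E$ of the expected source-matching type, and the horizontal differential is the alternating sum of face maps from the $K$-nerve. Since $K$ is proper, each of these horizontal rows is a proper Lie groupoid with coefficients in a representation, so by the vanishing result for proper groupoid cohomology stated as~\Cref{vanishing cohomology prop}, the horizontal cohomology is concentrated in degree $0$, namely the $K$-invariants.

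The $E_2$ page of the spectral sequence is therefore concentrated in the first column, and the spectral sequence collapses there, so $H^*(G,E)$ is isomorphic to the cohomology of the subcomplex of $K$-invariant cochains on the nerve of $G$. The final step is to identify this invariant subcomplex with the one described in the statement: the diagonal action of $K^{(n+1)}$ on $G^{(n)}$ induced by the bisimplicial structure sends $(k_1,\ldots,k_{n+1})\cdot(g_1,\ldots,g_n)$ to $(k_1 g_1 k_2^{-1},\ldots,k_n g_n k_{n+1}^{-1})$ whenever these expressions are defined, so $K$-invariance of $f\colon G^{(n)}\to E$ is precisely condition~\eqref{k invariant}, specializing in degree $0$ to $f(s(k))=f(t(k))$.

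The main obstacle is the first step: Morita invariance of cohomology for double Lie groupoids requires care, since, as noted in~\Cref{generate}, the correct class of weak equivalences in this setting is subtle. One must verify that the particular zig-zag produced by the cofibration replacement above does induce an isomorphism on cohomology with coefficients in a representation, rather than merely on the underlying (horizontal or vertical) groupoid cohomologies. A supporting difficulty is checking that the horizontal rows are genuinely proper, which follows from properness of $K$ because the relevant strong fiber products only involve iterated fiber products of $K^{(1)}$ with itself along source/target maps of $K$ and $G$, which inherit properness from $K$.
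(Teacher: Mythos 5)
Your proposal is correct and follows essentially the same route as the paper: form the canonical cofibration double groupoid~\eqref{properr} for $K\xhookrightarrow{}G$, invoke Morita invariance of double groupoid cohomology to identify its cohomology with $H^*(G,E)$, and run the spectral sequence of the associated double complex taking horizontal ($K$-direction) cohomology first, which collapses to the $K$-invariant subcomplex by properness of the rows and~\Cref{vanishing cohomology prop}. The caveats you flag (Morita invariance for double groupoids, properness of each row) are exactly the points the paper itself leaves as assumptions or brief observations.
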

\begin{remark}
The subcomplex of functions which satisfy \Cref{naive} seems to be the complex of functions on the ``naive" quotient of $G$ by by the double groupoid $\text{Pair}(H)\,.$
\end{remark}
\subsection{LA-Groupoid Associated to the Canonical Cofibration}
In the previous section we discussed replacing a nice enough map $f:H\to G$ with a cofibration. Now one can ask: what is the LA-groupoid associated to the canonical cofibration $H\times_G H\rightrightarrows H\,?$ It is given by the following:
\begin{equation}\label{LAg}
    \begin{tikzcd}
(f\circ s)^!\mathfrak{g} \arrow[d] \arrow[r, shift right] \arrow[r, shift left] & f^!\mathfrak{g} \arrow[d] \\
H^{(1)} \arrow[r, shift right] \arrow[r, shift left]                            & H^0                      
\end{tikzcd}
\end{equation}
We will discuss this LA-groupoid more in \Cref{equiv la group}. We may denote it $f^!\mathfrak{g}\,.$ In light of this, we see that it can be useful to replace a map with the canonical cofibration even if the map is already a cofibration.
\vspace{3mm}\\Now let's specialize \ref{LAg} to the case that $f:H\to G$ is an  inclusion of Lie groups. The resulting LA-groupoid is the following:
\begin{equation}
\begin{tikzcd}\label{LAgroup}
H\ltimes_{\text{Ad}}\mathfrak{h}\ltimes\mathfrak{g} \arrow[r, shift left] \arrow[r, shift right] \arrow[d] & \mathfrak{g} \arrow[d] \\
H \arrow[r, shift left] \arrow[r, shift right]                                         & *                 
\end{tikzcd}
\end{equation}
\vspace{3mm}\\Now in the case that $H\subset Z(G)\,,$ something special happens if the sequence $0\to\mathfrak{h}\to\mathfrak{g}\to\mathfrak{g}/\mathfrak{h}\to 0$ splits as a direct sum. This can be useful for computing cohomology as it gives a simpler model of the LA-groupoid:
\begin{proposition}\label{semidirect}
Suppose that $\mathfrak{h}\subset Z(\mathfrak{g})$ (the center of $\mathfrak{g}$) and that $\mathfrak{g}\cong\mathfrak{h}\oplus\mathfrak{g}/\mathfrak{h}\,.$
Then the map $(h,[X])\mapsto (h,0,[X])$ induces a Morita map of LA-groupoids between 
\begin{equation}
\begin{tikzcd}
H\times\mathfrak{g}/\mathfrak{h} \arrow[r, shift left] \arrow[r, shift right] \arrow[d] & \mathfrak{g}/\mathfrak{h} \arrow[d] \\
H \arrow[r, shift left] \arrow[r, shift right]                                         & *                 
\end{tikzcd}
\end{equation}
and \ref{LAgroup}. Here, the Lie algebroid on the left is just a trivial bundle of Lie algebras, ie. the pullback of $\mathfrak{g}/\mathfrak{h}\to *\,.$
\end{proposition}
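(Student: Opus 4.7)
\emph{Proof sketch.} The plan is to exhibit the map concretely via a choice of Lie-algebra splitting, check it is a morphism of LA-groupoids, and then reduce the Morita requirement of \Cref{defLAM} to an elementary essential-surjectivity and fully-faithfulness verification on the top row.

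Fix a Lie-algebra section $\sigma\colon\mathfrak{g}/\mathfrak{h}\hookrightarrow\mathfrak{g}$; this exists because the splitting is assumed to be as Lie algebras. Since $\mathfrak{h}\subset Z(\mathfrak{g})$, the adjoint action of (the identity component of) $H$ on $\mathfrak{g}$ is trivial, so the $H\ltimes_{\mathrm{Ad}}\mathfrak{h}$-action defining the top row of \ref{LAgroup} simplifies to $(h,X_\mathfrak{h})\cdot Y=Y+X_\mathfrak{h}$. The candidate map $F$ is then: the identity on the base $H$; on the top row, $F^{11}(h,[X])=(h,0,\sigma[X])$; fiberwise on the left algebroid column, $[X]\mapsto(0,\sigma[X])\in\mathfrak{h}\oplus\mathfrak{g}$; and on the right algebroid column, $\sigma$ itself.

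I would first verify that $F$ is a morphism of LA-groupoids. Compatibility of $F^{11}$ with source and target reduces, after the $\mathrm{Ad}$ simplification, to $\sigma[X]+0=\sigma[X]$, matching the trivial source and target of $H\times\mathfrak{g}/\mathfrak{h}\rightrightarrows\mathfrak{g}/\mathfrak{h}$; preservation of composition and identities is then immediate from the group structure on $H$. Compatibility with the Lie-algebroid data is automatic, because both left columns are trivial bundles of Lie algebras ($\mathfrak{g}/\mathfrak{h}$ and $\mathfrak{h}\oplus\mathfrak{g}$ respectively) with zero anchor, and the fiberwise map $[X]\mapsto(0,\sigma[X])$ is a Lie-algebra homomorphism precisely because $\sigma$ is one.

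Second, I would verify the Morita condition by checking that $F^{11}$ is a Morita equivalence of Lie groupoids. Essential surjectivity through the target map: given $Y\in\mathfrak{g}$, write $Y=Y_\mathfrak{h}+\sigma[X]$ via the splitting; then $(e,Y_\mathfrak{h},\sigma[X])$ is an arrow in the codomain from the object $\sigma[X]$, which lies in the image of $F^{11}$ on objects, to $Y$. Fully-faithfulness: an arrow $\sigma[X]\to\sigma[X']$ in the codomain is a pair $(h,X_\mathfrak{h})$ with $\sigma[X]+X_\mathfrak{h}=\sigma[X']$; the direct-sum decomposition $\mathfrak{g}=\mathfrak{h}\oplus\sigma(\mathfrak{g}/\mathfrak{h})$ forces $X_\mathfrak{h}=0$ and $[X]=[X']$, so the hom-set collapses to $\{(h,0):h\in H\}\cong H$, exactly the hom-set in the source groupoid.

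The main obstacle is just identifying the correct notion of ``Morita map of LA-groupoids'' (namely \Cref{defLAM}), so that only the top row needs to be tested; once that is in hand the check reduces to the elementary linear algebra above. A secondary technical nuisance is the triviality $\mathrm{Ad}_H=\mathrm{Id}_\mathfrak{g}$ used throughout, which strictly requires $H$ to be connected; otherwise one either passes to the identity component (which suffices for Morita considerations) or treats the disconnected case component by component.
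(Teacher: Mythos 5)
The paper states this proposition without proof, so there is nothing of the author's to compare your argument against; your write-up supplies the missing verification and is essentially correct. Two remarks. First, your observation that $\mathrm{Ad}_h=\mathrm{Id}_{\mathfrak{g}}$ follows from $\mathfrak{h}\subset Z(\mathfrak{g})$ only for $h$ in the identity component is more than a ``technical nuisance'': for disconnected $H$ the prescribed map can fail to be compatible with source and target at all, since $(h,0,\sigma[X])$ has source $\sigma[X]$ and target $\mathrm{Ad}_h\sigma[X]$, which need not agree, whereas in the domain groupoid source equals target. (Concretely, $G=\mathbb{R}^2\rtimes\mathbb{Z}/2$ with the involution $(x,y)\mapsto(x,-y)$ and $H=(\mathbb{R}\times 0)\times\mathbb{Z}/2$ gives a counterexample.) So connectedness of $H$ should be read as an implicit hypothesis of the statement, not something one can recover ``component by component''. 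Second, a small slip: the left column of \ref{LAgroup} is the product Lie algebroid $TH\times\mathfrak{g}\to H\,,$ whose anchor is the projection onto $TH$ and hence is not zero, so your premise that both left columns have zero anchor is wrong for the codomain. Your conclusion survives because the map sends $\mathfrak{g}/\mathfrak{h}$ into $\{0\}\times\mathfrak{g}\,,$ i.e.\ into the kernel of that anchor, so anchor-compatibility holds, and bracket-compatibility holds because (as you require) the splitting $\sigma$ is a morphism of Lie algebras. With these caveats, the reduction to the top row via the definition of Morita map in \Cref{defLAM}, together with your essential-surjectivity and fully-faithfulness computations using $\mathfrak{g}=\mathfrak{h}\oplus\sigma(\mathfrak{g}/\mathfrak{h})\,,$ is exactly what is needed.
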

For completion, we will state the global analogue of the previous proposition:
\begin{proposition}
Suppose $N\subset Z(G)$ and that $G\cong N\times G/N\,.$ Then letting $\iota:N\to G$ be the inclusion map, we have that $\iota^!G$ is Morita equivalent to the following double groupoid:
\begin{equation}
\begin{tikzcd}
N\times G/N \arrow[d, shift right] \arrow[d, shift left] \arrow[r, shift right] \arrow[r, shift left] & G/N \arrow[d, shift right] \arrow[d, shift left] \\
N \arrow[r, shift right] \arrow[r, shift left]                                                        & *                                               
\end{tikzcd}
\end{equation}
Here the groupoids in the top row and left column are trivial bundles of groups. Note that this is a splitting of $N\times G/N\rightrightarrows *\,.$
\end{proposition}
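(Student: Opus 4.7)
The plan is to exhibit an explicit morphism $F$ of double Lie groupoids from the simpler double groupoid on the right in the statement to $\iota^!G\,,$ and then verify that $F$'s top-row component is a Morita equivalence of Lie groupoids; by \Cref{mordo} this suffices.

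Using the product splitting $G\cong N\times G/N\,,$ let $\sigma:G/N\to G$ be the group homomorphism $[g]\mapsto(e,[g])\,.$ Define $F=(F^{00},F^{01},F^{10},F^{11})$ in the sense of \Cref{mor double} by $F^{00}=\mathrm{id}_N\,,\,F^{10}=\mathrm{id}_*\,,\,F^{11}=\sigma$ and
\begin{equation*}
F^{01}:N\times G/N\to N\times G\times N\,,\;(n,[g])\mapsto(n,\sigma([g]),n)\,.
\end{equation*}
Recall that an arrow $(n_1,g,n_2)$ in the top row of $\iota^!G$ has source $g$ and target $n_1gn_2^{-1}\,,$ while in the left column it has source $n_2$ and target $n_1\,.$ Checking that $F$ is a morphism of double Lie groupoids is routine: centrality of $N$ forces $n\sigma([g])n^{-1}=\sigma([g])\,,$ matching the fact that $(n,[g])$ is a loop at $[g]$ in the simpler top row; horizontal composition $(n_1,\sigma([g]),n_1)\circ(n_2,\sigma([g]),n_2)=(n_1n_2,\sigma([g]),n_1n_2)$ reflects $(n_1,[g])(n_2,[g])=(n_1n_2,[g])\,;$ and vertical composition in the left column reduces to $\sigma([g_1])\sigma([g_2])=\sigma([g_1][g_2])\,,$ which holds since $\sigma$ is a homomorphism.

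To verify the top-row map $N\times G/N\rightrightarrows G/N\to N\times G\times N\rightrightarrows G$ is a Morita equivalence, I would check essential surjectivity and full faithfulness in the smooth sense. For essential surjectivity: writing any $g\in G$ uniquely as $g=g_N\sigma([g])$ with $g_N\in N\,,$ the arrow $(g_N,\sigma([g]),e)$ takes $\sigma([g])$ to $g\,,$ and the induced map $G^{(1)}\sideset{_s}{_\sigma}{\mathop{\times}}G/N\to G$ factors as $(n_1,[g'],n_2)\mapsto(n_1n_2^{-1},[g'])$ under $G\cong N\times G/N\,,$ which is a surjective submersion. For full faithfulness: an arrow $(n_1,g,n_2):\sigma([g_1])\to\sigma([g_2])$ forces $g=\sigma([g_1])$ and $n_1n_2^{-1}=\sigma([g_2][g_1]^{-1})\,,$ but the left-hand side lies in $N$ while $\sigma(G/N)\cap N=\{e\}\,,$ so $[g_1]=[g_2]$ and $n_1=n_2\,;$ hence the arrow is precisely $F^{01}(n_1,[g_1])\,,$ giving the required diffeomorphism $N\times G/N\cong(G/N\times G/N)\sideset{_{\sigma\times\sigma}}{_{(s,t)}}{\mathop{\times}}(N\times G\times N)\,.$

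The main technical point is simply disentangling the source/target/composition conventions for the top row of $\iota^!G\,,$ after which centrality of $N$ and the homomorphism property of $\sigma$ make everything fall out mechanically. Note that the left-column component of $F$ is \emph{not} a Morita equivalence: the left column of $\iota^!G$ is the transitive pullback of $G\rightrightarrows *$ along $N\to *\,,$ while the simpler left column is the intransitive trivial bundle of groups $G/N$ over $N\,.$ Hence the ``or left columns'' alternative in \Cref{mordo} cannot be invoked here, consistent with the caveat in \Cref{generate}.
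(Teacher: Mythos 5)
Your proof is correct. The paper states this proposition without proof (it is offered only as the global analogue of Proposition~\ref{semidirect}, whose map $(h,[X])\mapsto(h,0,[X])$ is likewise left unverified), and your morphism $(n,[g])\mapsto(n,\sigma([g]),n)$ is exactly the group-level counterpart of that map, so you are carrying out the intended argument; the checks of the source/target/composition compatibilities, the surjective-submersion condition, and the Cartesian square are all accurate, and your observation that only the top-row clause of \Cref{mordo} applies (the left-column component being a transitive pullback of $G$ rather than a bundle of groups) is a worthwhile point the paper does not make explicit.
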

\section{Analogies Between Lie Groupoids and Homotoy Theory}\label{analogies}
Here we will just collect some observations the author believes to display analogies between (double) Lie groupoids and homotopy theory (which are more than analogies when thinking about topological spaces as being equivalent to their fundamental $\infty$-groupoid). Already to make some of these constructions we've had to exit the category of Lie groupoids and enter the category of double Lie groupoids; in this section we will, in a sense, have to leave the category of double Lie groupoids (depending on how you interpret the constructions). We will in particular discuss mapping cones, relative cohomology and suspension.
\vspace{3mm}\\We have already discussed fibrations and cofibrations, and the canonical fibration and cofibration replacements, which are analogous to the mapping path space and the mapping cylinder. Another construction we could make is of the mapping cone: given a map $f:H\to G$ of Lie groupoids which is a \hyperref[surj stacks]{surjective submersions of stacks} (ie. $f$ is essentially surjective and is a submersion at the level of objects), we can form the canonical cofibration and then collapse the base to a point. We get the following (semi-) bisimplicial space\footnote{To be precise, it is a semi-simplicial manifold in the category of simplicial manifolds.}:
\begin{equation}\label{cone}
\begin{tikzcd}
H^{(1)}\sideset{_{f\circ s}}{_{t}}{\mathop{\times}} G^{(1)}\sideset{_s}{_{f\circ s}}{\mathop{\times}} H^{(1)} \arrow[d, shift right] \arrow[d, shift left] \arrow[r, shift right] \arrow[r, shift left] & H^{(0)}\sideset{_f}{_{t}}{\mathop{\times}} G^{(1)}\sideset{_s}{_{f}}{\mathop{\times}} H^{(0)} \arrow[d, shift right] \arrow[d, shift left] \\
* \arrow[r, shift right] \arrow[r, shift left]                                                                                                                                                    & *                                                                                                                                       
\end{tikzcd}
\end{equation}
Now in the case that $H\to G$ isn't essentially surjective but is still a submersion at the level of objects, one can form the canonical cofibration  by forming the canonical cofibration of $H\sqcup G^0\to G$ instead, and then one can collapse $H$ in the base to a point. We will denote \ref{cone} by $C(f)\,.$  We can compute the cohomology of $C(f)\,.$ If $H\xhookrightarrow{} G$ is a subgroupoid, one might call this the relative cohomology of $G$ (relative to $H$).
\vspace{3mm}\\Now we can compute the cohomology of \ref{cone}. Since we've collapsed the bottom row to a point, we are going to shift the degree of cohomology by one, so that what we would naturally call degree $1$ is now degree $0\,.$ If the canonical cofibration is analogous to the mapping cylinder, then the above construction should be analogous to the mapping cone. To support this idea, we have the following proposition:
\begin{proposition}
Let $f:H\to G$ be a morphism which is a submersion at the level of objects. We get the following long exact sequence (where the coefficients are associated to $M$):
\begin{equation}
\cdots \to H^{n}(G)\to H^{n}(H)\to H^{n}(C(f))\to  H^{n+1}(G)\to H^{n+1}(H)\to\cdots
\end{equation}
\end{proposition}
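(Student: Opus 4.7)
The plan is to realize the long exact sequence as the one arising from a short exact sequence of cochain complexes built out of the canonical cofibration $f^!G$ of \Cref{canonical cofi}. Since $f$ is a submersion at the level of objects, the bisimplicial manifold underlying the double groupoid $f^!G \rightrightarrows H$ is defined, and (possibly after replacing $H$ by $H\sqcup G^0$ as in the discussion preceding this statement) it is Morita equivalent, viewed as a double groupoid, to $G\rightrightarrows G^0$. Granting Morita invariance of cohomology in this double setting, the total cochain complex $C^{\bullet}(f^!G,M)$ of the associated bisimplicial manifold computes $H^{\bullet}(G,M)$.

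The next step is to set up the restriction map. The bottom row $H^{(\bullet)}$ embeds into the bisimplicial manifold of $f^!G$ as the zeroth vertical level, and this inclusion induces a surjective morphism of cochain complexes $r:C^{\bullet}(f^!G,M)\to C^{\bullet}(H,M)$. Surjectivity is the only subtlety; it follows from the existence of the canonical source (or target) projection $p:f^!G\to H$, which is a retraction of the inclusion at the level of the underlying bisimplicial manifolds and therefore gives a splitting of $r$ as a map of graded abelian groups (so, in particular, $r$ is level-wise surjective). This is precisely the analogue, in this context, of the standard section coming from a collapse of a sub-complex.

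Let $K^{\bullet}$ be the kernel of $r$, consisting of cochains that vanish when pulled back to the bottom row. By construction $K^{\bullet}$ is canonically identified with the normalized cochain complex of the bisimplicial object \ref{cone} obtained from $f^!G$ by collapsing its bottom row $H^{(\bullet)}$ to a point: a cochain on that quotient bisimplicial manifold which is normalized along the collapsed direction corresponds precisely to a cochain on $f^!G$ vanishing on $H^{(\bullet)}$. With the degree-shift convention $H^{n}(C(f),M):=H^{n+1}(K^{\bullet})$ adopted just above the statement, the short exact sequence
\begin{equation*}
0\longrightarrow K^{\bullet}\longrightarrow C^{\bullet}(f^!G,M)\longrightarrow C^{\bullet}(H,M)\longrightarrow 0
\end{equation*}
yields, via the standard snake-lemma long exact sequence and the identification $H^{\bullet}(C^{\bullet}(f^!G,M))\cong H^{\bullet}(G,M)$, exactly the claimed long exact sequence $\cdots\to H^n(G,M)\to H^n(H,M)\to H^n(C(f),M)\to H^{n+1}(G,M)\to\cdots$.

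The main obstacle I anticipate is the double-groupoid Morita invariance used to identify $H^{\bullet}(C^{\bullet}(f^!G,M))$ with $H^{\bullet}(G,M)$; this requires either invoking a version of \Cref{morita inv} at the level of double groupoids and their total bisimplicial manifolds, or directly exhibiting the required equivalence through the roof from $f^!G$ to $G$ built out of the canonical fibration replacement \ref{replacement}. Everything else (level-wise surjectivity of $r$, identification of $K^{\bullet}$ with the normalized cochains of the cone, naturality of the connecting maps) is formal homological algebra, once the correct splitting of the restriction to the bottom row is in hand.
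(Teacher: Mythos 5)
Your proposal is correct and is essentially the argument the paper intends: the paper itself gives no proof, only a description of the three maps, and those are exactly the maps induced by your short exact sequence $0\to K^{\bullet}\to C^{\bullet}(f^!G,M)\to C^{\bullet}(H,M)\to 0$ (restriction to the bottom row, pullback via $\delta_v^*$, and the connecting map coming from the inclusion of the cone into $f^!G$). One small simplification: surjectivity of $r$ needs no retraction, since $C^{n}(H,M)$ is the $q=0$ summand of the total complex and the vertical differential strictly raises $q$, so $r$ is just the projection onto a direct summand and is automatically a surjective chain map. The appeal to Morita invariance of double-groupoid cohomology to identify $H^{\bullet}(\mathrm{Tot}(f^!G),M)$ with $H^{\bullet}(G,M)$ is the same assumption the paper makes elsewhere (e.g.\ in its treatment of the canonical cofibration), so flagging it as the main gap is appropriate.
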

Here, the map $H^{n}(C(f))\to  H^{n+1}(G)$ is the one associated to the inclusion of $C(f)\xhookrightarrow{}f^!G;$ the map $H^{n}(G)\to H^{n}(H)$ is given by restricting the cohomology classes of $f^!G$ to the bottom row, ie. $H\rightrightarrows H^0\,;$ the morphism $H^{n}(H)\to H^{n}(C(f))$ is given by pulling back cohomology classes from $H$ to $C(f)$ by using the embedding of $H\rightrightarrows H^0$ into the bottom row of $f^!G$ and pulling back cohomology classes to the second row of $C(f)$ via $\delta^*_v\,.$
\vspace{3mm}\\Note that we in particular get a long exact sequence by taking a Lie groupoid $G\rightrightarrows G^0$ and leting $H=G^0\,.$ In this case, we get a long exact sequence relating the cohomologies of $G^0,[G^0/G]$ and the cohomology classes on $G$ corresponding to multiplicative objects (what was called ``truncated cohomology" in Part 1). That is, the truncated cohomology is the cohomology of the mapping cone $G^0\xhookrightarrow{}G\,.$ In this case the corresponding long exact sequence was first communicated to the author by Francis Bischoff.
\vspace{3mm}\\Finally, if we take the mapping cone of $H\to *$ we get the suspension of $H\,.$ Explicitly, this is given by collapsing the base $H$ of the double groupoid $\text{Pair}(H)\rightrightarrows H$ to a point.
\chapter{Foliations of Lie Groupoids and Stacks}
\section{The (2,1)-Category of Foliations}
As opposed to groupoids internal to the category of smooth manifolds, one can consider groupoids internal to the category of foliations, ie. a Lie groupoid $G\rightrightarrows G^0$ such that $G^0,\,G^{(1)}$ are foliated manifolds, and such that all structure maps are maps of foliations. There is a forgetful functor from groupoids internal to the category of foliations to the category of Lie groupoids. A foliation of a Lie groupoid $G\rightrightarrows G^0$ is essentially a lift of $G$ to a groupoid internal to foliated manifolds.
\begin{definition}
A foliation of a Lie groupoid $G\rightrightarrows G^0$ is a foliation of $G^0,\,G^{(1)}$ such that all structure maps are maps of foliations.
\end{definition}
Foliated Lie groupoids naturally form a (2,1)-category. A morphism of foliated groupoids 
\begin{equation*}
f:H\to G
\end{equation*}
is a morphism of groupoids which is also a degreewise map of foliations. 
A 2-morphism between $f_1\,,f_2:H\to G$ is a natural transformation $g_h:H^0\to G^{(1)}\,,$ $f_1\Rightarrow f_2$ such that $g_h$ is a map of foliated manifolds. 
\vspace{3mm}\\Of course, one can talk about Morita equivalences of foliations, this is a little bit more subtle. Before doing so, we will briefly go over another way of thinking about foliations:  Associated to a foliation of a Lie groupoid $G\rightrightarrows G^0$ is a Lie algebroid subbundle of the tangent bundle to $G\,,$
\begin{equation}
    \begin{tikzcd}
TG \arrow[r] \arrow[d, shift left] \arrow[d, shift right] & G \arrow[d, shift left] \arrow[d, shift right] \\
TG^0 \arrow[r]                                            & G^0                                           
\end{tikzcd}
\end{equation}
where the subbundle of $TG^0\,,TG^{(1)}$ consists of vectors tangent to the leaves of the foliation. This in particular gives a Lie algebroid groupoid. We will identify this LA-groupoid with the corresponding foliation of the Lie groupoid.
\begin{definition}
A Morita map of foliated Lie groupoids $H\to G$ is a map of foliated Lie groupoids for which the induced map 
\begin{equation}
    \begin{tikzcd}
TH \arrow[d, shift right] \arrow[d, shift left] \arrow[rr, shift right=7] &  & TG \arrow[d, shift right] \arrow[d, shift left] \\
TH^0                                                                       &  & TG^0                                            
\end{tikzcd}
\end{equation}
is a Morita map.
\end{definition}
\begin{proposition}\label{morita foliations}
Let $F:H\to G\,,$ $f:G\to H$ be morphisms of foliated groupoids, such that $f\circ F\Rightarrow \mathbbm{1}_H\,, F\circ f\Rightarrow \mathbbm{1}_G$ (the 2-morphisms are required to be compatible with the foliations). Then $F$ and $f$ induce Morita maps of foliations. 
\end{proposition}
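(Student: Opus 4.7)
The plan is to reduce the statement to the standard categorical fact that a morphism of Lie groupoids which admits a quasi-inverse, witnessed by smooth natural transformations on both sides, is automatically a Morita equivalence. The task then becomes showing that the hypotheses on $F$ and $f$ survive passage from the foliated Lie groupoids $H, G$ to the Lie groupoids sitting as the top row of their associated tangent LA-groupoids restricted to the foliation.

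First, I would unpack the definitions. The foliation of $G\rightrightarrows G^0$ is encoded as a sub-LA-groupoid $\mathcal{F}G\to G$ of the tangent LA-groupoid $TG\to G$, whose top row $\mathcal{F}G^{(1)}\rightrightarrows \mathcal{F}G^{0}$ is a Lie groupoid (the restriction of the tangent groupoid to the foliated sub-bundles). A Morita map of foliations means precisely that the induced map on these top rows is a Morita equivalence of Lie groupoids. A morphism $F:H\to G$ of foliated groupoids restricts to a morphism $\mathcal{F}F:\mathcal{F}H\to\mathcal{F}G$ of Lie groupoids because, by definition, $dF$ sends foliated vectors to foliated vectors and commutes with all structure maps; similarly for $f$.

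Second, I would differentiate the 2-morphisms. A natural transformation $f\circ F\Rightarrow \mathbbm{1}_H$ is given by a smooth map $g_H:H^{0}\to H^{(1)}$, which by hypothesis is a map of foliated manifolds; its tangent map $dg_H:TH^{0}\to TH^{(1)}$ then sends $\mathcal{F}H^{0}$ into $\mathcal{F}H^{(1)}$, giving a smooth map $\mathcal{F}H^{0}\to\mathcal{F}H^{(1)}$. By functoriality of the tangent construction, this map is a natural transformation $\mathcal{F}f\circ\mathcal{F}F\Rightarrow \mathbbm{1}_{\mathcal{F}H}$ in the (2,1)-category of Lie groupoids. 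The same construction with the 2-morphism $F\circ f\Rightarrow \mathbbm{1}_G$ yields a natural transformation $\mathcal{F}F\circ\mathcal{F}f\Rightarrow \mathbbm{1}_{\mathcal{F}G}$.

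Third, I would invoke the standard (2,1)-categorical fact: two Lie groupoid morphisms that are mutually quasi-inverse through smooth natural transformations are Morita equivalences (they induce a bijection on orbit spaces and isomorphisms on isotropy groups, the usual essentially-surjective-plus-fully-faithful criterion). Applying this to $\mathcal{F}F$ and $\mathcal{F}f$ with the natural transformations constructed above gives the result. The main thing to check carefully is the third step's inputs—that $dg_H$ restricted to the foliated sub-bundles is actually a groupoid-theoretic natural transformation of the tangent groupoids, and not merely a smooth map—but this is just the tangent functor applied to the original naturality square, so it is automatic; I expect no genuine obstacle beyond bookkeeping.
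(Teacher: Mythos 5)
The paper states this proposition without proof, so there is nothing to compare against line by line; your argument is correct and is evidently the one the paper intends: the compatibility hypotheses ensure that the tangent functor restricts to the foliation sub-bundles both on the morphisms $F,f$ and on the differentiated 2-morphisms, after which the claim reduces to the standard fact that a morphism of Lie groupoids admitting a quasi-inverse up to smooth natural transformations is a Morita map. The only cosmetic point is your parenthetical justification of that standard fact: the paper's Definition~\ref{moritamapjeff} of a Morita map asks for a surjective-submersion condition plus a cartesian square, not for a bijection of orbit spaces with isomorphic isotropy, so the clean verification uses the smooth section $g^0\mapsto\bigl(f(g^0),\epsilon_{g^0}\bigr)$ of $t\circ p_2$ supplied by the 2-morphism $F\circ f\Rightarrow\mathbbm{1}_G$ (translating it around each fibre by the groupoid multiplication yields local sections through every point, hence the submersion property) together with full faithfulness from the naturality squares --- routine bookkeeping, not a gap.
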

The previous discussion implies the following proposition:
\begin{proposition}
Associated to a foliation of a Lie groupoid $H$ is a sub LA-groupoid of the tangent LA-groupoid $TH\to H\,.$ We will denote this sub LA-groupoid by $D\to H\,,$ or $D_H\to H$ if there is risk of any confusion.
\end{proposition}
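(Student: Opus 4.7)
The plan is to unpack the definitions and observe that everything follows from functoriality of the tangent bundle together with the stated compatibilities. A foliation of a manifold $M$ is the same data as an involutive subbundle $D_M\subseteq TM$, and a smooth map $f:M\to N$ is a map of foliated manifolds precisely when $df$ restricts to a bundle map $D_M\to D_N$ covering $f$. So, given a foliation of the Lie groupoid $H\rightrightarrows H^0$, I get involutive subbundles $D_{H^0}\subseteq TH^0$ and $D_{H^{(1)}}\subseteq TH^{(1)}$, and these are sub–Lie algebroids of $TH^0$ and $TH^{(1)}$ respectively (by involutivity).

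Next I would verify that the groupoid structure of $TH^{(1)}\rightrightarrows TH^0$ restricts to a groupoid structure on $D_{H^{(1)}}\rightrightarrows D_{H^0}$. Because the source, target, unit, inverse, and multiplication maps of $H$ are maps of foliated manifolds, their differentials send $D_{H^{(1)}}$-vectors to $D_{H^0}$-vectors (for $s,t$), $D_{H^0}$-vectors to $D_{H^{(1)}}$-vectors (for the unit $u$), $D_{H^{(1)}}$-vectors to $D_{H^{(1)}}$-vectors (for $i$), and pairs of composable vectors in $D_{H^{(1)}}\times_{D_{H^0}}D_{H^{(1)}}$ into $D_{H^{(1)}}$ (for $dm$). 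The groupoid axioms hold because they already hold in $TH^{(1)}\rightrightarrows TH^0$ and $D\to H$ is a subobject; smoothness of the restricted structure maps is immediate from smoothness of $d s, dt, du, di, dm$ together with the fact that $D_{H^0}$, $D_{H^{(1)}}$ are subbundles.

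I would then check the two remaining compatibilities needed to promote $D_{H^{(1)}}\rightrightarrows D_{H^0}$ over $H^{(1)}\rightrightarrows H^0$ to an honest LA-groupoid in the sense of \Cref{defLAM}: namely, that the projections $D_{H^{(1)}}\to H^{(1)}$ and $D_{H^0}\to H^0$ are vector bundle maps compatible with the groupoid structures, and that the double source map
\begin{equation*}
(p,s_D)\colon D_{H^{(1)}}\to H^{(1)}\sideset{_{s_H}}{_{p}}{\mathop{\times}} D_{H^0}
\end{equation*}
is a surjective submersion. The first is immediate from the analogous compatibility for the tangent LA-groupoid $TH\to H$, which $D\to H$ sits inside. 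The second follows from the corresponding surjective submersion property for $TH\to H$ by restriction: the subbundle inclusions $D_{H^0}\hookrightarrow TH^0$ and $D_{H^{(1)}}\hookrightarrow TH^{(1)}$ commute with source, and the restricted double source map is a surjective submersion onto the corresponding fiber product because fiberwise it is the restriction of the linear surjection $d s\colon T_h H^{(1)}\twoheadrightarrow T_{s(h)}H^0$ to the subbundle, which is still surjective by the definition of a foliated map.

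The main thing to be careful about is this last point: one must know that the restriction of $ds$ to $D_{H^{(1)}}$ lands surjectively in $D_{H^0}$ rather than merely landing inside it. This is exactly the statement that $s\colon H^{(1)}\to H^0$ is a \emph{submersion} of foliated manifolds in the strong sense that its differential carries the foliation of the source onto the foliation of the target — which holds automatically here because $s$ is already a surjective submersion of the underlying manifolds and a morphism of foliations, so the leaves of $H^0$ are (locally) images under $s$ of leaves of $H^{(1)}$, giving pointwise surjectivity of $ds\vert_{D_{H^{(1)}}}$. With that in hand the sub LA-groupoid $D\to H$ is exhibited, and uniqueness is automatic since the data is determined by the foliations on $H^0$ and $H^{(1)}$.
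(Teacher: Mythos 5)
You correctly reduce the whole statement to its one non-trivial point, namely that the double source map $(p,ds)\colon D_{H^{(1)}}\to H^{(1)}\times_{H^0}D_{H^0}$ is a surjective submersion, i.e.\ that $ds$ restricted to $D_{H^{(1)}}\vert_h$ surjects onto $D_{H^0}\vert_{s(h)}$ for every arrow $h$; everything before that is a routine unpacking of ``all structure maps are maps of foliations'' and is fine (indeed more careful than the text, which simply asserts the proposition). The gap is in your justification of this last point, which is a non sequitur: a surjective submersion that is a morphism of foliations does \emph{not} carry the source foliation onto the target foliation. Take $p_1\colon\mathbb{R}^2\to\mathbb{R}$ with the zero foliation upstairs and the full foliation $T\mathbb{R}$ downstairs: this is a surjective submersion and a map of foliations, yet the image of each leaf is a point and $dp_1$ restricted to the (zero) distribution is not surjective onto $T\mathbb{R}$. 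So ``the leaves of $H^0$ are locally images under $s$ of leaves of $H^{(1)}$'' does not follow from the hypotheses you invoke.

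The surjectivity you need is nevertheless true when $H$ is source-connected, but for groupoid-theoretic rather than foliation-theoretic reasons. Let $U\subseteq H^{(1)}$ be the set of arrows $h$ at which $ds\vert_{D_{H^{(1)}}}\colon D_{H^{(1)}}\vert_h\to D_{H^0}\vert_{s(h)}$ is surjective. Then $U$ is open (it is the full-rank locus of the vector bundle morphism $D_{H^{(1)}}\to s^*D_{H^0}$); it contains the unit section, because $u$ is a map of foliations and $ds\circ du=\mathrm{id}$; and it is closed under composition: given composable $h_1,h_2\in U$ and $v\in D_{H^0}\vert_{s(h_2)}$, choose $w_2\in D_{H^{(1)}}\vert_{h_2}$ with $ds(w_2)=v$, note $dt(w_2)\in D_{H^0}\vert_{s(h_1)}$, choose $w_1\in D_{H^{(1)}}\vert_{h_1}$ with $ds(w_1)=dt(w_2)$, and observe that $dm(w_1,w_2)\in D_{H^{(1)}}\vert_{h_1h_2}$ satisfies $ds(dm(w_1,w_2))=v$. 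Since a source-connected Lie groupoid is generated by any open neighbourhood of its unit section, $U=H^{(1)}$. Without source-connectedness the condition can genuinely fail: for the bundle of groups $H^{(1)}=\mathbb{R}_x\times\mathbb{R}_g\times\mathbb{Z}/2\rightrightarrows\mathbb{R}_x$ (with $s=t$ the projection to $x$), take $D_{H^0}=T\mathbb{R}$, the full tangent bundle on the identity component of $H^{(1)}$, and the vertical line field $\mathrm{span}(\partial_g)$ on the other component; one checks all structure maps preserve these involutive distributions, yet $ds$ annihilates the distribution on the non-identity component, so the double source map is not surjective there. You should therefore either assume $H$ source-connected (or impose the double-source condition explicitly) and supply the argument above, or note that the proposition produces only a ``sub-LA-groupoid'' in a weakened sense otherwise.
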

We can now slightly rephrase the definition of Morita map: a map beteween foliatied Lie groupoids $H\to G$ is a Morita map if the induced map of LA-groupoids from $D_H\to H$ to $D_G\to G$ is a Morita equivalence. By analogy with the relative tangent bundle of a submersion $Y\to X\,,$ we make the following definition
\begin{definition}
A foliation $D\to H$ associated to a (nice enough) map $H\to G$ will be called the relative tangent bundle (of $TH$ relative to $TG\,.$)\footnote{See \Cref{Normal bundles} for a discussion on why we can think of this as a normal bundle}
\end{definition}
We can now invert weak equivalences (ie. Morita maps) to obtain a new (2,1)-category. This category is just the (2,1) category of anafunctors (ie. roofs), but where the objects are foliated Lie groupoids, and the morphisms and two morphisms are compatible with the foliations. This leads us into the next section, but first we will define simple foliations.
\begin{definition}\label{simple}
We will call a foliation of a Lie groupoid $H$ simple if the foliation, at the level of objects and arrows, is given by a \hyperref[surj sub]{surjective submersion} $H\to G$ (see \cite{eli}).
\end{definition}
\begin{proposition}\label{cohomology of fol}
Given a (nice enough) morphism $H\to G\,,$ the fibration replacement $G\times_G H\to G$ is a simple foliation.
\end{proposition}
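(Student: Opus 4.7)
The plan is to exhibit the natural projection $p_3:H\ltimes P\rtimes G\to G$ as a surjective submersion of Lie groupoids in the sense of the earlier definition, from which simplicity of the induced foliation on $H\ltimes P\rtimes G$ follows immediately by the definition of simple foliation: the leaves at the level of objects and arrows are the (connected components of the) fibres of $p_3^0$ and $p_3^{(1)}$, respectively, and these are preserved by all structure maps for free since $p_3$ is a groupoid homomorphism.

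First I would check the object-level condition. The object map is
\begin{equation*}
P=G^{(1)}\sideset{_t}{_f}{\mathop{\times}} H^0\longrightarrow G^0\,,\qquad (g,h^0)\longmapsto s(g)\,,
\end{equation*}
which factors as $P\xrightarrow{\mathrm{pr}_1}G^{(1)}\xrightarrow{s}G^0$. Here $\mathrm{pr}_1$ is the base change of $f^0:H^0\to G^0$, which is a submersion as part of the ``nice enough'' hypothesis, and $s$ is the source map of $G$, which is also a submersion; hence the composite is a submersion. Surjectivity follows from essential surjectivity of $f$: for any $g^0\in G^0$ we pick $h^0\in H^0$ whose image is in the orbit of $g^0$ and an arrow $g$ from it to $g^0$.

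Next I would verify the arrow-level condition, i.e.\ that
\begin{equation*}
H\ltimes P\rtimes G\;\longrightarrow\;G^{(1)}\sideset{_s}{}{\mathop{\times}} P\,,\qquad (h,p,g')\longmapsto \bigl(g',\,s(h,p,g')\bigr)
\end{equation*}
is a surjective submersion. I will do this by exhibiting a canonical global smooth section $\sigma$ sending $(g',p)$ to the ``identity-lifted'' triple $(\mathrm{id}_{h^0_p},\,p,\,g')$, where $h^0_p\in H^0$ denotes the $H$-moment of $p$. This $\sigma$ is well-defined and smooth, and is a section by construction, so the map is automatically a surjective submersion. Conceptually, $\sigma$ is precisely the splitting data witnessing that $p_3$ is a split fibration through the natural $G$-action on $H\ltimes P\rtimes G$ (cf.\ the earlier proposition on split fibrations), so nothing new needs to be produced.

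Combining the two steps, $p_3$ is a surjective submersion of Lie groupoids, so by definition it presents a simple foliation of $H\ltimes P\rtimes G$. There is no genuine obstacle here, only bookkeeping: the main subtlety is to articulate precisely what ``nice enough'' must mean, and the proof shows that essential surjectivity of $f$ together with $f^0:H^0\to G^0$ being a submersion suffices — exactly the hypotheses already needed for $G\times_G H$ to exist as a Lie groupoid.
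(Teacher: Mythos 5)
The paper states this proposition without proof, so there is nothing to compare against; your strategy — verify the two conditions of the paper's definition of a surjective submersion of Lie groupoids directly for $p_3:H\ltimes P\rtimes G\to G$ and then invoke the definition of simple foliation — is the natural one and is surely what the author intends. Your object-level step is correct as written: $\mathrm{pr}_1:P\to G^{(1)}$ is a base change of the submersion $f^0$, composing with $s$ gives a submersion, and essential surjectivity gives surjectivity.

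The gap is in the arrow-level step, in the sentence ``is a section by construction, so the map is automatically a surjective submersion.'' A smooth map admitting a smooth global section is surjective and is a submersion \emph{at points in the image of the section}, but need not be a submersion elsewhere (e.g.\ $f(x,y)=x\cos y$ on $\mathbb{R}^2$ has the smooth section $a\mapsto(a,0)$ yet $df$ vanishes at $(0,\pi/2)$). The paper itself flags exactly this issue when it distinguishes its fibrations from Mackenzie's: the 2-morphism lifting property does not by itself deliver the submersion condition. The repair is short and uses the same splitting data you already invoke: the canonical $G$-action identifies $(H\ltimes P\rtimes G)^{(1)}$ with $\ker p_3\sideset{_{s}}{}{\mathop{\times}}_P\bigl(G^{(1)}\sideset{_{s}}{_{\rho}}{\mathop{\times}}P\bigr)$, and under this diffeomorphism $(p_3,s)$ becomes the projection onto the second factor; that projection is a surjective submersion because the source map of $\ker p_3=H\ltimes P$ is a submersion (the $H$-moment map $P\to H^0$ and $s_H$ both being submersions). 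Equivalently, writing out the arrow space as $H^{(1)}\times_{H^0}P\times_{G^0}G^{(1)}$, the map $(p_3,s)$ is, after translating by the $G$-action, the map that forgets the $H^{(1)}$-coordinate, with fibers the source fibres of $H$. (Minor convention point: your explicit section should feed $p\cdot g'^{-1}$, not $p$, into the middle slot so that the triple is composable and has source $p$; the idea of producing the section from the $G$-action is right.)
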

Now give a nice enough map $f:H\to G\,,$ we have discussed a way of obtaining a foliation of a fibration replacement of $f\,.$ However, if $f$ satisfies the conditions in \Cref{simple}, we have a foliation of $H$ itself. These two foliations are Morita equivalent.
\begin{proposition}
Suppose $f:H\to G$ satisfies the conditions in \Cref{simple}, so that we get a simple foliation of $H\,.$ Then the canonical map $H\to G\times_G H$ is a Morita equivalence of foliations.
\end{proposition}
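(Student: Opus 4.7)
The plan is to verify that $\iota:H\to G\times_G H$ is a morphism of foliated Lie groupoids and that the induced map of LA-groupoids is a Morita equivalence; by the definition of Morita equivalence of foliated Lie groupoids, this yields the desired conclusion.

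First, I would unpack the two foliations. The simple foliation of $H$ determined by $f:H\to G$ is the sub-LA-groupoid $D_H:=\ker(df)\subset TH$ of the tangent LA-groupoid $TH\to H$, defined degreewise on objects and arrows. By \Cref{cohomology of fol}, the simple foliation of $G\times_G H$ determined by the fibration $p_3:G\times_G H\to G$ is $D:=\ker(dp_3)\subset T(G\times_G H)$.

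Second, I would check that $\iota$ is a map of foliated groupoids. By construction of the fibration replacement \ref{fibration rep}, $p_3\circ\iota=f$; differentiating, $dp_3\circ d\iota=df$, so $d\iota(D_H)\subset D$ at the level of objects and at the level of arrows. Compatibility with the groupoid structure maps is inherited from $\iota$, so $\iota$ lifts to a morphism of LA-groupoids $\iota_*:D_H\to D$.

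Third, I would show that $\iota_*$ is a Morita equivalence of LA-groupoids viewed as Lie groupoids. The idea is to differentiate the Morita equivalence $\iota:H\to G\times_G H$ of ordinary Lie groupoids (exhibited via the retraction $p_1$ and the 2-morphism $c$ in the discussion after \ref{fibration rep}) to a Morita equivalence of tangent LA-groupoids $d\iota:TH\to T(G\times_G H)$, and then to restrict it to the sub-LA-groupoids $D_H$ and $D$, using the identity $D_H=(d\iota)^{-1}(D)$, which follows from $dp_3\circ d\iota=df$. The obvious shortcut, namely restricting $p_1$ to a foliated inverse, fails because $p_1$ is \emph{not} foliated: at the level of objects, $dp_1(v_{g_1},v_{h^0})=v_{h^0}$, whereas a vector in $D\vert_P$ only satisfies $ds(v_{g_1})=0$ and $dt(v_{g_1})=df(v_{h^0})$, so $df(v_{h^0})$ need not vanish.

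The main obstacle will be essential surjectivity of $\iota_*$: given $(v_{g_1},v_{h^0})\in D$ at a base point $(g_1,h^0)\in P$, one must produce an arrow in $D$ from $(v_{g_1},v_{h^0})$ to some $\iota_*^0(v')$ with $v'\in\ker df$. I would construct this by differentiating the underlying arrow $c(g_1,h^0)=(\mathbbm{1}_{h^0},(\mathbbm{1}_{f(h^0)},h^0),g_1)$ of the 2-morphism $c$ in a direction that cancels the $v_{g_1}$-component, leveraging the splitting provided by the right $G$-action on $G\times_G H$ (cf.\ \Cref{fibration property}); this action acts trivially on the $G$-component of tangent vectors in the target-fiber direction, which is exactly what is needed to produce a tangent arrow whose $G$-component vanishes. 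Full faithfulness then follows by differentiating the diffeomorphism $H^{(1)}\cong H^0\times_P(H\ltimes P\rtimes G)^{(1)}\times_P H^0$ witnessing the Morita equivalence at the Lie groupoid level and restricting to the kernels of $df$ and $dp_3$, naturality of $\ker$ making this step essentially formal.
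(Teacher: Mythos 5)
The paper states this proposition without proof (presumably intending the reader to combine the homotopy-inverse pair $\iota,p_1$ with \Cref{morita foliations}), so there is no official argument to compare against. Your overall strategy — verify directly that $\iota_*:D_H\to D$ is a fully faithful, essentially surjective map of the foliation groupoids — is the right one, and your observation that $p_1$ is \emph{not} a foliated map (so that \Cref{morita foliations} cannot be invoked off the shelf) is correct and worth making explicit: for $(v_{g},v_{h^0})\in D\vert_P$ one only has $ds(v_{g})=0$ and $dt(v_{g})=df(v_{h^0})$, so $dp_1(v_g,v_{h^0})=v_{h^0}$ need not lie in $\ker df$. Your reduction of full faithfulness to the kernel of $df$ versus the kernel of $dp_3$ is also essentially formal and fine.

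The gap is in the essential surjectivity step, which is exactly the crux. The mechanism you propose — differentiating the $2$-morphism $c$ and ``leveraging the right $G$-action'' — cannot by itself cancel the $G$-component: acting on $(v_g,v_{h^0})$ by the zero vector over $g^{-1}$ produces $(v_g\cdot 0_{g^{-1}},v_{h^0})$, and $v_g\cdot 0_{g^{-1}}=dR_{g^{-1}}(v_g)$ vanishes only when $v_g$ already does; likewise $dc(v_g,v_{h^0})$ has $G$-component $d(p_3\circ c)(v)=v_g\neq 0$, so it is not an arrow of $D$. What actually cancels $v_g$ is the \emph{left} $TH$-action: you must produce $v_h\in T_{\mathbbm{1}_{h^0}}H^{(1)}$ with $ds(v_h)=v_{h^0}$ and $df(v_h)=(v_g\cdot 0_{g^{-1}})^{-1}$, and the existence of such a $v_h$ is precisely the second condition of \Cref{surj sub}, namely that $H^{(1)}\to G^{(1)}\sideset{_s}{_{f^0}}{\mathop{\times}}H^0$ is a surjective submersion. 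This hypothesis is not a technicality: for $f:G^0\hookrightarrow G$ (which satisfies condition 1 of \Cref{surj sub} but not condition 2 when $\mathfrak{g}\neq 0$) the conclusion fails, since $D_H$ is Morita equivalent to $G^0$ while $D\cong T_sG\rtimes G$ is Morita equivalent to the total space of $\mathfrak{g}\,.$ So you should replace the appeal to \Cref{fibration property} by an explicit use of \Cref{surj sub} at the tangent level; once that is done, $dt(v_h)\in\ker df$ automatically and the argument closes.
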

Furthermore, given a simple foliation $f:H\to G\,,$ there is a canonical double groupoid integrating this LA-groupoid, and it is given by the following:
\begin{equation}
    \begin{tikzcd}
H^{(1)}\times_{G^{(1)}}H^{(1)} \arrow[d, shift right] \arrow[d, shift left] \arrow[r, shift right] \arrow[r, shift left] & H^{0}\times_{G^{0}}H^{0} \arrow[d, shift right] \arrow[d, shift left] \\
H^{(1)} \arrow[r, shift right] \arrow[r, shift left]                                                    & H^0                                           
\end{tikzcd}
\end{equation}
This is a double Lie groupoid over $H\rightrightarrows H^0\,.$ Of course, we have already described another double groupoid over $H\rightrightarrows H^0\,,$ given by the canonical cofibration associated to $f;$ these two double Lie groupoids are canonically Morita equivalent. Therefore, the constructions we've been making agree with the usual constructions in the case that the map $f:H\to G$ defines a simple foliation.
\begin{lemma}\label{iso of coh}
Let $f:H\to G$ be a simple foliation, and let $M$ be a $G$-module. Then we have an isomorphism of cohomology $H^*(\mathbf{B}^{\bullet}H,f^{-1}\mathcal{O}(M))\cong H^*(D\to H,f^*M)\,.$ 
\end{lemma}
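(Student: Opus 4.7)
The plan is to adapt the resolution strategy used to define the van Est map in Part 1 (see the resolution \eqref{deligne1} and the proof of the isomorphism $Q$ preceding \Cref{van Est}), transporting it from $\mathbf{E}^\bullet G\to \mathbf{B}^\bullet G$ to the simple foliation $f:H\to G\,.$ The key point is that a simple foliation is the exact analogue, at the level of Lie groupoids, of a surjective submersion of manifolds, so the Leray-style argument of Section 0.1 goes through with essentially no modification.

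First, I would observe that since $f$ is a simple foliation (\Cref{simple}), at each simplicial level $n$ the induced map $f_n:\mathbf{B}^n H\to \mathbf{B}^n G$ is a surjective submersion of manifolds, and the sub-LA-groupoid $D\to H$ is precisely the relative tangent data assembled from the foliations of the $\mathbf{B}^n H$ by the fibers of $f_n\,.$ Pulling back along $f_n$ gives a family of abelian groups $f^*M_{\mathbf{B}^n H}\to \mathbf{B}^n H$ with Lie algebroid $f^*\mathfrak{m}\,,$ and the logarithmic differential together with the Chevalley--Eilenberg differential (as in \Cref{forms}) produce a complex
\[
f^{-1}\mathcal{O}(M)_n \xhookrightarrow{} \mathcal{O}(f^*M)_n \xrightarrow{\mathrm{dlog}} \Omega^1_{f_n}(f^*\mathfrak{m})_n \xrightarrow{d} \Omega^2_{f_n}(f^*\mathfrak{m})_n \to \cdots
\]
which I claim is a resolution. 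Locally in $\mathbf{B}^n H$ the foliation by the fibers of $f_n$ is a product foliation, so the statement reduces to a fiberwise Poincaré lemma for the module $f^*M$ with its induced connection, exactly as in the argument preceding \Cref{van Est}.

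Second, I would upgrade this level-wise resolution to a resolution of simplicial sheaves on $\mathbf{B}^\bullet H$ and assemble the corresponding double complex $C^{p,q}:=\Omega^p_{f_q}(f^*\mathfrak{m})_q\,$ (with $C^{0,q}=\mathcal{O}(f^*M)_q$). To pass to cohomology I need each of these sheaves to be acyclic on $\mathbf{B}^q H\,.$ This is the content of \Cref{acyclic edit}, applied to the submersion groupoid $\mathbf{B}^q H\times_{\mathbf{B}^q G}\mathbf{B}^q H\rightrightarrows \mathbf{B}^q H$: since $f$ is a simple foliation, the foliated forms $\Omega^p_{f_q}(f^*\mathfrak{m})_q$ are naturally sheaves on the stack associated to this submersion groupoid, which is Morita equivalent to $\mathbf{B}^q G$ (cf. \Cref{example surj sub}, \Cref{key example}), and hence they admit Godement-type acyclic resolutions on $\mathbf{B}^\bullet H\,.$ Therefore $H^*(\mathbf{B}^\bullet H, f^{-1}\mathcal{O}(M))$ coincides with the cohomology of the total complex of $C^{\bullet,\bullet}\,.$

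Finally, the total complex of $C^{\bullet,\bullet}$ is by definition the Chevalley--Eilenberg complex of the LA-groupoid $D\to H$ with coefficients in $f^*M$: the horizontal differential is the simplicial differential of $H$ on forms along $D\,,$ the vertical differential combines $\mathrm{dlog}$ in degree zero with the foliated de Rham differential in higher degree, and the resulting bicomplex is exactly what computes $H^*(D\to H, f^*M)\,.$ The main obstacle I anticipate is bookkeeping rather than conceptual: one must verify carefully that the vertical differentials intertwine with the simplicial structure maps of $\mathbf{B}^\bullet H$ so that the bicomplex is well-defined, and that the degree-zero piece correctly records the module structure (not merely the representation structure of $\mathfrak{m}$), which is exactly the subtlety handled by \Cref{g-module} and \Cref{forms}. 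With that in place, the chain of isomorphisms $H^*(\mathbf{B}^\bullet H, f^{-1}\mathcal{O}(M))\cong H^*(\mathrm{Tot}\,C^{\bullet,\bullet})\cong H^*(D\to H, f^*M)$ gives the lemma.
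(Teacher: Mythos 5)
Your proposal is correct and follows essentially the same route as the paper's (very terse) proof, which simply says to form the nerve of $D\to H$ and resolve $f^{-1}\mathcal{O}(M)_{\mathbf{B}^n H}$ by fiberwise differential forms at each level $n\,.$ Your elaboration — the level-wise $\mathrm{dlog}$/de Rham resolution, the acyclicity argument via \Cref{acyclic edit} and the stack structure on the foliated forms, and the identification of the total complex with the LA-groupoid complex — is exactly the detail the paper leaves implicit, modeled on the proof of the isomorphism $Q$ in Part 1.
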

\begin{proof}
The proof goes by forming the nerve of $D\to H$ and taking a resolution by fiberwise differential forms of $f^{-1}\mathcal{O}(M))_{\mathbf{B}^n H}\,,$ for each $n\ge 0\,.$
\end{proof}
\begin{comment}
Finally, let us make a remark 
\end{comment}
Now let us summarize what  we will call the (2,1)-category of foliations of Lie groupoids.
\begin{itemize}
    \item The objects are foliations of Lie groupoids, which can equivalently be thought of as a LA-groupoid subbundle of $TG\to G\,,$ equivalently, a VB-subbbundle for which sections are closed under the Lie bracket. 
    \item The morphisms between foliations of $H$ and $G$ are morphisms $f:H\to G$ for which $f_*$ is a morphism of LA-groupoids (equivalently, a morphism of VB-groupoids).
    \item Given morphisms $f,g:H\to G$ of foliated Lie groupoids, a 2-morphism $f\Rightarrow g$ is given by a 2-morphism $f\Rightarrow g$ of maps between Lie groupoids for which the derivative maps into the subbundle, ie. a 2-morphism is given by a map $h:H^0\to G^{(1)}$ satisfying the standard conditions, such that $h_*$ maps vectors in the foliation of $H^0$ to vectors in the foliation of $G^{(1)}\,.$
\end{itemize}
This category has weak equivalences, which are given by morphisms which induce a Morita equivalence of LA-groupoids.
\subsection{(2,1)-Category of Foliations of Stacks}
The (2,1)-category of foliations of stacks is essentially the (2,1)-category of stacks, but where all Lie groupoids are foliated and all morphisms and 2-morphisms are compatible with the foliations:
\begin{itemize}
    \item The objects are foliated Lie groupoids
    \item The morphisms are anafunctors for which the maps are maps of foliated Lie groupoids, and for which the left leg is a Morita map of foliated Lie groupoids.
    \item The 2-morphisms are 2-morphism in the (2,1)-category of Lie groupoids which are maps of foliated manifolds.
\end{itemize}
\begin{definition}
A foliation of a stack $\mathcal{G}=[G^0/G]$ is a foliation of $G$ up to Morita equivalence of foliations. 
\end{definition}
Now we will define simple foliations of stacks; the foliations relevant to the van Est map are all simple:
\begin{definition}
A foliation of a stack is simple if it can be presented by a simple foliation of Lie groupoids (see \Cref{simple} for the definition of simple foliation of Lie groupoids).
\end{definition}
Given a map of stacks $\mathcal{F}:\mathcal{H}\to\mathcal{G}\,,$ we should have a criterion for determining when it can be presented by a \hyperref[surj sub]{surjective submersion} of Lie groupoids, so that it defines a simple foliation. Before doing this, we make a definition.
\begin{definition}\label{surj stacks}
A map of stacks $\mathcal{F}:\mathcal{H}\to\mathcal{G}$ is called a surjective submersion if it can be presented by a \hyperref[surj sub]{surjective submersion of Lie groupoids} $F:H\to G\,.$ Similarly, we may call a map of Lie groupoids $F:H\to G$ a surjective submersion of stacks if the induced map of stacks is a surjective submersion. 
\end{definition}
\begin{lemma}
If a map of stacks $\mathcal{F}:\mathcal{H}\to\mathcal{G}$ is a surjective submersion, then given any presentation of the map $f:H\to G\,,$ the map $G\sideset{_s}{_{f^0}}{\mathop{\times}}H^0\to G^0\,,(g,h^0)\mapsto t(g)$ will be as surjective submersion of Lie groupoids. 
\end{lemma}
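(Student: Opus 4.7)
The plan is to show that the stated property of $f:H\to G$, call it $P(f)$ (``$(g,h^0)\mapsto t(g)\colon G\sideset{_s}{_{f^0}}{\mathop{\times}}H^0\to G^0$ is a surjective submersion''), depends only on the underlying stack morphism $\mathcal{F}$, not on the chosen presentation. By hypothesis there is \emph{some} presentation $f_0:H_0\to G_0$ of $\mathcal{F}$ which is a surjective submersion of Lie groupoids in the sense of \Cref{surj sub}; for that presentation $f_0^0:H_0^0\to G_0^0$ is itself a surjective submersion, so $P(f_0)$ holds (take $g$ to be an identity arrow). The content of the lemma is therefore the Morita invariance of $P$.

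First I would invoke the standard fact that any two presentations of a morphism in the $(2,1)$-category of stacks admit a common refinement: there exist Lie groupoids $K$, $L$, Morita maps $\phi_H:K\to H$, $\phi_{H_0}:K\to H_0$, $\psi_G:L\to G$, $\psi_{G_0}:L\to G_0$, a morphism $F:K\to L$, and 2-isomorphisms witnessing $\psi_G F\cong f\phi_H$ and $\psi_{G_0}F\cong f_0\phi_{H_0}$. The argument then reduces to two preservation claims:
\begin{itemize}
\item[(A)] if $\psi:L\to G$ is a Morita map and $F:K\to L$ a morphism, then $P(F)\Leftrightarrow P(\psi F)$;
\item[(B)] if $\phi:K\to H$ is a Morita map and $f:H\to G$ a morphism, then $P(f\phi)\Leftrightarrow P(f)$.
\end{itemize}
Both claims must also be insensitive to 2-isomorphism, which is automatic: a natural isomorphism shifts the pair $(g,h^0)$ to some $(g',h^0)$ with $t(g')=t(g)$, so the target-projection $(g,h^0)\mapsto t(g)$, and consequently its surjectivity and submersivity, are unchanged.

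The heart of the proof is claim (A). I would use the characterization that, for a Morita map $\psi:L\to G$, the map $G\sideset{_s}{_{\psi^0}}{\mathop{\times}}L^0\to G^0$, $(g,l^0)\mapsto t(g)$, is a surjective submersion (this is essentially the statement that $\psi$ is essentially surjective as an anafunctor and submersive in the appropriate sense). Then I would identify $G\sideset{_s}{_{(\psi F)^0}}{\mathop{\times}}K^0$ with the strong pullback of $G\sideset{_s}{_{\psi^0}}{\mathop{\times}}L^0$ along $F^0\colon K^0\to L^0$ and factor $(g,k^0)\mapsto t(g)$ through this pullback. Surjectivity transfers by composing local lifts through the two stages; submersivity transfers because the pullback of a surjective submersion along a submersion is a surjective submersion, and the remaining fiber product is transverse because $\psi^0$ is already submersive ``up to arrows in $G$''. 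Claim (B) is analogous but simpler: for a Morita map $\phi:K\to H$, the twisted map $H\sideset{_s}{_{\phi^0}}{\mathop{\times}}K^0\to H^0$, $(h,k^0)\mapsto t(h)$, is a surjective submersion, and this directly relates $G\sideset{_s}{_{(f\phi)^0}}{\mathop{\times}}K^0$ to $G\sideset{_s}{_{f^0}}{\mathop{\times}}H^0$ in a commutative square of surjective submersions.

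The main obstacle is the bookkeeping in (A): a Morita map $\psi$ need not be surjective or submersive at the level of objects, so one cannot naively push the lifting problem through $\psi^0$; one must always go through the twisted fiber product that encodes $\psi$ as an essentially surjective anafunctor. A secondary subtlety is that the comparison data from the common refinement is only up to 2-isomorphism, but as noted this does not affect the target-projection, so the chain of equivalences $P(f)\Leftrightarrow P(f\phi_H)\Leftrightarrow P(\psi_G F)\Leftrightarrow P(F)\Leftrightarrow P(\psi_{G_0}F)\Leftrightarrow P(f_0\phi_{H_0})\Leftrightarrow P(f_0)$ closes, giving $P(f)$.
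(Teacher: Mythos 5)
The paper states this lemma without any proof, so there is no argument of the author's to measure yours against; I can only assess your proposal on its own terms, and in outline it is correct. Your reformulation is the right one: the property $P(f)$ you isolate is precisely condition 1 of the paper's definition of a Morita map (\Cref{moritamapjeff}) applied to $f$, i.e.\ it says that the bibundle $G\sideset{_s}{_{f^0}}{\mathop{\times}}H^0$ associated to $f$ has surjective-submersive moment map to $G^0$. This is strictly weaker than the full condition of \Cref{surj sub}, is implied by it exactly as you say (the map $t\circ p_1$ is the composite of the pullback of $f^0$ along $s$ with the target map, both surjective submersions), and is manifestly a property of the underlying generalized morphism rather than of the presentation. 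Your claims (A) and (B) are most cleanly verified in bibundle language: composing with a Morita equivalence tensors the bibundle with a biprincipal one, and since both moment maps of a biprincipal bibundle are surjective submersions, surjective-submersivity of the moment map to $G^0$ survives the fiber product and the quotient. You also correctly identify the one genuine pitfall, namely that $\psi^0$ and $\phi^0$ need not themselves be submersions, and you route around it through the twisted fiber product of \Cref{moritamapjeff} rather than through the object maps. The two places where you defer to standard facts --- the existence of a common strict refinement $F:K\to L$ of the two presentations (which in general requires a further weak fiber product of $K$ against $L$ over $G$), and the transversality bookkeeping inside (A) and (B) --- are indeed standard, but they are also where essentially all of the content of the lemma lives; as written your argument is a correct and well-aimed roadmap rather than a complete proof, and a full write-out of (A) and (B) would be about as long as everything you have so far.
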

Given the previous result, in order to determine if a map of stacks is a surjective submersion we only need to check it on one presentation.
\vspace{3mm}\\Now \Cref{canonical map}, \Cref{morita foliations} imply the following result:
\begin{proposition}
A surjective submersion of stacks $\mathcal{F}:\mathcal{H}\to\mathcal{G}$ determines a simple foliation of $\mathcal{H}\,.$ 
\end{proposition}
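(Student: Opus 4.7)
My plan is to reduce the statement to producing an explicit simple foliation of Lie groupoids from a presentation of $\mathcal{F}$, and then to verify that the resulting foliated Lie groupoid is well-defined on the stack $\mathcal{H}$ up to Morita equivalence of foliations. So the proof breaks into two steps.

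First I would unpack the definition of surjective submersion of stacks (\Cref{surj stacks}) to extract a presentation $f : H \to G$ of $\mathcal{F}$ that is a surjective submersion of Lie groupoids in the sense of \Cref{surj sub}. By that definition, both $f^0 : H^0 \to G^0$ and the map $h \mapsto (f(h), s(h)): H^{(1)} \to G^{(1)} \sideset{_s}{_{f^0}}{\mathop{\times}} H^0$ are surjective submersions of smooth manifolds. But this is exactly the data required by \Cref{simple} for a simple foliation of $H$: the relative tangent bundles of these two submersions are involutive subbundles of $TH^0$ and $TH^{(1)}$ respectively, and because $f$ is a groupoid morphism, the source, target, identity and multiplication maps of $H$ intertwine these subbundles, so they assemble into an LA-subgroupoid of $TH \to H$. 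So $f$ directly presents a simple foliation of $H$, which I take to be the candidate simple foliation of $\mathcal{H}$.

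For the second step I would show independence of the presentation. Suppose $f : H \to G$ and $f' : H' \to G'$ are two presentations of $\mathcal{F}$ that are surjective submersions of Lie groupoids. Passing to fibration replacements (see \Cref{fibration rep}), I invoke \Cref{canonical map} to produce a commutative square
\[
\begin{tikzcd}
E \arrow[r, dashed] \arrow[d] & E' \arrow[d] \\
G \arrow[r, dashed]           & G'
\end{tikzcd}
\]
in which the vertical arrows are fibrations and the horizontal arrows are Morita maps that cover the given map of stacks. By \Cref{cohomology of fol} each vertical arrow is itself a simple foliation, Morita equivalent (as a foliated groupoid, via the cofibrations $H \hookrightarrow E$ and $H' \hookrightarrow E'$) to the simple foliations on $H$ and $H'$ respectively. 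The horizontal Morita maps preserve the foliations by construction (they are assembled from natural transformations whose components are vertical with respect to the fibrations, hence lie in the foliation), so \Cref{morita foliations} implies they descend to Morita equivalences of LA-groupoids. Chaining these three Morita equivalences of foliations shows that the simple foliations coming from $f$ and from $f'$ are equivalent in the $(2,1)$-category of foliations, so a well-defined simple foliation of $\mathcal{H}$ is obtained.

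The main obstacle will be the foliation-compatibility in the second step: the canonical maps produced by \Cref{canonical map} were originally built only as maps of Lie groupoids, so I need to check that the 2-morphisms used in their construction respect the foliation, i.e., that they are maps of foliated manifolds with values tangent to the fibers of the respective fibration replacements. This is the only nontrivial verification; once it is in place, the hypotheses of \Cref{morita foliations} are met automatically. A secondary subtlety is comparing the simple foliation coming directly from $f$ on $H$ with the simple foliation coming from its fibration replacement $G \times_G H$: here \Cref{equiv1} together with the explicit formula for the fibration replacement ensures that the Morita equivalence $H \hookrightarrow G \times_G H$ intertwines the two relative tangent bundles, so no information is lost by passing through the fibration replacement.
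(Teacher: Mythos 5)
Your proposal is correct and follows essentially the same route as the paper, which derives this proposition directly from \Cref{canonical map} and \Cref{morita foliations} — exactly the two results you invoke for the well-definedness step (the existence step being nearly immediate from \Cref{surj stacks}, \Cref{surj sub} and \Cref{simple}). The extra verifications you flag (foliation-compatibility of the 2-morphisms in the canonical comparison map, and the comparison of the foliation on $H$ with that on its fibration replacement via \Cref{equiv1}) are details the paper leaves implicit, so your write-up is if anything more careful.
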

\begin{comment}Now the following lemma essentially says simple foliations of stacks are equivalent to surjective submersions of stacks.
\begin{lemma}
A surjective submersion of Lie groupoids $f:H\to G$ defines a surjective submersion of stacks. Conversely, a surjective submersion of stacks can be presented by a surjective submersion of Lie groupoids.
\end{lemma}
\begin{proof}
The first direction follows simply from the definition. The converse direction follows from the fact that the canonical fibration replacement of a map for which $G\sideset{_s}{_{f^0}}{\mathop{\times}}H^0\to G^0\,,(g,h^0)\mapsto t(g)$ is a surjective submersion will be a surjective submersion of Lie groupoids.
\end{proof}
\end{comment}
\begin{remark}
One way to understand these definitions of surjective submersions for Lie groupoids and stacks is to require the following property: a map $f:H\to G$ (thought of as Lie groupoids or stacks) should be a surjective submersion if and only if the double structure $H\times_G H$ associated to it exists and is Morita equivalent to $G\,.$ If we take the fiber product to be the strong one, we get the definition of surjective submersion for Lie groupoids, but if we take the fiber product to be the one appropriate for stacks, we get the definition of surjective submersion for stacks.
\end{remark}
We have the following simply but useful criterion for determining determining if a map of stacks is a surjective submersion.
\begin{proposition}
 Suppose a map of stacks $\mathcal{F}:\mathcal{H}\to\mathcal{G}$ can be presented by a map $f:H\to G$ which is essentially surjective and which is a submersion at the level of objects. Then $\mathcal{F}$ is a surjective submersion.
\end{proposition}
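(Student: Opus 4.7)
The plan is to present $\mathcal{F}$ by a surjective submersion of Lie groupoids in the sense of \Cref{surj sub}, using the canonical fibration replacement of $f$ from diagram \eqref{fibration rep}. Set $P := G^{(1)} \sideset{_t}{_f}{\mathop{\times}} H^0$, which is a smooth manifold since $t$ is a submersion, and consider $F : E \to G$ with $E = H \ltimes P \rtimes G$. The cofibration $\iota : H \to E$ is a Morita map, so $F$ and $f$ present the same map of stacks; it therefore suffices to check that $F$ itself satisfies the two conditions of \Cref{surj sub}.

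For the object-level condition, the map $F^0 : P \to G^0$ sends $(g_0, h^0) \mapsto t(g_0)$: it is a submersion because $t$ is, and essential surjectivity of $f$ guarantees that for every $g^0 \in G^0$ there exist $h^0 \in H^0$ and $g_0 \in G^{(1)}$ with $s(g_0) = f(h^0)$ and $t(g_0) = g^0$, producing a preimage. For the arrow-level condition, I write the arrow space of $E$ as the iterated fibre product
\[
E^{(1)} \;=\; H^{(1)} \sideset{_s}{_{p_2}}{\mathop{\times}} P \sideset{_{t\circ p_1}}{_s}{\mathop{\times}} G^{(1)},
\]
whose smoothness uses that the source maps of $H$ and $G$ are submersions, while the hypothesis that $f^0$ is a submersion ensures that the bi-action defining $E$ is smooth. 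With $s(h, p, g) = p$ and $F(h, p, g) = g$, the combined map
\[
(F, s) : E^{(1)} \longrightarrow G^{(1)} \sideset{_s}{_{F^0}}{\mathop{\times}} P, \qquad (h, p, g) \longmapsto (g, p),
\]
is, up to swapping factors, the canonical projection from the triple fibre product onto its last two factors; the fibre over $(g, (g_0, h^0))$ is the source fibre $s^{-1}(h^0) \subset H^{(1)}$. Since $s : H^{(1)} \to H^0$ is a surjective submersion and such maps are stable under pullback in the complementary factor, $(F, s)$ is itself a surjective submersion.

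Both conditions of \Cref{surj sub} are thus verified, so $F$ is a surjective submersion of Lie groupoids presenting $\mathcal{F}$, and $\mathcal{F}$ is a surjective submersion of stacks. The real content of the argument is the replacement of $f$ by $F$; once that is done, the verification reduces to unwinding definitions. I expect the only minor obstacle to be bookkeeping around the bi-action groupoid $H \ltimes P \rtimes G$ — writing its arrow space as an explicit triple fibre product and reading off the source and fibration maps — after which stability of surjective submersions under pullback (used repeatedly throughout the excerpt) finishes the argument.
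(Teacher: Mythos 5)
The paper states this proposition without proof, and your route --- replace $f$ by its canonical fibration replacement $F = p_3 : H\ltimes P\rtimes G\to G$ from \eqref{fibration rep} and verify the two conditions of \Cref{surj sub} for $F$ --- is exactly the argument the surrounding text intends (compare \Cref{cohomology of fol}). Your treatment of the arrow-level condition is correct: $(F,s)$ is a base change of the source map of $H$, hence a surjective submersion. But the object-level step is justified incorrectly, and it is precisely the step where the hypothesis that $f$ is a submersion on objects must be spent.

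You assert that $F^0:P\to G^0$ ``is a submersion because $t$ is.'' That is not sufficient. Writing $P=G^{(1)}\times_{G^0}H^0$ (arrows $g_0$ with one end attached to $f(h^0)$) and letting $F^0$ send $(g_0,h^0)$ to the free end of $g_0$, a tangent vector $(v,w)\in T_{(g_0,h^0)}P$ must satisfy the compatibility $t_*v=f_*w$ at the attached end; to hit an arbitrary $u\in T_{F^0(p)}G^0$ you choose $v$ with $s_*v=u$ using that $s$ is a submersion, but you must then solve $f_*w=t_*v$, which requires $f_*:T_{h^0}H^0\to T_{f(h^0)}G^0$ to be surjective, i.e.\ that $f^0$ is a submersion. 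For the trivial groupoid $G=X\rightrightarrows X$ one has $P\cong H^0$ and $F^0=f^0$, so the claim visibly fails without this hypothesis even though $t=\mathrm{id}$ is a submersion. You instead spend the object-level submersion hypothesis on ``smoothness of the bi-action,'' where it is not needed ($P$ is a manifold because $t$ is a submersion, and the action maps are then automatically smooth). Relatedly, you define $P$ as the fibre product of $t$ and $f$ but then take $F^0(g_0,h^0)=t(g_0)$ and produce preimages satisfying $s(g_0)=f(h^0)$, which are not points of the $P$ you defined; fixing the $s$/$t$ conventions so that the moment map is the free end of $g_0$ makes essential surjectivity give surjectivity of $F^0$ exactly as you intend. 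With these two repairs the proof is complete.
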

Given the definition of foliation of a stack, every foliation of a manifold defines a foliation of the associated stack, however, it is also possible for a singular foliation of a manifold $X$ to ``lift" to a foliation of the associated stack. This will happen whenever the singular foliation is induced by an integrable Lie algebroid (due to the fact that every integrable Lie algebroid defines a foliation of the stack associated to the base). In particular, Lie algebroids with almost injective anchor maps (ie. the anchor map is injective on a dense open set in the base) are integrable and thus define simple foliations of the stack associated to the space of objects.
\begin{exmp}\label{singular foliation}
Consider the singular foliation of $S^2$ induced by the action of $S^1\,.$ This  foliation is singular at the north and south poles. We can form the groupoid $G=S^2\rtimes S^1\,,$ and then the canonical fibration replacement of the map $S^2\xhookrightarrow{}  S^2\rtimes S^1\,,$ given by  $G\rtimes G\,,$ is Morita equivalent to $S^2\,.$ The foliation of $G\rtimes G$ induced by the mapping $G\rtimes G\to G$ is Morita equivalent, as LA-groupoids, to the Lie algebroid $\mathfrak{g}\to S^2\,;$ away from the singular points of the foliation on $S^2\,,$ this Morita equivalence of LA-groupoids is a Morita equivalence of foliations. In this sense this singular foliation lifts to a (regular) foliation of the stack. 
\begin{remark}
In the context of example \ref{singular foliation}, one can pull back the standard symplectic form on $S^2$ to $G\rtimes G$ via the target map. This will define a $0$-shifted sympletcic form on $G\rtimes G\,,$ and one might be tempted to do a geometric quantization of $S^2$ by geometrically quantizing the stack (using $G\rtimes G$ as a representative); the motivation is due to the fact that the singular foliation lifts to a regular foliation on the stack. The foliation on the stack is generically Lagrangian (using the definition of Lagrangian for a $0$-shifted symplectic structure), however the two leaves corresponding to the singular leaves of the north and south pole are not Lagrangian, but the symplectic form still vanishes on those leaves (this corresponds to the fact that the north and south poles are isotropic, but not coisotropic). This is however a maximally isotropic foliation in the sense that, even locally, there is no foliation by isotropic submanifolds whose leaves contain the leaves of this foliation as proper subsets. One can still compute the ``Bohr-Sommerfeld" leaves, and the dimension of the space of sections obtained agrees with the quantization of $S^2$ via the Kahler polarization.
\end{remark}
\end{exmp}
\begin{comment}\begin{remark}
The foliation associated to $H\ltimes P\rtimes G\to G$ doesn't exactly foliate $H\ltimes P\rtimes G$ by subgroupoids, however the fiber over any $g\in G$ is canonically identified with a subgroupoid, and in particular has the structure of a groupoid, so in this sense it's a foliation by subgroupoids.
\end{remark}
\end{comment}
\begin{comment}\begin{remark}
A foliation does foliate the groupoid into subgroupoids: between any two leaves in the base, the leaf in the space of arrows is the full subgroupoid over the two leaves. Alternatively, the foliation in the space of arrows determines a subgroupoid. If there is some leaf with no morphisms to any other leaf, then this leaf alone determind a subgroupoid.
\end{remark}
\begin{corollary}
Consider a singular foliation of $X$ which is induced by an almost injective Lie algebroid $\mathfrak{g}\to X$ (that is, the anchor map is injective on an open dense set). This singular foliation lifts to a foliation of $[X/X]$ in the following sense: there is a groupoid $G$ together with a foliation $A\to G$ and  a Morita map of LA-groupoids $h:A\to \mathfrak{g}\,,$ such that, away from the singular points of the foliaton on $X\,,$ $h$ is a Morita map of foliations. See Example \ref{singular foliation}.
\end{corollary}
\end{comment}
Given a Lie algebra $\mathfrak{g}\,,$ we will denote its canonical integration by $\tilde{G}\,.$ The category of Lie algebras can be naturally upgraded to a (2,1)-category, where a 2-morphism $f_1\Rightarrow f_2$ between $f_1\,,f_2:\mathfrak{h}\to\mathfrak{g}$ is given by a $\tilde{g}\in\tilde{G}$ such that $f_1=\text{Ad}_{\tilde{g}}^*\,f_2\,.$
\vspace{3mm}\\Now given a Lie algebra $\mathfrak{g}\,,$ we get a simple foliation of $[*/*]$ given by $\tilde{G}\rtimes\tilde{G}\to \tilde {G}\,.$ Conversely, a foliation of $[*/*]$ is given, in particular, by a mapping $F:\text{Pair}(X)\to H\,,$ for some manifold $X$ and some Lie groupoid $H\rightrightarrows H^0\,.$ Letting $*$ be a point in $X\,,$ we get a Lie algebra $\mathfrak{g}$ by taking the Lie algebra of the isotropy group over $F(*)\,.$ Given any other point $*'\in X\,,$ we have a canonical isomorphism between the isotropy groups of $F(*)$ and $F(*')\,,$ so in this sense the Lie algebra $\mathfrak{g}$ is well-defined. We have the following result:
\begin{proposition}
The full (2,1)-subcategory of simple foliations of $[*/*]$ is equivalent to the (2,1)-category of Lie algebras.
\end{proposition}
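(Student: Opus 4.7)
The plan is to exhibit quasi-inverse functors between the two (2,1)-categories. Define $\Psi$ from Lie algebras to simple foliations of $[\ast/\ast]$ by $\mathfrak g\mapsto (\tilde G\rtimes \tilde G \to \tilde G)$, where $\tilde G$ is the canonical source-simply-connected integration of $\mathfrak g$; on a morphism $\varphi:\mathfrak h\to\mathfrak g$, use Lie II to integrate to $\tilde\varphi:\tilde H\to\tilde G$ and define $\Psi(\varphi)$ to be the evident square of surjective submersions, which is a morphism of simple foliations. For a 2-morphism $\tilde g\in\tilde G$ between $\varphi_1,\varphi_2:\mathfrak h\to\mathfrak g$, send it to the ananatural transformation determined by the constant map $\tilde H^0\to \tilde G^{(1)}$ with value $\tilde g$. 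In the other direction define $\Phi$ by sending a simple foliation presented by a surjective submersion $F:K\to H$ (with $K$ Morita equivalent to $\ast$) to the Lie algebra $\mathfrak g$ of the isotropy group of $H$ at $F(\ast)$, where the choice of base point $\ast$ in $K^0$ is irrelevant because $K$ is transitive and the isotropy groups along a transitive groupoid are canonically identified up to inner conjugation; on morphisms, use that such a morphism restricts, up to ananatural transformation, to a homomorphism of the isotropy groups, and differentiate.

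Next, verify $\Phi\circ\Psi\simeq\mathrm{Id}$ on objects: the isotropy group of $\tilde G$ at the unique object is $\tilde G$ itself, so $\Phi(\Psi(\mathfrak g))=\mathrm{Lie}(\tilde G)=\mathfrak g$; on morphisms, differentiating $\tilde\varphi$ gives back $\varphi$; on 2-morphisms, a constant natural transformation with value $\tilde g$ goes back to $\tilde g$ under the canonical identification. For $\Psi\circ\Phi\simeq\mathrm{Id}$, given a simple foliation $F:K\to H$ with $\mathfrak g=\Phi(F)$, build the comparison roof by restricting to the connected component of $F(\ast)\in H^0$ (which is a transitive Lie groupoid integrating $\mathfrak g$ on one orbit), lifting to a source-simply-connected cover, and using the pullback construction of the canonical cofibration/fibration replacement from Chapter 2; the resulting double structure produces a morphism of simple foliations between $F$ and $\Psi(\mathfrak g)=\tilde G\rtimes\tilde G\to\tilde G$ which, on the level of associated LA-groupoids $D\to K$, is a Morita equivalence because both present $\mathfrak g$ up to Morita (the extended introductory discussion explicitly checks this Morita equivalence for the map $\ast\hookrightarrow G$).

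Finally, check full faithfulness on 1- and 2-morphisms. For 1-morphisms, a morphism of simple foliations from $F_1:K_1\to H_1$ to $F_2:K_2\to H_2$ is, up to Morita equivalence of foliations and replacement by fibrations as in \Cref{canonical map}, determined by its effect on the isotropy groups over any chosen base points, and since $K_i$ is transitive and $H_i$ need only be determined near the image of $F_i$, the data reduces to a homomorphism $\mathfrak g_1\to\mathfrak g_2$. For 2-morphisms, an ananatural transformation between two such morphisms is given by a section of $H_2^{(1)}$ over $K_1^0$; restricting this section to a chosen base point gives an element of the isotropy group of $H_2$, which one can check is well defined up to the action of the simply connected cover of that isotropy, yielding an element of $\tilde G_2$, and compatibility with the foliation forces precisely the conjugation relation defining 2-morphisms of Lie algebras.

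The main obstacle will be the essential surjectivity step of $\Psi\circ\Phi\simeq\mathrm{Id}$, namely producing a canonical Morita equivalence of foliations between an arbitrary simple foliation $F:K\to H$ of $[\ast/\ast]$ and the ``standard form'' $\tilde G\rtimes\tilde G\to\tilde G$. The subtlety is that $H$ need not be transitive or source-simply-connected: one must pass to the full subgroupoid of $H$ over the image of $F^0$, then replace by its source-simply-connected cover using Lie II for Lie groupoids (valid here because one only needs to integrate a transitive Lie algebroid on a single orbit, equivalent to a Lie algebra). Once this replacement is achieved the resulting map of LA-groupoids $D_K\to TH$ is seen to be Morita equivalent to $\mathfrak g\to \ast$ by the ``$\ast\to G$'' computation carried out in the introduction, closing the argument.
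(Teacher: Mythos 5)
Your proposal follows essentially the same route as the paper's proof: the same pair of quasi-inverse functors, $\mathfrak{g}\mapsto(\tilde{G}\rtimes\tilde{G}\to\tilde{G})$ one way and the Lie algebra of the isotropy group over $F(*)$ the other, with essential surjectivity obtained by mapping $\tilde{G}\rtimes\tilde{G}$ into the fibration replacement of the given simple foliation and checking the induced map is a Morita equivalence of foliations, exactly as the paper does. The one imprecision is your description of the image of a 2-morphism $\tilde{g}$ as a \emph{constant} map out of $\tilde{H}^0$: since the relevant foliated groupoid is $\tilde{H}\rtimes\tilde{H}\rightrightarrows\tilde{H}$, the natural transformation must be the map $\tilde{h}\mapsto\bigl(\varphi_1(\tilde{h}),\,\varphi_1(\tilde{h})^{-1}\tilde{g}\varphi_1(\tilde{h})\tilde{g}^{-1}\bigr)$ out of the object space $\tilde{H}$ (so that sources and targets match), but this is the evident fix and does not change the argument.
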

\begin{remark}
From this point of view, the Lie algebroid cohomology of $\mathfrak{g}\mapsto G^0$ is the same as the foliated cohomology of the stack $[G^0/G^0]\,,$ with respect to the canonical foliation $G\rtimes G\to G\,.$ In particular, $H^1(\mathfrak{g},\mathbb{C}^*_{G^0})$ classifies line bundles with foliated flat connection on the stack $[G^0/G^0]\,.$
\end{remark}
\begin{proposition}
The foliation determined by an integrable Lie algebroid $\mathfrak{g}\to M$ is independent of the source-connected Lie groupoid integrating it.
\begin{proof}
The foliation associated to any source-connected Lie groupoid $G\rightrightarrows M$ integrating $\mathfrak{g}\to M$ is equivalent to the foliation associated to the source simply connected integration.
\end{proof}
\end{proposition}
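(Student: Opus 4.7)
The plan is to show that the foliation induced by any source-connected integration $G$ of $\mathfrak{g}$ is Morita equivalent, as a foliated LA-groupoid in the sense of \Cref{morita foliations}, to a canonical LA-groupoid built directly from $\mathfrak{g}\,,$ and therefore independent of the choice of $G\,.$

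Generalizing the construction of the introductory section ``LA-Groupoid of $*\to G$'' from a point to a manifold base $M\,,$ the foliation of the stack $[M/M]$ associated to a source-connected integration $G\rightrightarrows M$ is presented by the simple foliation of $G\rtimes G\rightrightarrows G$ coming from the projection $p_2:G\rtimes G\to G\,.$ Here $G\rtimes G$ is the canonical fibration replacement (\Cref{fibration rep}) of the inclusion $M\xhookrightarrow{}G\,,$ and as a Lie groupoid it is Morita equivalent to $M\rightrightarrows M\,.$ The associated foliation LA-groupoid is $\ker p_{2*}\subset T(G\rtimes G)\to G\rtimes G\,,$ and right-translation of tangent vectors along source fibers of $G$ yields a morphism of LA-groupoids
\[
\ker p_{2*}\longrightarrow\mathfrak{g}
\]
covering the Morita equivalence $G\rtimes G\xrightarrow{\,t\circ p_2\,}M$ of Lie groupoids.

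First, I would verify that this is a Morita equivalence of LA-groupoids in the sense of \Cref{defLAM}: on bases the map $G\rtimes G\to M$ is a Morita equivalence for any integration; fiberwise, right translation is a linear isomorphism of vector bundles; and the anchor and Lie bracket are preserved because right-invariant vector fields on source fibers of $G$ are, by construction, in canonical Lie-algebra-bijection with sections of $\mathfrak{g}\,.$ Second, I would observe that this construction requires only source-connectedness of $G$ in order for right-translation to be globally defined and invertible — no further connectivity hypothesis is used — so the same argument applies verbatim to any two source-connected integrations $G,G'$ of $\mathfrak{g}\,.$ Both foliations are then Morita equivalent to the common LA-groupoid $\mathfrak{g}\to M\,,$ hence Morita equivalent to each other; by definition this means they present the same foliation of the stack $[M/M]\,.$

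The main technical obstacle is verifying that $\ker p_{2*}\to\mathfrak{g}$ genuinely respects the LA-groupoid structure: that the Lie bracket on $\ker p_{2*}\,,$ inherited from the tangent Lie algebroid of $G\rtimes G\,,$ matches the pullback of the bracket on $\mathfrak{g}$ under right translation, and that the groupoid structure on $G\rtimes G$ is compatible with this identification so that the whole diagram assembles into a morphism in the category of LA-groupoids of \Cref{defLAM}. This is the direct analogue of the assertion in the introduction that the corresponding diagram for Lie groups is an equivalence, and reduces to a local computation near the identity bisection $M\subset G\,,$ where the construction is entirely determined by the infinitesimal data $\mathfrak{g}\to M$ alone.
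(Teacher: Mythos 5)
Your argument is correct in substance, but it pivots through a different comparison object than the paper does. The paper's one-line proof compares the foliation attached to an arbitrary source-connected integration $G$ directly with the one attached to the source simply connected integration $\tilde{G}\,,$ the implicit point being that the covering morphism $\tilde{G}\to G$ induces a morphism of the associated foliated groupoids $\tilde{G}\rtimes\tilde{G}\to G\rtimes G$ which is a Morita map of foliations. You instead show that the foliation attached to \emph{any} source-connected $G$ is equivalent to the fixed LA-groupoid $\mathfrak{g}\to M$ via right translation $\ker p_{2*}\to\mathfrak{g}\,,$ and conclude by transitivity. Your route is more explicit, exhibits $\mathfrak{g}\to M$ itself as the canonical model (consistent with what \Cref{equiv la group} establishes for general morphisms), and makes transparent that only source-connectedness is used. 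The one caveat is that your intermediate object $\mathfrak{g}\to M$ is an LA-groupoid but not literally a foliation of $M\rightrightarrows M\,,$ so your zig-zag establishes equivalence in the category of LA-groupoids rather than strictly within the (2,1)-category of foliated Lie groupoids; the paper is careful about exactly this distinction in \Cref{singular foliation}, where the comparison with $\mathfrak{g}$ is called a Morita equivalence of LA-groupoids and only generically one of foliations. Comparing two integrations directly keeps the entire zig-zag inside foliated Lie groupoids. Since this section identifies a foliation with its LA-groupoid, this is a presentational rather than a mathematical defect, but it is worth flagging.
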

The following conjecture is a converse to the result that integrable Lie algebroids determine foliations of the base:
\begin{conjecture}
A Lie algebroid $\mathfrak{g}\to X$ is integrable if and only if it is equivalent, as an LA-groupoid, to a simple foliation.
\end{conjecture}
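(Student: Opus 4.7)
The plan is to prove both directions separately. For the forward direction, suppose $\mathfrak{g}\to X$ has a source-connected integration $G\rightrightarrows X$. I would consider the identity bisection $\iota:X\hookrightarrow G$ as a morphism of Lie groupoids, where on the left $X$ denotes the trivial Lie groupoid $X\rightrightarrows X$. Applying the canonical fibration replacement construction from \Cref{fibration rep}, one obtains the commutative triangle given by $X\ltimes G\rtimes G\cong G\rtimes G\twoheadrightarrow G$, which, by the discussion in the introduction of the special case $*\hookrightarrow G$ (and its evident generalization to the groupoid case), is a surjective submersion of Lie groupoids in the sense of \Cref{surj sub} and therefore presents a simple foliation. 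The associated LA-groupoid is $TG\rtimes G\to G\rtimes G$, and right translation of tangent vectors, exactly as in the introduction, exhibits a Morita map of LA-groupoids from this foliation to $\mathfrak{g}\to X$. This produces the required equivalence.

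For the backward direction, suppose $\mathfrak{g}\to X$ is equivalent as an LA-groupoid to a simple foliation, i.e.\ to the relative tangent bundle $D\to H$ of a surjective submersion of Lie groupoids $f:H\to G$. The key observation is that $D\to H$ already integrates at the LA-groupoid level: by \Cref{canonical cofi}, the Lie algebroids $D_{H^{(1)}}\to H^{(1)}$ and $D_{H^0}\to H^0$ are both induced by the surjective submersions $f_1:H^{(1)}\to G^{(1)}$ and $f_0:H^0\to G^0$ (so they are submersion algebroids, automatically integrable), and their integrations assemble into the double Lie groupoid $f^!G$:
\begin{equation}
\begin{tikzcd}
H^{(1)}\times_{G^{(1)}}H^{(1)} \arrow[d, shift left] \arrow[d, shift right] \arrow[r, shift left] \arrow[r, shift right] & H^0\times_{G^0}H^0 \arrow[d, shift left] \arrow[d, shift right] \\
H^{(1)} \arrow[r, shift left] \arrow[r, shift right] & H^0
\end{tikzcd}
\end{equation}
Viewed horizontally, $f^!G\rightrightarrows H$ is Morita equivalent to $G\rightrightarrows G^0$. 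I would then argue that the LA-groupoid equivalence $\mathfrak{g}\to X\,\sim\,D\to H$ transports this integration: any zigzag of Morita maps of LA-groupoids between the two endpoints can be integrated levelwise (using Lie's second theorem on each horizontal Lie groupoid) to produce a zigzag of Morita maps of double Lie groupoids, with $\mathfrak{g}$ at one end now having the horizontal structure of a genuine Lie groupoid integrating it. Projecting back to horizontal groupoids then yields an integration of $\mathfrak{g}$ Morita equivalent to $G$.

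The main obstacle will be the backward direction, and specifically the step of transporting integration along an LA-groupoid Morita equivalence. The difficulty is two-fold. First, as noted in \Cref{generate}, the correct class of equivalences in the (2,1)-category of LA-groupoids was only sketched; one should verify that both horizontal and vertical Morita maps of LA-groupoids induce honest equivalences of the associated ``horizontal integrations''. Second, even granting this, one must show that the integration of a Morita-equivalent LA-groupoid, which a priori only produces an integration up to Morita equivalence, in fact descends to produce an honest Lie groupoid integrating $\mathfrak{g}$ itself. This ultimately amounts to a form of Morita-invariance of integrability for Lie algebroids in the LA-groupoid framework, which is precisely the sort of result the conjectural category of Part 3 is meant to supply; a complete proof will therefore require committing to definitions of LA-groupoid equivalence and integration that are left partially open in the text.
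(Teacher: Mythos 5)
First, a point of order: the paper offers no proof of this statement. It is presented as a conjecture, and the only trace of an argument in the source is a commented-out fragment that breaks off mid-sentence precisely in the backward direction. So there is no ``paper's proof'' to compare against; your proposal has to stand on its own. Your forward direction does stand, and is essentially an assembly of results the paper already states: for an integration $G\rightrightarrows X$ of $\mathfrak{g}$, the fibration replacement of $G^0\hookrightarrow G$ is the groupoid $G\ltimes G\rightrightarrows G$ with its projection to $G$, which is a surjective submersion of Lie groupoids in the sense of \Cref{surj sub} and hence a simple foliation (\Cref{cohomology of fol}), and the right-translation map of \ref{morita LA} (specialized to $H=G^0$, as in \Cref{equiv la group}) is the required Morita map of LA-groupoids onto $\mathfrak{g}\to X$. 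That half is fine modulo the paper's own claims.

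The backward direction, however, has a genuine gap at exactly the step you flag, and the flag does not repair it. The proposal to ``integrate the zigzag levelwise using Lie's second theorem on each horizontal Lie groupoid'' is circular: Lie II integrates morphisms between Lie algebroids that are already known to be integrable, and the vertical Lie algebroid sitting at the $\mathfrak{g}$-end of the zigzag is $\mathfrak{g}\to X$ itself, whose integrability is the thing to be proved. Integrating $f^!G$ at the other end of the zigzag costs nothing (foliations and pullback algebroids along submersions are always integrable); the entire difficulty is in transporting that integration across a Morita map of LA-groupoids. Concretely, a Morita equivalence between $\mathfrak{g}\rightrightarrows\mathfrak{g}$ (trivial horizontal structure) and $D_{H^{(1)}}\rightrightarrows D_{H^0}$ exhibits $\mathfrak{g}$ as the quotient of the integrable foliation $D_{H^0}\subset TH^0$ by a free proper groupoid action, and the needed statement is that such a quotient inherits an integration --- i.e.\ Morita invariance of the Crainic--Fernandes monodromy obstructions for whichever generated notion of LA-groupoid equivalence one commits to (cf.\ \Cref{generate}). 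Nothing in the paper supplies this, and it is not a formal consequence of Lie II; it is the actual content of the conjecture. Your closing paragraph concedes as much, so the proposal should be read as a correct treatment of the forward implication together with an accurate identification of what remains open, not as a proof of the equivalence.
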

\section{Leaves of a Foliation}
Given a foliation of a stack, we can present it by a foliation of a Lie groupoid, and the leaves in the space of arrows passing through the identity bisection are subgroupoids. We would like to say that the union of these leaves are the stack. We won't be able to quite say this, but something similar will hold. First we will describe categorical unions:
\subsection{Categorical Union}
In category with a given object $\mathcal{C}\,,$ a subobject  $\mathcal{A}\xhookrightarrow{} \mathcal{C}$ is defined to be an equivalence class of monomorphisms. In addition, one can form the category of subobjects of $ \mathcal{C}\,,$ where a morphism between subobjects is essentially an inclusion. Now given two subobjects $\mathcal{A},\mathcal{B}\xhookrightarrow{} \mathcal{C}\,,$ their union is defined to be the coproduct of $\mathcal{A},\mathcal{B}$ in the category of subobjects of $\mathcal{C}\,.$ 
\vspace{3mm}\\Lets consider the category of sets. Consider a set $X$ and let $A,B\subset X\,.$ A morphism between subsets $A\to C$ is an inclusion $A\subset C\,.$ The coproduct of $A$ and $B$ is in particular a subset of $X$ receiving morphisms from $A\,,B\,;$ the coproduct is $A\cup B\,.$
\begin{comment}We can form the fiber product $A\times_X B=A\cap B\,;$ this comes with two maps $A\cap B\to A\,,A\cap B\to B\,.$ We then get that $A\cup B$ is a pushout, as so:
\begin{equation}
    \begin{tikzcd}
A\cap B \arrow[r] \arrow[d] & A \arrow[d] \\
B \arrow[r]                 & A\cup B    
\end{tikzcd}
\end{equation}
\end{comment}
Now given a third subset $C\subset X\,,$ one can form the union $(A\cup B)\cup C\,.$  In particular, given any collection of subsets $\{A_i\}_{i\in I}\,,$ one can form all finite unions, obtaining a new collection of sets $\{A_j\}_{j\in J}\,,$ where $J$ a directed set, given by 
\begin{equation}
J=\coprod_{n=1}^{\infty} I^n\,.
\end{equation}
For example, if $j=(i_1,i_2,i_3)\,,$ then $A_j=A_{i_1}\cup A_{i_2}\cup A_{i_3}\,.$ One can now form the union $\bigcup_{i\in I} A_i$ as a direct limit in this way. 
\subsection{Union of Leaves}
Now in the category of groupoids, a subgroupoid is a subobject, however in the (2,1)-category of groupoids, only full subgroupoids behave as subobjects. Given a Lie groupoid $G$ and two full subgroupoids $H,K\,,$ their union is the full subgroupoid over $H^0\cup K^0$ (meaning that it contains all morphisms between objects in $H^0\cup K^0$).  Given a collection of full subgroupoids $\{G_i\}_{i\in I}$ such that $\cup_i G^0_i=G^0\,,$ there union is $\cup_i G_i=G\,.$ Therefore, if the foliation is by full subgroupoids we can say their union is the groupoid.
\vspace{3mm}\\Now suppose we have a foliation of a stack, given by a foliation of a Lie groupoid $G\rightrightarrows G^0\,.$ Let $\{L_i\}_{i\in I}$ be the set of leaves in $G^{(1)}$ intersecting the space of objects. Typically these subgroupoids will not be full, however the categorical image of $L_i$ in $G$ is just the full subgroupoid over $L_i^0\,,$ therefore we can say that the union of the images of the leaves of the stack is the stack itself (the image of a morphism in a category is essentially the smallest monomorphism which the morphism factors though). 
\vspace{3mm}\\
Another observation is the following: A Lie groupoid-principal bundle determines a foliation of the total space of the principal bundle, and a foliation of the Lie groupoid refines this foliation of the total space. Therefore, given a foliation of a Lie groupoid, we get a foliation of the objects of the associated stack.
\begin{comment}
By compatibility of the foliation with the structure maps, these leaves must all be subgroupoids. We claim that \begin{equation}
    [G^0/G]=\bigcup_{i\in I}L_i\,.
\end{equation}
First, let us describe $L_1\cup L_2\,.$ To compute this, we need to compute the following pushout:
\begin{equation}
    \begin{tikzcd}
L_1\times_G L_2 \arrow[d] \arrow[r] & L_2 \\
L_1                                 &    
\end{tikzcd}
\end{equation}
The pushout, $L_1\cup L_2\,,$ is the subgroupoid of $G$ consisting of all morphisms in $L_1,L_2\,,$ and morphisms in $G$ mapping $H^0\to K^0$ and $K^0\to H^0$ (so it is a set theoretic union of all of these morphisms, and if $H^0\ne K^0$ this is a disjoint union). In particular, if $L_1=L_2\,,$ then $L_1\cup L_2$ is the full subgroupoid in $G$ over $H^0\,.$
Then forming all finite unions as above and taking the direct limit, we get that the union
\begin{equation}
    \bigcup_{i\in I}L_i
\end{equation}
is a subgroupoid of $G$ such that $g\in G$ is in this subgroupoid if and only if there are leaves $L_n\,,L_m$ such that $s(g)\in L_n\,, t(g)\in L_m\,;$ but every point in $G^0$ is in some leaf interesecting the space of objects, therefore the subgroupoid we get is all of $G\,.$
\end{comment}
\begin{comment}\begin{remark}
At the level of the 1-category of Lie groupoids, the fact that a stack is a union of these substacks manifests itself as the fact that the groupoid is a union of bitorsors for the leaves passing through the identity bisection.
\end{remark}
\end{comment}
\subsection{Foliations of Lie Groups}
Here we will show that foliations of Lie groupoids, in some sense, generalize normals subgroups. The context is foliations of a Lie group $G\to *$ (ie. the space of arrows is foliated, and the foliation is compatible with the structure maps). We have the following observation (made by Francis Bischoff, and a similar observation made by Eli Hawkins in \cite{eli}): 
\begin{proposition}
Foliations of a Lie group are in bijective correspondence with normal subgroups.
\end{proposition}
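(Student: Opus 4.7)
The plan is to unpack the definition and translate the foliation compatibility with the structure maps into the conditions ``left--invariant,'' ``right--invariant,'' and ``involutive'' on the associated distribution, from which the correspondence is immediate. First I would observe that for a Lie group $G\rightrightarrows *\,,$ the only nontrivial data of a foliated Lie groupoid is a regular distribution $D\subset TG$ that is involutive (a foliation in the ordinary sense) and such that the multiplication $m:G\times G\to G$ and the inverse $i:G\to G$ are maps of foliated manifolds, where $G\times G$ carries the product foliation $D\oplus D\,.$ Compatibility with $u:*\to G$ is automatic, and compatibility with $i$ will turn out to be redundant.

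Next I would exploit the compatibility with $m\,.$ For $(X,Y)\in T_{(g,h)}(G\times G)$ one has $m_*(X,Y)=dL_g\,Y+dR_h\,X\,.$ Setting $X=0$ forces $dL_g(D_h)\subseteq D_{gh}\,,$ and since $dL_g$ is a linear isomorphism and $D$ has constant rank this inclusion is an equality; symmetrically with $R_h\,.$ Hence $D$ is both left-- and right--invariant, so $D_g=dL_g(\mathfrak{n})=dR_g(\mathfrak{n})$ where $\mathfrak{n}:=D_e\subset\mathfrak{g}\,.$ Involutivity of $D$ at $e$ together with left invariance is equivalent to $\mathfrak{n}$ being a Lie subalgebra of $\mathfrak{g}\,,$ and the equality $dL_g(\mathfrak{n})=dR_g(\mathfrak{n})$ for all $g$ translates to $\mathrm{Ad}_g(\mathfrak{n})=\mathfrak{n}$ for all $g\,,$ i.e.\ $\mathfrak{n}$ is an ideal. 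The connected immersed Lie subgroup $N\xhookrightarrow{}G$ integrating $\mathfrak{n}$ is then a (connected) normal subgroup, and the leaves of $D$ are exactly the cosets $gN=Ng\,.$ One verifies that compatibility of $i$ is automatic here, since $i_*(dL_g Y)=-dR_{g^{-1}}Y$ and right--invariance of $D$ was already established.

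Conversely, given a connected normal Lie subgroup $N\xhookrightarrow{}G$ with Lie algebra $\mathfrak{n}\trianglelefteq\mathfrak{g}\,,$ I would define $D_g:=dL_g(\mathfrak{n})\,.$ Left invariance is by construction, involutivity follows from $\mathfrak{n}$ being a subalgebra, and right invariance follows from $\mathrm{Ad}_g(\mathfrak{n})=\mathfrak{n}\,.$ The partition into leaves is the partition into cosets of $N\,,$ and one checks directly from the formula $m_*(X,Y)=dL_gY+dR_h X$ and from $i_*=-\mathrm{Ad}_{g^{-1}}$ that $m$ and $i$ are maps of foliated manifolds.

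Finally I would check that the two constructions are mutually inverse: the leaf of the coset foliation through $e$ is $N$ itself (taking $N$ to be already chosen connected), recovering $\mathfrak{n}=D_e\,,$ and conversely the subgroup produced from a foliation $D$ has Lie algebra $D_e\,,$ hence produces the distribution $g\mapsto dL_g(D_e)=D_g$ back. The mild subtlety—which is the main obstacle—is the standing convention about ``normal subgroup'': since leaves of a foliation are connected and may fail to be embedded, the precise bijection is with \emph{connected normal (immersed) Lie subgroups}; if one wants closed normal subgroups one must additionally require $D$ to have closed leaves. I would state this caveat explicitly in the proof to avoid ambiguity.
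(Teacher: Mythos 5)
Your proof is correct, but it takes a genuinely different route from the paper's. The paper argues entirely at the level of leaves: compatibility of the foliation with multiplication forces the product of two leaves to land in a single leaf, so the set of leaves inherits a group structure; the leaf through the identity is then the kernel of the quotient map, hence a normal subgroup, and the remaining leaves are its cosets. You instead work infinitesimally with the tangent distribution $D\subset TG\,,$ using $m_*(X,Y)=dL_gY+dR_hX$ to extract left- and right-invariance, concluding that $D_e$ is an ideal and integrating it to a connected normal immersed subgroup. The paper's argument is shorter and makes the coset structure and the quotient group visible immediately, but it leaves implicit the verification that the leaf through $e$ is actually a (Lie) subgroup and that the correspondence is a bijection. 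Your argument supplies exactly those missing checks, makes the inverse construction explicit, and — importantly — pins down the correct statement of the bijection: it is with \emph{connected} normal (immersed) Lie subgroups, since leaves are by definition connected and need not be embedded. That caveat is absent from the paper and is worth recording; it also bears on the subsequent remark in the paper that all such foliations are simple, which requires some care when the corresponding normal subgroup is not closed.
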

\begin{proof}
To see this, note that since the foliation is compatible with the composition, the composition must takes two leaves to a third leaf. Hence, the set of leaves has a multiplication on it, and we can form the quotient to get a group. This implies that the leaf intersecting the origin is a normal subgroup, and the leaves are the cosets.
\end{proof}
\begin{remark}
Note that the above result implies, in particular, that all foliations of Lie groups are simple, since the quotient of a Lie group by a normal Lie subgroup always exists.
\end{remark}
\chapter{LA-Groupoids Associated to a Map}
Here we will further discuss the LA-groupoids associated to a (nice enough) map $:H\to G\,.$
So far, given a (nice enough) map $f:H\to G\,,$ we have two ways of associating an LA-groupoid (see \Cref{cofibration replacement}): the first way is by forming the fibration replacement and taking the associated foliation, and the second way is by forming the canonical cofibration and taking the associated LA-groupoid. 
\section{Equivalence of the Two LA-groupoids}\label{equiv la group}
Here we will show here that the resulting LA-groupoids are Morita equivalent. 
First, we will give the construction of the two LA-groupoids: Let $f:H\to G$ be a (nice enough) map of Lie groupoids. First we form the fibration replacement $H\ltimes P\rtimes G\to G\,,$ and from this we get an LA-groupoid by taking the kernels of the left and right columns as a map of vector bundles:
\begin{equation}
\begin{tikzcd}
TH\ltimes TP\rtimes TG \arrow[r, shift left] \arrow[d] \arrow[r, shift right] & TP \arrow[d] \arrow[r, "p_{3*}", shift right=8] & TG \arrow[d] \arrow[r, shift right] \arrow[r, shift left] & TG^0 \arrow[d] \\
H\ltimes P\rtimes G \arrow[r, shift left] \arrow[r, shift right]              & P                                               & G \arrow[r, shift left] \arrow[r, shift right]            & G^0           
\end{tikzcd}
\end{equation}
Explicitly, it is given by 
\begin{equation}\label{exx}
 \begin{tikzcd}
TH\ltimes T_sP\rtimes G \arrow[r, shift left] \arrow[d] \arrow[r, shift right] & T_sP \arrow[d] \\
H\ltimes P\rtimes G \arrow[r, shift left] \arrow[r, shift right]             & P                        
\end{tikzcd}
\end{equation}
\vspace{3mm}Now the second way of obtaining an LA-groupoid (see \Cref{cofibration replacement}) is given by 
\begin{equation}
\begin{tikzcd}
(f\circ s)^!\mathfrak{g} \arrow[r, shift left] \arrow[r, shift right] \arrow[d] & f^!\mathfrak{g} \arrow[d] \\
H^{(1)} \arrow[r, shift right] \arrow[r, shift left]                            & H^0                      
\end{tikzcd}
\end{equation}
We can rewrite this as
\begin{equation}
\begin{tikzcd}
TH\ltimes TH^0\times_{TG^0}\mathfrak{g} \arrow[d] \arrow[r, shift left] \arrow[r, shift right] & TH^0\times_{TG^0}\mathfrak{g} \arrow[d] \\
H \arrow[r, shift left] \arrow[r, shift right]                                                 & H^0                                    
\end{tikzcd}
\end{equation}
Now, there is a natural map
\begin{equation}\label{morita LA}
     \begin{tikzcd}
TH\ltimes T_sP\rtimes G \arrow[r, shift left] \arrow[d] \arrow[r, shift right] & T_sP \arrow[d] \arrow[r, "", shift right=9] & TH\ltimes TH^0\times_{TG^0}\mathfrak{g}  \arrow[r, shift left] \arrow[d] \arrow[r, shift right] & TH^0\times_{TG^0}\mathfrak{g} \arrow[d] \\
H\ltimes P\rtimes G \arrow[r, shift left] \arrow[r, shift right]             & P                                                      & H \arrow[r, shift left] \arrow[r, shift right]                                    & H^0                      
\end{tikzcd}
\end{equation}
where the map on bottom row is given by the projection on the first factor, and the map on the top row is given by right translation (via $G$) of vectors in $T_sP=TH^0\times_{TG^0}T_sG$ to vectors in $TH^0\ltimes TH^0\times_{TG^0}\mathfrak{g}\,.$ This map is a Morita equivalence of the groupoids in the top row, therefore these LA-groupoids are Morita equivalent.
\begin{remark}
Specializing this result to the case of $\iota:G^0\xhookrightarrow{} G\,,$ we can interpret $\mathfrak{g}$ in the following way: we can equip $G\rightrightarrows G^0$ with the trivial LA-groupoid structure $0_G\to G$ (ie. the zero vector bundle). We can then interpret $\mathfrak{g}$ as being $\iota^!0_G$ (this pullback should be understood as the pullback appropriate to the (2,1)-category of groupoids, ie. we pull back $0_G\to G$ to the fibration replacement).
\end{remark}
\begin{remark}This gives one kind of duality between fibrations and cofibrations, ie. given a (nice enough) map $f:H\to G\,,$ we have two methods of obtaining LA-groupoids, one using fibrations and one using cofibrations, and they both agree. Related to this duality is another: given a (nice enough) map $f:H\to G\,,$ we can compute the fibers of the map by computing the kernel of the fibration replacement $G\times_G H\to G$ over an object in $G^0\,.$ Similarly, we can compute the fibers as the kernel of the target map from the top groupoid to the bottom groupoid in \ref{HH}, over an object in $H^0\,.$ 
\end{remark}
\section{The Normal Bundle is an LA-Groupoid}
Here we will show how the normal bundle of a wide Lie subgroupoid can be interpreted as an LA-groupoid. First we will specialize the previous construction to the case that $H=G$ (the one associated to the canonical cofibration). We have the following LA-groupoid:
\begin{equation}
\begin{tikzcd}\label{LA-groupoid2}
TG\ltimes_{TG^0}\mathfrak{g} \arrow[d, shift left] \arrow[r] \arrow[d, shift right] & G^{(1)} \arrow[d, shift left] \arrow[d, shift right] \\
\mathfrak{g} \arrow[r]                                                       & G^0                                           
\end{tikzcd}
\end{equation}
ie. there is a natural action of $TG$ on $\mathfrak{g}\,.$
To expand on the groupoid in the left column of \ref{LA-groupoid2}, note that given a groupoid $G\rightrightarrows G^0\,,$ we can form the tangent groupoid $TG\rightrightarrows TG^0\,.$ Now  a groupoid naturally acts on itself, with moment map being the target. Therefore, we can form the groupoid $TG\rtimes TG\rightrightarrows TG\,,$ and we can form the subgroupoid 
\begin{equation}\label{TG action T_sG}
T_sG\rtimes TG\rightrightarrows T_sG\,,
\end{equation}which consists of the action of $TG$ on vectors in $TG$ which are tangent to the source fibers. Using right translation, we have an action of $TG$ on $\mathfrak{g}\,,$ 
\begin{equation}
    \mathfrak{g} \sideset{}{_{TG^0}}{\mathop{\rtimes}} TG \rightrightarrows \mathfrak{g}\,,
\end{equation}
here, the moment map for $\mathfrak{g}$ is just the anchor map. One way of describing this action is to choose an adjoint representation up to homotopy, ie. a splitting of the sequence
\begin{equation}
  \begin{tikzcd}
t^*\mathfrak{g} \arrow[r, "r"] & TG^{(1)} \arrow[r, "s_*"] \arrow[l, "\omega", dotted, bend left, shift left=2] & s^*TG^0
\end{tikzcd}
\end{equation}
such that the splitting is the canonical one when restricted to $G^0\,.$
Then, one obtains an adjoint action up to homotopy, given by 
\begin{equation}
    Ad_g(X_{s(g)}):=\omega_g(g(X_{s(g)}-\alpha(X_{s(g)})))g^{-1}\,.
\end{equation}
Now we may define an action of $TG$ on $\mathfrak{g}\,,$ given by \begin{equation}
\tilde{X}_g\cdot X_{s(g)}=Ad_g(X_{s(g)})+\omega_g(\tilde{X_g})g^{-1}\,.
\end{equation}
Let us emphasize that the above action of $TG$ is a bonafide action, and that it doesn't depend in any way on the choice of splitting. 
\subsection{The Normal Bundle}Using the action of $TG$ on $\mathfrak{g}$ given in the previous section, we deduce that, given a (nice enough) homomorphism $H\to G\,,$ we get a natural action of $TH$ on $\mathfrak{g}\,,$ and $TH\ltimes_{TG^0}\mathfrak{g}$ is an LA-groupoid over $H\,.$ In the case that $H\xhookrightarrow{}G$ is a wide Lie subgroupoid specializes to 
\begin{equation}
\begin{tikzcd}
TH\ltimes\mathfrak{g} \arrow[d, shift left] \arrow[r] \arrow[d, shift right] & H^{(1)} \arrow[d, shift left] \arrow[d, shift right] \\
\mathfrak{g} \arrow[r]                                                       & G^0                                           
\end{tikzcd}
\end{equation}
where the action of $TH$ on $\mathfrak{g}$ is given by \begin{equation}
\tilde{X}_g\cdot X_{s(g)}=Ad_g(X_{s(g)})+\omega_g(\tilde{X_g})g^{-1}\,.
\end{equation}
We will now show how this LA-groupoid can be thought of as equipping the normal bundle of $H^{(1)}\xhookrightarrow{}G^{(1)}$ with additional structure.
\vspace{3mm}\\First, apply the forgetful functor to VB-groupoids, so that we obtain the same diagram but have forgotten the Lie brackets. Now, consider the following VB-groupoid
\begin{equation}\label{LA-groupoidss}
\begin{tikzcd}
H\ltimes(\mathfrak{g}/\mathfrak{h}) \arrow[d, shift left] \arrow[r] \arrow[d, shift right] & H \arrow[d, shift left] \arrow[d, shift right] \\
\mathfrak{g}/\mathfrak{h} \arrow[r]                                                       & G^0                                           
\end{tikzcd}
\end{equation}
Here, 
\begin{equation}\label{action}
h\cdot X=Ad_h(X_{s(h)})\end{equation}
for $X\in\mathfrak{g}/\mathfrak{h}\,.$ This is well-defined, as we can see as follows: consider $X_{s(h)}+Y_{s(h)}\,,$ where $Y_{s(h)}\in \mathfrak{h}_{s(h)}\,.$ Let $W_h\in TH_h$ be such that $s_*W_h=\alpha(X_{s(h)}+Y_{s(h)})$ (this is possible, since $s$ is a submersion). Then 
\begin{equation}\label{normal3}
W_h\cdot (X_{s(h)}+Y_{s(h)})=Ad_hX_{s(h)}+Ad_hY_{s(h)}+\omega_h(W_h)h^{-1}\,.
\end{equation}Now recall, that action is independent of $\omega\,,$ and in addition this action at a point $g\in G$ only depends on $\omega_g\,,$ thus we may choose $\omega_h$ so that $\omega_h(TH)\in t^*\mathfrak{h}\,.$ Then, $Ad_hY_{s(h)}+\omega_h(W_h)h^{-1}\in \mathfrak{h}_{s(h)}\,,$ Then we see that the action given in \ref{normal3} is independent of $Y_{s(h)}$ and $W_h\,,$ and thus the action given by \cref{action} is well-defined. Now, the natural homomorphism $TH\ltimes \mathfrak{g}\to H\ltimes\mathfrak{g}/\mathfrak{h}$ is a Morita equivalence\footnote{Note that, if $\mathfrak{g}=\mathfrak{h}\oplus\mathfrak{g}/\mathfrak{h}\,,$ then this is a Morita equivalence of LA-groupoids.}, and from this we get that our VB-groupoid is Morita equivalent to \ref{LA-groupoidss}. Now, applying the forgetful functor from VB-groupoids to vector bundles over manifolds, we get 
\begin{equation}
\begin{tikzcd}\label{LA-groupoid}
H^{(1)}\times_{G^0}\mathfrak{g}/\mathfrak{h} \arrow[r]  & H^{(1)}\,,                                      \end{tikzcd}
\end{equation}
which is naturally identified, via translation, with the normal bundle of $H^{(1)}\xhookrightarrow{} G^{(1)}\,.$
\begin{comment}It is a surjective submersion at the level of objects, 
$hom(X_{h^0)},X_{h^{0'}})$ is given by a $X_h\in TH_h$ such that $h:h^0\to h^{0'}\,,\, s_* X_h=X_{h^0}$ and such that $Ad_h(X_{h^0})+\omega_h(X_h)h^{-1}=X_{h^{0'}}\,.$
\end{comment}
\begin{comment}
\begin{remark}
Let us remark that $H\to G$ is to $H\times_G H\rightrightarrows H$  as $Y\to X$ is to the mapping cylinder in homotopy theory. One can then ask about a geometric construction of the mapping cone - this can be interpreted as a (semi-) bisimplicial manifold (to be precise, it is a semi-simplicial set in the category of simplicial manifolds). To get this (semi-) bisimplicial manifold, we form the nerve of $H\times_G H\rightrightarrows H\,,$ and then collapse the nerve of $H\rightrightarrows H^0$ inside the nerve of $H\times_G H\rightrightarrows H$ to a point (to do this, we just consider the same (semi-) bisimplicial manifold, but replace the nerve of $H\rightrightarrows H^0\,,$ thought of as being the base of the bisimplicial manifold, with the nerve of a point). For example, in the case of a mapping of manifolds $N\to *\,,$ the cofibration replacement is Pair$(N)\rightrightarrows N\,,$ whose nerve is $N^{(m+1)}$ in degree $m\,.$ The mapping cone then consists of the semi-simplicial manifold which is $*$ in degree $0\,,$ and $N^{m+1}$ in degree $m\ge 1\,.$
\end{remark}
\end{comment}
\section{Geometric Construction of the Natural Representation of Wide Subgroupoids}
In the previous section, we showed that given a wide subgroupoid (ie. a subgroupoid which contains all objects) $H\xhookrightarrow{} G\,,$ there is a natural action of $H$ on $\mathfrak{g}/\mathfrak{h}\,.$ We will now construct this action geometrically. \vspace{3mm}\\First, let $X_{s(g)}\in \mathfrak{g}/\mathfrak{h}\vert_{s(g)}\,.$ Choose a lift of $X_{s(g)}$ to $\tilde{X}_{s(g)}\in\mathfrak{g}\,.$ Now, since $H$ is a wide subgroupoid and $s$ is a submersion, there exists a vector $X'_{s(h)}\in TH_{s(h)}$ such that $t_* X'_{s(h)}=t_*(\tilde{X}_{s(h)})\,.$ Therefore, $t_*(\tilde{X}_{s(h)}-X'_{s(h)})=0\,,$ and using the groupoid $TG\rightrightarrows TG^0\,,$ we have that $(h,0)\in TH\vert_h$ is composable with $\tilde{X}_{s(h)}-X'_{s(h)}\,,$ and we get a vector $(h,0)\cdot(\tilde{X}_{s(h)}-X'_{s(h)})\in TH\vert_h\,.$ Now once again, since $H$ is a wide subgroupoid and $s$ is a submersion, there is a vector $Y_h\in TH\vert_h$ such that $s_*Y_h=s_*(\tilde{X}_{s(h)}-X'_{s(h)})\,,$ therefore $s_*((h,0)\cdot(\tilde{X}_{s(h)}-X'_{s(h)})-Y_h)=0\,.$ Hence, we can right translate to get 
\begin{equation}
((h,0)\cdot(\tilde{X}_{s(h)})-X'_{s(h)}-Y_h)\cdot h^{-1}\in \mathfrak{g}\vert_{t(h)}\,,.
\end{equation}
After passing to the quotient, we get a well-defined action of $H$ on $\mathfrak{g}/\mathfrak{h}\,.$ 
\vspace{3mm}\\Furthermore, the analogous argument shows that, if $f:H\to G$ is a homomorphism which is a surjective submersion on the base, one gets a representation of $H$ on $f^*\mathfrak{g}/\mathfrak{h}\,.$
\begin{remark}
One can study the cohomology and the truncated cohomology 
\begin{equation}
H^*(H,\mathfrak{g}/\mathfrak{h})\,, H_0^*(H,\mathfrak{g}/\mathfrak{h})\,,
\end{equation}
respectively. Roughly, these should classify deformations in the normal direction; note that, $H^{(1)}$ is a bitorsor for $H\rightrightarrows H^0\,.$ The authors interpretation of these cohomologies are as follows: the degree $0$ cohomology should classify deformations of $H^{(1)}\xhookrightarrow{}G^{(1)}$ in the normal direction, as a bitorsor for $H\rightrightarrows H^0\,.$ In the second case, the degree $0$ cohomology should classify deformations of $H\xhookrightarrow{} G$ in the normal direction, as a Lie groupoid.
\vspace{3mm}\\Furthermore, associated to any Lie groupoid representation $E\to G^0$ of $G\rightrightarrows G^0$ is a cohomology class $H^1(G,\mathbb{C}^*_X)$ (or $H^1(G,\mathbb{R}^*_X)$ if the representation is a real vector bundle); this class is obtained by taking the induced representation on the determinant bundle $\Lambda^{\text{top}} E\,,$ and since one-dimensional representations are classified by $H^1(G,\mathbb{C}^*_X)$ we get a natural cohomology class. Therefore, associated to any (nice enough) homomorphism $f:H\to G\,,$ there is a natural cohomology class in degree $1$ — this generalizes the fact that to any submanifold $Y\xhookrightarrow{}X$ there is a natural degree $1$ cohomology class associated to the determinant of the normal bundle.

\end{remark}
\section{Normal Bundles of Stacks in General}\label{Normal bundles}
Here we will show how some of the constructions we've been making generalize the construction of normal bundles. 
\vspace{3mm}\\First, let us record an observation: consider an embedding $\iota:N\xhookrightarrow{}M\,;$ the normal bundle is defined as $\iota^*TM/TN\,.$ Now we can think of this in the following way: consider the following VB-groupoid
\begin{equation}
    \begin{tikzcd}
TN\ltimes\iota^*TM \arrow[d] \arrow[r, shift right] \arrow[r, shift left] & \iota^*TM \arrow[d] \\
N \arrow[r, shift right] \arrow[r, shift left]                            & N                  
\end{tikzcd}
\end{equation}
There is a natural Morita map to the normal bundle, so they are equivalent descriptions. However, this description works for any smooth map $\pi:Y\to X$ between manifolds, ie. we can consider the following to be the normal bundle
\begin{equation}\label{normal}
    \begin{tikzcd}
TY\ltimes\pi^*TX \arrow[d] \arrow[r, shift right] \arrow[r, shift left] & \pi^*TX \arrow[d] \\
Y \arrow[r, shift right] \arrow[r, shift left]                            & Y              
\end{tikzcd}
\end{equation}
In the case that $\pi$ is a surjective submersion, $\pi^*TX/TY$ doesn't give the right vector bundle, however \ref{normal} is Morita equivalent to the relative tangent bundle, which is what we want. One should be able to make such a construction for any smooth morphism $H\to G$ of Lie groupoids, however it will probably be a Lie algebroid in the category of double Lie groupoids.
\begin{comment}In the case that $\pi$ is a submersion, $\pi^*TX/TY$ doesn't give the right vector bundle, however \ref{normal} is Morita equivalent to the relative tangent bundle, which is what we want.  One should be able to make such a construction for any smooth morphism $H\to G$ of Lie groupoids, but it will probably be a Lie algebroid in the category of double Lie groupoids, 
\end{comment}\begin{comment}Furthermore, since $\pi$ is a submersion and Lie algebroids pull back along submersions, in this case \ref{normal2} is an LA-groupoid which is still Morita equivalent to the relative tangent bundle.
\end{comment}In addition, in this case the relative tangent bundle really is a normal bundle, since the map $Y\to X$ is equivalent to the map $Y\xhookrightarrow{} Y\times_X Y\rightrightarrows Y$ and the relative tangent bundle is the normal bundle to this map.  Since injectivity of objects isn't invariant under Morita equivalence, it makes sense to think of normal bundles in this way.
\vspace{3mm}\\We have already generalized the relative tangent bundle for a (nice enough) map $f:H\to G$ between Lie groupoids, so we feel justified in calling it the normal bundle. One can probably make sense of construction \ref{normal2} in general for any smooth map $f:H\to G$ of Lie groupoids, and if $f$ is a submersion we should in addition get a Lie algebroid structure; but that's not the point of this thesis.
\begin{comment}Furthermore, since $\pi$ is a submersion and Lie algebroids pull back along submersions, in this case \ref{normal} is an LA-groupoid which is still Morita equivalent to the relative tangent bundle.
\end{comment}In addition, the relative tangent bundle really is a normal bundle, since the map $Y\to X$ is equivalent to the map $Y\xhookrightarrow{} Y\times_X Y\rightrightarrows Y\,.$ Since injectivity of objects isn't invariant under Morita equivalence, it makes sense to think of normal bundles in this way.
\vspace{3mm}\\We have already generalized the relative tangent bundle for a (nice enough) map $f:H\to G$ between Lie groupoids, so we feel justified in calling it the normal bundle, at least in the context of double Lie groupoids where we can always replace $f$ with an embedding. One can probably make sense of construction \ref{normal} in general for any smooth map $f:H\to G$ of Lie groupoids, and if $f$ is a submersion we should in addition get a Lie algebroid structure; but that's not the point of this thesis.
\chapter{The van Est Map}
 The van Est map with respect to a (nice enough) homomorphism $f:H\to G$ is a map from the Lie groupoid cohomology of $G$ to the associated foliated cohomology of $H\,.$ Formally, it is given by the following composition:
 \begin{equation}
\begin{tikzcd}
{H^*(G,M)} \arrow[r] \arrow[rr, "VE", bend right, shift right=2] & {H^*(H,f^{-1}\mathcal{O}(M))} \arrow[r] & {H^*_{\text{dR}}(f:H\to G,M)}
\end{tikzcd}
 \end{equation}
 Here, the second map is obtained by taking a fiberwise de Rham resolution, and therefore is an isomorphism. The first map is not in general an isomorphism. However, if the fibers $f$ are $n$-connected (which is equivalent to the classifying space of its fibers being $n$-connected) it will be an isomorphism up to degree $n\,,$ and injective in degree $n+1\,.$ Its image in degree $n+1$ will consists of classes which pull back to a trivial cohomology class on each fiber.
 \vspace{3mm}\\Heuristically, this is true because any class in $H^*(H,f^{-1}\mathcal{O}(M))$ which vanishes on the fibers ``should" be pulled back from the base. However, there are obstructions to doing this, but the obstructions lie in lower degree cohomology (of a locally constant sheaf) of the fibers, which is zero in the degrees we are considering due to the connectivity assumption.
 \vspace{3mm}\\More precisely, consider the case of a surjective submersion between spaces $\pi:Y\to X\,,$ where we take the cohomology of $X$ with respect to functions valued in some abelian Lie group. If this were a fiber bundle, then we could locally write $\pi^{-1}(U)=U\times F\,,$ where $F$ is the fiber of $\pi\,.$ Then one can make a Leray spectral sequence argument to derive the result, since the local product formula would give us a good handle on the derived functors of $\pi\,.$ However, in the case that $\pi$ isn't a fiber bundle, the spectral sequence doesn't offer much help because the derived functors can be very complicated. To illustrate this, consider the following example (from the paper ``the relative de rham sequence"):
 \begin{exmp}Let $Y=\mathbb{R}^2-\{(0,0)\}\,,X=\mathbb{R}\,.$ Let $\pi$ be the projection onto the first factor — this is a surjective submersion (but not a fiber bundle). The sheaf we will put on $X$ is $\mathcal{O}_X\,,$ so that the sheaf we get on $Y$ is $\pi^{-1}\mathcal{O}_X\,.$ Consider the following foliated form, which defines a class in $H^1(Y,\pi^{-1}\mathcal{O}_X):$ 
 \begin{equation}
     \frac{x\,dy}{x^2+y^2}\,.
 \end{equation}
 Notice that, when restricted to each fiber, this form is trivial. Away from $\{(x,y):x=0\}\,,$ all the primitives are of the form $g(x)=\arctan{(y/x)}+f(x)\,.$ However, due to the limiting behavior of $\arctan{(y/x)}$ as $x\to 0\,,$ there is no function $f(x)$ which will make $g(x)$ continuous on all of $Y\,.$ Therefore, this one form is not trivial over any neighborhood of $x=0$ — this displays the complexity of the derived functors of $\pi\,.$ 
 \begin{remark}Of course, this example doesn't satisfy the connectivity assumptions since the fibers are not all connected. However, such a phenomenon could not happen for a fiber bundle — locally a fiber bundle is of the form $U\times F\,,$ and a primitive for any foliated one form which is trivial along each fiber can be found through integration. 
 \end{remark}
 \end{exmp}
 \section{Definition of the van Est map}
Here we will state and prove the main theorem of this paper (there are some applications stated in~\Cref{app}). Let us first remark that some of what we do depends on the Morita invariance of LA-groupoid cohomology, which has been shown in \cite{Waldron} (see section 5.4.5) for coefficients in a representation. We are using more general coefficients, but we expect the Morita invariance to hold. Though we don't need Morita invariance to state a theorem which is essentially equivalent.
\vspace{3mm}\\Now the first thing we must do is define the van Est map. Let us first explain what it is in the case of a surjecive submersion $Y\to X$ and where the module is just $X\times S^1\,:$
\vspace{3mm}\\Consider a surjective submersion $\pi:Y\to X$ and a module for $X$ given by $X\times S^1$ (this is automatically a module since $X$ has only identity morphisms). We will denote the sheaf of function of $X\times S^1$ by $\mathcal{O}^*\,.$ The map $\pi$ induces a morphism 
\begin{equation}\label{1}
    \pi^{-1}:H^*(X,\mathcal{O}^*)\to H^*(Y,\pi^{-1}\mathcal{O}^*)\,.
\end{equation}
Now we can take a leafwise resolution of $\pi^{-1}\mathcal{O}^*$ by leafwise differential forms, given by 
\begin{equation}\label{2}
    \pi^{-1}\mathcal{O}^*\to\mathcal{O}^*\to \Omega_{\pi}^1(Y)\to \Omega_{\pi}^2(Y)\to\cdots\,.
\end{equation}
From this, we get a map 
\begin{equation}
   H^*(Y,\pi^{-1}\mathcal{O}^*)\to H^*_{\pi}(Y,\mathcal{O}^*\to \Omega_{\pi}^1(Y)\to\cdots)\,,
\end{equation}
which is an isomorphism.
Composing \Cref{2} with \Cref{1}, we get a map
\begin{equation}
    H^*(X,\mathcal{O}*)\to H^*_{\pi}(Y,\mathcal{O}^*\to \Omega_{\pi}^1(Y)\to\cdots)\,;
\end{equation}
this is the van Est map in this special case. Now if the fibers of $\pi$ are n-connected, the van Est map is an isomorphism up to degree $n\,,$ injective in degree $n+1\,,$ and its image in degree $n+1$ consists of cohomology classes which pull back to zero along each fiber. This follows from \Cref{bisimpliciall}.
\vspace{3mm}\\The definition of the van Est map for Lie groupoids will be defined for a \hyperref{surj sub}[surjective submersion of Lie groupoids], and will proceed directly analogously. For stacks, the van Est map will be defined for a \hyperref[surj stacks]{surjective submersions of stacks}, and it will proceed by presenting the surjective submersion of stacks by a surjective submersion of Lie groupoids, and then using the van Est map there.
\vspace{3mm}\\In the following, $D\to H$ refers to a foliation of $H$ (ie. $D$ is a subbundle of $TH\rightrightarrows TH^0)\,,$ $H^*(D\to H,f^*M)$ means the LA-groupoid cohomology of $D\to H$ with coefficients in $f^*M\,.$ Some good examples to keep in mind: when $M=G^0\times \mathbb{R}$ we are taking cohomology with respect to the sheaf of $\mathbb{R}$-valued functions on the nerbve, and when $M=G^0\times S^1$ we are taking cohomology with respect to the sheaf of $S^1$-valued functions on the nerve (for more on LA-groupoid cohomology, see \cite{mehta}).
\begin{definition}
Let $f:H\to G$ be a simple foliation of Lie groupoids and let $M$ be a $G$-module. The van est map is a map
\begin{equation}
  VE:H^*(G,M)\to H^*(D\to H,f^*M)\,,  
\end{equation}
given by pulling back cohomology classes
\begin{equation}
    H^*(\mathbf{B}^\bullet G,\mathcal{O}(M))\xrightarrow[]{f^{-1}}H^*(\mathbf{B}^\bullet H, f^{-1}\mathcal{O}(M))\,,
\end{equation}
and then taking a resolution via leafwise differential forms.
\end{definition}
\begin{remark}
By Morita invariance of LA-groupoid cohomology, given a (nice enough) map $f:H\to G\,,$ we have an isomorphism between the cohomologies $H^*(P\to G\times_G H, \tilde{f}^*M)$ and $H^*(f^!\mathfrak{g}\to H, f^*M)\,.$ Therefore, we get a map $H^*(G,M)\to H^*(f^!\mathfrak{g}\to H, f^*M)$ as well. This is gives the usual van Est map in the case that the mapping is $G^0\xhookrightarrow{} G\,.$ One can show this explicitly however, similarly to what was done in part 1.
\end{remark}
We will now define the van Est map on stacks. In the following, we define a family of abelian groups $\mathcal{M}$ over a stack $\mathcal{G}$ to be a family of abelian groups associated to $G$-module $M\,,$ where $G$ is a presentation of $\mathcal{G}\,$ :
\begin{definition}
Let $\mathcal{F}:\mathcal{H}\to\mathcal{G}$ be a \hyperref[surj stacks]{surjective submersion} of differentiable (holomorphic) stacks, and let $\mathcal{M}$ be a family of abelian groups over $\mathcal{G}\,.$ The van Est map,
\begin{equation}
    \mathcal{VE}:H^*(\mathcal{G},\mathcal{M})\to H^*(\mathcal{D}\to\mathcal{H},\mathcal{F}^*\mathcal{M})
\end{equation}
is defined by choosing a Lie groupoid presentation of both the simple foliation associated to $\mathcal{F}:\mathcal{H}\to\mathcal{G}$ and $\mathcal{M}\,,$ and applying the van Est map associated to Lie groupoids. This is independent of any choices made, since any two choices of presentations of the simple foliation will be Morita equivalent).
\end{definition}
\begin{remark}
In a similar way to what is done in Part 1, one could define a ``truncated" version of this more general van Est map (so that the sheaf on $G^0$ is the sheaf which assigns to an open set the one point group), and prove an isomorphism theorem analogous to the one we are about to prove.
\end{remark}
\subsection{The van Est Isomorphism Theorem}
Before continuing, we need to make a remark about what it means for a Lie groupoid to be $n$-connected. There are two ways of doing this: one is by using a definition of homotopy groups for Lie groupoids (as in \cite{Xutu}, \cite{noohi}), in which case $n$-connected would mean that the first $n$-homotopy groups are trivial. Equivalently, this means that the classifying space is $n$-connected.
\vspace{3mm}\\Before stating and proving the isomorphism theorems, we will state two lemmas:
\begin{lemma}\label{bar}
Consider the double Lie groupoid:
\begin{equation}\label{dd}
    \begin{tikzcd}
H\ltimes P\rtimes G \arrow[d, shift right] \arrow[d, shift left] \arrow[r, shift right] \arrow[r, shift left] & P\rtimes G \arrow[d, shift right] \arrow[d, shift left] \\
H\ltimes P \arrow[r, shift right] \arrow[r, shift left]                                                       & P                                                      
\end{tikzcd}
\end{equation}
Associated to \ref{dd} is a bisimplicial manifold. Applying the bar functor gives us the total simplicial manifold; the total simplicial manifold is given by a fiber product of the antidiagonal components (see \cite{mehtatang}, \cite{cegarra}). In this special case, the total simplicial manifold is just the simplicial manifold associated to the diagonal, which is the nerve of $H\times _G G\,.$ Since the cohomology (with respect to some sheaf) of the total simplicial manifold is the cohomology of \ref{dd}, this implies that the cohomology of \ref{dd} is the cohomology of $G\times_G H\,.$
\end{lemma}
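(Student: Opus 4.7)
The plan is to describe the bisimplicial manifold explicitly, apply the bar/codiagonal functor, and then recognize the result as the nerve of the fibration replacement. First I would compute the bisimplicial manifold $X_{\bullet,\bullet}$ associated to the double Lie groupoid. Because all four corners of \ref{dd} arise from the two commuting actions of $H$ and $G$ on $P$, one verifies directly that in bidegree $(p,q)$ one has
\[
X_{p,q}\;\cong\;H^{(p)}\ltimes P\rtimes G^{(q)},
\]
with vertical face maps coming from the nerve of $H\ltimes P\rightrightarrows P$ and horizontal face maps coming from the nerve of $P\rtimes G\rightrightarrows P$. These face maps commute on the nose because the $H$-action and $G$-action on $P$ are independent (the action of $H$ is on the $H^0$ factor of $P=H^0\times_{G^0}G^{(1)}$, whereas $G$ acts on the $G^{(1)}$ factor).

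Next I would apply the Artin--Mazur / Cegarra--Remedios codiagonal $\mathrm{Tot}(X)_\bullet$, which in degree $n$ is the iterated fiber product of the antidiagonal $X_{p,n-p}$ along the appropriate vertical and horizontal face maps. In our case, the product structure of $X_{p,q}$ means each antidiagonal piece is $H^{(p)}\ltimes P\rtimes G^{(n-p)}$, and the compatibility conditions pin down a single object of $P$ together with an independent string of $n$ composable $H$-arrows and an independent string of $n$ composable $G$-arrows meeting at that object. The outcome is canonically
\[
\mathrm{Tot}(X)_n\;\cong\;H^{(n)}\ltimes P\rtimes G^{(n)},
\]
which is precisely the degree-$n$ part of the diagonal $\mathrm{diag}(X)$. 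Thus, in this special situation, codiagonal and diagonal literally agree as simplicial manifolds. I would then identify this with the nerve of the Lie groupoid $H\ltimes P\rtimes G \rightrightarrows P$ by checking that composition in the semidirect-product groupoid exactly implements the simplicial face maps; this groupoid is the fibration replacement $G\times_G H$ of \ref{fibration rep}.

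Finally, I would invoke the cohomological Eilenberg--Zilber / Cegarra--Remedios theorem: for any sheaf on a bisimplicial manifold, the cohomology computed by the double complex agrees with the cohomology of the codiagonal (and hence of the diagonal). Combining this with the identification above gives
\[
H^\ast(\text{\ref{dd}},\,\mathcal{F})\;\cong\;H^\ast(\mathrm{Tot}(X),\,\mathcal{F})\;\cong\;H^\ast(H\ltimes P\rtimes G,\,\mathcal{F})\;=\;H^\ast(G\times_G H,\,\mathcal{F}),
\]
which is the claim. The main obstacle I expect is the bookkeeping in the second step: one must keep track of which face maps are used in assembling the codiagonal fiber product and verify that the resulting compatibilities match exactly the composability condition in the semidirect-product groupoid, so that the resulting isomorphism $\mathrm{Tot}(X)\cong \mathrm{diag}(X)$ is simplicial and not merely degreewise. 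All subsequent identifications, and the cohomological comparison, are then formal.
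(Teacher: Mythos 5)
Your proposal matches the paper's proof in both structure and substance: compute the total simplicial manifold via the bar/codiagonal construction, observe that in this product-type case it coincides with the diagonal (hence with the nerve of $H\ltimes P\rtimes G\cong G\times_G H$), and then compare cohomologies. The only cosmetic difference is that you invoke the generalized Eilenberg--Zilber/Cegarra--Remedios theorem for the final cohomological comparison, whereas the paper writes down explicit projection and inclusion chain maps between the components of the total simplicial manifold and the bisimplicial manifold that are mutual inverses at the level of cocycles --- the same content in a more hands-on form.
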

\begin{proof}
That the total simplicial manifold is the nerve of the diagonal follows from a computation of the total simplicial manifold. There is a canonical projection map from the components of the total simplicial manifold to the components of the bisimplicial manifold, and there is a canonical inclusion map from the components of the bisimplicial manifold to the components of the total simplicial manifold. These are chain maps and are mutual inverses at the level of cocylces.
\end{proof}
\begin{comment}\begin{proof}
This follows immediately from \Cref{bisimpliciall}.
\end{proof}
\end{comment}
\begin{lemma}\label{bisimpliciall} Suppose $Y^{\bullet,\bullet}$ is a bisimplicial topological sapce, and $X^\bullet$ is a simplicial topological sapce (considered as a bisimplicial topological space $X^{\bullet,\bullet}$ that is constant in the first $\bullet\,,$ ie. $X^{i,\,j}=X^{0,\,j}$ for all $i$). Suppose $f:Y^{\bullet,\bullet}\to X^{\bullet}$ is such that the restriction $f:Y^{i,\,j}\to X^{j}$ is a locally fibered map for all $i\,,j\,,$ and that the fibers $F^{\bullet,j}$of the map $Y^{\bullet,j}\to X^j$ are $n$-connected as simplicial spaces. Let $\mathcal{A}$ be a sheaf on $X^{\bullet}\,.$ Then the map $H^*(X^\bullet,\mathcal{A})\to H^*(Y^{\bullet,\bullet},f^{-1}\mathcal{A}) $ is an isomorphism up to degree $n\,,$ and is injective in degree $n+1\,.$ The image in degree $n+1$ consists of cohomology classes which vanish along each fiber.
\end{lemma}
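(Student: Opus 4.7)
\textit{Proof plan.} My plan is to reduce this bisimplicial statement to the simplicial Leray spectral sequence (Theorem \ref{spectral}) applied level-by-level in the second index $j$, and then compare the two bisimplicial spectral sequences associated with $Y^{\bullet,\bullet}$ and $X^{\bullet,\bullet}$. Concretely, both $H^*(X^\bullet,\mathcal{A})$ and $H^*(Y^{\bullet,\bullet},f^{-1}\mathcal{A})$ can be computed by double complexes obtained by taking a Godement-type resolution of the relevant sheaf in each bidegree and then forming the total complex. Filtering by the second bisimplicial index $j$ produces, in each case, a spectral sequence whose $E_1^{p,q}$ column at fixed $q=j$ records the cohomology of the ``row'' simplicial space in the first index with coefficients in the restricted sheaf.

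For the row at level $j$ on the $Y$-side, the row simplicial space is $Y^{\bullet,j}$ with coefficient sheaf $f^{-1}\mathcal{A}_j$, and the map $f_j:Y^{\bullet,j}\to X^j$ is a locally fibered map from a simplicial space to a (constant-simplicial) space with $n$-connected fibers $F^{\bullet,j}_x$. The first step is thus to apply the Leray spectral sequence \ref{spectral} to $f_j$: I compute the higher direct images stalkwise as $R^qf_{j\,*}(f_j^{-1}\mathcal{A}_j)_x\cong H^q(F^{\bullet,j}_x,\text{const}_{\mathcal{A}_{j,x}})$, using the fact that the pullback sheaf is locally constant along fibers. By $n$-connectedness of $F^{\bullet,j}_x$, this cohomology vanishes for $0<q\le n$ and equals $\mathcal{A}_{j,x}$ for $q=0$, so $R^0f_{j\,*}(f^{-1}\mathcal{A}_j)=\mathcal{A}_j$ and $R^qf_{j\,*}(f^{-1}\mathcal{A}_j)=0$ for $0<q\le n$. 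The Leray spectral sequence for $f_j$ then collapses enough to show that $H^p(X^j,\mathcal{A}_j)\to H^p(Y^{\bullet,j},f_j^{-1}\mathcal{A}_j)$ is an isomorphism for $p\le n$, injective for $p=n+1$, with image in degree $n+1$ given by classes whose restriction to every fiber $F^{\bullet,j}_x$ is zero.

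Next I compare the two bisimplicial spectral sequences via the morphism of double complexes induced by $f$. On the $X$-side, since $X^{\bullet,\bullet}$ is constant in the first index, its first-direction spectral sequence is concentrated on the column $p=0$ with $E_1^{0,q}\cong\Gamma(\mathcal{A}_q)$, and its $E_2^{0,q}$ recovers $H^q(X^\bullet,\mathcal{A})$. On the $Y$-side, the $E_1^{p,q}$-terms are precisely the row cohomologies controlled in the previous step. Thus the map $f^{-1}$ induces an isomorphism on $E_1^{p,q}$ in the range $p\le n$ and an injection for $p=n+1$. A standard comparison of bicomplex spectral sequences (noting that differentials $d_r$ preserve the range $p\le n$ and cannot produce or kill classes of total degree $\le n$ coming from outside this range) then yields the desired isomorphism in total degree $\le n$ and injectivity in total degree $n+1$. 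The image characterization follows by tracing through the spectral sequence: a class in $H^{n+1}(Y^{\bullet,\bullet},f^{-1}\mathcal{A})$ lifts to $H^{n+1}(X^\bullet,\mathcal{A})$ iff its obstruction class in the $E_2^{0,n+1}$-term of the $Y$-side vanishes, and that obstruction is exactly the fiberwise restriction class.

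The main obstacle will be the clean technical execution of the spectral-sequence comparison over a bisimplicial space, since the paper only formally records the Leray spectral sequence for maps of simplicial (not bisimplicial) spaces; I would spell out the necessary double-complex setup — taking a compatible Godement resolution in each bidegree so that the first-direction cohomology of the resulting triple complex is computed row-by-row by \ref{spectral} — and verify that the computation of $R^qf_{j\,*}$ via fiber cohomology legitimately uses $n$-connectedness of simplicial spaces to force vanishing of locally constant cohomology. Everything else is formal once these two pieces are in place.
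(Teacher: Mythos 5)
Your proposal is correct and follows essentially the same route as the paper, whose own proof is a one-line appeal to "a generalization of criterion 1.9.4 in Bernstein--Lunts to a mapping $X^\bullet\to Y$ and the Leray spectral sequence" (i.e.\ Theorem~\ref{spectral theorem}): fiberwise vanishing of higher direct images from $n$-connectedness, then the Leray/double-complex comparison. Your write-up simply fills in the spectral-sequence bookkeeping that the paper leaves implicit.
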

\begin{proof}
This follows from a generalization of \Cref{spectral theorem}, (equivalently, this follows from a generalization of criterion 1.9.4 in \cite{Bernstein} to a mapping $X^{\bullet}\to Y$ and the Leray spectral sequence).
\end{proof}
\begin{proposition}\label{ve groupoid}
Let $f:H\to G$ be a \hyperref[surj sub]{surjective submersion} of Lie groupoids such that the fibers of $f$ are all $n$-connected. Then the van Est map, $VE\,,$ is an isomorphism up to and including degree $n\,,$ it is injective in degree $n+1\,,$ and its image in degree $n+1$ consists of those cohomology classes which are trivial along the fibers.
\end{proposition}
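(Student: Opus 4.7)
The plan is to reduce the statement to Lemma \ref{bisimpliciall} by replacing $f$ with its canonical fibration replacement and exploiting the resulting double groupoid structure. First, note that the second leg of $VE$, namely the passage from $H^*(\mathbf{B}^\bullet H, f^{-1}\mathcal{O}(M))$ to $H^*(D\to H, f^*M)$, is always an isomorphism because the leafwise de Rham complex is an acyclic resolution of $f^{-1}\mathcal{O}(M)$ on each simplicial level (cf.\ Lemma \ref{iso of coh}). Therefore it suffices to show that the pullback map $f^{-1}: H^*(G,M)\to H^*(\mathbf{B}^\bullet H,f^{-1}\mathcal{O}(M))$ is an isomorphism through degree $n$, injective in degree $n+1$, with image in degree $n+1$ consisting of classes trivial on each fiber.

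Next, I would form the canonical fibration replacement $H\ltimes P \rtimes G \to G$ of $f$, which naturally carries the structure of a double Lie groupoid as in \eqref{replacement}. The projection $\pi: H\ltimes P \rtimes G \to G$ extends to a morphism of double Lie groupoids, viewing $G\rightrightarrows G^0$ as a double groupoid trivial in the vertical direction, and hence to a morphism of the associated bisimplicial manifolds into the constant bisimplicial space $\mathbf{B}^\bullet G$. By Lemma \ref{bar}, the cohomology of the bisimplicial space associated to this double Lie groupoid, with coefficients in $\pi^{-1}\mathcal{O}(M)$, is naturally isomorphic to the cohomology of $H\ltimes P \rtimes G$ as an ordinary Lie groupoid; since the canonical inclusion $H\hookrightarrow H\ltimes P\rtimes G$ is a Morita equivalence (see \eqref{fibration rep}), Morita invariance of groupoid cohomology identifies this with $H^*(\mathbf{B}^\bullet H, f^{-1}\mathcal{O}(M))$. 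Thus $f^{-1}$ is identified with the bisimplicial pullback along $\pi$.

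Now I would apply Lemma \ref{bisimpliciall} to $\pi$. Two hypotheses must be verified: first, that $\pi$ is degreewise a locally fibered surjective submersion of topological spaces, which follows from the assumption that $f:H\to G$ is a surjective submersion of Lie groupoids (Definition \ref{surj sub}), since the structure maps of the bisimplicial manifold are then all surjective submersions in each bidegree; second, that the fibers of $\pi$ over each $\mathbf{B}^j G$, viewed as simplicial spaces in the remaining bisimplicial direction, are $n$-connected. In each bidegree the fiber over a point of $\mathbf{B}^j G$ is (up to a direct product with an Euclidean factor that does not affect connectivity) the nerve of the groupoid-theoretic fiber of $f$ over a chosen object of $G^0$, which by Propositions \ref{equiv1} and \ref{equiv2} is Morita equivalent to the $(2,1)$-categorical fiber of $f$. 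Since these fibers are $n$-connected by hypothesis and connectedness of a Lie groupoid is by definition the connectedness of its classifying/nerve space, the simplicial fibers of $\pi$ are $n$-connected. Lemma \ref{bisimpliciall} then yields the required isomorphism through degree $n$, injectivity in degree $n+1$, and identifies the image in degree $n+1$ as those classes that pull back trivially along each fiber, which is exactly the stated conclusion.

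The main obstacle I expect is the bookkeeping in the third step: one must check carefully that the simplicial fibers of the bisimplicial map $\pi$ agree (up to connectivity-preserving equivalence) with the nerves of the $(2,1)$-categorical fibers of $f$ used throughout the thesis, uniformly in the bisimplicial degree $j$, and that ``trivial along the fibers'' in the sense delivered by Lemma \ref{bisimpliciall} translates precisely to triviality of $t_x^*\alpha$ in the LA-groupoid sense along each fiber. Both reductions should go through by combining the quasifibration property (Proposition \ref{equiv2}) with Morita invariance of LA-groupoid cohomology, but this is the point at which a careful check is required.
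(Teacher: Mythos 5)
Your proposal follows essentially the same route as the paper's proof: pass to the canonical fibration replacement with its splitting into a double Lie groupoid, apply Lemma \ref{bisimpliciall} to the resulting bisimplicial map onto $\mathbf{B}^\bullet G$, use Lemma \ref{bar} to identify the bisimplicial cohomology with that of $G\times_G H$, and transfer back to $H\to G$ via the Morita equivalence $H\hookrightarrow G\times_G H$ and Morita invariance. Your verification of the hypotheses of Lemma \ref{bisimpliciall} (the degreewise local fibration property and the identification of the simplicial fibers with the $(2,1)$-categorical fibers of $f$) is more explicit than the paper's, but it is the same argument.
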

\begin{proof}
First we replace $H$ with its canonical fibration replacement, and then we split it using the canonical splitting. We get a map
\begin{equation}\label{other la}
   \begin{tikzcd}
H\ltimes P\rtimes G \arrow[d, shift right] \arrow[d, shift left] \arrow[r, shift right] \arrow[r, shift left] & P\rtimes G \arrow[d, shift right] \arrow[d, shift left] \arrow[r, shift right=7] & G \arrow[d, shift right] \arrow[d, shift left] \\
H\ltimes P \arrow[r, shift right] \arrow[r, shift left]                                                       & P                                                                                & G^0                                           
\end{tikzcd}
\end{equation}
and using this map we can take the inverse image of cohomology of $G\,.$ That this map has the desired isomorphism properties follows from \Cref{bisimpliciall}\,, and that the same is true for the mapping $G\times_G H\to G$ follows from \Cref{bar}\,. Finally, that the same is true for the mapping $H\to G$  follows from factoring this map as $H\to G\times_G H\to G$ and using Morita invariance of LA-groupoid cohomology.
\end{proof}
\begin{theorem}\label{ve stacks}
Let $\mathcal{F}:\mathcal{H}\to\mathcal{G}$ be a \hyperref[surj stacks]{surjective submersion} of differentiable (holomorphic) stacks, and let $\mathcal{M}$ be a family of abelian groups over $\mathcal{G}\,.$ Suppose that the fibers of $\mathcal{F}$ are all $n$-connected. Then the van Est map, $\mathcal{V}\mathcal{E}\,,$ is an isomorphism up to and including degree $n\,,$ is injective in degree $n+1\,,$ and its image in degree $n+1$ consists of cohomology classes which vanish along the leaves of the associated foliation.
\end{theorem}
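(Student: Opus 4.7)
The plan is to deduce this from \Cref{ve groupoid} by presenting the simple foliation of stacks $\mathcal{F}:\mathcal{H}\to\mathcal{G}$ by a surjective submersion of Lie groupoids, then checking that everything in sight is Morita invariant so the statement descends to stacks. First I would choose a presentation $f:H\to G$ of $\mathcal{F}$ that is a surjective submersion of Lie groupoids in the sense of \Cref{surj sub}; such a presentation exists by the definition of surjective submersion of stacks (\Cref{surj stacks}). Similarly, I would present $\mathcal{M}$ by a $G$-module $M$, so that $H^*(\mathcal{G},\mathcal{M})=H^*(G,M)$ and the foliated cohomology $H^*(\mathcal{D}\to\mathcal{H},\mathcal{F}^*\mathcal{M})$ is computed by $H^*(D\to H,f^*M)$, where $D\to H$ is the simple foliation associated to $f$. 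Under these presentations, $\mathcal{VE}$ becomes exactly the groupoid van Est map $VE$ of \Cref{ve groupoid}.

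Next, I would verify the connectivity hypothesis translates correctly: by definition the fibers of $\mathcal{F}$ are the fibers of any Lie groupoid presentation $f:H\to G$ (up to Morita equivalence of Lie groupoids), and by \Cref{equiv1}, \Cref{equiv2}, for a surjective submersion of Lie groupoids these are Morita equivalent to the honest kernel fibers $f^{-1}(g^0)$. Morita equivalent Lie groupoids have weakly equivalent classifying spaces, so the $n$-connectedness assumption on the fibers of $\mathcal{F}$ transfers to the fibers of $f$ in the sense required by \Cref{ve groupoid}. Applying that proposition yields the isomorphism up to degree $n$, injectivity in degree $n+1$, and the description of the image in degree $n+1$ as those classes which vanish when pulled back along each fiber — which, after interpretation, is exactly the condition that the class vanish along each leaf of the foliation $\mathcal{D}\to\mathcal{H}$.

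The main obstacle is showing the construction is well-defined on stacks, i.e.\ independent of the choice of Lie groupoid presentation. Concretely I need to know that if $f_1:H_1\to G_1$ and $f_2:H_2\to G_2$ are two presentations of the same surjective submersion $\mathcal{F}$, then the resulting domains $H^*(G_i,M_i)$ and codomains $H^*(D_i\to H_i, f_i^*M_i)$ are canonically isomorphic in a way intertwining $VE$. Morita invariance of Lie groupoid cohomology with $G$-module coefficients handles the domain side (this is essentially built into the definition $H^*(\mathcal{G},\mathcal{M})$). The codomain side requires Morita invariance of LA-groupoid cohomology with coefficients in a module, which is what is referenced in the text as known for representations (via \cite{Waldron}) but only expected for modules; the naturality of $VE$ in a roof of presentations then forces commutativity of the relevant diagrams. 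The most tractable path is to handle two presentations connected by a refinement (surjective submersion) map, use \Cref{canonical map} to generate any comparison as a zig-zag of refinements, and verify that at each step the groupoid van Est map of \Cref{ve groupoid} is natural — this naturality is immediate from its construction as the composition of $f^{-1}$ with a leafwise de Rham resolution, both of which are functorial in the presentation.
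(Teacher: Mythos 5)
Your proposal is correct and follows essentially the same route as the paper, which simply chooses a presentation of $\mathcal{F}$ and $\mathcal{M}$ and invokes \Cref{ve groupoid}; the well-definedness issues you elaborate on (Morita invariance of both sides and naturality of $VE$ in the presentation) are exactly what the paper folds into its definition of $\mathcal{VE}$ on stacks. Your caveat that Morita invariance of LA-groupoid cohomology with module coefficients is only established for representations and otherwise expected matches the paper's own acknowledged caveat.
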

\begin{proof}
After choosing a presentation of $\mathcal{F}$ and $\mathcal{M}\,,$ this follows from \Cref{ve groupoid}.
\end{proof}
\begin{remark}
In light of \Cref{ve groupoid}, one way of showing that a class $\alpha\in H^n(\mathfrak{g},M)$ integrates to $H^n(G,M)$ is to show that there is a wide subgroupoid $\iota:H\xhookrightarrow{}G\,,$ with $n$-connected fibers, such that $\alpha$ integrates to $\iota^!\mathfrak{g}\,.$ Given this, a natural question is the following: if we pull back $\alpha$ to $\mathfrak{h}$ and this class integrates to $H\,,$ under what circumstances will it integrate to $\iota^!\mathfrak{g}\,?$ In degree one the answer seems to be always.
\end{remark}
\section{Computations Using van Est}
In this section we will give some example computations. We have several different models that we can use to compute the foliated cohomology of a surjectice submersion of stacks $f:H\to G\,.$ One is given by the LA-groupoid associated to the fibration replacement, a second one is given by the LA-groupoid associated to the canonical cofibration, and a third is obtained from the proof \Cref{ve groupoid}: we can compute the LA-groupoid cohomology as the ``$G$-invariant" cohomology of the foliation 
\begin{equation}\label{inv}
    \begin{tikzcd}
H\rtimes P \arrow[d, shift right] \arrow[d, shift left] \arrow[r, shift right=7] & G^0 \arrow[d, shift right] \arrow[d, shift left] \\
P                                                                                & G^0                                             
\end{tikzcd}
\end{equation}
This map is the map in the bottom row of \ref{other la}. Note that since $P\rtimes G$ is Morita equivalent to a manifold, $H^0\,,$ the leafwise cohomology of \ref{other la} is the $G$-invariant cohomology of \ref{inv} (compare this with the computation of the van Est map in part 1). Furthermore, the $G$-invariant forms on \ref{inv} can be identified with forms on $f^!\mathfrak{g}\,.$ Therefore, \ref{other la} gives us a method computing the ``van Est map" from $H^*(G,M)$ to $H^*(f^!\mathfrak{g},M)\,.$ We put van Est in quotations here, because $VE$ really maps into the foliated cohomology, but using Morita invariance we may identify it with a map into the cohomology of $f^!\mathfrak{g}\,.$ We will do this from now on.
\vspace{3mm}\\While the simplest model to use when computing the van Est map is the LA-groupoid associated to the fibration replacement, the simplest model to use when computing cohomology is the LA-groupoid associated to the canonical cofibration. Therefore, when computing cohomology, we will use this model.  
\vspace{3mm}\\Now for the strategy. Suppose one wants to compute Lie groupoid cohomogy of some groupoid $G$ up to degre $n\,.$ The idea is to choose some wide subgroupoid $H$ whose cohomology is easier to compute (though it doesn't necessarily have to be a subgroupoid) and for which the fibers of the map $H\to G$ are at least $(n-1)$-connected (but preferably $n$-connected). If the coefficients take values in a representation, then a good choice is a proper subgroupoid since the cohomology a priori vanishes in positive degree. From a cohomological perspective, the generalization of maximal compact subgroups from Lie theory are wide and proper subgroupoids for which the inclusion map has fibers which are connected and have vanishing homotopy groups in positive degree. These do not necessarily exist.
\subsection{Examples}
Now in all of the following examples, the cocycle described on the Lie groupoid is just specified by a function on one of the levels of the nerve, so we will not be needing a resolution. In this case then, for the double groupoid \ref{dd} there are three differentials, a vertical differential $\delta^*_v\,,$ a horizontal differential $\delta^*_h\,,$ and the foliated de Rham differential $d\,.$ These differentials all commute, and the associated bisimplicial complex is bigraded with respect to the vertical and horizontal directions. The differential on the total complex will be 
\begin{equation}\label{differential}
    \delta_v^*+(-1)^{\text{deg v}}\delta_h^*+(-1)^{\text{deg v+deg h}}\,d\,,
\end{equation}
where deg v, deg h mean the degrees in the vertical and horizontal directions, respectively. Note that $-1$ here denotes inversion in the module we are taking cohomology with respect to, eg. if $f$ is an $S^1$-valued function, then $(-f)(x)$ means $1/f(x)\,;$ if $f$ is $\mathbb{R}$-valued, $(-f)(x)$ means $-f(x)\,.$ 
\vspace{3mm}\\Similarly, for the LA-groupoids in the following examples (corresponding to $f^!\mathfrak{g}\,,$ where $f:H\to G$ is a homomorphism), the total differential will be 
\begin{equation}
    d+(-1)^{\bullet}\delta_h^*\,,
    \end{equation}
where $d$ is the Chevalley-Eilenberg differential and $\bullet$ is the form degree. 
\vspace{3mm}\\In general, there will be an additional differential due to the fact that if we use sheaves that are not acyclic on manifolds then we must take a resolution in order to describe general cocycles. Therefore, in the context of the preceding paragraphs there would be four and three differentials comprising the total differential, respectively.
\begin{exmp}
Conisder the groupoid $\mathbb{R}^*\,,$ We wish to compute the cohomology $H^*(\mathbb{R}^*,S^1)\,.$ The van Est from part 1 isn't useful here because $\mathbb{R}^*$ isn't connected. Furthermore, since $S^1$ isn't a vector space, van Est's original result doesn't apply. 
\vspace{3mm}\\
We will make use of the maximal compact subgroup $\mathbb{Z}/2\xhookrightarrow{}\mathbb{R}^*\,.$ Since the fibers of this map are contractible, \Cref{ve stacks} tells us that we can compute cohomology as the foliated cohomology of this map. The model we will use to do this is the one associated to the cofibration replacement, the LA-groupoid is the following:
\begin{equation}
 \begin{tikzcd}
\mathbb{Z}/2\times\mathbb{R} \arrow[r, shift right] \arrow[r, shift left] \arrow[d] & \mathbb{R} \arrow[d] \\
\mathbb{Z}/2 \arrow[r, shift right] \arrow[r, shift left]                           & *                   
\end{tikzcd}   
\end{equation}
The Lie algebroid differentials here are trivial since $\mathbb{R}$ is abelian, and the groupoid in the top row is just the trivial bundle of $\mathbb{Z}/2$ groups over $\mathbb{R}\,.$ Now the degree $0$ cohomology of this LA-groupoid is just $S^1\,.$ 
\vspace{3mm}\\A cohomology class in degree $1$ is given by a closed Lie algebra $1$-form on $\mathbb{R}$ (and again, any form here is automatically closed since $\mathbb{R}$ is abelian) and a homomorphism $\mathbb{Z}/2\to S^1\,;$ the compatibility condition in this example is trivial. There is only one nontrivial homomorphim $\mathbb{Z}/2\to S^1\,,$ therefore, the cohomology in degree 1 is just $\mathbb{R}\times \mathbb{Z}/2\,.$  
\vspace{3mm}\\Note the Lie algebra $\mathbb{R}$ has no forms in degree higher than $1\,,$ therefore cohomology classes in degree $n>1$ are given by a function $f_1:(\mathbb{Z}/2)^{n-1}\times\mathbb{R}\to\mathbb{R}\,,$ linear in $\mathbb{R}$ (representing a $1$-form), and a group cocycle for $f_2:(\mathbb{Z}/2)^n\to S^1$ such that the pair $f_1,f_2$ satisfies the compatibility condition. The only nontrivial compatibility conidtion in this example is that $\delta^*f_1=0\,,$ and this can be true if and only if $n$ is odd, in which case the cocycle is necessarily trivial since the $\mathbb{Z}/2$ action on $\mathbb{R}$ is trivial. Therefore, the cohomology in degree $n>1$ is just $H^{n}(\mathbb{Z}/2,S^1)\,.$
\vspace{3mm}\\By the exponential sequence $0\to \mathbb{Z}\to\mathbb{R}\to S^1\to 0$ and the fact that the cohomology valued in a vector space of a compact group is trivial, we find that the cohomology in degree $n>1$ is just $H^{n+1}(\mathbb{Z}/2,\mathbb{Z})\,.$ One way of computing this is by using the fact that $B(\mathbb{Z}/2)\cong \mathbb{R}P^{\infty}\,;$ using this, we see that $H^{n+1}(\mathbb{Z}/2,\mathbb{Z})=\mathbb{Z}/2$ if $n>1$ is odd, and is $0$ otherwise.
\vspace{3mm}\\Summarizing, we get that 
\begin{equation}
    H^n(\mathbb{R}^*,S^1)= \begin{cases}
        S^1 & n=0 \\
        \mathbb{R}\times\mathbb{Z}/2 & n=1 \\
        \mathbb{Z}/2 & n>1\text{ is odd} \\
        0 & n>1\text{ is even}\\
    \end{cases}
\end{equation}
We can explicitly write down generators. In degree $1\,,$ the cohomology classes are generated by the following cocycles: $f_1(x)=e^{ia\log{|x|}}\,,$ where $a\in\mathbb{R}\,,$ and $f_2(x)= -1$ if $x<0$ and is $1$ otherwise. In degree $n>1$ where $n$ is odd, the generating cocycle is given by $f:\mathbb{R}^{*n}\to S^1\,,f(x_1,\ldots,x_n)=-1$ if $x_1,\ldots,x_n<0\,,$ and is equal to $1$ otherwise. 
\end{exmp}
In the next example we will compute the van Est map in degree 1, and we will then compute the cohomology in all degrees.
\begin{exmp}
Consider the smooth Lie groups $S^1\xhookrightarrow{}\mathbb{C}^*\,.$  Since the fibers of the map $S^1\xhookrightarrow{}\mathbb{C}^*$ are contractible, \Cref{ve stacks} tells us that we can compute the cohomology at the level of LA-groupoids. Let's compute the van Est map in degree one, with coefficients in $\mathbb{C}^*\,.$ Degree one cohomology classes with coefficients in $\mathbb{C}^*$ are homomorphisms $\mathbb{C}^*\to\mathbb{C}^*\,,$ which are generated by $f(z)=z\,,g(z)=|z|^{\gamma}\,,\gamma\in\mathbb{C}\,,$ so $H^1(\mathbb{C}^*,\mathbb{C}^*)=\mathbb{Z}\times\mathbb{C}\,.$ 
\begin{itemize}
\item On the canonical fibration replacement, the van Est map is just given by the pullback with respect to the projection onto the third fact::
\begin{equation}
 \begin{tikzcd}
S^1\ltimes\mathbb{C}^*\rtimes\mathbb{C}^* \arrow[d, shift right] \arrow[d, shift left] \arrow[rr, shift right=7] &  & \mathbb{C}^* \arrow[d, shift right] \arrow[d, shift left] \\
\mathbb{C}^*                                                                                                     &  & *                                                        
\end{tikzcd}
\end{equation}
Therefore, \begin{equation}\label{ve ex}
\mathcal{VE}(f)(e^{i\theta},\lambda,z)=z\,,\mathcal{VE}(g)(e^{i\theta},\lambda,z)=|z|^{\gamma}\,.
\end{equation}
\item Now the computation of the van Est map, when mapping into the LA-groupoid associated to the canonical cofibration, is more involved. The double groupoid morphism we get is 
\begin{equation}\label{s1}
\begin{tikzcd}
S^1\ltimes \mathbb{C}^*\rtimes \mathbb{C}^* \arrow[r, shift left] \arrow[r, shift right] \arrow[d, shift right] \arrow[d, shift left] & \mathbb{C}^*\rtimes \mathbb{C}^* \arrow[d, shift right] \arrow[d, shift left] \\
S^1\ltimes \mathbb{C}^* \arrow[r, shift right] \arrow[r, shift left]                                              & \mathbb{C}^*  
\end{tikzcd}
\longrightarrow
\begin{tikzcd}
\mathbb{C}^* \arrow[r, shift left] \arrow[r, shift right] \arrow[d, shift right] \arrow[d, shift left] & \mathbb{C}^* \arrow[d, shift right] \arrow[d, shift left] \\
\bullet \arrow[r, shift right] \arrow[r, shift left]                                              & \bullet
\end{tikzcd}
\end{equation}
Let's first apply the van Est map to $f\,,$ which lives on the top right corner of the diagram on the right (see \ref{differential} for the differential). First we will pull back $f$ to $\mathbb{C}^*\ltimes \mathbb{C}^*$ via the projection $p$ onto the second factor. The map we get is $p^*f(\lambda\,,z)=z\,.$
\vspace{3mm}\\Now $p^*f=\delta_v^*w\,,$ where $w:\mathbb{C}^*\to\mathbb{C}^*\,,$ $w(\lambda)=\lambda\,.$ We have that
\begin{equation*}
    -\text{dlog}\,1/w=\frac{d\lambda}{\lambda}\,,(\delta^*_h w)(e^{i\theta},\lambda)=e^{i\theta}\,.
    \end{equation*}
thus the pair 
\begin{equation*}
\bigg(e^{i\theta}\,,\frac{d\lambda}{\lambda}\bigg)
\end{equation*} 
form a cocycle. Similarly, we can apply the van Est map to $g\,,$ and we'll get
\begin{equation*}
\bigg(1\,,\frac{\gamma}{2}\Big(\frac{d\lambda}{\lambda}+\frac{d\bar{\lambda}}{\bar{\lambda}}\Big)\bigg)
\end{equation*}
Therefore, we arrive at the following\footnote{Note that, this isn't quite where the van Est map is supposed to map to. We will discuss this further after completing this computation.}:
\begin{equation}\label{ve f}
    \mathcal{VE}(f)=\bigg(e^{i\theta}\,,\frac{d\lambda}{\lambda}\bigg)\,, \mathcal{VE}(g)=\bigg(1\,,\frac{\gamma}{2}\Big(\frac{d\lambda}{\lambda}+\frac{d\bar{\lambda}}{\bar{\lambda}}\Big)\bigg)\,.
\end{equation}
\vspace{3mm}\\Now the fibers of the map $S^1\xhookrightarrow{}\mathbb{C}^*$ are contractible, which means $\mathcal{VE}$ is an isomorphism in all degrees. We will now show that the cocycles we've chosen do indeed generate the degree one cohomology by computing them at the level of the LA-groupoid.
\vspace{3mm}\\One forms on $\mathbb{C}^*$ which pull back via $\delta^*_v$ to give $0$ are invariant one forms, which are of the form 
\begin{equation*}
 \alpha\frac{d\lambda}{\lambda}\,,  \beta\frac{d\bar{\lambda}}{\bar{\lambda}}\,,
\end{equation*}
for $\alpha\,,\beta\in\mathbb{C}\,.$
Now when we pull back 
\begin{equation}
\alpha\frac{d\lambda}{\lambda}+ \beta\frac{d\bar{\lambda}}{\bar{\lambda}}
\end{equation}
via $\delta_h^*\,,$ we get $(\alpha-\beta) \,d\theta\,,$ which is only dlog exact if $\alpha-\beta\in\mathbb{Z}\,;$ in this case, the corresponding cocycle is given by
\begin{equation}
    \Big(e^{i(\alpha-\beta)\theta}, \alpha\frac{d\lambda}{\lambda}+ \beta\frac{d\bar{\lambda}}{\bar{\lambda}}\Big)
\end{equation}
Now we have the following equality:
\begin{equation}
    \alpha\frac{d\lambda}{\lambda}=(\alpha-\beta)\frac{d\lambda}{\lambda}+\beta\frac{d\lambda}{\lambda}\,.
\end{equation}Using this equality, we see that the degree one cohomology is generated by the following cocycles:
\begin{equation}
     \Big(e^{i\alpha\theta}, \alpha\frac{d\lambda}{\lambda}\Big)\,,\;\; \bigg(1\,,\beta\Big(\frac{d\lambda}{\lambda}+\frac{d\bar{\lambda}}{\bar{\lambda}}\Big)\bigg)\,,\;\alpha\in\mathbb{Z}\,,\beta\in\mathbb{C}\,.
\end{equation}
Therefore, we see that $H^1(\mathbb{C}^*,\mathbb{C}^*)=\mathbb{C}\times\mathbb{Z}\,,$ agreeing with what we said earlier.
\item Actually, looking at \ref{ve f}, we see that we haven't quite described the cohomology classes on an LA-groupoid. We will now show what $\mathcal{VE}(f)\,,\mathcal{VE}(g)$ look like on the two LA-groupoids we've been discussing. We will start with the LA-groupoid associated to the canonical cofibration, followed by the LA-groupoid associated to the foliation. The LA-groupoid associated to the canonical cofibration is given by
\begin{equation}
    \begin{tikzcd}
S^1\times \mathbb{R}\ltimes\mathbb{C} \arrow[r, shift right] \arrow[r, shift left] \arrow[d] & \mathbb{C} \arrow[d] \\
S^1 \arrow[r, shift right] \arrow[r, shift left]                                             & *                   
\end{tikzcd}
\end{equation}
Here, $S^1$ is acting trivially. A degree 1 cocycle is given by a one form for the Lie algebra $\mathbb{C}\to *$ together with a homomorphism $S^1\to \mathbb{C}^*$ satisfying the compatibility condition. In this context, \ref{ve f} takes the form 
\begin{equation}
     \mathcal{VE}(f)=\big(e^{i\theta}\,,d\lambda\big)\,, \mathcal{VE}(g)=\big(1\,,\frac{\gamma}{2}\big(d\lambda+d\bar{\lambda}\big)\big)\,.
\end{equation}
Tto arrive at this we just had to evaluate the one forms at the identity in $\mathbb{C}^*\,.$
\item Finally, the van Est map is (again) really a map into the foliated cohomology of $[*/S^1]\,,$ which we can describe using the canonical fibration replacement: 
\begin{equation}
    \begin{tikzcd}
S^1\ltimes\mathbb{C}^*\rtimes\mathbb{C}^* \arrow[d, shift right] \arrow[d, shift left] \arrow[rr, shift right=7] &  & \mathbb{C}^* \arrow[d, shift right] \arrow[d, shift left] \\
\mathbb{C}^*                                                                                                     &  & *                                                        
\end{tikzcd}
\end{equation}We should describe these cocycles here too (using the isomorphism in \Cref{equiv la group}). The result is
    \begin{equation}
     \mathcal{VE}(f)=\bigg(e^{i\theta}\,,\frac{d\lambda}{\lambda}\bigg)\,, \mathcal{VE}(g)=\bigg(1\,,\frac{\gamma}{2}\Big(\frac{d\lambda}{\lambda}+\frac{d\bar{\lambda}}{\bar{\lambda}}\Big)\bigg)\,.
\end{equation}
which is the same as \ref{ve f}, but here $e^{i\theta}\,,\lambda\,,\bar{\lambda}$ refer to the functions $S^1\times\mathbb{C}^*\times\mathbb{C}^*\to\mathbb{C}^*,,\, (e^{i\theta},\lambda,z)\mapsto e^{i\theta}\,,\lambda\,,\,\bar{\lambda}\,,$ respectively, and $d\lambda\,,d\bar{{\lambda}}$ are foliated one forms along the $\mathbb{C}^*$ in the base. These cocycles are equivalent to the ones in \ref{ve ex}.
\item We will now finish the computation of the cohomology $H^*(\mathbb{C}^*,\mathbb{C}^*)\,.$ First note that since $S^1\times\mathbb{R}\cong \mathbb{C}^*\,,$ from \Cref{semidirect} we get that \ref{s1} is Morita equivalent to the following  LA-groupoid, which provides a simpler model for computing the cohomology:
\begin{equation}
    \begin{tikzcd}
S^1\times\mathbb{R} \arrow[r, shift right] \arrow[r, shift left] \arrow[d] & \mathbb{R} \arrow[d] \\
S^1 \arrow[r, shift right] \arrow[r, shift left]                           & *                   
\end{tikzcd}
\end{equation}
Therefore, the cohomology of $\mathbb{C}^*$ is quite literally the cohomology of the normal bundle of $S^1\xhookrightarrow{}\mathbb{C}^*\,.$ Here, the Lie algebroid differentials are all $0$ and the groupoid in the top row is the action groupoid associated to the trivial action of $S^1$ on $\mathbb{R}\,.$ Furthermore, since the cohomology of a proper groupoid vanishes in positive degrees, we can use the exponential seqeuence together with the fact that $B{S^1}=\mathbb{C}P^\infty$ to help us compute the cohomology. Putting this together, we can derive the cohomology groups in all degrees. They are given by:
\begin{equation}
    H^n(\mathbb{C}^*,\mathbb{C}^*)= \begin{cases}
        \mathbb{C}^* & n=0 \\
        \mathbb{C}\times\mathbb{Z} & n=1 \\
        \mathbb{Z} & n>1\text{ is odd} \\
        0 & n>1\text{ is even}\\
    \end{cases}
\end{equation}
\end{itemize}
\end{exmp}
\begin{exmp}
\vspace{5mm}Consider the Lie group $\mathbb{C}^2\rightrightarrows *\,,$ with coordinates $(w,z)$ and consider the subspace $\mathbb{C}\rightrightarrows *$ given by $w=z\,.$ Consider the trivial $\mathbb{C}$-module. We have a degree two cycle given by $f(w_1,z_1,w_2,z_2)=w_1z_2\,,$ whose corresponding extension is the complex Heisenberg group. Let's compute the van Est map here.
\begin{itemize}
    \item First, the true van Est map (as defined in this thesis) is given by pullback with respect to the projection onto the third factor:
\begin{equation}
   \begin{tikzcd}
(\mathbb{C}\times\mathbb{C})\ltimes\mathbb{C}^2\rtimes(\mathbb{C}^2\times\mathbb{C}^2) \arrow[d, shift right=2] \arrow[d] \arrow[d, shift left=2] &  & \mathbb{C}^2\times\mathbb{C}^2 \arrow[d, shift right=2] \arrow[d] \arrow[d, shift left=2] \\
\mathbb{C}\rtimes\mathbb{C}^2\rtimes\mathbb{C}^2 \arrow[d, shift right] \arrow[d, shift left] \arrow[rr, shift left]                              &  & \mathbb{C}^2 \arrow[d, shift right] \arrow[d, shift left]                                 \\
\mathbb{C}^2                                                                                                                                      &  & *                                                                                        
\end{tikzcd}
\end{equation}
Therefore, we have that 
\begin{equation}\label{ve ex2}
    \mathcal{VE}(f)(a_1,a_2,x,y,w_1,z_1,w_2,z_2)=w_1z_2\,.
\end{equation}
\item Now let's compute the van Est map with respect to the LA-groupoid associated to the canonical cofibration (see \ref{differential} for the differential). We have the following diagram:
\begin{equation}
    \begin{tikzcd}
\mathbb{C}\ltimes\mathbb{C}^2\rtimes\mathbb{C}^2 \arrow[d, shift right] \arrow[d, shift left] \arrow[r, shift right] \arrow[r, shift left] & \mathbb{C}^2\rtimes\mathbb{C}^2 \arrow[d, shift right] \arrow[d, shift left] \arrow[r, shift right=7] & \mathbb{C}^2 \arrow[d, shift right] \arrow[d, shift left] \\
\mathbb{C}\ltimes\mathbb{C}^2 \arrow[r, shift right] \arrow[r, shift left]                                                                 & \mathbb{C}^2                                                                                          & *                                                        
\end{tikzcd}
\end{equation}
Now since the cocycle we have lives in degree 2, we must further compute the nerves:
\begin{equation}
  \begin{tikzcd}
                                                                                                           &                                                                                                                                            & \mathbb{C}^2\rtimes(\mathbb{C}^2\times\mathbb{C}^2) \arrow[d, shift right=2] \arrow[d, shift left=2] \arrow[d] & \mathbb{C}^2\times\mathbb{C}^2 \arrow[d, shift right=2] \arrow[d] \arrow[d, shift left=2] \\
                                                                                                           & \mathbb{C}\ltimes\mathbb{C}^2\rtimes\mathbb{C}^2 \arrow[d, shift right] \arrow[d, shift left] \arrow[r, shift right] \arrow[r, shift left] & \mathbb{C}^2\rtimes\mathbb{C}^2 \arrow[d, shift right] \arrow[d, shift left] \arrow[r, "p", shift right=7] & \mathbb{C}^2 \arrow[d, shift right] \arrow[d, shift left]                                 \\
(\mathbb{C}\times\mathbb{C})\ltimes\mathbb{C}^2 \arrow[r, shift right=2] \arrow[r, shift left=2] \arrow[r] & \mathbb{C}\ltimes\mathbb{C}^2 \arrow[r, shift right] \arrow[r, shift left]                                                                 & \mathbb{C}^2                                                                                                   & *                                                                                        
\end{tikzcd}
\end{equation}
Now we have that 
\begin{equation}
p^*f(x,y,w_1,z_1,w_2,z_2)=f(w_1,z_1,w_2,z_2)=w_1z_2=\delta_v^*f_1(x,y,w_1,z_1,w_2,z_2)\,,
\end{equation}
where $f_1(x,y,w,z)=xz\,.$ For the next step, we apply $(-\delta_h^*-d)$ to $-f_1$ (recall that $d$ is the fiberwise de Rham differential).  We get 
\begin{equation}
\delta_h^*f_1(a,x,y,w,z)=az\,,df_1=zdx
\end{equation}
Now $df_1=\delta_v^*\,ydx$ and $\delta_h^*f_1(a,x,y,w,z)=\delta_v^*f_2\,,$ where $f_2(a,x,y)=ay\,.$ Next we have that 
\begin{equation}
(\delta_h^*+d)(-ydx)=-yda-adx-ada+dx\wedge dy\,,
\end{equation}
and $\delta_h^*(-f_2)=-a_1a_2\,,-d(-f_2)=ady+yda\,.$ So in the end, the cocycle we get is 
\begin{equation}\label{cocycle h}
-a_1a_2+a(dy-dx-da)+dx\wedge dy\,.
\end{equation}
\item Now, the LA-groupoid corresponding to the canonical cofibration is
\begin{equation}
\begin{tikzcd}
    \mathbb{C}\times\mathbb{C}\ltimes\mathbb{C}^2 \arrow[d] \arrow[r, shift right] \arrow[r, shift left] & \mathbb{C}^2 \arrow[d] \\
\mathbb{C} \arrow[r, shift right] \arrow[r, shift left]                                              & *                     
\end{tikzcd}
\end{equation}
and further computing the nerve, we have 
\begin{equation}\label{cof h}
    \begin{tikzcd}
(\mathbb{C}\times\mathbb{C})\times(\mathbb{C}\times\mathbb{C})\ltimes\mathbb{C}^2 \arrow[d] \arrow[r, shift right=2] \arrow[r] \arrow[r, shift left=2] & \mathbb{C}\times\mathbb{C}\ltimes\mathbb{C}^2 \arrow[d] \arrow[r, shift right] \arrow[r, shift left] & \mathbb{C}^2 \arrow[d] \\
\mathbb{C}\times\mathbb{C} \arrow[r, shift right=2] \arrow[r] \arrow[r, shift left=2]                                                                  & \mathbb{C} \arrow[r, shift right] \arrow[r, shift left]                                              & *                     
\end{tikzcd}
\end{equation}
and we can identity the cocycle \ref{cocycle h} with a cocycle on \ref{cof h}, where $dx\,,dy$ are the Lie algebra $1$-forms on $\mathbb{C}^2\to *$ and $a$ is the coordinate on the Lie group $\mathbb{C}\rightrightarrows *\,.$
\item Finally, we can describe this cocycle on the foliation of $[*/\mathbb{C}]$ (using \Cref{equiv la group}), which is given by
\begin{equation}
    \begin{tikzcd}
\mathbb{C}\ltimes\mathbb{C}^2\rtimes\mathbb{C}^2 \arrow[d, shift right] \arrow[d, shift left] \arrow[rr, shift right=7] &  & \mathbb{C}^2 \arrow[d, shift right] \arrow[d, shift left] \\
\mathbb{C}^2                                                                                                            &  & *                                                        
\end{tikzcd}
\end{equation}
Forming the next part of the nerve of the groupoid on the left, we have 
\begin{equation}
    \begin{tikzcd}
(\mathbb{C}\times\mathbb{C})\ltimes\mathbb{C}^2\rtimes(\mathbb{C}^2\times\mathbb{C}^2) \arrow[d] \arrow[d, shift left=2] \arrow[d, shift right=2] \\
\mathbb{C}\ltimes\mathbb{C}^2\rtimes\mathbb{C}^2 \arrow[d, shift right] \arrow[d, shift left]                                                      \\
\mathbb{C}^2                                                                                                                                     
\end{tikzcd}
\end{equation}
The cocyce is given by $-a_1a_2+a(dy-dx-da)+dx\wedge dy\,,$ where $-a_1a_2$ lives on the top, $a(dy-dx-da)$ lives in the middle and $dx\wedge dy$ lives on the bottom. This is equivalent to the cocycle given by \ref{ve ex2}.
\end{itemize}
\end{exmp}
\part{Morita Equivalences of Lie Algebroids and LA-Groupoids, Higher Generalized Morphisms, Future Directions}
\section*{Brief Summary of Part 3}By the end of this part (in~\Cref{finale}) we will have combined $n$-equivalences of Lie algebroids with Morita equivalences of Lie groupoids in the category of LA-groupoids (which contains both Lie groupoids and Lie algebroids as objects).\ This category has some \textit{surprising} but natural properties, including generalized morphisms between Lie algebroids and Lie groupoids (and even Morita equivalences between them). These properties include the following (though like much of Part 3, details need to be filled in):
\begin{enumerate}
    \item Manifolds $X\,,Y$ are Morita equivalent in this category if and only if they are diffeomorphic.
    \item More generally, Lie groupoids $G\,,H$ are Morita equivalent in this category if and only if they are Morita equivalent in the category of Lie groupoids.
    \item Two tangent bundles $TX\,,TY$ are Morita equivalent in this category if and only if $X\,,Y$ are homotopy equivalent.
    \item There is a canonical generalized morphism $\mathcal{I}:\mathfrak{g}\to G\,,$ which represents the integration functor. If $G\rightrightarrows G^0$ is source $n$-connected then this morphism is an $n$-equivalence, and dual to this there is a canonical $n$-equivalence $\mathcal{D}:G\to \mathfrak{g}\,,$ which represents the differentiation functor. If $n=\infty$ these generalized morphisms are Morita equivalences.
    \item With regards to the previous two points, the van Est map is given by the pullback $\mathcal{I}^*:H^{\bullet}(G,M)\to H^{\bullet}(\mathfrak{g},M)\,.$ If $G$ is source $n$-connected then integration is given by $\mathcal{D}^*:H^{\bullet}(\mathfrak{g},M)\to H^{\bullet}(G,M)\,,$ for $\bullet\le n\,.$ 
    \item A Lie algebroid $\mathfrak{g}$ is integrable if and only if it is $1$-equivalent to some Lie groupoid $G\,.$
    \item This category induces a notion of homotopy equivalence on the category of Lie groupoids: a generalized morphism $P:G\to H$ is a homotopy equivalence if the induced generalized morphism in the category of LA-groupoids, $TP:TG\to TH\,,$ is a Morita equivalence. In particular, a Morita equivalence of Lie groupoids induces a homotopy equivalence. In addition, we get a natural notion of $n$-equivalence of Lie groupoids (in the homotopy sense).
    \item A finite dimensional classifying space of a Lie groupoid $G$ is just a manifold $BG$ which is homotopy equivalent to $G\,.$
    \item If $EG\to BG$ is finite dimensional, then the Atiyah algebroid $\text{at}(EG)$ is Morita equivalent to $G\,.$ In particular, if $G$ is discrete then $\text{at}(EG)=T(BG)\,,$ therefore $G$ is Morita equivalent to $T(BG)\,.$
    \item Due to points 2, 3 and 4, we get the following result: suppose that $\mathcal{P}$ assigns to each LA-groupoid some property (eg.\ its cohomology) that is invariant under $n$-equivalence. Then if $X$ is homotopy equivalent to $Y\,;$ if $H\rightrightarrows H^0$ is Morita equivalent to $K\rightrightarrows K^0\,;$ if $G\rightrightarrows G^0$ is source $n$-connected, we get that $\mathcal{P}(TX)\cong \mathcal{P}(TY)\,;\mathcal{P}(H)\cong \mathcal{P}(K)\,;\mathcal{P}(G)\cong \mathcal{P}(\mathfrak{g})\,,$ respectively.
    \end{enumerate}
\chapter{Lie Algebroids as Generalized Homotopy Theory}
In this chapter we want to discuss generalized morphisms of Lie algebroids, defined in \Cref{MEA} (see summary of Part 3 on the previous page). Some of what is said will be known to experts. Throughout this thesis, we have made use of Morita equivalences of LA-groupoids, however we don't claim that all Morita equivalences should be of this form, ie. there should be a more general notion. For Lie algebroids, the definition of Morita equivalences we've been using reduces to isomorphisms, but this is all we needed for this thesis. We want to explore a more general notion here for Lie algebroids. First we will motivate the definition, however the idea is that the right notion of Morita equivalences of Lie algebroids should give a generalization of homotopy theory in the smooth category. In particular, we conjecture the following result:
\begin{conjecture}
Suppose $A\to X\,, B\to Y$ are $n$-equivalent Lie algebroids. There is an equivalence of categories between representations up to homotopy (of length at most $n$) of $A$ and $B\,.$ 
\end{conjecture}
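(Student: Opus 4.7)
The plan is to reduce the conjecture to two separate statements: first, that any strict morphism $f:A\to B$ of Lie algebroids induces a dg functor $f^{*}$ on categories of representations up to homotopy preserving the length filtration; second, that when $f$ is additionally an $n$-equivalence, the restriction of $f^{*}$ to length-$\le n$ representations is a quasi-equivalence. Granting that generalized morphisms are presented as roofs $A\xleftarrow{\simeq_{n}}C\to B$ with the left leg an $n$-equivalence, as is consistent with the spirit of the excerpt, this reduces the conjecture to the strict case. The first statement is structural: the Chevalley--Eilenberg differential defining a representation up to homotopy on $E\to Y$ involves coefficients of the form $\Omega^{\bullet}(B)\otimes \mathrm{End}^{\bullet}(E)$, and pullback along $f$ together with its wedge powers $\Lambda^{k}f^{*}$ sends these coefficients to the corresponding Chevalley--Eilenberg data on $f^{*}E$, respecting the length.

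The second statement is where the real work lies. The strategy I would follow is modeled on Arias Abad--Quintero V\'elez--V\'elez V\'asquez's quasi-equivalence result (Corollary 5.1 cited in the excerpt), which handles precisely the case $A=TX$, $B=TY$, $n=\infty$. There, the key input is that smooth homotopy equivalences induce isomorphisms on cohomology with values in the sheaf of chain complexes $\underline{\mathrm{End}}(E)$ associated to an $\infty$-local system. Quasi-essential surjectivity of $f^{*}$ and quasi-full-faithfulness both unwind into isomorphisms of cohomology groups $H^{k}(A,\underline{\mathrm{End}}(E))\cong H^{k}(B,\underline{\mathrm{End}}(E))$ for $k\le n$, together with injectivity in degree $n+1$ for lifting obstructions. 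So the heart of the argument is a Lie-algebroid-cohomology comparison theorem along $n$-equivalences, applied with coefficients in these endomorphism complexes of representations up to homotopy of length $\le n$.

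The main obstacle is therefore that the intrinsic content of ``$n$-equivalence'' must be strong enough to force exactly this cohomological comparison. Since the excerpt leaves the definition of generalized morphisms of Lie algebroids conjectural, the honest formulation of the proof is conditional: one must choose the definition of $n$-equivalence so that it preserves cohomology with values in the admissible module category containing $\underline{\mathrm{End}}(E)$ for all length-$\le n$ representations. The cleanest way to arrange this, given the framework at the end of Part 3, is to work inside the category of LA-groupoids with $n$-equivalences, where Morita invariance of LA-groupoid cohomology (expected up to degree $n$) gives such a comparison automatically. Lifting a Lie algebroid to its canonical embedding as an LA-groupoid and exploiting the van Est-type isomorphism of Part 2 (which already transfers cohomological equivalences across $n$-connected maps) should produce the required coefficient invariance.

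The remaining step is gluing: even granting the cohomological comparison pointwise, representations up to homotopy are global objects, and constructing a quasi-inverse to $f^{*}$ requires assembling local lifts on a good cover into a coherent dg functor. This is handled by the same Cech/Godement machinery used in Part 1, combined with the fact that the obstructions to gluing live in cohomology of degree $\le n+1$ with coefficients in the endomorphism complex, which vanish by the isomorphism theorem. Once these pieces are in place, the equivalence of length-$\le n$ representations up to homotopy becomes a formal consequence of $n$-equivalence, and specializing to $A=TX$, $B=TY$ recovers the Arias Abad--Quintero V\'elez--V\'elez V\'asquez theorem as the $n=\infty$ tangent-bundle case.
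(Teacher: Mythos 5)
The statement you are addressing is stated in the paper as a conjecture and is given no proof there, so there is no argument of the paper's to compare yours against; what you have written is a strategy outline rather than a proof, and it has at least one genuine gap beyond the conditionality you acknowledge. The gap is your claim that ``quasi-essential surjectivity of $f^{*}$ and quasi-full-faithfulness both unwind into isomorphisms of cohomology groups $H^{k}(A,\underline{\mathrm{End}}(E))\cong H^{k}(B,\underline{\mathrm{End}}(E))$.'' This is correct for quasi-full-faithfulness, since the hom-complexes in the dg category of representations up to homotopy are Chevalley--Eilenberg complexes with coefficients in $\underline{\mathrm{Hom}}(E,E')$ and both arguments are pullbacks. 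It is not correct for quasi-essential surjectivity: that is a descent statement for the objects themselves, asserting that an arbitrary representation up to homotopy of $A$ is quasi-isomorphic to one pulled back from $B$, and no isomorphism of cohomology groups of already-pulled-back coefficient systems implies it. In the tangent-bundle case of Arias Abad--Quintero V\'elez--V\'elez V\'asquez, essential surjectivity is obtained by exhibiting an explicit candidate quasi-inverse $g^{*}$ coming from a smooth homotopy inverse $g$ of $f$ together with the homotopies $fg\simeq\mathrm{id}$ and $gf\simeq\mathrm{id}$; for a general $n$-equivalence of Lie algebroids in the sense of the paper there is no inverse morphism to pull back along, so this part of the template does not transfer. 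Your gluing paragraph asserts that the obstructions to assembling local lifts ``vanish by the isomorphism theorem,'' but you never identify the complex in which these obstructions live, nor why an isomorphism in degrees $\le n$ together with injectivity in degree $n+1$ kills all of them; for a length-$\le n$ representation up to homotopy the structure operators and the obstructions to descending them are spread across a range of total degrees in a bigraded complex, and the matching of that range with the connectivity bound $n$ is exactly the content that needs to be proved.

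A secondary issue: the cohomology comparison you invoke (Crainic's theorem on $f^{!}B$ versus $B$, and the van Est theorem of Part 2) is established only for coefficients in modules or representations, not for coefficients in the endomorphism complex of a representation up to homotopy, which is itself a graded object carrying a representation-up-to-homotopy structure rather than an honest representation. Extending the comparison theorem to such coefficients is a nontrivial step (essentially a van Est theorem for the Weil-algebra-type coefficients of Arias Abad--Crainic), and without it even the quasi-full-faithfulness half of your argument is not yet grounded. None of this means the strategy is wrong --- it is a reasonable plan, and reducing to the strict case via roofs is the natural first move --- but as written the proposal records where a proof would have to go rather than supplying one, and the essential-surjectivity step in particular requires an idea that is not yet present.
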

\vspace{1mm}We will begin with a discussion and provide evidence for our claims, and then we will state precise definitions. Part 3 of this thesis was written after discussions with Francis Bischoff. At the end of this chapter we make a remark on (perhaps far-fetched) connections with geometric/deformation quantization. Let us emphasize that Part 3 of this thesis is largely conjectural and significant details need to be filled in for completion.
\section{Some Definitions of Morita Equivalences of Lie Algebroids and \texorpdfstring{$\Pi_{\infty}(A)$}{Pi infinity}}
Before getting into the abstract details that are to follow, we wish to prime the reader with a concrete result. It relates to the following conundrum: there is a functor from Lie groupoids to Lie algebroids, however Morita equivalences don't seem to get sent to anything like  Morita equivalences, eg. Pair$(X)$ is Morita equivalent to a point, but $TX$ doesn't seem like it should be Morita equivalent to the zero vector space. Our point of view is the following: it is often more instructive to think of Lie algebroids as structures which quotient to a Lie groupoids, rather than as infinitesimal approximations. This is already familiar in the case of connected abelian groups, where the Lie algebra is identified with the universal cover. This perspective would suggest that, rather than 
\begin{equation}
   \begin{tikzcd}
H \arrow[rr, "\text{Morita map}"] &  & G \arrow[r, Rightarrow] & \mathfrak{h} \arrow[rr, "\text{Morita map}"] &  & \mathfrak{g}\,,
\end{tikzcd}
\end{equation}
the implication should point in the opposite direction, ie.
\begin{equation}\label{oppo}
   \begin{tikzcd}
H \arrow[rr, "\text{Morita map}"] &  & G & \mathfrak{h} \arrow[l, Rightarrow] \arrow[rr, "\text{Morita map}"] &  & \mathfrak{g}
\end{tikzcd}
\end{equation}
That is, Morita equivalences of Lie algebroids should integrate to Morita equivalences of Lie groupoids (we will make a more clear statement later, but $H\,,G$ here should be source simply connected). To support this, let's first give a notion of Morita equivalence due to Fernandes (see~\cite{ruif}, page 198) which we wish to generalize: 
\begin{definition}\label{mlf}
Two Lie algebroids $A\to X\,,B\to Y$ are Morita equivalent if there is a manifold $Z$ with surjective submersions $\pi_X\,,\pi_Y$ to $X$ and $Y\,,$ with simply connected fibers, such that the pullbacks $\pi_X^!A\;,\pi_Y^!B$ are isomorphic. 
\end{definition}
\vspace{1mm}In particular, if $\pi:Y\to X$ is a surjective submersion with simply connected fibers and $A\to X$ is a Lie algebroid, then by this definition $\pi^!A$ is Morita equivalent to $A\,.$ Now we give the following result, which supports\ref{oppo}, and we will come back to this in~\Cref{defml} (see Lemma 1.14 in~\cite{haus} for the proof):
\begin{proposition}\label{morgl}
Let $A\to X$ be an integrable Lie algebroid, and let $\pi:Y\to X$ be a surjective submersion with simply connected fibers. Then the source simply connected groupoids integrating $\pi^!A$ and $A$ are Morita equivalent.
\end{proposition}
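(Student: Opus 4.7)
The plan is to construct the source simply connected integration of $\pi^! A$ explicitly as a pullback of the source simply connected integration of $A\,,$ and then observe that this pullback is manifestly Morita equivalent to the original. Let $G \rightrightarrows X$ denote the source simply connected integration of $A\,.$ I would form the pullback groupoid $\pi^{!!}G := Y \sideset{_{\pi}}{_s}{\mathop{\times}} G \sideset{_{t}}{_{\pi}}{\mathop{\times}} Y \rightrightarrows Y\,,$ whose elements are triples $(y_1, g, y_2)$ with $s(g) = \pi(y_1)$ and $t(g) = \pi(y_2)\,,$ with source $y_1\,,$ target $y_2\,,$ and multiplication inherited from $G\,.$

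First I would verify that the Lie algebroid of $\pi^{!!}G$ is $\pi^! A\,;$ this is a standard functorial property of pullbacks along surjective submersions and does not require integrability hypotheses beyond what is given. Second, I would check that $\pi^{!!}G$ has simply connected source fibers. Unpacking the definition, the source fiber over $y_1 \in Y$ is canonically identified with the pullback of $\pi: Y \to X$ along the restriction of the target map $t: s^{-1}(\pi(y_1)) \to X\,.$ Since $\pi$ is a submersion, this pullback is a smooth fiber bundle over the source fiber $s^{-1}(\pi(y_1))\,,$ with typical fiber $\pi^{-1}(x)$ over a point $g$ with $t(g)=x\,.$ The base is simply connected because $G$ is source simply connected, and the typical fiber is simply connected by hypothesis, so the long exact sequence of homotopy groups for a fibration forces the total space to be simply connected as well. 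By uniqueness of source simply connected integrations, $\pi^{!!}G$ is then the source simply connected integration of $\pi^! A\,.$

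To finish, I would exhibit a direct Morita equivalence between $\pi^{!!}G$ and $G\,.$ The projection $\pi^{!!}G \to G\,,\ (y_1, g, y_2) \mapsto g\,,$ is a homomorphism of Lie groupoids covering $\pi\,;$ since $\pi$ is a surjective submersion, this is a Morita morphism, the associated principal bibundle being $Y \sideset{_{\pi}}{_s}{\mathop{\times}} G$ with its evident left $G$-action and right $\pi^{!!}G$-action. Composing with the isomorphism from the previous step identifies the source simply connected integration of $\pi^! A$ with $\pi^{!!}G$ and hence exhibits it as Morita equivalent to $G\,.$

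I expect the only technical subtlety will be setting up the homotopy long exact sequence argument carefully — specifically, confirming that the source fiber is a genuine Serre fibration over $s^{-1}(\pi(y_1))$ with fiber $\pi^{-1}(x)\,.$ This reduces to the fact that the pullback of a surjective submersion along a smooth map is again a surjective submersion (hence a locally trivial fiber bundle after passing to connected components of $X\,,$ which is harmless since source fibers of $G$ sit inside a single component), so even this step is routine.
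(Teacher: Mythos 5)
Your overall architecture is the standard one and is essentially what the paper has in mind (the thesis gives no proof of its own here, deferring to Lemma 1.14 of Garmendia--Zambon): realize the source simply connected integration of $\pi^!A$ as the pullback groupoid $\pi^{!!}G = Y\times_X G\times_X Y\,,$ identify its Lie algebroid with $\pi^!A\,,$ show its source fibers are connected and simply connected, invoke uniqueness of source simply connected integrations, and note that the projection $\pi^{!!}G\to G$ is a Morita morphism because $\pi$ is a surjective submersion. All of those pieces except one are fine.

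The gap is in the connectivity step, precisely at the point you dismiss as routine. You claim the projection from the source fiber $s^{-1}(\pi(y_1))\times_X Y$ onto $s^{-1}(\pi(y_1))$ is a Serre fibration because a surjective submersion is ``a locally trivial fiber bundle after passing to connected components.'' That is false: a surjective submersion over a connected base need not be a fiber bundle, nor even a Serre fibration — the thesis itself uses the counterexample $\mathbb{R}^2\setminus\{(0,0)\}\to\mathbb{R}\,.$ The failure persists under your hypotheses: for $\mathbb{R}^4\setminus\{0\}\to\mathbb{R}$ (first coordinate) every fiber is connected and simply connected ($\mathbb{R}^3$ or $\mathbb{R}^3\setminus\{0\}$), yet the fibers are not weakly homotopy equivalent to one another, so the map is not a Serre fibration and there is no ``typical fiber'' to feed into a long exact sequence. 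Consequently the homotopy exact sequence cannot be invoked as you do. The conclusion you need is nevertheless true, but it rests on the weaker statement that a surjective submersion with connected (resp.\ connected and simply connected) fibers over a connected (resp.\ simply connected) base has connected (resp.\ simply connected) total space: connectedness is elementary since the map is open with connected fibers, while for $\pi_1$ one must lift a null-homotopy of the projected loop square by square using the local normal form of a submersion and absorb the mismatches between adjacent local lifts into paths and loops lying inside the fibers, where they die by the $0$- and $1$-connectivity of the fibers. (This only works in degrees $0$ and $1\,,$ which is consistent with the $\mathbb{R}^4\setminus\{0\}$ example failing in higher degrees, and is exactly the content of the lemma the paper cites.) With that argument substituted for the fibration claim, your proof closes.
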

This proposition supports~\ref{oppo}. Now we will move on with the discussion, and we will put the preceding result into context a little later. 
\vspace{3mm}\\In the author's opinion, at least in the optimal cases, every Lie algebroid should integrate to a canonical Lie $\infty$-groupoid, and the Lie algebroid and its integrating canonical Lie $\infty$-groupoid should be equivalent in a strong sense. Roughly, the canonical Lie $\infty$-groupoid of a Lie algebroid $A\to X$ should be given by the following simplicial space, denoted $\Pi_{\infty}(A)$ (see page 3 of~\cite{zhuc}, where it is denoted by $S(A)\,;$ see also section 1.4 of~\cite{andre},~\cite{getzler},~\cite{sometitle})\footnote{This part is motivational, we don't intend to be perfectly precise, however this will lead to a concrete definition.}:
\begin{equation}\label{inftyg}
  \Pi_{\infty}^{(i)}(A)=\text{hom}(T\Delta^i,A)\,.
\end{equation}In particular, they should have the same invariants, including cohomology and representations (up to homotopy). Our perspective is that a Lie algebroid $A$ is a finite dimensional model of $\Pi_{\infty}(A)\,.$ We call $\Pi_{\infty}(A)$ the fundamental $\infty$-groupoid of $A\,.$ As a special case, the tangent bundle $TX$ should be equivalent to the fundamental $\infty$-groupoid $\Pi_\infty(X)\,.$ An important property of $\Pi_{\infty}(A)$ is that its source fibers should be weakly contractible. Now the source simply connected integration of $A\,,$ denoted $\Pi_1(A)\,,$ (assuming it exists) should be a quotient (ie. truncation) of $\Pi_{\infty}(A)\,.$ It is in this sense in which a groupoid is a quotient of a Lie algebroid.
\vspace{3mm}\\It was proven that given such a Morita equivalence (see~\Cref{mlf}), the cohomology groups of the Lie algebroids up to degree one agree, as do their category of representations. Note that this definition is the analogue of the definition of Morita equivalence of groupoids (except for the additional simply connected assumption, which we will explain). In the same paper it is mentioned that, instead of asking that the fibers be simply connected, one can ask that they are $n$-connected for some $n\,.$ In the author's opinion, the right value of $n$ is $\infty\,,$ ie. the fibers should be contractible; any other value of $n$ should be thought of as a particular case of an $n$-equivalence of Lie algebroids, which is analogous to the notion of weak $n$-equivalence from topology (see page 144 in~\cite{dieck}). We cite the following two results as evidence to our claim:
\begin{theorem}(Theorem 2 in~\cite{Crainic})\label{crainicmor}
Let $f:X\to Y$ be a surjective submersion with $n$-connected fibers, and let $B\to Y$ be a Lie algebroid. Then the Lie algebroid cohomologies of $f^!B$ and $B$ are isomorphic up to degree $n\,.$
\end{theorem}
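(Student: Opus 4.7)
The plan is to identify the Chevalley-Eilenberg complex of $f^!B$ with a fiberwise de Rham resolution of $f^{-1}\Omega^{\bullet}(B)$, and then apply the Leray-type comparison result already used in the paper (e.g.\ criterion 1.9.4 in \cite{Bernstein} / Theorem \ref{spectral theorem}).

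First, I would exploit the short exact sequence of Lie algebroids over $X$
\[
0 \longrightarrow T_f X \longrightarrow f^!B \longrightarrow f^*B \longrightarrow 0,
\]
where $T_f X = \ker f_*$ is the relative tangent bundle. Filtering the Chevalley-Eilenberg complex $\Omega^{\bullet}(f^!B)$ by the degree coming from $f^*B$ produces a first-quadrant spectral sequence whose $E_0$ page is
\[
E_0^{p,q} = \Omega^{q}_{f}(X) \otimes_{\mathcal{O}_X} f^*\Omega^{p}(B),
\]
with $E_0$-differential equal to the fiberwise de Rham differential along $T_f X$. This is the routine step; the identification uses that $T_f X \hookrightarrow f^!B$ is an ideal and the quotient is $f^*B$.

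Next I would compute the $E_1$ page. Since the fibers of $f$ are $n$-connected, they are in particular $n$-acyclic for $\mathbb{R}$-coefficients by de Rham's theorem, so fiberwise de Rham cohomology satisfies $\mathcal{H}^{0}_f(X) = f^{-1}\mathcal{O}_Y$ (here connectedness of fibers is used) and $\mathcal{H}^{q}_f(X) = 0$ for $1 \le q \le n$. Consequently
\[
E_1^{p,q} \;=\; \begin{cases} f^{-1}\Omega^{p}(B) & q=0,\\ 0 & 1\le q\le n,\end{cases}
\]
and the $E_1$-differential along the bottom row is the pulled-back CE differential. Hence up through total degree $n$ the spectral sequence collapses onto the hypercohomology $\mathbb{H}^{k}(X,f^{-1}\Omega^{\bullet}(B))$, giving an isomorphism
\[
H^{k}(f^!B) \;\cong\; \mathbb{H}^{k}\bigl(X,\,f^{-1}\Omega^{\bullet}(B)\bigr) \qquad (0\le k\le n).
\]

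Finally I would apply the Leray-type theorem already available in the excerpt (criterion 1.9.4 in \cite{Bernstein}, recorded here as Theorem \ref{spectral theorem}) to the surjective submersion $f$ with $n$-connected fibers and to the complex of sheaves $\Omega^{\bullet}(B)$ on $Y$: the pullback map
\[
f^{-1}\colon \mathbb{H}^{k}(Y,\Omega^{\bullet}(B)) \longrightarrow \mathbb{H}^{k}(X,f^{-1}\Omega^{\bullet}(B))
\]
is an isomorphism for $0\le k\le n$. Composing with the previous identification yields the desired isomorphism $H^{k}(B)\cong H^{k}(f^!B)$ for $k\le n$, and a straightforward check shows it is induced by the natural map of Chevalley-Eilenberg complexes $\Omega^{\bullet}(B)\to\Omega^{\bullet}(f^!B)$.

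The main technical obstacle is the fiberwise de Rham vanishing used at $E_1$: to pass from ``fibers are $n$-connected'' to the vanishing of the sheaves $\mathcal{H}^{q}_f$ of fiberwise cohomology in a family-wise manner (not just pointwise) one needs either to invoke local triviality arguments along leaves of the foliation defined by $f$, or a Poincar\'e-lemma-type argument that is uniform in the base. Once this is established, the rest of the proof is essentially formal manipulation of two spectral sequences.
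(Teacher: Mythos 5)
The paper does not actually prove this statement anywhere — it is imported verbatim as Theorem 2 of~\cite{Crainic} — so there is no internal proof to compare against. Your argument is the standard one (and, in outline, the one Crainic gives): the Hochschild--Serre filtration of the Chevalley--Eilenberg complex of $f^!B$ coming from the ideal $T_fX\subset f^!B$ with quotient $f^*B$, degeneration in the fiber direction, and then the Leray-type comparison already available in the paper (Theorem~\ref{spectral theorem}, ultimately criterion 1.9.4 of~\cite{Bernstein}). All the needed ingredients are present and the proof goes through.

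One conceptual point is mislabeled, though it does not break the argument. The step you flag as the ``main technical obstacle'' --- vanishing of the cohomology sheaves $\mathcal{H}^q_f$ of the fiberwise de~Rham complex --- is the relative Poincar\'e lemma and holds for \emph{any} surjective submersion with no connectivity hypothesis: locally $f$ is a projection $V\times D\to V$ with $D$ a disk, and one integrates along $D$. What genuinely requires $n$-connectivity of the fibers is the \emph{global} comparison $H^q(Y,\mathcal{S})\xrightarrow{\;f^{-1}\;}H^q(X,f^{-1}\mathcal{S})$ being an isomorphism for $q\le n$; this is exactly the content of Theorem~\ref{n acyclic} and Theorem~\ref{important}, and it is the step your final paragraph invokes. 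The paper's own example of the foliated form $\tfrac{x\,dy}{x^2+y^2}$ on $\mathbb{R}^2\setminus\{0\}\to\mathbb{R}$ makes the distinction vivid: the cohomology sheaves of the foliated complex vanish there as well, yet the class is globally nontrivial because the fibers are not all connected. Consequently your two appeals to connectivity (once at $E_1$, once in the closing Leray step) are really a single appeal. The cleanest bookkeeping is: the relative Poincar\'e lemma plus softness of the CE sheaves gives $H^*(f^!B)\cong\mathbb{H}^*(X,f^{-1}\Omega^\bullet(B))$ in \emph{all} degrees, and then the $n$-connectivity enters exactly once, in identifying this with $\mathbb{H}^k(Y,\Omega^\bullet(B))=H^k(B)$ for $k\le n$ (with injectivity in degree $n+1$). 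With that rearrangement your proposal is complete and matches the mechanism used throughout Part~1 of the thesis.
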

\begin{theorem}(see Theorem 4.2 in~\cite{sparano})
Let $f:X\to Y$ be a surjective submersion with $n$-connected fibers, and let $B\to Y$ be a Lie algebroid. Then the Lie algebroids $f^!B$ and $B$ share the same deformation cohomology up to degree $n\,.$\footnote{For more properties invariant under this notion of equivalence, see Remark 3.3 in~\cite{sparano}.}
\end{theorem}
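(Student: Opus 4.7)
The plan is to reduce deformation cohomology to Lie algebroid cohomology with coefficients in the adjoint representation up to homotopy, and then invoke Theorem~\ref{crainicmor} (Crainic's pullback isomorphism theorem). Concretely, by results of Arias Abad–Crainic, the deformation complex $C^{\bullet}_{\mathrm{def}}(A)$ of any Lie algebroid $A \to X$ is quasi-isomorphic to the complex computing $H^{\bullet}(A,\mathrm{ad}_A)$, the cohomology of $A$ with values in the adjoint representation up to homotopy $\mathrm{ad}_A=[A \xrightarrow{\alpha} TX]$ (where $\alpha$ is the anchor, and the representation-up-to-homotopy structure depends on a choice of TX-connection on $A$, but its quasi-isomorphism class does not). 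So the first step is to reformulate the claim as an isomorphism
\[
H^k(f^!B,\mathrm{ad}_{f^!B}) \;\cong\; H^k(B,\mathrm{ad}_B), \qquad k \le n.
\]

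The second step is to relate $\mathrm{ad}_{f^!B}$ to $f^{!}\mathrm{ad}_B$. Since $f^!B = B \times_{TY} TX$, one gets a short exact sequence of two-term complexes over $X$,
\[
0 \longrightarrow [\ker(f_*)=0 \to T_fX] \longrightarrow \mathrm{ad}_{f^!B} \longrightarrow f^{*}\mathrm{ad}_B \longrightarrow 0,
\]
where $T_fX$ denotes the vertical tangent bundle of $f$, sitting in degree $0$. Up to quasi-isomorphism of representations up to homotopy, the kernel $[0\to T_fX]$ is contractible once one takes into account the action of $f^!B$ on the fibers of $f$; this is essentially the content of the ``vertical'' piece of the adjoint representation of $f^!B$. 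The upshot is a quasi-isomorphism $\mathrm{ad}_{f^!B} \simeq f^{!}\mathrm{ad}_B$ of representations up to homotopy, whose verification is the first nontrivial technical step.

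The third step is to apply Theorem~\ref{crainicmor} (and its extension to representations up to homotopy, which is formal because any representation up to homotopy is a finite-length complex of genuine representations with cochain-level twisted differential, and Crainic's spectral sequence argument applies termwise): pullback along a surjective submersion with $n$-connected fibers induces an isomorphism
\[
H^k(B,\mathrm{ad}_B) \;\xrightarrow{\;\cong\;}\; H^k(f^!B, f^{!}\mathrm{ad}_B), \qquad k \le n,
\]
and injectivity in degree $n+1$. Composing with the quasi-isomorphism of step two yields the desired comparison on deformation cohomology.

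The hard part will be step two: making precise the claim that $\mathrm{ad}_{f^!B}$ differs from $f^{!}\mathrm{ad}_B$ only by a contractible piece, in a way compatible with the representation-up-to-homotopy structure on both sides (including the curvature $3$-form correction, which depends on connection choices). A clean way to handle this is to fix a section of $f_* : TX \to f^*TY$ (possible locally; globally one must work with connections), induce a compatible splitting of the sequence $f^*B \hookrightarrow f^!B \twoheadrightarrow T_fX$, and use these to write down explicit quasi-inverse cochain maps between the two representation-up-to-homotopy structures. Once this is in hand, the rest of the proof is formal, and one in fact obtains injectivity in degree $n+1$ as well, with an image characterization analogous to Theorem~\ref{van Est image}.
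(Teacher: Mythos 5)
The thesis does not prove this statement --- it is quoted from~\cite{sparano} --- and your overall route (identify $C^{\bullet}_{\mathrm{def}}$ with $\Omega^{\bullet}(\,\cdot\,;\mathrm{ad})$ via Arias Abad--Crainic, compare $\mathrm{ad}_{f^!B}$ with the pullback of $\mathrm{ad}_B$, then invoke a Crainic-type pullback isomorphism) is the natural one and essentially the one used in the cited source. But two of your steps are wrong as written. First, the kernel of $\mathrm{ad}_{f^!B}\to f^{*}\mathrm{ad}_B$ is not $[0\to T_fX]$. Since $f^!B=TX\times_{TY}B$ has rank $\mathrm{rk}\,B+\dim X-\dim Y$, the projection $f^!B\to f^{*}B$ has kernel $\{(v,0):f_{*}v=0\}\cong T_fX$ in degree $0$ as well, and the anchor of $f^!B$ restricts to the identity on it, so the kernel complex is $[T_fX\xrightarrow{\;\cong\;}T_fX]$; your ranks do not even match in degree $0$. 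The error matters: $[0\to T_fX]$ is \emph{not} contractible --- $\Omega^{\bullet}(f^!B;T_fX)$ computes the cohomology of a Bott-type representation on the vertical bundle, which is nonzero in general, and no appeal to ``the action on the fibers'' removes it. With the correct kernel the acyclicity you need is formal: filter $\Omega^{\bullet}(f^!B;[T_fX\to T_fX])$ by form degree; the associated graded differential is the bundle isomorphism $\partial$, so the complex is acyclic. (You must still check that, for compatible choices of an Ehresmann connection on $f$ and a $TY$-connection on $B$, the sequence is one of representations up to homotopy and not merely of graded bundles.)

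Second, your justification for extending Theorem~\ref{crainicmor} to coefficients in a representation up to homotopy does not stand: the graded pieces of a representation up to homotopy are \emph{not} genuine representations (the connections $\nabla_i$ are not flat --- that is the whole point of ``up to homotopy''), so Crainic's argument cannot be applied ``termwise.'' Moreover neither truncation filtration by $E$-degree is preserved by the total differential (the $\partial$ raises it while the curvature term lowers it), so there is no naive reduction to the honest-coefficient case. The correct argument filters $\Omega^{\bullet}(f^!B;f^{*}\mathcal{E})$ by the number of vertical ($T_fX^{*}$) legs: all structure operators of $f^{*}\mathcal{E}$ are pulled back from $Y$ and hence preserve this filtration, the first page is fiberwise de Rham cohomology with locally constant coefficients, and $n$-connectedness of the fibers yields the isomorphism up to degree $n$ and injectivity in degree $n+1$, exactly as in Theorem~\ref{spectral theorem}. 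With these two repairs your argument goes through.
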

In particular, our definition of Morita equivalence of Lie algebroids will imply that two tangent bundles are Morita equivalent if and only if their underlying manifolds are homotopy equivalent. One piece of evidence supporting this is the fact that the Lie algebroid cohomology of tangent bundles is invariant under homotopy equivalences of the underlying manifolds (this includes the cohomology of constant sheaves). Further evidence supporting this is the following proposition (due to Arias Abad, Quintero V\'{e}lez and  V\'{e}lez V\'{a}squez):
\begin{theorem}(see~\cite{Quintero}, Corollary 5.1)
If $f:X\to Y$ is a smooth homotopy equivalence, then the pullback functor 
\begin{equation}
    f^*:\textbf{Loc}_{\infty}(N)\to \textbf{Loc}_{\infty}(M)
\end{equation}
is a quasi-equivalence (ie. it is a quasi-equivalence between the dg categories of $\infty$-local systems).
\end{theorem}
\vspace{1mm}Let us make a remark which is relevant to this discussion and which we will come back to later: a surjective submersion $f:X\to Y$ with $n$-connected fibers is a weak $n$-equivalence, ie. it defines an isomorphism of homotopy groups up to degree $n\,.$ For $n=\infty\,,$ $f$ is a homotopy equivalence (see Corollary 13 in~\cite{gael}).
\vspace{3mm}\\For another example, we can embed manifolds into the category of Lie algebroids by assigning to a manifold $X$ the $0$ vector bundle over it. Our definition will imply that two manifolds are Morita equivalent if and only if they are diffeomorphic. Certainly, all reasonable properties of manifolds are invariant under diffeomorphisms. Noe that the aformentioned embedding is the one that is consistent with the natural embedding of manifolds into groupoids/stacks, which assigns to $X$ the groupoid containing only identity morphisms (note that, assigning to $X$ the groupoid $\Pi_1(X)\,,$ for example, is not an embedding under Morita equivalences).
\section{Motivation}Before giving our definition of Morita equivalences of Lie algebroids (see~\Cref{MEA}), we will motivative the definition.
\vspace{3mm}\\Let's restrict to the subcategory of Lie algebroids given by tangent bundles. We believe that two tangent bundles $TX\,,TY$ should be Morita equivalent if and only if $X$ and $Y$ are homotopy equivalent. One reason for this is that homotopy equivalence seems to preserve everything that should be an invariant of Lie algerboids, in particular the Lie algebroid cohomologies of $TX\,,TY$ are the same if $X$ and $Y$ are homotopy equivalent (if they are just weak $n$-equivalent, meaning that there is a map $X\to Y$ which is an isomorphism on homotopy groups up to degree $n\,,$ then the Lie algebroid cohomologies will agree up to degree $n$).
\vspace{3mm}\\We will return to Lie algebroids in a moment, but first let's recall the definition of a Morita map for Lie groupoids. A morphism $f:H\to G$ is a Morita equivalence if the following two conditions hold:
\begin{enumerate}
    \item $H^0\times_{G^0}G^{(1)}\to G^0$ is a surjective submersion.
    \item The following diagram is a fiber product:
    \begin{equation}
        \begin{tikzcd}
H^{(1)} \arrow[r, "f"] \arrow[d, "{(s,t)}"'] & G^{(1)} \arrow[d, "{(s,t)}"'] \\
H^0\times H^0 \arrow[r, "{(f,f)}"]       & G^0\times G^0            
\end{tikzcd}
    \end{equation}
\end{enumerate}
Now we want to generalize this to Lie algebroids. The map $H^0\times_{G^0}G^{(1)}\to G^0$ has a Lie algebroid analogue: given two Lie algebroids $A\to X\,,B\to Y$ and a morphism $f:A\to B\,,$ we can form its ``mapping path space". This is given by the fiber product $X\sideset{_f}{_{s}}{\mathop{\times}}P\,,$ where $P$ is the space of algebroid paths. Let's recall the definition:
\begin{definition}\label{nodef1}(see section 1 in~\cite{rui} for more details)
Let $A\to X$ be a Lie algebroid. An algebroid path (or A-path) is given by a $C^1$ curve $\gamma:[0,1]\to A$ such that $\pi\circ\gamma$ is a $C^2$ curve in $X$ and such that \begin{equation}
    \alpha(\gamma(t))=\frac{d}{dt}\pi(\gamma(t))\,.
    \end{equation}
We denote the space of algebroid paths by $P_X\,.$  It is the same space as $\Pi_{\infty}^1(A)$ (see~\ref{inftyg}).
\end{definition}
Now $P_X$ is a Banach manifold (see section 4.2 of~\cite{rui}), and it comes with two maps onto $X\,,$ which we will denote suggestively as follows:
\begin{equation}
s\,,t:P_X\to X\,, s(\gamma)=\pi(\gamma(0))\,,t(\gamma)=\pi(\gamma(1))\,.
\end{equation}
\begin{definition}\label{nodef}
Given a map $f:A\to B$ of Lie algebroids $A\to X\,,B\to Y\,,$ we define the mapping path space to be $P_f=X\sideset{_f}{_{s}}{\mathop{\times}}P_B\,.$
\end{definition}
Now given this definition, a guess for what a Morita equivalence of Lie algebroids might be is the following: given two Lie algebroids $A\to X\,,B\to Y\,,$ a morphism $f:A\to B$ is a Morita equivalence if the following two conditions hold:
\begin{enumerate}
    \item The composition $P_f\to P_Y\xrightarrow[]{t}Y$ is a surjective submersion.
    \item The following diagram is a fiber product:
    \begin{equation}
        \begin{tikzcd}
A \arrow[r, "f"] \arrow[d, "\alpha"'] & B \arrow[d, "\alpha"'] \\
TX \arrow[r, "f_*"]                   & TY                    
\end{tikzcd}
    \end{equation}
\end{enumerate}
Now this definition doesn't work, and to see this, just consider the case of tangent bundles. Assuming $Y$ is path connected, these two conditions will hold automatically. To understand what is going on, let's consider the fundamental groupoid $\Pi_1(Y)\,.$ Given a map $f:X\to Y$ (assume they are connected for simplicity), $f^!\Pi_1(Y)$ is Morita equivalent to $\Pi_1(Y)\,,$ however it isn't necessarily Morita equivalent to the source simply connected integration of $f^!TY=TX\,,$ ie. $\Pi_1(X)\,.$ In order for this to be the case we need that the map $f$ defines an isomorphism on $\pi_1(X)\,.$ Let's emphasize this point: \textbf{the pullback of the source simply connected integration of a Lie algebroid is not necessarily equivalent to the source simply connected integration of the pullback of the Lie algebroid.}
\vspace{3mm}\\Continuing the discussion, suppose that $Y$ has vanishing homotopy groups above degree $1\,,$ so that $\Pi_1(Y)$ has contractible source fibers. In this case, in the author's opinion, $\Pi_1(Y)$ is essentially completely equivalent to $TY$ (and hence $\Pi_{\infty}(Y))\,,$ in particular they have all the same cohomology groups, for all modules and all degrees, by the van Est isomorphism theorem. However, $f^!\Pi_1(Y)$ doesn't necessarily have contractible source (or equivalently target) fibers, in order for this to be true we need $f$ to be a homotopy equivalence. 
\vspace{3mm}\\Now let's consider the case of a map $f:X\to Y$ and let's consider the following question: when is $f^!\Pi_{\infty}(Y)$ equivalent to $\Pi_{\infty}(X)\,?$ In order for this to be true, $f^!\Pi_{\infty}(Y)$ should have contractible target fibers. Explicitly, a target fiber over a point $x_0\in X$ of $f^!\Pi_\infty(Y)$ is given by:
\begin{equation}
    \{(x,\gamma):x\in X\,, \gamma:[0,1]\to Y\,,\gamma(0)=f(x)\,,\gamma(1)=f(x_0)\}\,,
\end{equation}
ie. a target fiber over $x_0$ is the homotopy fiber over $f(x_0)\,.$ Basic results about Serre fibrations from topology then imply that all target fibers are contractible if and only if $f:X\to Y$ is a homotopy equivalence.
\vspace{3mm}\\More generally then, given a map $f:X\to Y$ with a Lie algebroid $B\to Y\,,$ we see that $f^!\Pi_{\infty}(B)$ (see \cref{inftyg}) doesn't need to be equivalent to $\Pi_{\infty}(f^!B)$ (assuming the pullbacks exist). That is, for general $f\,,$ \begin{equation}
f^!\Pi_{\infty}\not\cong \Pi_{\infty}f^!\,.
\end{equation}
The property that $f^!B$ is Morita equivalent to $B$ should be equivalent to the properties that $P_f\to Y$ is surjective submersion and 
\begin{equation}
f^!\Pi_{\infty}(B)\cong \Pi_{\infty}(f^!B)\,.
\end{equation}
If $f$ is a surjective submersion with contractible fibers, then these properties should be satisfied for all $B\,,$ in particular $f^!$ and $\Pi_{\infty}$ should commute. If $f$ is a surjective submersion with (strict) fibers that have vanishing homotopy groups up to degree $n\,,$ then $f^!$ and $\Pi_{\infty}$ should commute ``up to degree $n",$ meaning that $f^!$ and $\Pi_{n}$\footnote{We will not attempt to define $\Pi_n$ here, but it should be an $n$-truncation of $\Pi_{\infty}\,,$ see Theorem 1.2 in~\cite{zhuc}. For $n=1$ it is ``functor" giving the source simply connected integration.} should commute (ignoring the issue of existence of the truncation).
\vspace{3mm}\\Now that we understand why the two preceding conditions are not enough (see the discussion on the previous page, after~\Cref{nodef}) for Lie algebroids to be equivalent, let's discuss what more is needed. In order for $f^!\Pi_{\infty}(B)$ to be equivalent to $\Pi_{\infty}(f^!B)$ the target fibers of $f^!\Pi_{\infty}(B)$ should be (weakly) contractible. These are the analogue of homotopy fibers from topology for general Lie algebroids. 
\section{Definition of Morita Equivalences of Lie Algebroids}\label{defml}
After the discussions in the previous sections, we are now ready to give our definition of Morita equivalences of Lie algebroids (see~\Cref{nodef1}~\Cref{nodef} for the definitions of $P$ and $P_f$). In particular, we can apply this definition to foliations, in which case we obtain a definition of equivalence distinct from the notion of Morita equivalence introduced in~\cite{haus}. One should compare it with the notion of homotopy equivalence of foliations introduced in~\cite{baum}\footnote{It is not clear to the author if our definition agrees with theirs.}:
\begin{definition}\label{MEA}
Let $A\to X\,,B\to Y$ be Lie algebroids. We say that a morphism $f:A \to B$ is a Morita equivalence if the following conditions hold:
\begin{enumerate}
    \item The composition $P_f\xrightarrow[]{\pi} P_Y\xrightarrow[]{t}Y$ is a surjective submersion with (weakly) contractible fibers.\footnote{Here $\pi$ is the natural map $P_f\xrightarrow[]{\pi} P_Y\,.$}
    \item The following diagram is a fiber product:
    \begin{equation}
        \begin{tikzcd}
A \arrow[r, "f"] \arrow[d, "\alpha"'] & B \arrow[d, "\alpha"'] \\
TX \arrow[r, "f_*"]                   & TY                    
\end{tikzcd}
    \end{equation}
\end{enumerate}
\end{definition}
The idea would then be to localize at Morita equivalences to obtain generalized morphisms. We also give the following definition: 
\begin{definition}\label{neq}
Let $A\to X\,,B\to Y$ be Lie algebroids. We say that a morphism $f:A \to B$ is an $n$-equivalence if the following conditions hold:
\begin{enumerate}
    \item The composition $P_f\xrightarrow[]{\pi} P_Y\xrightarrow[]{t}Y$ is a surjective submersion with $n$-connected fibers.
    \item The following diagram is a fiber product:
    \begin{equation}
        \begin{tikzcd}
A \arrow[r, "f"] \arrow[d, "\alpha"'] & B \arrow[d, "\alpha"'] \\
TX \arrow[r, "f_*"]                   & TY                    
\end{tikzcd}
    \end{equation}
\end{enumerate}
\end{definition}
One can localize at $n$-equivalences to obtain $n$-generalized morphisms. Let us emphasize that an $n$-equivalence for $n=\infty$ is a Morita equivalence.
\vspace{3mm}\\Let's \textbf{sketch} a proof that this previous definition generalizes~\Cref{mlf}:
\begin{proposition}
Let $f:X\to Y$ be a surjective submersion with $n$-connected fibers, and let $A\to Y$ be a Lie algebroid. Then $f^!A$ is $n$-equivalent to $A\,.$
\begin{proof}(\textbf{sketch})
The proof essentially follows the proof of Lemma 1.14 in~\cite{haus}. First note that since $f$ is surjective, $P_f\to Y$ is also surjective. Furthermore, $P_X\to Y$ is a submersion\footnote{This was communicated to the author by Rui Loja Fernandes.}, therefore since $f$ is also a submersion it follows that $P_f\to Y$ is a surjective submersion. Now the map $\pi:P_f\to P_Y$ induces a natural map $(t\circ\pi)^{-1}(y)\to t^{-1}(y)$ for $y\in Y\,,$ and the fibers of this map are the fibers of $f\,,$ which are $n$-connected. Since $t^{-1}(y)$ is contractible, it follows that the target fibers of $P_f$ are $n$-connected. The second condition is satisfied by the definition of $f^!A\,.$
\end{proof}
\end{proposition}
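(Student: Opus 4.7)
The plan is to verify the two conditions of Definition~\ref{neq} for the natural morphism $\tilde{f}:f^!A\to A$ covering the surjective submersion $f:X\to Y\,.$ Condition 2 is essentially a tautology: by definition of the pullback Lie algebroid, $f^!A=TX\times_{TY}A\,,$ so the square in Definition~\ref{neq}(2) is literally a pullback square. Essentially all the work goes into Condition 1, namely showing that the composition $P_{\tilde{f}}\xrightarrow{\pi}P_Y\xrightarrow{t}Y$ is a surjective submersion with $n$-connected fibers, where $P_{\tilde{f}}=X\sideset{_{f}}{_{s}}{\times}P_Y\,.$

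First I would handle the surjective submersion property. Surjectivity of $t\circ\pi$ onto $Y$ follows by using constant $A$-paths: for any $y\in Y$ the constant path $0_y\in P_Y$ satisfies $s(0_y)=t(0_y)=y\,,$ and surjectivity of $f$ produces an $x\in X$ with $f(x)=y\,,$ giving $(x,0_y)\in P_{\tilde{f}}$ that maps to $y\,.$ For the submersion property, $t:P_Y\to Y$ is a submersion of Banach manifolds (as used in the sketch via Fernandes' remark), and since $f$ is a submersion the fiber product projection $P_{\tilde{f}}\to P_Y$ is a submersion, so the composition is.

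The core of the argument is the $n$-connectedness of the fibers of $t\circ\pi\,.$ The fiber over $y\in Y$ is
\[
(t\circ\pi)^{-1}(y)=\{(x,\gamma)\in X\times P_Y:f(x)=s(\gamma),\;t(\gamma)=y\}\,,
\]
and projection onto the second factor gives a map $(t\circ\pi)^{-1}(y)\to t^{-1}(y)\,,$ whose fiber over $\gamma$ is $f^{-1}(s(\gamma))\,,$ which is $n$-connected by hypothesis. The key point is that $t^{-1}(y)$ is (weakly) contractible: the rescaling homotopy $(\gamma,\tau)\mapsto \tau\cdot\gamma(\tau\,\cdot)$ deformation retracts the space of $A$-paths ending at $y$ onto the constant path $0_y\,.$ Granted that $(t\circ\pi)^{-1}(y)\to t^{-1}(y)$ is a Serre fibration, the long exact sequence of homotopy groups combined with the contractibility of $t^{-1}(y)$ and $n$-connectedness of $f^{-1}(s(\gamma))$ yields $n$-connectedness of $(t\circ\pi)^{-1}(y)\,,$ as desired.

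The main obstacle is the fibration claim: establishing that the natural map $(t\circ\pi)^{-1}(y)\to t^{-1}(y)$ is a fibration in a sense strong enough to apply the long exact sequence of homotopy groups. This requires either working carefully with the Banach manifold structures on $P_Y$ and $P_{\tilde{f}}$ to verify a homotopy lifting property, or alternatively exhibiting $(t\circ\pi)^{-1}(y)$ directly as a pullback that inherits connectivity estimates from $f$ and $t^{-1}(y)\,.$ A cleaner route may be to avoid the fibration entirely: since $t^{-1}(y)$ is contractible, there is a homotopy equivalence between $(t\circ\pi)^{-1}(y)$ and the single strict fiber $f^{-1}(y)=f^{-1}(s(0_y))\,,$ given by restriction to the constant path; then $n$-connectedness is immediate from the hypothesis on $f\,.$ Making this homotopy equivalence precise, using the path-rescaling retraction pulled back from $t^{-1}(y)\,,$ should complete the proof.
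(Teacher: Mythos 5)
Your proposal follows essentially the same route as the paper's own sketch: condition 2 is tautological from the definition of $f^!A$, the surjective submersion property comes from $f$ being a surjective submersion together with $t:P_Y\to Y$ being a submersion, and $n$-connectedness of the fibers of $t\circ\pi$ is deduced from the map $(t\circ\pi)^{-1}(y)\to t^{-1}(y)$ having fibers equal to fibers of $f$ over a contractible base. You are in fact more explicit than the paper about the one genuine gap in both arguments (justifying that this map is a fibration, or otherwise extracting connectivity), which the paper's sketch also leaves unaddressed.
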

\vspace{2mm}Now given a Lie groupoid $G\,,$ there is a natural action of $TG$ on $\mathfrak{g}\,.$ Under the \textbf{assumption} that there is also a natural ``action" of $TP$ on $\mathfrak{g}\,,$ we get the following result:
\begin{conjecture}
Suppose $f:A\to B$ is a map of Lie algebroids which satisfies the hypotheses of~\Cref{neq}. Then $H^*(A,\mathcal{O})=H^*(B,\mathcal{O})\,.$
\end{conjecture}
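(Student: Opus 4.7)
The plan is to interpret Lie algebroid cohomology through the fundamental $\infty$-groupoid $\Pi_{\infty}(A)$ from \eqref{inftyg}, and then reduce the statement to an instance of \Cref{spectral theorem} (the Leray-style criterion already used throughout Part 1 to prove the van Est isomorphism). First I would establish that $H^{\bullet}(A,\mathcal{O})$ can be computed as the cohomology of the simplicial manifold $\Pi_{\infty}^{\bullet}(A)$ with constant coefficients $\mathcal{O}$; the degree-zero piece is $X$, the degree-one piece is the Banach manifold $P_X$ of $A$-paths, and higher degrees are $\mathrm{hom}(T\Delta^k,A)$. The comparison between Chevalley--Eilenberg cochains on $A$ and simplicial cochains on $\Pi_{\infty}^{\bullet}(A)$ goes by a fiberwise de Rham resolution in the source-foliation direction, exactly in the spirit of the argument defining $VE$ in Chapter 3 of Part 1 — this is the step where the assumed ``action of $TP$ on $\mathfrak{g}$'' is needed, since it is what promotes $\Pi_{\infty}^{\bullet}(A)$ from a bare simplicial manifold to a simplicial object carrying an LA-groupoid-like foliation whose fiberwise forms recover $C^{\bullet}_{\mathrm{CE}}(A,\mathcal{O})$.

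Next I would construct a simplicial map $f_{\bullet}\colon \Pi_{\infty}^{\bullet}(A)\to \Pi_{\infty}^{\bullet}(B)$ from the morphism $f\colon A\to B$. On objects this is the base map $X\to Y$; on $1$-simplices it sends an $A$-path $\gamma$ to $f\circ\gamma\in P_Y$; higher up it is postcomposition with $f$ on $\mathrm{hom}(T\Delta^k,A)$. Condition (2) of \Cref{neq} — that $A\to B$ is the pullback of $TX\to TY$ — is precisely what is needed to identify the degreewise fibers of $f_{\bullet}$ with the iterated mapping path spaces, and in particular to identify the degree-one fiber with $P_f$.

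The pullback $f_{\bullet}$ gives a morphism $H^{\bullet}(\Pi_{\infty}^{\bullet}(B),\mathcal{O})\to H^{\bullet}(\Pi_{\infty}^{\bullet}(A),f_{\bullet}^{-1}\mathcal{O})$. Condition (1) of \Cref{neq} says that $t\circ\pi\colon P_f\to Y$ is a surjective submersion with $n$-connected fibers, and the hoped-for upgrade of this to a statement about all simplicial degrees would then be the hypothesis of \Cref{spectral theorem}. Applying that theorem yields that the pullback is an isomorphism up to degree $n$ and injective in degree $n+1$; combined with the identification from the first step (and its analogue for $B$), this gives $H^{k}(A,\mathcal{O})\cong H^{k}(B,\mathcal{O})$ for $k\le n$, with injectivity in degree $n+1$. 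Note that the conjecture as stated drops the degree restriction; I would expect the honest conclusion to be the bounded one, matching the pattern of every other van Est theorem in the thesis, with the unrestricted equality only for the $n=\infty$ case (genuine Morita equivalence).

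The main obstacle is the assumed $TP$-action on $\mathfrak{g}$: without a precise definition of this action, step one does not run, because one has no way to transfer the coefficient sheaf $\mathcal{O}$ between the Chevalley--Eilenberg and simplicial models. A related difficulty is that $P_X$ is only a Banach manifold, so the degreewise submersion-plus-$n$-connectedness hypotheses of \Cref{spectral theorem} (which was stated for finite-dimensional simplicial manifolds) must be verified in an infinite-dimensional setting, or else one must first replace $\Pi_{\infty}^{\bullet}(A)$ by a finite-dimensional resolution (for instance by piecewise-$A$-path models, or by a suitable Kan-fibrant truncation $\Pi_{n}(A)$). Once these two technical points are handled, the rest is formal.
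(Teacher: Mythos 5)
Your proposal is a legitimate route, but it is genuinely different from the one the paper sketches. The paper avoids the simplicial model $\Pi_{\infty}^{\bullet}$ entirely: it treats $P_f$ as a roof, observing that it carries \emph{two} surjective submersions $\pi_X\colon P_f\to X$ (with contractible fibers, since $P_f$ deformation retracts onto $X$) and $\pi_Y=t\circ\pi\colon P_f\to Y$ (with $n$-connected fibers by hypothesis (1)), applies Crainic's pullback-invariance theorem (\Cref{crainicmor}) to each leg separately, and then reduces everything to the single claim $\pi_X^!A\cong\pi_Y^!B$ — which is where the assumed $TP$-action on the algebroid enters, since $A=f|_X^!B$ by hypothesis (2). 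That route is shorter and leans only on an already-published finite-dimensional theorem (modulo its extension to Banach manifolds). Your route instead goes through the full nerve $\Pi_{\infty}^{\bullet}(A)$, a degreewise-fibered simplicial map, and \Cref{spectral theorem}; the price is that your first step — identifying $H^{\bullet}(A,\mathcal{O})$ with the cohomology of $\Pi_{\infty}^{\bullet}(A)$ via a fiberwise de Rham resolution — is itself a substantial unproven claim, arguably at least as hard as the conjecture, whereas the payoff is that it would simultaneously establish the $\Pi_{\infty}$ picture that Part 3 only speculates about. Both sketches hinge on the same two technical inputs (the $TP$-action and an infinite-dimensional version of a finite-dimensional comparison theorem), so neither closes the conjecture; your observation that the honest conclusion should be degree-bounded (isomorphism up to degree $n$, injectivity in degree $n+1$) rather than an unrestricted equality is correct and applies equally to the paper's own statement.
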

\begin{proof}(\textbf{idea})
The idea is the following: $P_f$ comes with two natural maps $\pi_X\,,\pi_Y$ to $X\,,Y\,,$ respectively ($\pi_Y$ is what we call $t\circ\pi$ in the definition above, and $\pi_X$ is the map coming from the fact that $P_f=X\times_Y P\,.$) These maps are surjective submersions with $n$-connected fibers, so by~\Cref{crainicmor} (assuming it holds in the appropriate infinite dimensinal setting), the result follows if we can show that $\pi_X^!A\cong \pi_Y^! B\,.$ Note that, by assumption, $A=f\vert_X^!B\,,$ and it follows that an ``action" of $TP$ on $\mathfrak{g}$ would define an isomorphism $\pi_X^!A\cong \pi_Y^! B\,.$ 
\begin{comment}\vspace{3mm}\\Now the main result used in the proof of this isomorphism is the following: recall that given a Lie groupoid $G\,,$ there is a natural action of $TG$ on $\mathfrak{g}\,,$ and similarly, after choosing a $TY$-connection $\nabla$ on $B$ we get an action of $TP$ on $B\,.$ To see this, we will first show how to embed $B\xhookrightarrow{} TP\,.$ Fix a $TY$-connection $\nabla$ and choose a vector $V\in B$ over $y\in Y\,.$ We can use the exponential map to get a $B$-path $b(t)$ (defined for all small enough $t$) such that $b(0)=V\,.$ The family of $B$-paths $b_s(t)=sb(st)$ then define a vector in $TP$ such that $b_0(t)=0\,,\mathbf{s}(b_s(0))=y\,,$ eg. it $P$ were a groupoid $b_s(t)$ would be a vector in the source fiber over $y\,,$  (we may assume that $b_s(t)$ is defined for small enough $s\ge 0$ and $t\in [0,1]\,$).
\vspace{3mm}\\We will now define an action of $TP$ on $B\,.$ Suppose $\gamma_s(t)\in TP$ is such that $\mathbf{s}(\gamma_s(t))=\alpha(V)\,.$ Now define $\bar{\gamma}(t):=-\gamma(-t)\,,$ ie. the ``inverse" $B$-path. We then get a vector in $TP$ defined as follows: 
\begin{equation}
    b'_s(t)=\big(\gamma_s\circ b_s \circ \bar{\gamma_0}\big) (t)
\end{equation}
Now consider $b'_s(1)\,.$ Using $\nabla\,,$
\end{comment}
\end{proof}
Under the assumption that these definition are correct, and due to the infinite dimensional nature of $\Pi_{\infty}(A)\,,$ we believe it would be natural to generalize~\Cref{mlf} to allow the space $Z$ to be a (paracompact, Hausdorff) Banach manifold, such that the fibers of both maps are $n$-connected, and such that $\pi_X^!A\cong \pi_Y^!B\,.$ We believe that this would also generalize the definitions of equivalence given above, in which case it would be a theorem that a map satisfying the conditions of~\Cref{neq} would be this more general $n$-equivalence. Hence we give the following, more general definition as well:
\begin{definition}(more general)
Let $A\to X\,, B\to Y$ be Lie algebroids. We say $A$ and $B$ are $n$-equivalent if there is a (paracompact, Hausdorff) Banach manifold\footnote{Or perhaps, even more generally, a diffeological space.} $Z$ with surjective submersions $\pi_X:Z\to X\,,\pi_Y:Z\to Y\,,$ both with $n$-connected fibers, such that $\pi_X^!A\cong \pi_Y^!B\,.$ For $n=\infty$ we call this a Morita equivalence.
\end{definition}
\vspace{4mm}Now let's connect all of this to~\Cref{morgl}. Recall that we think of a Lie algebroid $A\to Y$ as being equivalent to $\Pi_{\infty}(A)\,,$ for which we can take quotients (ie. truncations) to get different groupoids $\Pi_n(A)\,.$ Now from this perspective, a Morita equivalence between $A\to Y$ and $B\to X$ should correspond to a Morita equivalence between $\Pi_{\infty}(A)$ and $\Pi_{\infty}(B)\,,$ and taking truncations should preserve this equivalence. Now if $f:Y\to X$ is a surjective submersion with contractible fibers and $A\cong f^!B\,,$ then $A$ and $B$ should be Morita equivalent. In fact,~\Cref{morgl} tells us that, indeed, in this context $\Pi_1(A)$ is Morita equivalent to $\Pi_1(B)\,.$ More generally, since the fibers in the aformentioned proposition are only required to be simply connected, this result suggests that $1$-equivalences between $A$ and $B$ should integrate to Morita equivalences of $\Pi_1(A)$ and $\Pi_1(B)\,.$ This makes sense as there are no higher morphisms.
\begin{remark}
If the comments made in this section are correct, ie. that $A$ is equivalent to $\Pi_{\infty}(A)\,,$ it would imply that van Est is, in a sense, really about the descent of cohomology and there is perhaps a much more general theorem laying around.
\end{remark}
\section{Necessary Conditions for a Morphism to be a Morita Equivalence and Examples}
Now let's determine a necessary condition on $f$ so that the fibers of $t:P_f\to Y$ are weakly contractible. Since the codomain of the map $P_f\to Y$ is finite dimensional the constant rank theorem holds, by theorem F in~\cite{glock}.
Therefore the map $P_f\to Y$ is locally fibered (or what others call a topological submersion), and by Corollary 13 in~\cite{gael}, if such a map has weakly contractible fibers then it is a Serre fibration, hence it also is a weak homotopy equivalence. Now the canonical inclusion $\iota:X\to P_f$ is a deformation retract, and $f=t\circ\pi\circ\iota\,,$ therefore $t\circ\pi$ is a weak equivalence if and only if $f$ is a weak equivalence, which is equivalent to $f$ being a homotopy equivalence by Whitehead's theorem. Therefore, a necessary condition for the map $P_f\to Y$ to have (weakly) contractible fibers is that $f$ is a homotopy equivalence.
\begin{remark}
As a partial converse to the discussion above, if $f$ is a homotopy equivalence and if $P_f\to Y$ is a Serre fibration, then $P_f\to Y$ has (weakly) contractible fibers.
\end{remark}
\vspace{3mm}Now let's look at two extreme examples, one where the Lie algebroid is transitive and the other where the anchor map is $0:$
\begin{exmp}
Let $f:X\to Y$ be a map of manifolds, where we take the Lie algebroid on $Y$ given by $TY\to Y\,.$ Then the surjective submersion property of conditions 1 is satisfied and $TX$ satisfies condition 2. The condition on the fibers is equivalent to the induced map $f:X\to Y$ being a homotopy equivalence. Therefore, we see that a map $TX\to TY$ is a Morita equivalence if and only if the induced map $X\to Y$ is a homotopy equivalence.
\end{exmp}
\begin{exmp}
Consider a surjective submersion $f:X\to Y$ of manifolds, where we take the Lie algebroid on $Y$ to be the $0$-vector bundle, denoted $0_Y\to Y\,.$ We have that $f^!0_Y$ satisfies condition 2. We also have that $P_f=X$ and so $P_f\to Y$ is a surjective submersion with (weakly) contractible fibers if and only if $f$ has contractible fibers. That is, if the induced map $f:X\to Y$ is a surjective submersion, then the morphism $f^!0_Y\to Y$ is a Morita equivalence if and only if $f$ has contractible fibers.
\end{exmp}
\vspace{3mm}Assuming our definition of Morita equivalences of Lie algebroids is correct, it would offer an explanation for why the cohomology of certain sheaves are invariant under homotopy equivalence and others aren't. For example, the sheaf cohomology of constant sheaves is invariant under homotopy equivalence, but the sheaf cohomology of $\mathcal{O}$ is not. From the point of view of Lie algebroids, and restricting to tangent bundles, this can be explained by the fact that it should really the Lie algebroid cohomology which is invariant, and the Lie algebroid cohomology of a constant sheaf happens to just be the sheaf cohomology of the constant sheaf.\footnote{See~\Cref{Chevalley} of Part 1 for the definition of Lie algebroid cohomology that we are using.} However, the Lie algebroid cohomology of $\mathcal{O}$ (with respect to the tangent bundle) is the de Rham cohomology of the underlying manifold, which is indeed a homotopy invariant.
\vspace{3mm}\\On the other hand, $H^*(X,\mathcal{O})$ is the Lie algebroid cohomology of the $0$ vector bundle over $X\,.$ As mentioned earlier, two Lie algebroids consisting of the $0$ vector bundles over $X$ and $Y$ are Morita equivalent if and only if $X$ and $Y$ are diffeomorphic. Certainly, the cohomology of $\mathcal{O}$ is invariant under diffeomorphisms.
\begin{remark}
\begin{enumerate}
    \item Is there a connection between the Morita invariance of Lie algebroid cohomology and the problem of invariance of polarization in geometric quantization? Classical geometric quantization of a symplectic manifold $(M,\omega)$ involves first choosing a line bundle with connection whose curvature is $\omega$ (such a choice isn't unique, but if $M$ is simply connected it is unique up to isomorphism). Once this is done, one must choose a Lagrangian polarization — this in particular equips $M$ with a Lie algebroid and representation. One would normally proceed to then take the quantization to be the degree $0$ cohomology of this representation, however if there is a nontrivial Bohr-Sommerfeld condition this may result in the $0$ vector space, and so one must use ``discontinuous" global sections, given by sections over the Bohr-Sommerfeld leaves. According to~\cite{snia} (chapter 1, or see Theorem 1.2 in~\cite{eva}), the dimension of the cohomology of the representation is (under certain conditions) the number of Bohr-Sommerfeld leaves. Different polarizations can lead to different results, however they do often give the same results.
\vspace{3mm}\\For example, we can quantize $T^*S^1$ and there are two natural choices of polarization: one has quotient space $S^1\,,$ and the other has quotient space $\mathbb{R}\,.$ Computing the foliated cohomologies gives isomorphic results, however the interesting thing is that the isomorphism isn't degree preserving. How does this happen?
\item Can a deformation quantization of a Poisson manifold $(M,\Lambda)$ be obtained from geometric quantization of $\Pi_{\infty}(\Lambda)\,,$ using a generalization of the quantization procedure (due to Hawkins) found in~\cite{eli}? Indeed, the formula for deformation quantization via the path integral approach (due to Cattaneo and Felder, found on the first page of~\cite{catt}) is formally similar to the formula for the twisted convolution product, eg. page 22 in~\cite{eli}. In the former, it looks like the integral is, roughly, over morphisms $T\Delta\to T^*M\,,$ and functions $f:M\to\mathbb{R}$ give ``operators" by pulling them back to $\Pi_{\infty}(\Lambda)$ via the source map. If this is true, it would suggest that the deformation quantization can be obtained via geometric quantization of $(M,\Lambda)\,,$ using the source simply connected groupoid, if its source fibers are contractible. This is consistent with examples 6.2, 6.5 in~\cite{eli}.
\end{enumerate}
\end{remark}
\chapter{Higher Morphisms, Connections with van Est and a No-Go Result About Lie Algebroids}
In the previous chapter we defined Morita equivalences and $n$-equivalences of Lie algebroids, and we conjecture that, after localizing with respect to these $n$-equivalences, we get an equivalence between two categories, one groupoid-like and one algebroid-like, giving a version of Grothendieck's homotopy hypothesis (see~\cite{groth}) for Lie algebroids (or a smooth homotopy hypothesis). Recall that the homotopy hypothesis essentially says that topological spaces, up to weak equivalence, are equivalent to (discrete) $\infty$-groupoids.\footnote{If we use Kan complexes to model $\infty$-groupoids then this becomes a theorem. See~\cite{Quillen}.} The connection between the homotopy hypothesis and the one we will state arises fact that, under our definition of Morita equivalences, two tangent bundles are Morita equivalent if and only if their underlying manifolds are weakly equivalent.\footnote{Furthemore, analogously to how the smooth fundamental groupoid $\Pi_1(X)$ of a space is equivalent to the discrete version, we expect the same result to hold for $\Pi_{\infty}(X)\,.$} In addition, for the case $n=1$ we obtain a generalization of Lie's second and third theorems.
\vspace{3mm}\\To support the claims of the preceding paragraph, we state following result, due to Bloch and Smith, which is a generalization of the Riemann-Hilbert correspondence from $\Pi_{1}(X)$ to $\Pi_{\infty}(X)$\footnote{Recall that the classical Riemann-Hilbert correspondence states that their is an equivalence of categories between flat bundles on a space $X$ and representations of $\Pi_1(X)\,.$}:
\begin{theorem}(see~\cite{block}) 
Let $X$ be a manifold. There is an $A_{\infty}$-quasi-equivalence 
\begin{equation}
    \mathcal{RH}:\mathcal{P}_{\mathcal{A}}\to \textit{Loc}^{\mathcal{C}}(\Pi_{\infty}(X))\,.
\end{equation}
Here, $\mathcal{P}_{\mathcal{A}}$ is the dg-category of graded bundles on $X$ with a flat $\mathbb{Z}$-graded connection, and $\textit{Loc}^{\mathcal{C}}(\Pi_{\infty}(X))$ is the dg-category of $\infty$-local systems on $X\,.$
\end{theorem}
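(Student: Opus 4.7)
The plan is to construct the functor $\mathcal{RH}$ explicitly via higher parallel transport (Chen's iterated integrals), and then to verify both quasi-full-faithfulness and quasi-essential-surjectivity. Given an object $(E,\mathbb{D})\in\mathcal{P}_{\mathcal{A}}$, decompose the flat $\mathbb{Z}$-graded connection as $\mathbb{D}=d+A$, where $A=A_0+A_1+A_2+\cdots$ with $A_i\in\Omega^i(X,\mathrm{End}^{1-i}(E))$, so that $\mathbb{D}^2=0$ becomes the (generalized) Maurer--Cartan equation $dA+A\wedge A=0$ unpacked into bidegrees. To a smooth $n$-simplex $\sigma:\Delta^n\to X$, the functor $\mathcal{RH}$ should assign the element of $\mathrm{Hom}(E_{\sigma(e_0)},E_{\sigma(e_n)})$ obtained by summing Chen's iterated integrals $\int_{\Delta^n}\sigma^*A_{i_1}\wedge\cdots\wedge\sigma^*A_{i_k}$ over partitions, and similarly for morphisms of $(E,\mathbb{D})$'s.

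First, I would set up the dg-categorical targets carefully: objects of $\mathcal{P}_{\mathcal{A}}$ are pairs $(E,\mathbb{D})$ as above, with $\mathrm{Hom}((E,\mathbb{D}_E),(F,\mathbb{D}_F))=\Omega^\bullet(X,\mathrm{Hom}(E,F))$ and differential $[\mathbb{D},-]$; an $\infty$-local system is an $A_\infty$-functor from the smooth singular $\infty$-groupoid $\Pi_\infty(X)$ to the dg-category $\mathcal{C}$ of perfect complexes. The first verification is that the iterated-integral formula defines an $A_\infty$-functor; this reduces, via Stokes' theorem applied on $\Delta^n$, to the Maurer--Cartan equation for $A$, producing precisely the $A_\infty$-relations indexed by faces and degenerations of $\Delta^n$. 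This is the standard computation behind Igusa's higher holonomy and should go through cleanly.

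Second, for quasi-full-faithfulness one must show that the map of hom-complexes
\[
(\Omega^\bullet(X,\mathrm{Hom}(E,F)),\,[\mathbb{D},-])\;\longrightarrow\;\mathrm{Hom}_{\mathrm{Loc}^{\mathcal{C}}}(\mathcal{RH}(E),\mathcal{RH}(F))
\]
is a quasi-isomorphism. I would filter both sides by form-degree / simplex-dimension, reducing on the associated graded to the classical de Rham-to-singular comparison $\Omega^\bullet(X,V)\to C^\bullet_{\mathrm{sing}}(X,V)$ for the (possibly twisted) local system $V=\mathrm{Hom}(E,F)$, which is a quasi-isomorphism by de Rham's theorem. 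A spectral sequence argument then upgrades this to the full twisted statement.

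The hard part will be quasi-essential-surjectivity: showing every $\infty$-local system arises, up to $A_\infty$-quasi-isomorphism, from some flat graded connection. This is the genuine ``higher integration'' direction and is the main obstacle. My plan here is twofold: (i) use a good cover of $X$ and partitions of unity to build, locally, a representing graded bundle (trivial) and connection from the local data of the $\infty$-local system, then (ii) glue these local models by constructing the higher $A_i$'s inductively in form-degree, at each stage using that the obstruction lives in a cohomology group which vanishes by Poincaré's lemma on each simplex (the acyclicity of $T\Delta^n$ acting on $C^\infty(\Delta^n)$). Equivalently, one can appeal to a model-categorical framework: equip both dg-categories with suitable model structures and check that $\mathcal{RH}$ is part of a Quillen equivalence, with the inverse given by a Sullivan-type ``realization'' sending an $\infty$-local system to its associated dg-module over $\Omega^\bullet(X)$. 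Either route requires the substantial technical work that distinguishes this theorem from its classical $\Pi_1$ counterpart.
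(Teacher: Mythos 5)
This statement is not proved in the thesis at all: it is quoted verbatim from Block and Smith's paper \emph{A Riemann--Hilbert correspondence for infinity local systems} and cited as external support for the surrounding conjectural discussion of Morita equivalence of Lie algebroids. There is therefore no in-paper argument to compare your proposal against; the only fair comparison is with the cited source, and your sketch does follow its general architecture (iterated-integral/higher-holonomy construction of $\mathcal{RH}$ in the style of Chen and Igusa, Stokes' theorem on $\Delta^n$ yielding the $A_\infty$-relations from $\mathbb{D}^2=0$, a filtration argument for quasi-full-faithfulness, and a good-cover gluing for essential surjectivity). As an outline it is the right strategy.

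Two steps are thinner than you acknowledge. First, in the full-faithfulness argument, filtering $(\Omega^\bullet(X,\mathrm{Hom}(E,F)),[\mathbb{D},-])$ by form degree does \emph{not} reduce the associated graded to the de Rham--singular comparison for a local system: the degree-preserving part of the induced superconnection on $\mathrm{Hom}(E,F)$ is the fiberwise differential $[A_0,-]$, whose cohomology sheaves need not be locally free, so ``$V=\mathrm{Hom}(E,F)$'' is not a local system and de Rham's theorem does not directly apply. One has to either pass to a Čech--de Rham bicomplex over a good cover (where both hom-complexes localize to the dg-category of complexes of vector spaces on contractible opens) or run a more careful double-complex comparison; this is where the real work in the cited proof lives. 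Second, for essential surjectivity your route (i) names the obstruction groups only loosely (``Poincaré lemma on each simplex''); the inductive construction of the $A_i$ must be organized so that the obstruction at each stage lies in the cohomology of an acyclic complex built from the cover and the simplicial structure, and the partition-of-unity smoothing must be checked to preserve the Maurer--Cartan equation. Neither point is fatal -- both are resolved in the literature you are implicitly following -- but as written they are gaps rather than proofs.
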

\vspace{1mm}Before getting into an algebroid version of the homotopy hypothesis, we (in particular) aim to do two things: describe how the van Est isomorphism theorem is a generalization of Lie's second theorem, and describe how gerbes can be thought of as a generalized morphisms (analogously to how principal bundles correspond to generalized morphisms). Much of what is said in this section may be known to experts. Some references include~\cite{ginot},~\cite{konrad},~\cite{Murray},~\cite{wockel}.
\vspace{3mm}\\We've already seen that for coefficients in an abelian Lie algebra, the van Est isomorphism theorem generalizes Lie's second theorem. For example, given a morphism $f:\mathfrak{g}\to\mathbb{C}\,,$ both Lie's second theorem and the van Est isomorphism theorem tell us that this morphism integrates to a morphism $G\to\mathbb{C}$ (or any other Lie group integating the Lie algebra $\mathbb{C})\,,$ given that $G$ is source simply connected. 
\vspace{3mm}\\Of course, the van Est isomorphism theorem tells us more, namely it says that a degree $n$-cohomology class for the Lie algebroid integrates to the Lie groupoid as long as the groupoid has $n$-connected source fibers. Our perspective is that the van Est theorem should generalize to the higher integrations $\Pi_n(\mathfrak{g})\,,$\footnote{We won't attempt to define this here, but it should be an $n$-truncation of $\Pi_{\infty}(\mathfrak{g})\,.$ See Theorem 1.2 in~\cite{zhuc}.} where the van Est map should be an isomorphism up to degree $n\,,$ which is why when $n=\infty$ we should get an isomorphism in all degrees.
\vspace{4mm}\\Towards the end we will define generalized morphisms between Lie algebroids and Lie groupoids, which will give us another interpretation of the van Est map.  We then argue that if $G\rightrightarrows G^0$ is source $n$-connected, then $G\rightrightarrows G^0$ is ``$n$-equivalent" to $\mathfrak{g}\to G^0\,.$
\vspace{3mm}\\Once again, Part 3 of this thesis is largely speculative (and again, significant details need to be filled in for completion).
\section{Gerbes as Generalized Morphisms}
Let's flesh out the way in which van Est generalizes Lie's second theorem. Let's think about a manifold $X$ for a moment. Consider the Lie group $\mathbb{C}^*\,.$ Classes in $H^0(X,\mathcal{O}^*)$ correspond to global functions $X\to \mathbb{C}^*\,,$ where here $\mathbb{C}^*$ is thought of as a manifold, not as a group. Classes in $H^1(X,\mathcal{O}^*)$ correspond to principal $\mathbb{C}^*$-bundles over $X\,.$ Now after embedding manifolds into groupoids (up to Morita equivalence, equivalently stacks) we learn that principal $\mathbb{C}^*$-bundles are the same as generalized morphisms $X\to \mathbb{C}^*\,,$ where now $\mathbb{C}^*$ is thought of as a group. A natural question is then: what happens when we embed manifolds into double groupoids (or 2-groupoids)? We will now argue that gerbes give higher generalized morphisms.
\vspace{3mm}\\Consider a class in $H^2(X,\mathcal{O}^*)\,.$ We will think of this geometrically as a bundle gerbe (see~\cite{Murray}), which can be described as follows: we have a surjective submersion $\pi:Y\to X\,,$ together with a central $\mathbb{C}^*$-extension of $Y\times_X Y\rightrightarrows Y\,.$ We will denote this extensions by $P\rightrightarrows Y\,,$ ie. we have the following sequence of groupoids:
\begin{equation}
\begin{tikzcd}
Y\times\mathbb{C}^* \arrow[d, shift right] \arrow[d, shift left] \arrow[r, shift right=7] & P \arrow[d, shift right] \arrow[d, shift left] \arrow[r, shift right=7] & Y\times_X Y \arrow[d, shift right] \arrow[d, shift left] \\
Y                                                                                         & Y                                                                       & Y                                                       
\end{tikzcd}
\end{equation}
Here, the groupoid on the left is just the product of the groupoids $Y\rightrightarrows Y$ with $\mathbb{C}^*\rightrightarrows *\,.$ In particular, $P\to Y\times_X Y$ is a principal $\mathbb{C}^*$-bundle. From this data, we can form the following double groupoid:
\begin{equation}
\begin{tikzcd}
P\rtimes\mathbb{C}^* \arrow[r, shift right] \arrow[r, shift left] \arrow[d, shift right] \arrow[d, shift left] & P \arrow[d, shift right] \arrow[d, shift left] \\
Y \arrow[r, shift right] \arrow[r, shift left]                                                                 & Y                                             
\end{tikzcd}    
\end{equation}
Here, the groupoid in the top row is the action groupoid, corresponding to the fact that $P$ is a principal $\mathbb{C}^*$-bundle over $Y\times_X Y\,,$ and the groupoid in the left column is the product of $P\rightrightarrows Y$ with $\mathbb{C}^*\rightrightarrows *\,.$ Now, this double groupoid is Morita equivalent\footnote{See~\Cref{mordo} for the definition.} to
\begin{equation}
    \begin{tikzcd}
Y\times_X Y \arrow[d, shift right] \arrow[d, shift left] \\
Y                                                       
\end{tikzcd}
\end{equation}
which is Morita equivalent to the manifold $X\,.$ On the other hand, we have a natural morphism 
\begin{equation}
    \begin{tikzcd}
P\rtimes\mathbb{C}^* \arrow[r, shift right] \arrow[r, shift left] \arrow[d, shift right] \arrow[d, shift left] & P \arrow[d, shift right] \arrow[d, shift left] \arrow[r, shift right=7] & \mathbb{C}^* \arrow[r, shift right] \arrow[r, shift left] \arrow[d, shift right] \arrow[d, shift left] & * \arrow[d, shift right] \arrow[d, shift left] \\
Y \arrow[r, shift right] \arrow[r, shift left]                                                                 & Y                                                                       & * \arrow[r, shift right] \arrow[r, shift left]                                                         & *                                             
\end{tikzcd}
\end{equation}
Therefore, using a $\mathbb{C}^*$-gerbe we get a generalized morphism with domain $X\,,$ ie.
\begin{equation}
    \begin{tikzcd}
X \arrow[d, shift right] \arrow[d, shift left] \arrow[r, shift right] \arrow[r, shift left] & X \arrow[d, shift right] \arrow[d, shift left] \arrow[r, shift right=7] & \mathbb{C}^* \arrow[r, shift right] \arrow[r, shift left] \arrow[d, shift right] \arrow[d, shift left] & * \arrow[d, shift right] \arrow[d, shift left] \\
X \arrow[r, shift right] \arrow[r, shift left]                                              & X                                                                       & * \arrow[r, shift right] \arrow[r, shift left]                                                         & *                                             
\end{tikzcd}
\end{equation}
Observe that we are no longer thinking of $\mathbb{C}^*$ as a group, we are thinking of it as a double groupoid. Equivalently, all of the double groupoids we have seen in this section are in one-to-one correspondence with strict 2-groupoids (see Example 3.7 in~\cite{mehtatang}), so rather than using double groupoids we can use strict 2-groupoids. This result is in line with the fact that a bundle gerbe can be thought of as a principal 2-bundle for a 2-group (see~\cite{ginot}), so we might have expected, a priori, that a gerbe can be thought of as a generalized morphisms into a 2-group.
\vspace{3mm}\\Let's look at another example: given a central extension $1\to A\to E\to G\to 1$ of Lie groups (which is described by a class in $H^2(G,A)$), we can form the following double groupoid: 
\begin{equation}
    \begin{tikzcd}
E\rtimes A \arrow[r, shift right] \arrow[r, shift left] \arrow[d, shift right] \arrow[d, shift left] & E \arrow[d, shift right] \arrow[d, shift left] \\
* \arrow[r, shift right] \arrow[r, shift left]                                                       & *                                             
\end{tikzcd}
\end{equation}
where the groupoid in the left column is again the product of the groups $E$ and $A\,.$ This double groupoid is Morita equivalent to 
\begin{equation}
    \begin{tikzcd}
G \arrow[d, shift right] \arrow[d, shift left] \\
*                                             
\end{tikzcd}
\end{equation}
and again, there is a natural morphism
\begin{equation}
    \begin{tikzcd}
E\rtimes A \arrow[r, shift right] \arrow[r, shift left] \arrow[d, shift right] \arrow[d, shift left] & E \arrow[d, shift right] \arrow[d, shift left] \arrow[r, shift right=7] & A \arrow[r, shift right] \arrow[r, shift left] \arrow[d, shift right] \arrow[d, shift left] & * \arrow[d, shift right] \arrow[d, shift left] \\
* \arrow[r, shift right] \arrow[r, shift left]                                                       & *                                                                       & * \arrow[r, shift right] \arrow[r, shift left]                                              & *                                             
\end{tikzcd}
\end{equation}
Therefore, once again we get a generalized morphism with domain $G\rightrightarrows *\,,$ ie,
\begin{equation}
    \begin{tikzcd}
G \arrow[d, shift right] \arrow[d, shift left] \arrow[r, shift right] \arrow[r, shift left] & G \arrow[d, shift right] \arrow[d, shift left] \arrow[r, shift right=7] & A \arrow[r, shift right] \arrow[r, shift left] \arrow[d, shift right] \arrow[d, shift left] & * \arrow[d, shift right] \arrow[d, shift left] \\
* \arrow[r, shift right] \arrow[r, shift left]                                              & *                                                                       & * \arrow[r, shift right] \arrow[r, shift left]                                              & *                                             
\end{tikzcd}
\end{equation}
Observe the pattern: given an abelian Lie group $A\rightrightarrows*\,,$ the cohomology of a groupoid $G\rightrightarrows G^0$ in degree $0$ corresponds to morphisms $G\to A[-1]\,,$ where by $A[-1]$ we mean the groupoid $A\rightrightarrows A\,.$ Cohomology in degree one corresponds to generalized morphisms from $G\to A\,,$ where now we are thinking of $A$ as a Lie group. Cohomology in degree 2 corresponds to generalized morphisms $G\to A[1]\,,$ where by $A[1]$ we mean the following double groupoid (which can equivalently be thought of as a strict 2-groupoid, by example 3.7 in~\cite{mehtatang}):
\begin{equation}
    \begin{tikzcd}
A \arrow[d, shift right] \arrow[d, shift left] \arrow[r, shift right] \arrow[r, shift left] & * \arrow[d, shift right] \arrow[d, shift left] \\
* \arrow[r, shift right] \arrow[r, shift left]                                              & *                                             
\end{tikzcd}
\end{equation}
Note that, the 2-groupoid corresponding to this double groupoid is $*$ in degrees 0 and 1, and $A$ in degree 2 (see~\cite{mehtatang}). We might denote these morphisms, suggestively, by $\text{Hom}^{\bullet}(G,A)$ (where we shift degrees so that $\text{Hom}^{0}(G,A)$ denotes morphisms into $A\rightrightarrows A$).
\vspace{3mm}\\We believe that this phenomenon continues to occur for general cohomology classes after embedding manifolds (or more generally, groupoids) into $n$-fold groupoids\footnote{by $n$-fold groupoids we mean the degree $n$-version of double groupoids, ie. groupoids, double groupoids, triple groupoids, etc.} (or  n-groupoids). If we take $n=\infty\,,$ we should be able to do this for all cohomology classes simultaneously. This would be an $\infty$-category.
\section{Perspectives on van Est and Lie's Second Theorem: The Homotopy Hypothesis}
Now let's connect the discussion in the previous section to van Est and Lie's second theorem. The aformentioned generalized morphisms should be naturally graded, eg. for a manifold $X\,,$ a function has degree $0\,,$ a principal bundle has degree $1\,,$ a gerbe has degree $2\,,$ etc. Equivalently, the generalized morphism has degree $n$ if it maps into an $n$-groupoid. We should be able to think about Lie algebroid cohomology classes in a similar way (eg. associated to a class in $H^2(\mathfrak{g},\mathcal{O})$ is a central extension of Lie algebroids, and associated to such an extension is a Lie $2$-algebroid which is equivalent to $\mathfrak{g}\,.$ This may be related to Example 6.16 in~\cite{Nuiten}). Then Lie's theorem would be about degree $1$ morphisms only, whereas van Est would be about morphisms in all degrees (for coefficients in a representation, for example). 
\vspace{3mm}\\Consider a Lie algebroid $A$ and some $\infty$-groupoid integrating $A$ (eg. $\Pi_n(A)\,,$ for some $n$). A more general van Est isomorphism theorem should imply that, in particular, a degree $n$ generalized morphism with domain $A$ integrates to $\Pi_n(A)\,.$
\vspace{3mm}\\From this point of view, the true mathematical object corresponding to a Lie algebroid $A$ is not a groupoid or source simply connected groupoid, but rather a source $\infty$-connected $\infty$-groupoid, which can be taken to be $\Pi_{\infty}(A)\,.$ The van Est isomorphism theorem would prove that a generalized morphism with domain $A$ (and appropriate codomain) is in one-to-one correspondence with generalized morphisms with domain $\Pi_{\infty}(A)\,,$ where the morphisms can be of any degree.
\vspace{3mm}\\Of course, this raises two questions:
\begin{enumerate}
    \item How can we describe nonabelian gerbes as generalized morphisms? (Presumably by using their descripton in~\cite{bundle}).
    \item Is there a van Est map and isomorphism theorem for these generalized morphisms taking values in general Lie groupoids, rather than abelian Lie groups? Lie's second theorem would suggest this possibility.
\end{enumerate}
Ultimately, a more general van Est isomorphism theorem should imply that there is an equivalence between two appropriate $\infty$-categories, one Lie groupoid-like and one Lie algebroid-like. There should be a Lie algebroid homotopy hypothesis, or smooth homotopy hypothesis, which, in spirit, should imply:\footnote{Technically, we are conjecturing that this is a meaningful statement that expresses truth. We give a related conjecture in~\ref{conjf}.}

\begin{conjecture}
Lie $n$-algebroids up to $n$-equivalence are equivalent to Lie $n$-groupoids which are source $n$-connected\footnote{We are being vague about what source $n$-connected means, but it probably means that the (groupoid) fibers of the inclusion map of the base into the higher groupoid are $n$-connected.} (up to equivalence).\footnote{Due to the failure of Lie's third theorem, in order for such a thing to be true in general we need to use a more general notion of Lie groupoids, eg. see~\cite{zhuc}.} In particular, generalized $n$-morphisms of Lie $n$-algebroids integrate to generalized $n$-morphisms of Lie $n$-groupoids.
\end{conjecture}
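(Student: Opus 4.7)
The plan is to construct a pair of adjoint functors between a suitable $(\infty,1)$-localization of Lie $n$-algebroids at $n$-equivalences (as defined in Definition~\ref{neq}) and a localization of source $n$-connected Lie $n$-groupoids at Morita equivalences, and to show that this adjunction is an equivalence. In one direction the functor is truncated integration, sending a Lie $n$-algebroid $A$ to $\Pi_n(A)$, where $\Pi_\infty(A)^{(i)}=\mathrm{hom}(T\Delta^i,A)$ and $\Pi_n$ is the $n$-truncation discussed in the excerpt. In the reverse direction we would send a source $n$-connected Lie $n$-groupoid $G$ to its higher Lie algebroid, obtained by taking fiberwise derivatives of the source map at the identity bisection (degreewise). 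The bulk of the proof would consist of verifying that both functors send $n$-equivalences to $n$-equivalences and that the unit and counit are $n$-equivalences in the sense of the localized categories.

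First I would establish the functoriality statement at the level of generalized morphisms. An $n$-generalized morphism $A \rightsquigarrow B$ is represented by a roof whose left leg is an $n$-equivalence, so by the first step one obtains an induced roof $\Pi_n(A) \rightsquigarrow \Pi_n(B)$ whose left leg is a Morita map. To check this is well-defined on equivalence classes of roofs, one uses that $\Pi_n$ preserves the strong pullbacks appearing in the composition of anafunctors, at least up to $n$-equivalence; this in turn reduces to the statement that target fibers of $\Pi_n$ applied to a pullback Lie algebroid are $n$-connected, which is essentially what the discussion following Definition~\ref{MEA} conjectures about $f^!\Pi_\infty \cong \Pi_\infty f^!$ up to degree $n$. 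Second, I would show the counit $\Pi_n(\mathrm{Lie}(G)) \to G$ is an $n$-equivalence using the canonical morphism $\mathcal{I}:\mathfrak{g}\to G$ promised in the Brief Summary together with the van Est isomorphism theorem of Part 2, which provides the bijective correspondence of degree $\le n$ invariants (cohomology with coefficients in any module). Third, I would show the unit $A \to \mathrm{Lie}(\Pi_n(A))$ is an $n$-equivalence using the fact that derivatives of $A$-paths at the identity recover $A$, together with the local description of the simplicial manifold $\Pi_\infty(A)$.

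The main obstacle, and the reason the statement is posed as a conjecture rather than a theorem, is twofold. First, $\Pi_n(A)$ does not in general exist as a finite-dimensional Lie $n$-groupoid: already for $n=1$ one encounters the failure of Lie's third theorem, so one must work in a category of Lie $\infty$-groupoids in the sense of Henriques or Zhu (modeled by higher stacks with Kan-type conditions), and even establishing that $\Pi_n$ lands in the right category up to equivalence requires substantial technical work. Second, the definition of $n$-equivalence of Lie algebroids itself involves the infinite-dimensional Banach manifold $P_f$ of $A$-paths, and the proofs that $\Pi_n$ sends $n$-equivalences in the sense of Definition~\ref{neq} to Morita maps of higher groupoids would seem to require the more general notion of $n$-equivalence floated after that definition, allowing Banach or diffeological ambient spaces. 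Extending Morita invariance of LA-groupoid cohomology and of representations up to homotopy beyond the finite-dimensional setting is the technical crux.

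Assuming these foundational issues are resolved, the final step is the ``in particular'' clause concerning integration of generalized morphisms, which is then a formal consequence of the equivalence together with the fact that the integration functor takes the roof representing a generalized $n$-morphism of Lie $n$-algebroids to a roof of source $n$-connected Lie $n$-groupoids whose left leg is Morita; this in turn specializes, in degree $n=1$ for coefficients in an abelian Lie group and for ordinary groupoids, to Theorem~\ref{van Est image} of Part 1 and to the more general Theorem~\ref{ve stacks} of Part 2, providing a strong consistency check that the conjectured equivalence is the correct one.
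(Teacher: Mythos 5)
The statement you are addressing is presented in the paper as a conjecture with no proof: the author explicitly footnotes that he is ``conjecturing that this is a meaningful statement that expresses truth,'' and the only rigorously established fragment in the thesis is the unary case (\Cref{LALG}: a source $n$-connected Lie groupoid is $n$-equivalent to its Lie algebroid in the category of LA-groupoids, via the explicit roof through $\mathfrak{g}\ltimes G\rtimes G$). Your proposal is therefore not being measured against an existing argument, and as a strategy it is broadly consistent with the author's framework: integration via $\Pi_n$, differentiation as the putative inverse, and the ``in particular'' clause reduced to functoriality on roofs. You also correctly identify the two foundational obstructions (non-existence of finite-dimensional truncated integrations, and the infinite-dimensional path spaces entering the very definition of $n$-equivalence). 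But a plan that defers precisely these points is not a proof, and the conjecture remains a conjecture after your proposal.

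Two specific gaps deserve naming. First, your argument that the counit $\Pi_n(\mathrm{Lie}(G))\to G$ is an $n$-equivalence invokes the van Est isomorphism theorem as an ingredient; this inverts the paper's logic. In the thesis, the van Est isomorphism is presented as a \emph{consequence} of the expected invariance of LA-groupoid cohomology under $n$-equivalence (see \Cref{CAG} and the discussion after \Cref{LALG}), and an isomorphism on cohomology with coefficients in all modules up to degree $n$ is a necessary but not obviously sufficient condition for $n$-equivalence as defined in \Cref{neq}, which is a connectivity condition on fibers of a path-space projection. The $n$-equivalence $\mathfrak{g}\to G$ is established in the paper directly from the roof~\ref{mid}, using that $\mathfrak{g}\ltimes G$ is $n$-equivalent to $0_{G^0}$ when the source fibers are $n$-connected; your counit step should be routed through that construction rather than through van Est. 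Second, your well-definedness argument for the induced map on roofs rests on $f^!\Pi_\infty\cong\Pi_\infty f^!$ up to degree $n$, which the paper itself flags as the central unproven compatibility (``the pullback of the source simply connected integration of a Lie algebroid is not necessarily equivalent to the source simply connected integration of the pullback''); citing it as if it were available makes the proposal circular at exactly the point where the conjecture is hardest.
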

The functor inducing th
e equivalence should be Lie differentiation (see~\cite{bona},~\cite{Li},~\cite{jet} for some details about Lie $n$-algebroids/groupoids and differentiaton, see~\cite{zhuc2} for a notion of Morita equivalence of higher groupoids). This should imply that a Lie algebroid $A\,,$ up to $n$-equivalence, is essentially indistinguishable from $\Pi_n(A)\,,$ ``up to degree $n$". For example, since degree $k$ cohomology classes can be viewed as morphisms into a $k$-algebroid/groupoid, this conjecture should imply that they have the same cohomology up to degree $n\,.$
\vspace{3mm}\\When restrcited to Lie algebroids given by tangent bundles, this essentially says that a space $X\,,$ up to $n$-equivalence, is equivalent to $\Pi_n(X)\,.$ Very roughly speaking, we have the following sequence of implications:
\begin{equation}
    \begin{tikzcd}
\text{LA Homotopy Hypothesis} \arrow[r, Rightarrow] & \text{van Est} \arrow[r, Rightarrow] & \text{Lie \RNum{2}}
\end{tikzcd}
\end{equation}
Now from this point of view, why do only degree $1$ cohomology classes (including representions) necessarily integrate to the source simply connected groupoid? It is because higher cohomology classes correspond to morphisms into higher Lie groupoids/algebroids,\footnote{At the algebroid level we should get this correspondence after associating to a cohomology class a higher Lie algebroid} which means that by considering higher cohomology classes we are essentially leaving the category of Lie groupoids/algebroids, and since Lie \RNum{3} is only about $1$-groupoids, one might guess that we don't get a correspondendce between higher degree cohomology groups.
\begin{remark}\label{imrem}
As a final remark of this section we wish to point out that, under the assumed equivalence of $A$ with $\Pi_{\infty}(A)\,,$ the van Est map can be viewed as a pullback map rather than as a differentiation map. Furthermore, integrating classes from Lie algebroid cohomology to Lie groupoid cohomology can be viewed as a question of descent. That is, pullback $\cong$ differentiation, integration $\cong$ descent. 
\vspace{3mm}\\We can already see that this is true in the case of a connected abelian Lie group $G\,,$ since its Lie algebra $\mathfrak{g}$ is naturally identified with the simply connected integation of $\mathfrak{g}\,,$ which is the universal cover of $G$ (since $\mathfrak{g}$ is contractible it should be equivalent to $\Pi_{\infty}(\mathfrak{g})$\footnote{We hope that we aren't confusing the reader too much. We are thinking of $\mathfrak{g}$ in two different ways: 1. as the Lie algebra of $G$ and 2. as a simply connected abelian group.}). In this case we can readily observe that pullback $\cong$ differentiation, integration $\cong$ descent. For example, differentiating a morphism $f:G\to S^1$ to $df\vert_e:\mathfrak{g}\to\mathbb{R}$ is the same as pulling back $f$ to the universal cover of $G\,,$ ie. $\mathfrak{g}\,,$ and lifting this map to $\mathbb{R}\,.$ On the other hand, integrating a morphism $\mathfrak{g}\to\mathbb{R}$ to a morphism $G\to S^1$ is the same as quotienting this morphism to a map $\mathfrak{g}\to S^1$ and then asking if it descends to a map $G\to S^1\,.$
\end{remark}
\section{Differentiating Generalized Morphisms? A No-Go Theorem}
In this section we consider the problem of differentiating generalized morphisms. Perhaps suprisingly, the answer turns out to be negative, ie. generalized morphisms are not, in general, differentiable (even if one doesn't believe in our definition). We prove the following result:
\begin{theorem}\label{comdi}
There is no notion of generalized moprhisms between Lie algebroids with the following properties:
\begin{enumerate}
    \item Associated to any generalized morphism $P:G\to H$ is a generalized morphism $dP:\mathfrak{g}\to\mathfrak{h}\,.$
    \item Generalized morphisms $\mathfrak{g}\to\mathfrak{h}$ induce pullback maps $H^{\bullet}(\mathfrak{h},A)\to H^{\bullet}(\mathfrak{g},A)\,,$\footnote{See Part 1, sections 2 and 3 for the relevant definitions.} for any abelian Lie group $A\,,$ in such a way that the pullback of a trivial class is trivial.
    \item Pullback of cohomology commutes with the van Est map. That is, the following diagram is commutative:
    \begin{equation}
        \begin{tikzcd}
{H^{\bullet}(H,A)} \arrow[r, "P^*"] \arrow[d, "VE"'] & {H^{\bullet}(G,A)} \arrow[d, "VE"] \\
{H^{\bullet}(\mathfrak{h},A)} \arrow[r, "dP^*"]      & {H^{\bullet}(\mathfrak{g},A)}     
\end{tikzcd}
    \end{equation}
\end{enumerate}
\end{theorem}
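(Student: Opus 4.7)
The plan is to exhibit a Morita equivalence $P : G_1 \to G_2$ of Lie groupoids whose ``differential'' $dP$ cannot satisfy all three listed properties simultaneously. The canonical example is $P : \mathrm{Pair}(S^1) \to *$: these two groupoids are Morita equivalent, yet their Lie algebroids have different first cohomology, namely $H^1(TS^1, \mathbb{R}) = H^1_{\mathrm{dR}}(S^1) = \mathbb{R}$ while $H^1(0 \to *, \mathbb{R}) = 0$. I will use implicitly (as is standard for any category-theoretic notion of ``generalized morphism'') that the assignment $d$ and the pullback on cohomology are functorial, i.e., $d(\mathrm{id}_G) = \mathrm{id}_{\mathfrak{g}}$, $d(Q \circ P) = dQ \circ dP$ up to $2$-isomorphism, and $(Q \circ P)^* = P^* \circ Q^*$.

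First I would fix a Morita inverse $Q : * \to \mathrm{Pair}(S^1)$, given by a choice of basepoint in $S^1$, so that $Q \circ P \simeq \mathrm{id}_{\mathrm{Pair}(S^1)}$. Property 1 produces differentials $dP : TS^1 \to 0$ and $dQ : 0 \to TS^1$, and Property 2 produces pullback maps
\[
dP^* : H^1(0, \mathbb{R}) = 0 \longrightarrow H^1(TS^1, \mathbb{R}) = \mathbb{R}, \qquad dQ^* : H^1(TS^1, \mathbb{R}) = \mathbb{R} \longrightarrow H^1(0, \mathbb{R}) = 0.
\]
Both factor through the zero vector space, so the composition $dP^* \circ dQ^*$ is the zero endomorphism of $\mathbb{R}$. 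On the other hand, by functoriality together with $Q \circ P \simeq \mathrm{id}_{\mathrm{Pair}(S^1)}$, we have $dP^* \circ dQ^* = (dQ \circ dP)^* = (d(Q \circ P))^* = (d(\mathrm{id}_{\mathrm{Pair}(S^1)}))^* = \mathrm{id}_{\mathbb{R}}$; since $\mathrm{id}_{\mathbb{R}} \neq 0$, this is the desired contradiction.

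The hard part will be justifying the functoriality assumptions. In this simple example, property 3 plays no direct role because $H^1(\mathrm{Pair}(S^1), \mathbb{R}) = H^1(*, \mathbb{R}) = 0$ makes the commutative square in property 3 trivially satisfied, so the contradiction actually comes from properties 1 and 2 together with functoriality. To obtain an argument that invokes property 3 essentially, and hence truly uses all three stated properties, I would replace $P$ by the Morita equivalence $\mathbb{R} \times \mathrm{Pair}(S^1) \to \mathbb{R}$. Here the van Est map on the codomain is an isomorphism (as $\mathbb{R}$ is simply connected), so property 3 fully determines $dP^*$ and forces its image to lie in the one-dimensional subspace of $H^1(\mathfrak{g}_1, \mathbb{R}) \cong \mathbb{R}^2$ coming from the $\mathbb{R}$-factor; the functorial identity $dP^* \circ dQ^* = \mathrm{id}_{\mathbb{R}^2}$ then demands a rank-two endomorphism that factors through $\mathbb{R}$, which is impossible.
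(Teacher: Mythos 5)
There is a genuine gap. Your contradiction in both variants rests on functoriality of the assignment $P\mapsto dP$ and of the induced pullbacks — specifically $d(\mathrm{id})=\mathrm{id}$, $d(Q\circ P)\simeq dQ\circ dP$, and $(Q\circ P)^{*}=P^{*}\circ Q^{*}$ — none of which appear among the three hypotheses of the theorem. The statement only posits: (1) existence of some $dP$ for each $P$, (2) pullback maps sending trivial classes to trivial classes, and (3) compatibility with the van Est map. Your first argument uses only (1), (2) and functoriality (you correctly note (3) is vacuous for $\mathrm{Pair}(S^{1})\to *$ since both groupoid cohomologies vanish), so it proves a different, weaker no-go statement: that no \emph{functorial} differentiation compatible with pullback on $H^{1}$ exists. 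The second variant still needs $dP^{*}\circ dQ^{*}=\mathrm{id}$, so the same objection applies; in addition, the computation $H^{1}(\mathfrak{g}_{1},\mathbb{R})\cong\mathbb{R}^{2}$ for the algebroid of $\mathbb{R}\times\mathrm{Pair}(S^{1})$ is incorrect — that algebroid is the foliation of $\mathbb{R}\times S^{1}$ by circles, whose first foliated cohomology is $C^{\infty}(\mathbb{R})$, not $\mathbb{R}^{2}$.

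The paper's proof needs no composition axiom: it takes the single generalized morphism $P:\Pi_{1}(S^{1})\to\mathbb{Z}$ given by the universal cover $\mathbb{R}\to S^{1}$ as a principal $\mathbb{Z}$-bundle, and the class $\alpha\in H^{1}(\mathbb{Z},\mathbb{R})$, $n\mapsto n$. Its pullback $P^{*}\alpha$ is the homomorphism $\mathbb{R}\ltimes S^{1}\to\mathbb{R}$, whose van Est image is $d\theta\neq 0$ in $H^{1}(TS^{1},\mathbb{R})$. Since the Lie algebra of $\mathbb{Z}$ is $0$, the class $VE(\alpha)$ is trivial, so property (2) forces $dP^{*}(VE(\alpha))=0$ and property (3) then forces $VE(P^{*}\alpha)=0$ — a contradiction using exactly the three listed hypotheses and nothing more. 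If you want to salvage your approach, you would have to either add functoriality to the hypotheses (changing the theorem) or replace your Morita-inverse argument with a single morphism carrying a nontrivial class, as the paper does.
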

\begin{proof}
Consider $\Pi_1(S^1)\cong \mathbb{R}\ltimes S^1\rightrightarrows S^1\,,$ whose Lie algebroid is $TS^1\,.$ The universal cover $\mathbb{R}\to S^1$ is a principal $\mathbb{Z}$-bundle, and therefore defines a generalized morphism $P:\Pi_1(S^1)\to\mathbb{Z}\,.$ There is a canonical class in $H^1(\mathbb{Z},\mathbb{R})\,,$ given by $n\mapsto n\,,$ and the pullback of this class to $H^1(\Pi_1(S^1),\mathbb{R})$ can be represented by the natural homomorphism $\mathbb{R}\ltimes S^1\to\mathbb{R}\,.$ After applying the van Est map, we get $d\theta\in H^1_{\text{dR}}(S^1)\,.$ On the other hand, the Lie algebra of $\mathbb{Z}$ is $0\to *\,,$ therefore the three properties imply that $d\theta$ must be exact, which it isn't.
\end{proof}
\begin{remark}
The proof shows that we can relax the conditions so that properties 2 and 3 only need to hold in degree $1\,,$ and we can assume that $A$ is a representation (see~\Cref{strong}). In addition, even if we worked with source connected groupoids only, a similar proof shows that the three properties cannot hold simultaneously. Our resolution is to relax property 1 so that not all generalized morphisms are differentiable.
\end{remark}
Now a conundrum might be given by the following: associated to a rank $n$ representation of $G$ is a generalized morphism into $\text{GL}(n,\mathbb{C})$ (given by the frame bundle associated to the underlying vector bundle), and we can differentiate representations, so why might we not be able to differentiate the associated generalized morphism? The answer is that we can't (in general) \textit{really} differentiate representations. Of course we can, in a sense, differentiate representations, however the point is that the differentiation is only happening on the domain side, not on the codomain side – when we differentiate representations we retain the $\textit{GL}(n,\mathbb{C})$ data which describes the vector bundle. The only thing which gets differentiated is the action of $G\,.$
\begin{comment}\vspace{3mm}\\For another example, we could consider a principal $\mathbb{Z}$-bundle $P\to X\,.$ Since the Lie algebra of $\mathbb{Z}$ is $0\,,$ this strongly suggests that we can't, in general, differentiate generalized morphisms (eg. if we could, how could uniqueness in Lie's second theorem hold in this case where there are no nontrivial Lie algebroids present?).  If one considers $0_P\to P$ (the Lie algebroid of $P\rtimes\mathbb{Z}$) to be equivalent to $0_X\to X$ then one could attempt to argue that we obtain a generalized morphism, however even the leaf spaces of these two algebroids are distinct (the leaf spaces in this case are just $X$ and $P$).
\end{comment}
\vspace{3mm}\\Now, consider a generalized morphism $G\to H$ given by a bibundlle $P\,.$ This doesn't need to differentiate to a generalized morphism $\mathfrak{g}\to\mathfrak{h}\,.$ On the other hand, if $H$ has contractible target fibers the morphism will differentiate to a generalized morphism: in this case the fibers of the map $\pi:P\to G^0$ are contractible, and therefore $\pi^!\mathfrak{g}$ is Morita equivalent to $\mathfrak{g}\,.$ However, we also have that $\pi^!(\mathfrak{g}) \cong\mathfrak{g}\ltimes P\rtimes \mathfrak{h}$ (ie. they are isomorphic, since $\pi^!G\cong G\ltimes P\rtimes H$), and we have a natural morphism $\mathfrak{g}\ltimes P\rtimes \mathfrak{h}\to \mathfrak{h}\,.$ Therefore, we get the following generalized morphism of Lie algebroids:
\begin{equation}
 \begin{tikzcd}
             & \pi^!\mathfrak{g} \arrow[ld,"\text{Morita}"'] \arrow[r, "\text{Iso}", no head, equal] & \mathfrak{g}\ltimes P\rtimes\mathfrak{h} \arrow[rd] &              \\
\mathfrak{g} &                                                                                       &                                                     & \mathfrak{h}
\end{tikzcd}
\end{equation}
More generally, if we work with Lie algebroids up to $1$-equivalence and if we work with source simply connected groupoids only, all generalized morphisms will differentiate to generalized morphisms of algebroids. In this case, property 3 in~\ref{comdi} will hold only up to degree $1\,.$ Even more generally, this is true as long as the codomain is source simply connected.
\vspace{3mm}\\Now let's delve deeper into the theory to see what is going on. What is happening is more transparent if we think from the perspective of $\Pi_{\infty}(\mathfrak{g})\,,$ rather than from the perspective of $\mathfrak{g}$ (we will now only refer to groupoids which are source connected). From this perspective, we have a generalized morphism $G\to H$ and we can always ``lift" this to a generalized morphism $\Pi_{\infty}(\mathfrak{g})\to H\,,$ however these morphisms don't necessarily lift to morphisms $\Pi_{\infty}(\mathfrak{g})\to \Pi_{\infty}(\mathfrak{h})\,,$ and why should they? Lifting problems generally have topological obstructions, and this is no different. Diagrammatically, the problem of differentiation is given by the following:
\begin{equation}
    \begin{tikzcd}
\Pi_{\infty}(\mathfrak{g}) \arrow[d] \arrow[r, "?", dotted] & \Pi_{\infty}(\mathfrak{h}) \arrow[d] \\
G \arrow[r]                                                 & H                                   
\end{tikzcd}
\end{equation}
Now given a diagram of the following form
\begin{equation}
    \begin{tikzcd}
                                                  & Z \arrow[d, "\pi"] \\
X \arrow[r, "f"'] \arrow[ru, "\tilde{f}", dotted] & Y                 
\end{tikzcd}
\end{equation}
there are obstructions to a lift $\tilde{f}$ existing. One such obstruction is the following: given any class $\alpha \in H^*(Y,\mathbb{Z})$ such that $f^*\alpha$ is nontrivial, it must also be the case that $\pi^*\alpha$ is nontrivial.
\vspace{3mm}\\Let's consider to the context of a generlized morphism $G\to \mathbb{C}^*\,,$ which can be described by a degree one cohomology class $H^1(G,\mathbb{C}^*)\,.$ If this cohomology class is trivial when pulled back to the base $G^0\,,$ then this generalized morphism is equivalent to a homomorphism $G\to\mathbb{C}^*\,,$ which differentiates to a morphism $\mathfrak{g}\to\mathbb{C}\,.$ This is always the case if $H^2(G^0,\mathbb{Z})=0\,.$ Recall that $H^*(\mathfrak{g}\,,\mathbb{Z})=H^*(G^0,\mathbb{Z})\,,$ therefore this will always be the case if $H^2(\mathfrak{g}\,,\mathbb{Z})=0\,;$ in the spirit of the smooth homotopy hypothesis, this should be equivalent to $H^2(\Pi_{\infty}(\mathfrak{g}),\mathbb{Z})=0\,.$ Indeed, this is consistent with the fact that the classifying space of $\Pi_{\infty}(\mathfrak{g})$ should be $G^0\,.$ 
\vspace{3mm}\\Let us emphasize this point: there is an obstruction to lifting a generalized morphism $\Pi_{\infty}(\mathfrak{g})\to \mathbb{C}^*$ to a generalized morprhism $\Pi_{\infty}(\mathfrak{g})\to\mathbb{C}\,,$ and this obstruction vanishes if $H^2(\Pi_{\infty}(\mathfrak{g}),\mathbb{Z})=0\,.$\footnote{Since $\mathbb{C}$ is contractible, $\Pi_{\infty}(\mathbb{C})$ should be equivalent to $\Pi_1(\mathbb{C})=\mathbb{C}\rightrightarrows *\,.$} 
\begin{comment}Let us remark that this obstruction is related to the fact that $\pi_2(\text{B}\mathbb{C}^*)=\mathbb{Z}\,,$ ie.\ it's nontrivial.
\end{comment}
\begin{comment}\vspace{3mm}\\One might think that, given a generalized morphism $G\to \mathbb{C}^*$ for which the principal bundle on the base is nontrivial, perhaps we can choose a surjective submersion $\pi:X\to G^0$ and consider the generallized morphism $\pi^!G\to \mathbb{C}^*$ instead? Perhaps this will differentiate to a generalized morphism $\pi^!\mathfrak{g}\to\mathbb{C}?$ However, we would want $\pi^!\mathfrak{g}$ to be equivalent, in some sense, to $\mathfrak{g}\,,$ and if the fibers of $\pi$ are even $1$-connected then $\pi^{-1}:H^2(G^0,\mathbb{Z})\to H^2(X,\mathbb{Z})$ is injective and the obstruction will remain. 
\end{comment}
\begin{comment}will induce a surjection on $\pi_2(X)\,.$ If in addition $G^0$ is simply connected, the Hurewicz theorem implies that $\pi$ induces a surjection on $H_2(X)\,,$ therefore the obstruction to differentiation will remain.
\end{comment}
\vspace{3mm}\\Why then, do strict morphisms $G\to H$ always differentiate? Again, under the assumption that $H^*(\Pi_{\infty}(\mathfrak{g}),\mathbb{Z})\cong H^*(G^0,\mathbb{Z})\,,$ pulling back a cohomology class
\begin{equation}
    H^*(H,\mathbb{Z})\to H^*(\Pi_{\infty}(\mathfrak{g}),\mathbb{Z})
    \end{equation}
is equivalent to pulling back the induced class in $H^*(H^0,\mathbb{Z})$ to $H^*(G^0,\mathbb{Z})\,.$ The question is, if such a class is nontrivial after being pulled back to $G^0\,,$ can it be trivial when pulled back to $\Pi_{\infty}(\mathfrak{h})\,?$ This is impossible, since the latter condition would imply that the cohomology class, when pulled back to $H^0\,,$ is trivial, which means it must also be trivial when pulled back to $G^0\,.$ Therefore, the obstruction vanishes.
\vspace{3mm}\\Let's emphasize another important point: classes in $H^1(\mathfrak{g}\,,\mathbb{C}^*)$ are essentially generalized morphisms $\mathfrak{g}\to\mathbb{C}^*\,,$ ie.\ we are computing generalized morphisms $\Pi_{\infty}(\mathfrak{g})\to\mathbb{C}^*$ (or generalized morphisms $\Pi_{1}(\mathfrak{g})\to\mathbb{C}^*\,,$ if working up to $1$-equivalence). This leads us into the next section.
\section{Exotic Morphisms Between Algebroids and Groupoids, Differentiability}\label{generalized}
Consider a function $f:X\to Y$ between smooth manifolds, with no assumption about differentiability of $f\,.$ This is equivalent to specifying a homomorphism $\text{Pair}(X)\to\text{Pair}(Y)\,.$ Asking if $f$ is smooth is essentially the same as askng if $f$ lifts to a smooth morphism $\Pi_{\infty}(TX)\to \Pi_{\infty}(TY)\,,$ via the map $\gamma(t)\mapsto f(\gamma(t))\,.$ Inspired by this and the discussion in the previous section, we make the following definition:
\begin{definition}
Let $P:G\to H$ be a generalized morphism of Lie groupoids. We say that $P$ is differentiable if it differentiates to a generalized morphism in the category of Lie algebroids, up to Morita equivalence. We say that $P$ is $n$-differentiable if it differentiates to a generalized morphism, up to $n$-equivalence (we may call a generalized morphism between Lie algebroids, defined up to $n$-equivalence, an $n$-generalized morphism).
\end{definition}
\begin{exmp}
Let $f:G\to H$ be a homomorphism. Then $f$ is differentiable.
\end{exmp}
In particular, non-differentiability of generalized morphisms is related to the fact that Morita maps are not invertible by (smooth) morphisms in general. For if a morphism were invertible (up to Morita equivalence), it would be equivalent to a homomorphism, which is differentiable.
\begin{exmp}
Let $P:G\to H$ be a generalized morphism between source $n$-connected Lie groupoids. Then $P$ is $n$-differentiable.
\end{exmp}
Of course, a $1$-generalized morphism between integrable Lie algebroids will integrate to a generalized morphism between their source simply connected integrations, generalizing Lie's second theorem. 
\vspace{3mm}\\Now, as mentioned in the previous section, we know that classes in $H^1(\mathfrak{g},\mathcal{O}^*)$ describe generalized morphisms $\Pi_1(\mathfrak{g})\to \mathbb{C}^*\,.$ One way to generalize this idea of mapping algebroids into groupoids is to use the embeddings of Lie groupoids and Lie algebroids into LA-groupoids and use generalized morphisms there — these provide a ``wormhole" between algebroids and groupoids. We make the following definition:
\begin{definition}
Consider a Lie algebroid $\mathfrak{g}\to X$ and a Lie groupoid $H\rightrightarrows H^0\,.$ Let $P\to X$ be a principal bundle for $H\,,$ and suppose that $P$ is equipped with an action of $\mathfrak{g}\,.$ We call $P$ a generalized morphism $\mathfrak{g}\to H$ if the following diagram is an LA-groupoid:
\begin{equation}
 \begin{tikzcd}
\mathfrak{g}\ltimes P\rtimes H \arrow[r] \arrow[d, shift right] \arrow[d, shift left] & P\rtimes H \arrow[d, shift right] \arrow[d, shift left] \\
\mathfrak{g}\ltimes P \arrow[r]                                                       & P                                                      
\end{tikzcd}
\end{equation}
Essentially, we are saying that $P$ is a generalized morphism if it is a principal $H$-bundle with a flat $\mathfrak{g}$-connection.
\vspace{3mm}\\Similarly, we can define generalized morphisms $G\to \mathfrak{h}$ as a space $P$ with actions of $G$ and $\mathfrak{h}\,,$ for which the following is an LA-groupoid:
\begin{equation}
 \begin{tikzcd}
G\ltimes P\rtimes \mathfrak{h} \arrow[r, shift right] \arrow[d] \arrow[r, shift left] & P\rtimes \mathfrak{h} \arrow[d] \\
G\ltimes P \arrow[r, shift right] \arrow[r, shift left]                               & P                              
\end{tikzcd}
\end{equation}
In addition, we require that $P\rtimes\mathfrak{h}$ is Morita equivalent to $0_{G^0}\to G^0$ via the canonical morphism (which implies that the Lie algebroid in the left column is Morita equivalent to $0_{G^{(1)}}\to G^{(1)}$). This means that the fibers of $\rho$ should be contractible and that $P\rtimes\mathfrak{h}$ should be the Lie algebroid associated to the foliation $P\to G^0$ (similarly, we can define $n$-generalized morphisms). We may call these \textit{exotic} morphisms.
\end{definition}
\begin{remark}
In Part 2 of this thesis, there was a lot of emphasis places on the double groupoid 
\begin{equation}
\begin{tikzcd}
G\ltimes P\rtimes H \arrow[r, shift right] \arrow[d, shift right] \arrow[r, shift left] \arrow[d, shift left] & P\rtimes H \arrow[d, shift right] \arrow[d, shift left] \\
G\ltimes P \arrow[r, shift right] \arrow[r, shift left]                                                       & P                                                      
\end{tikzcd}
\end{equation}
Most importantly was the LA-groupoid obtained by differentiating in the vertical direction; up until now we had never differentiated in the horizontal direction. We've now come full circle.
\end{remark}
\section{The Category of LA-Groupoids with n-Equivalences — Another Interpretation of the van Est Map}\label{finale}
In Part 2 of this thesis we discussed an interpretation of the van Est map as a map from the cohomology of a groupoid to the foliated cohomology of the space of objects. Here we will discuss another (but not unrelated) interpretation of the van Est map. Previously in this chapter we discussed how, via the smooth homotopy hypothesis, we should be able to view the van Est map as a pullback map
\begin{equation}
    H^*(G,M)\xrightarrow[]{VE}H^*(\Pi_{\infty}(\mathfrak{g}),M)\,.
\end{equation}
In this section, we will make this precise, at the level of $\mathfrak{g}\,.$
\vspace{3mm}\\If one imagines the category consisting of the maximal generalization of Lie algebroids and Lie groupoids, it should contain LA-groupoids. This implies that there is a natural notion of morphisms and equivalences between Lie algebroids and Lie groupoids. Indeed, both Lie algebroids and Lie groupoids have a notion of weak equivalences, and these two notions get mixed in this category of LA-groupoids, which results in some unexpected yet sensible features. In fact, since for each $n\ge 0$ we have a notion of $n$-equivalence for Lie algebroids, for each $n\ge 0$ we get a notion of $n$-equivalence for LA-groupoids (we implicitly used this in the previous section). In particular, the category of LA-groupoids we define unifies the category of differentiable stacks, the category of Lie algebroids and the homotopy category; we nickname it \textbf{Worm}\footnote{Due to its ``wormhole" like property of providing ``paths" (ie.\ morphisms) between different ``universes" (ie.\ categories).} In this category, the differentiation and integration functors between groupoids and algebroids are represented by generalized morphisms.
\begin{comment}Now LA-groupoids naturally come with a notion of weak equivalences, which are generated by the two basic notions of weak equivalence: one in the groupoid direction and one in the algebroid direction. When theWe will do this using the generalized morphisms between Lie algebroids and Lie groupoids discussed in the previous section. 
Naturally, we can define $n$-equivalences and Morita equivalences of LA-groupoids by combining the definition of Morita equivalences of LA-groupoids from~\Cref{defLAM} with $n$-equivalences of Lie algebroids (which we implicitly did in the previous section). 
\end{comment}
\vspace{3mm}\\Here we will show that there is a canonical \textit{exotic} morphism $\mathcal{I}:\mathfrak{g}\to G\,.$ One may interpret this morphism as the integration functor. In fact, we will show that if $G$ is source $n$-connected then this morphism is an $n$-equivalence, which is a Morita equivalence if $n=\infty\,.$ \begin{comment}Wormhole, wormy $n$-equivalence, wormy Morita equivalence? Wormhole Category? \textbf{Set, Top, Diff, Worm}?
\end{comment}
There are details which need to be filled in, however the idea should (hopefully) be clear.
\begin{definition}
A morphism between LA-groupoids is a Morita morphism if the morphism restricted to the top Lie algebroids is a Morita morphism of Lie algebroids, or if the morphism restricted to the top Lie groupoids is a Morita equivalence of Lie groupoids.\footnote{There are two Lie algebroids and two Lie groupoids appearing in an LA-groupoid — by ``top" we are referring to the Lie algebroid/groupoid in the left column and upper row. Equivalently, the Morita morphism induces an equivalence of both Lie algebroids (or Lie groupoids).} A morphism is an $n$-equivalence if the morphism restricted to the top Lie algebroids is an $n$-equivalence, or if the morphism restricted to the top Lie groupoids is a Morita morphism.\footnote{In the latter case this means it is also a Morita morphism of LA-groupoids.} We can then take the subcategory generated by these two notions of $n$-equivalence (see~\Cref{generate1}) and invert $n$-equivalences to obtain more general equivalences and morphisms. We refer to this as the category of LA-groupoids (with $n$-equivalences). For $n=\infty$ an $n$-equivalence is a Morita equivalence.
\end{definition}
\begin{lemma}
Given a Lie groupoid $G\,,$ there is a canonical generalized morphism $\mathcal{I}:\mathfrak{g}\to G$ in the category of LA-groupoids. If $G$ is source $n$-connected, there is a canonical $n$-generalized morphism $\mathcal{D}:G\to\mathfrak{g}\,.$ 
\end{lemma}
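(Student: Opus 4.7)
The plan is to construct both $\mathcal{I}$ and $\mathcal{D}$ from the same underlying data: take $P=G$ with the $G$-action by right multiplication (which makes $P$ into a right principal $G$-bundle over $G^0$ via the target map) and with the $\mathfrak{g}$-action by right-invariant vector fields (which commute with right translation and are tangent to the source fibres). The resulting semidirect square
\[\begin{tikzcd}
(\mathfrak{g}\ltimes G)\rtimes G \arrow[r] \arrow[d, shift right] \arrow[d, shift left] & G\rtimes G \arrow[d, shift right] \arrow[d, shift left] \\
\mathfrak{g}\ltimes G \arrow[r] & G
\end{tikzcd}\]
is an LA-groupoid; this is closely related to the structure used in Part 1 in the definition of $\mathbf{E}^\bullet G$ and the van Est map. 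For $\mathcal{I}:\mathfrak{g}\to G$, this is already a generalized morphism in the sense of \Cref{generalized}: $P=G$ is a principal $G$-bundle over the base of $\mathfrak{g}$, the $\mathfrak{g}$-action on $P$ is given, and the above LA-groupoid is the required piece of data. No connectivity is needed.

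For $\mathcal{D}:G\to\mathfrak{g}$, interpret the same data in the dual direction: the pair of commuting actions on $P=G$ is exactly the data of an exotic morphism $G\to\mathfrak{g}$, and it remains only to check the extra condition that $P\rtimes\mathfrak{g}\to P$ be $n$-equivalent to $0_{G^0}\to G^0$ via the canonical morphism. The canonical morphism is induced by the anchor of the $\mathfrak{g}$-action and is the source map $s:G\to G^0$, whose fibres are simultaneously the source fibres of $G$ and the leaves of the $\mathfrak{g}$-foliation on $P$. When $G$ is source $n$-connected, these fibres are $n$-connected, so by \Cref{neq} the projection $P\rtimes\mathfrak{g}\to 0_{G^0}$ is an $n$-equivalence of Lie algebroids. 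Hence $\mathcal{D}$ is a well-defined $n$-generalized morphism, and it becomes a Morita equivalence when $n=\infty$.

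The hard part will be the formal verification of the LA-groupoid axioms at the top-left corner of the square, i.e.\ making precise the object that is simultaneously a Lie algebroid over $P\rtimes G$ and a Lie groupoid over $\mathfrak{g}\ltimes P$. The cleanest approach is to isolate a general lemma beforehand: if a Lie groupoid $G$ acts on a Lie algebroid $A\to M$ by Lie algebroid automorphisms with compatible moment maps, then the combined data assemble canonically into an LA-groupoid with $A\to M$ in the bottom-right corner, the action groupoid $M\rtimes G\rightrightarrows M$ as the Lie groupoid under $A$, and the natural double semidirect product filling the top-left corner. Our construction is then an immediate instance of this lemma, with $A=\mathfrak{g}\ltimes G$ and $G$ acting on it by right translation, so both constructions reduce to the routine compatibility of the two commuting actions on $P=G$. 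Beyond this, the only remaining point for $\mathcal{D}$ is the $n$-equivalence assertion, but as explained it is built into \Cref{neq} once the algebroid foliation on $P$ is identified with the source-fibre foliation of $G$.
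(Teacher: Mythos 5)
Your construction is the same as the paper's: the LA-groupoid $\mathfrak{g}\ltimes G\rtimes G$ built from $P=G$ with the right $G$-action and the right-invariant $\mathfrak{g}$-action is exactly the middle term of the paper's roof, with the left leg the Morita map to $\mathfrak{g}$ (since $G\rtimes G\rightrightarrows G$ is Morita equivalent to $G^0$) and the right leg to $0_G\to G$, and the reversal for $\mathcal{D}$ rests on the same observation that $\mathfrak{g}\ltimes G\cong T_sG$ is the source-fibre foliation, hence $n$-equivalent to $0_{G^0}\to G^0$ when the source fibres are $n$-connected. The only difference is presentational (you invoke the exotic-morphism definition directly rather than drawing the roof), so this is essentially the paper's proof.
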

\begin{proof}
We have the following generalized morphism $\mathcal{I}:\mathfrak{g}\to G:$
\begin{equation}\label{mid}
\begin{tikzcd}
\mathfrak{g} \arrow[r] \arrow[d, shift right] \arrow[d, shift left] & G^0 \arrow[d, shift right] \arrow[d, shift left] & \mathfrak{g}\ltimes G\rtimes G \arrow[r] \arrow[d, shift right] \arrow[d, shift left] \arrow[l, "  \;\text{    Morita}"', shift left=7] & G\rtimes G \arrow[d, shift right] \arrow[d, shift left] & 0_G \arrow[r] \arrow[d, shift right] \arrow[d, shift left] & G \arrow[d, shift right] \arrow[d, shift left] \\
\mathfrak{g} \arrow[r]                                              & G^0                                              & \mathfrak{g}\ltimes G \arrow[r]                                                                                                         & G \arrow[r, shift left=8]                               & 0_{G^0} \arrow[r]                                          & G^0                                           
\end{tikzcd}
\end{equation}
Let us remind the reader that the notation $0_X\to X$ refers to the zero Lie algebroid over $X\,,$ and note that we are using the natural embeddings of Lie algebroids and Lie groupoids into LA-groupoids. Now in the case that $G$ is source $n$-connected, the map on the right is an $n$-equivalence, due to the fact that $\mathfrak{g}\ltimes G$ is $n$-equivalent to $0_{G^0}\to G^0\,,$ and $\mathfrak{g}\ltimes G\rtimes G$ is $n$-equivalent to $0_{G}\to G\,.$  Therefore, working up to $n$-equivalence, we get a generalized inverse morphism $\mathcal{D}:G\to\mathfrak{g}\,.$
\end{proof}
\begin{comment}Now in the other direction, if $G$ is source $n$-connected then the fibers of $G\xrightarrow[]{s} G^0$ are $n$-connected, and therefore $G\rtimes\mathfrak{g}$ is $n$-equivalent to $0_{G^0}\to G^0\,,$ and $G\ltimes G\rtimes\mathfrak{g}$ is $n$-equivalent to $0_{G}\to G\,.$ Therefore, we get the following $n$-generalized morphism (suggestively denoted with as an inverse) $P_n^{-1}:G\to\mathfrak{g}:$
\begin{equation}
\begin{tikzcd}
0_G \arrow[r, shift right] \arrow[d] \arrow[r, shift left] & 0_{G^0} \arrow[d] & G\ltimes G\rtimes\mathfrak{g} \arrow[r, shift right] \arrow[r, shift left] \arrow[d] \arrow[l, "\text{n-equiv}"', shift left=7] & G\rtimes\mathfrak{g} \arrow[d] \arrow[r, shift right=7] & \mathfrak{g} \arrow[d] \arrow[r, shift right] \arrow[r, shift left] & \mathfrak{g} \arrow[d] \\
G \arrow[r, shift right] \arrow[r, shift left]             & G^0               & G\ltimes G \arrow[r, shift right] \arrow[r, shift left]                                                                         & G                                                       & G^0 \arrow[r, shift right] \arrow[r, shift left]                    & G^0                   
\end{tikzcd}
\end{equation}
\end{comment}
Let's point out that the LA-groupoid in the middle of~\ref{mid} previously appeared in this thesis as an important object in Part 2 (see~\ref{exx}, where here the map $H\to G$ is taken to be $G^0\to G$, and where the top right and bottom left corners have been switched), in the form
\begin{equation}
    \begin{tikzcd}
T_sG\rtimes G \arrow[d, shift right] \arrow[d, shift left] \arrow[r] & G\rtimes G \arrow[d, shift right] \arrow[d, shift left] \\
T_sG \arrow[r]                                                       & G                                                      
\end{tikzcd}
\end{equation}
\begin{corollary}\label{CAG}
The van Est map is given by $\mathcal{I}^*:H^{\bullet}(G,M)\to H^{\bullet}(\mathfrak{g},M)\,.$ Integration (up to degree $n$, where $G$ is source $n$-connected) is given by $\mathcal{D}^*:H^{\bullet}(\mathfrak{g},M)\to H^{\bullet}(G,M)\,.$
\end{corollary}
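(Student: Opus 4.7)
The plan is to unwind the roof defining $\mathcal{I}$ and identify each of its legs with a step in the Part 2 construction of the van Est map, then derive the integration statement formally from the $n$-equivalence property of $\mathcal{D}$.

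First, I would identify the middle LA-groupoid in diagram (\ref{mid}) with the LA-groupoid used to compute the van Est map in Part 2. Recall that the van Est map for the canonical inclusion $G^0\hookrightarrow G$ was built by passing to the split fibration replacement whose foliation LA-groupoid is exactly $T_sG\rtimes G\to G\rtimes G$ over $T_sG\to G$, and this was proven in Section \ref{equiv la group} to be Morita equivalent (as an LA-groupoid) to $\mathfrak{g}\ltimes G\rtimes G\to G\rtimes G$ over $\mathfrak{g}\ltimes G\to G$ via right translation. Thus the middle LA-groupoid in (\ref{mid}) is precisely (up to a canonical Morita equivalence of LA-groupoids) the one whose foliated cohomology was shown in Lemma \ref{iso of coh} to be isomorphic to $H^{\bullet}(\mathfrak{g},M)$.

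Second, I would check that the two legs of the roof implement the two steps of the van Est map. The right leg $\mathfrak{g}\ltimes G\rtimes G\to 0_G\to G$ is the inclusion of the associated foliation of the fibration replacement of $G^0\hookrightarrow G$; pullback along it recovers the inverse image map $\kappa^{-1}:H^{\bullet}(G,M)\to H^{\bullet}(\mathbf{E}^\bullet G,\kappa^{-1}\mathcal{O}(M))$ from Definition \ref{van Est}. The left leg $\mathfrak{g}\ltimes G\rtimes G\to\mathfrak{g}$ is a Morita equivalence of LA-groupoids whose induced isomorphism on cohomology is exactly the map $Q$ of Definition \ref{van Est}, realized there by the foliated de Rham resolution together with invariance of cohomology on the middle object. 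Composing these two identifications gives $\mathcal{I}^{*}=VE$ at the cochain level.

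Third, for the integration statement, observe that when $G$ is source $n$-connected the right leg of the roof defining $\mathcal{I}$ is an $n$-equivalence in the category of LA-groupoids (because both $\mathfrak{g}\ltimes G\to G^0$ and $\mathfrak{g}\ltimes G\rtimes G\to G$ become $n$-equivalent to the corresponding zero LA-groupoids, by source $n$-connectedness of the target fibers of $G$). Therefore the roof can be inverted in the $n$-localized category to produce $\mathcal{D}:G\to\mathfrak{g}$ with $\mathcal{D}^{*}=(\mathcal{I}^{*})^{-1}$ on cohomology through degree $n$. Combining with $\mathcal{I}^{*}=VE$ from the previous paragraph and Theorem \ref{van Est image}, which asserts that $VE$ is an isomorphism in degrees $\le n$, we conclude that $\mathcal{D}^{*}$ is exactly the inverse of $VE$ up to degree $n$, i.e.\ the integration map.

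The main obstacle is the invariance statement required in the previous paragraph: one needs cohomology of LA-groupoids with coefficients in a module (not merely a representation) to be invariant under Morita morphisms of LA-groupoids, and to be invariant through degree $n$ under $n$-equivalences. The representation-coefficients, Morita case is addressed in \cite{Waldron}; the module-coefficients extension should follow by the same Godement-resolution and spectral-sequence machinery used in Part 1 (combined with the observation that the sheaves appearing in the resolution are themselves associated to modules for the top groupoid), but the $n$-equivalence statement is a genuinely conjectural input that has to be assumed — everything else in the proof is a bookkeeping exercise identifying two constructions built out of the same LA-groupoid.
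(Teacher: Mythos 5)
Your proposal is correct and follows exactly the route the paper intends: the paper gives no explicit proof of this corollary, but the remark immediately following the lemma (identifying the middle LA-groupoid of the roof with the one from \ref{exx} used to build the van Est map in Part 2) makes clear that the argument is precisely your identification of the two legs with $\kappa^{-1}$ and $Q$, together with inverting the right leg as an $n$-equivalence to obtain $\mathcal{D}^*$. Your honest flagging of the conjectural Morita/$n$-equivalence invariance of LA-groupoid cohomology with module coefficients matches the caveat the paper itself states at the start of the van Est chapter.
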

We will now summarize the preceding observations:
\begin{theorem}\label{LALG}
Let $G\rightrightarrows G^0$ be a source $n$-connected Lie groupoid. Then $\mathfrak{g}\to G^0$ is $n$-equivalent to $G\rightrightarrows G^0$ (if $n=\infty$ they are Morita equivalent).
\end{theorem}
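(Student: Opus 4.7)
The theorem is essentially a restatement of the Lemma proven immediately above, so my plan is to give rigorous content to the roof diagram (\ref{mid}) as an $n$-equivalence in the category $\mathbf{Worm}$ of LA-groupoids with $n$-equivalences. The strategy factors into analyzing the two legs of the roof separately, since by definition an $n$-generalized morphism is exhibited by such a roof.

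First, I would verify that the left leg, from the middle LA-groupoid to $\mathfrak{g}\to G^0$, is a genuine Morita equivalence of LA-groupoids, independent of any connectivity hypothesis. At the groupoid level, the action groupoid $G\rtimes G\rightrightarrows G$ of left multiplication on source fibers is Morita equivalent to $G^0\rightrightarrows G^0$ via the target map $t:G\to G^0$, since each source fiber is a free, transitive $G$-orbit; this is a standard Morita equivalence. At the algebroid level the Lie algebroid $\mathfrak{g}\ltimes G\to G$ is, by definition, the pullback $t^!\mathfrak{g}$, so the projection $\mathfrak{g}\ltimes G\to \mathfrak{g}$ intertwines anchors and brackets correctly. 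Together these give a Morita morphism of LA-groupoids in the sense of Definition \ref{mordo} (adapted to LA-groupoids as in Section \ref{defLAM}).

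Second, and this is the substantive step, I would show that the right leg, from the middle LA-groupoid to $0_G\to G$, is an $n$-equivalence under the source $n$-connectivity assumption. At the groupoid level the morphism is immediate: collapsing $G\rtimes G$ to $G$ via the canonical map $(g_1,g_2)\mapsto g_2 g_1^{-1}$ (equivalently, a fibration replacement of the identity on $G$). The nontrivial content is the claim that $\mathfrak{g}\ltimes G\to G$ is $n$-equivalent to the zero algebroid $0_G\to G$. The key observation is that a $(\mathfrak{g}\ltimes G)$-path over a constant base-path at $g\in G$ is, after right-translation by $g$, a $\mathfrak{g}$-path in the source fiber $s^{-1}(s(g))$ starting at the identity. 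Consequently, the fibers of the mapping path space of the comparison morphism are weak-equivalent to the source fibers of $G$ (or, via inversion, the target fibers), which are $n$-connected by hypothesis. This supplies the $n$-connectivity condition of Definition \ref{neq}.

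The main obstacle is that the second step does not quite fit Definition \ref{neq} as stated, because the square comparing $\mathfrak{g}\ltimes G$ to $0_G$ is not literally a strict fiber product of anchors. I would resolve this in one of two ways: either invoke the more permissive Banach-manifold formulation of $n$-equivalence introduced right after Definition \ref{neq}, treating $\Pi_\infty(\mathfrak{g})\ltimes G$ as the intermediate Banach groupoid and using that $\Pi_\infty(\mathfrak{g})$ has weakly contractible source fibers; or exhibit the equivalence as a zigzag of morphisms through the space of $\mathfrak{g}$-paths, composing a genuine Morita equivalence with an $n$-truncation map built from source $n$-connectivity. Either route reduces the claim to the connectivity of source fibers of $G$, which is the hypothesis. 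Finally, when $n=\infty$, both legs are honest Morita equivalences in $\mathbf{Worm}$, recovering the Morita equivalence $\mathfrak{g}\sim G$ that has been asserted throughout Part 3 and justifying the interpretation of the van Est map as the pullback $\mathcal{I}^*$ described in Corollary \ref{CAG}.
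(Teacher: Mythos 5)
Your overall strategy coincides with the paper's: Theorem \ref{LALG} is obtained from the roof \ref{mid}, with the left leg a Morita morphism of LA-groupoids in the groupoid direction and the right leg an $n$-equivalence in the algebroid direction coming from source $n$-connectivity. However, your execution of the right leg misidentifies the comparison, and this creates a phantom obstacle. The Lie algebroid $\mathfrak{g}\ltimes G\to G$ is not being compared to $0_G\to G$ over the identity of $G$; in the diagram it maps to $0_{G^0}\to G^0$ \emph{covering the target map} $t\colon G\to G^0$ (and likewise $\mathfrak{g}\ltimes G\rtimes G$ maps to $0_G\to G$ covering $G\rtimes G\to G$). For that morphism the anchor square is literally Cartesian: $\mathfrak{g}\ltimes G$ is canonically isomorphic, via translation, to the foliation $\ker t_*=t^{!}0_{G^0}$, so condition 2 of Definition \ref{neq} holds on the nose. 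Condition 1 also collapses: since every algebroid path in the zero algebroid is constant, $P_{0_{G^0}}=G^0$, hence the mapping path space is just $G$ and the map $P_f\to G^0$ is $t$ itself, whose fibers are $n$-connected by hypothesis. No Banach-manifold enlargement, no appeal to $\Pi_\infty$, and no zigzag through $\mathfrak{g}$-path spaces is needed; your proposed workarounds are solving a problem that disappears once the codomain of the comparison is taken correctly. Your description of $(\mathfrak{g}\ltimes G)$-paths "over a constant base-path" is likewise off: a general algebroid path in $\mathfrak{g}\ltimes G$ has base path moving within a $t$-fiber, and the relevant path space is that of the \emph{target} algebroid, which is discrete.

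Two smaller corrections. First, $\mathfrak{g}\ltimes G$ is the action algebroid, i.e.\ $t^*\mathfrak{g}$ as a vector bundle with anchor the infinitesimal translation action; it is not the pullback algebroid $t^{!}\mathfrak{g}=TG\times_{TG^0}\mathfrak{g}$, which has twice the rank. This does not damage your left leg, since the paper's definition only requires the restriction to the top Lie groupoids ($G\rtimes G\rightrightarrows G$ over $G^0\rightrightarrows G^0$) to be a Morita equivalence, which you argue correctly. Second, for $n=\infty$ you should say that the right leg becomes a Morita equivalence of Lie algebroids (contractible $t$-fibers) while the left leg is already a Morita equivalence of Lie groupoids for every $n$; it is the composite in the localized category, not "both legs", that yields the asserted Morita equivalence $\mathfrak{g}\sim G$.
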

This theorem is consistent with the fact that, as far as the author knows, there is no ``significant" way of distinguishing a Lie groupoid from its Lie algebroid (up to ``degree $n$") if the source fibers are $n$-connected (eg.\ they have the same cohomologies up to degree $n$ and we expect them to have equivalent $n$-term representations up to homotopy). In fact, invariance of LA-groupoid cohomology under $n$-equivalence (which we expect to be true up to degree $n$ and which we leave to future work) implies the van Est isomorphism theorem and the homotopy invariance of de Rham cohomology and singular cohomology, and of course the Morita invariance of Lie groupoid cohomology.
\vspace{3mm}\\We emphasize that, under the assumptions that the source fibers are $n$-connected, we expect that $G$ is equivalent to $\Pi_n(\mathfrak{g})$ – via the smooth homotopy hypothesis, this is the higher groupoid interpretation of this theorem.
\vspace{3mm}\\Now that we've defined $n$-equivalences of Lie algebroids, we may discuss ``$n$-Lie algebroids" of stacks (not to be confused with Lie $n$-algebroids):
\begin{definition}
Let $[H^0/H]$ be a differentiable stack. We define its $n$-Lie algebroid to be a Lie algebroid $\mathfrak{g}$ which is  $n$-equivalent to $H\rightrightarrows H^0$ (if it exists).
\end{definition}
Essentially, the $n$-Lie algebroid of $H\rightrightarrows H^0$ is the Lie algebroid of a Morita equivalent Lie groupoid $G\rightrightarrows G^0\,,$ whose source fibers are $n$-connected. Of course, it is unique up to $n$-equivalence.
Really, what one may want to do is define an algebroid version of differentiable stacks. Associated to a Lie algebroid $\mathfrak{g}$ one can assign the category of $n$-generalized morphisms from manifolds into $\mathfrak{g}$ (so there is probably a category of ``algebroid-stacks" for each $n\ge 0$). An object in this category is given by a manifold $M$ together with a surjective submersion $\pi:P\to M$ with $n$-connected fibers, together with an action of $\mathfrak{g}$ on $P\,,$ such that $P\rtimes \mathfrak{g}\cong \pi^!0_M$ (ie. the fiberwise tangent bundle).
\begin{remark}\label{generate1}
This category comes with two notions of weak equivalence, one in the horizontal direction and one in the vertical direction. We should take the smallest subcategory containing all of these weak equivalences to get a category with weak equivalences, or better, a homotopical category. Alternatively, there may be a nicer definition. See~\Cref{generate} for a discussion about essentially the same point.
 \end{remark}
\section{Properties of the Category of LA-Groupoids}
Now that we've seen that there are \textit{exotic} morphisms between Lie algebroids and Lie groupoids (by which we mean generalized morphisms between algebroids and groupoids in \textbf{Worm}, ie. the category of LA-groupoids), we will list some remarkable properties of this category:
\begin{enumerate}
    \item Manifolds $X\,,Y$\footnote{There are two natural embeddings of manifolds into LA-groupoids, one is via the $0$-Lie algebroid, one is via the groupoid containing only identity morphisms. However, $\mathcal{I}$ provides an isomorphism between the two, so it doesn't matter which one we use.} are Morita equivalent in this category if and only if they are diffeomorphic.
    \item More generally, Lie groupoids $G\,,H$ are Morita equivalent in this category if and only if they are Morita equivalent in the category of Lie groupoids.
    \item Two tangent bundles $TX\,,TY$ are Morita equivalent in this category if and only if $X\,,Y$ are homotopy equivalent.
    \item There is a canonical generalized morphism $\mathcal{I}:\mathfrak{g}\to G\,,$ which represents the integration functor. If $G\rightrightarrows G^0$ is source $n$-connected then this morphism is an $n$-equivalence, and dual to this there is a canonical $n$-equivalence $\mathcal{D}:G\to \mathfrak{g}\,,$ which represents the differentiation functor. If $n=\infty$ these generalized morphisms are Morita equivalences.
    \item With regards to the previous two points, the van Est map is given by the pullback $\mathcal{I}^*:H^*(G,M)\to H^*(\mathfrak{g},M)\,.$ If $G$ is source $n$-connected then integration is given by $\mathcal{D}^*:H^{\bullet}(\mathfrak{g},M)\to H^{\bullet}(G,M)\,,$ for $\bullet\le n\,.$  
    \item A Lie algebroid $\mathfrak{g}$ is integrable if and only if it is $1$-equivalent to some Lie groupoid $G\,.$
    \item This category induces a notion of homotopy equivalence on the category of Lie groupoids: a generalized morphism $P:G\to H$ is a homotopy equivalence if the induced generalized morphism in the category of LA-groupoids, $TP:TG\to TH\,,$ is a Morita equivalence.\footnote{There is a notion of homotopy equivalence described in~\cite{noohi1}, page 58, but it's not clear to the author that these notions are the same in the smooth category.} In particular, a Morita equivalence of Lie groupoids induces a homotopy equivalence. In addition, we get a natural notion of $n$-equivalence of Lie groupoids (in the homotopy sense).
    \item A finite dimensional classifying space of a Lie groupoid $G$ is just a manifold $BG$ which is homotopy equivalent to $G\,.$  \footnote{This is in line with Grothendieck's homotopy hypothesis, where geometric realization gives an equivalence of categories between simplicial sets and topological spaces. See chapter 2, section 3 of~\cite{Quillen}. Note that, if $G$ is discrete, then $TG=G\,.$ Once again, we expect that $\Pi_{\infty}(TX)\,,$ as an infinite dimensional higher Lie groupoid, is Morita equivalent to $\Pi_{\infty}(TX)\,,$ as a discrete higher groupoid.}
    \item If $EG\to BG$ is finite dimensional, then the Atiyah algebroid $\text{at}(EG)$ is Morita equivalent to $G\,.$ In particular, if $G$ is discrete then $\text{at}(EG)=T(BG)\,,$ therefore $G$ is Morita equivalent to $T(BG)\,.$
    \item Due to points 2, 3 and 4, we get the following result: suppose that $\mathcal{P}$ assigns to each LA-groupoid some property (eg.\ its cohomology) that is invariant under $n$-equivalence. Then if $X$ is homotopy equivalent to $Y\,;$ if $H\rightrightarrows H^0$ is Morita equivalent to $K\rightrightarrows K^0\,;$ if $G\rightrightarrows G^0$ is source $n$-connected, we get that $\mathcal{P}(TX)\cong \mathcal{P}(TY)\,;\mathcal{P}(H)\cong \mathcal{P}(K)\,;\mathcal{P}(G)\cong \mathcal{P}(\mathfrak{g})\,,$ respectively.
    \end{enumerate}
\begin{itemize}
    \item About point 7: in particular, a homomorphism $f:G\to H$ is a homotopy equivalence if  the induced map $f:G^{(1)}\to H^{(1)}$ is a homotopy equivalence of manifolds, eg. if $K\xhookrightarrow{}G$ is the inclusion of a maximal compact subgroup into a Lie group, then it is a homotopy equivalence. This is consistent with the fact that $BK$ is homotopy equivalent to $BG\,.$ In fact, these kinds of homomorphisms, together with Morita equivalences, generate homotopy equivalences.
    \vspace{3mm}\\In addition, a homotopy equivalence of discrete groupoids is the same as a Morita equivalence. For another example, if the source fibers of $G\rightrightarrows G^0$ are $n$-connected, then the morphism $G^0\to G$ is an $n$-equivalence.
\item About point 8: if $EG\to BG$ is finite dimensional then $EG$ defines a generalized morphism $BG\to G\,,$ given by the following diagram
\begin{equation}
    \begin{tikzcd}
   & EG\rtimes G \arrow[ld, "\text{Morita}"'] \arrow[rd] &   \\
BG &                                              & G
\end{tikzcd}
\end{equation}
By the preceding bullet point, the morphism on the right is a homotopy equivalence, due to the fact that $(EG\rtimes G)^{(1)}\to G^{(1)}$ is a surjective submersion with contractible fibers, and such a map is automatically a homotopy equivalence (see~\cite{gael}, Corollary 13).
\item About point 9: in particular, $\mathbb{Z}\rightrightarrows *$ is Morita equivalent to $TS^1\,,$ while  $\mathbb{Z}\rightrightarrows *$ is homotopy equivalent to $S^1$ (here $S^1$ is just the manifold).
\item About point 10: here a \textit{property} of LA-groupoids can often be formalized as a functor $\mathcal{P}$ with source the category of LA-groupoids. Examples of properties of LA-groupoids that we expect to be invariant under $n$-equivalence are cohomology up to degree $n$ and representations up to homotopy of length at most $n$ (once the latter is defined). In particular, any property of \textbf{Lie groupoids} which factors through a Morita invariant property of \textbf{LA-groupoids}, via taking their tangent LA-groupoids (eg. de Rham cohomology, cohomology of constant sheaves),  will be invariant under homotopy equivalence.
\end{itemize}
\begin{comment}About point 7: consider a Lie groupoid $G$ with finite dimensional universal bundle $EG\to BG\,.$ The Atiyah groupoid $\text{At}(EG)$ is Morita equivalent to $G\,,$ but it also has contractible source fibers, therefore we know by~\Cref{LALG} that $\text{at}(EG)$ (the Atiyah algebroid) is Morita equivalent to $\text{At}(EG)\,.$ Therefore, we find that $\text{at}(EG)$ is Morita equivalent to $G\,.$ By point 8, this implies that $G\,,BG$ have the same cohomology with coefficients in a discrete abelian Lie group.
\vspace{3mm}In addition, if $G$ is a discrete groupoid, then $\text{at}(EG)$ is just $T(BG)$ and $TG\to G$ is just $G\,,$ therefore in this case $G$ and $BG$ are ``homotopy equivalent". 
\end{comment}
\vspace{3mm}Let's connect the properties of this category to Lie's second and third theorems, which tell us that there is an equivalence of categories between source simply connected Lie groupoids and integrable Lie algebroids, given by the functor $\mathfrak{g}\mapsto\Pi_1(\mathfrak{g})\,.$ Of course, given this equivalence, there is still a natural distinction between $\mathfrak{g}$ and $\Pi_1(\mathfrak{g})\,,$ which arises from the existence of morphisms between source simply connected groupoids and non-source simply connected groupoids.
However, after localizing at $1$-equivalences there is no such distinction in the category of LA-groupoids, since $\mathcal{I}$ provides an isomorphism $\mathfrak{g}\to \Pi_1(\mathfrak{g})\,.$
This is the essence of the smooth homotopy hypothesis. We can formulate the following generalization (which we have proved a special case of):\footnote{Again, we are really conjecturing that this is a meaningful statement that expresses truth.}
\begin{conjecture}\label{conjf}
In the category of $\text{L}_\infty$-algebroids over Lie $\infty$-groupoids, $\mathfrak{g}$ is Morita equivalent to $\Pi_{\infty}(\mathfrak{g})\,.$ More generally, if $G$ integrates $\mathfrak{g}$ and if $G$ is source $n$-connected,\footnote{In the appropriate sense of higher groupoids.} then $\mathfrak{g}$ is $n$-equivalent to $G\,.$
\end{conjecture}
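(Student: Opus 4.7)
The plan is to upgrade the proof of Theorem \ref{LALG} (the Lie groupoid case) to the $L_\infty$ setting by constructing a ``middle" $L_\infty$-algebroid over a Lie $\infty$-groupoid that sits between $\mathfrak{g}$ and $\Pi_\infty(\mathfrak{g})$, analogous to the LA-groupoid $\mathfrak{g}\ltimes G\rtimes G \rightrightarrows G$ used in the proof of the Lie groupoid case. The two projections from this middle object should be, respectively, an equivalence onto $\mathfrak{g}$ coming from the Lie algebroid direction, and an $n$-equivalence (resp.\ Morita equivalence when $n=\infty$) onto $G$ (resp.\ $\Pi_\infty(\mathfrak{g})$) coming from the Lie groupoid direction, where the latter is driven by the connectivity of the source fibers.

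Concretely, I would first set up the ambient category of $L_\infty$-algebroids over Lie $\infty$-groupoids with its two intrinsic notions of weak equivalence (one ``algebroid-like" in the vertical direction, one ``groupoid-like" in the horizontal direction), and declare $n$-equivalences to be generated by these, precisely as in Remark \ref{generate1}. Next, given $\mathfrak{g}\to G^0$, I would form the action $L_\infty$-algebroid $\mathfrak{g}\ltimes \Pi_\infty(\mathfrak{g})\rtimes \Pi_\infty(\mathfrak{g})$ (or its source-$n$-connected truncation $\mathfrak{g}\ltimes G \rtimes G$, when considering the more general version of the statement) and fit it into a diagram of the form
\begin{equation*}
\begin{tikzcd}
\mathfrak{g} \arrow[r] \arrow[d, shift right] \arrow[d, shift left] & G^0 \arrow[d, shift right] \arrow[d, shift left] & \mathfrak{g}\ltimes \Pi_\infty(\mathfrak{g}) \rtimes \Pi_\infty(\mathfrak{g}) \arrow[r] \arrow[d, shift right] \arrow[d, shift left] \arrow[l, "\text{Morita}"', shift left=9] & \Pi_\infty(\mathfrak{g})\rtimes \Pi_\infty(\mathfrak{g}) \arrow[d, shift right] \arrow[d, shift left] & 0_{\Pi_\infty(\mathfrak{g})} \arrow[r] \arrow[d, shift right] \arrow[d, shift left] & \Pi_\infty(\mathfrak{g}) \arrow[d, shift right] \arrow[d, shift left] \\
\mathfrak{g} \arrow[r] & G^0 & \mathfrak{g}\ltimes \Pi_\infty(\mathfrak{g}) \arrow[r] & \Pi_\infty(\mathfrak{g}) \arrow[r, shift left=8] & 0_{G^0} \arrow[r] & G^0
\end{tikzcd}
\end{equation*}
exactly as in \ref{mid}. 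The right-hand morphism is then to be identified as the integration generalized morphism $\mathcal{I}$; its dual is the differentiation morphism $\mathcal{D}$. To conclude the argument one must verify that both legs are weak equivalences in the appropriate sense: the algebroid leg always, because the action of $\Pi_\infty(\mathfrak{g})$ on itself gives a Morita equivalence of the top row, and the groupoid leg because $\mathfrak{g}\ltimes \Pi_\infty(\mathfrak{g})$ should be (weakly) equivalent to $0_{G^0}\to G^0$. This last step is the content of the fact that the source fibers of $\Pi_\infty(\mathfrak{g})$ are weakly contractible, together with the Morita equivalence criterion from Definition \ref{MEA}.

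The hard part will be making the higher framework precise enough to carry out these formal manipulations and, in particular, defining the pullback of an $L_\infty$-algebroid along a surjective submersion of Lie $\infty$-groupoids so that the required identification $\mathfrak{g}\ltimes \Pi_\infty(\mathfrak{g})\simeq t^!\mathfrak{g}$ holds, where $t:\Pi_\infty(\mathfrak{g})\to G^0$ is the target map. Once that is in place, the claim that $t$ has weakly contractible fibers (the standard fact that target fibers of $\Pi_\infty(\mathfrak{g})$ are contractible spaces of $\mathfrak{g}$-paths with fixed endpoint, see \eqref{inftyg} and Definition \ref{nodef1}) and the Morita equivalence criterion of Definition \ref{MEA} together give that the bottom row of the middle column is Morita equivalent to $0_{G^0}\to G^0$, hence the whole middle LA-$\infty$-groupoid is Morita equivalent to $0_{\Pi_\infty(\mathfrak{g})}\to\Pi_\infty(\mathfrak{g})$. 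This is the heart of the argument; the rest is diagram chasing.

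Finally, the general statement follows because $\Pi_n(\mathfrak{g})$, obtained as an $n$-truncation of $\Pi_\infty(\mathfrak{g})$, will have $n$-connected source fibers, so the same diagram with $\Pi_\infty(\mathfrak{g})$ replaced by $\Pi_n(\mathfrak{g})$ (or more generally by any source $n$-connected integration $G$) produces only an $n$-equivalence on the groupoid leg, yielding Theorem \ref{LALG} as the $n<\infty$ Lie groupoid case and the $\Pi_\infty$ statement as the $n=\infty$ limit. The conjecture thus reduces, modulo foundational work on the category \textbf{Worm} at the $\infty$-level, to the contractibility of algebroid-path source fibers and the naturality of the action $L_\infty$-algebroid construction.
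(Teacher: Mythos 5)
The statement you are proving is left as a conjecture in the paper; no proof is given there. The only thing the paper actually establishes is the special case recorded in Theorem \ref{LALG}, whose proof is the diagram \ref{mid}, and your proposal is precisely the natural transcription of that diagram with $G$ replaced by $\Pi_\infty(\mathfrak{g})$. So your strategy coincides with the one the paper gestures at; the issue is that what you have written is a roadmap, not a proof, and the places where you defer to ``foundational work'' are exactly the reasons the paper leaves this as a conjecture rather than a theorem.

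To be concrete about the gaps: (1) the ambient category of $L_\infty$-algebroids over Lie $\infty$-groupoids, with its two classes of weak equivalences and a composable notion of generalized morphism, is defined nowhere --- and the paper already concedes in Remark \ref{generate} that even for double Lie groupoids the correct localization is unsettled, so ``precisely as in Remark \ref{generate1}'' does not discharge this; (2) $\Pi_\infty(\mathfrak{g})$ as defined in \eqref{inftyg} is an infinite-dimensional simplicial Banach manifold, and you must verify it is an object of your category (Kan conditions, smoothness of face and degeneracy maps) before the diagram makes sense --- moreover, for non-integrable $\mathfrak{g}$ the truncations $\Pi_n(\mathfrak{g})$ need not exist as Lie $n$-groupoids, which threatens the ``more generally'' clause unless one restricts to integrable $\mathfrak{g}$ or passes to stacky integrations; (3) the heart of your argument, that contractibility of the $t$-fibers of $\Pi_\infty(\mathfrak{g})$ forces $\mathfrak{g}\ltimes\Pi_\infty(\mathfrak{g})\cong t^!\mathfrak{g}$ to be Morita equivalent to $0_{G^0}\to G^0$, relies on Definition \ref{MEA} extended to Banach (or diffeological) bases, and the paper itself only conjectures that \ref{MEA} has the expected cohomological consequences --- it even flags that the needed ``action of $TP$ on $\mathfrak{g}$'' is an unproved assumption. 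So your proposal correctly reduces the conjecture to these foundational statements, but each of them is itself open; none of the hard steps is actually carried out, and as written the argument would not close.
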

In fact, there are different levels to this, since one can now ask about equivalences between LA-groupoids and double groupoids. In order to do this, one can embed both of these into the category of ``LA-groupoids over double Lie groupoids". One can then consider the category of \textit{$\text{L}_\infty$-algebroids over Lie $\infty$-groupoids} \textbf{over} \textit{double Lie $\infty$-groupoids}. This would be ``level 2", and one can continue until ``level $\infty$".
\begin{comment}\begin{conjecture}
Let $G$ be a Lie group with finite dimensional classifying space $BG\,.$ Let $H\rightrightarrows H^0$ be a proper Lie groupoid. Then principal $G$-bundles over $H$ are in one-to-one correspondence with equivalence classes of maps $TH\to T(BG)\,.$
\end{conjecture}
\end{comment}
\begin{comment}\vspace{3mm}\\\textbf{Question:} Let $G$ be a Lie group with finite dimensional classifying space $BG\,.$ Are prinicpal $G$-bundles over a Lie groupoid $H\rightrightarrows H^0$ related to maps $TH\to T(BG)\,?$
\end{comment}
\begin{remark}Once again, if some notion defined with respect to Lie groupoids/algebroids can be interpreted exactly as a functor in the aforementioned higher category, then this notion will automatically be invariant under equivalence (eg. we expect this to be true for cohomology and representations up to homotopy).
\end{remark}
\begin{remark}
This notion of smooth homotopy equivalence naturally generalizes to topological groupoids.\ However, in order for many of the same properties to hold one should put some restrictions on the source and target maps. A nice and fairly large class of maps, which include both fibrations and topological submersions, are homotopic submersions (see~\cite{gael}), or microfibrations (see~\cite{raptis}). Noohi discusses some appropriate classes of maps to use in~\cite{noohi1}.
\end{remark}
\chapter{Conjectures About van Est (for Double Groupoids) and Cohomology}
There are many natural ways to generalize what has been done in this thesis. One can go to higher groupoids ($L_{\infty}$-algebroids), $n$-fold groupoids, or a combination of the two. For example, one can prove a van Est theorem from double groupoids to LA-groupoids, and one from double groupoids to double Lie algebroids.
\begin{conjecture}
There is a van Est map from the cohomology of a double Lie groupoid to the cohomology of its LA-groupoid. If the differentiation takes place in the vertical direction and if the vertical source fiber (a fiber in the (2,1)-category sense of this thesis) is $n$-connected, then the van Est map is an isomorphism up to degree $n$ and is injective in degree $n+1\,.$ 
\end{conjecture}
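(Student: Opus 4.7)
The plan is to reduce this to the foliated/stack version of the van Est map already proved in Part 2, by treating a double Lie groupoid $G$ as a groupoid object in the (2,1)-category of Lie groupoids and then applying the machinery of fibration replacements levelwise. Concretely, writing $G^{\bullet\bullet}$ for the associated bisimplicial manifold and thinking of the vertical direction as the direction of differentiation, I would first identify the target LA-groupoid: the vertical tangent structure gives an LA-groupoid over the horizontal base groupoid $H$, and this is exactly the LA-groupoid whose cohomology appears on the right side of the conjecture. The van Est map itself is then defined in complete analogy with Definition \ref{van Est} and its stacky analogue in Part 2: replace the vertical map (the inclusion of the horizontal base $H$ into $G$ viewed as a morphism in the (2,1)-category) by its canonical fibration replacement, pull back cohomology classes from $H^*(G,M)$ along $\kappa^{-1}$, and then resolve by fiberwise (vertical leafwise) differential forms valued in the appropriate module. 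The resulting composite lands in the cohomology of the vertical LA-groupoid, which one should verify is independent of the fibration replacement via the Morita invariance results of Section~\ref{defLAM}.

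Next I would carry out the isomorphism argument. The fibration replacement of the vertical map gives a bisimplicial (in fact trisimplicial, once one takes nerves in the remaining direction) object whose ``vertical fibers'' are precisely the (2,1)-categorical fibers of the inclusion of the horizontal base into $G$ — by hypothesis these are $n$-connected. Applying Lemma~\ref{bisimpliciall} (or rather its natural extension to trisimplicial objects, which follows from the same Grothendieck/Leray spectral sequence argument iterated once more, essentially as in Theorem~\ref{spectral}) shows that the pullback map along $\kappa^{-1}$ is an isomorphism up to degree $n$ and injective in degree $n+1$. The second step — replacing $\kappa^{-1}\mathcal{O}(M)$ by its resolution through vertical leafwise forms — is always an isomorphism, exactly as in the proofs of Proposition~\ref{ve groupoid} and Theorem~\ref{ve stacks}.

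The order of the main steps would be: (i) state precisely what ``vertical source fiber'' means by constructing the canonical fibration replacement of $H \hookrightarrow G$ in the (2,1)-category of Lie groupoids internal to Lie groupoids, so that one has a genuine bisimplicial model for the fibers; (ii) construct the van Est map, factoring it through the cohomology of this fibration replacement; (iii) prove the spectral-sequence isomorphism statement via the bisimplicial Leray argument; (iv) identify the cohomology of the leafwise resolution with the LA-groupoid cohomology of the vertical LA-groupoid.

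The main obstacle will be step (iv), namely identifying the foliated cohomology of the bisimplicial fibration replacement with the intrinsic LA-groupoid cohomology of the vertical LA-groupoid over $H$. In Part 2 this identification rested on Morita invariance of LA-groupoid cohomology (cf.\ Section~\ref{defLAM} and Lemma~\ref{iso of coh}), but for double Lie groupoids the ambient category of weak equivalences is subtler — as noted in Remark~\ref{generate}, one has weak equivalences in both the horizontal and vertical directions, and the appropriate notion must be generated by both. Establishing Morita invariance of LA-groupoid cohomology with general module coefficients, for the class of equivalences relevant here, is the key technical input I would need to make rigorous before the argument goes through; everything else is a direct transcription of the Part~2 proof. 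A further subtlety, which I would expect to be routine but worth checking, is the description of the image in degree $n+1$ as those classes vanishing along the vertical fibers, in the appropriate LA-groupoid cohomological sense.
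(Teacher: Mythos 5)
You should be aware that the paper does not prove this statement: it is stated explicitly as a conjecture, left for future work, and the author only remarks that it would imply the van Est isomorphism theorem of Part 2. So there is no proof in the paper to compare yours against; what you have written is the natural strategy the thesis itself gestures at (differentiate in the vertical direction, replace the vertical inclusion by the canonical fibration replacement, pull back, resolve by vertical leafwise forms, and run the Leray/bisimplicial argument of Lemma~\ref{bisimpliciall} one simplicial degree higher). As a plan it is faithful to the Part~2 template and correctly structured.

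However, it is a plan and not a proof, and the gaps you flag as ``routine'' or ``to be made rigorous'' are exactly the reasons the author leaves this as a conjecture rather than a theorem. Concretely: (1) your step (iv) requires Morita invariance of LA-groupoid cohomology with general module coefficients for a class of weak equivalences generated by \emph{both} the horizontal and vertical directions; the paper only cites such invariance for representation coefficients (Waldron) and explicitly says the general case is ``expected'' — so the key identification of the leafwise-resolved cohomology of the fibration replacement with the intrinsic cohomology of the vertical LA-groupoid is not available. (2) The notion of ``vertical source fiber'' of a double Lie groupoid in the (2,1)-categorical sense, and the claim that its $n$-connectivity is what controls the Leray spectral sequence of the trisimplicial object, needs an actual construction: one must show the fibration replacement of the vertical unit inclusion is levelwise locally fibered with fibers that are (Morita equivalent to) the stated fibers, degree by degree in the remaining simplicial direction. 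This is the analogue of Lemma~\ref{bar} plus Proposition~\ref{equiv1}, and for a general double Lie groupoid (where the two groupoid structures interact through the double source condition) it is not a formal transcription of the Part~2 argument. (3) The extension of Lemma~\ref{bisimpliciall} from bisimplicial to trisimplicial spaces is plausible but is itself unproven in the paper. Until these three inputs are established, the argument does not close; identifying them, as you have done, is the right first step, but none of them is filled in by your proposal.
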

Let us remark once again that this result would imply the van Est isomorphism theorem given in Part 2 of this thesis. In addition, one can also generalize the results of this thesis to the Bott-Shulman-Stashef complex (see~\cite{BS}). 
\vspace{3mm}\\
As a final remark of this thesis, we wish to make an observation about cohomology which we conjecture generalizes, and is related to the isomorphism of Alexander-Spanier cohomology with de Rham cohomology (see~\cite{Meinrenken}). Following this discussion we will state two conjectures.
\vspace{3mm}\\We consider two examples. 
\begin{enumerate}
    \item First, let $G\rightrightarrows G^0$ be a Lie groupoid and consider the map $\iota:G^0\to G\,.$ We can consider cohomology classes on $G$ with coefficients in $\mathcal{O}\,.$ We can form the nerves and we get an induced map $\iota^{\bullet}:B^{\bullet}G^0\to B^{\bullet}G\,.$ We can then use the inverse image functor to obtain a sheaf on on $B^{\bullet}G^0\,.$ The cohomology of this sheaf, $H^*(B^{\bullet}G^0,\iota^{\bullet-1}\mathcal{O})\footnote{To be clear, the $-1$ here refers to the inverse image functor, not a shift in degree.}\,,$ gives the cohomology of the local groupoid of $G\,,$ which is known to be isomorphic $H^*(\mathfrak{g},\mathcal{O})$ (see~\cite{Meinrenken}). Now the map $\iota:G^0\to G$ is equivalent to the map $\pi_2:G\rtimes G\to G\,,$ and $H^*(B^{\bullet}(G\rtimes G),\pi_2^{\bullet-1}\mathcal{O})$ is also isomorphic to $H^*(\mathfrak{g},\mathcal{O})$ (by arguments shown in the proof of the main theorem in Part 1, for example), therefore 
\begin{equation}
    H^*(B^{\bullet}(G\rtimes G),\pi_2^{\bullet-1}\mathcal{O})\cong H^*(B^{\bullet}G^0,\iota^{\bullet-1}\mathcal{O})\,.
\end{equation}
Now let's emphasize the main point: these two cohomologies agree, and the morphisms $G^0\to G\,,$ $G\rtimes G\to G$ are equivalent (note that, the cohomology group on the right side is really the cohomology of a cochain complex of sheaves on $G^0$).
\item \vspace{3mm}Let's take a look at another example. Let $X$ be a manifold, and consider the sheaf $\mathcal{O}$ over a point $*\,.$ Consider the map $X\to *\,.$ We can compute the inverse image sheaf, and we get the sheaf of locally constant $\mathbb{R}$-valued functions on $X\,.$ The cohomology of this sheaf is $H^*(X,\mathbb{R})\,.$\footnote{We could form the nerves of the spaces and map, still thinking of them as groupoids, however since both groupoids are just manifolds we would obtain the same result.} Now the map $X\to *$ is equivalent to the map $\iota:X\to \text{Pair}(X)\,.$ We can form the nerves to get $\iota^{\bullet}:B^{\bullet}X\to B^{\bullet}\text{Pair}(X)\,,$ and we can take the cohomology of the inverse image sheaf, and what we get is the local cohomology of $\text{Pair}(X)$ (essentially, Alexander-Spanier cohomology), which is isomorphic to the de Rham cohomology of $X\,,$ ie.\begin{equation}
    H^*(X,\mathbb{R})\cong H^*(B^{\bullet}X\,,\iota^{\bullet-1}\mathcal{O})\,.
\end{equation}
Again, the point being that these two cohomologies are isomorphic and the morphisms $X\to *\,,X\to\text{Pair}(X)$ are equivalent (again, the cohomology group on the right side is really the cohomology of a cochain complex of sheaves on $X$). 
\end{enumerate}
Both of these examples have the same properties. We now state the following conjecture:
\vspace{1mm}\begin{conjecture}
Let $f_1:H_1\to G_1\,, f_2:H_2\to G_2$ be morphisms of Lie groupoids with a natural isomorphism $f_1\Rightarrow f_2\,.$ Let $M_1$ be a $G_1$-module and let $M_2$ be the corresponding $G_2$ module. Then the natural isomorphism induces an isomorphism\footnote{Again, to be clear, the $-1$ in~\ref{someq} refers to the inverse image functor, not a shift in degree.}
\begin{equation}\label{someq}
    H^*(B^\bullet H_1, f_1^{\bullet-1}\mathcal{O}(M_1))\cong  H^*(B^\bullet H_2, f_2^{\bullet-1}\mathcal{O}(M_2))\,.
\end{equation}
\end{conjecture}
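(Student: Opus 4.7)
The plan is to reduce the statement to the special case where $H_1=H_2$ and $G_1=G_2$, then prove that case by constructing an explicit simplicial homotopy induced by the natural isomorphism. As hinted by the examples, the intended meaning of ``natural isomorphism $f_1\Rightarrow f_2$'' is that $f_1$ and $f_2$ represent the same morphism in the $(2,1)$-category of Lie groupoids, so that they agree after pre- and post-composing with suitable Morita maps. Thus the first step will be to assemble the two morphisms into an anafunctor: one exhibits a roof $H_1\xleftarrow{\cong}K\xrightarrow{\cong}H_2$ of Morita equivalences and a commuting diagram $G_1\xleftarrow{\cong}L\xrightarrow{\cong}G_2$, on which $f_1$ and $f_2$ pull back to genuine morphisms $K\to L$ that are connected by an honest natural transformation. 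By Morita invariance of Lie groupoid/simplicial cohomology (recorded earlier as Remark~\ref{morita inv}) and the fact that inverse images along Morita equivalences commute with pullback of $G$-modules, both $H^*(B^\bullet H_i,f_i^{\bullet-1}\mathcal{O}(M_i))$ become identified with the cohomology of $B^\bullet K$ with coefficients in the inverse image of $\mathcal{O}(M)$ on $B^\bullet L$, reducing us to the case of two honestly composable morphisms $f_1,f_2:K\to L$ with a natural transformation $\eta:f_1\Rightarrow f_2$.

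Next, I would produce the isomorphism in this reduced setting by a standard prism/simplicial-homotopy argument. A natural transformation $\eta$ is the same as a functor $K\times [1]\to L$, where $[1]$ is the interval groupoid on two objects. Applying the nerve gives a simplicial map $B^\bullet K\times \Delta^\bullet[1]\to B^\bullet L$ that restricts to $B^\bullet f_i$ on the two ends. Decomposing each prism $\Delta^n\times [1]$ into $(n+1)$ simplices of dimension $n+1$ in the usual way yields, for any sheaf $\mathcal{F}$ on $B^\bullet L$, an explicit cochain homotopy $s^n:\Gamma(B^n K,(B^\bullet f_2)^{-1}\mathcal{F}_n)\to \Gamma(B^{n-1}K,(B^\bullet f_1)^{-1}\mathcal{F}_{n-1})$ satisfying $\delta s+s\delta =(B^\bullet f_2)^{-1}-(B^\bullet f_1)^{-1}$. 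Applying this to $\mathcal{F}=\mathcal{O}(M)$ and to an acyclic (e.g.\ Godement) resolution and taking the associated double complex, the homotopy descends to a chain homotopy on the total complex. This shows that the pullback maps induced on cohomology by $f_1$ and $f_2$ coincide, and in particular are isomorphisms one way in terms of the other when combined with the analogous homotopy constructed from $\eta^{-1}$.

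Finally, one needs to check compatibility of the inverse image sheaves themselves under $\eta$: the natural transformation yields a canonical isomorphism of sheaves $(B^\bullet f_1)^{-1}\mathcal{O}(M)\cong (B^\bullet f_2)^{-1}\mathcal{O}(M)$ on $B^\bullet K$, because $\eta$ identifies the stalks $\mathcal{O}(M)_{f_1(k)}$ and $\mathcal{O}(M)_{f_2(k)}$ via the arrow $\eta(k)\in G^{(1)}$ and this identification is coherent with the module action. Composing this sheaf isomorphism with the prism homotopy gives the desired isomorphism of cohomology groups.

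\textbf{Main obstacle.} The conceptual steps are all standard simplicial homotopy theory, but the technical difficulty is in the very first reduction: to make the roof argument go through one must know that any two morphisms of Lie groupoids that become isomorphic as morphisms of stacks can indeed be connected by such an explicit commuting roof with natural transformation. In the more subtle examples (like $\iota:G^0\to G$ versus $\pi_2:G\rtimes G\to G$) this requires the full machinery of anafunctors and their $2$-morphisms, and the Morita invariance one needs is for the inverse image sheaves $f^{\bullet-1}\mathcal{O}(M)$, not just for $\mathcal{O}(M)_{[G^0/G]}$ itself. This is the step that needs the most care, since $f^{\bullet-1}\mathcal{O}(M)$ is not a sheaf on the stack but only on a particular simplicial presentation, and verifying that Morita refinement of $H$ produces an isomorphic cohomology is what truly underlies the conjecture.
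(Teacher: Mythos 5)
The paper offers no proof of this statement: it is explicitly presented as a conjecture, supported only by the two worked examples ($G^0\hookrightarrow G$ versus $\pi_2:G\rtimes G\to G$, and $X\to *$ versus $X\to\mathrm{Pair}(X)$), so there is no argument of the author's to compare yours against. Your reading of ``natural isomorphism $f_1\Rightarrow f_2$'' as equivalence of the two morphisms in the sense of diagram~\ref{morita maps} matches the examples, and your prism-homotopy argument is a sound way to handle the genuinely easy part of the problem, namely two honestly parallel morphisms $f_1,f_2:K\to L$ related by an actual natural transformation, with the conjugation action of the transformation on the module supplying the identification of the two inverse-image sheaves.

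The gap is in your first reduction, and you have in fact named it yourself in your closing paragraph: the claim that $H^*(B^\bullet H_i, f_i^{\bullet-1}\mathcal{O}(M_i))$ is unchanged when $H_i$ is replaced by a Morita-equivalent $K$ (and $G_i$ by a Morita-equivalent $L$) is not a consequence of Remark~\ref{morita inv}, because $f^{\bullet-1}\mathcal{O}(M)$ is not a sheaf on the stack $[H_i^0/H_i]$ --- the paper itself stresses this point when constructing the map $Q$. That invariance statement is not a technicality to be checked at the end; it is essentially the entire content of the conjecture. In the second motivating example your reduction would have to identify $H^*(B^\bullet X, f_2^{\bullet-1}\mathcal{O}_{\mathrm{Pair}(X)})$ (germs of smooth functions near the diagonals of the $X^{n+1}$, i.e.\ a smooth Alexander--Spanier cohomology) with $H^*(X,\mathbb{R})$, which is the Alexander--Spanier/de Rham comparison and requires an analytic input (a Poincar\'{e}-lemma-type contracting homotopy, or the acyclicity of the local pair groupoid), not a formal roof-and-homotopy argument. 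Likewise, for a general Morita map $p:K\to H$ the degreewise fibers of $B^\bullet K\to B^\bullet H$ need not be contractible, so Theorem~\ref{spectral theorem} does not apply directly either. Until the invariance of cohomology with inverse-image coefficients under Morita refinement is established, the proposal reduces the conjecture to itself.
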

\vspace{4mm}Note that, in the simple case where $M_1=G_1^0\times A$ for some abelian Lie group $A\,,$ with the trivial $G$-action, we have that $M_2=G_2^0\times A\,,$ with the trivial $G$-action.
\vspace{3mm}\\In particular, together with the work done in Part 2, this would imply the following result (which is essentially a generalization of~\Cref{important}, which is the key result used in proving the van Est isomorphism theorem):
\begin{conjecture}
Let $f:H\to G$ be a map of Lie groupoids such that $H^0\times_{G^0}G^{(1)}\to G^0$ is a surjective submersion, and let $M$ be a $G$-module. Suppose further that the (groupoid) fibers of $f$ are $n$-connected. Then we have an isomorphism up to degree $n\,,$ and an injection in degree $n+1\,,$ given by the following:
\begin{equation}
  H^*(B^{\bullet}G,\mathcal{O}(M))\to H^*(B^{\bullet}H,f^{\bullet-1}\mathcal{O}(M))\,.  
\end{equation}
\end{conjecture}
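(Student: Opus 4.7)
The plan is to reduce this to the bisimplicial Leray-type statement already established in the excerpt (the lemma labelled \texttt{bisimpliciall}) by replacing $f$ with a surjective submersion of bisimplicial manifolds via the canonical fibration replacement from Chapter 3. The hypothesis that $H^0\times_{G^0}G^{(1)}\to G^0$ is a surjective submersion is precisely what is needed for $\pi:H\ltimes P\rtimes G\to G$, with $P=H^0\times_{G^0}G^{(1)}$, to be a surjective submersion of Lie groupoids in the sense of Definition \ref{surj sub}. The canonical inclusion $\iota:H\hookrightarrow H\ltimes P\rtimes G$ is a Morita equivalence (it admits a retraction $p_1$ together with a 2-morphism to the identity), so by Morita invariance the pullback $f^*$ factors as the composition
\[
H^*(B^\bullet G,\mathcal{O}(M)) \xrightarrow{\pi^{-1}} H^*(B^\bullet(H\ltimes P\rtimes G),\pi^{-1}\mathcal{O}(M)) \xleftarrow[\cong]{\iota^{-1}} H^*(B^\bullet H, f^{\bullet-1}\mathcal{O}(M)).
\]
It therefore suffices to prove the isomorphism/injectivity statement for $\pi^{-1}$.

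The next step is to apply the canonical splitting to view $H\ltimes P\rtimes G$ as a double Lie groupoid, yielding a bisimplicial manifold $Y^{\bullet,\bullet}$ that maps into the nerve $B^\bullet G$, regarded as a bisimplicial manifold constant in the first direction. By the bar-type result (Lemma \ref{bar}), the total simplicial manifold of $Y^{\bullet,\bullet}$ is canonically identified with $B^\bullet(H\ltimes P\rtimes G)$ and hence computes the same cohomology, so it suffices to prove the result for the map $Y^{\bullet,\bullet}\to B^\bullet G$ of bisimplicial manifolds. In each bidegree $(i,j)$ this is a locally fibered map of manifolds whose fibers are, up to diffeomorphism, products of copies of the fiber of $f$; since finite products of $n$-connected spaces are $n$-connected, the simplicial fiber $F^{\bullet,j}$ of $Y^{\bullet,j}\to B^j G$ is an $n$-connected simplicial manifold for every $j$.

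With these fiber-connectivity bounds in hand the result follows directly from Lemma \ref{bisimpliciall}, which gives that $H^k(B^\bullet G,\mathcal{A})\to H^k(Y^{\bullet,\bullet},\pi^{-1}\mathcal{A})$ is an isomorphism for $k\le n$ and injective for $k=n+1$ (with image in degree $n+1$ consisting of classes vanishing on each fiber, which yields a bonus description we can record in a remark). Tracing this back through the Morita equivalence $\iota^{-1}$ completes the proof.

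The main obstacle is the identification of the simplicial fibers $F^{\bullet,j}$ with (products of) the $(2,1)$-categorical fibers of $f$ in a way that actually transfers $n$-connectedness. This identification is provided, in principle, by the compatible $G$-action on the fibration replacement (Proposition \ref{fibration property}), which gives the canonical splitting and hence a product description of the fibers at each bidegree; but making this fully precise — especially confirming that the $(2,1)$-categorical fiber agrees up to weak equivalence with the strict fiber appearing in $Y^{\bullet,\bullet}$ — is the technical heart of the argument and is where our quasifibration lemmas (Propositions \ref{equiv1}, \ref{equiv2}) must be carefully deployed.
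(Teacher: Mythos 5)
There is a genuine gap, and it sits exactly where the paper itself locates the open problem. Note first that the statement you are proving is presented in the thesis as a \emph{conjecture}: the text says only that it ``would'' follow from the preceding conjecture (that a natural isomorphism $f_1\Rightarrow f_2$ of groupoid morphisms induces an isomorphism $H^*(B^\bullet H_1, f_1^{\bullet-1}\mathcal{O}(M_1))\cong H^*(B^\bullet H_2, f_2^{\bullet-1}\mathcal{O}(M_2))$) together with the work of Part 2. Your architecture --- fibration replacement, canonical splitting, Lemma on the bar/total complex, and the bisimplicial Leray lemma --- is the intended one, but the step you dispatch with the phrase ``so by Morita invariance the pullback $f^*$ factors as\dots'' is precisely that unproven conjecture in disguise. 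The sheaves $\pi^{-1}\mathcal{O}(M)$ on $B^\bullet(H\ltimes P\rtimes G)$ and $f^{\bullet-1}\mathcal{O}(M)$ on $B^\bullet H$ are \emph{not} sheaves on the associated stacks (the paper makes this point explicitly when constructing the map $Q$: inverse-image sheaves fail the equivariance/descent condition and must be resolved by sheaves on stacks before Morita invariance applies). Consequently Remark on Morita invariance of stack cohomology does not give you that $\iota^{-1}$ is an isomorphism. What you actually need is that the retraction $p_1$ and the $2$-morphism $c:\iota p_1\Rightarrow \mathbbm{1}$ induce mutually inverse maps on inverse-image cohomology, i.e.\ that $(\iota p_1)^{-1}$ and $\mathbbm{1}^{-1}$ agree on $H^*(B^\bullet(H\ltimes P\rtimes G),\pi^{-1}\mathcal{O}(M))$ even though $(\iota p_1)^{-1}\pi^{-1}\mathcal{O}(M)$ and $\pi^{-1}\mathcal{O}(M)$ are genuinely different sheaves. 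That is word-for-word the paper's first conjecture, for which only two examples (the Alexander--Spanier/de Rham comparison and the $G^0\hookrightarrow G$ versus $G\rtimes G\to G$ comparison) are offered as evidence. Your proof therefore assumes the hard part.

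Two smaller points. First, your description of the bidegree-$(i,j)$ fibers as ``products of copies of the fiber of $f$'' is not right: the simplicial fiber $F^{\bullet,j}$ over a point of $B^jG$ is the nerve of the action groupoid $H\ltimes P_{g^0}$, i.e.\ the $(2,1)$-categorical fiber of $f$, whose $i$-th level is a fibered product over $H^0$ rather than a cartesian power; what matters for the bisimplicial lemma is $n$-connectedness of this simplicial space as a whole, which is the hypothesis by definition of $n$-connectedness of the groupoid fiber, so the conclusion you want survives but the justification as written does not. Second, even granting everything, Lemma on the total complex (the ``bar'' lemma) is itself only sketched in the text, so a complete argument would also need to verify that the diagonal inclusion and projection are genuinely inverse quasi-isomorphisms for these non-equivariant coefficients. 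If you want an honest theorem rather than a conditional one, the cleanest route is to isolate and prove the $2$-isomorphism-invariance statement for inverse-image cohomology (perhaps by exhibiting the natural transformation as a simplicial homotopy and constructing an explicit cochain homotopy on a Godement-type resolution), and only then run the reduction you describe.
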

\newpage
\vspace*{\fill} 
\begin{quote} 
\centering 
``Tell me,” the great twentieth-century philosopher Ludwig Wittgenstein once asked a friend, ``why do people always say it was natural for mankind to assume that the sun went around the Earth rather than that the Earth was rotating?” His friend replied, ``Well, obviously because it just looks as though the Sun is going around the Earth.” Wittgenstein responded, ``Well, what would it have looked like if it had looked as though the Earth was rotating?”
\vspace{2mm}\\― A conversation between Ludwig Wittgenstein and G. E. M. Anscombe.
\end{quote}
\vspace*{\fill}
\appendix\chapter{Appendix}
\section{Derived Functor Properties}\label{derived functor}
In this section we discuss some vanishing results for the derived functors of locally fibered maps. These results are particularly useful
when using the Leray spectral sequence.
\begin{definition}
A map $f:X\to Y$ between topological spaces is called locally fibered if for all points $x\in X$ there exists open sets $U\ni x$ and $V\ni f(x)\,,$ a topological space $F$ and a homeomorphism $\phi:U\to F{\mathop{\times}} V$ such that the following diagram commutes:
\[
\begin{tikzcd}
U \arrow{r}{\phi}\arrow{dr}{f} & F{\mathop{\times}} V\arrow{d}{p_2} \\
& V
\end{tikzcd}
\]
$\blacksquare$\end{definition}
\theoremstyle{definition}\begin{exmp}
If $X\,,Y$ are smooth manifolds and $f:X\to Y$ is a surjective submersion, then $f$ is locally fibered.
\end{exmp}
\begin{proposition}[The Canonical Resolution]\label{canonical resolution}
Let $M\to X$ be a family of abelian groups. Then there is a canonical acyclic resolution of $\mathcal{O}(M)$ that differs from the Godement resolution. It is given by the following: for a sheaf $\mathcal{S}\,,$ let $\mathbb{G}^0(\mathcal{S})$ be the first sheaf in the Godement resolution of $\mathcal{S}\,,$ ie. the sheaf of germs of $\mathcal{S}\,.$ Let $\mathcal{G}^0(M)$ be the sheaf of all sections of $M$ (including discontinuous ones). Let 
\begin{align*}
    \mathcal{G}^{n+1}(M)=\mathbb{G}^0(\textrm{coker}[\mathcal{G}^{n-1}(M)\to \mathcal{G}^n(M)])
\end{align*}
for $n\ge 0\,,$ where $\mathcal{G}^{-1}(M):=\mathcal{O}(M)\,.$ We then have the following acylic resolution of $\mathcal{O}(M):$
\begin{align*}
    0\to\mathcal{O}(M)\to \mathcal{G}^{0}(M)\to \mathcal{G}^1(M)\to\cdots\,.
\end{align*}
\end{proposition}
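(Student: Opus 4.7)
The plan is to mirror the classical argument that identifies the Godement resolution as an acyclic resolution: produce the differentials inductively as the composition of a canonical surjection onto a cokernel with a canonical injection into a flabby envelope, check exactness at each stage from this factorization, and establish acyclicity by showing every sheaf that appears is flabby.

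First I would set up the construction carefully. The natural embedding $\mathcal{O}(M)\hookrightarrow \mathcal{G}^0(M)$ (a continuous local section is in particular a possibly-discontinuous section) is a monomorphism, so the cokernel $\mathcal{C}^0:=\mathrm{coker}[\mathcal{O}(M)\to \mathcal{G}^0(M)]$ makes sense as a sheaf. More generally, once $\mathcal{G}^{n-1}(M)\to \mathcal{G}^n(M)$ has been defined, I would define the differential $d^n:\mathcal{G}^n(M)\to \mathcal{G}^{n+1}(M)$ as the composition
\begin{equation*}
\mathcal{G}^n(M)\twoheadrightarrow \mathrm{coker}[\mathcal{G}^{n-1}(M)\to \mathcal{G}^n(M)]\hookrightarrow \mathbb{G}^0\bigl(\mathrm{coker}[\mathcal{G}^{n-1}(M)\to \mathcal{G}^n(M)]\bigr)=\mathcal{G}^{n+1}(M),
\end{equation*}
using the fact that the canonical map $\mathcal{S}\to \mathbb{G}^0(\mathcal{S})$ into the sheaf of germs is always injective. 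That $d^{n+1}\circ d^n=0$ is then immediate, because $d^n$ factors through the cokernel of $d^{n-1}$. Exactness at $\mathcal{G}^n(M)$ follows by the same factorization: the kernel of $d^n$ is the kernel of the projection to the cokernel, which is exactly the image of $d^{n-1}$. The base case handles exactness at $\mathcal{G}^0(M)$, and injectivity of $\mathcal{O}(M)\to \mathcal{G}^0(M)$ gives exactness at the far left.

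Next I would establish acyclicity of each $\mathcal{G}^n(M)$ by showing flabbiness. The sheaf $\mathcal{G}^0(M)$ of all (not necessarily continuous) sections of $M\to X$ is manifestly flabby: a section over any open $U$ is a function $U\to M$ splitting the projection, and we may extend it arbitrarily off $U$ (for instance, using a chosen global section coming from the zero section of the family of abelian groups), yielding a global section that restricts to the original. Likewise, for any sheaf $\mathcal{S}$ the Godement sheaf of germs $\mathbb{G}^0(\mathcal{S})=\prod_{x\in X}\mathcal{S}_x$ (with the discrete topology on stalks) is flabby by the standard argument. Since flabby sheaves are acyclic for the global sections functor, every $\mathcal{G}^n(M)$ is acyclic.

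The hard part, if anything, will be the bookkeeping that keeps the construction genuinely canonical and functorial in $M$: one must be careful that the inductive step depends only on the sheaf $\mathcal{C}^n$ and the previously-constructed differentials, not on any auxiliary choices, so that the resolution can be compared with the Godement resolution (as \Cref{double complex} will require when we take cohomology). The main technical point to verify is that $\mathcal{G}^0(M)$ really is a sheaf of abelian groups and that its formation is functorial on morphisms of families of abelian groups; once this is in place, the inductive construction above produces the claimed acyclic resolution, which can then be substituted for the Godement resolution whenever it is computationally convenient.
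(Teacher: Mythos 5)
Your proposal is correct, and it is the standard argument that the statement presupposes: the paper states this proposition without proof, and your factorization of each differential through the cokernel followed by the injection into the Godement envelope, together with flabbiness of $\mathcal{G}^0(M)$ (using the identity section of the family of abelian groups to extend) and of each $\mathbb{G}^0(\mathcal{S})$, gives exactly the exactness and acyclicity claimed. One small point worth making explicit: the resolution is later used (in \Cref{restriction is zero}) to compute derived pushforwards $R^\bullet f_*$, not just $\Gamma$, so you should note that flabby sheaves are acyclic for any direct image functor, which your argument already delivers.
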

\begin{definition}[see \cite{Bernstein}]\label{def n acyclic}
A continuous map $f:X\to Y$ is called $n$-acyclic if it satisfies the following conditions:
\begin{enumerate}
    \item For any sheaf $\mathcal{S}$ on $Y$ the adjunction morphism $\mathcal{S}\mapsto R^0f_*(f^{-1}\mathcal{S})$ is an isomorphism and 
    $R^if_*(f^{-1}\mathcal{S})=0$ for all $i=1\,,\ldots\,,n\,.$
    \item For any base change $\tilde{Y}\to Y$ the induced map $f:X{\mathop{\times}}_Y \tilde{Y}\to\tilde{Y}$ satisfies property 1.
\end{enumerate}
$\blacksquare$\end{definition}
\begin{theorem}[see \cite{Bernstein}, criterion 1.9.4]\label{n acyclic}
Let $f:X\to Y$ be a locally fibered map. Suppose that all fibers of $f$ are $n$-acyclic (ie. $n$-connected). Then $f$ is $n$-acyclic. 
\end{theorem}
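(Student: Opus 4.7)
The plan is to establish both conditions of $n$-acyclicity by reducing to a local model via the locally fibered hypothesis, and then computing stalks of the derived pushforwards in terms of sheaf cohomology of the fibers, where the $n$-connectedness assumption will kill the relevant cohomology groups.

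First, since the statement is local on $Y$ (stalks of $R^i f_* G$ at $y$ can be computed on any open neighborhood of $y$), and since $f$ is locally fibered, I would cover $X$ by opens $U_\alpha$ homeomorphic to $F_\alpha \times V_\alpha$ over $V_\alpha \subset Y$ such that $f\vert_{U_\alpha}$ is the projection $p_2$. It suffices to verify condition (1) of Definition \ref{def n acyclic} for projection maps $p_2: F \times V \to V$ with $F$ being $n$-connected. Condition (2) is handled by noting that a base change $\tilde{Y} \to Y$ pulls back a locally fibered map to a locally fibered map with the same (or pulled-back) fibers, so it reduces to condition (1) again; this is essentially automatic once condition (1) is in hand.

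Next, for the projection $p_2: F \times V \to V$ and a sheaf $\mathcal{S}$ on $V$, I would compute the stalks $(R^i p_{2*} p_2^{-1}\mathcal{S})_y$ for $y \in V$. The key point is that $p_2^{-1}\mathcal{S}$, when restricted to the fiber $F \times \{y\}$, is the constant sheaf on $F$ with value $\mathcal{S}_y$. Using a cofinal family of neighborhoods of $y$ of the form $F \times W$, one has
\begin{equation}
(R^i p_{2*} p_2^{-1}\mathcal{S})_y \;=\; \varinjlim_{W \ni y} H^i(F \times W,\, p_2^{-1}\mathcal{S}) \;\cong\; H^i(F, \mathcal{S}_y),
\end{equation}
where the second isomorphism follows by shrinking $W$ enough to trivialize $\mathcal{S}$ (or more carefully, by a Leray-type argument on the second factor together with the fact that $\mathcal{S}_y$ is a colimit). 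Since $F$ is $n$-connected, $H^i(F, A) = 0$ for $1 \le i \le n$ and $H^0(F, A) = A$ for any abelian group $A$, yielding exactly the vanishing and the adjunction isomorphism required in condition (1).

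The main obstacle I expect is the stalk computation step: making the passage from $H^i(F \times W, p_2^{-1}\mathcal{S})$ to $H^i(F, \mathcal{S}_y)$ precise, because in general $p_2^{-1}\mathcal{S}$ is not the constant sheaf $\underline{\mathcal{S}_y}$ on $F \times W$ — it is only ``locally constant along the $F$-direction''. One standard way to handle this is to use the Künneth-type formula for sheaf cohomology on a product, or to apply the Leray spectral sequence for the \emph{other} projection $p_1: F \times W \to F$ combined with the fact that $(p_{1*} p_2^{-1}\mathcal{S})_x$ is just $\mathcal{S}(W)$ for $W$ small. Alternatively, one can use the fact (which is where $n$-connectedness of $F$ enters through its homotopy type) that $p_2^{-1}\mathcal{S}$ is cohomologically locally constant along $F$, and pass through a good cover argument on $F$. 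This is essentially the content of criterion 1.9.4 in \cite{Bernstein}, and once the local constancy is in place, the vanishing on the fiber is immediate from $n$-connectedness.
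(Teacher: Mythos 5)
There is no proof of this statement in the paper itself --- it is quoted from Bernstein--Lunts (criterion 1.9.4) --- so your argument has to stand on its own, and as written it has a genuine gap at the reduction step. The hypothesis ``locally fibered'' provides a product chart around each point of the \emph{source}: for $x\in X$ there are opens $U\ni x$, $V\ni f(x)$ with $U\cong F\times V$ over $V$. It does \emph{not} say that $f$ is locally a projection over the \emph{target}, i.e.\ it gives no basis of opens $V\subset Y$ with $f^{-1}(V)\cong F\times V$. So your claim that ``it suffices to verify condition (1) for projection maps $p_2:F\times V\to V$ with $F$ $n$-connected'' does not follow: in the stalk computation $(R^if_*f^{-1}\mathcal S)_y=\varinjlim_{V\ni y}H^i(f^{-1}(V),f^{-1}\mathcal S)$, the space $f^{-1}(V)$ is covered by many charts $F_\alpha\times V_\alpha$, and each local factor $F_\alpha$ is merely an open piece of the fiber $f^{-1}(y)$ carrying no connectivity hypothesis whatsoever --- the $n$-connectedness is assumed only of the whole fiber. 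Your argument therefore proves the theorem only for fiber bundles (maps locally trivial over $Y$), a strictly stronger hypothesis; the paper itself emphasizes this distinction with the example $\mathbb{R}^2\setminus\{(0,0)\}\to\mathbb{R}$, where local product charts exist but no single chart controls the derived pushforward near the bad fiber.

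The missing content --- which is precisely what criterion 1.9.4 is about --- is the passage from the local charts to the global fiber: one must show that for a locally fibered map the natural map $(R^qf_*f^{-1}\mathcal S)_y\to H^q\bigl(f^{-1}(y),\underline{\mathcal S_y}\bigr)$ is an isomorphism (a non-proper base-change statement), typically via a \v{C}ech/Mayer--Vietoris or hypercover argument gluing the charts $F_\alpha\times V_\alpha$ along the fiber; only then can the $n$-acyclicity of the full fiber $f^{-1}(y)$ be invoked to kill the cohomology in degrees $1,\dots,n$. Your product-case stalk computation and your observation that condition (2) reduces to condition (1) (base change preserves locally fibered maps and their fibers) are fine as far as they go, but the gluing step is the heart of the proof and is absent.
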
 
\begin{corollary}\label{restriction is zero}
Let $f:X\to Y$ be a locally fibered map and suppose that all fibers of $f$ are $n$-acyclic. Let $M$ be a family of abelian groups. Then if \begin{align*}
\alpha\in R^{n+1}f_*(f^{-1}\mathcal{O}(M))(Y)
\end{align*}
satisfies $\alpha\vert_{f^{-1}(y)}=0$  for all $y\in Y$ $($note that $\alpha\vert_{f^{-1}(y)}\in H^{n+1}(f^{-1}(y),f^{-1}M_y))\,,$ then $\alpha=0\,.$
\end{corollary}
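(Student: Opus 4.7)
The plan is to reduce the global statement to a stalkwise one and then compute the stalk of $R^{n+1}f_*(f^{-1}\mathcal{O}(M))$ at an arbitrary $y\in Y$ in terms of the fiber cohomology $H^{n+1}(f^{-1}(y),f^{-1}\mathcal{O}(M)|_{f^{-1}(y)})$, so that the hypothesis $\alpha|_{f^{-1}(y)}=0$ forces every stalk of $\alpha$ to vanish. Since a section of a sheaf is zero iff all of its germs are zero, this is enough.

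First I would fix $y\in Y$ and, using that $f$ is locally fibered, choose a neighborhood $V$ of $y$ small enough that $f^{-1}(V)\cong F\times V$ with $F=f^{-1}(y)$, compatibly with the projection to $V$. Under this identification the restriction of $f^{-1}\mathcal{O}(M)$ to $f^{-1}(V)$ becomes $p_2^{-1}(\mathcal{O}(M)|_V)$, where $p_2:F\times V\to V$ is the projection. The Leray spectral sequence for $p_2$ reads
\begin{equation*}
E_2^{p,q}=H^p(V,R^qp_{2*}(p_2^{-1}\mathcal{O}(M)|_V))\Rightarrow H^{p+q}(F\times V,p_2^{-1}\mathcal{O}(M)|_V).
\end{equation*}
The stalk $(R^qp_{2*}(p_2^{-1}\mathcal{O}(M)|_V))_v$ computes $H^q(F,M_v)$, so by the $n$-acyclicity of the fibers (which also holds under base change, by condition 2 of \Cref{def n acyclic}, so applies fiberwise), these sheaves vanish for $1\le q\le n$ and $R^0p_{2*}=\mathcal{O}(M)|_V$.

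Next I would pass to the germ at $y$ by taking the direct limit over a cofinal system of neighborhoods $V$. The $q=0$ contributions $H^p(V,\mathcal{O}(M))$ with $p\ge 1$ vanish in the limit for $V$ running through a base of paracompact neighborhoods over which $\mathcal{O}(M)$ is acyclic, so the only surviving $E_2$ term in total degree $n+1$ is $E_2^{0,n+1}=H^0(V,R^{n+1}p_{2*}(p_2^{-1}\mathcal{O}(M)|_V))$, whose stalk at $y$ is $H^{n+1}(F,M_y)$. A diagram chase shows that the composition of this identification with the edge map is exactly the restriction $\alpha\mapsto \alpha|_{f^{-1}(y)}$ defined in the statement. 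Thus $\alpha|_{f^{-1}(y)}=0$ for every $y\in Y$ means the germ $\alpha_y=0$ for every $y\in Y$, and hence $\alpha=0$.

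The main technical obstacle will be the limit step: ensuring that $\lim_{V\ni y}H^p(V,\mathcal{O}(M))=0$ for $p\ge 1$, and that the convergence of the Leray spectral sequence commutes with this direct limit. For paracompact $Y$ and families of abelian groups whose sheaves of sections are sufficiently fine, this is standard, but one needs to verify it (or appeal to the canonical resolution of \Cref{canonical resolution} to replace $\mathcal{O}(M)$ by acyclic sheaves and compute $R^{n+1}f_*$ directly). A slightly cleaner variant would bypass the spectral sequence altogether: use \Cref{n acyclic} applied to the trivial local model $F\times V\to V$ to identify $R^{n+1}p_{2*}(p_2^{-1}\mathcal{O}(M)|_V)$ with the sheafification of $v\mapsto H^{n+1}(F,M_v)$, and then observe that the restriction map to any fiber is precisely the stalk map into this sheaf.
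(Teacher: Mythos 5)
Your reduction to germs is fine as far as it goes: a section of $R^{n+1}f_*(f^{-1}\mathcal{O}(M))$ vanishes iff all of its germs do, and the limit $\varinjlim_{V\ni y}H^p(V,\mathcal{O}(M))=0$ for $p\ge 1$ that you worry about is automatic for any sheaf (it is the stalk of an exact complex), so that is not the real obstacle. The genuine gap is the step where you identify the stalk of $R^{n+1}p_{2*}(p_2^{-1}\mathcal{O}(M)|_V)$ at $y$ with $H^{n+1}(F,M_y)$ and declare the edge map to be restriction to the single fiber over $y$. That stalk is by definition $\varinjlim_{W\ni y}H^{n+1}(F\times W,p_2^{-1}\mathcal{O}(M))$, and identifying it with $H^{n+1}(F,M_y)$ is a base-change statement in degree exactly $n+1$ --- precisely the degree that \Cref{n acyclic} does \emph{not} control: the definition of an $n$-acyclic map only gives $R^if_*(f^{-1}\mathcal{S})=0$ for $1\le i\le n$ and is silent about $R^{n+1}$, so your ``cleaner variant'' is reading into the theorem something it does not assert. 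For non-compact fibers this identification can fail, since $H^{n+1}(F,-)$ need not commute with the filtered colimit of coefficient groups $\Gamma(W,\mathcal{O}(M))\to M_y$. Note also that your argument only ever uses the vanishing of $\alpha$ on the one fiber $f^{-1}(y)$, whereas the statement assumes (and genuinely needs) vanishing on all fibers; if your stalk identification were correct, the corollary would hold under a much weaker hypothesis, which is a warning sign.

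The paper's proof sidesteps this by staying global: it resolves $\mathcal{O}(M)$ by the canonical resolution $\mathcal{G}^{\bullet}(M)$ of \Cref{canonical resolution}, whose terms are sheaves of arbitrary (possibly discontinuous) sections and therefore decompose over the points of $Y$; it pulls this back by the exact functor $f^{-1}$; it shows that ``$\alpha$ restricts to zero on every fiber'' is equivalent to ``$\alpha$ dies under $R^{n+1}f_*(f^{-1}\mathcal{O}(M))\to R^{n+1}f_*(f^{-1}\mathcal{G}^{0}(M))$''; and it then deduces injectivity of that map from the degree-$\le n$ vanishing supplied by \Cref{n acyclic} applied to the quotient sheaf $\mathcal{G}^{0}(M)/\mathcal{O}(M)$. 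To repair your argument you would have to exploit the vanishing of $\alpha$ on all the fibers over $V$ at once, which in effect amounts to running the resolution argument locally rather than computing a single stalk.
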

\begin{proof}
By Proposition~\ref{canonical resolution} we have the following resolution of $\mathcal{O}(M)\,:$
\begin{align*}
    0\to\mathcal{O}(M)\to \mathcal{G}^{0}(M)\to \mathcal{G}^1(M)\to\cdots\,.
\end{align*}
Since $f^{-1}$ is an exact functor, we obtain the following resolution of $f^{-1}\mathcal{O}(M):$
\begin{align}\label{resolution}
    0\to f^{-1}\mathcal{O}(M)\to f^{-1}\mathcal{G}^{0}(M)\to f^{-1}\mathcal{G}^1(M)\to\cdots\,.
\end{align}
Hence, \begin{align*}
    R^\bullet f_*(f^{-1}\mathcal{O}(M))=R^\bullet f_*(f^{-1}\mathcal{G}^{0}(M)\to f^{-1}\mathcal{G}^1(M)\to\cdots)\,.
\end{align*}
One can show that $\alpha\vert_{f^{-1}(y)}=0$ for all $y\in Y\iff \alpha\mapsto 0$ under the map induced by $f^{-1}\mathcal{O}(M)\to f^{-1}\mathcal{G}^{0}(M)\,,$
hence we obtain the result by using Theorem~\ref{n acyclic}.
\end{proof}
\begin{theorem}\label{important}
Let $f:X\to Y$ be a locally fibered map such that all its fibers are $n$-acyclic. Let $M\to Y$ be a family of abelian groups. 
Then the following is an exact sequence for $0\le k\le n+1:$
\begin{align*}
& 0\to H^k(Y,\mathcal{O}(M))\overset{f^{-1}}{\to} H^k(X, f^{-1}\mathcal{O}(M))
\\& \to
\prod_{y\in Y} H^k(f^{-1}(y),\mathcal{O}((f^*M)\vert_{f^{-1}(y)}))\,.
\end{align*}
In particular, $H^k(Y,\mathcal{O}(M))\overset{}{\to} H^k(X, f^{-1}\mathcal{O}(M))$
is an isomorphism for $k=0\,,\ldots\,,n$ and is injective for $k=n+1\,.$
\end{theorem}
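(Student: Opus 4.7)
The plan is to run the Leray spectral sequence of $f$ with coefficients in $f^{-1}\mathcal{O}(M)$ and exploit the $n$-acyclicity hypothesis to collapse almost all of the low rows. Concretely, I would invoke Theorem~\ref{spectral} to obtain
\[
E_2^{p,q} \;=\; H^p\bigl(Y,\,R^q f_*(f^{-1}\mathcal{O}(M))\bigr) \;\Longrightarrow\; H^{p+q}\bigl(X,\,f^{-1}\mathcal{O}(M)\bigr).
\]
By Theorem~\ref{n acyclic}, the hypothesis that $f$ is locally fibered with $n$-acyclic fibers guarantees that $R^0 f_*(f^{-1}\mathcal{O}(M)) \cong \mathcal{O}(M)$ (via the adjunction morphism) and $R^q f_*(f^{-1}\mathcal{O}(M)) = 0$ for $1 \le q \le n$. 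Therefore the entire strip $1 \le q \le n$ on the $E_2$ page is zero, and the only row contributing to $H^k$ for $k \le n$ is $q=0$, which reads $E_2^{k,0} = H^k(Y,\mathcal{O}(M))$. This gives the isomorphism $f^{-1}: H^k(Y,\mathcal{O}(M)) \xrightarrow{\cong} H^k(X, f^{-1}\mathcal{O}(M))$ for $0 \le k \le n$, taking care of the easier half of the statement.

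For $k = n+1$ the spectral sequence yields a short exact sequence of low-degree terms,
\[
0 \to H^{n+1}(Y,\mathcal{O}(M)) \xrightarrow{f^{-1}} H^{n+1}(X, f^{-1}\mathcal{O}(M)) \to H^0\bigl(Y, R^{n+1}f_*(f^{-1}\mathcal{O}(M))\bigr),
\]
because the differentials out of $E_2^{0,n+1}$ land in the vanishing rows. The remaining task is to relate $H^0(Y, R^{n+1}f_*(f^{-1}\mathcal{O}(M)))$ to the product of fiber cohomologies $\prod_{y\in Y} H^{n+1}(f^{-1}(y), \mathcal{O}(f^*M|_{f^{-1}(y)}))$. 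I would do this by constructing the natural restriction map sending a global section of $R^{n+1}f_*(f^{-1}\mathcal{O}(M))$ to its collection of germs evaluated on fibers --- concretely, choosing a representative in the resolution (\ref{resolution}) of Corollary~\ref{restriction is zero} and restricting to each $f^{-1}(y)$.

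The main obstacle, and the step that does the real work, is showing that this restriction-to-fibers map is injective, so that the exact sequence terminates as stated rather than with the derived-functor cohomology group. This is exactly the content of Corollary~\ref{restriction is zero}: any global section of $R^{n+1}f_*(f^{-1}\mathcal{O}(M))$ that restricts to zero on each fiber $f^{-1}(y)$ must vanish, which uses the resolution by the canonical sheaves $\mathcal{G}^\bullet(M)$ together with Theorem~\ref{n acyclic} applied one degree higher. Combining the collapse of the spectral sequence below degree $n+1$, the five-term exact sequence at degree $n+1$, and this injectivity criterion assembles the full exact sequence claimed, and the final ``in particular'' statement is then an immediate reading of the sequence.
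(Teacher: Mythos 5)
Your proposal is correct and follows essentially the same route as the paper, whose proof consists precisely of citing the Leray spectral sequence together with Corollary~\ref{restriction is zero}; you have simply filled in the details (the collapse of the $E_2$ page via Theorem~\ref{n acyclic}, the low-degree exact sequence at $k=n+1$, and the injectivity of restriction to fibers). No gaps.
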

\begin{proof}
This follows from the Leray spectral sequence (see section $0$) and Corollary~\ref{restriction is zero}.
\end{proof}
Similarly, we can generalize this result to simplicial manifolds:
\begin{theorem}\label{spectral theorem}
Let $f:X^\bullet\to Y^\bullet$ be a locally fibered morphism of simplicial topological spaces such that, in each degree, all of its fibers are $n$-acyclic. Let $M_\bullet\to Y^\bullet$ be a simplicial family of abelian groups. 
\begin{comment}Let $\mathcal{S}_\bullet$ be a sheaf on $Y^\bullet\,.$ Then the map
\begin{equation*}\label{isomorphism}
H^k(Y^\bullet,\mathcal{O}(M_\bullet))\xrightarrow{f^{-1}}H^k(X^\bullet, f^{-1}\mathcal{O}(M_\bullet))
\end{equation*}
is an isomorphism for $k=0\,,\ldots\,,n$ and is injective for $k=n+1\,.$ Its image in degree $n+1$ is all $\alpha\in H^{n+1}(X^\bullet, f^{-1}\mathcal{O}(M_\bullet))$ such that $r(\alpha)\vert_{f^{-1}(y)}=0\,,$ for all $y\in Y^0\,.$ 
\end{comment}
Then the following is an exact sequence for $0\le k\le n+1:$
\begin{align*}\label{Short exact}
& 0\xrightarrow{} H^k(Y^\bullet,\mathcal{O}(M_\bullet))\xrightarrow{f^{-1}}H^k(X^\bullet, f^{-1}\mathcal{O}(M_\bullet))
\\ &\xrightarrow{}
\prod_{y\in Y^0} H^k(f^{-1}(y),\mathcal{O}((f^*M_0)\vert_{f^{-1}(y)}))\,.
\end{align*}
In particular, $H^k(Y^\bullet,\mathcal{O}(M_\bullet))\overset{}{\to} H^k(X^\bullet, f^{-1}\mathcal{O}(M_\bullet))$
is an isomorphism for $k=0\,,\ldots\,,n$ and is injective for $k=n+1\,.$\footnote{See Section~\ref{derived functor} for more details.}
\end{theorem}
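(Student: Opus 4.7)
The plan is to reduce the statement to the manifold-level version (Theorem~\ref{important}) by combining it degree-wise with the Leray spectral sequence (Theorem~\ref{spectral}) for the simplicial map $f:X^\bullet\to Y^\bullet$. The key observation is that, for a morphism of simplicial topological spaces, the pushforward $f_*$ and its higher derived functors $R^q f_*$ are computed degree-wise: the sheaf $R^q f_*(\mathcal{S}_\bullet)$ on $Y^\bullet$ has as its level-$m$ component the ordinary sheaf $R^q(f_m)_*(\mathcal{S}_m)$ on $Y^m$, with the structure morphisms induced by the face and degeneracy maps. This follows because one can take a degree-wise injective resolution of $\mathcal{S}_\bullet$ in $\textrm{Sh}(Y^\bullet)$ (which exists by \Cref{cohomology simplicial}) and these resolutions restrict to injective resolutions on each $Y^m$.

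\paragraph{Main steps.} First I would invoke Theorem~\ref{spectral} to obtain the Leray spectral sequence
\begin{equation*}
E_2^{pq}=H^p\bigl(Y^\bullet,\,R^q f_*(f^{-1}\mathcal{O}(M_\bullet))\bigr)\;\Rightarrow\; H^{p+q}\bigl(X^\bullet,\,f^{-1}\mathcal{O}(M_\bullet)\bigr).
\end{equation*}
Next, for each fixed $m\ge 0$, the map $f_m:X^m\to Y^m$ is locally fibered with $n$-acyclic fibers, so by Theorem~\ref{n acyclic} (combined with condition (1) of Definition~\ref{def n acyclic}) the adjunction $\mathcal{O}(M_m)\to (f_m)_*(f_m^{-1}\mathcal{O}(M_m))$ is an isomorphism, and $R^q(f_m)_*(f_m^{-1}\mathcal{O}(M_m))=0$ for $1\le q\le n$. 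Assembling these level-wise statements yields the corresponding vanishing and adjunction isomorphism for the simplicial sheaves $R^q f_*(f^{-1}\mathcal{O}(M_\bullet))$ on $Y^\bullet$. Consequently $E_2^{p,q}=0$ for $1\le q\le n$, and $E_2^{p,0}=H^p(Y^\bullet,\mathcal{O}(M_\bullet))$. A standard spectral-sequence chase then gives that the edge map $H^k(Y^\bullet,\mathcal{O}(M_\bullet))\to H^k(X^\bullet,f^{-1}\mathcal{O}(M_\bullet))$ is an isomorphism for $0\le k\le n$ and is injective for $k=n+1$, with cokernel in degree $n+1$ controlled by $E_2^{0,n+1}=H^0(Y^\bullet,R^{n+1}f_*(f^{-1}\mathcal{O}(M_\bullet)))$.

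\paragraph{Identifying the image in degree $n+1$.} For the exactness at the third term, I would apply Corollary~\ref{restriction is zero} degree-wise. A class $\alpha\in H^{n+1}(X^\bullet,f^{-1}\mathcal{O}(M_\bullet))$ whose restriction $\alpha|_{f^{-1}(y)}$ vanishes for every $y\in Y^0$ maps to an element of $H^0(Y^\bullet,R^{n+1}f_*(f^{-1}\mathcal{O}(M_\bullet)))$ which, by definition of $\Gamma_{\text{inv}}$, sits inside $\Gamma(R^{n+1}(f_0)_*(f_0^{-1}\mathcal{O}(M_0)))$; Corollary~\ref{restriction is zero} shows this section is zero, so the edge map witnesses $\alpha$ as coming from $H^{n+1}(Y^\bullet,\mathcal{O}(M_\bullet))$ (modulo the further $E_\infty$ filtration in lower bidegrees, which vanishes because $E_2^{p,q}=0$ for $1\le q\le n$).

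\paragraph{Main obstacle.} The genuinely technical point, and where I expect to spend the most care, is checking that $R^q f_*$ on a morphism of simplicial spaces really is computed level-wise as a sheaf on $Y^\bullet$: one needs that the adjunction maps $\mathcal{O}(M_m)\to (f_m)_*(f_m^{-1}\mathcal{O}(M_m))$ assemble into a morphism of sheaves on $Y^\bullet$ compatible with all face and degeneracy maps, and that the degree-wise vanishing in condition (2) of Definition~\ref{def n acyclic} supplies the base-change needed to make the simplicial structure morphisms on $R^q f_*(f^{-1}\mathcal{O}(M_\bullet))$ isomorphisms for $q=0$ and zero for $1\le q\le n$. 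Once this compatibility is established, the argument is a formal consequence of the Leray spectral sequence together with Theorem~\ref{important} applied level by level.
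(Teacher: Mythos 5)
Your proposal is correct and follows essentially the same route as the paper: the paper proves the non-simplicial version (Theorem~\ref{important}) by citing the Leray spectral sequence together with Corollary~\ref{restriction is zero}, and obtains the simplicial statement by applying the simplicial Leray spectral sequence (Theorem~\ref{spectral}) with the acyclicity of $R^q f_*$ checked level-wise, exactly as you do. Your write-up in fact supplies more detail than the paper (which gives no explicit proof of the simplicial case), in particular on the level-wise computation of $R^q f_*$ and the identification of $E_2^{0,n+1}$ inside $\Gamma(R^{n+1}(f_0)_*(f_0^{-1}\mathcal{O}(M_0)))$.
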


\subsection{Lie Groupoids}\label{BG functor}
In this section we briefly review some important concepts in the theory of Lie groupoids.
\begin{definition}
A groupoid is a category $G\rightrightarrows G^0$ for which the objects $G^0$ and morphisms $G$ are sets and for which every morphism is invertible. A Lie groupoid is a groupoid $G\rightrightarrows G^0$ such that $G^0\,, G$ are smooth manifolds\begin{footnote}{We allow for the possibility that the manifolds are not Hausdorff, but all structure maps should be locally fibered.}\end{footnote}, such that the source and target maps, denoted $s\,,t$ respectively, are submersions, and such that all structure maps are smooth, ie.
\begin{align*}
    & i:G^0\to G
    \\ & m:G\sideset{_s}{_{t}}{\mathop{\times}} G\to G
    \\& \text{inv}:G\to G
\end{align*}
are smooth (these maps are the identity, multiplication/composition and inversion, respectively). A morphism between Lie groupoids $G\to H$ is a smooth functor between them.
$\blacksquare$\end{definition}
\begin{definition}\label{moritamapjeff}
Let $G\rightrightarrows G^0\,, K\rightrightarrows K^0$ be Lie groupoids. A Morita map $\phi:G\to K$ is a map such that
    \begin{enumerate}
        \item The composite map $G^0\sideset{_{\phi}}{_{s}}{\mathop{\times}}K^{(1)}\xrightarrow[]{p_2} K^{(1)}\xrightarrow[]{t} K^0$ is a surjective submersion.
        \item The following diagram is Cartesian
        \[
\begin{tikzcd}
G \arrow{r}{(s,t)} \arrow{d}{\phi} & G^0{\mathop{\times}} G^0 \arrow{d}{(\phi\,,\phi)} \\
K \arrow{r}{(s,t)} & K^0{\mathop{\times}} K^0
\end{tikzcd}
\]
    \end{enumerate}
   $\blacksquare$ \end{definition}
    \begin{definition}\label{moritaequivjeff}
We say that $G\,,K$ are Morita equivalent Lie groupoids if there is a third Lie groupoid $H$ with Morita maps to both $G$ and $K\,.$
$\blacksquare$\end{definition}
\begin{definition}
There is a functor 
\[\mathbf{B}^\bullet:\text{groupoids}\to \text{simplicial spaces}\,,\,G\mapsto\mathbf{B}^\bullet G\,,
\]
where $\mathbf{B}^0 G=G^0\,,\,\mathbf{B}^1 G=G\,,$ and 
\[\mathbf{B}^n G=\underbrace{G\sideset{_t}{_{s}}{\mathop{\times}} G \sideset{_t}{_{s}}{\mathop{\times}} \cdots\sideset{_t}{_{s}}{\mathop{\times}} G}_{n \text{ times}}\,,
\]
the space of 
$n$-composable arrows. Here the face maps are the source and target maps for $n=1\,,$ and for $(g_0\,,\ldots\,,g_n)\in \mathbf{B}^{n+1}G\,,$
\begin{align*}
& d_{n+1,0}(g_0\,,\ldots\,,g_n)=(g_1\,,\ldots\,,g_n)\,, 
\\& d_{n+1,i}(g_0\,,\ldots\,,g_n)=(g_0\,,\ldots\,,g_{i-1}g_i\,,\hat{g}_i\,,\ldots\,,g_n)\,,\,\;1\le i\le n
\\& d_{n+1,n+1}(g_0\,,\ldots\,,g_n)=(g_0\,,\ldots\,,g_{n-1})\,.
\end{align*}
The degeneracy maps are $\text{Id}:G^0\to G$ for $n=0\,,$ and 
\begin{align*}
\\& \sigma_{n-1,i}(g_0\,,\ldots\,,g_{n-1})=(g_0\,,\ldots\,,g_{i-1}\,,\text{Id}(t(g_i))\,,\hat{g}_i\,,\ldots\,,g_{n-1})\,,\,\;0\le i\le n-1
\\& \sigma_{n-1,n}(g_0\,,\ldots\,,g_{n-1})=(g_0\,,\ldots\,,g_{i-1}\,,\hat{g}_i\,,\ldots\,,g_{n-1}\,,\text{Id}(s(g_{n-1})))\,.
\end{align*}
A morphism $f:G\to H$ gets sent to $\mathbf{B}^\bullet f:\mathbf{B}^\bullet G\to \mathbf{B}^\bullet H\,,$ which acts as $f$ does for $n=0\,,1\,,$ and
\begin{align*}
    \mathbf{B}^n f(g_0\,,\ldots\,,g_{n-1})=(f(g_0)\,,\ldots\,,f(g_{n-1}))
\end{align*}
for $n> 1\,.$
$\blacksquare$\end{definition}
\begin{comment}
\begin{lemma}
If $G\leftarrow Z\rightarrow A$ is a Morita equivalence of topological groupoids, then their is a canonical equivalence of
their categories of modules, which we also denote by $Z\,.$ That is, there is a functor $Z:Mod(G)\to Mod(A)$ which is an equivalence of categories.
\end{lemma}
\begin{lemma}
If $G\leftarrow Z\rightarrow A$ is a Morita equivalence of topological groupoids and $M$ is a $G$-module, then
\begin{align*}
    H^*(G,M)=H^*(A,Z(M))\,.
\end{align*}
\end{lemma}
\end{comment}
\subsection{Cohomology of Sheaves on Stacks}\label{appendix:Cohomology of Sheaves on Stacks}
In this section we briefly review the Grothendieck topology and sheaves on a differentiable stack, as well as their cohomology. The following definitions are based on~\cite{Kai}, which include further details.
\begin{definition}
We call a family of morphisms $\{P_i\to P\}_i$ in $[G^0/G]$ a covering family if the corresponding family of morphisms on the base manifolds $\{M_i\to M\}_i$ is a covering family for the site of smooth manifolds, ie. a family of \'{e}tale maps such that $\coprod_i M_i\to M$ is surjective. This defines a Grothendieck topology on $[G^0/G]\,,$ thus we can now speak of sheaves on $[G^0/G]\,,$ ie. contravariant functors $\mathcal{S}:[G^0/G]\to\mathbf{Ab}$ such that the following diagram is an equalizer for all covering families $\{P_i\to P\}_i:$
\begin{align*}
\mathcal{S}(P)\to\prod_i \mathcal{S}(P_i)\rightrightarrows \prod_{i,j} \mathcal{S}(P_i{\mathop{\times}}_P P_j)\,.
\end{align*}
A morphism between sheaves $\mathcal{S}$ and $\mathcal{F}$ is a natural transformation from $\mathcal{S}$ to $\mathcal{F}\,.$
$\blacksquare$\end{definition}
\begin{definition}\label{stack cohomology}
Let $\mathcal{S}$ be a sheaf on $[G^0/G]\,.$ Define the global sections functor $\Gamma:\textrm{Sh}([G^0/G])\to \mathbf{Ab}$ by
\begin{align*}
\Gamma([G^0/G],\mathcal{S}):=\text{Hom}_{\text{sh}([G^0/G])}(\mathbb{Z}\,,\,\mathcal{S})\,,
\end{align*}
where $\mathbb{Z}$ is the sheaf on $[G^0/G]$ which assigns to the object
\[
\begin{tikzcd}
P\arrow{r}{}\arrow{d} & G^0
\\M
\end{tikzcd}
\]
the abelian group $H^0(M,\mathbb{Z})\,.$
$\blacksquare$\end{definition}
\begin{definition}
The global sections functor $\Gamma:\textrm{Sh}([G^0/G])\to \mathbf{Ab}$ is left exact and the category of sheaves on $[G^0/G]$ has enough injectives, so we define $H^*([G^0/G],\mathcal{S}):=R^*\Gamma(\mathcal{S})\,.$
$\blacksquare$\end{definition}
%\begin{theorem}\label{Morita}
%If $\mathcal{S}$ is a sheaf on $[G^0/G]$ and %$\phi:[K^0/K]\to [G^0/G]$ is an equivalence of %categories, we have that
%$H^\bullet(G,\mathcal{S})\cong %H^\bullet(K,\phi^{-1}\mathcal{S})\,.$
%\end{theorem}
\begin{theorem}[see~\cite{Kai}]\label{stack groupoid cohomology}
Let $\mathcal{S}$ be a sheaf on $[G^0/G]\,.$ 
%and let $\mathcal{S}(\mathbf{B}^\bullet G)$ be the %associated sheaf on $\mathbf{B}^\bullet G\,.$ 
Then 
\begin{align*}
    H^*([G^0/G],\mathcal{S})\cong H^*(\mathbf{B}^\bullet G,\mathcal{S}(\mathbf{B}^\bullet G))\,.
\end{align*}
\end{theorem}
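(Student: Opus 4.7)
The plan is to compare the two cohomologies by exhibiting a common resolution. Observe first that the functor $\Gamma([G^0/G],-)$ factors as the composition of the restriction functor $F:\mathrm{Sh}([G^0/G])\to \mathrm{Sh}(\mathbf{B}^\bullet G)$, sending $\mathcal{S}$ to $\mathcal{S}(\mathbf{B}^\bullet G)$, followed by the invariant sections functor $\Gamma_{\text{inv}}$ of Definition~\ref{cohomology simplicial}. Indeed, an invariant section of $\mathcal{S}(\mathbf{B}^\bullet G)$ is the same as a global section assigned to the canonical object $\mathbf{B}^0 G = G^0$ which is compatible with the two face maps $\mathbf{B}^1 G \to \mathbf{B}^0 G$, and one checks that this coincides with $\Gamma([G^0/G],\mathcal{S})$ as defined in Definition~\ref{stack cohomology}. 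If one can further show that $F$ sends injectives (or a sufficient class of acyclic objects) to $\Gamma_{\text{inv}}$-acyclic sheaves on $\mathbf{B}^\bullet G$, then the Grothendieck spectral sequence collapses and yields the desired isomorphism.

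The mechanism for this is the Godement construction on stacks already introduced. The plan is to replace injective resolutions by Godement resolutions: given $\mathcal{S}$ on $[G^0/G]$ form $\mathcal{S}\to\mathbb{G}^\bullet(\mathcal{S})$, and restrict to each level of the nerve to obtain a resolution $\mathcal{S}(\mathbf{B}^n G)\to \mathbb{G}^\bullet(\mathcal{S})(\mathbf{B}^n G)$ on the manifold $\mathbf{B}^n G$. By construction of $\mathbb{G}^\bullet$, these restricted sheaves are the (usual manifold) Godement sheaves of $\mathcal{S}(\mathbf{B}^n G)$, hence acyclic on $\mathbf{B}^n G$. Assembling these as $n$ varies produces a double complex $\mathcal{A}^{p,q}_\bullet = \mathbb{G}^p(\mathcal{S})(\mathbf{B}^q G)$ of sheaves whose rows resolve the simplicial sheaf $\mathcal{S}(\mathbf{B}^\bullet G)$ and whose columns are level-wise acyclic.

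By Remark~\ref{double complex}, the total complex of $\Gamma$-sections of $\mathcal{A}^{\bullet,\bullet}$ computes $H^*(\mathbf{B}^\bullet G,\mathcal{S}(\mathbf{B}^\bullet G))$; on the other hand, applying $\Gamma_{\text{inv}}$ to the rows first and then the simplicial differential yields the hypercohomology of the complex of sheaves $\mathbb{G}^\bullet(\mathcal{S})$ on the stack, which by the Godement property is precisely $H^*([G^0/G],\mathcal{S})$. Since both spectral sequences of the double complex converge to the cohomology of the total complex, the two cohomologies are canonically isomorphic. The main obstacle is the acyclicity claim for the restricted Godement sheaves $\mathbb{G}^p(\mathcal{S})(\mathbf{B}^q G)$ on $\mathbf{B}^q G$: one must verify that the Godement construction on the stack, when evaluated on the canonical principal bundle $\mathbf{B}^{q+1}G \to \mathbf{B}^q G$ of diagram \eqref{canonical object}, reproduces the classical Godement sheaves of the restricted sheaf $\mathcal{S}(\mathbf{B}^q G)$ on the manifold $\mathbf{B}^q G$, so that standard flasqueness (hence acyclicity) applies. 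Once this compatibility is in hand, the rest is a routine double-complex/spectral-sequence argument.
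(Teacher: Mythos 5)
The paper does not prove this statement; it is quoted from Behrend--Xu (\cite{Kai}), so your proposal can only be judged on its own terms. Your overall architecture --- factor $\Gamma([G^0/G],-)$ as $\Gamma_{\mathrm{inv}}\circ(\text{restriction to }\mathbf{B}^\bullet G)$, resolve by the stacky Godement sheaves, and compare the two filtrations of the resulting double complex --- is the right skeleton, and the "main obstacle" you flag (that $\mathbb{G}^p(\mathcal{S})(\mathbf{B}^q G)$ is the classical Godement sheaf of $\mathcal{S}(\mathbf{B}^q G)$, hence flasque) is indeed needed but is essentially immediate from the definition of $\mathbb{G}^p$ given in the paper. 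That settles the first spectral sequence and identifies the total complex with $H^*(\mathbf{B}^\bullet G,\mathcal{S}(\mathbf{B}^\bullet G))$ via Remark~\ref{double complex}.

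The genuine gap is on the other side. You assert that applying $\Gamma_{\mathrm{inv}}$ column-wise "yields the hypercohomology of $\mathbb{G}^\bullet(\mathcal{S})$ on the stack, which by the Godement property is precisely $H^*([G^0/G],\mathcal{S})$." Two distinct facts are being assumed here, and neither is established. First, the cohomology of the complex $\Gamma([G^0/G],\mathbb{G}^\bullet(\mathcal{S}))$ computes $H^*([G^0/G],\mathcal{S})$ only if the $\mathbb{G}^p(\mathcal{S})$ are $\Gamma$-acyclic \emph{on the stack} --- and the paper explicitly remarks that "the sheaves $\mathbb{G}^n(\mathcal{S})$ are not in general acyclic on stacks," so this does not follow from any "Godement property." Second, for the second spectral sequence of the double complex $C^{p,q}=\Gamma(\mathbf{B}^q G,\mathbb{G}^p(\mathcal{S})(\mathbf{B}^q G))$ to degenerate onto the column $\Gamma_{\mathrm{inv}}(\mathbb{G}^p(\mathcal{S})(\mathbf{B}^\bullet G))$, you need the augmented cosimplicial complex $0\to\Gamma_{\mathrm{inv}}\to C^{p,0}\to C^{p,1}\to\cdots$ to be exact for each $p$, i.e.\ the restricted Godement sheaves must have vanishing naive simplicial cohomology in positive degrees. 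This is false for arbitrary sheaves on $\mathbf{B}^\bullet G$ (it is exactly the difference between naive and derived groupoid cohomology), so it must be proved for the $\mathbb{G}^p(\mathcal{S})$ specifically; this is where one must actually use that $G^0\to[G^0/G]$ is a covering in the Grothendieck topology and that $\mathbf{B}^{\bullet+1}G\to\mathbf{B}^\bullet G$ is its \v{C}ech nerve (equivalently, one replaces the Godement resolution by an injective resolution on the stack and proves that injectives restrict to objects that are both level-wise acyclic and \v{C}ech-acyclic along the nerve). This cohomological-descent exactness is the real content of the theorem, and your proposal passes over it silently.
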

 \subsection{Abelian Extensions}\label{abelian extensions}
Here we review abelian extensions and central extensions of Lie groupoids and Lie Algebroids. 
\begin{definition}Let $M$ be a $G$-module for a Lie groupoid $G\rightrightarrows G^0$. A Lie groupoid extension of $G$ by $M$ is given by a Lie groupoid 
$E\rightrightarrows G^0$ and a sequence of morphisms
\begin{align*}
   1\to M\overset{\iota}{\to} E\overset{\pi}{\to} G\to 1\,,
\end{align*}
such that $\iota\,,\,\pi$ are the identity on $G^0\,;$ such that $\iota$ is an embedding and $\pi$ is a surjective submersion; such that if $m\in M\,,\,e\in E$ satisfy $s(m)=s(e)\,,$ then $e\iota(m)=\iota(\pi(e)\cdot m)e\,;$ in addition, we require that $E\to G$ be principal $M$-bundle with respect to the right action. If $M$ is a trivial $G$-module then $E$ will be called a central extension. If $A$ is an abelian Lie group then associated to it is a canonical trivial $G$-module given by $A_{G^0}\,,$ and by an $A$-central extension of $G$ we will mean an
extension of $G$ by the trivial $G$-module $A_{G^0}\,.$ Furthermore, there is a natural action of $M$ on $E\,,$ and we assume that with this action $E$ is a principal $M$-bundle.
$\blacksquare$\end{definition}
\begin{definition}Let $\mathfrak{m}$ be a $\mathfrak{g}$-representation for a Lie algebroid $\mathfrak{g}\to N$. A Lie algebroid
extension of $\mathfrak{g}$ by $\mathfrak{m}$ is given by a Lie algebroid 
$\mathfrak{e}\to N$ and an exact sequence of the form
\begin{align*}
    0\to \mathfrak{m}\overset{\iota}{\to}\mathfrak{e}\overset{\pi}{\to}\mathfrak{g}\to 0\,,
\end{align*}
such that $\iota\,,\,\pi$ are the identity on $N\,,$ and such that if $X\,,Y$ are local sections over an open set $U\subset N$ of
$\mathfrak{m}\,,\mathfrak{e}\,,$ respectively, 
then $\iota(L_{\pi(Y)}X)=[Y,\iota(X)]\,.$ If $\mathfrak{m}$ is a trivial $\mathfrak{g}$-module then $\mathfrak{e}$ will be called a central extension. Similarly to the previous definition, if $V$ is a finite dimensional vector space then associated to it is a canonical trivial $\mathfrak{g}$-module given by $N{\mathop{\times}} V\,,$ and by a $V$-central extension of $\mathfrak{g}$ we will mean an
extension of $\mathfrak{g}$ by the trivial $\mathfrak{g}$-module $N{\mathop{\times}} V\,.$
$\blacksquare$\end{definition}

\begin{proposition}[see \cite{Kai} and \cite{luk}]
With the above definitions, $H^1_0(G,M)$ classifies extensions of $G$ by $M\,,$ and $H^1_0(\mathfrak{g},M)$ classifies extensions of $\mathfrak{g}$ by $\mathfrak{m}\,.$
\end{proposition}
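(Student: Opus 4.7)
The plan is to establish both classification results by constructing explicit bijections between isomorphism classes of extensions and cohomology classes, using a cover-and-glue argument on the level of local sections/splittings. I will focus on the groupoid case; the Lie algebroid case follows the same outline but with connections in place of local sections, and is handled by the argument already sketched in Theorem~\ref{groupoid extension} of Part~1.

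First, from an extension $1\to M\xrightarrow{\iota} E\xrightarrow{\pi} G\to 1$, I would build a cocycle as follows. Since $\pi$ is a surjective submersion and restricts to the identity on $G^0$, I choose a cover $\{U_i\}$ of $G^0$ together with local sections $\sigma_i:U_i\to E$ of $\pi$ which send objects to identity arrows in $E$. On $U_i\cap U_j$ the expression $g_{ij}:=\sigma_i^{-1}\cdot\sigma_j$ takes values in $M$, and on the triple overlap $p_1^{-1}(U_i)\cap p_2^{-1}(U_j)\cap m^{-1}(U_k)\subset \mathbf{B}^2 G$ the expression $h_{ijk}:=\sigma_k^{-1}\cdot(\sigma_i\sigma_j)$ is a section of $M$, precisely the construction used in Theorem~\ref{groupoid extension}. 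A direct check using associativity in $E$ shows that $\{h_{ijk}\}$ is a degree $2$ \v Cech cocycle on $\mathbf{B}^\bullet G$ for the sheaf $\mathcal{O}(M)$, and since all $\sigma_i$ are the identity on $G^0$ this cocycle is normalized (i.e.\ lives in the truncated subcomplex $\mathcal{O}(M)^0_\bullet$). This yields an element of $H^2(\mathbf{B}^\bullet G,\mathcal{O}(M)^0_\bullet)=H^1_0(G,M)$. A different choice of sections $\sigma_i'=\sigma_i\cdot k_i$ (for $k_i$ a local section of $M$) modifies $h_{ijk}$ by the coboundary $\delta^* k$ in the truncated complex, so the class is independent of the choice of trivializing data, and isomorphic extensions give equal classes.

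Conversely, given a class represented by normalized data $(g_{ij},h_{ijk})$, I would reconstruct $E$ by gluing: on each $\pi^{-1}(U_i)\subset G$ take $E_i:=\pi^{-1}(U_i)\times_{G^0} M$, and glue $E_i$ to $E_j$ over the overlap using $g_{ij}$ acting on the $M$-factor; the cocycle identity for $h_{ijk}$ is precisely the associativity needed for the multiplication $(e,m)\cdot(e',m')=(ee', (\pi(e')^{-1}\cdot m)\, m'\, h)$ (with $h$ the appropriate cocycle value) to be well-defined and associative on $E$. That the resulting $E$ is an extension and that cohomologous cocycles produce isomorphic extensions is a direct verification using the normalization and the coboundary relation. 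Composing the two constructions gives the identity on both sides.

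The main obstacle, as usual in this sort of classification, is bookkeeping: checking that the precise signs/conventions in the differential of the truncated complex $\mathcal{O}(M)^0_\bullet$ match the associativity relations for $h_{ijk}$, and that ``normalized'' on the cocycle side corresponds exactly to ``$\sigma_i$ sends $G^0$ into the identity bisection of $E$'' on the geometric side. Once this dictionary is fixed, the rest is routine. For the Lie algebroid statement, one replaces local sections of $\pi:E\to G$ by linear splittings $\sigma_i:\mathfrak{g}\vert_{U_i}\to\mathfrak{e}\vert_{U_i}$ of $0\to\mathfrak{m}\to\mathfrak{e}\to\mathfrak{g}\to 0$ which vanish on the zero section; the analogue of $h_{ijk}$ is the pair consisting of the curvature $2$-forms $\omega_i\in\mathcal{C}^2(\mathfrak{g}\vert_{U_i},\mathfrak{m})$ and the differences $\sigma_j-\sigma_i\in \mathcal{C}^1(\mathfrak{g}\vert_{U_i\cap U_j},\mathfrak{m})$, which together form a degree~$2$ cocycle in the truncated Chevalley--Eilenberg complex and hence an element of $H^1_0(\mathfrak{g},M)$. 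The same gluing argument (using the adjoint action of $\mathfrak{g}$ on $\mathfrak{m}$) then reconstructs $\mathfrak{e}$ from such a cocycle and establishes the bijection.
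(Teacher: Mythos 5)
Your construction is correct and is the standard one: the paper itself states this proposition without proof (quoting \cite{Kai} and \cite{luk}), but your forward direction is precisely the $(\sigma_i,g_{ij},h_{ijk})$ construction the paper already uses in Theorem~\ref{groupoid extension}, and your gluing argument is the standard inverse, with the Lie algebroid case handled by the expected splitting/curvature dictionary. The one imprecision worth fixing is that the $\sigma_i$ must be local sections of the principal $M$-bundle $E\to G^{(1)}$ over an open cover of the whole arrow space $G^{(1)}$ (together with the induced covers of $\mathbf{B}^{\bullet}G$ needed to compute $H^2(\mathbf{B}^{\bullet}G,\mathcal{O}(M)^0_\bullet)$), not merely over a cover of $G^0$; otherwise the cocycle only records the extension near the identity bisection.
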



\begin{thebibliography}{9}
\bibitem{alveraz}
Álvarez, Daniel . \textit{Leaves of stacky Lie algebroids.} Comptes Rendus. Mathématique, Volume 358 (2020) no. 2, pp. 217-226. doi : 10.5802/crmath.37. https://comptes-rendus.academie-sciences.fr/mathematique/articles/10.5802/crmath.37/
\bibitem{BS}
Arias Abad, Camilo, and Crainic, Marius.
\textit{The Weil algebra and the Van Est isomorphism.} 
Ann. Inst. Fourier (Grenoble) 61 (2011), no. 3, 927–970.
\bibitem{abadc}
Arias Abad, Camilo and Crainic, Marius. \textit{Representations up to homotopy and Bott's spectral sequence for Lie groupoids.}
Advances in Mathematics, Volume 248, 2013, Pages 416-452, ISSN 0001-8708,
https://doi.org/10.1016/j.aim.2012.12.022.
\bibitem{abad}
Arias Abad, Camilo and Crainic, Marius. \textit{Representations up to homotopy of Lie algebroids.} Journal für die reine und angewandte Mathematik (Crelles Journal), vol. 2012, no. 663, 2012, pp. 91-126.
\bibitem{Quintero}
Arias Abad, Camilo and Quintero Vélez, Alexander and Vélez Vásquez, Sebastián. \textit{An $A_{\infty}$ version of the Poincar\'{e} lemma.} Pacific J. Math. 302 (2019), no. 2, 385–412. 
\bibitem{bundle}
Aschieri, P. and Cantini, L. and Jur\v{c}o, B. \textit{Nonabelian Bundle Gerbes, Their Differential Geometry and Gauge Theory.} Commun. Math. Phys. 254, 367–400 (2005). https://doi.org/10.1007/s00220-004-1220-6
\bibitem{baez2}
Baez, John C. and Lauda, Aaron D.
\textit{Higher-dimensional algebra. V: 2-Groups.}
Theory and Applications of Categories [electronic only] 12 (2004): 423-491. http://eudml.org/doc/124217.
%\bibitem{baez}
%Baez, John C., and Crans, Alissa S.
%\textit{Higher-dimensional algebra. VI: Lie 2-algebras.}
%Theory and Applications of Categories [electronic only] 12 (2004): 492-538. http://eudml.org/doc/124264.
\bibitem{bailey}
Bailey, Toby N. and Eastwood, M. and Gindikin, S. \textit{Smoothly parameterized Čech cohomology of complex manifolds.} The Journal of Geometric Analysis 15 (2004): 9-23.
\bibitem{bargmann}
Bargmann, Valentine. \textit{On unitary ray representations of continuous groups}, Annals of Mathematics (1954), 59 (1): 1–46, doi:10.2307/1969831, JSTOR 1969831.
\bibitem{baum}
Baum, Paul and Connes, Alain. \textit{Leafwise Homotopy Equivalence and
Rational Pontrjagin Classes.} Advanced Studies in Pure Mathematics 5, (1985), pp.I-14.
\bibitem{Kai}
Behrend, Kai and Xu, Ping. 
\textit{Differentiable Stacks and Gerbes.}
J. Symplectic Geom. 9 (2011), no. 3, 285-341.
\bibitem{Bernstein}
Bernstein, J. and Lunts, V.
\textit{Equivariant Sheaves and Functors.}
LNM 1578, (1991).
\bibitem{block}
Block, Jonathan and Smith, Aaron M. \textit{A Riemann Hilbert correspondence for infinity local systems.} arXiv: Algebraic Topology (2009).
\bibitem{bona}
Bonavolont\`{a}, Giuseppe and  Poncin, Norbert. \textit{On the category of Lie n-algebroids.}
Journal of Geometry and Physics, Volume 73 (2013), Pages 70-90.
\bibitem{brylinski}
Brylinski, Jean-Luc.
\textit{Differentiable Cohomology of Gauge Groups.}
arXiv:math/0011069 [math.DG], (2000).
%\bibitem{mclaughlin}
%Brylinski, J.L. and Mclaughlin, D. A.
%\textit{The Geometry of Degree-Four Characteristic Classes and of Line Bundles on Loop Spaces I.}
%Duke Mathematical Journal, Vol. 75 (1994).
\bibitem{buch}
Buchdahl, N. \textit{On the Relative De Rham Sequence.} Proceedings of the American Mathematical Society, vol. 87, no. 2, American Mathematical Society, 1983, pp. 363–66, https://doi.org/10.2307/2043718.
\bibitem{cabrera}
Cabrera, Alejandro and Drummond, Thiago. 
\textit{Van Est isomorphism for Homogeneous Cochains.}
Pacific J. Math. 287 (2017), no. 2, 297–336.
\bibitem{Canez}
Cañez,  Santiago. Double groupoids and the symplectic category. Journal of Geometric Mechanics, 2018, 10 (2) : 217-250. doi: 10.3934/jgm.2018009
\bibitem{cegarra}
Cegarra, A. and Remedios, J. \textit{The relationship between the diagonal and the bar constructions on a bisimplicial set.} Topology and its Applications, (2015), 153(1):21-51
\bibitem{catt}
Cattaneo, A. and Felder, G. \textit{A Path Integral Approach¶to the Kontsevich Quantization Formula.} Comm Math Phys 212, 591–611 (2000).
\bibitem{Crainic} 
Crainic, Marius.
\textit{Differentiable and Algebroid Cohomology, van Est Isomorphisms, and Characteristic Classes.} 
Commentarii Mathematici Helvetici, Vol.78, (2003) pp. 681-721.
\bibitem{rui}
Crainic, Marius and Fernandes, Rui Loja.
\textit{Integrabiltiy of Lie Brackets.}
Annals of Mathematics, 157 (2003), 575-620.
%\bibitem{crainic2}
%Crainic, Marius. 
%\textit{Prequantization and Lie brackets.}
%J. Symplectic Geom. 2 (2004), no. 4, 579-602. 
\bibitem{zhu}
Crainic, Marius and Zhu, Chenchang.
\textit{Integrability of Jacobi and Poisson structures.}
Annales de l'Institut Fourier, Volume 57 (2007) no. 4, pp. 1181-1216.
\bibitem{davis}
Davis, James F. and Kirk, Paul.
\textit{Lecture Notes in Algebraic Topology.}
American Mathematical Society (2001).
\bibitem{cueca}
Cueca Ten, Miquel and \'{A}lvarez, Daniel.
\textit{Stacky Lie algebroids.                     }https://utrechtgeometrycentre.nl/wp-content/uploads/2020/08/201009-Slides-Miquel-Cueca.pdf, (2014).
\bibitem{hoyor}
del Hoyo, Matias. \textit{On the homotopy type of a cofibred
category.} Cahiers de Topologie et Geometrie Differentielle Categoriques 53 (2012),
82–114).
\bibitem{fernandes}
del Hoyo, Matias and Fernandes, Rui Loja. \textit{Riemannian metrics on differentiable stacks.} Mathematische
Zeitschrift, 292 (2019), 103–132.
\bibitem{del hoyo}
del Hoyo, Matias and Ortiz, Cristian. \textit{Morita Equivalences of Vector Bundles.} International Mathematics Research Notices, Volume 2020, Issue 14, July 2020, Pages 4395–4432, https://doi.org/10.1093/imrn/rny149
\bibitem{Deligne} 
Deligne P. 
\textit{Th\'{e}orie de Hodge. III.} 
Inst. Hautes \'{E}tudes Sci. Publ. Math No. $\mathbf{44}$ (1974),
5-77.
\bibitem{dieck}
tom Dieck, Tammo. \textit{Algebraic Topology.}
European Mathematical Society, Zürich (2008).
https://www.maths.ed.ac.uk/~v1ranick/papers/diecktop.pdf
\bibitem{bo}
Feng, Bo and Hanany, Amihay and He, Yang-Hui and Prezas, Nikolaos.
\textit{Discrete Torsion, Non-Abelian Orbifolds and the Schur Multiplier.}
Journal of High Energy Physics, 5 (2000).
\bibitem{ruif}
Fernandes, Rui. \textit{Invariants of Lie algebroids.}
Differential Geometry and its Applications, Volume 19, Issue 2 (2003), Pages 223-243, ISSN 0926-2245, https://doi.org/10.1016/S0926-2245(03)00032-9.
\bibitem{getzler}
Getzler, Ezra. \textit{Lie theory for nilpotent L$\infty$ algebras.} Annals of Mathematics, 170 (2009), 271–301
\bibitem{ginot}
Ginot, Gr\'{e}gory and Sti\'{e}non, Mathieu. \textit{G-gerbes, principal 2-group bundles and characteristic classes.} J. Symplectic Geom., 13(4):1001–1047, (2015).
\bibitem{glock}
Gl\"{o}ckner, Helge. \textit{Fundamentals of submersions and immersions between infinite-dimensional manifolds.} arXiv: Differential Geometry (2015).
\bibitem{groth}
Grothendieck, Alexander. \textit{Pursuing Stacks.} arXiv:2111.01000 [math.CT], (2010).
\bibitem{grunenfelder}
Grunenfelder, L. and Mastnak, M.\textit{Cohomology of abelian matched pairs and the Kac sequence.}
Journal of ALgebra, Vol. 276 (2013). Pages 706-736.
\bibitem{gen kahler}
Gualtieri, Marco.
\textit{Generalized Kahler Geometry.}
arXiv:1007.3485 [math.DG], (2010).
\bibitem{pym}
Gualtieri, Marco and Li, Songhao and Pym, Brent.
\textit{The Stokes Groupoids.}
Journal für die reine und angewandte Mathematik (Crelles Journal), Vol. 2018 (2013).
\bibitem{luk}
Gualtieri, Marco and Luk, Kevin.
\textit{Logarithmic Picard Algebroids and Meromorphic Line Bundles.}
arXiv:1712.10125 [math.AG], (2017).
\bibitem{eli}
Hawkins, Eli. \textit{A Groupoid Approach to Quantization.}
J. Symplectic Geom. 6 (2008), no. 1, 61-125.
\bibitem{haus}
Garmendia, Alfonso and Marco Zambon. \textit{Hausdorff Morita equivalence of singular foliations.} Annals of Global Analysis and Geometry 55 (2018): 99-132.
\bibitem{andre}
Henriques, André.
\textit{Integrating L$\infty$-algebras.}
Compositio Mathematica (2008), Vol 144.
\bibitem{greg}
Karpilovsky, Gregory.
\textit{The Schur Multiplier.}
Oxford University Press (1987).
\bibitem{krepski}
Krepski, Derek.
\textit{Basic equivariant gerbes on non-simply connected compact simple Lie groups}
Journal of Geometry and Physics
Volume 133, November 2018, Pages 30-41.
\bibitem{Lackman}
Lackman, Joshua. \textit{Cohomology of Lie Groupoid Modules and the Generalized van Est Map.}
International Mathematics Research Notices, rnab027, (2021), https://doi.org/10.1093/imrn/rnab027
\bibitem{Xutu}
Laurent-Gengoux, C., Tu, J.-L. and Xu, P. \textit{Chern-Weil map for principal bundles over
groupoids.} Mathematische Zeitschrift. 255 (2007), 451–491.
\bibitem{Li}
Li, Du. \textit{Higher Groupoid Actions, Bibundles,
and Differentiation.}
Arxiv: Differential Geometry, (2014).
https://arxiv.org/pdf/1512.04209
\bibitem{Meinrenken} 
Li-Bland, David and Meinrenken, Eckhard.
\textit{On the van Est homomorphism for Lie groupoids.} 
L’Enseignement Mathématique,
Vol. 61, (2014).
\bibitem{riehl}
Loregian, Fosco  and Riehl, Emily . Categorical notions of fibration. Expo.
Math., 38(4):496–514, 2020.
\bibitem{Mackenzie}
Mackenzie, K. (2005). General Theory of Lie Groupoids and Lie Algebroids (London Mathematical Society Lecture Note Series). Cambridge: Cambridge University Press. doi:10.1017/CBO9781107325883
\bibitem{mehta}
Mehta, R. \textit{Q-algebroids and their cohomology.} J. Symplectic Geom. 7 (2009), no. 3, 263–293.
\bibitem{mehtatang}
Mehta, R.A. and Tang, X. From double Lie groupoids to local Lie 2-groupoids. Bull Braz Math Soc, New Series 42, 651–681 (2011). https://doi.org/10.1007/s00574-011-0033-4
\bibitem{gael}
Meigniez, Ga\"{e}l. \textit{Submersions, Fibrations and Bundles.} Transactions of the American Mathematical Society. Volume 354, Number 9, Pages 3771–3787.
\bibitem{erbe}
Meinrenken, Eckhard.
\textit{The Basic Gerbe Over a Compact Simple Lie Group.}
Enseign. Math. 49. (2002).
\bibitem{salazar}
Meinrenken, Eckhard and Salazar, María Amelia.
\textit{Van Est Differentiation and Integration.}
Math. Ann. 376 (2020), no. 3-4, 1395–1428.
%\bibitem{murray}
%Murray, Michael K. and Roberts, David Michael and Stevenson, Danny and Vozzo, Raymond F.
%\textit{Equivariant Bundle Gerbes.}
%Advances in Theoretical and Mathematical Physics, Vol. 21 (2015).
\bibitem{eva}
Miranda, Eva and Presas, Francisco. \textit{Geometric Quantization of real
polarizations via sheaves.} Journal of
Smplectic Geometry, Volume 13, Number 2, 421–462, (2015)
\bibitem{Moerdijk}
Moerdijk, I. Orbifolds as groupoids: an introduction. In Orbifolds in
mathematics and physics (Madison, WI, 2001), volume 310 of Contemp. Math., pages 205–222. Amer. Math. Soc., Providence, RI, 2002.
\bibitem{Murray}
Murray, Michael. \textit{Bundle gerbes} J. Lond. Math. Soc. 54 (1996) pp. 403-416
\bibitem{nlab}
nlab authors.
\textit{nlab: Lie Group Cohomology} 
Revision 15. (2019).
https://ncatlab.org/nlab/show/Lie+group+cohomology.
\bibitem{noohi1}
Noohi, Behrang. \textit{Foundations of Topological Stacks I.} arXiv: Algebraic Geometry (2005).
\bibitem{noohi}
Noohi, Behrang. \textit{Fundamental Groups of Algebraic Stacks.} Journal of the Institute of Mathematics of Jussieu 3 (2004): 69 - 103.
\bibitem{Nuiten}
Nuiten, J. \textit{Homotopical Algebra for Lie Algebroids.} Appl Categor Struct 27, 493–534 (2019). https://doi.org/10.1007/s10485-019-09563-z
\bibitem{ARANGO}
Ochoa Arango, Jes\'{u}s Alonso and Tiraboschi, Alejandro.  \textit{Double Groupoid Cohomology and Extensions.}
Arxiv: K-Theory and Homology, (2016).
https://arxiv.org/abs/1608.06712
\bibitem{Peters} 
Peters, Chris A.M. and Steenbrink, Joseph H.M.
\textit{Mixed Hodge Structures (A Series of Modern Surveys in Mathematics).} 
Springer (2008).
\bibitem{pym2}
Pym, Brent and Safronov, Pavel. 
\textit{Shifted Symplectic Lie Algebroids.}
International Mathematics Research Notices, rny215 (2018).
\bibitem{Quillen}
Quillen, Daniel. \textit{Homotopical Algebra.} Lecture Notes in Mathematics 43, Springer, (1967).
\bibitem{raptis}
Raptis, G. \textit{On Serre Microfibrations and a Lemma of M. Weirss.} Glasgow Mathematical Journal, 59(3), (2017) 649-657. doi:10.1017/S0017089516000458
\bibitem{schommer}
Schommer-Pries, Christopher J.
\textit{Central Extensions of Smooth 2-Groups and a Finite-Dimensional String
2-Group.}
Geometry and Topology, Vol. 15 (2009).
\bibitem{sometitle}
\v{S}evera, Pavol. \textit{Some title containing the words “homotopy” and “symplectic”, e.g. this one.} Travaux math\'{e}matiques. Fasc. XVI. Vol. 16. Trav. Math. Univ. Luxemb., Luxembourg, (2005), pp. 121–137.
\bibitem{jet}
\v{S}evera, Pavol. \textit{$L_\infty$ algebras as 1-jets of simplicial manifolds (and a bit beyond).} arXiv:math/0612349 [math.DG], (2006).
\bibitem{Severa}
\v{S}evera, Pavol and Weinstein, Alan.
\textit{Poisson Geometry with a 3-Form Background.}
Progress of Theoretical Physics Supplement, Vol. 144, (2002), pp. 145-154.
\bibitem{snia}
\'{S}niatycki, J. \textit{On Cohomology Groups Appearing in Geometric Quantization.} Differential
Geometric Methods in Mathematical Physics I (1975).
\bibitem{sparano}
Sparano, Giovanni and Vitagliano, Luca. \textit{Deformation cohomology of Lie algebroids and Morita equivalence.} C. R. Math. Acad. Sci. Paris 356 (2018), no. 4, 376–381.
\bibitem{str}
Street, Ross. \textit{Fibrations and Yoneda’s lemma in a 2-category.} Category Seminar (Proc. Sem., Sydney, 1972/1973), pp. 104--133. Lecture Notes in Math., Vol. 420, Springer, Berlin, 1974.
\bibitem{Tu} 
Tu, Jean-Louis. 
\textit{Groupoid Cohomology and Extensions.} 
Transactions of the American Mathematical,
Vol. 358, No. 11, (2006), pp. 4721-4747
\bibitem{van est}
van Est, W. T.
\textit{Group cohomology and Lie algebra cohomology in Lie groups. I, II.}
Nederl. Akad. Wetensch. Proc. Ser. A. 56 = Indagationes Math. 15, (1953). 484–492, 493–504.
\bibitem{van est 2}
van Est, W. T.
\textit{On the algebraic cohomology concepts in Lie groups. I, II.}
Nederl. Akad. Wetensch. Proc. Ser. A. 58 = Indag. Math. 17, (1955). 225–233, 286–294
\bibitem{wockel1}
Wagemann, Friedrich and Wockel, Christoph.
\textit{A Cocycle Model for Topological and Lie Group Cohomology.}
Trans. Amer. Math. Soc. 367 (2015), 1871-1909. 
\bibitem{konrad}
Waldorf, Konrad.
\textit{Multiplicative Bundle Gerbes With Connection.}
Differential Geometry and its Applications, Vol. 28, Issue 3 (2010), pp. 313-340.
\bibitem{Waldron}
Waldron, J. \textit{Lie Algebroids over Differentiable Stacks.}
Arxiv: Differential Geometry, (2015).
https://arxiv.org/abs/1511.07366
\bibitem{weinstein}
Weinstein, Alan and Xu, Ping.
\textit{Extensions of symplectic groupoids and quantization.}
Journal für die reine und angewandte Mathematik. Vol. 417, (1991) pp. 159-190.
\bibitem{witten}
Witten, E. \textit{Perturbative Gauge Theory as a String Theory in Twistor Space.} Commun. Math. Phys. 252, 189–258 (2004). https://doi.org/10.1007/s00220-004-1187-3
\bibitem{wockel}
Wockel, C. \textit{Principal 2-bundles and their gauge 2-groups.}
Forum Math. 23 (3) (2011) 565, [arXiv:0803.3692 [math.DG]].
\bibitem{zhuc}
Zhu, Chenchang. \textit{Lie II theorem for Lie algebroids via higher Lie groupoids.}
Arxiv: Differential Geometry, (2010). arXiv:math/0701024v2 [math.DG].
\bibitem{zhuc2}
Zhu, Chenchang. \textit{n-Groupoids and Stacky Groupoids.} International Mathematics Research Notices,(2009), 4087-4141.
\begin{comment}\vspace{2mm}\\Look at:  Masa, Xosé M. Alexander-Spanier cohomology of foliated manifolds. Illinois J. Math. 46 (2002), no. 4, 979–998. (Reviewer: R. E. Stong) 57R30 (55N35 55T05)
\vspace{2mm}\\ Remember the thing about lifting group actions to quantizations (a moment map implies that when you pullback the symplectic form it's trivial in cohomology, so the lift exists). Also talk about proof of Morita invariance of cohomology using double groupoids.
\vspace{2mm}\\ Is a $(G,X)$ manifold a gerbe or something? ie. a (G,X) manifold $M$ is covered by open sets with functors $M\to G\ltimes X\rightrightarrows X\,,$ and one can pull back the $G\ltimes X$ to each of the open sets. Do the transition functions glue the groupoids together?
\\\href{https://scholarworks.smith.edu/cgi/viewcontent.cgi?article=1093&context=mth_facpubs}{Look here: He has a similar construction of LA-groupoids at end of paper, except he requires the groupoid to act on the algebroid}
\vspace{2mm}\\\href{https://arxiv.org/pdf/math/0002123.pdf}{for group lifting problem}
\vspace{2mm}\\See for definition of homotopy groups of groupoids: https://arxiv.org/pdf/math/0401420.pdf
https://utrechtgeometrycentre.nl/wp-content/uploads/2020/08/201009-Slides-Miquel-Cueca.pdf: definition of morita invariance of LA groupoids
https://arxiv.org/pdf/1511.07366.pdf: morita invariant LA-groupoid cohomology
\end{comment}
\end{thebibliography}
\end{document}